\documentclass{amsart}
\usepackage{latexsym,amsfonts,amssymb,exscale,enumerate,comment}
\usepackage{amsmath,amsthm,amscd}
 \usepackage[percent]{overpic}
\usepackage{mathrsfs}
\usepackage{pifont}

\excludecomment{answer}      

\makeatletter
\tagsleft@false
\makeatother

\usepackage{geometry}
\usepackage{url}
\usepackage[bookmarks=true,%
    colorlinks=true,%
    linkcolor=blue,%
    citecolor=blue,%
    filecolor=blue,%
    menucolor=blue,%
    urlcolor=blue,%
    breaklinks=true]{hyperref} 
    
\newtheorem*{namedtheorem}{\namedtheoremname}
\newcommand{\namedtheoremname}{Theorem}

\makeatletter
\newenvironment{alphatheorem}[1]{%
  \renewcommand{\namedtheoremname}{Theorem #1}%
  \def\@currentlabel{#1}%
  \begin{namedtheorem}%
}{%
  \end{namedtheorem}%
}
\makeatother

\makeatletter
\newenvironment{alphacorollary}[1]{%
  \renewcommand{\namedtheoremname}{Corollary #1}%
  \def\@currentlabel{#1}%
  \begin{namedtheorem}%
}{%
  \end{namedtheorem}%
}
\makeatother

\DeclareRobustCommand{\waterdrop}{%
  \tikz[scale=0.07, baseline=-0.4ex, inner sep=0pt, outer sep=0pt]{%
    \useasboundingbox (-0.9,-1.4) rectangle (0.9,1.5);
    \draw[line width=0.5pt]
      (0,1.3)
      .. controls (-0.1,0.8) and (-0.8,0.2) ..
      (-0.75,-0.4)
      arc(180:360:0.75 and 0.7)
      .. controls (0.8,0.2) and (0.1,0.8) ..
      cycle;
  }%
}

\DeclareRobustCommand{\snowflake}{\makebox[0pt]{%
  \tikz[scale=0.09, baseline=-0.4ex, inner sep=0pt, outer sep=0pt]{%
    \useasboundingbox (-1,-1) rectangle (1,1);
    \foreach \a in {0,60,...,300}{
      \draw[line width=0.5pt] (0,0) -- (\a:1);
      \draw[line width=0.4pt] (\a:0.55) -- +(\a+60:0.3);
      \draw[line width=0.4pt] (\a:0.55) -- +(\a-60:0.3);
    }%
  }%
}}

\input xy
\usepackage[all]{xy}
\xyoption{line}
\xyoption{arrow}
\xyoption{color}
\SelectTips{cm}{}

\usepackage{tikz}
\usetikzlibrary{shapes,snakes}
\usetikzlibrary{decorations.markings}
\usetikzlibrary{decorations.pathreplacing}
\tikzstyle directed=[postaction={decorate,decoration={markings,
    mark=at position #1 with {\arrow{>}}}}]

\newcommand{\hackcenter}[1]{
 \xy (0,0)*{#1}; \endxy}

\tikzset{->-/.style={decoration={
  markings,
  mark=at position #1 with {\arrow{>}}},postaction={decorate}}}

\tikzset{middlearrow/.style={
        decoration={markings,
            mark= at position 0.5 with {\arrow{#1}} ,
        },
        postaction={decorate}
    }
}

\usepackage{graphicx}
\usepackage{color}

\usetikzlibrary{shapes.geometric}

\theoremstyle{plain}
\newtheorem{theorem}{Theorem}

\newtheorem{corollary}[theorem]{Corollary}
\newtheorem{proposition}[theorem]{Proposition}
\newtheorem{lemma}[theorem]{Lemma}

\theoremstyle{definition}
\newtheorem{example}[theorem]{Example}
\newtheorem{definition}[theorem]{Definition}
\newtheorem{conjecture}[theorem]{Conjecture}
\theoremstyle{definition}
\newtheorem{remark}[theorem]{Remark}

\def\height{{\operatorname{ht}}}

\numberwithin{equation}{section}
\numberwithin{theorem}{section}

\newcommand{\refequal}[1]{\xy {\ar@{=}^{#1}
(-1,0)*{};(1,0)*{}};
\endxy}

\newcommand{\Hom}{{\rm Hom}}
\newcommand{\TypeH}{{\rm Type}}

\renewcommand{\to}{\rightarrow}

\def\Res{{\mathrm{Res}}}
\def\Ind{{\mathrm{Ind}}}

\numberwithin{equation}{section}
\let\hat=\widehat
\let\tilde=\widetilde

\let\epsilon=\varepsilon

\usepackage{bbm}
\def\C{{\mathbb{C}}}

\def\ZZ{{\mathbbm Z}}

\def\1{\mathbbm{1}}%
\usepackage{bbm}

\def \k {\mathbbm{k}}
\def \Z {\mathbbm{Z}}

\def \bkap {\boldsymbol{\kappa}}
\def \blam {\boldsymbol{\lambda}}
\def \bi {\boldsymbol{i}}
\def \bj {\boldsymbol{j}}
\def \bk {\boldsymbol{k}}
\newcommand{\bS}{\boldsymbol{\mathsf{S}}}
\def \bmu {\boldsymbol{\mu}}

\def \bgamma {\boldsymbol{\gamma}}

\def \brho {\boldsymbol{\rho}}
\def \beps {\boldsymbol{\varepsilon}}

\def \eps {\varepsilon}
\def \bOm {\boldsymbol{\Omega}}
\def \bom {\boldsymbol{\omega}}
\def \bTheta {\boldsymbol{\Theta}}
\def \bnu {\boldsymbol{\nu}}

\def \PL {\textup{P}^\textup{L}}
\def \PR {\textup{P}^\textup{R}}
\def \NPL {\textup{NP}^\textup{L}}
\def \NPR {\textup{NP}^\textup{R}}

\def \min {{\rm min}}

\newcommand\nc{\newcommand}

\nc\rnc{\renewcommand}
\nc\Kar{\operatorname{Kar}}
\nc\End{\operatorname{End}}
\nc\modQ {{\mathbb Q}}
\nc\modZ {{\mathbb Z}}
\nc\simeqto{\overset{\simeq}{\longrightarrow }}
\nc\K{\mathcal {K}}
\nc\CC{\mathbf{C}}

\nc\tE{\check{E}}
\nc\qh{\mathcal{H}}
\nc\hbm{\mathcal{B}}
\nc\bu{\mathbf{u}}
\nc\bZ{\mathbf{Z}}
\nc\theirs{\mathrm{theirs}}
\nc\ours{\mathrm{ours}}
\nc\hE{\mathcal{\hat E}}
\nc\bK{\mathbf{K}}
\nc\bw{\mathbf{w}}
\nc\bh{\mathbf{h}}
\nc\ba{\mathbf{a}}
\nc\bb{\mathbf{b}}
\nc\SPAR{{\tt spar}}
\nc\TAL{\mathrm{tally}}
\nc\BFC{\mathrm{bfc}}
\nc\ORD{\mathrm{ord}}
\nc\ORTH{\mathrm{orth}}
\nc\EXT{\mathrm{ext}}
\nc\CALC{\mathrm{calc}}
\nc\TRIP{\mathrm{trip}}
\nc\STACK{\mathrm{stack}}
\nc\FLIP{\mathrm{flip}}
\newcommand{\sfD}{\mathsf{D}}
\newcommand{\sfU}{\mathsf{U}}
\newcommand{\sfL}{\mathsf{L}}
\newcommand{\usfD}{\underline{\mathsf{D}}}
\newcommand{\sfC}{\mathsf{C}}
\newcommand{\usfC}{\underline{\mathsf{C}}}

\newcommand{\cross}{%
  \mathord{\begin{tikzpicture}[baseline=0.4ex, scale=0.35, line width=0.5pt]
    \draw[->] (0,0) -- (1,1);
    \draw[->] (1,0) -- (0,1);
  \end{tikzpicture}}}

\newcommand{\uncross}{%
  \mathord{\begin{tikzpicture}[baseline=0.4ex, scale=0.35, line width=0.5pt]
    \draw[->] (0,0) -- (0,1);
    \draw[->] (0.5,0) -- (0.5,1);
  \end{tikzpicture}}}

\newcommand\aMV{\mathcal{MV}}
\newcommand\MP{\mathcal{MP}}
\newcommand\Par{\mathcal{P}}
\newcommand\Parreg{\mathcal{P}_{\mathrm{2reg}}}
\newcommand\Parres{\mathcal{P}_{\mathrm{2res}}}
\newcommand\UL{\mathcal{UL}}
\newcommand\LL{\mathcal{LL}}
\newcommand\MPres{\mathcal{MP}_{\mathrm{2res}}}
\newcommand\MPK{\mathcal{MP}_{\mathrm{K}}}

\newcommand\FLOAT{\mathrm{float}}
\newcommand\SPLIT{\mathrm{split}}

\newcommand\GLUE{\mathrm{glue}}
\newcommand\kres{\mathrm{res}}
\newcommand\SINK{\mathrm{sink}}

\newcommand\re{\mathrm{re}}
\newcommand\im{\mathrm{im}}

\newcommand\succa{\substack{{} \\ {} \\ \textstyle\succ \\ \scriptscriptstyle {}^1}}
\newcommand\succatext{\substack{{} \\ \textstyle\succ \\ \scriptscriptstyle {}^1}}
\newcommand\succb{\substack{{} \\ {} \\ \textstyle\succ \\ \scriptscriptstyle {}^0}}
\newcommand\succbtext{\substack{ {} \\ \textstyle\succ \\ \scriptscriptstyle {}^0}}

\newcommand\succitext{\substack{{} \\ \textstyle\succ \\ \scriptscriptstyle {}^i}}
\newcommand\succieqtext{\substack{{} \\ \textstyle\succeq \\ \scriptscriptstyle {}^i}}

\nc{\urcap}{\textrm{uRCap}}
\nc{\urcup}{\textrm{uRCup}}
\nc{\ulcap}{\textrm{uLCap}}
\nc{\ulcup}{\textrm{uLCup}}
\nc{\drcap}{\textrm{dRCap}}
\nc{\drcup}{\textrm{dRCup}}
\nc{\dlcap}{\textrm{dRCap}}
\nc{\dlcup}{\textrm{dRCup}}

\nc{\cl}{\mathrm{cl}}
\nc{\cala}{\mathcal{A}}
\nc{\calb}{\mathcal{B}}
\nc{\calc}{\mathcal{C}}
\nc{\bx}{\mathbf{x}}
\nc{\by}{\mathbf{y}}
\nc{\bE}{\mathbbm{E}}
\nc{\bElong}{{\mathbbm E^{\bullet, \geq}}}

\nc{\Sk}{\mathrm{Sk}}
\nc{\Hilb}{\mathrm{Hilb}}

\nc\col{\colon\thinspace}

\allowdisplaybreaks

\title[Crystals and KLR representations in type \({\tt A}_1^{(1)}\)]{
Kleshchev multipartitions, affine Mirkovi\'c-Vilonen polytopes, and representations of KLR algebras in type \({\tt A}^{(1)}_1\)
}

\begin{document}
\setcounter{tocdepth}{2}

\author{Samantha Allen}
\email{allens6@duq.edu}
\address{Department of Mathematics and Computer Science \\ Duquesne University \\ Pittsburgh, PA USA}

\author{Jack Isaac}
\email{jci11@pitt.edu}
\address{Department of Mathematics \\ University of Pittsburgh \\ Pittsburgh, PA USA}

\author{Corinne Moscariello}
\email{moscarielloc@duq.edu}
\address{Duquesne University \\ Pittsburgh, PA USA}

\author{Robert Muth}
\email{muthr@duq.edu}
\address{Department of Mathematics and Computer Science \\ Duquesne University \\ Pittsburgh, PA USA}


\author{Bella Deborah Uwase}
\email{uwased@duq.edu}
\address{Duquesne University \\ Pittsburgh, PA USA}

\author{Lucas Walton}
\email{waltonl2@duq.edu}
\address{Duquesne University \\ Pittsburgh, PA USA}

\renewcommand{\shortauthors}{S. Allen, J. Isaac, C. Moscariello, R. Muth, B. D. Uwase, and L. Walton}

\date{\today}

\begin{abstract}
We construct explicit isomorphisms between three models for the $\mathcal{B}(\infty)$ crystal in type ${\tt A}_1^{(1)}$: affine Mirkovi\'{c}--Vilonen 
polytopes, Kleshchev multipartitions, and a new model we call {\em upper ledge diagrams}. We also present some clarifying results on these crystals, giving a direct method for completing an affine MV polytope from the data of one of its boundary root partitions, and a non-iterative recognition theorem which characterizes Kleshchev multipartitions in type ${\tt A}_1^{(1)}$. We apply these results to the representation theory of KLR algebras, where they yield a combinatorial dictionary between cuspidal- and cellular-theoretic frameworks, along with some augmented branching rules for real root functors of induction and restriction.
\end{abstract}

\date{\today}

\maketitle
\setcounter{tocdepth}{1}
\tableofcontents

\section{Introduction}

Crystal bases are a powerful combinatorial framework for studying representations of quantum groups, and the crystal $\mathcal{B}(\infty)$ plays a central organizing role; it encodes the combinatorial skeleton of highest weight representations and arises naturally in the categorification program via Khovanov–Lauda–Rouquier (KLR) algebras.  Herein our focus is on the fundamental type ${\tt A}_1^{(1)}$ crystals for the quantum group \(U_q(\hat{\mathfrak{sl}}_2)\), where $\mathcal{B}(\infty)$ admits a number of well-studied but still rather mysterious combinatorial realizations. Under consideration in this paper are two key examples:
\begin{itemize}
\item \emph{Kleshchev multipartitions} (\S\ref{bigKleshsec}), whose crystal rules originate
in work of Kleshchev \cite{KleshBranchI, KleshBranchII} on modular branching rules for symmetric groups 
and Hecke algebras, and work of Lascoux--Leclerc--Thibon \cite{LLT}
and Misra--Miwa \cite{MisraMiwa} on the basic representation of \(U_q(\hat{\mathfrak{sl}}_e)\), extended to higher levels by Ariki--Mathas \cite{ArikiMathas}.
\item \emph{Rank 2 affine Mirkovi\'{c}--Vilonen (MV) polytopes} (\S\ref{bigAMVsec}), defined in work of Baumann--Dunlap--Kamnitzer--Tingley \cite{aMVrank2} 
(see also  \cite{BKT, Kamnitzer2010, MuthiahTingley2014, MuthiahTingley2018}), which encode PBW data for the quantum group. These polytopes are fundamental building blocks (2-faces) in the characterization of affine MV polytopes in arbitrary symmetric affine type. 
\end{itemize}

Motivating our interest in these two crystals is the deep representation-theoretic content they carry in the world of categorification via KLR algebras (see \S\ref{bigKLRsec}). 
Simple KLR modules are indexed by affine MV polytopes via the cuspidal system framework of \cite{KleshCusp, McNaffine, TW}. On the other hand, Kleshchev multipartitions index simple modules for cyclotomic KLR 
algebras through the graded cellular structure of \cite{HuMathasCycA}. From each point of view we capture complementary structural information; the 
former offers stratification, filtrations, and BGG-type reciprocity \cite{KMstrat, Murata}, the latter offers
ties to modular representation theory 
of Hecke algebras and attendant results on Specht homomorphisms and decomposition numbers \cite{BKisom, BKW, KMR}. 

Creating a direct combinatorial bridge between these perspectives would allow these tool kits to be brought to bear in tandem, yet translating directly between crystals has remained heretofore a difficult task. Happily, our main result (Theorems~\ref{PolyOH} and \ref{isommpul} and Corollary~\ref{PolyMPthm}) constructs such a bridge by way of explicit bijections between these crystals and a new, third, nicely transparent (in both characterization and crystal operation) combinatorial model for \(\mathcal{B}(\infty)\) which we call \emph{upper ledge diagrams} (see \S\ref{bigULsec}).

\begin{figure}[h]
\begin{align*}
\hackcenter{}
\hackcenter{
\begin{overpic}[height=66.5mm]{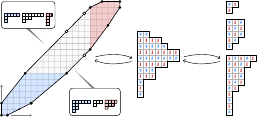}
 \put(1.8,-0.3){\makebox(0,0)[t]{$\scriptstyle \alpha_1$}}    
  \put(-1.2,1.8){\makebox(0,0)[t]{$\scriptstyle \alpha_0$}}
  \put(29.4,8.6){\makebox(0,0)[t]{$\scriptstyle\pi^1$}}    
    \put(37.4,8.6){\makebox(0,0)[t]{$\scriptstyle\pi^\delta$}}    
      \put(42,8.6){\makebox(0,0)[t]{$\scriptstyle\pi^0$}}    
  \put(2.9,42.6){\makebox(0,0)[t]{$\scriptstyle\phi^0$}}    
    \put(9.8,42.6){\makebox(0,0)[t]{$\scriptstyle\phi^\delta$}}    
      \put(19,42.6){\makebox(0,0)[t]{$\scriptstyle\phi^1$}}    
  \put(86,15.8){\makebox(0,0){$\scriptstyle \lambda^{(1)}$}}    
    \put(86,33.5){\makebox(0,0){$\scriptstyle \lambda^{(2)}$}}    
      \put(86,42.8){\makebox(0,0){$\scriptstyle \lambda^{(3)}$}}    
  \put(54.3,34.8){\makebox(0,0){$\scriptstyle \sfD$}}    
             \put(44.6,26){\makebox(0,0){$\scriptstyle \mathcal{X}$}}  
               \put(44.6,19){\makebox(0,0){$\scriptstyle \mathcal{Y}$}}  
                  \put(79,26){\makebox(0,0){$\scriptstyle \mathcal{W}^{\bkap}$}}  
               \put(79,19){\makebox(0,0){$\scriptstyle \mathcal{Z}^{\bkap}$}}  
\end{overpic}
}
\end{align*}
\caption{
The affine Mirkovi\'c-Vilonen polytopes (left, \S\ref{bigAMVsec}), upper ledge diagrams (center,  \S\ref{bigULsec}), and Kleshchev multipartitions (right, \S\ref{bigKleshsec}) are models for \(\mathcal{B}(\infty)\) in type \({\tt A}_1^{(1)}\). We present direct isomorphisms \(\mathcal{X}, \mathcal{Y},  \mathcal{Z}^{\bkap}, \mathcal{W}^{\bkap}\) between these crystals in \S \ref{crystalisomsec}.}
\label{maineverythingfig}
\end{figure}

Along the way, we establish several results of independent interest, the first two of which address significant practical difficulties in working with these crystals:
\begin{itemize}
\item There are two convex preorders on the root system of type \({\tt A}_1^{(1)}\), and root partitions in either order correspond to northwest/southeast boundaries of polytopes (as seen in Figure~\ref{maineverythingfig}). Working with affine MV polytopes and their crystal operators requires one to generate, from a given root partition in one order, the root partition in the other order which uniquely completes an affine MV polytope---a rather delicate iterative balancing act featuring the satisfaction of many tropical-flavored inequalities. We simplify matters by providing an explicit tableau-theoretic polytope completion algorithm in type \({\tt A}_1^{(1)}\) (Theorem~\ref{rpartranslate}). 
\item Recognizing whether a given multipartition is Kleshchev is also an iterative process which in general requires a number of steps roughly equal to the rank of the multipartition itself. We provide a new non-iterative recognition theorem for Kleshchev multipartitions in type \({\tt A}^{(1)}_1\) for arbitrary level and multicharge (Theorem~\ref{Kleshtest}), the first of its kind in affine type {\tt A}.
\item A key ingredient in our combinatorial setup is a bijection between partitions and 2-regular partitions (Theorem~\ref{tallyprop}) with interesting properties; it can be seen for instance that it restricts to a bijection between partitions of \(n\) with \(k\) parts and 2-regular partitions whose odd-numbered parts sum to \(n\) and even-numbered parts sum to \(n-k\). That the cardinalities of these sets are equal is known thanks to Euler and refinements by Sylvester and Bessenrodt (see for instance \cite{eulerbijections,BessSyl, Paksurvey}) but our bijection is apparently new. 
\end{itemize}

Next we consider in \S\ref{bigKLRsec} applications of these results to type \({\tt A}_1^{(1)}\) KLR representation theory. Corollary~\ref{PolyMPthm}'s crystal isomorphisms lift to isomorphisms of simple modules in the associated cuspidal/cellular labeling regimes mentioned above, as shown in \S\ref{simpletranslations}. Exploiting this new connection allows us to freely share representation-theoretic information between regimes, and in this direction we establish several related results and pose some new conjectures:
\begin{itemize}
\item We give a remarkably simple ``glue-then-split'' procedure for translating multipartition labels of simple modules between any two choices of multicharge, allowing us to pass between distinct cellular regimes for cyclotomic KLR algebras (\S\ref{transmulticharges}).
\item We provide a direct means of calculating the {\em jump} statistic on simple KLR modules via upper ledge diagram combinatorics (\S\ref{jumpstatsec}).
\item We present some `augmented' branching rules (\S\ref{augmentedmulti}) for Kleshchev multipartitions describing heads/socles of simple modules under  {\em  real root} functors of induction/restriction (generalizing the usual notion of branching by {\em simple roots}, see \S\ref{rootfuncsec}).
\item We describe a combinatorial `tile-factor' theorem (\S\ref{tilingsec}) which identifies some multiplicity-one simple factors of quasirestricted Specht modules in level one.
\item Inspired by the crystal combinatorics, we pose some new conjectures: a level-two analogue of James's famous regularization theorem  (\S\ref{conjjames2}), and an alternate construction of simple KLR modules closely tied to the upper ledge diagram crystal (\S\ref{conjlabelreg}).
\end{itemize}

\subsection{Future directions} It would be interesting to construct explicit crystal isomorphisms between upper ledge diagrams and other combinatorial realizations of $\mathcal{B}(\infty)$ in type ${\tt A}_1^{(1)}$ using the upper ledge diagram crystal as a combinatorial hub. Natural candidates include Young walls \cite{KangYW, KimShinYW}; Nakajima monomials \cite{KimMono}; and rigged configurations \cite{SSSrigB}. At the level of 
highest weight crystals $\mathcal{B}(\Lambda)$, it would also be interesting to connect upper ledge diagrams to Uglov/FLOTW multipartitions \cite{FLOTW, Gerber, JaconLecouvey, Uglov}, which provide alternatives to the Kleshchev labeling in higher levels.

Another important direction would be to extend this work to types \({\tt A}_{>1}^{(1)}\), 
constructing analogous notions of upper ledge diagrams, associated crystal isomorphisms, and dictionaries between KLR representation theories. This work is a crucial first step in that direction, since the \({\tt A}_{1}^{(1)}\) polytopes (and their rank 2 finite-type kin) govern the key 2-face conditions in the characterization of higher-rank affine MV polytopes \cite{BKT}.
There are new difficulties that arise in higher ranks---for instance, the associated Weyl group combinatorics become more complex and there are infinitely many convex preorders to contend with (as opposed to two). Certain aspects of Kleshchev multipartition combinatorics also become considerably thornier---for instance, the Mullineux map is nontrivial in higher ranks (see e.g.\ the increase in complexity in characterizing Kleshchev bipartitions in \cite[Propositions 9.7, 9.8]{AKT}).

\subsection{On the structure of the paper} In order to maximize readability/usability, we adopt a slightly idiosyncratic structure. In \S\ref{partitioncombsec}--\S\ref{crystalisomsec} we define the objects under study, present the key combinatorial algorithms and functions, and {\em state} the main (lettered) theorems. The combinatorial heavy lifting is then done below decks in the long and technical \S\ref{dirtywork}--\S\ref{proofsmainthmssec}, where more ancillary objects and functions are defined, and proofs of the lettered theorems may be found. In \S\ref{bigKLRsec} we devote our attention to applying the main combinatorial theorems to KLR algebras.

\subsection{ArXiv Version}\label{SS:ArxivVersion} We relegate some of the more routine/repetitive calculations to the \texttt{arXiv} version of the paper.  The reader interested in seeing these additional details can download the \LaTeX~source file from the \texttt{arXiv}.  Near the beginning of the file is a toggle which allows one to compile the paper with these calculations included.

\subsection{Acknowledgements}
We thank Farnaz Ariyanfar, Nicholas Davidson, Matthew Fayers, Alexander Kleshchev, Andrew Mathas, Travis Scrimshaw, George Seelinger, Liron Speyer, Arthur Sugden, Louise Sutton and Peter Tingley for helpful conversations and insights. Some of our nomenclature in \S\ref{partitioncombsec} was inspired by 
\cite{pynchon2006}.
The first author was supported by NSF Grant No. DMS-2532488. The first and fourth authors were supported by a Duquesne Faculty Development Fund grant. The fourth author was supported by an AMS-Simons PUI Research Enhancement Grant, and by ICERM for the {\em Categorification and Computation in Algebraic Combinatorics} semester program. The sixth author was supported by a Duquesne University School of Science and Engineering Dean's Grant. 

\section{Partition combinatorics}\label{partitioncombsec}
We will freely associate integer partitions \(\lambda = (\lambda_1, \ldots, \lambda_m)\) with their Young diagrams, displayed in the English convention as top-left aligned box arrangements, where the \(k\)th row consists of \(\lambda_k\) boxes. We write 
\begin{itemize}
\item \(|\lambda| := \lambda_1 + \cdots + \lambda_m\) for the number of boxes in \(\lambda\);
\item  \(p(\lambda):=m\) for the number of nonzero rows (or {\em parts}) in \(\lambda\);
\item \(c(\lambda)\) for the number of nonzero columns in \(\lambda\); 
\item \(c_t(\lambda)\) for the length of the \(t\)th column in \(\lambda\);
\item \(n_k(\lambda) := \#\{t \mid \lambda_t = k\}\) for the number of rows of length \(k\) in \(\lambda\); \item \(\lambda'\) for the {\em conjugate} of \(\lambda\), given by \(\lambda'_t = c_t(\lambda)\) for all \(t\);
\item \(\min(\lambda):= \lambda_{p(\lambda)}\) for the minimal row length in \(\lambda\) if \(\lambda\) is nonempty, and \(\min(\lambda) = \infty\) if \(\lambda = \varnothing\);
\item \(\textup{double}(\lambda)\) for the partition given by doubling all column lengths in \(\lambda\);
\item  \(\textup{halve}(\lambda)\) for the partition given by halving all column lengths (assuming the column lengths of \(\lambda\) are even of course).
\end{itemize}
We say that \(\lambda\) is {\em 2-regular} (resp. {\em 2-restricted}) provided that \(\lambda\) has no two rows (resp. columns) of equal length. We write \(\Par, \Parreg, \Parres\) for the set of partitions, 2-regular partitions, and 2-restricted partitions respectively. 
In some settings a partition \(\lambda\) will come equipped with a distinguished {\em charge} \(\kappa \in \Z_2\) which defines {\em residues} associated to each box in \(\lambda\): if \(\lambda\) has charge \(\kappa\) then the residue of a box \(u\) in the \(r\)th row and \(c\)th column of \(\lambda\) is given by \(\textup{res}(u) :=\kappa +  \bar c - \bar r \in \Z_2\). We write \(\Par^{\kappa}\) for the set of partitions equipped with charge \(\kappa\).

\subsection{Spar tableaux}\label{traversesec}
Given a partition \(\lambda\), we define the {\em spar tableau} \(\SPAR(\lambda)\) to be the labeling of boxes in the Young diagram for \(\lambda\) with positive integers as follows. Traversing through columns from right to left, and within each column from bottom to top, label each box with the smallest positive integer which exceeds the label of any boxes directly below and to the right, and appears nowhere in the column immediately to the right. See Figure~\ref{makereeffig} for an example. We set \({\tt w}(\lambda)\) to be the label in the upper left corner of \(\SPAR(\lambda)\) (or 0 if \(\lambda = \varnothing\)). We now define a pair of functions which refract out of spar tableau combinatorics.

\subsubsection{Ordinary/extraordinary functions} Given \(\lambda \in \Par\), we define \(\mu = \ORD(\lambda) \in \Parreg\) to be the unique 2-regular partition such that \(c_t(\mu)\) is odd if and only if \(t\) appears in the first column of \(\SPAR(\lambda)\). More concretely, we have \(c_{{\tt w}(\lambda)}(\mu) = 1\), and moving to the left, the \(t\)th column length in \(\mu\) increases by one precisely when exactly one of \(t, t+1\) is in the first column of \(\SPAR(\lambda)\).

For \(w \geq {\tt w}(\lambda)\), we then define \(\EXT^w(\lambda) \in \Parreg\) by adding a row of length \(w\) to the top of \(\ORD(\lambda)\) if \(w> {\tt w}(\lambda)\), or deleting the top row of \(\ORD(\lambda)\) if \(w = {\tt w}(\lambda)\). It follows from definitions that \(\ORD(\lambda)\) and \(\EXT^w(\lambda)\) are `orthogonal' in a particular sense; for each \(t \in [1,w]\), the \(t\)th columns in each are of opposite parity. 
See Figure~\ref{makereeffig} for an example.

\begin{figure}[h]
\begin{align*}
\hackcenter{}
\hackcenter{
\begin{overpic}[height=28mm]{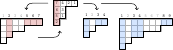}
 \put(31,30){\makebox(0,0)[b,l]{$\scriptstyle \SPAR(\lambda)$}}    
  \put(8.5,8){\makebox(0,0)[l]{$\scriptstyle \ORD(\lambda)$}}    
    \put(53.2,8){\makebox(0,0)[l]{$\scriptstyle \EXT^7(\lambda)$}}    
  \put(84.5,8){\makebox(0,0)[l]{$\scriptstyle \EXT^9(\lambda)$}}    
   \put(20,28){\makebox(0,0)[b,l]{$\scriptstyle\ORD$}}    
    \put(49,28){\makebox(0,0)[b,l]{$\scriptstyle\EXT^7$}}    
    \put(65,28){\makebox(0,0)[b,l]{$\scriptstyle\EXT^9$}}    
      \put(27.9,6.1){\makebox(0,0)[t]{$\scriptstyle \CALC$}}    
\end{overpic}
}
\end{align*}
\caption{At top, a partition \(\lambda = (4,2^2,1)\) and its spar tableau. Below are shown the 2-regular partitions \(\ORD(\lambda) = (7,4,2,1) \), \(\EXT^7(\lambda) = (4,2,1)\), and  \(\EXT^9(\lambda)=(9,7,4,2,1)\). Note that \(\CALC \circ \ORD(\lambda) = \CALC(7,4,2,1) = (4,2^2,1) = \lambda\).}
\label{makereeffig}
\end{figure}

\begin{conjecture}\label{conjsparcontain}
If \(\lambda, \nu \in \Par\) are such that \(\nu\) is obtained by deleting the last row of \(\lambda\), then \(\ORD(\nu) \subseteq \ORD(\lambda)\). 
\end{conjecture}

\subsubsection{The calc function} 
Given \(\mu \in \Parreg\), we construct a (tableau for a) partition \(\lambda = \CALC(\mu)\) as follows. Set the first column of \(\lambda\) to consist of boxes labeled by \(\{ t \in [1, c(\mu)] \mid c_t(\mu) \text{ is odd}\}\) in decreasing order from top to bottom. Then, beginning from the second column in \(\lambda\), traverse through columns from left to right, and within each column from top to bottom, placing labeled boxes according to the following rule. For each new box, label it with the largest positive integer which is strictly less than any entry directly above or to the left of the new box, and appears nowhere in the column immediately to the left. If no such integer exists---or if there is no box immediately to its left---terminate this column without placing this box and move on to the next column to the right. The construction halts when a column receives no new boxes, and the resulting partition is \(\lambda\). See Figure~\ref{makereeffig} for an example.

The following result is proved in Theorem~\ref{appendreefundo}.

\begin{alphatheorem}{A}\label{tallyprop}
The functions \(\ORD\colon \Par \to \Parreg\) and \(\CALC\colon \Parreg \to \Par\) are mutually inverse bijections. Moreover, the tableau generated in the construction of \(\CALC(\mu)\) is exactly \(\SPAR(\CALC(\mu))\).
\end{alphatheorem}

\subsection{The involution \(\ORTH^w\)}\label{opsec}
For  \(\lambda \in \Par\) and \(w \geq {\tt w}(\lambda)\), we set:
\begin{align*}
\ORTH^w(\lambda) = \CALC(\EXT^w(\lambda)) \in \Par.
\end{align*}
It follows that \(\ORTH^w(\lambda)\) is the (unique, thanks to Theorem~\ref{tallyprop}) partition such that \([1,w]\) is the disjoint union of the labels in the first columns of \(\SPAR(\lambda)\) and \(\SPAR(\ORTH^w(\lambda))\). It also follows that \(\ORTH^w\) is an involution with no fixed points on the set of (the \(2^w\) many) partitions \(\lambda\) with \({\tt w}(\lambda) \leq w\).

\section{The type \({\tt A}^{(1)}_1\) root system}
Throughout the paper, it will be convenient to write \(\hat i = i + \bar 1\) for \(i \in \Z_2\).
We work with the root system of type \({\tt A}_1^{(1)}\) for the Kac-Moody Lie algebra \(\hat{\mathfrak{sl}}_2(\C)\) (see \cite{Kac1990} for a thorough exposition). We write \(I = \{\alpha_0, \alpha_1\}\) for the simple roots, \(\Z_{\geq 0}I\) for the root lattice, and define 
\begin{align*}
\alpha_{1:k} := k \alpha_1 + (k-1) \alpha_0 ;\qquad \alpha_{0:k} := (k-1) \alpha_1 + k \alpha_0 ; \qquad \delta = \alpha_1 + \alpha_0,
\end{align*}
in \(\Z_{\geq 0}I\) for all \(k \in \Z_{>0}\), where \(\delta\) is the null root. For \(\theta = a_1 \alpha_1 + a_0 \alpha_0 \in \Z_{\geq 0}I\) we write \(\height(\theta)= a_1 + a_0\) for the height of \(\theta\). We have
\begin{align*}
\Phi_+^\re = \{  \alpha_{1:k},  \; \alpha_{0:k}\mid k \in \Z_{> 0} \}; 
\qquad
\Phi_+^\im= \{ k \delta \mid k \in \Z_{>0}\};
\qquad
\Phi_+ = \Phi_+^\re \sqcup \Phi_+^\textup{im};
\qquad
\Psi_+ = \Phi_+^\re \sqcup \{\delta\},
\end{align*}
for the sets of real, imaginary, positive, and indivisible roots respectively. We write \(P = \Z\{\Lambda_1, \Lambda_0, \delta\}\) for the weight lattice. 

\subsection{Root partitions}
Given \(\theta \in \Z_{\geq 0}I\), a {\em root partition} \(\pi\) of \(\theta\) is a triple of integer partitions \(\pi = \{\pi^1,  \pi^\delta, \pi^0\}\) such that
\begin{align}\label{rpar1}
\theta = \sum_{k=1}^\infty n_k(\pi^1) \alpha_{1:k} + | \pi^\delta| \delta + \sum_{k=1}^\infty n_k(\pi^0)\alpha_{0:k}.
\end{align}
We write \(\Pi(\theta)\) for the set of root partitions of \(\theta\) and set \(\Pi = \bigsqcup_{\theta \in \Z_{\geq 0}I} \Pi(\theta)\).

\subsection{Crystals of type \({\tt A}_1^{(1)}\)} 
\begin{definition}\label{crystaldef}
A {\em highest weight \({\tt A}_1^{(1)}\)-crystal} is the data of a set \(B\) equipped with a {\em highest weight element \(b^+ \in B\)} and functions \(\textup{wt}: B \to P\), \(\varepsilon_i, \varphi_i : B \to \Z \), \(f_i, e_i: B \to B \sqcup \{0\}\) for \(i \in \Z_2\), such that for \(b, b' \in B\),
\begin{enumerate}
\item \(\varphi_i(b) = \varepsilon_i(b) + \langle \textup{wt}(b), \alpha_i^\vee\rangle\);
\item \( e_i\) increases \(\varphi_i\) by one, decreases \(\varepsilon_i\) by one, and increases \(\textup{wt}\) by \(\alpha_i\) (when \(e_ib \neq 0\));
\item \(f_i b = b'\) if and only if \(e_i b' = b\);
\item \(\varepsilon_i(b) = \max\{m \mid e_i^m b \neq 0\}\);
\item $e_i b_+ = 0$;
\item \(b_+\) can be reached from any \(b \in B\) by applying a sequence of \(e_1\)'s and \(e_0\)'s.
\end{enumerate}
\end{definition}

\begin{definition}
An {\em \({\tt A}_1^{(1)}\)-bicrystal} is a set \(B\) equipped with two distinct crystal structures whose weight functions agree. We indicate the secondary structure with stars, writing \(e_i^*, f_i^*, \varepsilon_i^*, \varphi_i^*\). 
\end{definition}

Kashiwara's theory of crystals \cite{Kashiwara1995} attaches to each integrable highest weight representation \(V(\Lambda)\)
of the quantum group \(U_q(\hat{\mathfrak{sl}}_2)\)
a highest weight crystal \(\mathcal{B}(\Lambda)\)
which records the leading-order \((q \to 0)\) 
combinatorial behavior of \(V(\Lambda)\). 
The negative half of the quantum group $U_q^-(\hat{\mathfrak{sl}}_2)$ has a crystal basis $\mathcal{B}(\infty)$ given as the direct limit of (weight shifted) $\mathcal{B}(\Lambda)$, and  $\mathcal{B}(\Lambda)$ can be obtained from cutting out a certain part of $\mathcal{B}(\infty)$.

 \section{Affine Mirkovi\'c-Vilonen polytopes}\label{bigAMVsec}
\subsection{Convex orders and polytopes}\label{cvxordersec}
We work with two total orders on the indivisible roots \(\Psi\) which are restrictions of the two possible convex preorders on \(\Phi_+\):
 \begin{align}\label{favorder1}
\alpha_{1:1} \hspace{1mm}\succa \hspace{1mm}\alpha_{1:2} \hspace{1mm}\succa \hspace{1mm}\alpha_{1:3} \hspace{1mm}\succa \hspace{1mm}\cdots \hspace{1mm}\succa\hspace{1mm} \delta \hspace{1mm}\succa \hspace{1mm}\cdots\hspace{1mm} \succa \hspace{1mm}\alpha_{0:3} \hspace{1mm}\succa \hspace{1mm}\alpha_{0:2} \hspace{1mm}\succa \hspace{1mm}\alpha_{0:1},
 \end{align}
 and
  \begin{align}\label{favorder2}
\alpha_{0:1} \hspace{1mm} \succb \hspace{1mm} \alpha_{0:2}\hspace{1mm} \succb \hspace{1mm}\alpha_{0:3} \hspace{1mm}\succb \hspace{1mm}\cdots\hspace{1mm} \succb \hspace{1mm}\delta\hspace{1mm} \succb\hspace{1mm} \cdots\hspace{1mm} \succb \hspace{1mm}\alpha_{1:3} \hspace{1mm}\succb \hspace{1mm}\alpha_{1:2} \hspace{1mm}\succb\hspace{1mm} \alpha_{1:1}.
 \end{align}
We may associate a root partition \(\pi \in \Pi(\theta)\) with finite sets of points in the root lattice \(\ZZ_{\geq 0}I\), letting \(S(\pi, \succatext)\) (resp. \(S(\pi, \succbtext)\)) denote the set of partial sums of (\ref{rpar1}) taken in \(\succatext\)-decreasing (resp. \(\succbtext\)-decreasing) order with respect to (\ref{favorder1},\ref{favorder2}). Then, for \(\pi, \phi \in \Pi(\theta)\) we may visualize the ordered pair \((\pi | \phi)\) as the (marked) convex hull of the points \(S(\pi, \succatext) \sqcup S(\phi, \succbtext)\), see for instance Example~\ref{polytopefirstex}.
For this reason we abuse terminology somewhat and refer to ordered pairs of root partitions \((\pi | \phi)\) as `polytopes'. While we will make use of the geometric visualization of \((\pi | \phi)\) for aid of intuition occasionally, we will work with polytopes in this paper from a primarily combinatorial and tableau-theoretic standpoint.

\begin{figure}[h]
\begin{align*}
\hackcenter{}
\hackcenter{
\begin{overpic}[height=70mm]{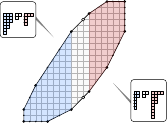}
 \put(22,-2){\makebox(0,0)[]{$\scriptstyle 4 \alpha_{1:1}$}}    
  \put(39.5,2){\makebox(0,0)[]{$\scriptstyle 2 \alpha_{1:2}$}}    
    \put(48.5,10.5){\makebox(0,0)[]{$\scriptstyle 2 \delta$}}    
    \put(53.2,15.5){\makebox(0,0)[]{$\scriptstyle  \delta$}}    
      \put(62.3,24.7){\makebox(0,0)[]{$\scriptstyle  \alpha_{0:4}$}}    
         \put(74,43){\makebox(0,0)[]{$\scriptstyle 3 \alpha_{0:2}$}}    
            \put(80,63.5){\makebox(0,0)[]{$\scriptstyle 5 \alpha_{0:1}$}}    
     \put(9.5,5){\makebox(0,0)[]{$\scriptstyle 2 \alpha_{0:1}$}}          
       \put(13,16.5){\makebox(0,0)[]{$\scriptstyle 2 \alpha_{0:2}$}}       
         \put(28,40){\makebox(0,0)[]{$\scriptstyle 3 \alpha_{0:3}$}}     
            \put(44,60){\makebox(0,0)[]{$\scriptstyle 2 \delta$}}    
    \put(50,65.5){\makebox(0,0)[]{$\scriptstyle  \delta$}}   
     \put(57,71.5){\makebox(0,0)[]{$\scriptstyle \alpha_{1:3}$}}     
       \put(69.5,75.5){\makebox(0,0)[]{$\scriptstyle 3\alpha_{1:1}$}}     
        \put(12.5,-1){\makebox(0,0)[]{$\scriptstyle \color{gray}0$}}   
         \put(77,74.5){\makebox(0,0)[]{$\scriptstyle \color{gray}\theta$}}   
              \put(82,20){\makebox(0,0)[b]{$\scriptstyle \pi^1$}}   
               \put(87,20){\makebox(0,0)[b]{$\scriptstyle \pi^\delta$}}   
                \put(92.5,20){\makebox(0,0)[b]{$\scriptstyle \pi^0$}}   
                  \put(3.5,66.5){\makebox(0,0)[b]{$\scriptstyle \phi^0$}}   
                    \put(10.5,66.5){\makebox(0,0)[b]{$\scriptstyle \phi^\delta$}}   
                      \put(16,66.5){\makebox(0,0)[b]{$\scriptstyle \phi^1$}}   
\end{overpic}
}
\end{align*}
\caption{The polytope \((\pi |\phi) \in \Pi(\theta)\), for \(\theta = 17 \alpha_1 + 20 \alpha_0\), as discussed in Example~\ref{polytopefirstex}.}
\label{polytopeex5fig}
\end{figure}

\begin{example}\label{polytopefirstex}
Let \(\theta = 17 \alpha_1 + 20 \alpha_0\), and consider the root partitions \(\pi, \phi \in \Pi(\theta)\) shown in Figure~\ref{polytopeex5fig}. Since
\begin{align*}
n_1(\pi^1) = 4;\quad
n_2(\pi^1) = 2;\quad
|\pi^\delta| = 3;\quad
n_4(\pi^0) = 1; \quad
n_2(\pi^0) = 3; \quad
n_1(\pi^0) = 5,
\end{align*}
with \(n_k(\pi^i) = 0\) for all other \(k \in \Z_{>0}, i \in \Z_2\), we have that \(S(\pi, \succatext) \) consists of the points
\begin{align*}
0 \xrightarrow{+ 4 \alpha_{1:1}}4 \alpha_1 \xrightarrow{+ 2 \alpha_{1:2}}8 \alpha_1 \hspace{-0.6mm}+\hspace{-0.6mm} 2 \alpha_0\xrightarrow{+ 3 \delta}11 \alpha_1 \hspace{-0.6mm}+\hspace{-0.6mm} 5 \alpha_0\xrightarrow{+ \alpha_{0:4}}14 \alpha_1 \hspace{-0.6mm}+ \hspace{-0.6mm}9 \alpha_0\xrightarrow{+3 \alpha_{0:2}} 17 \alpha_1\hspace{-0.6mm} +\hspace{-0.6mm} 15 \alpha_0 \xrightarrow{+ 5\alpha_{0:1}}17 \alpha_1\hspace{-0.6mm} + \hspace{-0.6mm}20 \alpha_0.
\end{align*}
By convention we will represent the simple roots \(\alpha_1, \alpha_0\) 
by unit vectors in the \(x\)- and \(y\)-directions respectively, so \(S(\pi, \succatext)\) makes up the vertices along the southeast boundary of the geometric visualization of the polytope \((\pi | \phi)\) shown in Figure~\ref{polytopeex5fig}. We use white dots to subdivide the edge parallel to \(\delta\) into segments \(\pi_1^\delta \delta, \pi_2^\delta \delta, \ldots\), so that the root partition \(\pi\) can be completely recovered from the shape together with these markings. 
The points \(S(\phi, \succbtext)\) are similarly computed and make up the northwest boundary of the polytope. \end{example}
 
 \subsection{Affine Mirkovi\'c-Vilonen polytopes}
Following \cite{aMVrank2}, we will only be interested in those polytopes which satisfy some rather delicate tropical-flavored conditions.
For \(\pi, \phi \in \Pi(\beta)\) and \(m \in \Z_{>0}\) we define integers
\begin{align*}
&c_m^{\uparrow}(\pi | \phi) = \sum_{k=1}^m k(n_{k+1}(\phi^0) - n_k(\pi^1));
&c_m^{\rightarrow}(\pi | \phi) = \sum_{k=1}^m k(n_{k+1}(\pi^1) - n_k(\phi^0));\\
&c_m^{\downarrow}(\pi | \phi) = \sum_{k=1}^m k(n_{k+1}(\pi^0) - n_k(\phi^1));
&c_m^{\leftarrow}(\pi | \phi) = \sum_{k=1}^m k(n_{k+1}(\phi^1) - n_k(\pi^0)).
\end{align*}

\begin{definition}\label{defaMV}
Let \(\pi, \phi \in \Pi(\theta)\). 
Then \((\pi | \phi)\) is an {\em affine Mirkovi\'c-Vilonen (MV)} polytope (of content \(\theta\)) provided that both of the following conditions are satisfied:
\begin{enumerate}
\item For every \(m \in \Z_{>0}\), we have
\begin{align*}
0 \in \{c_m^{\uparrow}(\pi | \phi), c_m^{\rightarrow}(\pi | \phi)\} \subseteq \Z_{\leq 0}
\quad
\textup{and}
\quad
0 \in \{c_m^{\downarrow}(\pi | \phi), c_m^{\leftarrow}(\pi | \phi)\} \subseteq \Z_{\leq 0}
\end{align*}
\item We have \(\pi_1^\delta \leq p(\pi^1) + p(\phi^0) = p(\pi^0) + p(\phi^1)\) and
\begin{align*}
\phi^\delta =
\begin{cases}
\pi^\delta & \textup{if } |\pi^\delta| = |\phi^\delta|;\\
(\pi^\delta_2, \pi^\delta_3, \ldots, \pi^\delta_{p(\pi^\delta)}) & \textup{if } |\pi^\delta| > |\phi^\delta|;\\
(p(\pi^1) + p(\phi^0), \pi^\delta_1, \ldots, \pi^\delta_{p(\pi^\delta)}) & \textup{if } |\pi^\delta| < |\phi^\delta|.
\end{cases}
\end{align*}
\end{enumerate}
\end{definition}

The above definition is a rephrasing of \cite[Definition~3.4]{aMVrank2} adapted to our notational choices---to translate between our polytope \((\pi | \phi)\) notation and the Lusztig data \((a_t, \lambda, a^t, \bar a_t, \bar \lambda, \bar a^t)\) of \cite{aMVrank2}, one takes:
\begin{align*}
n_t(\pi^1) = a_t; \quad  \pi^\delta = \lambda; \quad n_t(\pi^0) = a^t; \quad \quad n_t(\phi^0) = \bar a_t;\quad \phi^\delta = \bar \lambda; \quad n_t(\phi^1) = \bar a^t.
\end{align*}
We write \(\aMV(\theta)\) for the set of affine MV polytopes of content \(\theta\):
\begin{align*}
\aMV(\theta) = \{ (\pi | \phi) \mid \pi, \phi \in \Pi(\theta) \textup{ and } (\pi | \phi) \textup{ is an affine MV polytope}\},
\end{align*}
and set \(\aMV = \bigsqcup_{\theta \in \Z_{\geq 0}I} \aMV(\theta)\).

\begin{example}\label{polyex67}
Consider the polytope \((\pi | \phi)\) as in Figure~1. We have
\begin{align*}
\begin{array}{lllllllll}
n_1(\pi^1) = 4 && n_2(\pi^1) = 2 && n_3(\pi^1) = 0 && n_m(\pi^1) = 0\\
n_1(\phi^0) = 2 && n_2(\phi^0) = 2 && n_3(\phi^0) = 3  && n_m(\phi^0) = 0
\end{array}
\end{align*}
for \(m > 3\), and thus
\begin{align*}
\begin{array}{lllllll}
c_1^{\uparrow}(\pi | \phi) = -2 & &c_2^{\uparrow}(\pi | \phi) = 0  && c_m^{\uparrow}(\pi | \phi) = 0\\
c_1^{\rightarrow}(\pi | \phi) = 0 & &c_2^{\rightarrow}(\pi | \phi) = -4  && c_m^{\rightarrow}(\pi | \phi) = -13
\end{array}
\end{align*}
for all \(m \geq 3\), and so \(0 \in \{c_m^{\uparrow}(\pi | \phi), c_m^{\rightarrow}(\pi | \phi)\} \subseteq \Z_{\leq 0}\). A similar check shows that \(0 \in \{c_m^{\downarrow}(\pi | \phi), c_m^{\leftarrow}(\pi | \phi)\} \subseteq \Z_{\leq 0}\), and so Definition~\ref{defaMV}(1) is satisfied. Since \(\pi^\delta_1 = 2 < 13 = p(\pi^1) + p(\phi^0)\) and \(\pi^\delta = \phi^\delta\), we have that Definition~\ref{defaMV}(2) is satisfied as well, and therefore \((\pi | \phi)\) is an affine MV polytope.
\end{example}

\begin{theorem}\label{exunloz} \cite[Theorem~3.11]{aMVrank2}
Let \(\pi \in \Pi(\theta)\). Then there exists a unique \(\phi \in \Pi(\theta)\) such that \((\pi | \phi)\) is an affine Mirkovi\'c-Vilonen polytope.
\end{theorem}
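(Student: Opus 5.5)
The plan is to prove existence and uniqueness directly from Definition~\ref{defaMV}. The key observation is that its conditions decouple into three independent pieces. Condition (1) splits into the pair $(\uparrow,\rightarrow)$, which couples only $\pi^1$ with $\phi^0$, and the pair $(\downarrow,\leftarrow)$, which couples only $\pi^0$ with $\phi^1$. Condition (2) then pins down $\phi^\delta$ from $\pi^\delta$ and the part counts. So, given $\pi$, I will first determine $\phi^0$ from $\pi^1$ alone, then $\phi^1$ from $\pi^0$ alone, and finally $\phi^\delta$. The content constraint $\phi\in\Pi(\theta)$ then has to be checked, and this is where the $\delta$-part absorbs any discrepancy.

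\textbf{Step 1: determining $\phi^0$.} Write $a_k=n_k(\pi^1)$ and $b_k=n_k(\phi^0)$. The two sums are
\begin{align*}
c_m^{\uparrow}=\sum_{k=1}^m k(b_{k+1}-a_k),\qquad c_m^{\rightarrow}=\sum_{k=1}^m k(a_{k+1}-b_k).
\end{align*}
Set $U_m=\sum_{k\le m}k b_{k+1}$ and $A_m=\sum_{k\le m}k a_k$. The condition $0\in\{c_m^\uparrow,c_m^\rightarrow\}\subseteq\Z_{\le0}$ says that $U_m\le A_m$, that $B_m:=\sum_{k\le m}kb_k\ge\sum_{k\le m}ka_{k+1}$, and that one of these is an equality. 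Both $c_m^\uparrow$ and $c_m^\rightarrow$ involve $b_1,\dots,b_{m+1}$. I expect this to be the obstacle: the unknowns enter with a shift, so a naive induction on $m$ overdetermines. The plan is to rewrite it in tropical form. Summation by parts converts each condition into a statement about the minimum of two linear expressions, giving the recursion
\begin{align*}
c_m^\uparrow-c_{m-1}^\uparrow=m(b_{m+1}-a_m),\qquad c_m^\rightarrow-c_{m-1}^\rightarrow=m(a_{m+1}-b_m).
\end{align*}
Together with $\max(c_m^\uparrow,c_m^\rightarrow)=0$, I would then show by induction on $m$ that $b_{m+1}$ is uniquely forced by $a_1,\dots,a_{m+1}$ and $b_1,\dots,b_m$. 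Explicitly, if $c_{m}^\uparrow=0$ one solves for $b_{m+1}$ making the maximum zero at stage $m+1$, and nonnegativity and integrality follow from divisibility by $m$ together with the inequality already recorded at stage $m$. The initial value $b_1$ is forced by the limit $m\to\infty$: for large $m$ we have $a_k=b_k=0$, and $c_m^\rightarrow=-B_\infty+\sum ka_{k+1}$ together with $c_m^\uparrow$ must have maximum $0$, which solves for $b_1$. In practice I would run the induction from the top, starting at the largest $k$ with $a_k\neq0$ and descending, since finiteness gives a clean base there.

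\textbf{Steps 2 and 3: $\phi^1$ and $\phi^\delta$.} The same argument, with $\pi^0$ in place of $\pi^1$, gives a unique $\phi^1$. Only now is $\phi^\delta$ chosen via condition (2). The three cases are distinguished by comparing $|\phi^\delta|$ with $|\pi^\delta|$, and $|\phi^\delta|$ is forced by content: $\theta$ minus the real-root contributions of $\phi^0,\phi^1$ must be a multiple of $\delta$. Two things remain to verify. First, this remainder really is a multiple of $\delta$ and differs from $|\pi^\delta|$ by exactly $0$, $-\pi^\delta_1$, or $+(p(\pi^1)+p(\phi^0))$. Second, $p(\pi^1)+p(\phi^0)=p(\pi^0)+p(\phi^1)$, together with the bound on $\pi^\delta_1$. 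I would obtain these from the telescoped identities at $m\to\infty$: comparing the $\alpha_1$- and $\alpha_0$-coordinates of the real-root contributions expresses the content difference in terms of $\sum_k a_k-\sum_k b_k$, which is a difference of part counts. Uniqueness of $\phi^\delta$ is then immediate from the case formula.

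The main obstacle is this final compatibility. It has to reconcile the two independently determined real-root parts with the imaginary part through condition (2), including the inequality $\pi^\delta_1\le p(\pi^1)+p(\phi^0)$. If the direct computation stalls, I would fall back on citing the geometric construction of \cite{aMVrank2}, namely MV polytopes as images of $\mathcal{B}(\infty)$ via PBW/Lusztig data for both convex orders \eqref{favorder1}, \eqref{favorder2}. In that picture existence and uniqueness amount to bijectivity of both Lusztig parametrizations, which is \cite[Theorem~3.11]{aMVrank2} itself.
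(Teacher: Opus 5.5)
The paper gives no proof of this statement; it is imported from \cite[Theorem~3.11]{aMVrank2}. Your fallback of citing that theorem therefore matches the paper's treatment, though it is not an argument. Your direct route has a genuine gap at its founding claim, that condition (1) of Definition~\ref{defaMV} determines $\phi^0$ from $\pi^1$ alone.

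Take $\pi^1=\varnothing$. Then $c^{\uparrow}_m=\sum_{k\le m}k\,n_{k+1}(\phi^0)\ge 0$ forces $n_k(\phi^0)=0$ for $k\ge 2$, after which $c^\uparrow_m=0$ for all $m$. So the $(\uparrow,\rightarrow)$ conditions hold for \emph{every} $\phi^0=(1^b)$ with $b\ge 0$. Your claim that $b_1$ is forced as $m\to\infty$ fails because the maximum is already attained by $c^\uparrow_m$. In general, Lemma~\ref{mvbiparsdesc} shows the compatible $\phi^0$ are exactly $\ORTH^W(\pi^1)$, one for each $W\ge{\tt w}(\pi^1)$, where $W=p(\pi^1)+p(\phi^0)$.

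The forward recursion also does not behave as you claim. When $c^{\rightarrow}_m=0$ you only get an upper bound on $b_{m+1}$, and some choices dead-end. For $\pi^1=(1^2)$, the choice $b_1=0$, $b_2=1$ passes stage $1$, but it gives $c^\uparrow_1=-1$ and $c^\rightarrow_2=-2$, so stage $2$ requires $2b_3=1$. Integrality is not automatic, and existence of a compatible $\phi^0$ for a given $W$ is genuinely nontrivial; in the paper it rests on the binary-tree bijection of Theorem~\ref{DGthm}.

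The missing idea is that $\phi^0$ and $\phi^1$ are coupled through the common value $W=p(\pi^1)+p(\phi^0)=p(\pi^0)+p(\phi^1)$. This $W$ is pinned down only jointly with $\pi^\delta$ and the content, so your Steps 1--3 cannot be run in sequence. A correct self-contained proof can be assembled from the paper's later lemmas, none of which depend on this statement.

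\emph{Existence.} Put $W=\max\{{\tt w}(\pi^0),\pi^\delta_1,{\tt w}(\pi^1)\}$, $\phi^0=\ORTH^W(\pi^1)$ and $\phi^1=\ORTH^W(\pi^0)$.
\begin{itemize}
\item Lemma~\ref{mvbiparsdesc} gives condition (1) and $p(\pi^1)+p(\phi^0)=p(\pi^0)+p(\phi^1)=W$.
\item Lemma~\ref{mutallest} shows $\pi^1_1<\phi^0_1$ exactly when ${\tt w}(\pi^1)<W$, and likewise for $\pi^0,\phi^1$.
\item Lemma~\ref{MVlongertop}(1),(2) and their symmetric versions use only condition (1). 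They show the real parts of $\phi$ exceed those of $\pi$ by $-W\delta$, $W\delta$, or $0$ in the three respective cases of Theorem~\ref{rpartranslate}.
\item Defining $\phi^\delta$ by that case formula then yields $\phi\in\Pi(\theta)$ and condition (2).
\end{itemize}

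\emph{Uniqueness.} Any partner has $\phi^i=\ORTH^W(\pi^{\hat i})$ for its own $W$, and $W\ge\max\{{\tt w}(\pi^0),\pi^\delta_1,{\tt w}(\pi^1)\}$. Strict inequality would give $\pi^1_1<\phi^0_1$ and $\pi^0_1<\phi^1_1$, hence $\pi^\delta_1=W$ by Lemma~\ref{MVlongertop}(3), a contradiction. Once $W$ is fixed, $\phi^\delta$ is forced by the content and condition (2).
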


Given \(\pi \in \Pi(\theta)\), we will write \(\pi_{\lozenge}\) for the unique root partition of \(\theta\) such that \((\pi | \pi_\lozenge)\) is an affine MV polytope. It follows from the symmetricity of Definition~\ref{defaMV} then that \((\pi_\lozenge | \pi)\) is an affine MV polytope as well, so the function \(\lozenge: \pi \mapsto \pi_\lozenge\) is an involution on \(\Pi(\theta)\).

 \subsection{The affine Mirkovi\'c-Vilonen polytope crystal}\label{aMVcrysdef} 
 
 Let \(\pi \in \Pi(\theta)\) and \(i \in \Z_2\). We let \(\pi_{+i:k} \in \Pi(\theta + \alpha_{i:k})\) be defined by inserting a row of length \(k\) into \(\pi^i\). If \(n_k(\pi^i) >0\), we let \(\pi_{-i:k} \in \Pi(\theta - \alpha_{i:k})\) be defined by deleting a row of length \(k\) from \(\pi^i\). 
For \(i \in \Z_2\), \(k \in \Z_{>0}\) we define operators 
\begin{align*}
 f_{i:k},  f_{i:k}^* \colon \aMV(\theta) \to \aMV(\theta + \alpha_{i:k}); 
 \qquad
e_{i:k},  e_{i:k}^* \colon \aMV(\theta) \to \aMV(\theta - \alpha_{i:k}) \sqcup \{0\}
 \end{align*}
 as follows. 
\begin{enumerate}
 \item The action of the operators \( f_{i:k},  f_{i:k}^*\) is given by setting
  \begin{align*}
 \begin{array}{llllll}
  (\pi | \phi) \xrightarrow{ f_{1:k}} ((\phi_{+1:k})_{\lozenge} \,|\, \phi_{+1:k});
 &&
 (\pi | \phi) \xrightarrow{ f_{1:k}^*} (\pi_{+1:k}\,|\, (\pi_{+1:k})_{\lozenge});\\
  (\pi | \phi) \xrightarrow{ f_{0:k}} (\pi_{+0:k} \,|\, (\pi_{+0:k} )_{\lozenge});
 &&
 (\pi | \phi) \xrightarrow{ f^*_{0:k}} ((\phi_{+0:k})_{\lozenge} \,|\, \phi_{+0:k}).
 \end{array}
 \end{align*}
 \item The action of the operators \( e_{i:k},  e_{i:k}^*\) is given by setting
 \begin{align*}
 \begin{array}{llllll}
  (\pi | \phi) \xrightarrow{ e_{1:k}} ((\phi_{-1:k})_{\lozenge} \,|\, \phi_{-1:k});
 &&
 (\pi | \phi) \xrightarrow{ e_{1:k}^*} (\pi_{-1:k}\,|\, (\pi_{-1:k})_{\lozenge});\\
  (\pi | \phi) \xrightarrow{ e_{0:k}} (\pi_{-0:k} \,|\, (\pi_{-0:k} )_{\lozenge});
 &&
 (\pi | \phi) \xrightarrow{ e^*_{0:k}} ((\phi_{-0:k})_{\lozenge} \,|\, \phi_{-0:k}).
 \end{array}
 \end{align*}
if \(n_k(\phi^1), n_k(\pi^1), n_k(\pi^0), n_k(\phi^0)\) are nonzero, respectively. Otherwise we set
 \((\pi | \phi) \to 0\).
 \end{enumerate}
 We abbreviate \(e_i = e_{i:1}\), \(f_i = f_{i:1}\), \(e_i^* = e^*_{i:1}\), \(f_i^* = f^*_{i:1}\).

 Following \cite{aMVrank2} we consider \(\aMV\) as an \({\tt A}_1^{(1)}\)-bicrystal with operators \(e_i, f_i, e_i^*, f_i^*\) as defined above, and statistics defined by \(\textup{wt}(\pi | \phi) = -\theta\), and
 \begin{align}\label{epsvaluesamv}
 \varepsilon_1(\pi | \phi) = n_1(\phi^1);
 \quad
 \varepsilon_0(\pi | \phi) = n_1(\pi^0);
 \quad
 \varepsilon_1^*(\pi | \phi) = n_1(\pi^1);
 \quad
 \varepsilon_0^*(\pi | \phi) = n_1(\phi^0),
 \end{align}
 for all \((\pi | \phi) \in \aMV(\theta)\). The statistics \(\varphi_i, \varphi_i^*\) are implicitly defined by Definition~\ref{crystaldef}(1).

We include the starred operators for completeness; we will work primarily with \(\aMV\) as a highest weight crystal (rather than bicrystal) with highest weight element \((\varnothing | \varnothing) \in \aMV(0)\) using the unstarred operators. 
The initial layers of the \(\aMV\) crystal are shown in Figure~\ref{aMVcrystal11fig}. The key result on affine MV polytopes is the following

\begin{figure}[h]
\begin{align*}
\hackcenter{}
\hackcenter{
\begin{overpic}[height=80mm]{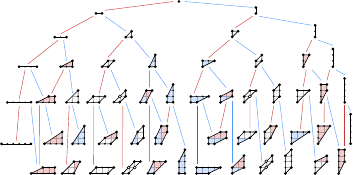}
\end{overpic}
}
\end{align*}
\caption{Initial layers of the \(\aMV\) crystal, with elements displayed as marked polytopes (see Example~\ref{polyex67} for an explanation of the notation). The operators \(e_1/f_1\) appear in red as upward/downward segments respectively. The operators \(e_0/f_0\) appear in blue.}
\label{aMVcrystal11fig}
\end{figure}

\begin{theorem}\cite[Theorem~4.5]{aMVrank2}\label{PolyBinf}
The  crystal \(\aMV\) is isomorphic to the crystal \(\mathcal{B}(\infty)\).
\end{theorem}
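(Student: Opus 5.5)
This statement is \cite[Theorem~4.5]{aMVrank2}, and in the paper it is used as a black box. What follows is how I would reprove it independently; the route is a standard \emph{characterization of $\mathcal{B}(\infty)$ by bicrystal data}, namely the Kashiwara--Saito / Tingley--Webster criterion. That criterion says a bicrystal $B$ with highest weight element $b^+$ is isomorphic to $\mathcal{B}(\infty)$ provided:
\begin{enumerate}
\item $b^+$ is the unique element with $\varepsilon_i(b)=0$ for all $i$, and likewise for $\varepsilon_i^*$;
\item $e_i$ and $e_j^*$ commute whenever $i\neq j$;
\item for each $i$, the elements $b$ with $\varepsilon_i(b)+\varepsilon_i^*(b)-\langle \textup{wt}(b),\alpha_i^\vee\rangle$ equal to $0$, and those where it is nonzero, obey the usual rule. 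Namely, if the quantity is positive then $f_i b=f_i^*b$ and $e_i,e_i^*$ agree appropriately; if it is $\geq 0$ then $\varepsilon_i(f_i^*b)=\varepsilon_i(b)$.
\end{enumerate}
So the plan is to verify these axioms for $\aMV$ with the operators of \S\ref{aMVcrysdef}.

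\textbf{Step 1: normalization and uniqueness.} I first check that $\aMV$ is a bicrystal in the sense of Definition~\ref{crystaldef}. Axioms (3)--(5) follow from the definitions: $e_{i}$ inverts $f_{i}$ because $\lozenge$ is an involution (Theorem~\ref{exunloz}), and $\varepsilon_i$ counts the rows of length $1$ in the relevant boundary partition. Axiom (6) follows by induction on $\height(\theta)$, once I show that every nonzero $(\pi|\phi)$ has $n_1(\phi^1)>0$ or $n_1(\pi^0)>0$. For this I use Definition~\ref{defaMV}(1) with $m$ large. If $n_1(\phi^1)=n_1(\pi^0)=0$, the tropical conditions on $c^\downarrow_m,c^\leftarrow_m$ propagate, forcing every $n_k$ to vanish, and condition (2) then kills $\pi^\delta$. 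The same argument gives uniqueness of $b^+$ for both structures.

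\textbf{Step 2: the main obstacle, commutation of $e_1$ with $e_0^*$.} The operator $e_1$ modifies $\phi$ by deleting a $1$-row from $\phi^1$ and then completes via $\lozenge$. The operator $e_0^*$ also modifies $\phi$, by deleting a row from $\phi^0$, and then completes. The point to prove is that deleting a length-$1$ row of $\phi^1$ and deleting one of $\phi^0$ commute with $\lozenge$-completion. I would establish a local description of $\lozenge$ under these small changes: removing an $\alpha_{1:1}$ edge at the top of the northwest boundary shifts the southeast boundary in a controlled way, adjusting at most one row of $\pi^1$ and $\pi^\delta$ via condition (2). The corresponding statement for $\alpha_{0:1}$ changes an independent corner, and the two commute. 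This is where the delicate $\max/\min$ bookkeeping of $c_m$ lives. I would organize it by case analysis on which of the two quantities in the set $\{c_m^\uparrow,c_m^\rightarrow\}$ vanishes for each $m$.

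\textbf{Step 3: the $\varepsilon+\varepsilon^*$ condition.} By (\ref{epsvaluesamv}), the quantity $\varepsilon_1+\varepsilon_1^*-\langle\textup{wt},\alpha_1^\vee\rangle$ equals $n_1(\phi^1)+n_1(\pi^1)+2a_1-2a_0$. Using (\ref{rpar1}), I would rewrite this in terms of the $c_1$ statistics. I expect that its vanishing corresponds exactly to the case $c_1^\rightarrow\ne 0$ in condition (1). In that case adding $\alpha_{1:1}$ at either end produces the same polytope, so $f_1=f_1^*$. In the remaining case the added edge does not interact with the opposite corner, which gives invariance of $\varepsilon_1$ under $f_1^*$. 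The $i=0$ case is symmetric under swapping the two convex orders (\ref{favorder1}) and (\ref{favorder2}).

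Once these axioms hold, the characterization theorem yields $\aMV\cong\mathcal{B}(\infty)$. I expect the hardest part to be Step 2, specifically controlling how $\pi^\delta$ changes under completion.
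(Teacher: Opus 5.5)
The paper gives no proof of this statement; it imports it from \cite{aMVrank2}, so your reproof has to stand on its own. Verifying a Kashiwara--Saito-type characterization of $\mathcal{B}(\infty)$ is a reasonable strategy. Your Step~1 is also fine: you get uniqueness of $(\varnothing|\varnothing)$ by propagating the $c_m^{\downarrow},c_m^{\leftarrow}$ (resp.\ $c_m^{\uparrow},c_m^{\rightarrow}$) conditions and then applying Definition~\ref{defaMV}(2).

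The criterion you set out to verify, however, is misstated. With $\textup{wt}(\pi|\phi)=-\theta$, the relevant quantity is $Q_i(b)=\varepsilon_i(b)+\varepsilon_i^*(b)+\langle\textup{wt}(b),\alpha_i^\vee\rangle$. For $i=1$ and $\theta=a_1\alpha_1+a_0\alpha_0$ this is $n_1(\phi^1)+n_1(\pi^1)-2a_1+2a_0$. It must vanish on the segment $f_1(\varnothing|\varnothing)$, where your expression gives $4$. The required conditions are:
\begin{itemize}
\item $Q_i\geq 0$ always;
\item $Q_i(b)=0$ implies $f_ib=f_i^*b$;
\item $Q_i(b)\geq 1$ implies $\varepsilon_i(f_i^*b)=\varepsilon_i(b)$ and $\varepsilon_i^*(f_ib)=\varepsilon_i^*(b)$;
\item $Q_i(b)\geq 2$ implies $f_if_i^*b=f_i^*f_ib$.
\end{itemize}
Your item~3 swaps the zero and positive cases and drops the first and last conditions. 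As written it is unsatisfiable. For $Q_i(b)>0$ it forces both $f_ib=f_i^*b$ and $\varepsilon_i(f_i^*b)=\varepsilon_i(b)$, hence $\varepsilon_i(f_ib)=\varepsilon_i(b)$, contradicting Definition~\ref{crystaldef}(2),(3). Concretely, it already fails in $\aMV$ at the $\alpha_1$-segment.

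The difficulty is also misplaced. Your Step~2 is a tautology in this model. The operators $e_1$ and $e_0^*$ both read and modify only $\phi$. So $e_1e_0^*(\pi|\phi)$ and $e_0^*e_1(\pi|\phi)$ both equal $(\psi_\lozenge|\psi)$, where $\psi$ is obtained from $\phi$ by deleting a length-$1$ row from each of $\phi^1$ and $\phi^0$, and both vanish under the same condition. Likewise $e_0$ and $e_1^*$ both act only on $\pi$. No information about $\lozenge$ is needed.

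The real content lies in the same-index conditions, because $f_i$ and $f_i^*$ act on \emph{opposite} boundaries. For instance, when $Q_1\geq 1$ you must show $n_1\bigl(((\pi_{+1:1})_\lozenge)^1\bigr)=n_1(\phi^1)$. That means controlling how $\lozenge$ reacts, at the corner $\theta$ of the northwest boundary, to inserting an $\alpha_{1:1}$ edge at the corner $0$ of the southeast boundary.

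Step~3 offers only an expectation here, and it is false already at the highest weight element. We have $Q_1(\varnothing|\varnothing)=0$ (and indeed $f_1=f_1^*$ there), while $c_1^{\rightarrow}(\varnothing|\varnothing)=0$. The same happens at the $\alpha_1$-segment. This is to be expected: $c_1^{\rightarrow}$ involves only $\pi^1$ and $\phi^0$, whereas $Q_1$ also involves $n_1(\phi^1)$.

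To close the gap you need an actual description of this corner-to-corner effect of $\lozenge$, for example from the explicit formula in Theorem~\ref{rpartranslate}. You then need to verify all four same-index conditions, including $Q_i\geq 0$ and the $Q_i\geq 2$ commutation.
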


\begin{remark}\label{starinvpoly} Under the isomorphism of Theorem~\ref{PolyBinf}, Kashiwara's \(*\)-involution on \(\mathcal{B}(\infty)\) corresponds to negation at the level of the geometric polytope, and to the exchange \((\pi |\phi) \mapsto (\phi | \pi) = (\pi_\lozenge | \phi_\lozenge)\) in our combinatorial notation.
\end{remark}

\subsection{The highest weight crystal \(\aMV^\Lambda\)}
Let \(t_1, t_0 \in \Z_{\geq 0}\), so \(\Lambda = t_1 \Lambda_1 + t_0 \Lambda_0\) is a dominant integral weight. Let \(\aMV^\Lambda \subset \aMV\) be the subset of polytopes \((\pi | \phi)\) such that \(\varepsilon_i^*(\pi | \phi) \leq t_i\) for \(i \in \Z_2\). Then \((\aMV^\Lambda, e_i^\Lambda, f_i^\Lambda, \textup{wt}^\Lambda,  \varepsilon_i^\Lambda, \varphi_i^\Lambda)\) is a highest-weight combinatorial \({\tt A}_1^{(1)}\)-crystal, where  \(e_i^\Lambda, f_i^\Lambda\) are the operators inherited from \(\aMV\) by setting all \((\pi | \phi) \notin \aMV^\Lambda\) equal to \(0\), and we have \(\textup{wt}^\Lambda = \Lambda +  \textup{wt}\), \(\varepsilon_i^\Lambda = \varepsilon_i\),  \(\varphi_i^\Lambda = \varphi_i + t_i\).
The following is \cite[Corollary~4.6]{aMVrank2}.
\begin{theorem}\label{polycycthm}
The  crystal \(\aMV^\Lambda\) is isomorphic to the highest weight crystal \(\mathcal{B}(\Lambda)\).
\end{theorem}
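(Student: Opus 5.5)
This theorem is cited in the paper as \cite[Corollary~4.6]{aMVrank2}, so it can simply be quoted. Still, here is how I would derive it from Theorem~\ref{PolyBinf} via the standard description of \(\mathcal{B}(\Lambda)\) inside \(\mathcal{B}(\infty)\).

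The relevant known fact is due to Kashiwara (and Joseph). Consider the embedding \(\mathcal{B}(\Lambda)\otimes T_\Lambda \hookrightarrow \mathcal{B}(\infty)\). Its image is
\[
\{\, b \in \mathcal{B}(\infty) \mid \varepsilon_i^*(b) \leq \langle \Lambda, \alpha_i^\vee\rangle \text{ for all } i \,\}.
\]
The crystal operators on \(\mathcal{B}(\Lambda)\) are the restrictions of \(e_i, f_i\), with any \(f_i b\) that leaves the set replaced by \(0\). The weight is shifted by \(\Lambda\), \(\varepsilon_i\) is unchanged, and \(\varphi_i\) is shifted by \(\langle \Lambda,\alpha_i^\vee\rangle = t_i\).

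The plan has three steps.

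\begin{enumerate}
\item Apply Theorem~\ref{PolyBinf} to get an isomorphism \(\Psi\colon \aMV \to \mathcal{B}(\infty)\) of crystals for the unstarred structure.
\item Upgrade \(\Psi\) so that it also intertwines the starred structure, i.e.\ Kashiwara's \(*\)-involution. By Remark~\ref{starinvpoly}, \(*\) corresponds to \((\pi|\phi)\mapsto(\phi|\pi)\). Comparing the definitions, \(e_i^*\) and \(f_i^*\) are exactly the conjugates of \(e_i\) and \(f_i\) by this swap, and \(\varepsilon_i^*(\pi|\phi) = \varepsilon_i(\phi|\pi)\) by \eqref{epsvaluesamv}. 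So \(\Psi\) transports \(\varepsilon_i^*\) to Kashiwara's \(\varepsilon_i^*\).
\item Under \(\Psi\), the subset \(\aMV^\Lambda\) therefore maps onto the image of \(\mathcal{B}(\Lambda)\). The truncated operators and shifted statistics defined for \(\aMV^\Lambda\) match the ones above, so \(\Psi\) restricts to the desired isomorphism \(\aMV^\Lambda \cong \mathcal{B}(\Lambda)\).
\end{enumerate}

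The main obstacle is step 2. Here one must know that the isomorphism of Theorem~\ref{PolyBinf} is compatible with \(*\), which is exactly the content of Remark~\ref{starinvpoly}, itself imported from \cite{aMVrank2}. An isomorphism \(\aMV \to \mathcal{B}(\infty)\) of highest weight crystals is unique, since both crystals are connected with a single highest weight element. So it suffices to know that the bicrystal structure on \(\aMV\) realizes that of \(\mathcal{B}(\infty)\). I would cite \cite{aMVrank2} for this rather than reprove it.
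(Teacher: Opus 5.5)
Your proposal is correct and agrees with the paper, which gives no argument of its own and simply quotes \cite[Corollary~4.6]{aMVrank2}. Your sketch is the standard derivation behind that corollary: transport \(\varepsilon_i^*\) through the \(*\)-compatible isomorphism of Theorem~\ref{PolyBinf} (Remark~\ref{starinvpoly}), then apply Kashiwara's characterization of \(\mathcal{B}(\Lambda)\) as \(\{b\in\mathcal{B}(\infty) \mid \varepsilon_i^*(b)\le t_i\}\). There is one notational slip: the embedding should be \(\mathcal{B}(\Lambda)\hookrightarrow\mathcal{B}(\infty)\otimes T_\Lambda\), equivalently \(\mathcal{B}(\Lambda)\otimes T_{-\Lambda}\hookrightarrow\mathcal{B}(\infty)\), rather than \(\mathcal{B}(\Lambda)\otimes T_\Lambda\hookrightarrow\mathcal{B}(\infty)\), as your own remark that weights shift by \(\Lambda\) indicates.
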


\subsection{Working with the crystal \(\aMV\)}
The main inconvenience in working with the crystal \(\aMV\) (or \(\aMV^\Lambda)\) is the cumbersome definition of the involution \(\lozenge: \pi \mapsto \pi_{\lozenge}\). The existence/uniqueness definition of \(\lozenge\) implied by Theorem~\ref{exunloz} does not yield a very practical way to work with affine MV polytopes, particularly since each application of an operator \(e_i/f_i\) in \S\ref{aMVcrysdef} involves recalculating \(\lozenge\) anew. Currently, the best approach to constructing  \(\pi_{\lozenge}\)  involves iteratively tweaking/mending polytope shapes `one trapezoid at a time' until the myriad \(\aMV\) conditions are satisfied (see \cite[Remark~3.22]{aMVrank2}). We now rectify this issue.

\subsubsection{A combinatorial description of the \(\lozenge\) involution}\label{combloz}
The tableau combinatorics of \S\ref{traversesec}, \ref{opsec} can be used to directly describe the key involution \(\lozenge: \Pi(\theta) \to \Pi(\theta)\). This appears as Theorem~\ref{amvrecogthm}.

\begin{alphatheorem}{B}\label{rpartranslate}
Let \(\pi = \{\pi^1, \pi^\delta, \pi^0\} \in \Pi(\beta)\) be a root partition in type \({\tt A}_1^{(1)}\). Then \(\pi_{\lozenge} = \{\pi_\lozenge^1, \pi_\lozenge^\delta, \pi_\lozenge^0\} \) may be directly constructed as follows. Set \({\tt w}(\pi) = \max\{{\tt w}(\pi^{0}), \pi_1^\delta, {\tt w}(\pi^{1})\}\). Then we have
\begin{align*}
\pi_\lozenge^1 &= \ORTH^{{ \tt w}(\pi)}(\pi^0);\\
\pi_\lozenge^0 &= \ORTH^{{ \tt w}(\pi)}(\pi^1);\\
\pi_{\lozenge}^\delta &= \begin{cases}
({\tt w}(\pi), \pi_1^\delta, \ldots, \pi_{p(\pi^\delta)}^\delta) &\textup{if } {\tt w}(\pi)={\tt w}(\pi^{0}) = {\tt w}(\pi^{1}); \\
(\pi^\delta_2, \ldots, \pi^\delta_{p(\pi^\delta)}) & \textup{if } {\tt w}(\pi) > {\tt w}(\pi^{0}), {\tt w}(\pi^{1});\\
\pi^\delta & \textup{otherwise}.
\end{cases}
\end{align*}
\end{alphatheorem}

\section{Kleshchev multipartitions}\label{bigKleshsec}
\subsection{Multicharges and multipartitions}
A {\em (semi-infinite) multicharge} \(\bkap = (\kappa_1, \kappa_2, \ldots)\) is a sequence of elements of \(\Z_2\) containing infinitely many 0's and infinitely many 1's. A {\em \(\bkap\)-multipartition} \(\blam = (\lambda^{(1)}, \lambda^{(2)}, \ldots)\) is a sequence of partitions \(\lambda^{(t)} \in \Par^{\kappa_t}\), with \(\lambda^{(u)} = \varnothing\) for \(u \gg 0\). 
 If \(\lambda^{(m)} = \varnothing\) for \(m > \ell\) then we say that \(\blam\) is of {\em level \(\ell\)}. 
We write \(|\blam| = \sum_{t=1}^\infty |\lambda^{(t)}|\). The {\em content} of \(\blam\) is given by
\begin{align*}
\textup{cont}(\blam) = \#\{\textup{boxes of residue }0\textup{ in }\blam\}\cdot \alpha_0 +  \#\{\textup{boxes of residue }1\textup{ in }\blam\}\cdot \alpha_1 \in \Z_{\geq 0}I.
\end{align*}
We write \(\MP^{\bkap}(\theta)\) for the set of \(\bkap\)-multipartitions of content \(\theta \in \Z_{\geq0}I\), and set \(\MP^{\bkap} = \bigsqcup_{\theta \in \Z_{\geq 0}I} \MP^{\bkap}(\theta)\).
We depict \(\blam \in \MP^{\bkap}\) as a vertical array of partitions with \(\lambda^{(1)}\) at the bottom and with residues shown within each box; see Figure~\ref{kleshcrysexfig} for examples.
The dominance order \(\trianglerighteq\) on \(\MP^{\bkap}\) is defined by setting \(\bmu \trianglerighteq \blam\) provided
\begin{align*}
\sum^\infty_{s=t+1} |\mu^{(s)}| + \sum_{r=1}^m \mu_r^{(t)}
\geq 
\sum^\infty_{s=t+1} |\lambda^{(s)}| + \sum_{r=1}^m \lambda_r^{(t)}
\qquad
\textup{for all }t \in \Z_{>0}, m \in [1,p(\lambda^{(t)})].
\end{align*}

\begin{remark}
We use semi-infinite multicharges in this paper because our primary focus is the \(\mathcal{B}(\infty)\) crystal; one easily recovers the more familiar {\em finite} multicharge \((\kappa_1, \ldots, \kappa_\ell)\) setting by selecting any semi-infinite extension \(\bkap = (\kappa_1, \ldots, \kappa_\ell, \ldots)\) and restricting to multipartitions of level \(\ell\), see \S\ref{cycKleshcrystal}. Because we choose to index our multipartition components vertically with the first component at the bottom, readers accustomed to indexing beginning from the top component will also need to reorder to match our convention.
\end{remark}

For \(i \in \Z_2\), a box \(u \in \blam\) is said to be {\em \(i\)-removable} in \(\blam\) if \(\kres(u) = i\) and \(\blam \backslash u\) is a \(\bkap\)-multipartition. A box \(u \notin \blam\) is said to be {\em \(i\)-addable} in \(\blam\) if \(\kres(u) = i\) and \(\blam \sqcup u\) is a \(\bkap\)-multipartition. We note that by the conditions on the semi-infinite multicharge \(\bkap\), there will always be infinitely many \(i\)-addable boxes for \(\blam\), and finitely many (perhaps zero) \(i\)-removable boxes.

\begin{figure}[h]
\begin{align*}
\hackcenter{}
\hackcenter{
\begin{overpic}[height=45.5mm]{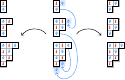}
 \put(27,41.7){\makebox(0,0)[b]{$\scriptstyle e_0$}}    
  \put(71,41.7){\makebox(0,0)[b]{$\scriptstyle f_0$}}    
    \put(42,27){\makebox(0,0)[r]{$\scriptstyle \lambda^{(1)}$}}    
        \put(42,46){\makebox(0,0)[r]{$\scriptstyle \lambda^{(2)}$}}    
            \put(42,60.5){\makebox(0,0)[r]{$\scriptstyle \lambda^{(3)}$}}    
                \put(44.5,65){\makebox(0,0)[b]{$\scriptstyle \blam$}}    
                    \put(2.5,65){\makebox(0,0)[b]{$\scriptstyle e_0\blam$}}    
                        \put(88,65){\makebox(0,0)[b]{$\scriptstyle f_0\blam$}}    
\end{overpic}
}
\end{align*}
\caption{Take the multicharge \(\bkap\) with \(\kappa_1 =\kappa_2 = 0, \kappa_3 = 1\), and \(\theta = 7 \alpha_1 + 8 \alpha_0\). A level 3 multipartition \(\blam \in \MP^{\bkap}(\theta)\) is shown in the center, where \(\lambda^{(1)} = (3,2^2,1)\), \(\lambda^{(2)} = (2^2,1)\), \(\lambda^{(3)} = (1^2)\). The \(0\)-addable boxes in the first three components are highlighted, as is the \(0\)-arc arrangement \(\blam[0]\) for \(\blam\). As \(\blam\) has one unarced \(0\)-removable box, we have \(\varepsilon_0(\blam) = 1\). The multipartitions \(e_0 \blam\) and \(f_0 \blam\) are shown to the left and right respectively.}
\label{kleshcrysexfig}
\end{figure}

\subsection{The Kleshchev multipartition crystal}\label{Kleshcrys}
Let \(\blam \in \MP^{\bkap}\) and \(i \in \Z_2\). We define the {\em \(i\)-arc arrangement \(\blam[i]\) for \(\blam\)} as follows. Reading top to bottom, whenever an \(i\)-removable box is followed by an \(i\)-addable box with no unarced \(i\)-removable/\(i\)-addable boxes between them, draw an arc connecting them, and repeat until no more such arcs are possible. 

For \(i \in \Z_2\), we define operators 
\begin{align*}
f_i: \MP^{\bkap}(\theta)  \to \MP^{\bkap}(\theta + \alpha_i);
\qquad
e_i: \MP^{\bkap}(\theta) \to \MP^{\bkap}(\theta - \alpha_i)  \sqcup \{0\}
\end{align*}
as follows.
\begin{enumerate}
\item Set \(f_i \blam\) to be the \(\bkap\)-multipartition given by adding the bottommost unarced \(i\)-addable box in \(\blam[i]\).
\item Set \(e_i \blam\) to be the \(\bkap\)-multipartition given by deleting the topmost unarced \(i\)-removable box in \(\blam[i]\). If no such box exists, set \(e_i \blam = 0\).
\end{enumerate}
We show an example application of the operators \(e_0/f_0\) in Figure~\ref{kleshcrysexfig}.

We say that \(\blam \in \MP^{\bkap}\) is {\em Kleshchev} provided that \(\blam = f_{i_m} \cdots f_{i_2} f_{i_1} \varnothing\) (or equivalently \(\varnothing = e_{i_1} e_{i_2} \cdots e_{i_m} \blam\)) for some sequence \((i_1, i_2, \ldots, i_m) \in \Z_2^m\).
We write \(\MPK^{\bkap}\) for the set of Kleshchev multipartitions. Then \(\MPK^{\bkap}\) is a highest weight combinatorial \({\tt A}^{(1)}_1\)-crystal, where \(e_i, f_i\) are simply the restrictions to \(\MPK^{\bkap}\) of the operators defined above, the weight function is given by \(\textup{wt}(\blam) = -\textup{cont}(\blam)\), the statistic \(\varepsilon_i(\blam)\) is the number of unarced \(i\)-removable boxes in \(\blam[i]\), and \(\varphi_i\) is implicitly defined by Definition~\ref{crystaldef}(1). The first few layers of the crystal \(\MPK^{\bkap}\) for \(\bkap = (0,1,0,1,\ldots)\) are shown in Figure~\ref{Kleshcrystal11fig}.

\begin{figure}[h]
\begin{align*}
\hackcenter{}
\hackcenter{
\begin{overpic}[height=92mm]{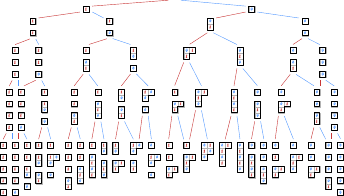}
 \put(50.8,57){\makebox(0,0)[]{$\scriptstyle \varnothing$}}   
\end{overpic}
}
\end{align*}
\caption{Initial layers of the \(\MPK^{\bkap}\) crystal for the multicharge \(\bkap = (0,1,0,1,\ldots)\). Empty components of multipartitions are not shown. The operators \(e_1/f_1\) appear in red as upward/downward segments respectively. The operators \(e_0/f_0\) appear in blue.}
\label{Kleshcrystal11fig}
\end{figure}

\subsection{The highest weight crystal \(\MPK^{(\bkap, \ell)}\)}\label{cycKleshcrystal}
Fix \(\ell \in \Z_{>0}\). We define \(\MPK^{(\bkap, \ell)}\) to be the set of Kleshchev multipartitions in \(\MPK^{\bkap}\) of level \(\ell\) (i.e., those \(\blam \in \MPK^{\bkap}\) with \(\lambda^{(m)} = \varnothing\) for \(m > \ell\)), and we define the associated dominant integral weight \(\Lambda(\bkap, \ell) := \sum_{t = 1}^\ell \Lambda_{\kappa_t}\). Then  \((\MPK^{(\bkap, \ell)}, e_i^{(\bkap, \ell)}, f_i^{(\bkap, \ell)}, \textup{wt}^{(\bkap, \ell)},  \varepsilon_i^{(\bkap, \ell)}, \varphi_i^{(\bkap, \ell)})\)
is a highest weight combinatorial \({\tt A}_1^{(1)}\)-crystal, with operators \(e_i^{(\bkap, \ell)}, f_i^{(\bkap, \ell)}\) inherited from \(\MPK^{\bkap}\) by setting all \(\blam \notin \MPK^{(\bkap, \ell)}\) equal to 0. We have \(\textup{wt}^{(\bkap, \ell)} = \Lambda(\bkap, \ell) +  \textup{wt}\), \(\varepsilon_i^{(\bkap, \ell)} = \varepsilon_i\), and \(\varphi_i^{(\bkap, \ell)}(\blam)\) is the number of unarced \(i\)-addable boxes in components \(1\) through \(\ell\) of \(\blam[i]\).
The following is due to work of \cite{KleshBranchI, KleshBranchII, LLT, MisraMiwa} in level one, and extended to higher levels in \cite{ArikiMathas}. See also \cite{Fayers2010, Uglov}.

\begin{theorem}\label{KleshCrystalCycThm}
Let \(\Lambda\) be a dominant integral weight, and assume the multicharge \(\bkap\) and \(\ell \in \Z_{>0}\) are such that \(\Lambda = \Lambda(\bkap, \ell)\). Then the  crystal \(\MPK^{(\bkap, \ell)}\) is isomorphic to the highest weight crystal \(\mathcal{B}(\Lambda)\).
\end{theorem}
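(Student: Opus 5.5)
The plan is to realize $\MPK^{(\bkap,\ell)}$ as the connected component of the empty multipartition inside the crystal of a tensor product of level-one Fock spaces. Kashiwara's theory then identifies that component with $\mathcal{B}(\Lambda)$. Since the statement is cited from \cite{KleshBranchI, KleshBranchII, LLT, MisraMiwa, ArikiMathas}, the proof amounts to assembling these inputs and checking that the paper's conventions match them. The checks concern the bottom-to-top indexing of components, the top-to-bottom arc rule, and the truncation to level $\ell$.

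\textbf{Level one.} For $\kappa\in\Z_2$, let $\mathcal F(\Lambda_\kappa)$ be the Hayashi/Misra--Miwa Fock space with basis $\Par^{\kappa}$. By \cite{MisraMiwa}, this is an integrable $U_q(\hat{\mathfrak{sl}}_2)$-module with a crystal basis. Its crystal operators are given by the $i$-signature rule: list the $i$-addable and $i$-removable boxes of $\lambda$ in order, and cancel adjacent removable--addable pairs. This is exactly the arc rule of \S\ref{Kleshcrys} restricted to one component. The vector $\varnothing$ is a highest weight vector of weight $\Lambda_\kappa$ generating a copy of $V(\Lambda_\kappa)$. Hence its crystal component is $\mathcal{B}(\Lambda_\kappa)$; these are the level-one Kleshchev (2-restricted, suitably charged) partitions.

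\textbf{Higher level.} Form $\mathcal F(\Lambda_{\kappa_\ell})\otimes\cdots\otimes\mathcal F(\Lambda_{\kappa_1})$, ordering the factors so that reading factors left to right corresponds to reading components $\lambda^{(\ell)},\ldots,\lambda^{(1)}$ top to bottom. Kashiwara's tensor product rule says $\tilde f_i$ acts on the factor carrying the first uncancelled $+$ in the concatenated signature, after cancelling $-+$ pairs. With the right sign convention, this is literally the arc rule: a removable box is arced with a later (lower) addable box, $f_i$ adds the bottommost unarced addable box, and $e_i$ removes the topmost unarced removable box. The vector $\varnothing^{\otimes\ell}$ is a highest weight vector of weight $\sum_t\Lambda_{\kappa_t}=\Lambda(\bkap,\ell)$ in an integrable module. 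Its crystal component is therefore $\mathcal{B}(\Lambda(\bkap,\ell))$, as in \cite{ArikiMathas}.

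\textbf{Comparison with the truncated crystal.} It remains to see that this component agrees with $\MPK^{(\bkap,\ell)}$ carrying the inherited operators.

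1. Take $\blam$ of level $\ell$. Components $m>\ell$ are empty, so they contribute only $i$-addable boxes, all lying above every box of components $1,\ldots,\ell$.
2. Such addable boxes could only be arced with removable boxes above them, and there are none. So the arcs among components $1,\ldots,\ell$ are unchanged by the presence of the higher components.
3. Consequently the topmost unarced removable box, and hence $e_i$, agrees with the tensor product crystal.
4. Likewise $f_i\blam$ stays in level $\ell$ exactly when an unarced addable box exists in components $1,\ldots,\ell$. In that case it agrees with the tensor product $\tilde f_i$; otherwise $f_i^{(\bkap,\ell)}\blam=0$, matching $\tilde f_i=0$. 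This also confirms the stated formula for $\varphi_i^{(\bkap,\ell)}$.
5. The operators $f_i$ only add boxes. Therefore any path $f_{i_m}\cdots f_{i_1}\varnothing$ ending at a level-$\ell$ multipartition stays in level $\ell$ throughout.

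Together, these points show that the level-$\ell$ Kleshchev multipartitions are exactly the component of $\varnothing$ in the truncated crystal, with the same weights and $\varepsilon_i$.

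The main obstacle is the convention bookkeeping in the higher-level step: choosing the tensor order and the signature sign convention so that Kashiwara's rule becomes ``arc removable above to addable below, $f_i$ acts bottommost, $e_i$ acts topmost.'' With the opposite choice one obtains a different, though isomorphic, labelling. I would settle this first on a small example such as Figure~\ref{Kleshcrystal11fig}, then invoke Kashiwara's tensor product theorem and the Misra--Miwa level-one result as black boxes.
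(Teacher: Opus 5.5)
This is essentially the argument behind the references the paper cites; the paper gives no proof of its own, attributing the result to Misra--Miwa/LLT/Kleshchev at level one and to Ariki--Mathas, via the tensor product of level-one Fock spaces, in higher level, and your truncation steps 1--5 correctly identify $\MPK^{(\bkap,\ell)}$ with the component of $\varnothing$. One correction to the bookkeeping you flagged: with the factors ordered $\lambda^{(\ell)},\ldots,\lambda^{(1)}$ and $\tilde f_i$ acting on the first uncancelled $+$, the rule would add the \emph{topmost} unarced addable box; under Kashiwara's convention you need the order $\mathcal{F}(\Lambda_{\kappa_1})\otimes\cdots\otimes\mathcal{F}(\Lambda_{\kappa_\ell})$, so that left-to-right means bottom-to-top, or else keep your order and use the opposite tensor product convention, in which $\tilde f_i$ acts on the \emph{last} uncancelled $+$.
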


The following theorem, well-known to experts, can be seen as an \(\ell \to \infty\) limit of Theorem~\ref{KleshCrystalCycThm}, or as a result of Theorems~\ref{PolyBinf}, \ref{mainamvulthm} and \ref{WKisommainproof}.

\begin{theorem}
Let \(\bkap\) be a multicharge. Then the crystal \(\MPK^{\bkap}\) is isomorphic to \(\mathcal{B}(\infty)\).
\end{theorem}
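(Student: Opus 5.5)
The most economical route is the limiting argument. Fix a multicharge \(\bkap\) and, for each \(\ell\), set \(\Lambda^{(\ell)} = \Lambda(\bkap,\ell)\). By Theorem~\ref{KleshCrystalCycThm}, \(\MPK^{(\bkap,\ell)} \cong \mathcal{B}(\Lambda^{(\ell)})\). The plan is to show three things: these crystals form a directed system under the inclusions \(\MPK^{(\bkap,\ell)} \subseteq \MPK^{(\bkap,\ell+1)}\); the union of the system is all of \(\MPK^{\bkap}\); and the system matches Kashiwara's realization of \(\mathcal{B}(\infty)\) as the direct limit of the weight-shifted \(\mathcal{B}(\Lambda)\).

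\textbf{Step 1 (compatibility of levels).} Let \(\blam\) be a multipartition of level \(\ell\). I would check that the \(i\)-arc arrangement \(\blam[i]\), restricted to components \(1,\dots,\ell\), does not depend on anything above component \(\ell\) except the addable boxes there. Those higher addable boxes are unarced and lie above all removable boxes, so they cannot absorb a removable box lying below them. Consequently \(e_i\) computed in \(\MPK^{\bkap}\) agrees with \(e_i^{(\bkap,\ell)}\). The same holds for \(f_i\) whenever \(f_i\blam\) stays in level \(\ell\), and \(\varepsilon_i\) agrees as well.

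\textbf{Step 2 (exhaustion).} It follows that a multipartition is Kleshchev in \(\MPK^{\bkap}\) if and only if it is Kleshchev of level \(\ell\) for some (equivalently, every sufficiently large) \(\ell\). Indeed, the condition \(\varnothing = e_{i_1}\cdots e_{i_m}\blam\) is intrinsic, and \(e_i\) never increases the level. Hence \(\MPK^{\bkap} = \bigcup_\ell \MPK^{(\bkap,\ell)}\). Moreover, \(\varphi_i^{(\bkap,\ell)}(\blam) \to \infty\) as \(\ell\) grows, because \(\bkap\) contains infinitely many entries of each residue, which supply infinitely many \(i\)-addable boxes.

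\textbf{Step 3 (identification with \(\mathcal{B}(\infty)\)).} I would use the standard characterization of \(\mathcal{B}(\infty)\), namely Kashiwara--Saito, or directly the direct-limit construction. The maps \(\mathcal{B}(\Lambda^{(\ell)}) \to \mathcal{B}(\Lambda^{(\ell+1)})\), which are strict embeddings after the weight shift, commute with the \(e_i\). They commute with the \(f_i\) as long as the target has \(\varphi_i > 0\). Taking the limit therefore gives an isomorphism \(\MPK^{\bkap} \cong \mathcal{B}(\infty)\).

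The delicate point, and the main obstacle, is showing that the isomorphisms of Theorem~\ref{KleshCrystalCycThm} for different \(\ell\) can be chosen compatibly with the inclusions. This is handled by uniqueness. A highest weight crystal isomorphism \(\mathcal{B}(\Lambda)\cong\mathcal{B}(\Lambda)\) is unique, since every element is reached from \(b^+\) by a string of \(f_i\)'s. So both composites send the same \(f\)-words to the same elements, provided the \(f_i\) agree, which Step 1 guarantees.

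An alternative route avoids this entirely. Compose the isomorphism \(\MPK^{\bkap} \cong \mathcal{UL}\) of Theorem~\ref{WKisommainproof} with \(\mathcal{UL} \cong \aMV\) of Theorem~\ref{mainamvulthm}, and then apply Theorem~\ref{PolyBinf}.
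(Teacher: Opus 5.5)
Your proposal is correct and matches the paper, which justifies this theorem in exactly these two ways: as the \(\ell \to \infty\) limit of Theorem~\ref{KleshCrystalCycThm}, or by composing the isomorphisms of Theorems~\ref{WKisommainproof} and \ref{mainamvulthm} with Theorem~\ref{PolyBinf}. One small slip: the transition maps \(\mathcal{B}(\Lambda^{(\ell)}) \to \mathcal{B}(\Lambda^{(\ell+1)})\) commute with \(f_i\) when \(f_i b \neq 0\) in the source (i.e.\ \(\varphi_i > 0\) there), not when the target has \(\varphi_i>0\); your \(f\)-word uniqueness argument only uses the source condition, so nothing breaks.
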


\subsection{A recognition theorem for Kleshchev multipartitions}
One of the main difficulties in working with the crystal \(\MPK^{\bkap}\) lies in the iterative definition of Kleshchevness. It is well-known that if \(\blam\) is of level one, then \(\blam\) is Kleshchev if and only if \(\lambda^{(1)}\) is 2-restricted. Under a restrictive choice of multicharge and multipartition (\(\kappa_t \neq \kappa_{t+1}\) for at most one \(t\) less than the level of \(\blam\)), a non-iterative condition for Kleshchevness was given in \cite[Proposition 4.9]{MathasAK} (see also \cite{AKT, Jacon} for other settings and approaches).
But for general multicharge and multipartition, no similar characterizing condition is known, and a priori it  requires a \(|\blam|\)-step process (either via repeated applications of the operators \(e_0/e_1\) or by an approach as in \cite{Jacon}) to determine whether \(\blam\) is Kleshchev or not. We rectify this with the following non-iterative recognition theorem, which appears as Theorem~\ref{Kleshtestappend}.

\begin{alphatheorem}{C}\label{Kleshtest}
Working in type \({\tt A}_1^{(1)}\), let \(\bkap\) be a multicharge, and let \(\blam\) be a multipartition of level \(\ell\). Then \(\blam\) is Kleshchev if and only if it satisfies the following conditions:
\begin{enumerate}
\item The partition \(\lambda^{(t)}\) is 2-restricted for all \(t \in [1,\ell]\).
\item For all \(t \in [1,\ell-1]\), we have \(\lambda^{(t)}_1 - (\lambda^{(t+1)})'_1 \geq \delta_{\kappa_t, \kappa_{t+1}} - 1\).
\item If \(s \in [1,\ell-2]\), \(u \in [s+2,\ell]\) are such that \(\lambda^{(u)} \neq \varnothing\) and \(\kappa_s \neq \kappa_{s+1} = \cdots =  \kappa_{u-1} \neq \kappa_u\), then there exists  \(t \in [s,u-1]\) such that  \(\lambda^{(t)}_1 - (\lambda^{(t+1)})'_1 \geq \delta_{\kappa_t, \kappa_{t+1}}+1\). 
\end{enumerate}
\end{alphatheorem}

\begin{remark}
We learned after completing this work that Theorem~\ref{Kleshtest} was also discovered independently by Fayers, and will appear in his forthcoming book \cite{FayersBook}.
\end{remark}

\begin{remark}
Conditions (1) and (2) appear in Mathas's characterization in the restricted multicharge setting \cite{MathasAK}. Condition (3) was identified with the aid of machine learning. We trained decision tree classifiers with an emphasis on features known to distinguish Kleshchevness in special cases, and interpreting the results helped identify the criterion for the general case. It would be interesting to explore whether machine learning could similarly assist in the study of Kleshchev multipartitions beyond the ${\tt A}_1^{(1)}$ case.
\end{remark}

\section{Upper ledge diagrams}\label{bigULsec}

\subsection{Ledge diagrams}\label{ledgeintrosec}
We now define some new combinatorial objects.
A {\em ledge diagram} \(\sfD\) is a left-aligned arrangement of boxes with rows of length \((d_1, \ldots, d_m) \in \Z_{>0}^m\) from top to bottom, with each box `colored' by \(i \in \Z_2\), such that:
\begin{enumerate}
\item for some \(t \in [1,m]\), \((d_1, \ldots, d_t)\) is weakly increasing and \((d_t, \ldots, d_m)\) is weakly decreasing, and;
\item all boxes within a given row share the same color, and colors alternate down columns.
\end{enumerate}
We refer to any row of maximal length in a ledge diagram \(\sfD\) as a {\em peak row} of \(\sfD\), and any row which is of maximal length among \(i\)-colored rows as an \(i\)-peak row. We say an \(i\)-colored box in \(\sfD\) is \(i\)-bottom (resp. \(i\)-top) if there is no box directly below (resp. above) it. We define the {\em content} of \(\sfD\) to be
\begin{align*}
\textup{cont}(\sfD) = \#\{\textup{boxes of color }0\textup{ in }\sfD\}\cdot \alpha_0 +  \#\{\textup{boxes of color }1\textup{ in }\sfD\}\cdot \alpha_1 \in \Z_{\geq 0}I.
\end{align*}
An {\em upper} (resp. {\em lower}) ledge diagram is one wherein we replace the `weakly' in front of `increasing' (resp. `decreasing') with `strictly' in (1) above.
We write \(\UL(\theta)\) (resp. \(\LL(\theta)\)) for the upper (resp. lower) ledge diagrams of content \(\theta \in \Z_{\geq0}I\). We set \(\UL = \bigsqcup_{\theta \in \Z_{>0} I} \UL(\theta)\) and \(\LL = \bigsqcup_{\theta \in \Z_{>0} I} \LL(\theta)\). Example upper/lower ledge diagrams appear in Figure~\ref{sinkfloatexfig}.

Given an upper (resp. lower) ledge diagram \(\sfD\), there is a unique way to rearrange \(\sfD\) into a lower (resp. upper) ledge diagram \(\sfD' \) by shifting columns of \(\sfD\) in increments of two so that box colors are preserved. We write \(\SINK \colon\LL(\theta) \to \UL(\theta)\) and \(\FLOAT \colon \UL(\theta) \to \LL(\theta)\) for these inverse bijections. See Figure~\ref{sinkfloatexfig} for an example. We will also write \(\textup{flip}\) for the involution on ledge diagrams given by reflecting across a horizontal axis. For \(\lambda \in \Par\), we will write \(\textup{color}_i(\lambda) \in \UL\) for the upper ledge diagram with shape \(\lambda\) and with top row colored by \(i\). If a ledge diagram has weakly decreasing rows, then we write \(\textup{forget}(\sfD) \in \Par\) for the partition given by forgetting the box colors of \(\sfD\).

\begin{figure}[h]
\begin{align*}
\hackcenter{}
\hackcenter{
\begin{overpic}[height=31.5mm]{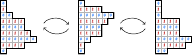}
 \put(-0.7,26.5){\makebox(0,0)[r]{$\scriptstyle \sfD'$}}    
  \put(39.5,26.5){\makebox(0,0)[r]{$\scriptstyle \sfD$}}    
   \put(80.2,26.5){\makebox(0,0)[r]{$\scriptstyle \sfD''$}}    
  \put(69,20){\makebox(0,0)[b]{$\scriptstyle \SINK$}}  
    \put(69,9.4){\makebox(0,0)[t]{$\scriptstyle \FLOAT$}}  
      \put(29,19.5){\makebox(0,0)[b]{$\scriptstyle \textup{flip}$}}  
    \put(29,9.4){\makebox(0,0)[t]{$\scriptstyle \textup{flip}$}}  
\end{overpic}
}
\end{align*}
\caption{Take \(\theta = 14 \alpha_1 + 14 \alpha_0\). An upper ledge diagram \(\sfD \in \UL(\theta)\) is shown at center, and lower ledge diagrams \(\sfD' = \textup{flip}(\sfD)\) and \(\sfD'' = \SINK(\sfD)\) are shown to either side.}
\label{sinkfloatexfig}
\end{figure}

\subsection{The upper ledge diagram crystal}\label{ULcrystaldefworkings}
Let \(i \in \Z_2\) and \(\sfD \in \UL\). We define the {\em \(i\)-arc arrangement \(\sfD[i]\) for \(\sfD\)} as follows. Reading right to left, whenever an \(i\)-top (resp. \( i\)-bottom) box is followed by an \(\hat{i}\)-top (resp. \(\hat i\)-bottom) box with no unarced top (resp. bottom) boxes between them, draw an arc connecting them. Repeat until no more such arcs are possible. An example is shown in Figure~\ref{ulex11fig}.

We give \(\UL\) the structure of an \({\tt A}_1^{(1)}\)-bicrystal as follows. Let \(i \in \Z_2\) and \(\sfD \in \UL\). We set \(\textup{wt}(\sfD) = -\textup{cont}(\sfD)\). The statistic \(\varepsilon_i(\sfD)\) is the number of unarced \(i\)-bottom boxes in \(\sfD[ i]\), and \(\varepsilon_i^*(\sfD)\) is the number of unarced \(i\)-top boxes in \(\sfD[i]\). The statistics \(\varphi_i, \varphi_i^*\) are implicitly defined by Definition~\ref{crystaldef}(1).
Then
\begin{enumerate}
\item \(e_{ i} \sfD\) is given by deleting the rightmost unarced \( i\)-bottom box in \(\sfD[ i]\). If none exists, set \(e_{ i} \sfD = 0\).
\item \(f_{ i} \sfD\) is given by adding an \( i\)-colored box directly below the leftmost unarced \(\hat i\)-bottom box in \(\sfD[ i]\). If none exists, append an \( i\)-colored box to the uppermost \(i\)-peak row in \(\sfD\).
\item \(e_{ i}^* \sfD\) is given by deleting the rightmost unarced \( i\)-top box in \(\sfD[i]\). If none exists, set \(e_i^* \sfD = 0\).
\item \(f_i^*\sfD\) is given by adding an \(i\)-colored box directly above the leftmost unarced \(\hat{i}\)-top box in \(\sfD[i]\). If none exists, append an \(i\)-colored box to the uppermost \(i\)-peak row in \(\sfD\).
\end{enumerate}
An example application of these crystal operators is shown in Figure~\ref{ulex11fig}.  
As with affine MV polytopes, we will primarily treat \(\UL\) as a crystal (rather than bicrystal) with unstarred operators. The first few layers of the crystal \(\UL\) appear in Figure~\ref{ledgecrystal11fig}.
The following theorem is a result of Theorems~\ref{PolyBinf} and \ref{mainamvulthm}.
\begin{alphatheorem}{D}\label{BinfUL}
The \({\tt A}_1^{(1)}\)-crystal \(\UL\) is isomorphic to \(\mathcal{B}(\infty)\) .
\end{alphatheorem}

\begin{remark}
It is straightforward to check from Remark~\ref{starinvpoly} and Theorem~\ref{PolyBinf} that Kashiwara's \(*\)-involution corresponds to the map \(\textup{float} \circ \textup{flip}: \UL \to \UL\) under the isomorphism of Theorem~\ref{BinfUL}.
\end{remark}

\begin{figure}[h]
\begin{align*}
\hackcenter{}
\hackcenter{
\begin{overpic}[height=48.5mm]{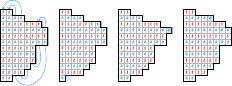}
 \put(-0.5,32){\makebox(0,0)[r]{$\scriptstyle \sfD$}}    
  \put(29.2,34.7){\makebox(0,0)[r]{$\scriptstyle e_0 \sfD$}}  
    \put(54.7,34.7){\makebox(0,0)[r]{$\scriptstyle f_0 \sfD$}}  
       \put(83.2,34.7){\makebox(0,0)[r]{$\scriptstyle f_0^* \sfD$}}  
\end{overpic}
}
\end{align*}
\caption{At left, an upper ledge diagram \(\sfD \in \UL\) shown with \(0\)-arc arrangement \(\sfD[0]\). Shown to the right are the upper ledge diagrams \(e_0\sfD\), \(f_0\sfD\), and \(f_0^*\sfD\) (note \(e_0^*\sfD = 0\)).}
\label{ulex11fig}
\end{figure}

\begin{figure}[h]
\begin{align*}
\hackcenter{}
\hackcenter{
\begin{overpic}[height=75mm]{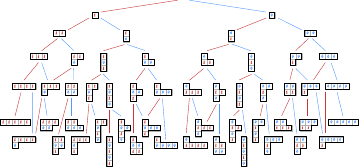}
 \put(51.2,46.5){\makebox(0,0)[]{$\scriptstyle \varnothing$}}    
\end{overpic}
}
\end{align*}
\caption{Initial layers of the \(\UL\) crystal. The operators \(e_1/f_1\) appear in red as upward/downward segments respectively. The operators \(e_0/f_0\) appear in blue.}
\label{ledgecrystal11fig}
\end{figure}

\subsection{The highest weight crystal \(\UL^\Lambda\)}
Let \(t_1, t_0 \in \Z_{\geq 0}\), so \(\Lambda = t_1 \Lambda_1 + t_0 \Lambda_0\) is a dominant integral weight. Let \(\UL^\Lambda \subset \UL\) be the subset of upper ledge diagrams \(\sfD\) such that \(\varepsilon_i^*(\sfD) \leq t_i\) for \(i \in \Z_2\). Then \((\UL^\Lambda, e_i^\Lambda, f_i^\Lambda, \textup{wt}^\Lambda,  \varepsilon_i^\Lambda, \varphi_i^\Lambda)\) is a highest-weight combinatorial \({\tt A}_1^{(1)}\)-crystal, where  \(e_i^\Lambda, f_i^\Lambda\) are the operators inherited from \(\UL\) by setting all \(\sfD\notin \UL^\Lambda\) equal to \(0\), and we have \(\textup{wt}^\Lambda = \Lambda +  \textup{wt}\), \(\varepsilon_i^\Lambda = \varepsilon_i\),  \(\varphi_i^\Lambda = \varphi_i + t_i\).
The following is immediate from Theorems~\ref{polycycthm} and \ref{mainamvulthm}.
\begin{theorem}
The \({\tt A}_1^{(1)}\)-crystal \(\UL^\Lambda\) is isomorphic to the highest weight crystal \(\mathcal{B}(\Lambda)\).
\end{theorem}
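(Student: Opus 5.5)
The statement is that $\UL^\Lambda$ is isomorphic to $\mathcal{B}(\Lambda)$. I will deduce it by transport of structure from the polytope side rather than by working directly with ledge combinatorics. The key input is Theorem~\ref{mainamvulthm}: I take it to provide a bijection $\mathcal{X}\colon \aMV \to \UL$ that is an isomorphism of ${\tt A}_1^{(1)}$-bicrystals. That is, it intertwines the unstarred operators $e_i, f_i$ and also the starred statistics, so that
\begin{align*}
\varepsilon_i^*(\mathcal{X}(\pi|\phi)) = \varepsilon_i^*(\pi|\phi) \qquad \textup{for all } (\pi|\phi)\in\aMV,\ i \in \Z_2.
\end{align*}
Given this, $\mathcal{X}$ carries the subset $\aMV^\Lambda$, cut out by $\varepsilon_i^*\le t_i$, bijectively onto $\UL^\Lambda$, which is cut out by the same inequalities.

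Next I will check that this restricted bijection is a morphism of the truncated crystals. The operators $e_i^\Lambda, f_i^\Lambda$ on both sides are defined identically: apply the ambient operator, then send the result to $0$ if it leaves the truncated set. Since $\mathcal{X}$ commutes with the ambient $e_i, f_i$ and preserves membership in the truncated sets, it commutes with $e_i^\Lambda, f_i^\Lambda$. The statistics are also defined identically on both sides, via
\begin{align*}
\textup{wt}^\Lambda = \Lambda + \textup{wt}, \qquad \varepsilon_i^\Lambda = \varepsilon_i, \qquad \varphi_i^\Lambda = \varphi_i + t_i,
\end{align*}
so they are preserved because $\mathcal{X}$ preserves $\textup{wt}$ and $\varepsilon_i$. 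Composing with Theorem~\ref{polycycthm}, which gives $\aMV^\Lambda \cong \mathcal{B}(\Lambda)$, completes the argument.

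The only real obstacle is confirming that Theorem~\ref{mainamvulthm} respects the starred structure, i.e.\ the displayed identity for $\varepsilon_i^*$, and not just the unstarred crystal structure. If the theorem as proved only covers the unstarred structure, I would check the identity directly. On the $\aMV$ side, by \eqref{epsvaluesamv} we have $\varepsilon_1^* = n_1(\pi^1)$ and $\varepsilon_0^* = n_1(\phi^0)$. On the $\UL$ side, $\varepsilon_i^*$ is the number of unarced $i$-top boxes. I would match these two counts through the explicit description of $\mathcal{X}$.

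As a fallback that avoids the explicit map, I would argue via the $*$-involution. Remark~\ref{starinvpoly} and the remark after Theorem~\ref{BinfUL} identify Kashiwara's $*$-involution on each side: $(\pi|\phi)\mapsto(\phi|\pi)$ on $\aMV$ and $\FLOAT\circ\textup{flip}$ on $\UL$. Both $\aMV$ and $\UL$ are copies of $\mathcal{B}(\infty)$, and the crystal isomorphism between copies of $\mathcal{B}(\infty)$ is unique. Hence $\mathcal{X}$ intertwines the two involutions. The starred statistics are the unstarred ones conjugated by $*$, so $\mathcal{X}$ preserves $\varepsilon_i^*$. This is exactly the fact needed to identify $\UL^\Lambda$ with the standard realization of $\mathcal{B}(\Lambda)$ inside $\mathcal{B}(\infty)$, as $\{b : \varepsilon_i^*(b)\le t_i\}$.
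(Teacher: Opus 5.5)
Your proposal is correct and is the paper's argument: the paper calls this immediate from Theorem~\ref{polycycthm} together with the bicrystal isomorphism of Theorem~\ref{mainamvulthm}. That isomorphism explicitly includes preservation of $\varepsilon_i^*$, so it carries $\aMV^\Lambda$ onto $\UL^\Lambda$. Your fallbacks are therefore unnecessary, and the $*$-involution one would lean on that same theorem anyway, since the paper's identification of $*$ on $\UL$ is itself derived from it.
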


\section{Crystal isomorphisms}\label{crystalisomsec}
In this section we describe the main results of the paper---a tool kit of explicit isomorphisms, couched in the language of partition combinatorics, between the \({\tt A}_1^{(1)}\)-crystal structures described in \S\ref{bigAMVsec}, \ref{bigKleshsec}, and \ref{bigULsec}.

\subsection{Crystal isomorphisms between affine MV polytopes and upper ledge diagrams} \label{MVULisomsintro}
In this subsection we provide an isomorphism between the \({\tt A}_1^{(1)}\)-bicrystals \(\aMV\) and \(\UL\).

\subsubsection{The \(i\)-triple upper ledge diagrams} We will first need to introduce a key combinatorial notion. We say \((\sfD^\uparrow_i, \sfD^\delta_i, \sfD^\downarrow_i) \in \UL^3\) is an {\em \(i\)-triple upper ledge diagram} provided:
\begin{itemize}
\item the row lengths of \(\sfD^\uparrow_i\) strictly increase from top to bottom, and the bottom row has color \(i\);
\item the row lengths of \(\sfD^\delta_i\) weakly decrease from top to bottom, all top boxes in \(\sfD^\delta_i\) have color \(\hat i\), and all bottom boxes in \(\sfD^\delta_i\) have color \(i\);
\item the row lengths of \(\sfD^\downarrow_i\) strictly decrease from top to bottom, and the top row has color \(\hat i\).
\end{itemize}
Note then that \(D_i^\delta\) will always have even-length columns. We write \(\UL^3_i\) for the set of \(i\)-triple upper ledge diagrams.

To \((\sfD^\uparrow_i, \sfD^\delta_i, \sfD^\downarrow_i) \in \UL_i^3\) we may associate an upper ledge diagram \( \STACK_i(\sfD^\uparrow_i, \sfD^\delta_i, \sfD^\downarrow_i) \in \UL\) as follows. Place \(\sfD^\uparrow_i\) directly above \(\sfD^\delta_i\) and \(\sfD^\downarrow_i\) directly below \(\sfD^\delta_i\) with left columns aligned. Then shift the columns of \(\sfD^\downarrow_i\) upward until they meet the bottom of \(\sfD^\delta_i\) or else align with its top row. Then \( \STACK_i(\sfD^\uparrow_i, \sfD^\delta_i, \sfD^\downarrow_i)\) is the resulting diagram. See Figure~\ref{decompex123fig} for an example.

For \(\sfD \in \UL\), we define an associated \(i\)-triple upper ledge diagram \(\TRIP_i(\sfD):= (\sfD^\uparrow_i, \sfD^\delta_i, \sfD^\downarrow_i)\) as follows. 
Take \(\sfD_i^\uparrow\) to be all boxes weakly northwest of the rightmost \(i\)-top box in \(\sfD\).
To define \(\sfD_i^\downarrow\),  highlight the rightmost \(\hat i\)-bottom box in \(\sfD\), and then working leftward, highlight the same number of boxes at the bottom of each successive column in \(\sfD\), increasing by one each time the bottom box color changes. 
Shifting the highlighted columns down so they are top-aligned yields \(\sfD_i^\downarrow\).
(Equivalently, one could take \(\sfD_1^\downarrow\) to be all boxes weakly southwest of the rightmost \(\hat i\)-bottom box in \(\SINK(\sfD)\)).
We take the remaining diagram after removing the boxes in \(\sfD^\uparrow_i\) and \(\sfD^\downarrow_i\) from \(\sfD\) to be \(\sfD^\delta_i\). See Figure~\ref{decompex123fig} for an example.
It is easy to check that \(\TRIP_i: \UL \to \UL^3_i\) and \(\STACK_i: \UL^3_i \to \UL\) are mutually inverse bijections.

\begin{figure}[h]
\begin{align*}
\\
\hackcenter{}
\hackcenter{
\begin{overpic}[height=38.5mm]{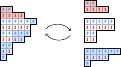}
 \put(-1.5, 48){\makebox(0,0)[r]{$\scriptstyle \sfD$}}  
  \put(68, 52.5){\makebox(0,0)[r]{$\scriptstyle \sfD_1^\uparrow$}}    
   \put(68, 37.5){\makebox(0,0)[r]{$\scriptstyle \sfD_1^\delta$}}    
    \put(68, 12.5){\makebox(0,0)[r]{$\scriptstyle \sfD_1^\downarrow$}}      
         \put(48.5,36.5){\makebox(0,0)[b]{$\scriptstyle \TRIP_1$}}    
           \put(48.5,20.5){\makebox(0,0)[t]{$\scriptstyle \STACK_1$}}    
\end{overpic}
}
\end{align*}
\caption{At left, an upper ledge diagram \(\sfD \in \UL\), and at right its decomposition into the \(1\)-triple upper ledge diagram \(\TRIP_1(\sfD) = (\sfD_1^\uparrow, \sfD^\delta_1, \sfD^\downarrow_1) \in \UL^3_1\).}
\label{decompex123fig}
\end{figure}

\subsubsection{From root partitions to upper ledge diagrams}
Let \(\theta \in \Z_{\geq 0}I\), and \(i \in \Z_2\). We define a map \(\mathcal{X}_i:\Pi(\theta) \to \UL(\theta)\) as follows. For \(\pi = \{\pi^1, \pi^\delta, \pi^0\} \in \Pi(\theta)\), we set 
\begin{align*}
\sfD^\uparrow_i = \textup{flip} \circ \textup{color}_i \circ \ORD(\pi^i);
\qquad
\sfD^\delta_i = \textup{color}_{\,\hat i} \circ \textup{double}(\pi^\delta);
\qquad 
\sfD^\downarrow_i =\textup{color}_{\,\hat i} \circ \ORD(\pi^{\hat i}),
\end{align*}
and define
\(
\mathcal{X}_i(\pi)= \STACK_i(\sfD^\uparrow_i, \sfD^\delta_i, \sfD^\downarrow_i).
\)
An example is shown in Figure~\ref{rptoledge2fig}.

\begin{figure}[h]
\begin{align*}
\\
\hackcenter{}
\hackcenter{
\begin{overpic}[height=51.5mm]{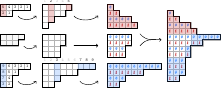}
 \put(0.5,38.5){\makebox(0,0)[l,b]{$\scriptstyle \pi^1$}}    
  \put(0.5,25){\makebox(0,0)[l,b]{$\scriptstyle \pi^\delta$}}    
    \put(0.5,11.7){\makebox(0,0)[l,b]{$\scriptstyle \pi^0$}}   
        \put(76.2,36){\makebox(0,0)[b]{$\scriptstyle \sfD$}}   
         \put(12.3,30.3){\makebox(0,0)[t]{$\scriptstyle \ORD$}}    
           \put(12.3,0.8){\makebox(0,0)[t]{$\scriptstyle \ORD$}}   
             \put(13.4,17){\makebox(0,0)[t]{$\scriptstyle \textup{double}$}}   
            \put(39,30){\makebox(0,0)[t]{$\scriptstyle \textup{flip} \, \circ \, \textup{color}_1$}}  
          \put(39, 19.8){\makebox(0,0)[b]{$\scriptstyle \textup{color}_0$}}  
                 \put(40,0.8){\makebox(0,0)[t]{$\scriptstyle \textup{color}_0$}}   
                            \put(69.3,22.8){\makebox(0,0)[b]{$\scriptstyle \STACK_1$}}   
      \put(48,36.7){\makebox(0,0)[r]{$\scriptstyle \sfD^\uparrow_1$}}   
          \put(48,22.5){\makebox(0,0)[r]{$\scriptstyle \sfD^\delta_1$}}   
              \put(48,9.4){\makebox(0,0)[r]{$\scriptstyle \sfD^\downarrow_1$}}   
\end{overpic}
}
\end{align*}
\caption{Computing the upper ledge diagram \(\mathcal{X}_1(\pi) = \sfD\) (at right) associated to the root partition \(\pi = \{\pi^1, \pi^\delta, \pi^0\}\) (at left).}
\label{rptoledge2fig}
\end{figure}

\subsubsection{From upper ledge diagrams to root partitions}
Let \(\theta \in \Z_{\geq 0}I\), and \(i \in \Z_2\).
We now define a map \(\mathcal{Y}_i: \UL(\theta) \to \Pi(\theta)\) as follows. Let \(\sfD \in \UL(\theta)\), and set \((\sfD^\uparrow_i, \sfD^\delta_i, \sfD^\downarrow_i) = \TRIP_i(\sfD)\). Then define
\begin{align*}
\pi^i = \CALC \circ \textup{forget} \circ \textup{flip}(\sfD^\uparrow_i);
\qquad
\pi^\delta = \textup{halve} \circ \textup{forget}(\sfD^\delta_i);
\qquad
\pi^{\hat i} = \CALC \circ \textup{forget}(\sfD^\downarrow_i),
\end{align*}
and set \(\mathcal{Y}_i(\sfD) = \pi = \{\pi^1, \pi^\delta, \pi^0\}\). 
An example is shown in Figure~\ref{ledgetorp1fig}.

\begin{figure}[h]
\begin{align*}
\\
\hackcenter{}
\hackcenter{
\begin{overpic}[height=48mm]{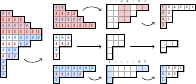}
 \put(-1, 36){\makebox(0,0)[r]{$\scriptstyle \sfD$}}  
  \put(26.8, 38.7){\makebox(0,0)[r]{$\scriptstyle \sfD_1^\uparrow$}}    
   \put(26.8, 23.2){\makebox(0,0)[r]{$\scriptstyle \sfD_1^\delta$}}    
    \put(26.8, 7.8){\makebox(0,0)[r]{$\scriptstyle \sfD_1^\downarrow$}}      
          \put(17,10){\makebox(0,0)[t]{$\scriptstyle \TRIP_1$}}  
              \put(46.7,40){\makebox(0,0)[b]{$\scriptstyle  \textup{forget}\,\circ\, \textup{flip}$}}       
           \put(45,19.5){\makebox(0,0)[b]{$\scriptstyle \textup{forget}$}}    
             \put(48,1.2){\makebox(0,0)[t]{$\scriptstyle \textup{forget}$}}    
                \put(72.5,19.6){\makebox(0,0)[b]{$\scriptstyle \textup{halve}$}}    
                  \put(75.5,29){\makebox(0,0)[t]{$\scriptstyle \CALC$}}    
                    \put(75.5,1.2){\makebox(0,0)[t]{$\scriptstyle \CALC$}}    
                       \put(83,41.3){\makebox(0,0)[b]{$\scriptstyle \pi^1$}}    
                          \put(83,22.7){\makebox(0,0)[b]{$\scriptstyle \pi^\delta$}}    
                             \put(83,10.5){\makebox(0,0)[b]{$\scriptstyle \pi^0$}}    
\end{overpic}
}
\end{align*}
\caption{Computing the root partition \(\mathcal{Y}_1(\sfD) = \pi = \{\pi^1, \pi^\delta, \pi^0\}\) associated to an upper ledge diagram \(\sfD\).}
\label{ledgetorp1fig}
\end{figure}

\subsubsection{The crystal isomorphism \(\aMV \leftrightarrow \UL\)} 
The following result appears as Theorem~\ref{mainamvulthm}.

\begin{alphatheorem}{E}\label{PolyOH}
For \(\theta \in \Z_{\geq 0}I\) and \(i \in \Z_2\), the maps \(\mathcal{X}_i: \Pi(\theta) \to \UL(\theta)\) and \(\mathcal{Y}_i: \UL(\theta) \to \Pi(\theta)\) are mutually inverse bijections. Moreover, we have mutually inverse \({\tt A}_1^{(1)}\)-bicrystal isomorphisms
\begin{align*}
&\mathcal{X}: \aMV \xrightarrow{\sim} \UL, \quad (\pi | \phi) \mapsto \mathcal{X}_1(\pi) = \mathcal{X}_0(\phi)
\\
&\mathcal{Y}: \UL \xrightarrow{\sim} \aMV, \quad
\sfD \mapsto (\mathcal{Y}_1(\sfD) \,|\, \mathcal{Y}_0(\sfD)).
\end{align*}
\end{alphatheorem}

\begin{alphacorollary}{F}
For any dominant integral weight \(\Lambda\), the maps \(\mathcal{X}, \mathcal{Y}\) restrict to mutually inverse  \({\tt A}_1^{(1)}\)-crystal isomorphisms
\begin{align*}
\mathcal{X}^\Lambda: \aMV^{\Lambda} \xrightarrow{\sim} \UL^\Lambda
\qquad \textup{and} \qquad
\mathcal{Y}^\Lambda: \UL^{\Lambda} \xrightarrow{\sim} \aMV^{\Lambda}.
\end{align*}
\end{alphacorollary}

The following would be implied by  Conjecture~\ref{conjsparcontain}.

\begin{conjecture}
Let \(i \in \Z_2\), \(\pi, \tilde{\pi} \in \Pi\), and assume \(\tilde{\pi}\) is obtained by deleting the last row of \(\pi^1\) or \(\pi^0\). Then \(\mathcal{X}_i(\tilde{\pi}) \subseteq \mathcal{X}_i(\pi)\).\end{conjecture}

\subsection{Crystal isomorphisms between Kleshchev multipartitions and upper ledge diagrams}\label{kmulisomsec}
In this subsection we provide an isomorphism between the \({\tt A}_1^{(1)}\)-bicrystals \(\MPK^{\bkap}\) and \(\UL\). 

\subsubsection{From upper ledge diagrams to Kleshchev multipartitions}\label{defthesplitmap}
Let \(i \in \Z_2\). We define now a map
\begin{align*}
\SPLIT^i \colon \UL \to \UL \times \UL, \quad \sfD \mapsto (\SPLIT^i(\sfD)_\downarrow, \SPLIT^i(\sfD)_\uparrow)
\end{align*}
as follows. 
Let \(\sfD \in \UL\), and let \(u\) be the uppermost box of color \(i\) in the first column of \(\sfD\). If there is no such \(u\), we set \(\SPLIT^i(\sfD) = (\varnothing, \sfD)\). Otherwise, extending a ray \(r\) directly southeast (i.e. at a \(-45^\circ\) angle) from the centroid of \(u\), let \(\mathsf{L}\) be those boxes in \(\sfD\) whose centroid lies weakly below \(r\), and let \(\mathsf{U}\) be those whose centroid lies strictly above \(r\). Now, \(\mathsf{L}\) is already an upper ledge diagram, and left-justifying the rows of \(\mathsf{U}\) gives a lower ledge diagram \(\mathsf{U}'\). We set \(\SPLIT^i(\sfD)_\downarrow = \mathsf{L}\) and \(\SPLIT^i_\uparrow(\sfD) = \FLOAT(\mathsf{U}')\). An example is shown in Figure~\ref{splitexfig}.

\begin{figure}[h]
\begin{align*}
\\
\hackcenter{}
\hackcenter{
\begin{overpic}[height=59.5mm]{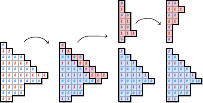}
 \put(0,31){\makebox(0,0)[l,b]{$\scriptstyle \sfD$}}    
  \put(29.8,31){\makebox(0,0)[l,b]{$\scriptstyle \mathsf{U}$}}    
    \put(28.5,25){\makebox(0,0)[r]{$\scriptstyle \mathsf{L}$}}    
      \put(57.8,48.8){\makebox(0,0)[r]{$\scriptstyle \mathsf{U}'$}}  
          \put(57.5,25){\makebox(0,0)[r]{$\scriptstyle \mathsf{L}$}}  
       \put(81,48.8){\makebox(0,0)[r]{$\scriptstyle \textup{split}_\uparrow^1(\sfD)$}}  
        \put(81,24){\makebox(0,0)[r]{$\scriptstyle \textup{split}_\downarrow^1(\sfD)$}}  
        \put(18,33.5){\makebox(0,0)[b]{$\scriptstyle \textup{$\mathsf{U}/\mathsf{L}$ cut}$}}  
          \put(46,33.5){\makebox(0,0)[b]{$\scriptstyle \textup{left-justify}$}}  
             \put(72.7,42.5){\makebox(0,0)[b]{$\scriptstyle \FLOAT$}}  
\end{overpic}
}
\end{align*}
\caption{An application of \(\SPLIT^1\) to an upper ledge diagram \(\sfD\) is shown. The resulting upper ledge diagrams \(\SPLIT^1(\sfD)_\downarrow\), \(\SPLIT^1(\sfD)_\uparrow\) are shown on the right.}
\label{splitexfig}
\end{figure}

More generally, if \(\bkap = (\kappa_1, \kappa_2, \ldots)\) is a multicharge, we define an infinite sequence of upper ledge diagrams \(\SPLIT^{\bkap}(\sfD) = (\sfD_1, \sfD_2, \ldots)\) by iteratively splitting the upper component repeatedly; i.e., we set 
\begin{align*}
 \sfD_1 = \SPLIT^{\kappa_1}(\sfD)_\downarrow, \quad \textup{and} \quad (\sfD_2, \sfD_3, \ldots) = \SPLIT^{(\kappa_2, \kappa_3, \ldots)}(\SPLIT^{\kappa_1}(\sfD)_\uparrow),
\end{align*}
noting that \(\sfD_k = \varnothing \) for \(k \gg 0\) by the definition of multicharges and the map \(\SPLIT^i\). 

Now, we define a map \(\mathcal{W}^{\bkap}: \UL(\theta) \to \MPK^{\bkap}(\theta)\) as follows. For \(\sfD \in \UL(\theta)\), assume \(\SPLIT^{\bkap}(\sfD) = (\sfD_1, \sfD_2, \ldots)\). Then, for \(t \in \Z_{>0}\), let \(\lambda^{(t)}\) be the partition constructed by top-aligning every column in \(\sfD_t\), and set \(\mathcal{W}^{\bkap}(\sfD) = (\lambda^{(1)}, \lambda^{(2)}, \ldots)\). An example is shown in Figure~\ref{applyWkap2fig}.

\begin{figure}[h]
\begin{align*}
\\
\hackcenter{}
\hackcenter{
\begin{overpic}[height=73.5mm]{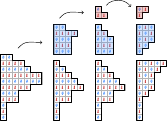}
 \put(1,41){\makebox(0,0)[b]{$\scriptstyle \sfD$}}   
  \put(19, 48){\makebox(0,0)[b]{$\scriptstyle \SPLIT^1$}}   
    \put(44,66){\makebox(0,0)[b]{$\scriptstyle \SPLIT^0$}}   
     \put(70.7, 73.2){\makebox(0,0)[b]{$\scriptstyle\textup{top-justify}$}}   
     \put(56.2, 34.2){\makebox(0,0)[r]{$\scriptstyle \sfD_1$}}   
       \put(56.2, 55){\makebox(0,0)[r]{$\scriptstyle \sfD_2$}}   
         \put(56.2, 66){\makebox(0,0)[r]{$\scriptstyle \sfD_3$}}   
            \put(81, 34.2){\makebox(0,0)[r]{$\scriptstyle \lambda^{(1)}$}}   
       \put(81, 55){\makebox(0,0)[r]{$\scriptstyle \lambda^{(2)}$}}   
         \put(81, 66){\makebox(0,0)[r]{$\scriptstyle \lambda^{(3)}$}}   
\end{overpic}
}
\end{align*}
\caption{Let \(\theta = 19 \alpha_1 + 19 \alpha_0\), and \(\bkap\) be a multicharge with \(\kappa_1 = 1\), \(\kappa_2 = 0\), \(\kappa_3 = 0\). 
At left, an upper ledge diagram \(\sfD \in \UL(\theta)\). The computation of \(\mathcal{W}^{\bkap}(\sfD) = \blam = (\lambda^{(1)}, \lambda^{(2)}, \lambda^{(3)}, \varnothing, \ldots) \in \MP^{\bkap}(\theta)\) is shown from left to right. The second-to-rightmost diagram is the sequence of upper ledge diagrams \((\sfD_1, \sfD_2, \sfD_3, \varnothing, \ldots) = \SPLIT^{\bkap}(\sfD)\).}
\label{applyWkap2fig}
\end{figure}

\subsubsection{From Kleshchev multipartitions to upper ledge diagrams}\label{gluingsec}

Let \(\sfD_1, \sfD_2 \in \UL\). We define an upper ledge diagram \(\GLUE(\sfD_1, \sfD_2) \in \UL\) as follows. First set \(\sfD_2\) to the right of \(\sfD_1\) with top rows aligned if they have matching color, or so that the top row of \(\sfD_2\) is one notch higher otherwise. Then, starting from the leftmost column in \(\sfD_2\) whose bottom box is above the uppermost peak row of \(\sfD_1\) (if such exists), work to the right, shifting columns down as far as possible in steps of two so that the bottom box is still (a) not below the uppermost peak row of \(\sfD_1\) and (b) is not below the bottom box in the column directly to its left. 
Then combine diagrams, left-justifying all rows to yield the diagram \(\GLUE(\sfD_1, \sfD_2)\). It is straightforward to check that this resulting diagram is upper ledge.
See Figure~\ref{gengluefig} for an example.

\begin{figure}[h]
\begin{align*}
\\
\hackcenter{}
\hackcenter{
\begin{overpic}[height=42mm]{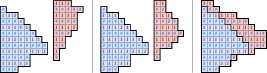}
 \put(3.8,23.4){\makebox(0,0)[l,b]{$\scriptstyle\leftarrow \textup{align diagrams} \rightarrow$}}    
  \put(57,2){\makebox(0,0)[l,b]{$\scriptstyle\downarrow \, \textup{shift columns} \,\downarrow$}}    
       \put(56.2,9.8){\makebox(0,0)[l,b]{$  \color{gray}\star \color{black}$}}    
         \put(86,25.6){\makebox(0,0)[l,b]{$\scriptstyle\leftarrow \, \textup{combine rows}$}}    
          \put(0,25.8){\makebox(0,0)[l,b]{$\scriptstyle \sfD_1$}}   
           \put(20,27.8){\makebox(0,0)[l,b]{$\scriptstyle \sfD_2$}}   
              \put(37.5,25.8){\makebox(0,0)[l,b]{$\scriptstyle \sfD_1$}}   
           \put(57.5,27.8){\makebox(0,0)[l,b]{$\scriptstyle \sfD_2$}}   
            \put(90,2){\makebox(0,0)[l,b]{$\scriptstyle \GLUE(\sfD_1, \sfD_2)$}}  
\end{overpic}
}
\end{align*}
\caption{The gluing process for upper ledge diagrams \(\sfD_1, \sfD_2\). The alignment of diagrams is shown in the first pane, and the column shift is shown in the second pane, with the uppermost peak row of \(\sfD_1\) marked with a star. The last pane is the resulting upper ledge diagram \(\GLUE(\sfD_1, \sfD_2)\).}
\label{gengluefig}
\end{figure}

More generally, let \(\underline{\sfD} = (\sfD_1, \sfD_2, \ldots)\) be an infinite sequence of upper ledge diagrams such that \(\sfD_k = \varnothing\) for \(k \gg 0\). If all \(\sfD_k\) are empty, set \(\GLUE(\underline{\sfD}) = \varnothing\). Otherwise, let \(M\) be maximal such that \(\sfD_M \neq \varnothing\). Now, set \(\sfD'_{M+1} = \varnothing\) and proceeding downwards iteratively define \(\sfD_t' = \GLUE(\sfD_t, \sfD_{t+1}')\) for \(t=M, M-1, M-2, \ldots, 1\). Set \(\GLUE(\underline{\sfD}) = \sfD_1'\).

Now we define a map \(\mathcal{Z}^{\bkap}: \MP^{\bkap}(\theta) \to \UL(\theta)\) as follows. Let \(\blam = (\lambda^{(1)}, \lambda^{(2)}, \ldots) \in \MP^{\bkap}(\theta)\). We convert each \(\lambda^{(t)}\) into an upper ledge diagram \(\sfD_t\) by shifting (for all \(k\)) the \(k\)th column of \(\lambda^{(t)}\) down by \(k-1\) notches and then left aligning all rows---the color of each box in \(\sfD_t\) being given by its pre-shift residue in \(\lambda^{(t)}\). 
Taking \(\underline{\sfD} = (\sfD_1, \sfD_2, \ldots)\), we set \(\mathcal{Z}^{\bkap}(\blam) = \GLUE(\underline{\sfD})\). An example is shown in Figure~\ref{Zkapexfig2}.

\begin{figure}[h]
\begin{align*}
\\
\hackcenter{}
\hackcenter{
\begin{overpic}[height=59.5mm]{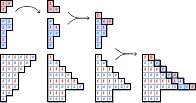}
 \put(-0.5,23){\makebox(0,0)[r]{$\scriptstyle \lambda^{(1)}$}}   
  \put(-0.5,41){\makebox(0,0)[r]{$\scriptstyle \lambda^{(2)}$}}   
    \put(-0.5,51){\makebox(0,0)[r]{$\scriptstyle \lambda^{(3)}$}}   
       \put(74.5,26.5){\makebox(0,0)[r]{$\scriptstyle \sfD$}}   
        \put(23.5,23){\makebox(0,0)[r]{$\scriptstyle \sfD_1$}}   
  \put(23.5,41){\makebox(0,0)[r]{$\scriptstyle \sfD_2$}}   
    \put(23.5,51){\makebox(0,0)[r]{$\scriptstyle \sfD_3$}}   
     \put(14,52){\makebox(0,0)[b]{$\scriptstyle \textup{column}$}}  
        \put(14,50){\makebox(0,0)[b]{$\scriptstyle \textup{shift}$}}  
           \put(41.5,44){\makebox(0,0)[b]{$\scriptstyle \GLUE$}}  
            \put(65.5,25.5){\makebox(0,0)[b]{$\scriptstyle \GLUE$}}  
\end{overpic}
}
\end{align*}
\caption{Let \(\theta = 16 \alpha_1 + 21\alpha_0\) and \(\bkap\) be a multicharge with \(\kappa_1 = 1\), \(\kappa_2 = 0\), \(\kappa_3 = 1\). At left, a Kleshchev multipartition \(\blam = (\lambda^{(1)}, \lambda^{(2)}, \lambda^{(3)}, \varnothing, \ldots) \in \MPK^{\bkap}(\theta)\). The computation of \(\mathcal{Z}^{\bkap}(\blam) = \sfD \in \UL(\theta)\) is shown from left to right. The second-to-leftmost diagram is the sequence of upper ledge diagrams \(\underline{\sfD} = (\sfD_1, \sfD_2, \sfD_3, \varnothing, \ldots)\) with \(\GLUE(\underline{\sfD}) = \sfD\).}
\label{Zkapexfig2}
\end{figure}

\subsubsection{The crystal isomorphism \(\MPK^{\bkap} \leftrightarrow \UL\)}

The following result appears as Theorem~\ref{WKisommainproof}.

\begin{alphatheorem}{G}\label{isommpul}
For any multicharge \(\bkap\) we have \(\mathcal{W}^{\bkap}(\UL) = \MPK^{\bkap}\), and the maps 
\begin{align*}
\mathcal{W}^{\bkap}:  \UL \to \MPK^{\bkap}
\qquad \textup{and} \qquad
\mathcal{Z}^{\bkap}: \MPK^{\bkap} \to \UL
\end{align*}
are mutually inverse crystal isomorphisms. 
\end{alphatheorem}

\begin{alphacorollary}{H}
Let \(\Lambda\) be a dominant integral weight, and assume the multicharge \(\bkap\) and \(\ell \in \Z_{>0}\) are such that \(\Lambda = \Lambda(\bkap, \ell)\). Then the maps \(\mathcal{W}^{\bkap}, \mathcal{Z}^{\bkap}\) restrict to mutually inverse crystal isomorphisms
\begin{align*}
\mathcal{W}^{(\bkap, \ell)}:  \UL^{\Lambda} \to \MPK^{(\bkap, \ell)}
\qquad \textup{and} \qquad
\mathcal{Z}^{(\bkap, \ell)}: \MPK^{(\bkap, \ell)} \to \UL^{\Lambda}.
\end{align*}
\end{alphacorollary}

\subsection{A crystal isomorphism between affine MV polytopes and Kleshchev multipartitions}
Finally, the upper ledge diagrams act as an intermediary, yielding a straightforward combinatorial isomorphism between  affine MV polytopes and Kleshchev multipartitions:

\begin{alphacorollary}{I}\label{PolyMPthm}
The maps 
\begin{align*}
\mathcal{W}^{\bkap} \circ \mathcal{X}: \aMV \to \MPK^{\bkap} 
\qquad
\textup{ and }
\qquad
\mathcal{Y} \circ \mathcal{Z}^{\bkap}: \MPK^{\bkap} \to \aMV
\end{align*} are mutually inverse crystal isomorphisms.
\end{alphacorollary}

\begin{alphacorollary}{J}\label{PolyMPthmcyc}
Let \(\Lambda\) be a dominant integral weight, and assume the multicharge \(\bkap\) and \(\ell \in \Z_{>0}\) are such that \(\Lambda = \Lambda(\bkap, \ell)\). Then the maps of Corollary~\ref{PolyMPthm} restrict to mutually inverse crystal isomorphisms
\begin{align*}
\mathcal{W}^{(\bkap, \ell)} \circ \mathcal{X}^{\Lambda}: \aMV^{\Lambda} \to \MPK^{(\bkap, \ell)} 
\qquad
\textup{ and }
\qquad
\mathcal{Y}^{\Lambda} \circ \mathcal{Z}^{(\bkap, \ell)}: \MPK^{(\bkap, \ell)} \to \aMV^{\Lambda}.
\end{align*}
\end{alphacorollary}

\section{Bicompositions and binary sequences}\label{dirtywork}
In this long and technical section we roll up our sleeves and prepare to prove the main results of \S\ref{partitioncombsec}--\ref{crystalisomsec}. Most of these are downstream of the key combinatorial result (Theorem~\ref{DGthm}) proved in this section which establishes a hidden binary tree structure on bicompositions associated with the `real' components of affine MV polytopes, see \S\ref{secmvbicomp}.

\subsection{Compositions}
A {\em composition} \(\mu\) is a finitely supported function \(\mu: \ZZ \to \ZZ_{\geq 0}\), which we write as \(\mu = (\mu_k)_{k \in \ZZ}\), where \(\mu_k = \mu(k)\). We write \(|\mu| = \sum_{k \in \ZZ} \mu_k \in \ZZ_{\geq 0}\). In practice we will usually have \(\mu_k = 0\) for \(k \leq 0\), but it is convenient for our purposes to allow \(\mu\) to be defined more generally. We write \(\Omega\) for the set of all compositions. We also will occasionally add decorations to this set, writing
\begin{align*}
\Omega_{\geq k} &= \{ \mu \in \Omega \mid \mu_\ell = 0 \textup{ for all } \ell <k\} \qquad \textup{and} \qquad
\Omega(n) = \{ \mu \in \Omega \mid |\mu| = n\},
\end{align*}
with multiple decorations like \(\Omega_{\geq k}(n)\) indicating the obvious intersections of the corresponding sets.

For each \(\mu \in \Omega, k \in \ZZ\), define a statistic \(\vartheta_k(\mu) \in \ZZ_{\geq 0}\) to be the {\em minimal nonnegative integer} \(m\) such that \(\mu_{k-m} = 0\), and define a statistic \(\eta_k(\mu) \in \ZZ_{\geq 0}\) to be the {\em minimal nonnegative integer} \(m\) such that \(\mu_{k+m} = 0\). Note in particular that \(\eta_k(\mu) = \vartheta_k(\mu) = 0\) if and only if \(\mu_k = 0\). 

\subsection{The functions \(\mathcal{G}\) and \(\mathcal{H}\)}
We define a function \(\mathcal{G}: \Omega \to \Omega\) by setting \(\mathcal{G}\mu = \nu\), where
\begin{align}
\nu_k = \begin{cases}
\mu_k - 1 & \textup{if \(\vartheta_k(\bmu)\) is odd};\\
\mu_k + 1 & \textup{if \(\vartheta_{k-1}(\bmu)\) is odd};\\
0 & \textup{if \(\mu_k = 0\) and \(\vartheta_{k-1}(\bmu)\) is even.}\\
\end{cases}
\label{Gcases}
\end{align}
We also define a dual function \(\mathcal{H}: \Omega \to \Omega\) by setting \(\mathcal{H}\mu = \nu\), where
\begin{align}
\nu_k = \begin{cases}
\mu_k - 1 & \textup{if \(\eta_k(\bmu)\) is odd};\\
\mu_k + 1 & \textup{if \(\eta_{k+1}(\bmu)\) is odd};\\
0 & \textup{if \(\mu_k = 0\) and \(\eta_{k+1}(\bmu)\) is even}.\\
\end{cases}
\label{Hcases}
\end{align}
We also
define maps \(\textup{zero}_0: \Omega\to \Omega\) and \(\textup{one}_0 : \Omega \to \Omega\) by setting
\begin{align*}
(\textup{zero}_0(\mu))_k = \begin{cases}
0 & \textup{if }k = 0;\\
\mu_k & \textup{otherwise}
\end{cases}
\qquad
\textup{and}
\qquad
(\textup{one}_0(\mu))_k = \begin{cases}
1 & \textup{if }k = 0;\\
\mu_k & \textup{otherwise}
\end{cases}
\end{align*}
In other words,  \(\textup{zero}_0(\mu)\) is given by replacing \(\mu_0\) with zero, and \(\textup{one}_0(\mu)\) is given by replacing \(\mu_0\) with one.
Then we define maps \(\bar{\mathcal{G}}\), \(\bar{\mathcal{H}}: \Omega \to \Omega\) by setting 
\begin{align*}
\bar{\mathcal{G}} = \mathcal{G}  \circ \textup{one}_0 \qquad \textup{and} \qquad  \bar{\mathcal{H}} = \textup{zero}_0 \circ \mathcal{H}.
\end{align*}

Let \(s: \Omega \to \Omega\) be the `flip' involution given by setting \(s\mu = \nu\), where \(\nu_k = \mu_{-k}\) for all \(k \in \ZZ\). It follows then that \(\vartheta_k(s\mu) = \eta_{-k}(\mu)\) and \(\eta_k(s\mu) = \vartheta_{-k}(\mu)\), and therefore that \(s  \mathcal{H}= \mathcal{G}  s\).
The following lemma provides an alternate description of the maps \(\mathcal{G}, \mathcal{H}: \Omega \to \Omega\). 

\begin{lemma}\label{altdesc}
Let \(\mu \in \Omega\). Then we have that 
\begin{enumerate}[(1)]
\item \(\mathcal{H}\mu\) is the unique element \(\nu \in \Omega\) with the property that, for all \(k \in \ZZ\):
\begin{enumerate}
\item  \(\nu_k \in \{\mu_k \pm 1\}\) if \(\mu_k >0\);
\item \(\nu_k \in \{0,1\}\) if \(\mu_k = 0\);
\item \(\nu_{k+1} = \mu_{k+1}-1\) if and only if \(\nu_k = \mu_k + 1\).
\end{enumerate}
Moreover if \(\mu \in \Omega_{\geq 1}\), then \(\bar{\mathcal{H}}\mu\) is the unique element \(\nu \in \Omega_{\geq 1}\) which satisfies (1a, 1b, 1c) for all \(k \geq 1\).
\item  \(\mathcal{G}\mu\) is the unique element \(\nu \in \Omega\) with the property that, for all \(k \in \ZZ\):
\begin{enumerate}
\item  \(\nu_k \in \{\mu_k \pm 1\}\) if \(\mu_k >0\);
\item \(\nu_k \in \{0,1\}\) if \(\mu_k = 0\);
\item \(\nu_{k+1} = \mu_{k+1}+1\) if and only if \(\nu_k = \mu_k - 1\).
\end{enumerate}
Moreover if \(\mu \in \Omega_{\geq 1}\), then \(\bar{\mathcal{G}}\mu\) is the unique element \(\nu \in \Omega_{\geq 1}\) which satisfies (2a, 2b, 2c) for all \(k \geq 1\), and \(\nu_1 = \mu_1 + 1\).
\end{enumerate}
\end{lemma}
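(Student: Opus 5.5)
The plan is a direct verification of existence followed by a uniqueness argument via a one-sided recursion. Part (2) is then reduced to part (1) using the flip \(s\), except for the \(\bar{\mathcal{G}}\) clause, which needs a short separate argument.

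\emph{Existence for \(\mathcal{H}\).} Fix a maximal run of nonzero entries \(\mu_a, \ldots, \mu_b\), so that \(\mu_{b+1} = 0\). For \(k\) in this run we have \(\eta_k(\mu) = b+1-k\) and \(\eta_{k+1}(\mu) = \eta_k(\mu) - 1\). Hence, when \(\mu_k > 0\), exactly one of \(\eta_k, \eta_{k+1}\) is odd. When \(\mu_k = 0\), we have \(\eta_k = 0\), which is even. So the three cases in (\ref{Hcases}) are mutually exclusive and exhaustive, and \(\mathcal{H}\mu\) is well defined. Properties (1a) and (1b) can be read off directly from (\ref{Hcases}). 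For (1c), the definition gives
\[
\nu_k = \mu_k + 1 \iff \eta_{k+1}(\mu) \text{ is odd},
\]
and this holds whether \(\mu_k > 0\) or \(\mu_k = 0\). We also have \(\nu_{k+1} = \mu_{k+1} - 1 \iff \eta_{k+1}(\mu)\) is odd. When \(\mu_{k+1} = 0\) both sides of this are false, because \(\nu_{k+1} \geq 0\) and \(\eta_{k+1} = 0\). This gives (1c). Finite support of \(\mathcal{H}\mu\) holds because \(\nu_k = 0\) whenever \(\mu_k = \mu_{k+1} = 0\).

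\emph{Uniqueness for \(\mathcal{H}\).} Let \(\nu\) satisfy (1a)--(1c), and set \(x_k = 1\) if \(\nu_k = \mu_k + 1\) and \(x_k = 0\) otherwise. By (1a) and (1b), \(\nu\) is completely determined by the sequence \((x_k)\). Condition (1c) says that \(x_k\) equals the indicator of the event \(\nu_{k+1} = \mu_{k+1} - 1\). This indicator is \(1 - x_{k+1}\) if \(\mu_{k+1} > 0\), and it is \(0\) if \(\mu_{k+1} = 0\). This is a recursion that runs from right to left and is anchored at every zero of \(\mu\). Starting at \(k\), let \(m = \eta_{k+1}(\mu)\), so that \(k+1+m\) is the first zero of \(\mu\) to the right of \(k\). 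Iterating the recursion gives \(x_{k+m} = 0\), with \(x\) alternating as we move left from there, so \(x_k \equiv m \pmod 2\). Thus \(x\), and hence \(\nu\), is forced, and it must coincide with \(\mathcal{H}\mu\).

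\emph{The \(\bar{\mathcal{H}}\) clause.} Take \(\mu \in \Omega_{\geq 1}\) and impose the conditions only for \(k \geq 1\). For such \(k\), the recursion for \(x_k\) involves only \(\mu_{k+1}\) with \(k+1 \geq 2\). So the same argument determines \(\nu_k\) for every \(k \geq 1\), and these values agree with \((\mathcal{H}\mu)_k\). Membership \(\nu \in \Omega_{\geq 1}\) forces \(\nu_k = 0\) for \(k \leq 0\). On the other side, \((\mathcal{H}\mu)_k = 0\) for \(k \leq -1\), since there \(\mu_k = \mu_{k+1} = 0\), and the entry at \(k = 0\) is erased by \(\textup{zero}_0\). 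Hence \(\nu = \bar{\mathcal{H}}\mu\).

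\emph{Part (2).} We use \(s\mathcal{H} = \mathcal{G}s\). Put \(\mu' = s\mu\) and \(\nu' = s\nu\). Conditions (a) and (b) are invariant under \(s\). Condition (1c) for \((\mu, \nu)\) at index \(k\) becomes, at index \(j = -k-1\), the statement
\[
\nu'_j = \mu'_j - 1 \iff \nu'_{j+1} = \mu'_{j+1} + 1,
\]
which is exactly (2c). So the characterization of \(\mathcal{G}\) follows from part (1).

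The \(\bar{\mathcal{G}}\) clause is the one step that does not transfer under \(s\), because \(s\) does not preserve \(\Omega_{\geq 1}\); I expect it to be the main point needing care. I would argue directly, using the mirror recursion that runs from left to right. Set \(y_k = 1\) if \(\nu_k = \mu_k - 1\) and \(y_k = 0\) otherwise. Condition (2c) gives \(\nu_{k+1} = \mu_{k+1} + 1\) exactly when \(y_k = 1\). Because \(\nu_{k+1} = \mu_{k+1} + 1\) is the only alternative to \(\nu_{k+1} = \mu_{k+1} - 1\) under (2a) and (2b), this determines \(y_{k+1}\) from \(y_k\). The extra hypothesis \(\nu_1 = \mu_1 + 1\) plays the role of the seed value \(y_0 = 1\). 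From there the recursion determines \(\nu_k\) for all \(k \geq 1\).

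It remains to check that \(\mathcal{G}(\textup{one}_0\,\mu)\) has this seed and the required support. We have \(\vartheta_0(\textup{one}_0\,\mu) = 1\), which is odd, so the \(0\)th entry is \(1 - 1 = 0\) and the \(1\)st entry is \(\mu_1 + 1\). For \(k < 0\) the entry is \(0\), because there the composition vanishes at \(k\) and \(\vartheta_{k-1} = 0\). The conditions for \(k \geq 1\) hold by the \(\mathcal{G}\) case above. Uniqueness then gives \(\nu = \bar{\mathcal{G}}\mu\).
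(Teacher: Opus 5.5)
Your proof is correct and follows essentially the same route as the paper: the right-to-left recursion forced by (1c) is anchored at the zeros of \(\mu\) and has parity governed by \(\eta_{k+1}(\mu)\), so it pins the solution down as \(\mathcal{H}\mu\), and the flip \(s\) with \(s\mathcal{H}=\mathcal{G}s\) transfers this to \(\mathcal{G}\). Your left-to-right recursion for the \(\bar{\mathcal{G}}\) clause spells out what the paper calls immediate. One small wording point there: when \(\mu_{k+1}=0\) the dichotomy is \(\nu_{k+1}\in\{0,1\}\) rather than \(\mu_{k+1}\pm 1\), but \(\nu_{k+1}\) is still forced by \(y_k\).
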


\begin{proof}
Let \(\nu\) be the element defined as in (1a, 1b, 1c) above. 
It is straightforward to see that \(\nu\) exists and is uniquely defined---if \(m\) is maximal such that \(\mu_m > 0\), then we have \(\nu_{m'} = 0\) for all \(m' > m\), \(\nu_m = \mu_m -1\), and rules (1a, 1b, 1c) above iteratively force the choice of \(\nu_{m''}\) for all \(m''< m\), proceeding in the downward direction. 

We now claim that for all \(k \in \ZZ\), we have
\begin{align}
\nu_k = \begin{cases}
\mu_k -1 & \textup{if }\eta_k(\mu) \textup{ is odd};\\
\mu_k +1 & \textup{if }\eta_{k+1}(\mu) \textup{ is odd};\\
0 & \textup{if }\mu_k = 0 \textup{ and }\eta_{k+1}(\mu) \textup{ is even}.
\end{cases}
\label{casesnuparx}
\end{align}
Clearly (\ref{casesnuparx}) holds for \(k \gg 0\), since in this case  \(\mu_k = \nu_k = \eta_{k+1}(\mu) = 0\). Now, going by downward induction on \(k \in \ZZ_{\geq 0}\), we assume that (\ref{casesnuparx}) holds for \(k' = k+1\), and prove that (\ref{casesnuparx}) holds for \(k\).\\

\noindent{\em Case 1. Assume \(\mu_k = 0 \textup{ and }\eta_{k+1}(\mu) \textup{ is even}.\)} Then by (1b) we have \(\nu_k \in \{0,1\}\). Since \(\eta_{k'}(\mu)\) is even, by induction assumption \(\nu_{k'} \neq \mu_{k'} -1\). Then by (1c) we have \(\nu_k \neq \nu_k +1 = 1\), so \(\nu_k = 0\), as desired.\\

\noindent{\em Case 2. Assume \(\eta_k(\mu)\) is odd.} Then \(\mu_k>0\), so \(\nu_k \in \{\mu_k \pm 1\}\) by (1a). 
We have that  \(\eta_{k'}(\mu)\) is even, so by induction assumption \(\nu_{k'} \neq \mu_{k'} -1\). Then by (1c) we have \(\nu_k \neq \nu_k +1 \), so \(\nu_k= \mu_k - 1\), as desired.\\

\noindent{\em Case 3.  Assume \(\eta_{k+1}(\mu)\) is odd.} Then since \(\eta_{k'}(\mu)\) is odd, we have \(\nu_{k'} = \mu_{k'}-1\) by induction assumption. Then by (1c) we have \(\nu_k = \mu_k+1\), as desired.\\

Since (\ref{casesnuparx}) holds for all \(k\), we have \(\nu = \mathcal{H} \mu\). The `moreover' clause in (1) is immediate then as well, completing the proof of (1). To prove (2), note that \(\nu\) satisfies (2a, 2b, 2c) for \(\mu\) if and only if \(\nu' = s(\nu)\) satisfies (1a, 1b, 1c) for \(\mu' = s(\mu)\). Therefore using (1) we have
\begin{align*}
\mathcal{G} \mu = \mathcal{G}s\mu' = s\mathcal{H} \mu' = s \nu' = \nu,
\end{align*}
proving the initial statement in (2). The `moreover' clause in (2) follows directly since \(\vartheta(\textup{add}_0(\mu)) = 1\) in this setting.
\end{proof}

\begin{corollary}\label{pressum}
For all \(\mu \in \Omega\), we have \(|\mu| = |\mathcal{H} \mu| = |\mathcal{G} \mu|\).
\end{corollary}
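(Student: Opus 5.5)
We deduce Corollary~\ref{pressum} directly from the characterization in Lemma~\ref{altdesc}. Because \(s\mathcal{H} = \mathcal{G}s\) and \(s\) preserves \(|\cdot|\), it suffices to prove \(|\mathcal{H}\mu| = |\mu|\). The identity for \(\mathcal{G}\) then follows from \(|\mathcal{G}\mu| = |\mathcal{G}s(s\mu)| = |s\mathcal{H}(s\mu)| = |\mathcal{H}(s\mu)| = |s\mu| = |\mu|\).

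Set \(\nu = \mathcal{H}\mu\) and define the difference \(d_k = \nu_k - \mu_k\) for each \(k\). By (1a) and (1b) of Lemma~\ref{altdesc}, each \(d_k\) is \(+1\), \(-1\), or \(0\), and \(d_k = 0\) can occur only when \(\mu_k = \nu_k = 0\). Condition (1c) says exactly that \(d_k = +1\) if and only if \(d_{k+1} = -1\). Hence the map \(k \mapsto k+1\) is a bijection from \(\{k : d_k = +1\}\) onto \(\{k : d_k = -1\}\). Both sets are finite, since \(\mu\) and \(\nu\) are finitely supported.

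It follows that \(|\nu| - |\mu| = \sum_k d_k = \#\{d_k=+1\} - \#\{d_k=-1\} = 0\). Alternatively, one can read this off the case formula (\ref{Hcases}): the increments at \(k\) with \(\eta_{k+1}(\mu)\) odd pair off with the decrements at \(k+1\), where \(\eta_{k+1}(\mu)\) is odd by the same condition.

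No step here is a serious obstacle. The only care needed is to confirm that the sums are finite, so that rearranging them is legitimate, and this holds because all compositions are finitely supported.
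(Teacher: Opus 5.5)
Your proof is correct and is essentially the paper's argument. The paper likewise notes that entries change by at most one, and that by Lemma~\ref{altdesc}(1c, 2c) the raised and lowered entries occur in adjacent pairs, so the sum is preserved. The only difference is that you obtain the \(\mathcal{G}\) case from the \(\mathcal{H}\) case via \(s\mathcal{H} = \mathcal{G}s\), rather than citing (2c) directly; this is equally valid.
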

\begin{proof}
By construction, the maps \(\mathcal{G}\) and \(\mathcal{H}\) either preserve entries, or else raise or lower entries by one. 
One sees from Lemma~\ref{altdesc}(1c, 2c) that raised/lowered entries appear in neighboring pairs, so the overall sum of entries must be preserved.
\end{proof}

\begin{lemma}\label{GHisom}
For \(n \in \ZZ_{\geq 0}\), the maps \(\mathcal{G}, \mathcal{H}\) are mutually inverse on \(\Omega(n)\).
\end{lemma}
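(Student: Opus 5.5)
The plan is to avoid the case formulas (\ref{Gcases}) and (\ref{Hcases}) entirely and work only with the uniqueness characterizations of Lemma~\ref{altdesc}. By Corollary~\ref{pressum}, both \(\mathcal{G}\) and \(\mathcal{H}\) map \(\Omega(n)\) into \(\Omega(n)\), so it suffices to prove \(\mathcal{G}\mathcal{H}\mu = \mu\) and \(\mathcal{H}\mathcal{G}\mu = \mu\) for every \(\mu \in \Omega\). For the first identity, fix \(\mu\) and set \(\nu = \mathcal{H}\mu\). By Lemma~\ref{altdesc}(2), \(\mathcal{G}\nu\) is the \emph{unique} composition satisfying (2a, 2b, 2c) relative to \(\nu\). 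It will therefore suffice to check that \(\mu\) itself satisfies (2a, 2b, 2c) relative to \(\nu\).

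First I record what (1a, 1b) say about the difference \(d_k = \nu_k - \mu_k\). Each \(d_k\) lies in \(\{-1, 0, 1\}\), and \(d_k = 0\) forces \(\mu_k = \nu_k = 0\). Indeed, if \(\mu_k > 0\) then \(d_k = \pm 1\) by (1a), while if \(\mu_k = 0\) then \(\nu_k \in \{0, 1\}\) by (1b).

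\emph{Condition (2a).} Suppose \(\nu_k > 0\). If \(\mu_k > 0\), then \(\mu_k = \nu_k \mp 1\). If \(\mu_k = 0\), then \(\nu_k = 1\), so \(\mu_k = \nu_k - 1\). In either case \(\mu_k \in \{\nu_k \pm 1\}\).

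\emph{Condition (2b).} Suppose \(\nu_k = 0\). Then either \(\mu_k = 0\), or \(\mu_k > 0\) and \(0 = \mu_k - 1\) (since \(\mu_k + 1 > 0\)), which gives \(\mu_k = 1\). In either case \(\mu_k \in \{0, 1\}\).

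\emph{Condition (2c).} With the roles of \(\mu\) and \(\nu\) exchanged, (2c) reads
\[
\mu_{k+1} = \nu_{k+1} + 1 \iff \mu_k = \nu_k - 1 .
\]
This is literally the statement \(\nu_{k+1} = \mu_{k+1} - 1 \iff \nu_k = \mu_k + 1\), which is (1c) for \(\nu = \mathcal{H}\mu\).

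Uniqueness in Lemma~\ref{altdesc}(2) then gives \(\mathcal{G}\nu = \mu\), that is, \(\mathcal{G}\mathcal{H} = \mathrm{id}\).

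For the reverse composition I will not repeat the computation but use the flip involution \(s\) and the relation \(s\mathcal{H} = \mathcal{G}s\) noted before Lemma~\ref{altdesc}. Since \(s\) is an involution, this relation also gives \(\mathcal{H}s = s\mathcal{G}\). Hence
\[
\mathcal{H}\mathcal{G}\mu = \mathcal{H}\mathcal{G}ss\mu = \mathcal{H}s\mathcal{H}s\mu = s\mathcal{G}\mathcal{H}s\mu = ss\mu = \mu .
\]
Alternatively, the same verification can be run with the roles of (1) and (2) swapped.

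The only delicate point is the bookkeeping in (2a, 2b): the pattern \(\mu_k = 0\), \(\nu_k = 1\) must be matched with the (2a) case \(\mu_k = \nu_k - 1\), and the pattern \(\mu_k = 1\), \(\nu_k = 0\) with the (2b) case. Beyond that, the argument is a direct translation between the two characterizations in Lemma~\ref{altdesc}, which are symmetric under \(\mu \leftrightarrow \nu\).
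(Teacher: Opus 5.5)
Your proof is correct, and it takes a different route from the paper's.

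The paper sets $\omega=\mathcal{G}\mathcal{H}\mu$ and compares $\omega$ with $\mu$ entry by entry. It uses only that $\mathcal{H}\mu$ and $\mathcal{G}\mathcal{H}\mu$ satisfy the relations (1a--c) and (2a--c) of Lemma~\ref{altdesc}. From these it rules out $\omega_m=\mu_m+2$ by a minimal-counterexample descent: an occurrence at $m$ forces one at $m-2$. It then rules out $\omega_m=\mu_m+1$ by reducing that case to the first. Finally it concludes $\omega=\mu$ from $\omega\le\mu$ entrywise together with $|\omega|=|\mu|$ (Corollary~\ref{pressum}).

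You instead observe that conditions (2a--c) with input $\nu$ and output $\mu$ are the same as (1a--c) with input $\mu$ and output $\nu$. The uniqueness clause of Lemma~\ref{altdesc}(2) then gives $\mathcal{G}\mathcal{H}\mu=\mu$ immediately. Your checks of (2a) and (2b) are right, including the boundary patterns $(\mu_k,\nu_k)=(0,1)$ and $(1,0)$. Your flip argument for $\mathcal{H}\mathcal{G}$ is the same as the paper's.

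What your version buys: it is shorter, and it shows why the maps are inverse, since they are one relation read in opposite directions. It also needs Corollary~\ref{pressum} only to know that $\Omega(n)$ is preserved, not to force equality.

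What it costs: it rests on the uniqueness half of Lemma~\ref{altdesc}, which the paper handles in one sentence (downward forcing for (1), and transport through the flip $s$ for (2)). The paper's proof of this lemma uses only that the compositions satisfy the relations, plus the sum count.
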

\begin{proof}
We have by Corollary~\ref{pressum} that 
\(\mathcal{G}, \mathcal{H}\) restrict to maps on \(\Omega(n)\).
Let \(\mu \in \Omega\), and set \(\nu = \mathcal{H}\mu\), \(\omega = \mathcal{G}\nu\). Applications of \(\mathcal{G}\) and \(\mathcal{H}\) change entries of compositions by at most one, so we have \(\omega_m \in [\mu_m - 2, \mu_m + 2]\) for all \(m\). \\

\noindent{\em Claim I. There exists no \(m \in \ZZ\) such that \(\omega_m = \mu_m + 2\).} Indeed, assume by way of contradiction that there exists an occurrence of such \(m\). We assume moreover that \(m\) is minimal with this property. Then in consideration of Lemma~\ref{altdesc}, we must have \(\nu_m = \mu_m + 1\) and \(\omega_m = \nu_m + 1\). Then by Lemma~\ref{altdesc}(1c, 2c) we have \(\nu_{m-1} \neq \mu_{m-1} + 1\) and \(\omega_{m-1} = \nu_{m-1}-1\). Hence \(\nu_{m-1} > 0\). If \(\mu_{m-1} = 0\), then Lemma~\ref{altdesc}(1b) would force \(\nu_{m-1} = 0\) or \(\nu_{m-1} = 1 = \mu_{m-1} + 1\), but as neither of these are the case, it must be that \(\mu_{m-1} > 0\), and hence \(\nu_{m-1} = \mu_{m-1} -1\) by Lemma~\ref{altdesc}(1a). Then \(\nu_{m-2} = \mu_{m-2} +1\) by Lemma~\ref{altdesc}(1c). Then \(\nu_{m-2} > 0\), so \(\omega_{m-2} \in \{\nu_{m-2} \pm 1\}\) by Lemma~\ref{altdesc}(2a). But since \(\omega_{m-1} = \nu_{m-1} - 1\), it follows from  by Lemma~\ref{altdesc}(2c) that \(\omega_{m-2} = \nu_{m-2} +1\). Therefore
\begin{align*}
\omega_{m-2} = \nu_{m-2} + 1 = \mu_{m-2} + 2,
\end{align*}
contradicting the minimality assumption on \(m\), and proving Claim I.\\

\noindent{\em Claim II. There exists no \(m \in \ZZ\) such that \(\omega_m = \mu_m + 1\).} Indeed, assume by way of contradiction that there exists an occurrence of such \(m\). It follows then that either
\begin{align*}
\nu_m = \mu_m + 1, \;\omega_m = \nu_m, \qquad 
\textup{or}
\qquad
\nu_m = \mu_m, \;\omega_m = \nu_m +1.
\end{align*}
The former case is impossible however, since in this setting we have \(\nu_m >0\), which would imply by Lemma~\ref{altdesc}(2a) that \(\omega_m \in \{\nu_m \pm 1\}\). Thus we assume the latter; \(\nu_m = \mu_m, \;\omega_m = \nu_m +1\). Then \(\mu_m = \nu_m = 0\) by Lemma~\ref{altdesc}(1b). By Lemma~\ref{altdesc}(2c) we have \(\omega_{m-1} = \nu_{m-1} -1\), so \(\nu_{m-1} > 0\). Since \(\nu_m \neq \mu_m -1\), we cannot have that \(\nu_{m-1} = \mu_{m-1} +1\) by Lemma~\ref{altdesc}(1c). Thus either \(\nu_{m-1} = \mu_{m-1} = 0\), or else \(\nu_{m-1} = \mu_{m-1} -1\) by Lemma~\ref{altdesc}(1a,1b). But since \(\nu_{m-1} > 0\), it follows then that \(\nu_{m-1} = \mu_{m-1} -1\), and so \(\nu_{m-2} = \mu_{m-2} + 1\) by Lemma~\ref{altdesc}(1c). Thus \(\nu_{m-2} > 0\), so \(\omega_{m-2} \in \{\nu_{m-2} \pm 1\}\). Since 
\(\omega_{m-1} = \nu_{m-1}-1\), it cannot be that \(\omega_{m-2} = \nu_{m-2} - 1\) by Lemma~\ref{altdesc}(2c). Therefore \(\omega_{m-2} = \nu_{m-2} + 1\), and so we have
\begin{align*}
\omega_{m-2} = \nu_{m-2} + 1 = \mu_{m-2} + 2,
\end{align*}
which contradicts Claim I, and thereby proves Claim II.

Now, by Claim I and II, we have \(\omega_m \geq \mu_m\) for all \(m \in \ZZ\). But then since \(|\mu| = |\omega|\) by Corollary~\ref{pressum}, it must be that \(\omega = \mu\). Thus \(\mathcal{G} \mathcal{H} \mu = \mu\) for all \(\mu \in \Omega\). Therefore
\begin{align*}
\mathcal{H} \mathcal{G} \mu = \mathcal{H} \mathcal{G} ss\mu  = \mathcal{H} s \mathcal{H} s\mu = s\mathcal{G}  \mathcal{H} s\mu = ss \mu = \mu
\end{align*}
for all \(\mu \in \Omega\) as well, completing the proof.
 \end{proof}
 
 \begin{corollary}\label{leftinvcor}
 Restricted to \(\Omega_{\geq 1}(n)\), the map \(\overline{\mathcal{H}}\) is a left inverse to both \(\mathcal{G}\) and \(\overline{\mathcal{G}}\).
 \end{corollary}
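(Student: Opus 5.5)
We must show: for μ ∈ Ω_{≥1}(n), the composites \(\bar{\mathcal H}\mathcal G\mu = \mu\) and \(\bar{\mathcal H}\bar{\mathcal G}\mu=\mu\).

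For the first identity, we use Lemma~\ref{GHisom}. It gives \(\mathcal H\mathcal G\mu=\mu\), so \(\bar{\mathcal H}\mathcal G\mu = \textup{zero}_0(\mathcal H\mathcal G\mu)=\textup{zero}_0(\mu)=\mu\), since \(\mu_0=0\) for \(\mu\in\Omega_{\geq 1}\). This is immediate.

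For the second identity, set \(\mu' = \textup{one}_0(\mu)\), which is \(\mu\) with \(\mu'_0=1\). This \(\mu'\) lies in \(\Omega(n+1)\). Lemma~\ref{GHisom} applies on \(\Omega(n+1)\), so
\[
\mathcal H\bar{\mathcal G}\mu = \mathcal H\mathcal G\mu' = \mu'.
\]
Hence
\[
\bar{\mathcal H}\bar{\mathcal G}\mu=\textup{zero}_0(\mu')=\textup{zero}_0(\mu)=\mu .
\]
The only point to check is that \(\bar{\mathcal G}\mu\) is a composition on which Lemma~\ref{GHisom} is applicable; this is automatic, since that lemma holds on all of \(\Omega(n+1)\) and no support condition is needed.

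As a consistency check against Lemma~\ref{altdesc}, note that \(\bar{\mathcal G}\mu\) lies in \(\Omega_{\geq 1}\), as asserted there. Indeed, with \(\mu'_0=1\) and \(\mu'_{-1}=0\), we get \(\vartheta_0(\mu')=1\), which is odd, so \((\mathcal G\mu')_0=0\). All entries at negative indices stay \(0\), because \(\vartheta_{k-1}(\mu')=0\) for \(k\le -1\). Thus \(\bar{\mathcal G}\mu\in\Omega_{\geq1}(n)\). So both \(\mathcal G\) and \(\bar{\mathcal G}\) map \(\Omega_{\geq1}(n)\) into \(\Omega_{\geq1}(n)\), and \(\bar{\mathcal H}\) is a left inverse to each there.

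There is no real obstacle here. The only subtlety is that the second identity must invoke Lemma~\ref{GHisom} at size \(n+1\) rather than \(n\).
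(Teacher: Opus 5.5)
Your argument is correct and is the one the paper intends: the corollary is stated without proof as an immediate consequence of Lemma~\ref{GHisom}, unwinding \(\bar{\mathcal H}=\textup{zero}_0\circ\mathcal H\) and \(\bar{\mathcal G}=\mathcal G\circ\textup{one}_0\) exactly as you do. There is one slip in your closing ``consistency check'': \(\bar{\mathcal G}\mu=\mathcal G\mu'\) lies in \(\Omega_{\geq 1}(n+1)\), not \(\Omega_{\geq 1}(n)\), because \(|\mathcal G\mu'|=|\mu'|=n+1\) by Corollary~\ref{pressum} (as your own use of Lemma~\ref{GHisom} at size \(n+1\) already shows), so only \(\mathcal G\), not \(\bar{\mathcal G}\), preserves \(\Omega_{\geq1}(n)\); this side remark is not used in the proof and does not affect its validity.
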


 \subsection{The \(\ORD\) and \(\CALC\) functions}\label{RKsecappend}
 Recall the spar tableau, and the \(\ORD\) and \(\CALC\) functions defined in \S\ref{partitioncombsec}.
For \(\mu \in \Parreg\), it will be useful to define a (tableau for a) partition \(\lambda = \CALC_\textup{alt}(\mu)\) as follows. Start by setting \(\lambda = \varnothing\), and then for each \(t = c(\mu), c(\mu)-1, \ldots, 1\), work from left to right and add a box labeled \(t\) at addable positions in \(\lambda\) if there is {\em not} a box labeled \(t\) in the column immediately to the left---starting from the first column if \(c_t(\mu)\) is odd, and starting from the second column if \(c_t(\mu)\) is even. The following lemma is straightforward to check, and we leave it as an exercise for the reader. 

\begin{lemma}\label{altdesccalc}
For \(\mu \in \Parreg\), the partitions (and associated tableau) \(\CALC(\mu)\) and \(\CALC_\textup{alt}(\mu)\) are identical.
\end{lemma}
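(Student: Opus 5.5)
We prove that the two constructions place the same labels in the same positions, by induction on the labels. Write \(\lambda = \CALC(\mu)\) and \(\lambda^{\textup{alt}} = \CALC_\textup{alt}(\mu)\), each viewed as a tableau. For each label \(t\), let \(R_t\) and \(R_t^{\textup{alt}}\) be the sets of boxes labeled \(t\) in \(\lambda\) and \(\lambda^{\textup{alt}}\).

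\textbf{Strategy.} We show \(R_t = R_t^{\textup{alt}}\) by downward induction on \(t = c(\mu), c(\mu)-1, \ldots, 1\). The induction hypothesis is that the boxes labeled \(>t\) coincide in the two tableaux. We also carry along the structural fact that, in \(\CALC(\mu)\), entries strictly decrease down columns and along rows. Both facts are read off directly from the rule ``largest integer strictly less than any entry above or to the left''.

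\textbf{Step 1: where the label \(t\) can go in \(\CALC(\mu)\).}
\begin{enumerate}
\item[(a)] Since entries decrease along rows and columns, the boxes labeled \(\geq t\) form a partition shape \(\lambda_{\geq t}\). Moreover, \(R_t\) is a set of addable boxes of \(\lambda_{>t}\), with at most one in each column.
\item[(b)] If column \(c\) contains \(t\), then column \(c+1\) contains \(t\) if and only if column \(c\) does not... this is not quite right as stated. The actual rule is the exclusion condition: a box in column \(c+1\) cannot receive \(t\) when \(t\) already appears in column \(c\).
\item[(c)] Combining (a) and (b): in \(\CALC(\mu)\), the label \(t\) occupies exactly those addable positions of \(\lambda_{>t}\), scanned left to right, whose column immediately to the left does not contain \(t\). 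This is precisely the \(\CALC_\textup{alt}\) rule.
\end{enumerate}

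\textbf{Step 2: the starting column.} In \(\CALC(\mu)\), \(t\) lies in the first column exactly when \(c_t(\mu)\) is odd, since the first column is defined by the odd columns of \(\mu\). This matches the \(\CALC_\textup{alt}\) starting convention.

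\textbf{Step 3: the main obstacle, showing the scan never skips a column.} We must show that every addable position of \(\lambda_{>t}\) in a column whose left neighbour lacks \(t\) really does receive \(t\) in \(\CALC(\mu)\). The danger is that \(\CALC(\mu)\) might instead place a smaller label there, or terminate that column early. To rule this out, take the cell at the bottom of \(\lambda_{>t}\) in such a column. Then:
\begin{itemize}
\item the entries above it are \(>t\);
\item the entry to its left is \(\geq t\), and in fact \(>t\), because that left column lacks \(t\);
\item the integers strictly between \(t\) and these bounds already appear in the left column. This uses the induction hypothesis together with the alternation pattern forced by the exclusion rule.
\end{itemize}
Hence the ``largest admissible integer'' at that cell is exactly \(t\).

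The converse also needs checking: \(t\) is never placed when the left column contains \(t\), and it is never placed below an existing \(t\).

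Together these show that the boxes labeled \(t\) agree in the two tableaux, which closes the induction. Since every box of either tableau carries some label in \([1, c(\mu)]\), the two tableaux, and hence the two partitions, are identical.
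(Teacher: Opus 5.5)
The paper gives no proof of this lemma (it is left as an exercise), so your argument has to stand on its own. Your overall structure works: downward induction on \(t\), plus an inner left-to-right induction over columns. State the inner induction explicitly, because Step 3 needs to know that column \(c-1\) of \(\CALC(\mu)\) lacks \(t\) exactly when column \(c-1\) of \(\CALC_\textup{alt}(\mu)\) does. The ``converse'' you list at the end is already supplied by Step 1(a),(b): a \(t\)-box of \(\CALC(\mu)\) is an addable cell of \(\lambda_{>t}\), there is at most one per column, and it never sits beside a \(t\) in the left column. So nothing more is needed there, and the stray self-correction in 1(b) should be removed.

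\textbf{The gap.} The third bullet of Step 3 is the whole content of the lemma, and you only assert it; ``the alternation pattern forced by the exclusion rule'' is not why it holds. Here is an argument that works.
\begin{itemize}
\item Let \(x=(r,c)\), \(c\ge 2\), be the addable cell of \(\lambda_{>t}\) in column \(c\). Let \(a\) be the entry above it (\(a=\infty\) if \(r=1\)) and \(b\) the entry to its left. Note \(b>t\) simply because \((r,c-1)\in\lambda_{>t}\).
\item Take \(s\) with \(t<s<\min(a,b)\). Since \(\lambda_{>s}\subseteq\lambda_{>t}\) and \(a>s\), column \(c\) of \(\lambda_{>s}\) has length exactly \(r-1\). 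Since \(b>s\), column \(c-1\) of \(\lambda_{>s}\) has length at least \(r\). So \(x\) is the addable cell of column \(c\) during round \(s\) of \(\CALC_\textup{alt}\).
\item Suppose column \(c-1\) did not receive \(s\) in round \(s\). When \(c=2\) this means \(c_s(\mu)\) is even, so the scan starts at column \(2\). In every case the rule would place \(s\) at \(x\), forcing \(x\in\lambda_{>t}\), a contradiction.
\item Hence \(s\) lies in column \(c-1\) of \(\CALC_\textup{alt}(\mu)\), and therefore of \(\CALC(\mu)\) by the induction hypothesis.
\end{itemize}
To conclude, combine this with three further facts: \(t<\min(a,b)\); \(t\) is absent from column \(c-1\) by the inner induction; and \(\CALC\) actually reaches \(x\), because the cells above it carry labels \(>t\). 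The \(\CALC\) rule (largest integer below \(a\) and \(b\) not appearing in the left column) then assigns exactly \(t\) to \(x\). With this argument inserted, your proof is complete.
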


Let \(\textup{mc}: \Omega_{\geq 1} \to \Par\) be the bijection defined by setting \(\textup{mc}(\mu) = \lambda\), where \(n_k(\lambda) = \mu_k\) for all \(k \in \Z_{>0}\). 
Define operators \(\mathcal{G}_\Par, \overline{\mathcal{G}}_\Par, \overline{\mathcal{H}}_\Par : \Par \to \Par\) by setting
\begin{align*}
\mathcal{G}_\Par = \textup{mc} \circ \mathcal{G} \circ \textup{mc}^{-1};
\qquad
\overline{\mathcal{G}}_\Par= \textup{mc} \circ \overline{\mathcal{G}} \circ \textup{mc}^{-1};
\qquad
\overline{\mathcal{H}}_\Par= \textup{mc} \circ \overline{\mathcal{H}} \circ \textup{mc}^{-1}.
\end{align*}
Note that if \(\lambda = \textup{mc}(\mu)\), then adding an addable box to a row of length \(k\) in \(\lambda\) is tantamount to subtracting one from \(\mu_k\) and adding one to \(\mu_{k+1}\), and removing a removable box from a row of length \(k\) in \(\lambda\) is tantamount to subtracting one from \(\mu_k\) and (when \(k>1\)) adding one to \(\mu_{k-1}\). 
In view of Lemma~\ref{altdesc}, we thus may straightforwardly describe the workings of these operators:
\begin{enumerate}
\item[(GP1)] \(\mathcal{G}_\Par\) acts by adding a box at each addable position in \(\lambda\) if and only if there was not a box added in the column directly to the left, starting from the second column.
\item[(GP2)] \(\overline{\mathcal{G}}_\Par\) acts by adding a box at each addable position in \(\lambda\) if and only if there was not a box added in the column directly to the left, starting from the first column.
\item[(HP)] \(\overline{\mathcal{H}}_\Par\) acts by deleting the rightmost removable box, and then working to the left, deleting each removable box if and only if a box was not deleted in the column directly to the right.
\end{enumerate}
We may record one additional feature of these operators, thanks to Corollary~\ref{leftinvcor}:
\begin{enumerate}
\item[(HG)] The operator \(\overline{\mathcal{H}}\) removes from \(\mathcal{G}(\omega)\) (resp. \(\overline{\mathcal{G}}(\omega)\)) exactly those boxes which are added to \(\omega\) by \(\mathcal{G}\) (resp. \(\overline{\mathcal{G}}\)).
\end{enumerate}

\begin{theorem}\label{appendreefundo}
The functions \(\ORD\colon \Par \to \Parreg\) and \(\CALC\colon \Parreg \to \Par\) are mutually inverse bijections. Moreover, the tableau generated in the construction of \(\CALC(\nu)\) is exactly \(\SPAR(\CALC(\nu))\).
\end{theorem}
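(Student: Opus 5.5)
The plan is to recast both constructions as iterations of the operators $\overline{\mathcal{H}}_\Par$ and $\mathcal{G}_\Par,\overline{\mathcal{G}}_\Par$ on partitions, and then to use Corollary~\ref{leftinvcor} together with the uniqueness characterizations of Lemma~\ref{altdesc} to show that the two iterations undo one another.

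First, the reduction to sets. A $2$-regular partition $\mu$ is determined by the set $S(\mu)=\{t\mid c_t(\mu)\text{ odd}\}$, and every finite subset of $\Z_{>0}$ arises in this way. Indeed, column lengths weakly decrease and drop by at most one at each step, and each drop happens exactly where the parity changes; hence $\mu\mapsto S(\mu)$ is a bijection from $\Parreg$ onto finite subsets of $\Z_{>0}$. By definition $\ORD(\lambda)$ is the $2$-regular partition with $S=F(\lambda)$, the set of labels in the first column of $\SPAR(\lambda)$. It therefore suffices to prove two things:
\begin{enumerate}[(i)]
\item $\lambda\mapsto F(\lambda)$ is a bijection $\Par\to\{\text{finite subsets of }\Z_{>0}\}$;
\item $\CALC$ (equivalently $\CALC_\textup{alt}$, by Lemma~\ref{altdesccalc}) builds exactly $\SPAR$ of its output.
\end{enumerate}

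Second, I would give a layer description of the spar tableau. Write $\lambda^{\geq t}$ for the boxes of $\SPAR(\lambda)$ with label $\geq t$. Labels strictly decrease going down and going right, so each $\lambda^{\geq t}$ is a partition. Now unwind the rule that assigns label $1$ to a box: nothing may lie below or to the right of it, and no $1$ may appear in the column immediately to its right. Reading columns right to left, this says the label-$1$ boxes are exactly the boxes deleted by (HP). Thus $\lambda^{\geq 2}=\overline{\mathcal{H}}_\Par(\lambda)$. The same argument applied to $\lambda^{\geq t}$ gives, by induction, $\lambda^{\geq t+1}=\overline{\mathcal{H}}_\Par(\lambda^{\geq t})$, with the label-$t$ boxes being precisely those removed. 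Moreover $t\in F(\lambda)$ exactly when this step removes a box from the first column.

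Dually, $\CALC_\textup{alt}$ builds the partitions $\lambda^{\geq t}$ for $t=c(\mu),\dots,1$. At each step it applies $\overline{\mathcal{G}}_\Par$ if $t\in S(\mu)$ and $\mathcal{G}_\Par$ if $t\notin S(\mu)$, by (GP1) and (GP2). Feature (HG), coming from Corollary~\ref{leftinvcor}, then says that $\overline{\mathcal{H}}_\Par$ removes exactly the boxes just added. Combined with the layer description, this shows that the tableau produced by $\CALC$ is $\SPAR(\CALC(\mu))$, and that $F(\CALC(\mu))=S(\mu)$. The latter holds because a first-column box is added precisely when $\overline{\mathcal{G}}_\Par$ is used. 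We conclude that $\ORD\circ\CALC=\mathrm{id}$.

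The main obstacle is the reverse identity $\CALC\circ\ORD=\mathrm{id}$, which needs more than a left inverse. Setting $\nu=\overline{\mathcal{H}}_\Par(\rho)$, I must show that $\rho=\overline{\mathcal{G}}_\Par(\nu)$ when the first-column box was removed, and $\rho=\mathcal{G}_\Par(\nu)$ otherwise. I would work in compositions via $\textup{mc}$. In the case where no first-column box is removed, take $\nu=\overline{\mathcal{H}}\rho=\mathcal{H}\rho$ and use Lemma~\ref{GHisom} directly: $\mathcal{G}\mathcal{H}\rho=\rho$. In the other case $\mathcal{H}\rho$ has $(\mathcal{H}\rho)_0=1$, and $\overline{\mathcal{H}}$ zeroes it. I would check that $\mathcal{G}\circ\textup{one}_0$ applied to $\textup{zero}_0(\mathcal{H}\rho)$ again gives $\mathcal{G}\mathcal{H}\rho=\rho$, since $\textup{one}_0\circ\textup{zero}_0$ restores the entry $1$. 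This is a case check on whether $(\mathcal{H}\rho)_0\in\{0,1\}$, which Lemma~\ref{altdesc}(1b) guarantees.

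Finally, iterating down from $t=1$ until the partition is empty recovers $\lambda$ from $F(\lambda)$. This gives injectivity in (i), and surjectivity in (i) follows from the previous step. Hence $\ORD$ and $\CALC$ are mutually inverse bijections.
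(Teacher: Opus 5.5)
Your proof is correct and follows the paper's route. You read off $\lambda^{\geq t+1}=\overline{\mathcal{H}}_\Par(\lambda^{\geq t})$ from (HP), realize $\CALC_\textup{alt}$ as an iteration of $\mathcal{G}_\Par,\overline{\mathcal{G}}_\Par$ via (GP1)/(GP2), and match the two using (HG) and Lemma~\ref{altdesccalc}, which yields $\ORD\circ\CALC=\mathrm{id}$ together with the tableau statement. The paper's written proof stops at $\ORD\circ\CALC=\mathrm{id}$, so your extra step is a correct and genuinely useful supplement: you show $\rho=\mathcal{G}_\Par(\overline{\mathcal{H}}_\Par\rho)$ or $\rho=\overline{\mathcal{G}}_\Par(\overline{\mathcal{H}}_\Par\rho)$ by using Lemma~\ref{GHisom} and the fact that $\textup{one}_0\circ\textup{zero}_0$ fixes $\mathcal{H}\mu$ when $(\mathcal{H}\mu)_0=1$, exactly as in Lemma~\ref{commaps}. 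This supplies the missing direction $\CALC\circ\ORD=\mathrm{id}$, i.e.\ the injectivity of $\ORD$.
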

\begin{proof}
Let \(\nu \in \Parreg\), and set \(\lambda = \CALC_\textup{alt}(\nu)\). Then from the definition of \(\CALC_\textup{alt}\) and (GP1),(GP2), we have that \(\lambda\) is obtained by iterative applications of \(\mathcal{G}_\Par\), \(\overline{\mathcal{G}}_\Par\) to \(\varnothing\); applying \(\overline{\mathcal{G}}_\Par\) in the \(t\)th step if \(c_t(\nu)\) is odd, applying \(\mathcal{G}_\Par\) instead if \(c_t(\nu)\) is even. On the other hand, 
we have that \(\overline{\mathcal{H}}_\Par^k(\lambda) = \varnothing\) for \(k \gg 0\). It is clear from (HP) that a box is labeled \(t\) in \(\SPAR(\lambda)\) if it is ousted in the \(t\)th application of \(\overline{\mathcal{H}}_\Par\) to \(\lambda\). Hence by (HG), \(t\) appears in the first column of \(\SPAR(\lambda)\) if and only if \(c_t(\nu)\) is odd. Thus the tableau generated in the construction of \(\CALC_\textup{alt}(\nu)\) is exactly \(\SPAR(\lambda)\), and so \(\nu = \ORD(\lambda)\). Then the result follows from Lemma~\ref{altdesccalc}.
\end{proof}

\subsection{Bicompositions}
A {\em bicomposition} \(\bmu = (\mu^{1}, \mu^{0}) \in \Omega^2\) is an ordered pair of compositions. We write \(\bOm\) for the set of all bicompositions. 
Recall that we write the complementary element of \(i \in \ZZ_2\) with a hat, i.e. \(\hat 0 = 1\), \(\hat 1 = 0\). 

Let \(i \in \ZZ_2, k \in \ZZ, a, b,n\in \ZZ_{\geq 0}\). We define some important subsets of \(\bOm\) as follows:
\begin{align*}
\bOm_{\geq k} &= \{ \bmu \in \bOm \mid \mu^{0}, \mu^{1} \in \Omega_{\geq k}\}\\
\bOm_{\geq 0,i} &= \{ \bmu \in \bOm \mid \mu^{0}, \mu^{1} \in \Omega_{\geq 0}, \mu_0^i = 1, \mu_0^{\hat i} = 0\}\\
\bOm_{\geq 0, \bullet} &= \bOm_{\geq 0,0}  \sqcup \bOm_{\geq 0,1}\\
\bOm(a,b) &= \{\bmu \in \bOm \mid |\mu^0| = a, |\mu^1| = b\}\\
\bOm(n) &= \{\bmu \in \bOm \mid |\mu^0| + |\mu^1| = n\},
\end{align*}
and we will also mix decorations, writing \(\bOm_{\geq 0, \bullet}(a,b)\) for instance, to be interpreted in the obvious way.

\subsubsection{Functions of bicompositions}
Abusing notation, we extend the functions \(\mathcal{G}, \mathcal{H}, \bar{\mathcal{H}}\) to bicompositions \(\bOm\) by acting diagonally, so that, for instance \(\mathcal{G} \bmu = \bnu\), where \(\nu^i = \mathcal{G} \mu^i\). For \(i \in \ZZ_2\), we further define the function \(\bar{\mathcal{G}}^i: \bOm \to \bOm\) by setting \(\bar{\mathcal{G}}^i \bmu = \bnu\), where
\begin{align*}
\nu^i = \bar{\mathcal{G}}\mu^i \qquad 
\textup{and}
\qquad
\nu^{\hat i} = \mathcal{G} \mu^{\hat i}.
\end{align*}

\subsection{Binary words}
We call a binary string \(\beps = \varepsilon_1 \cdots \varepsilon_n \in \ZZ_2^n\) a  {\em word}, and the entries of \(\beps\) {\em letters}.  We write \(\beps^\textup{rev}:=\eps_n \cdots \eps_1\).
Let \(\Theta(n)\) be the set of all words of length \(n\). A {\em subword} of \(\beps\) is a substring \(\beps^{[s,t]}:=\varepsilon_s \cdots  \varepsilon_t\) for some \(1 \leq s \leq t \leq n\). For \(i \in \ZZ_2\), we write \(\#_i\beps\) for the number of entries in \(\beps\) equal to \(i\). We write
\begin{align*}
\Theta(a,b) = \{ \beps \in \Theta \mid \#_0 \beps = a, \#_1 \beps = b\} \subseteq \Theta(a+b).
\end{align*} 

\subsubsection{Peaks}
We say \(k \in [1,n]\) is a {\em left \(i\)-peak} for \(\beps \in \Theta(n)\) provided
\(
\#_i\beps^{[j,k]}> \#_{\hat i}\beps^{[j,k]}
\)
for all \(j \in [1,k]\). We say \(k\) is a {\em right \(i\)-peak} for \(\beps\) provided
\(
\#_i\beps^{[k,j]}> \#_{\hat i}\beps^{[k,j]}
\)
for all \(j \in [k,n]\). We write \(\textup{P}^{\textup{L}}_i(\beps)\) and \(\textup{P}^{\textup{R}}_i(\beps)\) for the set of left and right \(i\)-peaks of \(\beps\), respectively. If \(\eps_k = i\) but \(k\) is not a left (resp. right) \(i\)-peak for \(\beps\), then we say \(k\) is a {\em left (resp. right) \(i\)-nonpeak} for \(\beps\).
We write \(\textup{NP}^{\textup{L}}_i(\beps)\) and \(\textup{NP}^{\textup{R}}_i(\beps)\) for the set of left and right \(i\)-nonpeaks of \(\beps\), respectively.
It follows then that
\begin{align*}
\textup{P}^{\textup{L}}_i(\beps) \sqcup \textup{P}^{\textup{L}}_{\hat i}(\beps) \sqcup \textup{NP}^{\textup{L}}_i(\beps) \sqcup \textup{NP}^{\textup{L}}_{\hat i}(\beps) = [1,n],
\end{align*}
and similarly for the right.

\begin{lemma}\label{NPbij}
Let \(\beps \in \Theta\), \(i \in \ZZ_2\).
There is a bijection
\begin{align*}
\zeta: \NPL_{\hat i}(\beps) \to \NPR_{ i}(\beps).
\end{align*}
given by sending \(k \in \NPL_{\hat i}(\beps)\) to the unique \(\zeta(k) \in [1,k-1]\) such that \(\#_i \beps^{[\zeta(k),k]} = \#_{\hat i} \beps^{[\zeta(k),k]}\), \(\#_i \beps^{[t,k]} < \#_{\hat i} \beps^{[t,k]}\) for all \(\zeta(k) < t \leq k\), and \(\#_i \beps^{[\zeta(k), u]} > \#_{\hat i}\beps^{[\zeta(k),u]}\) for all \(\zeta(k) \leq u < k\).
\end{lemma}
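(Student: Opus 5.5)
We work with the walk (height function) attached to $\beps$: set $h(0)=0$ and $h(t)=\#_{\hat i}\beps^{[1,t]}-\#_i\beps^{[1,t]}$, so an $\hat i$-letter is an up-step and an $i$-letter a down-step. Then $\#_{\hat i}\beps^{[j,k]}-\#_i\beps^{[j,k]}=h(k)-h(j-1)$. In this language:
\begin{itemize}
\item $k$ is a left $\hat i$-peak iff $\eps_k=\hat i$ and $h(k)>h(j-1)$ for all $j\le k$, i.e. $h(k)$ is a strict new maximum;
\item $k$ is a right $i$-peak iff $\eps_k=i$ and $h(k-1)>h(j)$ for all $j\ge k$, i.e. the level $h(k-1)$ is never revisited afterwards.
\end{itemize}
Thus $k\in\NPL_{\hat i}(\beps)$ means $\eps_k=\hat i$ and some $j<k$ has $h(j)\ge h(k)$. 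Likewise $m\in\NPR_i(\beps)$ means $\eps_m=i$ and some $j\ge m$ has $h(j)\ge h(m-1)$.

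\textbf{Constructing $\zeta$.} For $k\in\NPL_{\hat i}(\beps)$, scan leftward from $k$ and let $s<k$ be the largest index with $h(s)\ge h(k)$. Since $h(k-1)=h(k)-1$, we have $s\le k-2$. Because the walk moves by $\pm1$ steps and stays below $h(k)$ on $(s,k)$, we get $h(s)=h(k)$ and $h(s+1)=h(k)-1$, so $\eps_{s+1}=i$. Set $\zeta(k)=s+1$.

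Translating back, the three conditions in the statement say exactly that $h(\zeta(k)-1)=h(k)$ and that $h$ stays strictly below this level on $[\zeta(k),k-1]$. (There is an apparent sign tension between the clause ``$\#_i\beps^{[t,k]}<\#_{\hat i}\beps^{[t,k]}$'' and the choice of $\zeta(k)$; I would first recheck each inequality against the height picture and fix the intended orientation.) Uniqueness is then clear: $\zeta(k)-1$ must be the last return to level $h(k)$ before $k$. Finally, $\zeta(k)$ is a right $i$-nonpeak, because $\eps_{\zeta(k)}=i$ and the later index $k$ satisfies $h(k)=h(\zeta(k)-1)$, witnessing the return to that level.

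\textbf{Constructing the inverse.} For $m\in\NPR_i(\beps)$, let $k$ be the first index after $m$ with $h(k)=h(m-1)$. Such $k$ exists because the walk steps below $h(m-1)$ at $m$ and later reaches $\ge h(m-1)$ in unit steps. Then $\eps_k=\hat i$, and $k$ is a left nonpeak since $h(m-1)=h(k)$ with $m-1<k$.

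\textbf{Checking bijectivity.} The two maps are mutually inverse by the matching-parentheses principle: each down-step from level $L$ to $L-1$ is paired with the first subsequent up-step back to $L$, and this pairing is symmetric. Concretely:
\begin{itemize}
\item Starting from $k$, the index $\zeta(k)-1$ is the last time at level $h(k)$ before $k$, and the walk stays below that level in between. So $k$ is the first return after $\zeta(k)$, and the inverse map sends $\zeta(k)$ back to $k$.
\item Starting from $m$, the first return $k$ has the walk strictly below $h(m-1)$ on $[m,k-1]$. So $m-1$ is the last time at level $h(k)$ before $k$, and $\zeta(k)=m$.
\end{itemize}

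The main obstacle is only bookkeeping: matching the paper's strict versus weak inequalities and index endpoints to this height description. The combinatorics is otherwise the standard bracket matching.
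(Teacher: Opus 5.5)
Your proof is correct and is essentially the paper's argument rewritten in height-function language. Your $s$, the last index before $k$ with $h(s)\ge h(k)$, is exactly the paper's maximal $k'$ shifted by one (so $\zeta(k)=s+1=k'$), and your first-return inverse spells out the ``inverse map constructed symmetrically'' that the paper only asserts.

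The parenthetical ``sign tension'' is a false alarm. Your own translation reduces the three conditions to $h(\zeta(k)-1)=h(k)$ together with $h<h(k)$ on $[\zeta(k),k-1]$, and your choice $\zeta(k)=s+1$ satisfies both.
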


\begin{proof}
Let \(k \in \NPL_{\hat i}(\beps)\). Then by \(k\)'s nonpeakness there exists a maximal \(k' \in [1,k-1]\) such that \(\#_i \beps^{[k',k]} \geq \#_{\hat i} \beps^{[k',k]}\). Then by assumption \(\#_i \beps^{[t,k]} < \#_{\hat i} \beps^{[t,k]}\) for all \(k' < t \leq k\). Note then that \(\eps_{k'} = i\) and \(\#_i \beps^{[k',k]} = \#_{\hat i} \beps^{[k',k]}\). If \(\#_i \beps^{[k',u]} \leq \#_{\hat i} \beps^{[k',u]}\) for some \(k' < u < k\), then 
\begin{align*}
\#_i \beps^{[k',k]} = \#_i \beps^{[k',u]} + \#_i \beps^{[u+1,k]} < \#_{\hat i} \beps^{[k',u]} + \#_{\hat i} \beps^{[u+1,k]} = \#_{\hat i} \beps^{[k',k]},
\end{align*}
a contradiction. Therefore \(\#_i \beps^{[k',u]} > \#_{\hat i} \beps^{[k',u]}\) for all \(k' \leq u < k\). Thus we may set \(\zeta(k) = k'\), so the map \(\zeta\) of the lemma statement is well-defined. There is plainly an inverse map constructed  symmetrically, so \(\zeta\) is a bijection.
\end{proof}

\subsection{Functions on binary words}
Fix \(n \in \ZZ_{\geq 0}\). We define a map \(\mathcal{A}^i : \Theta(n) \to \Theta(n+1)\) which sends \(\beps \in \Theta(n)\) to \(\mathcal{A}^i\beps \in \Theta(n+1)\) by appending an \(i\) to the end of \(\beps\). We define a map \(\mathcal{R}: \Theta(n+1) \to \Theta(n)\) which sends \(\beps \in \Theta(n+1)\) to \(\mathcal{R}\beps \in \Theta_{n}\) by deleting the last letter in \(\beps\).

\begin{lemma}\label{PtoR}
Let \(n \in \ZZ_{\geq 1}\), \(i \in \ZZ_2\). Let \(\beps \in \Theta(n)\). Then
\begin{align*}
\PR_i (\mathcal{R} \beps) = 
\begin{cases}
\PR_i (\beps) \backslash \{n\} & \textup{if } \eps_n = i;\\
\PR_i (\beps) \sqcup \{\zeta(n)\} & \textup{if } n \in \NPL_{\hat i}(\beps);\\
\PR_i (\beps) = \varnothing & \textup{if } n \in \PL_{\hat i}(\beps).\\
\end{cases}
\end{align*}
\end{lemma}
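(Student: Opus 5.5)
The plan is to compare the defining inequalities of right \(i\)-peaks for \(\beps\) and for \(\mathcal{R}\beps\) directly. The only difference is the extra test \(j=n\), which is present for \(\beps\) and absent for \(\mathcal{R}\beps\). Write \(d(k,j) := \#_i\beps^{[k,j]} - \#_{\hat i}\beps^{[k,j]}\). Then \(k \in \PR_i(\beps)\) iff \(d(k,j) \geq 1\) for all \(j \in [k,n]\), and for \(k \leq n-1\) we have \(k \in \PR_i(\mathcal{R}\beps)\) iff \(d(k,j)\geq 1\) for all \(j\in[k,n-1]\). Hence \(\PR_i(\beps)\backslash\{n\} \subseteq \PR_i(\mathcal{R}\beps)\) always. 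The only possible new elements are those \(k \leq n-1\) with \(d(k,j) \geq 1\) for \(j<n\) but \(d(k,n) \leq 0\). Since \(d(k,n) = d(k,n-1) \pm 1\), this forces \(d(k,n)=0\) and \(\eps_n = \hat i\).

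In the first case, \(\eps_n = i\), we have \(d(k,n) = d(k,n-1)+1 \geq 2\) whenever \(d(k,n-1)\ge 1\), so no new elements arise. Removing \(n\) itself, which is a right \(i\)-peak of \(\beps\) since \(d(n,n)=1\), gives \(\PR_i(\mathcal{R}\beps) = \PR_i(\beps)\backslash\{n\}\).

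In the remaining cases \(\eps_n = \hat i\), so \(n\notin\PR_i(\beps)\). The new elements are exactly the \(k\in[1,n-1]\) with \(d(k,n)=0\) and \(d(k,u)\ge1\) for all \(k\le u<n\). For such \(k\) and any \(t \in (k,n]\), we get \(d(t,n) = d(k,n) - d(k,t-1) < 0\). So \(k\) satisfies all three conditions characterizing \(\zeta(n)\) in Lemma~\ref{NPbij}.
\begin{itemize}
\item If \(n\in\NPL_{\hat i}(\beps)\), then \(\zeta(n)\) exists and satisfies these conditions. It is the unique such index, since two such indices \(k<k'\) would give \(d(k',n)<0\) from the first and \(=0\) from the second. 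The new element is therefore exactly \(\zeta(n)\), and it is not already in \(\PR_i(\beps)\) because \(d(\zeta(n),n)=0\). This gives the disjoint union.
\item If \(n\in\PL_{\hat i}(\beps)\), then \(d(k,n)<0\) for every \(k\le n\), so there is no new element. Moreover, any \(k\in\PR_i(\beps)\) would have \(d(k,n)\ge1\), a contradiction, so \(\PR_i(\beps)=\varnothing=\PR_i(\mathcal{R}\beps)\).
\end{itemize}

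The only mildly delicate point is matching the ``new peak'' conditions with the precise characterization of \(\zeta(n)\) in Lemma~\ref{NPbij}, including its uniqueness. The argument above handles this by the telescoping identity \(d(t,n)=d(k,n)-d(k,t-1)\).
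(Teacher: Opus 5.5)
Your proof is correct and follows essentially the same route as the paper's arXiv-version argument: the inclusion \(\PR_i(\beps)\cap[1,n-1]\subseteq\PR_i(\mathcal{R}\beps)\), followed by the same three-case analysis. The one small difference is in the case \(n\in\NPL_{\hat i}(\beps)\). There the paper identifies a new peak \(v\) as \(\zeta(n)\) by applying \(\zeta^{-1}\) to \(v\) and showing \(\zeta^{-1}(v)=n\), whereas you use the telescoping identity to check that \(v\) satisfies the conditions characterizing \(\zeta(n)\) and then invoke uniqueness --- an equally valid and slightly more self-contained step.
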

\begin{proof}
This is a routine definition chase, occasionally invoking Lemma~\ref{NPbij}, so we omit the proof here.
 Full details can be found in the \texttt{arXiv} version of the paper as explained in \S\ref{SS:ArxivVersion}.
 \begin{answer}
It follows from definitions that 
\begin{align}
\PR_i(\beps) \cap [1,n-1] \subseteq \PR_i(\mathcal{R}\beps).\label{prsubset}
\end{align}
We consider now the cases \(\eps_n = i\) or \(\eps_n = \hat i\), the latter of which we subdivide into the two cases \(n \in \NPL_{\hat i}(\beps)\) and \(n \in \PL_{\hat i}(\beps)\). \\

\noindent{\em Case 1. Assume \(\eps_n = i\).} Then \(\PR_i(\beps) = (\PR_i(\beps) \cap [1,n-1]) \cup \{n\}\), so by (\ref{prsubset}) we have \(\PR_i(\beps) \backslash \{n\} \subseteq \PR_i(\mathcal{R}\beps)\). On the other hand, if \(q \in \PR_i(\mathcal{R}\beps)\), then \(q \in [1,n-1]\), and \(\#_i\beps^{[q,t]} > \#_{\hat i}\beps^{[q,t]}\) for all \(t \in [q,n-1]\), and therefore 
\begin{align*}
\#_i \beps^{[q,n]} = \#_i \beps^{[q,n-1]} + 1 >  \#_{\hat i} \beps^{[q,n-1]} =  \#_{\hat i} \beps^{[q,n]}
\end{align*}
so \(q \in \PR_i(\beps) \backslash \{n\}\). Thus \(\PR_i (\mathcal{R} \beps) = \PR_i (\beps) \backslash \{n\} \).\\

\noindent{\em Case 2.  Assume \(n \in \NPL_{\hat i}(\beps)\).} Then \(n \notin \PR_i(\beps)\), so \(\PR_i(\beps) \subseteq \PR_i(\mathcal{R}\beps)\) by (\ref{prsubset}). By Lemma~\ref{NPbij} we have \(q:=\zeta(n) \in \NPR_i(\beps)\), where \(\#_i \beps^{[q,n]} = \#_{\hat i} \beps^{[q,n]}\), \(\#_i \beps^{[t,n]} < \#_{\hat i} \beps^{[t,n]}\) for all \(q < t \leq n\), and \(\#_i \beps^{[q, u]} > \#_{\hat i}\beps^{[q,u]}\) for all \(q \leq u <n\). Then it follows that \(q \in \PR_i(\mathcal{R} \beps)\), so \(\PR_i(\beps) \sqcup \{\zeta(n)\} \subseteq \PR_i(\mathcal{R}\beps)\). 

On the other hand, assume \(v \in \PR_i(\mathcal{R}\beps) \backslash \PR_i(\beps)\). By Lemma~\ref{NPbij}, \(v<\zeta^{-1}(v) \in \NPL_{\hat i}(\beps)\) is such that \(\#_i \beps^{[v, \zeta^{-1}(v)]} = \#_{\hat i} \beps^{[v, \zeta^{-1}(v)]}\),  \(\#_i \beps^{[t,\zeta^{-1}(v)]} < \#_{\hat i} \beps^{[t,\zeta^{-1}(v)]}\) for all \(v< t \leq \zeta^{-1}(v)\), and \(\#_i \beps^{[v, u]} > \#_{\hat i}\beps^{[v,u]}\) for all \(v\leq u < \zeta^{-1}(v)\).
Since \(v \in \PR_i(\mathcal{R}\beps)\), we have \(\#_i\beps^{[v,t]} > \#_{\hat i} \beps^{[v,t]}\) for \(t \in [v,n-1]\), which implies that \(\zeta^{-1}(v) = n\), and therefore \(v = \zeta(n)\). 
Therefore we have \(\PR_i(\mathcal{R} \beps) \subseteq \PR_i(\beps) \sqcup \{\zeta(n)\}\), so \(\PR_i(\mathcal{R} \beps) = \PR_i(\beps) \sqcup \{\zeta(n)\}\).\\

\noindent{\em Case 3. Assume \(n \in \PL_{\hat i}(\beps)\).} Then for all \(k \in [1,n]\) we have \(\#_i \beps^{[k,n]} < \#_{\hat i} \beps^{[k,n]}\), and thus \(\PR_i(\beps) = \varnothing\). Therefore, for \(t \in [1,n-1]\), we have 
\begin{align*}
\#_i \beps^{[t,n-1]} = \#_i \beps^{[t,n]} <  \#_{\hat i} \beps^{[t,n]} =  \#_{\hat i} \beps^{[t,n-1]} +1,
\end{align*}
which implies that \(\#_i \beps^{[t,n-1]} \leq \#_{\hat i} \beps^{[t,n-1]}\), so \(t \notin \PR_i(\mathcal{R}\beps)\), and thus \(\PR_i(\mathcal{R}\beps) = \varnothing\).
\end{answer}
\end{proof}

\subsubsection{The \(\mathcal{T}_i\) map}
Now we define a function \(\mathcal{T}_i: \Theta(n) \to \Theta(n)\) by setting
\(
\mathcal{T}_i\beps = \brho
\),
where 
\begin{align}
\rho_r = \begin{cases}
\eps_r & \textup{if } r \in \textup{P}^\textup{L}_{\hat i}(\beps);\\
\hat{\eps}_r & \textup{otherwise}.
\end{cases}\label{Tmapdef}
\end{align}

\begin{lemma}
Let \(n \in \ZZ_{\geq 1}\). We have \(\mathcal{R} \circ \mathcal{T}_i = \mathcal{T}_i \circ \mathcal{R}\) as functions on \(\Theta(n)\).
\end{lemma}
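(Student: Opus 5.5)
The claim is that the \(r\)th letter of \(\mathcal{T}_i\beps\) depends only on \(\eps_1,\dots,\eps_r\). Once this is established, the commutation follows immediately, because \(\mathcal{R}\) only deletes the last letter. My plan is therefore to prove that left-peakness is a prefix-local property and then read off the identity.

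First, I check from the definition that whether \(r \in \PL_{\hat i}(\beps)\) is decided entirely by the prefix \(\beps^{[1,r]}\). By definition, \(r\) is a left \(\hat i\)-peak exactly when \(\#_{\hat i}\beps^{[j,r]} > \#_i\beps^{[j,r]}\) for all \(j \in [1,r]\). Every subword appearing in this condition lies inside \([1,r]\), so letters at positions beyond \(r\) play no role. It follows that for \(\beps \in \Theta(n)\) and \(r \in [1,n-1]\),
\begin{align*}
r \in \PL_{\hat i}(\beps) \iff r \in \PL_{\hat i}(\mathcal{R}\beps).
\end{align*}

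Next I compare the two sides letter by letter. Write \(\brho = \mathcal{T}_i\beps\). Then \(\mathcal{R}\mathcal{T}_i\beps = \rho_1\cdots\rho_{n-1}\), where by (\ref{Tmapdef}) each \(\rho_r\) equals \(\eps_r\) or \(\hat\eps_r\) according to whether \(r \in \PL_{\hat i}(\beps)\). On the other side, \(\mathcal{T}_i\mathcal{R}\beps\) has \(r\)th letter equal to \(\eps_r\) or \(\hat\eps_r\) according to whether \(r \in \PL_{\hat i}(\mathcal{R}\beps)\), for \(r \in [1,n-1]\). Since the underlying letters \(\eps_r\) agree for \(r<n\), the equivalence from the first step shows the two words agree letter by letter.

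There is no genuine obstacle here. The one thing to double-check is the orientation of the peak definition: left peaks look at subwords ending at \(r\), so they depend on the prefix, which is what makes the argument work. Right peaks, by contrast, look at subwords starting at \(r\) and would not behave this way under deletion of the last letter; that is why Lemma~\ref{PtoR} was needed for them. I also need the degenerate case \(n=1\), which is immediate since both sides are the empty word.
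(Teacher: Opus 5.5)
Your proposal is correct and matches the paper's proof exactly: both observe that \(r \in \PL_{\hat i}(\beps)\) depends only on the prefix \(\eps_1\cdots\eps_r\), so \(r \in \PL_{\hat i}(\beps) \iff r \in \PL_{\hat i}(\mathcal{R}\beps)\) for \(r \le n-1\). The commutation \(\mathcal{R}\mathcal{T}_i = \mathcal{T}_i\mathcal{R}\) then follows letter by letter from (\ref{Tmapdef}).
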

\begin{proof}
Let \(\beps \in \Theta(n)\). For \(j \in [1,n-1]\), we have \(j \in \PL_{\hat i}(\beps)\) if and only if \(j \in \PL_{\hat i}( \mathcal{R} \beps)\). From this it immediately follows that \(\mathcal{R}  \mathcal{T}_i\beps = \mathcal{T}_i  \mathcal{R} \beps\).
\end{proof}

\begin{lemma}\label{countis}
For \(\beps \in \Theta\), we have \(\#_i \mathcal{T}_i \beps = \#_i \beps - \# \PR_i(\beps)\).
\end{lemma}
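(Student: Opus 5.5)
The plan is to compute both sides directly from the definition (\ref{Tmapdef}) of \(\mathcal{T}_i\) and then use the bijection of Lemma~\ref{NPbij}. No induction on the length of \(\beps\) should be needed.

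\emph{Step 1: identify the letters of \(\mathcal{T}_i\beps\).} Write \(\brho = \mathcal{T}_i\beps\) and sort the positions \(r \in [1,n]\) by the decomposition
\[
[1,n] = \PL_i(\beps) \sqcup \PL_{\hat i}(\beps) \sqcup \NPL_i(\beps) \sqcup \NPL_{\hat i}(\beps).
\]
A left \(\hat i\)-peak \(k\) must satisfy \(\eps_k = \hat i\); this is the case \(j=k\) of the defining inequality. So every \(r\) with \(\eps_r = i\) lies outside \(\PL_{\hat i}(\beps)\) and is flipped, giving \(\rho_r = \hat i\). Every \(r \in \PL_{\hat i}(\beps)\) is kept, giving \(\rho_r = \hat i\). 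Every \(r \in \NPL_{\hat i}(\beps)\) has \(\eps_r = \hat i\) and is flipped, giving \(\rho_r = i\). Hence
\[
\#_i\mathcal{T}_i\beps = \#\NPL_{\hat i}(\beps).
\]

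\emph{Step 2: transfer to the right.} Lemma~\ref{NPbij} gives a bijection \(\zeta\colon \NPL_{\hat i}(\beps) \to \NPR_i(\beps)\), so \(\#\NPL_{\hat i}(\beps) = \#\NPR_i(\beps)\).

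\emph{Step 3: count right \(i\)-nonpeaks.} The letters equal to \(i\) are split into right \(i\)-peaks and right \(i\)-nonpeaks. Every right \(i\)-peak \(k\) has \(\eps_k = i\), again by the case \(j = k\) of the definition, and \(\NPR_i(\beps)\) is by definition the complementary set of \(i\)-letters. Therefore
\[
\#\NPR_i(\beps) = \#_i\beps - \#\PR_i(\beps).
\]
Chaining Steps 1–3 gives the claim.

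There is no serious obstacle here. The only point needing care is the bookkeeping in Step 1: one must check that peaks automatically carry the "right" letter, so that every position falls into exactly one case. If one distrusted Lemma~\ref{NPbij}, the fallback would be induction on \(n\) using \(\mathcal{R}\mathcal{T}_i = \mathcal{T}_i\mathcal{R}\) together with the three cases of Lemma~\ref{PtoR}. In each case the change in \(\#\PR_i\) matches the change in \(\#_i\) of the last letter of \(\mathcal{T}_i\beps\).
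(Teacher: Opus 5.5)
Your proof is correct and is essentially the paper's argument: the paper also reads off $\#_i\mathcal{T}_i\beps = \#\NPL_{\hat i}(\beps)$ from the definition of $\mathcal{T}_i$, transfers this to $\#\NPR_i(\beps)$ via the bijection $\zeta$ of Lemma~\ref{NPbij}, and uses $\#\NPR_i(\beps) = \#_i\beps - \#\PR_i(\beps)$. Your only addition is spelling out the bookkeeping (that left $\hat i$-peaks and right $i$-peaks carry the letters $\hat i$ and $i$ respectively), which the paper leaves implicit.
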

\begin{proof}
By Lemma~\ref{NPbij} and the definition of the map \(\mathcal{T}_i\), we have
\begin{align*}
\#_i \beps - \# \PR_i(\beps) = \#\NPR_i(\beps) = \#\NPL_{\hat i}(\beps) = \#_i \mathcal{T}_i \beps,
\end{align*}
which yields the result.
\end{proof}

\begin{lemma}\label{TsTerm}
For \(\beps \in \Theta(n)\), we have \(\mathcal{T}_i^k\beps = {\hat i}^n\) for \(k \geq n\). 
\end{lemma}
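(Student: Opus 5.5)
We argue by induction on \(n\). The case \(n=0\) is vacuous, since \(\Theta(0)\) consists only of the empty word. The tool is the commutation \(\mathcal{R}\circ\mathcal{T}_i = \mathcal{T}_i\circ\mathcal{R}\) from the preceding lemma, which iterates to \(\mathcal{R}\mathcal{T}_i^k = \mathcal{T}_i^k\mathcal{R}\) for all \(k\).

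Let \(\beps\in\Theta(n)\) with \(n\ge 1\). By the induction hypothesis applied to \(\mathcal{R}\beps\in\Theta(n-1)\), we get \(\mathcal{R}\mathcal{T}_i^k\beps=\mathcal{T}_i^k\mathcal{R}\beps=\hat i^{\,n-1}\) for every \(k\ge n-1\). So for \(k\ge n-1\), the word \(\mathcal{T}_i^k\beps\) has the form \(\hat i^{\,n-1}x\) for some letter \(x\in\ZZ_2\). It remains to control the last letter.

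Take \(\brho=\mathcal{T}_i^{n-1}\beps=\hat i^{\,n-1}x\) and compute \(\mathcal{T}_i\brho\) directly from (\ref{Tmapdef}).

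\emph{Positions \(r\le n-1\).} Each such \(r\) is a left \(\hat i\)-peak of \(\brho\), since every subword \(\brho^{[j,r]}\) consists only of \(\hat i\)'s. These letters are therefore unchanged by \(\mathcal{T}_i\).

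\emph{Position \(n\).} If \(x=\hat i\), then \(n\) is also a left \(\hat i\)-peak, so \(\mathcal{T}_i\brho=\hat i^{\,n}\). If \(x=i\), then \(n\) is not a left \(\hat i\)-peak, because the single-letter subword \(\brho^{[n,n]}=i\) fails the strict inequality. Hence the letter flips, and again \(\mathcal{T}_i\brho=\hat i^{\,n}\).

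In either case \(\mathcal{T}_i^n\beps=\hat i^{\,n}\). The same peak computation shows that \(\hat i^{\,n}\) is a fixed point of \(\mathcal{T}_i\), so \(\mathcal{T}_i^k\beps=\hat i^{\,n}\) for all \(k\ge n\).

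The only delicate point is the boundary behaviour at the last letter. The conventions in the definition of peaks must be checked carefully: all-\(\hat i\) prefixes give left \(\hat i\)-peaks at every position, and a terminal \(i\) gets flipped. Both facts are immediate from the strict inequalities in the definition, so I expect no real obstacle.
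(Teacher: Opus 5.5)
Your proof is correct, but it takes a different route from the paper's. The paper uses no induction. If \(\beps \neq \hat i^{\,n}\), let \(m\) be the position of the first \(i\). Positions \(1,\ldots,m-1\) are left \(\hat i\)-peaks and stay fixed, while position \(m\) is not a peak (the one-letter subword \(\beps^{[m,m]}=i\) fails the strict inequality) and so flips. Hence the first \(i\) in \(\mathcal{T}_i\beps\) lies strictly to the right of \(m\), and after \(n\) applications no \(i\) can remain.

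You instead induct on the length. The commutation \(\mathcal{R}\mathcal{T}_i=\mathcal{T}_i\mathcal{R}\) reduces the problem to controlling the last letter of the word \(\hat i^{\,n-1}x\), where you make the same one-letter boundary check. The two arguments rest on the same underlying fact: whether \(r\) is a left \(\hat i\)-peak depends only on the prefix \(\beps^{[1,r]}\), which is exactly what the commutation lemma encodes. Either argument also gives at once that the first \(k\) letters of \(\mathcal{T}_i^k\beps\) are \(\hat i\).

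The paper's version is shorter and self-contained. Yours is more modular: it reuses the previously established lemma, so the only new computation is on words of the form \(\hat i^{\,n-1}x\).
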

\begin{proof}
Let \(\beps \in \Theta(n)\). If \(\beps = \hat{i}^n\), then by definition of the map \(\mathcal{T}_i\) we have \(\mathcal{T}_i^k\beps = \hat{i}^n\) for all \(k \in \ZZ_{\geq 0}\), so the claim follows. Otherwise, let \(m\) be minimal such that \(\eps_m = i\). Setting \(\brho = \mathcal{T}_i \beps\), we have \([1,m-1] \in \PL_{\hat i}(\beps)\), so \(\rho_1 = \cdots = \rho_m = \hat i\). Thus each application of \(\mathcal{T}_i\) moves the first instance of \(i\) to strictly later in the string, so by the \(n\)th application of \(\mathcal{T}_i\) there can be no more \(i\)'s. Therefore  \(\mathcal{T}_i^n \beps = {\hat i}^n\), and the claim follows.
\end{proof}

\begin{lemma}\label{PRL}
Let \(\beps \in \Theta\), \(i \in \ZZ_2\). Then \(\PR_{\hat i}(\mathcal{T}_i \beps) = \PL_{\hat i}(\beps) \sqcup \PR_i(\beps)\), and \(j<k\) for all \(j \in \PL_{\hat i}(\beps)\), \(k \in \PR_i(\beps)\).
\end{lemma}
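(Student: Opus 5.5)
We prove the ordering claim directly and the equality $\PR_{\hat i}(\mathcal{T}_i\beps) = \PL_{\hat i}(\beps)\sqcup\PR_i(\beps)$ by induction on $n$. The induction peels off the last letter, using $\mathcal{R}\circ\mathcal{T}_i=\mathcal{T}_i\circ\mathcal{R}$ together with Lemma~\ref{PtoR}.

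\emph{Ordering.} Suppose $j\in\PL_{\hat i}(\beps)$ and $k\in\PR_i(\beps)$ with $k\le j$. The left-peak condition at $j$, applied to the subword $\beps^{[k,j]}$, gives $\#_{\hat i}\beps^{[k,j]}>\#_i\beps^{[k,j]}$. The right-peak condition at $k$, applied to the same subword, gives the reverse strict inequality. This is a contradiction, so $j<k$. In particular the two sets are disjoint, which justifies writing $\sqcup$.

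\emph{Equality.} The case $n=0$ is trivial. For $n\ge1$ write $\brho=\mathcal{T}_i\beps$, so that $\mathcal{R}\brho=\mathcal{T}_i\mathcal{R}\beps$, and assume the statement for $\mathcal{R}\beps$. For $j<n$ we have $j\in\PL_{\hat i}(\beps)$ if and only if $j\in\PL_{\hat i}(\mathcal{R}\beps)$. Hence
\[
\PL_{\hat i}(\beps)=\PL_{\hat i}(\mathcal{R}\beps)\sqcup\{n \mid n\in\PL_{\hat i}(\beps)\}.
\]
We split into the three cases of Lemma~\ref{PtoR}, applied to $\beps$ with index $i$ and to $\brho$ with index $\hat i$.

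\emph{Case $\eps_n=i$.} Then $n\notin \PL_{\hat i}(\beps)$, so $\rho_n=\hat i$. Lemma~\ref{PtoR} gives $\PR_i(\beps)=\PR_i(\mathcal{R}\beps)\sqcup\{n\}$ and $\PR_{\hat i}(\brho)=\PR_{\hat i}(\mathcal{R}\brho)\sqcup\{n\}$. Since $\PL_{\hat i}$ is unchanged, the two sides of the claimed identity each gain exactly $\{n\}$, and induction closes the case.

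\emph{Case $n\in\PL_{\hat i}(\beps)$.} Then $\rho_n=\eps_n=\hat i$, so again $\PR_{\hat i}(\brho)$ gains exactly $n$. Here $\PR_i(\beps)=\PR_i(\mathcal{R}\beps)=\varnothing$, and $\PL_{\hat i}$ gains $n$. The two sides again match.

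\emph{Case $n\in\NPL_{\hat i}(\beps)$.} We expect this to be the main obstacle. Here $\rho_n=i$ and $\PR_i(\mathcal{R}\beps)=\PR_i(\beps)\sqcup\{\zeta(n)\}$, with $\PL_{\hat i}$ unchanged. So it suffices to show that $n\in\NPL_i(\brho)$ and that the bijection $\zeta$ of Lemma~\ref{NPbij} for $\brho$, with indices swapped, sends $n$ to the same $\zeta(n)$. Lemma~\ref{PtoR} for $\brho$ then gives $\PR_{\hat i}(\mathcal{R}\brho)=\PR_{\hat i}(\brho)\sqcup\{\zeta(n)\}$, and cancelling $\zeta(n)$ from both inductive identities finishes the proof.

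To establish this, we first show that no $m\in[\zeta(n),n]$ lies in $\PL_{\hat i}(\beps)$:
\begin{itemize}
\item For $m<n$, Lemma~\ref{NPbij} gives $\#_i\beps^{[\zeta(n),m]}>\#_{\hat i}\beps^{[\zeta(n),m]}$, which violates the left $\hat i$-peak condition at $m$.
\item For $m=n$ we are assuming $n\notin\PL_{\hat i}(\beps)$.
\end{itemize}
Hence $\brho$ is the letterwise complement of $\beps$ on $[\zeta(n),n]$. All of the counting conditions in Lemma~\ref{NPbij} that characterize $\zeta(n)$ involve only subwords inside $[\zeta(n),n]$. Under complementation these conditions transform exactly into the same conditions with $i$ and $\hat i$ interchanged.

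This has two consequences for $\brho$. First, the subword $\brho^{[\zeta(n),n]}$ has equally many $i$'s and $\hat i$'s, so $n\notin\PL_i(\brho)$, i.e.\ $n\in\NPL_i(\brho)$. Second, $\zeta(n)$ satisfies the defining properties of the $\brho$-version of $\zeta(n)$, and by the uniqueness in Lemma~\ref{NPbij} the two coincide.
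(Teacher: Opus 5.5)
Your proof is correct, and it follows a genuinely different route from the paper's.

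\textbf{The paper's route.} The paper proves the equality directly, by three inclusions.
\begin{itemize}
\item For $\PR_{\hat i}(\mathcal{T}_i\beps)\subseteq \PL_{\hat i}(\beps)\sqcup\PR_i(\beps)$, it supposes $j\in\NPR_i(\beps)$. It then uses $\zeta^{-1}(j)$ to exhibit a balanced subword of $\brho$ starting at $j$, a contradiction.
\item For $\PL_{\hat i}(\beps)\subseteq\PR_{\hat i}(\brho)$, it takes a maximal counterexample $j$. Applying the inverse of the $\zeta$-bijection for $\brho$ gives some $k>j$, and the paper shows that $k\in\PL_{\hat i}(\beps)$. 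This contradicts $\rho_k=i$.
\item For $\PR_i(\beps)\subseteq\PR_{\hat i}(\brho)$, it uses the ordering statement to see that $\brho$ is the letterwise complement of $\beps$ on $[k,n]$.
\end{itemize}

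\textbf{Your route.} You induct on length using $\mathcal{R}\circ\mathcal{T}_i=\mathcal{T}_i\circ\mathcal{R}$. You apply Lemma~\ref{PtoR} twice: once to $\beps$ with index $i$, and once to $\brho$ with index $\hat i$. This leaves only one case with real content, namely $n\in\NPL_{\hat i}(\beps)$. There you show that the window $[\zeta(n),n]$ avoids $\PL_{\hat i}(\beps)$, so that complementation transports $\zeta$. That step is sound: conditions (a) and (b) of Lemma~\ref{NPbij} already force $\zeta(k)$ to be the maximal $k'$ with a non-negative $i$-surplus, so the uniqueness you invoke is genuine.

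\textbf{Comparison.} Both arguments rest on the same observation: on a stretch free of left $\hat i$-peaks, $\mathcal{T}_i$ is letterwise complementation, which swaps the roles of $i$ and $\hat i$ in every balance condition. Your version outsources the bookkeeping to Lemma~\ref{PtoR} and avoids the paper's maximal-counterexample step. It also exhibits exactly the last-letter compatibility that is exploited later in Lemma~\ref{commaps}. The paper's version is self-contained, using only Lemma~\ref{NPbij} and the definitions. It does not lean on Lemma~\ref{PtoR}, whose proof likewise appears only in the arXiv version.
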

\begin{proof}
This is another routine definition chase making use of Lemma~\ref{NPbij}, so we omit the proof here.
 Full details can be found in the \texttt{arXiv} version of the paper as explained in \S\ref{SS:ArxivVersion}.
 \begin{answer}
The secondary statement is immediate from the definitions of the sets \(\PL_{\hat i}(\beps)\) and \( \PR_i(\beps)\).
We prove the primary statement. Set \(\brho = \mathcal{T}_i\beps\).

We first show \(\PR_{\hat i}(\brho) \subseteq \PL_{\hat i}(\beps) \sqcup \PR_i(\beps)\).
Let \(j \in \PR_{\hat i}(\brho)\). Then \(\rho_j = \hat i\), so either \(j \in \PL_{\hat i}(\beps)\) or \(\eps_j = i\). In the former case, we are done, so assume \(\eps_j = i\). Assume by way of contradiction that \(j \in \NPR_i(\beps)\). Then by Lemma~\ref{NPbij} there exists  \(k = \zeta^{-1}(j) \in \NPL_{\hat i}\) with \(k \in [j+1,n]\), \(\#_i \beps^{[j,k]} = \#_{\hat i} \beps^{[j,k]}\), and \(\#_i \beps^{[j,u]} > \#_{\hat i} \beps^{[j,u]}\) for all \(u \in [j,k-1]\). Note then that \(u \notin \PL_{\hat i}(\beps)\) for \(u \in [j,k-1]\), and \(j,k \notin \PL_{\hat i}(\beps)\), so therefore we have
\begin{align*}
\#_{\hat i} \brho^{[j,k]} = \#_i \beps^{[j,k]} = \#_{\hat i} \beps^{[j,k]} = \#_i \brho^{[j,k]},
\end{align*}
which contradicts \(j \in \PR_{\hat i}(\brho)\). Thus \(j \in \PR_i(\beps)\), and so  \(\PR_{\hat i}(\brho) \subseteq \PL_{\hat i}(\beps) \sqcup \PR_i(\beps)\).

Next we show \(\PL_{\hat i}(\beps) \subseteq \PR_{\hat i}(\brho)\).
Assume by way of contradiction that \(\PL_{\hat i}(\beps) \cap \NPR_{\hat i}(\brho) \neq \varnothing\). Let \(j\) be the maximal element of this set. 
Then we have \(\eps_j = \rho_j =  \hat i\). Since \(j \in \NPR_{\hat i}(\brho)\), by Lemma~\ref{NPbij} there exists \(k = \zeta^{-1}(j) \in \NPL_{i}(\brho)\) with \(k \in [j+1,n]\) such that \(\#_{\hat i} \brho^{[j,k]} = \#_{ i} \brho^{[j,k]}\), \(\#_{\hat i} \brho^{[t,k]} < \#_{ i} \brho^{[t,k]}\) for all \(t \in [j+1,k]\), 
We claim that, for each \(t \in [j+1,k]\), we have \(\rho_t = \hat \eps_t\). Indeed, if it were the case that \(\rho_t = \eps_t\), then we would have \(t \in \PL_{\hat i}(\beps)\) and \(\rho_t = \eps_t = \hat i\), so by maximality of \(j\) we would have that \(t \in \PR_{\hat i}(\brho)\). But since \(\#_{\hat i} \brho^{[t,k]} < \#_{ i} \brho^{[t,k]}\), this cannot be the case.
Thus for all \(t \in [j+1,k]\), we have \(\rho_t = \hat{\eps}_t\), so
\begin{align}\label{tkeq}
\#_i \beps^{[t,k]}  = \#_{\hat i} \brho^{[t,k]} < \#_i \brho^{[t,k]} = \#_{\hat i} \beps^{[t,k]}.
\end{align}
Moreover, for \(u \in [1,j]\), by (\ref{tkeq}) and the fact that \(j \in \PL_{\hat i}(\beps)\), we have
\begin{align*}
\#_i \beps^{[u,k]}  = \#_i \beps^{[u,j]} + \#_i \beps^{[j+1,k]} <
\#_{\hat i} \beps^{[u,j]} + \#_{\hat i} \beps^{[j+1,k]}  = \#_{\hat i} \beps^{[u,k]} 
\end{align*}
Therefore \(k \in \PL_{\hat i}(\beps)\). But then we should have \(\rho_k = \hat i\), which contradicts \(k \in \NPL_i(\brho)\). Thus \(\PL_{\hat i}(\beps) \cap \NPR_{\hat i}(\brho) = \varnothing\).
Since \(\PL_{\hat i}(\beps) \subseteq \PR_{\hat i}(\brho) \sqcup \NPR_{\hat i}(\brho)\) by definition of the map \(\mathcal{T}^i\), we have then that  \(\PL_{\hat i}(\beps) \subseteq \PR_{\hat i}(\brho)\).

Finally we show \(\PR_i(\beps) \subseteq \PR_{\hat i}(\brho)\).
Assume \(k \in \PR_i(\beps)\). It follows from definitions that \(j<k\) for any \(j \in \PL_{\hat i}(\beps)\). Therefore for all \(u \in [k,n]\), we have \(\rho_u = \hat{\eps}_u\). Therefore we have
\begin{align*}
\#_{\hat i} \brho^{[k,u]} = \#_i \beps^{[k,u]} > \#_{\hat i} \beps^{[k,u]} = \#_i \brho^{[k,u]},
\end{align*}
and so \(k \in \PR_{\hat i}(\brho)\).
Therefore \(\PL_{\hat i}(\beps) \sqcup \PR_i(\beps) \subseteq\PR_{\hat i}(\brho)\), completing the proof.
\end{answer}
\end{proof}

\subsubsection{The map \(\mathcal{U}_{i,m}\)}
Let \(\beps \in \Theta(n)\), \(m \in \ZZ_{\geq 0}\). Set \(\PR_{\hat i}(\beps)_{-m}\) to be the subset of \(\PR_{\hat i}(\beps)\) given by deleting the \(m\) largest elements from \(\PR_{\hat i}(\beps)\). We define a function \(\mathcal{U}_{i, m}:\Theta(n) \to \Theta(n)\) by setting \(\mathcal{U}_{i,m}(\beps) = \brho\), where
\begin{align}
\rho_r = \begin{cases}
\eps_r & \textup{if } r \in \PR_{\hat i}(\beps)_{-m}\\
\hat{\eps}_r & \textup{otherwise}.
\end{cases}\label{Umapdef}
\end{align}
We write \(\mathcal{U}_i := \mathcal{U}_{i,0}\).

\begin{lemma}\label{Uundo}
Let \(\beps \in \Theta\).  Then \(\mathcal{U}_{i,\#\PR_i(\beps)}  \mathcal{T}_i \beps = \beps\).
\end{lemma}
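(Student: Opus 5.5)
Set \(\brho = \mathcal{T}_i\beps\) and \(m = \#\PR_i(\beps)\). By (\ref{Umapdef}), \(\mathcal{U}_{i,m}\brho\) keeps the letters of \(\brho\) in positions \(\PR_{\hat i}(\brho)_{-m}\) and flips all others. By (\ref{Tmapdef}), \(\brho\) agrees with \(\beps\) exactly on \(\PL_{\hat i}(\beps)\) and is flipped elsewhere. Since flipping twice is the identity, the lemma reduces to the set equality
\begin{align*}
\PR_{\hat i}(\brho)_{-m} = \PL_{\hat i}(\beps).
\end{align*}
Once this holds, positions in \(\PL_{\hat i}(\beps)\) are kept by both maps, and every other position is flipped twice, so \(\mathcal{U}_{i,m}\mathcal{T}_i\beps = \beps\).

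The set equality comes straight from Lemma~\ref{PRL}. That lemma gives \(\PR_{\hat i}(\mathcal{T}_i\beps) = \PL_{\hat i}(\beps) \sqcup \PR_i(\beps)\), with every element of \(\PL_{\hat i}(\beps)\) smaller than every element of \(\PR_i(\beps)\). So the \(m = \#\PR_i(\beps)\) largest elements of \(\PR_{\hat i}(\brho)\) are exactly the elements of \(\PR_i(\beps)\). Deleting them leaves \(\PL_{\hat i}(\beps)\), as required.

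There is no real obstacle, since the combinatorial content is already in Lemma~\ref{PRL}. The only point to check is the bookkeeping that "\(\PR_{\hat i}(\brho)_{-m}\)" is taken with respect to \(\brho\) (not \(\beps\)), because \(\mathcal{U}_{i,m}\) is applied to \(\brho\). With that in place, the disjointness and ordering statement of Lemma~\ref{PRL} pins down exactly which elements are removed.
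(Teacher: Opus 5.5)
Your proposal is correct and follows essentially the same route as the paper. You use Lemma~\ref{PRL} to identify \(\PR_{\hat i}(\mathcal{T}_i\beps)_{-\#\PR_i(\beps)}\) with \(\PL_{\hat i}(\beps)\), and then read the identity off the definitions (\ref{Tmapdef}) and (\ref{Umapdef}); you simply spell out the ``flip twice'' bookkeeping that the paper leaves implicit.
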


\begin{proof}
By Lemma~\ref{PRL}, we have \(\PR_{\hat i}(\mathcal{T}_i \beps)_{-\#\PR_i(\beps)} = \PL_{\hat i}(\beps)\), and so it immediately follows from (\ref{Tmapdef}, \ref{Umapdef}) that \(\mathcal{U}_{i,\#\PR_i(\beps)}  \mathcal{T}_i\beps = \beps\).
\end{proof}

\subsection{From binary sequences to bicompositions}

We now define a function \(\mathcal{J}: \Theta \to \bOm_{\geq 1}\) by setting \(\mathcal{J}\beps = \bmu\), where
\begin{align}\label{Jdef}
\mu^i_k = \#\textup{P}^{\textup{R}}_i(\mathcal{T}_i^{k-1} \beps)
\end{align}
for \(i \in \ZZ_2\), \(k \in \ZZ_{\geq 1}\). This map is well defined, since for \(k \gg 0\), we have \(\mu_k^i = \#\PR_i(\mathcal{T}_i^k\beps) = \#\PR_i(\hat i^n) = 0\) by Lemma~\ref{TsTerm}. In fact, it follows from Lemma~\ref{countis} that \(|\mu^i| = \#_i \beps\), so \(\mathcal{J}\) restricts to a map 
\begin{align*}
\mathcal{J}: \Theta(a,b) \to \bOm_{\geq 1}(a,b).
\end{align*}

\begin{lemma}\label{commaps}
Let \(n \in \ZZ_{\geq 0}, i \in \ZZ_2\). We have equalities
\begin{align}
\label{comsq}
\bar{\mathcal{H}} \mathcal{J}= \mathcal{J} \mathcal{R}
\qquad
\textup{and}
\qquad
\bar{\mathcal{G}}^i \mathcal{J}= \mathcal{J} \mathcal{A}^i
\end{align}
of functions \(\Theta(n+1) \to \bOm_{\geq 1}(n)\), and  \(\Theta(n) \to \bOm_{\geq 1}(n+1)\) respectively.
\end{lemma}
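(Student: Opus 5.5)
The plan is to prove the first identity $\bar{\mathcal{H}}\mathcal{J}=\mathcal{J}\mathcal{R}$ via the uniqueness characterization of Lemma~\ref{altdesc}(1), and then to deduce the second identity from the first using Lemma~\ref{GHisom}, rather than redoing a separate case analysis.

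\emph{First identity.} Fix $\beps\in\Theta(n+1)$ and $i\in\ZZ_2$, and set $\beps^{(k)}:=\mathcal{T}_i^{k-1}\beps$. Put $\mu^i_k=\#\PR_i(\beps^{(k)})$, and put $\nu^i_k=\#\PR_i(\mathcal{T}_i^{k-1}\mathcal{R}\beps)$. By the lemma that $\mathcal{R}\circ\mathcal{T}_i=\mathcal{T}_i\circ\mathcal{R}$, we have $\nu^i_k=\#\PR_i(\mathcal{R}\beps^{(k)})$. Lemma~\ref{PtoR} then splits into three cases according to the last letter of $\beps^{(k)}$:
\begin{itemize}
\item if the last letter is $i$, then $\nu^i_k=\mu^i_k-1$, and here $\mu^i_k>0$;
\item if $n+1\in\NPL_{\hat i}(\beps^{(k)})$, then $\nu^i_k=\mu^i_k+1$;
\item if $n+1\in\PL_{\hat i}(\beps^{(k)})$, then $\mu^i_k=\nu^i_k=0$.
\end{itemize}
This already gives (1a) and (1b) of Lemma~\ref{altdesc} for all $k\ge1$. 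In the second case, when $\mu^i_k=0$ we get $\nu^i_k=1$, which is allowed by (1b).

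For (1c), look at the definition (\ref{Tmapdef}) of $\mathcal{T}_i$. The last letter of $\beps^{(k+1)}=\mathcal{T}_i\beps^{(k)}$ is unchanged if $n+1\in\PL_{\hat i}(\beps^{(k)})$, and is flipped otherwise. So the last letter of $\beps^{(k+1)}$ equals $i$ exactly when $n+1\in\NPL_{\hat i}(\beps^{(k)})$. By the case list above, this reads: $\nu^i_{k+1}=\mu^i_{k+1}-1$ if and only if $\nu^i_k=\mu^i_k+1$, which is (1c). The uniqueness clause in Lemma~\ref{altdesc}(1), for $\Omega_{\ge1}$, then gives $\nu^i=\bar{\mathcal{H}}\mu^i$. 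The same argument applies to the $\hat i$ component, using the iterates of $\mathcal{T}_{\hat i}$.

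\emph{Second identity.} Let $\beps\in\Theta(n)$, $\bmu=\mathcal{J}\beps$ and $\bnu=\mathcal{J}\mathcal{A}^i\beps$. Since $\mathcal{R}\mathcal{A}^i=\mathrm{id}$, the first identity gives $\bar{\mathcal{H}}\bnu=\bmu$. So for $k\ge1$ we know $(\mathcal{H}\nu^j)_k=\mu^j_k$ for each $j\in\ZZ_2$. For $k<0$ we also have $(\mathcal{H}\nu^j)_k=0$, because $\nu^j\in\Omega_{\ge1}$ and so the relevant $\eta$ values vanish.

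It remains to compute $(\mathcal{H}\nu^j)_0$. Since $\nu^j_0=0$, the value $(\mathcal{H}\nu^j)_0$ is $1$ exactly when $\nu^j_1$ was lowered, i.e.\ when $\eta_1(\nu^j)$ is odd. By the case analysis above, applied to the word $\mathcal{A}^i\beps$ with $k=1$, this happens exactly when the last letter of $\mathcal{A}^i\beps$ equals $j$. That last letter is $i$. Hence
\begin{itemize}
\item $\mathcal{H}\nu^i=\textup{one}_0\,\mu^i$;
\item $\mathcal{H}\nu^{\hat i}=\mu^{\hat i}$.
\end{itemize}
Applying $\mathcal{G}$ and Lemma~\ref{GHisom} gives $\nu^i=\mathcal{G}\,\textup{one}_0\,\mu^i=\bar{\mathcal{G}}\mu^i$ and $\nu^{\hat i}=\mathcal{G}\mu^{\hat i}$. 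This is exactly $\bnu=\bar{\mathcal{G}}^i\bmu$.

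The main obstacle is the bookkeeping in (1c), in particular checking that the trichotomy of Lemma~\ref{PtoR} correctly tracks how the last letter evolves under $\mathcal{T}_i$. A second, smaller point is justifying that the index-$0$ entry of $\mathcal{H}\nu$ is governed by the same last-letter criterion.
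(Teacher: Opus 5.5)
Your proposal is correct and follows essentially the same route as the paper. For the first identity, you use Lemma~\ref{PtoR} together with the commutation $\mathcal{R}\mathcal{T}_i=\mathcal{T}_i\mathcal{R}$ and the uniqueness clause of Lemma~\ref{altdesc}(1). For the second, you apply the first identity to $\mathcal{A}^i\beps$, read off the index-$0$ entry of $\mathcal{H}$ from the parity of $\eta_1$, and invert via Lemma~\ref{GHisom}; your last-letter bookkeeping for (1c) is a cleaner rendering of the paper's bulleted case check.
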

\begin{proof}
We prove the left equality first.
Let \(\beps \in \Theta(n+1)\). 
Let \(\bmu = \mathcal{J} \beps\), and \(\bnu = \mathcal{J} \mathcal{R}\beps\). Let \(i \in \ZZ_2\), \(k \in \ZZ_{\geq 1}\). Set \(\brho = \mathcal{T}_i^{k-1}\beps\) and \(\bar \brho = \mathcal{T}_i^k \beps = \mathcal{T}_i \brho\). Then \(\mu_k^i = \#\PR_i(\brho)\), \(\mu_{k+1}^i = \#\PR_i(\bar \brho)\), \(\nu_k^i = \#\PR_i(\mathcal{T}_i^{k-1}\mathcal{R}\beps)  = \#\PR_i(\mathcal{R}\brho)\), and similarly \(\nu_{k+1}^i = \#\PR_i(\mathcal{R}\bar \brho)\). Then it follows from Lemma~\ref{PtoR} that 
\begin{align}
\nu_k^i =
\begin{cases}
\mu_k^i-1 & \textup{if } \rho_n = i;\\
\mu_k^i+1 & \textup{if } n \in \NPL_{\hat i}(\brho);\\
  \mu_k^i= 0 & \textup{if } n \in \PL_{\hat i}(\brho),\\
\end{cases}
\qquad
\textup{and}
\qquad
\nu_{k+1}^i =
\begin{cases}
\mu_{k+1}^i-1 & \textup{if } \bar\rho_n = i;\\
\mu_{k+1}^i+1 & \textup{if } n \in \NPL_{\hat i}(\bar \brho);\\
  \mu_{k+1}^i= 0 & \textup{if } n \in \PL_{\hat i}(\bar \brho).\\
\end{cases}\label{munucases}
\end{align}
We have the following by the definition of the map \(\mathcal{T}_i\).
\begin{itemize}
\item If \(\bar \rho_n = i\), then \(n \in \NPL_{\hat i}(\brho)\).
\item If \(n \in \NPL_{\hat i}(\bar \brho)\), then \(\rho_n = i\) or \(n \in \PL_{\hat i}(\bar \brho)\).
\item If \( n \in \PL_{\hat i}(\bar \brho)\), then \(\rho_n = i\) or \(n \in \PL_{\hat i}(\bar \brho)\).
\end{itemize}
By (\ref{munucases}) we must have \(\nu^i_{k+1} = \mu^i_{k+1} \pm 1\), or else \(\nu^i_{k+1} = \mu^i_{k+1} = 0\), and in view of (\ref{munucases}) and the above bulleted facts it follows that:
\begin{itemize}
\item If \(\nu^i_{k+1} = \mu^i_{k+1} - 1\), then \(\nu^i_k = \mu_k + 1\).
\item If \(\nu^i_{k+1} = \mu^i_{k+1} + 1\), then  \(\nu^i_k = 0\) if \(\mu^i_k = 0\), and otherwise \(\nu^i_k = \mu^i_k -1\).
\item If \(\nu^i_{k+1} = \mu^i_{k+1} =0\), then \(\nu^i_k = 0\) if \(\mu^i_k = 0\), and otherwise \(\nu^i_k = \mu^i_k -1\).
\end{itemize}
But then this situation is logically equivalent to (1a, 1b, 1c) of Lemma~\ref{altdesc} holding for \(\nu^i\) and \(\mu^i\) and all \(k \in \ZZ_{\geq 1}\), so it immediately follows from that Lemma~\ref{altdesc}(1) that \(\nu^i = \bar{\mathcal{H}}\mu^i\), and thus \(\bar{\mathcal{H}} \bmu = \bnu\), verifying the left side of (\ref{comsq}). 

Now we work on the right side of (\ref{comsq}).
Let \(\bgamma \in \Theta(n-1)\), and set \(\beps = \mathcal{A}^j \bgamma\). Then \(\eps_n = j\) and \(\bgamma = \mathcal{R}\beps\). Set \(\bmu = \mathcal{J} \beps\), and \(\bnu = \mathcal{J} \mathcal{R}\beps = \mathcal{J} \bgamma\). Then by (\ref{munucases}) we have, for \(i \in \ZZ_2\), that 
\begin{align*}
(\mathcal{H}\bmu)^i_1=(\bar{\mathcal{H}}\bmu)^i_1=(\bar{\mathcal{H}}\mathcal{J} \beps)^i_1=(\mathcal{JR\beps})^i_1 = \nu_1^i = \mu_1^i -1 \textup{ if }i=j \qquad \textup{and} \qquad (\mathcal{H}\bmu)^i_1 =\nu_1^i \in \{\mu_1^i+1, \mu_1^i\} \textup{ if } i = \hat j. 
\end{align*}
Therefore by (\ref{Hcases}) we have that 
\(\eta^i_1(\bmu)\) is odd if \(i = j\) and even if \(i = \hat j\), and so 
\begin{align*}
(\mathcal{H}\bmu)^j_0 = \mu_0^j + 1 = 1 \qquad \textup{and} \qquad (\mathcal{H}\bmu)^{\hat j}_0 = \mu_0^{\hat j} = 0
\end{align*}
by (\ref{Hcases}). It follows then that \(\mathcal{H}\bmu = \textup{add}^j_0 \circ \textup{del}_0  \circ \mathcal{H}\bmu\), and so we have
\(
\bar{\mathcal{G}}^j\bar{\mathcal{H}}\bmu  =\mathcal{G}\mathcal{H} \bmu.
\)
Therefore, applying the left side of (\ref{comsq}) and Lemma~\ref{GHisom}, we have
\begin{align*}
\bar{\mathcal{G}}^j  \mathcal{J} \bgamma =
\bar{\mathcal{G}}^j  \mathcal{J} \mathcal{R} \mathcal{A}^j \bgamma =
\bar{\mathcal{G}}^j \bar{\mathcal{H}} \mathcal{J} \mathcal{A}^j \bgamma = \bar{\mathcal{G}}^j \bar{\mathcal{H}} \bmu = \mathcal{G}\mathcal{H} \bmu = \bmu = \mathcal{J}\mathcal{A}^j \bgamma
\end{align*}
as desired.
\end{proof}

\subsection{From bicompositions to binary sequences}
For \(i \in \ZZ_2\), define a map \(\mathcal{K}^i : \bOm_{\geq 1}(n) \to \Theta(n)\) as follows. If \(\mu^i = \varnothing\), set \(\mathcal{K}^i(\bmu) = \hat i^n\). Otherwise, let \(m\) be maximal such that \(\mu^i_m > 0\), and define
\begin{align}\label{Kidef}
\mathcal{K}^i(\bmu) = \mathcal{U}_{i,\mu^i_1} \cdots \mathcal{U}_{i,\mu^i_m}(\hat i^n).
\end{align}

\subsection{MV bicompositions}\label{secmvbicomp}
Let \(\bmu = (\mu^1, \mu^0) \in \bOm\), \(i \in \ZZ_2\) and \(m \in \Z_{\geq 1}\). 
We define points \(\{ \beta^i_m(\bmu) \mid i \in \ZZ_2, m \in \ZZ_{\geq 0}\}\) in the lattice \(\ZZ_{\geq 0}I\):
\begin{align*}
\beta_m^i (\bmu) = \sum_{k = 1}^m \mu_k((k-1)\delta + \alpha_i) = (\mu_1 + 2 \mu_2 + \cdots + m \mu_m)\alpha_i + (\mu_2 + 2 \mu_3 + \cdots +(m-1)\mu_m) \alpha_{\hat i}.
\end{align*}
We define \(P(\bmu)\) to be the convex hull of the points \(\{ \beta^i_m(\bmu) \mid i \in \ZZ_2, m \in \ZZ_{\geq 0}\}\). 
(see Example~\ref{bicomgeomex}).
We further define integers:
\begin{align}
c^i_m(\bmu) := \sum_{k=1}^m k(\mu_{k+1}^{\hat i} - \mu^i_k) 
\qquad
d^i_m(\bmu) := m \mu_m^{\hat i} + (m+1)\mu_{m+1}^{\hat i} + \sum_{k=1}^{m-1} k(\mu_k^{\hat i} - \mu_{k+1}^i) 
\label{cdef}
\end{align}
so that
\begin{align*}
\beta^{\hat i}_{m+1}(\bmu) - \beta^{i}_m(\bmu):=  c_m^i(\bmu)\alpha_i + d_m^i(\bmu)\alpha_{\hat i}
\end{align*}
for all \(i \in \ZZ_2, m \in \ZZ_{\geq 0}\).
It will be convenient to record some equalities for future use:
\begin{align}
d_m^i(\bmu) - c_m^i(\bmu) &= (\mu_2^i + \cdots + \mu_m^i) + (\mu_1^{\hat i} + \cdots + \mu_{m+1}^{\hat i})\label{dcdiff}\\
c_{m}^{i}(\bmu) - c_{m-1}^{i}(\bmu)&=  m(\mu_{m+1}^{\hat{i}} - \mu_m^{i}) \label{downonec}\\
d_{m}^{i}(\bmu) - d_{m-1}^{i}(\bmu)&=   (m-1)\mu^{i}_m + (m+1)\mu^{\hat{i}}_{m+1}\label{downoned}\\
c_{m}^{i}(\bmu) + d_{m}^{\hat{i}}(\bmu)&=  m\mu_{m+1}^{i} + (m+1)\mu_{m+1}^{\hat{i}}\label{ctodsame}\\
c_{m}^{i}(\bmu) + d_{m-1}^{\hat{i}}(\bmu)&=  (m-1)\mu_m^{\hat{i}} + m\mu_{m+1}^{\hat{i}}\label{ctoddown}\\
c^{\hat{i}}_{m-1}(\bmu) + d_{m}^{i}(\bmu) &= m\mu^{\hat{i}}_m + (m+1)\mu^{\hat{i}}_{m+1}\label{dtocdown}
\end{align}

Finally, we define \(\eta^i_m(\bmu)\) to be smallest non-negative integer \(t\) such that \(\mu^i_{m+t} = 0\), and we set \(\vartheta^i_m(\bmu)\) to be the smallest non-negative integer \(t\) such that \(\mu^i_{m-t} = 0\).
We will occasionally write \(\beta^i_m, c^i_m, d^i_m, \eta^i_m, \vartheta^i_m\) when \(\bmu\) is clear from context.

\begin{definition}\label{mvbicompdef}
Let \(\bmu = (\mu^{1}, \mu^{0}) \in \bOm\). Then we say \(\bmu\) is  {\em MV} provided that for all \(m \in \ZZ_{>0}\) we have
\begin{align}
0 \in \{c_m^1(\bmu), c_m^0(\bmu) \} \subseteq \ZZ_{\leq 0}.
\end{align}
We write \(\bOm^{\textup{MV}}\) for the set of all MV bicompositions. We say that \(\bmu \in \bOm^{\textup{MV}}\) is {\em nontrivial} if there exists some \(k > 0\), \(i \in \ZZ_2\) with \(\mu_k^i > 0\). For \(j \in \ZZ_2\) we define subsets
\begin{align*}
\bOm_{\geq 1,+}^{\textup{MV}} &= \{\bmu \in \bOm^{\textup{MV}} \mid \bmu \textup{ is nontrivial and } \mu^i_m = 0 \textup{ for all } i \in \ZZ_2, m \leq 0\}\\
\bOm_{\geq 0, j}^{\textup{MV}} &= \{\bmu \in \bOm^{\textup{MV}} \mid \mu^j_0 = 1, \mu^{\hat j}_0 = 0, \mu^i_m = 0 \textup{ for all } i \in \ZZ_2, m < 0\}\\
\bOm_{\geq 0, \bullet}^{\textup{MV}} &= \{\bmu \in \bOm^{\textup{MV}} \mid \mu^0_0 + \mu^1_0 = 1, \mu^i_m = 0 \textup{ for all } i \in \ZZ_2, m < 0\} = \bOm_{\geq 0, 0}^{\textup{MV}}  \sqcup \bOm_{\geq 0, 1}^{\textup{MV}}. 
\end{align*}
\end{definition}

\begin{lemma}\label{mixedpar}
Let \(\bmu \in \bOm^{\textup{MV}}\) be nontrivial. Then \(|\eta^1_1(\bmu) - \eta^0_1(\bmu)| = 1\).
\end{lemma}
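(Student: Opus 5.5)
The plan is a direct argument from the recursion
\[
c^i_m(\bmu) - c^i_{m-1}(\bmu) = m\bigl(\mu^{\hat i}_{m+1} - \mu^i_m\bigr)
\]
of (\ref{downonec}), together with the MV condition of Definition~\ref{mvbicompdef}. I use the convention $c^i_0(\bmu)=0$ (an empty sum). Write $a=\eta^1_1(\bmu)$ and $b=\eta^0_1(\bmu)$. Then $\mu^1_k>0$ for $1\le k\le a$ and $\mu^1_{a+1}=0$, and likewise $\mu^0_k>0$ for $1\le k\le b$ and $\mu^0_{b+1}=0$. The roles of $0$ and $1$ in the definitions are symmetric, so I assume without loss of generality that $a\ge b$. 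The goal is to rule out $a=b$ and $a\ge b+2$.

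\emph{Case $a=b=0$.} Here $\mu^1_1=\mu^0_1=0$. I show by induction on $m$ that $\mu^i_k=0$ for all $k\le m$ and both $i$. Given this for $m$, the sum defining $c^i_m$ collapses to $c^i_m = m\mu^{\hat i}_{m+1}$. The MV condition forces $c^i_m\le 0$, hence $\mu^{\hat i}_{m+1}=0$ for both $i$, which is the statement for $m+1$. So $\bmu$ vanishes on all positive indices. Since nontriviality only concerns positive indices, this contradicts the hypothesis.

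\emph{Case $a\ge b\ge 1$.} First I pin down which $c$ vanishes at $m=b$. We have $\mu^0_{b+1}=0$ and $\mu^1_b>0$ (because $b\le a$). The recursion gives
\[
c^1_b = c^1_{b-1} - b\mu^1_b < c^1_{b-1}\le 0,
\]
so the MV condition forces $c^0_b=0$. Expanding this by the same recursion gives $0 = c^0_b = c^0_{b-1} + b(\mu^1_{b+1}-\mu^0_b)$.

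Suppose $a=b$. Then $\mu^1_{b+1}=0$ and $\mu^0_b>0$, so $c^0_b<c^0_{b-1}\le 0$. This contradicts $c^0_b=0$.

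Suppose instead $a\ge b+2$. Then $\mu^1_{b+2}>0$, and the recursion at $m=b+1$ gives
\[
c^0_{b+1} = c^0_b + (b+1)\bigl(\mu^1_{b+2}-\mu^0_{b+1}\bigr) = (b+1)\mu^1_{b+2} > 0,
\]
contradicting $c^0_{b+1}\le 0$.

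\emph{Case $b=0<a$.} Here $c^0_0=0$ holds automatically, so only the case $a\ge 2$ needs ruling out. In that case $c^0_1 = \mu^1_2-\mu^0_1 = \mu^1_2>0$, again a contradiction.

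Hence $a=b+1$, which gives the lemma. The only delicate point is the boundary bookkeeping: the convention $c_0=0$, and the fact that nontriviality excludes only the all-zero case. I expect no substantial obstacle beyond choosing the right index, $m=b$, at which to apply the MV condition.
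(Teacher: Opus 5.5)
Your proof is correct and is essentially the paper's argument. The paper likewise rules out the all-zero case via the minimal nonzero index, which is the same computation \(c^{\hat j}_r = r\mu^j_{r+1}>0\) as your induction step; then, with \(s=\eta^i_1\le \eta^{\hat i}_1=t\), it shows \(c^{\hat i}_s<0\), forces \(c^i_s=0\), and combines this with \(c^i_{s+1}\le 0\) to get \(t=s+1\). The only cosmetic difference is that the paper reads off \(\mu^{\hat i}_{s+1}>0\) directly from \(c^i_s=0\), instead of arguing by contradiction against \(a=b\).
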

\begin{proof}
Let \(i \in \ZZ_2\) be such that \(s = \eta^i_1(\bmu) \leq \eta^{\hat i}_1(\bmu) = t\). Since \(\bmu\) is nontrivial, there exists some \(j \in \ZZ_2, r \geq 0\) such that 
\begin{align*}
\mu^i_1 = \cdots =\mu^i_r = 0, \qquad \mu^{\hat i}_1 = \cdots = \mu^{\hat i}_r = 0, \qquad \mu^j_{r+1} > 0.
\end{align*}
If \(r > 0\), then by (\ref{cdef}) we would have \(c^{\hat j}_r = r \mu^j_{r+1} > 0\), a contradiction. Therefore \(r = 0\), and therefore \(t\geq 1\).

First assume \(s = 0\). Then \(\mu^i_1 =0\), and we have 
\begin{align*}
0 \geq c^{ i}_1 = \mu^{\hat i}_2 - \mu^i_1 = \mu^{\hat i}_2, 
\end{align*}
and thus \(\mu^{\hat i}_2 = 0\), so \(t \leq 1\), which implies that \(t=1\), and we are done.

Next we assume \(s>0\). 
We have \(\mu^{\hat i}_{s} > 0\) and \(\mu^i_{s+1} = 0\). Then by (\ref{downonec}) we have
\begin{align*}
c^{\hat i}_{s}= c^{\hat i}_{s-1} + s(\mu^i_{s+1} - \mu^{\hat i}_s) = c^i_{s-1} - s\mu^{\hat i}_s < c^{\hat i}_{s-1} \leq 0.
\end{align*}
Thus \(c^{\hat i}_s < 0\), so \(c^i_{s} = 0\). Thus, noting that \(\mu^i_s > 0\) and \(c^i_{s-1} \leq 0\), we have by (\ref{downonec}) that
\begin{align*}
0 = c^i_s = c^i_{s-1} + s(\mu^{\hat i}_{s+1} - \mu^i_s) = c^i_{s-1} +s\mu^{\hat i}_{s+1} -s \mu^i_s < c^i_{s-1}  +s\mu^{\hat i}_{s+1} \leq s\mu^{\hat i}_{s+1},
\end{align*}
which implies that \(\mu^{\hat i}_{s+1} > 0\), so \(t \geq s+1\).
Next we have by (\ref{downonec}) that
\begin{align*}
0 \geq c^i_{s+1} = c^i_{s} + (s+1)(\mu^{\hat i}_{s+2} - \mu^i_{s+1}) = (s+1) \mu^{\hat i}_{s+2},
\end{align*}
and so \(\mu^{\hat i}_{s+2}=0\). Thus \(t= s+1\), completing the proof.
\end{proof}

\begin{lemma}\label{onemustbebigger}
Let \(\bmu \in \bOm^{\textup{MV}}\) be nontrivial. Then there exists \(i \in \ZZ_2\), \(M \in \Z_{>0}\) such that \(\mu^i_M >0\) and \(\mu^{\hat i}_m = 0\) for all \(m \geq M\).
\end{lemma}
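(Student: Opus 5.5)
The plan is to take $M$ to be the largest index at which either composition is nonzero. The MV condition at $m=M$ should then forbid both compositions from being nonzero there. Since $\bmu$ is nontrivial and compositions are finitely supported, there is a maximal $N \in \ZZ_{>0}$ such that $\mu^0_N + \mu^1_N > 0$. By maximality, $\mu^0_m = \mu^1_m = 0$ for all $m > N$. It therefore suffices to show that $\mu^0_N$ and $\mu^1_N$ are not both positive. We then take $i$ with $\mu^i_N > 0$ and $M = N$: we get $\mu^{\hat i}_N = 0$ and $\mu^{\hat i}_m = 0$ for $m > N$, as required.

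We argue by contradiction, supposing $\mu^0_N, \mu^1_N > 0$. For each $i \in \ZZ_2$, we use (\ref{downonec}) together with $\mu^{\hat i}_{N+1} = 0$:
\begin{align*}
c^i_N(\bmu) = c^i_{N-1}(\bmu) + N(\mu^{\hat i}_{N+1} - \mu^i_N) = c^i_{N-1}(\bmu) - N\mu^i_N < c^i_{N-1}(\bmu).
\end{align*}
Next we need $c^i_{N-1}(\bmu) \leq 0$. If $N \geq 2$, this is the MV condition of Definition~\ref{mvbicompdef} at $m = N-1 > 0$. If $N = 1$, then $c^i_0(\bmu) = 0$ because the defining sum in (\ref{cdef}) is empty. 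In either case $c^i_N(\bmu) < 0$ for both $i \in \ZZ_2$. This contradicts the requirement $0 \in \{c^1_N(\bmu), c^0_N(\bmu)\}$ of Definition~\ref{mvbicompdef} at $m = N > 0$.

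The only delicate points are bookkeeping. First, we restrict $N$ to positive indices: nontriviality only guarantees support at some $k > 0$, and the MV condition is only imposed for $m > 0$. Second, the edge case $N=1$ is handled by the convention that an empty sum vanishes. No deeper obstacle is expected; this is a direct consequence of the top-index behaviour of $c^i_m$, in the same spirit as the final step of Lemma~\ref{mixedpar}.
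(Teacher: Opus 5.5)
Your proof is correct and is essentially the paper's argument. You take the maximal index $N$ where either composition is nonzero, assume both $\mu^1_N,\mu^0_N>0$, and use (\ref{downonec}) together with $c^i_{N-1}\le 0$ to force $c^1_N,c^0_N<0$, contradicting the MV condition; your explicit treatment of the case $N=1$ (where $c^i_0=0$ as an empty sum) is a detail the paper leaves implicit.
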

\begin{proof}
By way of contradiction assume that there exists \(N \in \Z_{>0}\) such that \(\mu^1_N, \mu^0_N > 0\) and \(\mu^j = 0\) for all \(m >N\). Since \(\bmu   \in \bOm^{\textup{MV}}\), we have \(c^1_{N-1}(\bmu), c^0_{N-1}(\bmu) \leq 0\). Therefore:
\begin{align*}
c^1_N(\bmu) &= c^1_{N-1}(\bmu) + N(\mu^0_{N+1} - \mu^1_N) = c^1_{N-1}(\bmu) - N \mu^1_N < 0;\\
c^0_N(\bmu) &= c^0_{N-1}(\bmu) + N(\mu^1_{N+1} - \mu^0_N) = c^0_{N-1}(\bmu) - N \mu^0_N < 0,
\end{align*}
which contradicts the fact that \(0 \in \{c^1_N(\bmu), c^0_N(\bmu)\}\).
\end{proof}

\subsection{Diagonalization}
Let \(\bmu \in \bOm^{\textup{MV}}\). For \(i \in \ZZ_2\), \(m \in \ZZ_{\geq 1}\), 
if \(c^{i}_m(\bmu) = 0\), then \(\beta^{\hat i}_{m+1} - \beta^i_m = d^i_m(\bmu) \alpha_{\hat i}\), and we refer to the line segment connecting the points \( \beta^i_m, \beta^{\hat i}_{m+1}\) as a {\em diagonal}. 
A {\em diagonalization} of \(\bmu \in \bOm^{\textup{MV}}\) is a function 
\(
\xi: \ZZ_{\geq 0} \to \ZZ_2
\)
such that \(c^{\xi(m)}_m(\bmu) = 0\) for all \(m \in \ZZ_{\geq 0}\); i.e. \(\xi\) gives a specific choice of diagonal for each \(m \in \ZZ_{\geq 0}\). By the definition of MV bicompositions, diagonalizations always exist.

Given a diagonalization \(\xi\) of \(\bmu\), we refer to the portion of \(P(\bmu)\) between the \((m-1)\)th and \(m\)th diagonals in \(\xi\) as the {\em \(m\)th segment of \(\xi\)}. In other words, the \(m\)th segment of \(\xi\) is the convex hull of the points 
\begin{align*}
\beta^{\xi(m-1)}_{m-1}, \beta^{\widehat{\xi(m-1)}}_m, \beta^{\xi(m)}_m, \beta^{\widehat{\xi(m)}}_{m+1}, \beta^{\xi(m-1)}_m.
\end{align*}

\subsection{Downward diagonalization}\label{downdiag}
We inductively construct a specific `downward' diagonalization \(\xi_{\bmu}\) of \(\bmu \in \bOm^{\textup{MV}}\) as follows. 
First, let \(M \in \ZZ_{\geq 0}\) be minimal such that \(\mu^1_k = \mu^0_k = 0\) for all \(k \geq M\). Note that for \(i \in \ZZ_2\) and \(m \geq M-1\), we have
\begin{align}\label{c01meq}
c^0_m =  c^0_{M-2}  -(M-1)\mu^0_{M-1} \qquad \textup{and} \qquad c^1_m =  c^1_{M-2}  -(M-1)\mu^1_{M-1}.
\end{align}
Since at least one of \(c^0_m\), \(c^1_m\) is equal to zero and \(c^i_{M-2} \leq 0\), it follows from the minimality of \(M\) that \(\mu^i_{M-1} = 0\) for exactly one \(i \in \ZZ_2\). Letting \(i \in \ZZ_2\) be such that \(\mu^i_{M-1} = 0\) and  \(\mu^{\hat i}_{M-1} > 0\), it follows from (\ref{c01meq}) that \(c^{\hat i}_m < 0\) for all \(m \geq M-1\). Hence by the properties of MV bicompositions, we have that \(c^i_m = 0\) for all \(m \geq M-1\). We therefore may construct a well-defined diagonalization \(\xi_{\bmu}\) of \(\bmu\) by downward induction, setting:
\begin{align}\label{downdiagdef}
\xi_{\bmu}(m) = 
\begin{cases}
\; i & \textup{if }m \geq M-1;\\
\;\xi_{\bmu}(m+1) &\textup{if }m < M-1 \textup{ and } c^{\xi_{\bmu}(m+1)}_{m} = 0;\\
\;\widehat{\xi_{\bmu}(m+1)} &\textup{if }m < M-1 \textup{ and } c^{\xi_{\bmu}(m+1)}_{m}  < 0;\\
\end{cases}
\end{align}
for all \(m \in \ZZ_{\geq 1}\). This choice of diagonalization gives us some control over the structure of its segments, as noted in the next subsection.

\subsubsection{Typifying segments}\label{deftypes}
Let \(\xi\) be a diagonalization of \(\bmu \in \bOm^{\textup{MV}}\). We define a function
\begin{align*}
\TypeH_{\xi, \bmu} : \ZZ_{\geq 1} \to \{\textup{A}_\varnothing, \textup{A}_+, \textup{A}_-, \textup{B}_+, \textup{B}_-, \textup{Z}\}
\end{align*}
given by setting
\begin{align*}
\TypeH_{\xi, \bmu}(m) = 
\begin{cases}
\textup{A}_\varnothing & \textup{if \(\xi(m-1) = \xi(m)\), 
\(\mu^{\xi(m)}_m = \mu^{\hat{\xi(m)}}_{m+1} = 0\), and \(\eta^{\xi(m)}_{m+1}(\bmu) , \eta^{\hat{\xi(m)}}_{m+2}(\bmu) \) are even.}\\
\textup{A}_+ & \textup{if \(\xi(m-1) = \xi(m)\) and \(\eta^{\xi(m)}_{m+1}(\bmu) , \eta^{\hat{\xi(m)}}_{m+2}(\bmu) \) are odd.}\\
\textup{A}_- & \textup{if \(\xi(m-1) = \xi(m)\) and \(\eta^{\xi(m)}_{m}(\bmu) , \eta^{\hat{\xi(m)}}_{m+1}(\bmu) \) are odd.}\\
\textup{B}_+ & \textup{if \(\xi(m-1) = \hat{\xi(m)}\), \(c^{\xi(m)}_{m-1}(\bmu) \neq 0\), \(\eta^{\hat{ \xi(m)}}_{m+1}(\bmu) \) is odd and \(\eta^{\xi(m)}_{m+1}(\bmu) \) is even.}\\
\textup{B}_- & \textup{if \(\xi(m-1) = \hat{\xi(m)}\), \(c^{ \xi(m)}_{m-1}(\bmu)  \neq  0\), \(\mu^{ \hat{\xi(m)}}_{m+1} > 0\), and  \(\eta^{ \hat{\xi(m)}}_{m}(\bmu) \), \(\eta^{ \xi(m)}_{m+1}(\bmu) \) are odd.}\\
\textup{Z} & \textup{otherwise},
\end{cases}
\end{align*}
and we refer to \(\TypeH_{\xi, \bmu}(m)\) as the {\em (downward) type of the \(m\)th segment of \(\xi\) in \(\bmu\)}.

\begin{figure}[h]
\begin{align*}
\hackcenter{}
\hackcenter{
\begin{overpic}[height=35mm]{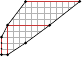}
 \put(5.8,-3){\makebox(0,0)[]{$\scriptstyle \alpha_{1:1}$}}    
  \put(28,8){\makebox(0,0)[]{$\scriptstyle  \alpha_{1:3}$}}    
    \put(54,26){\makebox(0,0)[]{$\scriptstyle  \alpha_{1:4}$}}   
      \put(87,51){\makebox(0,0)[]{$\scriptstyle  \alpha_{1:5}$}}   
     \put(-6,12){\makebox(0,0)[]{$\scriptstyle 3 \alpha_{0:1}$}}          
       \put(-1,32){\makebox(0,0)[]{$\scriptstyle \alpha_{0:2}$}}       
         \put(15,56){\makebox(0,0)[]{$\scriptstyle  \alpha_{0:4}$}}     
        \put(-2,-1){\makebox(0,0)[]{$\scriptstyle \color{gray}0$}}   
           \put(12,0){\makebox(0,0)[l]{$\scriptstyle \beta^1_1 = \beta^1_2$}}    
            \put(34.5,16){\makebox(0,0)[l]{$\scriptstyle \beta^1_3$}}    
            \put(64,37){\makebox(0,0)[l]{$\scriptstyle \beta^1_4$}}    
             \put(102,70){\makebox(0,0)[l]{$\scriptstyle \beta^1_5 = \beta^1_6 =  \cdots$}}    
              \put(-1.5,23){\makebox(0,0)[r]{$\scriptstyle \beta^0_1$}}    
               \put(6,41){\makebox(0,0)[r]{$\scriptstyle \beta^0_3 = \beta^0_2$}}    
                \put(27,70){\makebox(0,0)[r]{$\scriptstyle \cdots  = \beta^0_5 = \beta^0_4$}}    
\end{overpic}
}
\end{align*}
\caption{The polytope \(P(\bmu)\) associated to the MV bicomposition \(\bmu\) as considered in Example~\ref{bicomgeomex}.}
\label{bicomgeomfig}
\end{figure}

\begin{example}\label{bicomgeomex}
Consider the case of \(\bmu \in \bOm^{\textup{MV}}\), with \(\mu\) given by:
\begin{align*}
\mu^1_1 = 1; \quad \mu^1_3=1;\quad  \mu^1_4 = 1; \quad \mu^1_5=1;\qquad \mu^0_1=3; \quad \mu^0_2=1; \quad \mu^0_4 = 1,
\end{align*}
and \(\mu^i_m = 0\) otherwise. Then the polytope \(P(\bmu)\) which is the convex hull of the points \(\beta^i_m(\bmu)\) is shown in Figure~\ref{bicomgeomfig}. Following \S\ref{downdiag}, the downward diagonalization \(\xi_{\bmu}\) gives us \(\xi_{\bmu}(0) = \xi_{\bmu}(1) = \xi_{\bmu}(2) = 1\) and \(\xi_{\bmu}(m) = 0\) otherwise. The associated \(m\)th diagonals---line segments connecting \(\beta_m^{\xi_{\bmu}(m)}\) and \(\beta_{m+1}^{\widehat{\xi_{\bmu}(m)}}\)---are shown in red in the figure.
The (downward) types of the segments in the diagonalization \(\xi_{\bmu}\) are given by
\begin{align*}
\textup{Type}_{\xi_{\bmu}, \bmu}(1) = \textup{A}_-;
\,\,\,
\textup{Type}_{\xi_{\bmu}, \bmu}(2) = \textup{A}_+;
\,\,\,
\textup{Type}_{\xi_{\bmu}, \bmu}(3) = \textup{B}_-;
\,\,\,
\textup{Type}_{\xi_{\bmu}, \bmu}(4) = \textup{A}_-;
\,\,\,
\textup{Type}_{\xi_{\bmu}, \bmu}(m) = \textup{A}_\varnothing,
\end{align*}
for \(m \geq 5\).  The reader should compare these segment types against (\ref{allcombotypes}).
\end{example}

\begin{lemma}\label{downdiagtypeslem}
Let \(\bmu \in \bOm^{\textup{MV}}\). 
For all \(m \in \ZZ_{\geq 1}\), we have \(\TypeH_{\xi_{\bmu}, \bmu}(m) \in \{\textup{A}_\varnothing, \textup{A}_+, \textup{A}_-, \textup{B}_+, \textup{B}_-\}\). Moreover,
 for all \(m \in \ZZ_{\geq 2}\), we have that 
 \begin{align*}
 (\TypeH_{\xi_{\bmu}, \bmu}(m-1), \TypeH_{\xi_{\bmu}, \bmu}(m))
 \end{align*}
 is equal to one of 
 \begin{align}
 &\{
(\textup{A}_\varnothing, \textup{A}_\varnothing), (\textup{A}_-,\textup{A}_\varnothing), (\textup{B}_+, \textup{A}_\varnothing), (\textup{A}_\varnothing, \textup{A}_+), (\textup{A}_-, \textup{A}_+),(\textup{B}_+, \textup{A}_+), (\textup{A}_\varnothing, \textup{B}_+),\notag\\ & \qquad\;\;\;(\textup{A}_-, \textup{B}_+), (\textup{B}_+, \textup{B}_+), (\textup{A}_+, \textup{A}_-), (\textup{B}_-, \textup{A}_-), (\textup{A}_+, \textup{B}_-),(\textup{B}_-, \textup{B}_-)
 \}.\label{allcombotypes}
 \end{align}
\end{lemma}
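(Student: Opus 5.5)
The plan is a downward induction on \(m\) that follows the construction of \(\xi_{\bmu}\) in (\ref{downdiagdef}). Let \(M\) be as in \S\ref{downdiag}, so that \(\xi_{\bmu}(m)=i\) for \(m\ge M-1\), with \(\mu^i_{M-1}=0<\mu^{\hat i}_{M-1}\). The base step handles \(m\ge M\): here \(\xi(m-1)=\xi(m)=i\) and all relevant entries \(\mu_m^{i},\mu^{\hat i}_{m+1}\) vanish with even (zero) \(\eta\)'s, so the type is \(\textup{A}_\varnothing\). Next we treat \(m=M-1\) directly: since \(\mu^{\hat i}_{M-1}>0\) and \(\mu^{\hat i}_M=0\), we get \(\eta^{\hat i}_{M-1}=1\), which is odd. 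Checking \(\xi(M-2)\) and the parities then gives \(\textup{A}_-\) or \(\textup{B}_+\), and one verifies that \((\cdot,\textup{A}_\varnothing)\) is compatible with each.

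For the inductive step, fix \(m<M-1\) and set \(j=\xi(m)\). The case split follows (\ref{downdiagdef}).
\begin{itemize}
\item If \(c^j_{m-1}=0\), then \(\xi(m-1)=j\) and we are in an A-type. Which of \(\textup{A}_\varnothing,\textup{A}_+,\textup{A}_-\) occurs is decided by the parities of \(\eta^{j}_{m},\eta^{j}_{m+1},\eta^{\hat j}_{m+1},\eta^{\hat j}_{m+2}\).
\item If \(c^j_{m-1}<0\), then \(\xi(m-1)=\hat j\) and we are in a B-type, with the sign decided by the same parities.
\end{itemize}
The key tool is a local parity lemma, which would be proved first. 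Suppose \(c^j_m=0\) and \(c^{j}_{m-1}\le 0\). Combining (\ref{downonec}) with the MV condition at \(m-1\) (one of \(c^j_{m-1},c^{\hat j}_{m-1}\) vanishes), we want to show that \(\mu^j_m>0\) forces \(\mu^{\hat j}_{m+1}>0\), and conversely in the relevant configurations. This mirrors the argument of Lemma~\ref{mixedpar}, which is exactly the case \(m=1\). The intended consequence is that \(\eta^{j}_{m}\) and \(\eta^{\hat j}_{m+1}\) have the same parity whenever \(\xi(m-1)=\xi(m)=j\). That rules out the mismatched-parity situations which would otherwise fall into type Z. It also yields a relation of the form \(\eta^{j}_{m}=\eta^{j}_{m+1}+1\) or \(\eta^{j}_m=0\), which links the parities at consecutive indices.

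With that lemma in hand, the pair list (\ref{allcombotypes}) is a bookkeeping exercise. Segment \(m\) depends on \(\eta\)'s based at \(m,m+1,m+2\), and segment \(m-1\) depends on \(\eta\)'s based at \(m-1,m,m+1\). The shared quantities \(\eta^{\cdot}_{m},\eta^{\cdot}_{m+1}\) must agree, and each \(\eta_{k}\) is either \(0\) or \(1+\eta_{k+1}\), so its parity flips or resets. For instance, if segment \(m\) is \(\textup{A}_+\), then \(\eta^j_{m+1}\) is odd. So \(\mu^j_{m+1}>0\), hence \(\eta^j_m\) is even; via the parity lemma this forces the previous segment to be \(\textup{A}_\varnothing\), \(\textup{A}_-\) or \(\textup{B}_+\), never \(\textup{A}_+\) or \(\textup{B}_-\). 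We would tabulate each of the five types for segment \(m\) together with the forced parities at the shared indices, and read off the admissible predecessors.

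The main obstacle is the B cases. There the diagonal switches colour, so the parity conditions mix \(\eta^{j}\) and \(\eta^{\hat j}\) at shifted indices. We also must use \(c^{j}_{m-1}<0\) together with \(c^{\hat j}_{m-1}=0\), applying (\ref{ctodsame})–(\ref{dtocdown}), to obtain the needed positivity, namely \(\mu^{\hat j}_{m+1}>0\) for \(\textup{B}_-\). I would first try to derive every B-case fact from (\ref{downonec}) applied to both colours simultaneously, exactly as in the proof of Lemma~\ref{onemustbebigger}, falling back on the polytope geometry only if that fails.
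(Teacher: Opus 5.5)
Your architecture is the paper's. You run a downward induction along \(\xi_{\bmu}\), split on \(c^{\xi(m)}_{m-1}=0\) (A-types) versus \(c^{\xi(m)}_{m-1}<0\) (B-types), and read off \(\TypeH_{\xi_{\bmu},\bmu}(m)\) from \(\TypeH_{\xi_{\bmu},\bmu}(m+1)\). The paper additionally splits the first case according to whether \(\mu^{\xi(m)}_m=\mu^{\hat{\xi(m)}}_{m+1}\) is zero or positive. However, your base case is off by one. Since \(\mu^i_{M-1}=0\), (\ref{c01meq}) gives \(c^i_{M-2}=c^i_{M-1}=0\), so \(\xi(M-2)=i\) is forced. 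Together with \(\mu^i_{M-1}=\mu^{\hat i}_M=0\), this makes segment \(M-1\) of type \(\textup{A}_\varnothing\), as is every \(m\geq M-1\). It is segment \(M-2\) that is \(\textup{A}_-\) or \(\textup{B}_+\), according to \(\xi(M-3)\).

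The more substantive problem is your ``local parity lemma'': it cannot be proved first from (\ref{downonec}) and the MV condition near \(m\). When \(c^j_{m-1}=c^j_m=0\), you get \(\mu^j_m=\mu^{\hat j}_{m+1}\) for free.
\begin{itemize}
\item If these are positive, then \(\eta^j_m\equiv\eta^{\hat j}_{m+1}\) is equivalent to \(\eta^j_{m+1}\equiv\eta^{\hat j}_{m+2}\), which depends on the whole tail of \(\bmu\).
\item If they are zero, the agreement at \(m\) is vacuous and does not exclude type Z. What separates \(\textup{A}_\varnothing\), \(\textup{A}_+\) and Z is again the comparison of \(\eta^j_{m+1}\) with \(\eta^{\hat j}_{m+2}\).
\end{itemize}
This information is available only through the inductive hypothesis on segment \(m+1\), which is exactly how the paper supplies it in each of its subcases. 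So the parity statement has to be folded into the induction, not established beforehand as a tool.

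Likewise, in the B case the positivity you single out is immediate: \(c^j_{m-1}<0=c^j_m\) and (\ref{downonec}) give \(\mu^{\hat j}_{m+1}>\mu^j_m\geq 0\). What actually needs an argument for \(\textup{B}_-\) is \(\mu^{\hat j}_m>0\), so that \(\eta^{\hat j}_m=1+\eta^{\hat j}_{m+1}\) is odd. It comes from \(c^{\hat j}_{m-1}=0\) and the MV inequality \(c^{\hat j}_m\leq 0\), which by (\ref{downonec}) give \(\mu^{\hat j}_m\geq\mu^j_{m+1}\). Combine this with \(\mu^j_{m+1}>0\), which holds because segment \(m+1\) is then \(\textup{A}_-\) or \(\textup{B}_-\). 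The identities (\ref{ctodsame})--(\ref{dtocdown}) play no role. With these repairs, your tabulation (your \(\textup{A}_+\) example is correct) reproduces the paper's case table.
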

\begin{proof}
Write \(\xi = \xi_{\bmu}\). 
By construction of the diagonalization \(\xi\), it is immediate that \(\TypeH_{\xi, \bmu}(m) = \textup{A}_{\varnothing}\) 
for all \(m \geq M-1\). Now assume that \(m \in \ZZ_{\geq 2}\) and \(\TypeH_{\xi, \bmu}(m) \in  \{\textup{A}_\varnothing, \textup{A}_+, \textup{A}_-, \textup{B}_+, \textup{B}_-\}\). We prove accordingly that \(\TypeH_{\xi, \bmu}(m-1) \in  \{\textup{A}_\varnothing, \textup{A}_+, \textup{A}_-, \textup{B}_+, \textup{B}_-\}\), and satisfies (\ref{allcombotypes}). We consider the possibilities 
\begin{align*}
\mu^{\xi(m-1)}_{m-1} = \mu^{\hat{\xi(m-1)}}_m =0, \qquad \mu^{\xi(m-1)}_{m-1} = \mu^{\hat{\xi(m-1)}}_m > 0, \qquad \mu^{\xi(m-1)}_{m-1} \neq \mu^{\hat{\xi(m-1)}}_m
\end{align*}
as three individual cases.

\noindent{\em Case 1. Assume that \(\mu^{\xi(m-1)}_{m-1} = \mu^{\hat{\xi(m-1)}}_m =0\).} Then by (\ref{downonec}) we have \(c^{\xi(m-1)}_{m-2} = c^{\xi(m-1)}_{m-1} = 0\), and thus by (\ref{downdiagdef}) we have \(\xi(m-2) = {\xi(m-1)}\). Now, note that
\begin{align*}
\eta^{\xi(m-1)}_m = \begin{cases}
1+ \eta^{\xi(m-1)}_{m+1} & \textup{if }\mu^{\xi(m-1)}_m > 0\\
0 & \textup{if }\mu^{\xi(m-1)}_m = 0
\end{cases}
\qquad
\textup{and}
\qquad
\eta^{\hat{\xi(m-1)}}_{m+1} = \begin{cases}
1+ \eta^{\hat{\xi(m-1)}}_{m+2} & \textup{if }\mu^{\hat{\xi(m-1)}}_{m+1} > 0\\
0 & \textup{if }\mu^{\hat{\xi(m-1)}}_{m+1} = 0.
\end{cases}
\end{align*}
We consider some subcases:
\begin{enumerate}[(1a)]
\item[(1a)] 
If \(\TypeH_{\xi,\bmu}(m) = \textup{A}_\varnothing\), then we have that \(\mu^{\xi(m-1)}_m = \mu^{\hat{\xi(m-1)}}_{m+1} =0\), so \(\eta^{\xi(m-1)}_m= \eta^{\hat{\xi(m-1)}}_{m+1} = 0\), and thus \(\TypeH_{\xi,\bmu}(m-1) = \textup{A}_\varnothing\).
\item[(1b)]
If \(\TypeH_{\xi,\bmu}(m) = \textup{A}_+\), then we have that  \(\eta^{\xi(m-1)}_{m+1}\), \(\eta^{\hat{\xi(m-1)}}_{m+2}\) are odd, and therefore in any case \(\eta^{\xi(m-1)}_m, \eta^{\hat{\xi(m-1)}}_{m+1}\) are even, so \(\TypeH_{\xi,\bmu}(m-1) = \textup{A}_\varnothing\).
\item[(1c)] If the \(\TypeH_{\xi,\bmu}(m) = \textup{B}_+\), then we have that \(\eta^{\xi(m-1)}_{m+1}\) is odd and \(\eta^{\hat{\xi(m-1)}}_{m+1}\) is even. Thus \(\eta^{\xi(m-1)}_{m}\) is even in any case, and so \(\TypeH_{\xi,\bmu}(m-1) = \textup{A}_\varnothing\).
\item[(1d)]
If \(\TypeH_{\xi,\bmu}(m) \in\{ \textup{A}_-, \textup{B}_-\}\), then we have that \(\eta^{\xi(m-1)}_{m}, \eta^{\hat{\xi(m-1)}}_{m+1}\) are odd, which implies that 
\(\TypeH_{\xi,\bmu}(m-1) = \textup{A}_+\).
\end{enumerate}

\noindent{\em Case 2. Assume that \(\mu^{\xi(m-1)}_{m-1}=\mu^{\hat{\xi(m-1)}}_m > 0\).} Then by (\ref{downonec}) we have \(c^{\xi(m-1)}_{m-2} = c^{\xi(m-1)}_{m-1} = 0\), and thus by (\ref{downdiagdef}) we have \(\xi(m-2) = {\xi(m-1)}\). Now, note that
\begin{align*}
\eta^{\xi(m-1)}_{m-1} = \begin{cases}
2+ \eta^{\xi(m-1)}_{m+1} & \textup{if }\mu^{\xi(m-1)}_m > 0\\
1 & \textup{if }\mu^{\xi(m-1)}_m = 0
\end{cases}
\qquad
\textup{and}
\qquad
\eta^{\hat{\xi(m-1)}}_{m} = \begin{cases}
2+ \eta^{\hat{\xi(m-1)}}_{m+2} & \textup{if }\mu^{\hat{\xi(m-1)}}_{m+1} > 0\\
1 & \textup{if }\mu^{\hat{\xi(m-1)}}_{m+1} = 0
\end{cases}
\end{align*}
We consider some subcases:
\begin{enumerate}[(2a)]
\item 
If \(\TypeH_{\xi,\bmu}(m) = \textup{A}_\varnothing\), then we have that \(\mu^{\xi(m-1)}_m = \mu^{\hat{\xi(m-1)}}_{m+1} =0\), so \(\eta^{\xi(m-1)}_{m-1}= \eta^{\hat{\xi(m-1)}}_{m} = 1\), and thus 
\(\TypeH_{\xi,\bmu}(m-1) = \textup{A}_-\).
\item 
If \(\TypeH_{\xi,\bmu}(m) = \textup{A}_+\), then we have that  \(\eta^{\xi(m-1)}_{m+1}\), \(\eta^{\hat{\xi(m-1)}}_{m+2}\) are odd, and therefore in any case \(\eta^{\xi(m-1)}_{m-1}, \eta^{\hat{\xi(m-1)}}_{m}\) are odd, so
\(\TypeH_{\xi,\bmu}(m-1) = \textup{A}_-\).
\item If 
\(\TypeH_{\xi,\bmu}(m) = \textup{B}_+\), then we have that \(\eta^{\xi(m-1)}_{m+1}\) is odd and \(\eta^{\hat{\xi(m-1)}}_{m+1}\) is even. Thus either \(\mu^{\hat{\xi(m-1)}}_{m+1} = 0\) or \(\eta^{\hat{\xi(m-1)}}_{m+2}\) is odd. Thus \(\eta^{\xi(m-1)}_{m-1}, \eta^{\hat{\xi(m-1)}}_m\) are odd in any case, and so
\(\TypeH_{\xi,\bmu}(m-1) = \textup{A}_-\).
\item
\(\TypeH_{\xi,\bmu}(m) \in\{ \textup{A}_-, \textup{B}_-\}\), then we have that \(\eta^{\xi(m-1)}_{m}, \eta^{\hat{\xi(m-1)}}_{m+1}\) are odd, which implies that \(\TypeH_{\xi,\bmu}(m-1) = \textup{A}_+\)
\end{enumerate}

\noindent{\em Case 3. Assume that \(\mu^{\xi(m-1)}_{m-1}  \neq \mu^{\hat{\xi(m-1)}}_m\).} Then by (\ref{downonec})  we have \(c^{\xi(m-1)}_{m-2} \neq 0\). Thus, since \(\bmu \in \bOm^{\textup{MV}}\) we have that \(c^{\xi(m-1)}_{m-2} < 0\), so \(c^{\hat{\xi(m-1)}}_{m-2} = 0\) and \(\mu^{\hat{\xi(m-1)}}_m > \mu^{\xi(m-1)}_{m-1} \geq 0\). By (\ref{downdiagdef}) we have \(\xi(m-2) = \hat{\xi(m-1)}\). Note that
\begin{align*}
\eta^{\hat{\xi(m-1)}}_m = 
\begin{cases}
2 + \eta_{m+2}^{\hat{\xi(m-1)}} &\textup{if }\mu_{m+1}^{\hat{\xi(m-1)}} > 0\\
1&\textup{if }\mu_{m+1}^{\hat{\xi(m-1)}} = 0
\end{cases}
\qquad
\textup{and}
\qquad
\eta^{\xi(m-1)}_m = 
\begin{cases}
1 + \eta_{m+1}^{\xi(m-1)} &\textup{if }\mu_{m}^{\xi(m-1)} > 0\\
0&\textup{if }\mu_{m}^{\xi(m-1)} = 0.
\end{cases}
\end{align*}
We again consider some subcases:
\begin{enumerate}[(3a)]
\item 
If \(\TypeH_{\xi,\bmu}(m) = \textup{A}_\varnothing\), then 
\(\xi(m-1) = \xi(m)\) and 
\(\mu^{\xi(m)}_m = \mu^{\hat{\xi(m)}}_{m+1} = 0\).  We have that \(\eta_m^{\hat{\xi(m-1)}} = 1\) and \(\eta^{\xi(m-1)}_{m} = 0\), so 
\(\TypeH_{\xi,\bmu}(m-1) = \textup{B}_+\).

\item If 
\(\TypeH_{\xi,\bmu}(m) = \textup{A}_+\), then 
\(\xi(m-1) = \xi(m)\) and \(\eta^{\xi(m)}_{m+1}, \eta^{\hat{\xi(m)}}_{m+2}\) are odd. Then in any case we have that \(\eta^{\hat{\xi(m-1)}}_m\) is odd, and \(\eta^{\xi(m-1)}_m\) is even, so
\(\TypeH_{\xi,\bmu}(m-1) = \textup{B}_+\).

\item If 
\(\TypeH_{\xi,\bmu}(m) = \textup{A}_-\), then
\(\xi(m-1) = \xi(m)\) and \(\eta^{\xi(m)}_{m}, \eta^{\hat{\xi(m)}}_{m+1}\) are odd.
Since \(\eta_{m+1}^{\hat{\xi(m-1)}}\) is odd, it follows that \(\mu_m^{\xi(m-1)}, \mu_{m+1}^{\hat{\xi(m-1)}}> 0\) and \(\eta_{m+2}^{\hat{\xi(m-1)}}\) is even. Moreover, we have
\begin{align*}
0 \geq c^{\hat{\xi(m-1)}}_{m-1} = c^{\hat{\xi(m-1)}}_{m-2} + (m-1)(\mu_m^{\xi(m-1)} - \mu_{m-1}^{\hat{\xi(m-1)}}) = (m-1)(\mu_m^{\xi(m-1)} - \mu_{m-1}^{\hat{\xi(m-1)}}),
\end{align*}
which implies that \(\mu_{m-1}^{\hat{\xi(m-1)}} \geq \mu_m^{\xi(m-1)} > 0\). Thus \(\eta_{m-1}^{\hat{\xi(m-1)}} = 1 + \eta_{m}^{\hat{\xi(m-1)}}\) is odd. 
Therefore 
\(\TypeH_{\xi,\bmu}(m-1) = \textup{B}_-\).

\item If 
\(\TypeH_{\xi,\bmu}(m) = \textup{B}_+\),  we have that \(\xi(m-1) = \hat{\xi(m)}\), \(c^{\xi(m)}_{m-1} \neq 0\), \(\eta^{\hat{ \xi(m)}}_{m+1}\) is odd and \(\eta^{\xi(m)}_{m+1}\) is even. Since \(\eta^{\hat{\xi(m-1)}}_{m+1}\) is even, either \(\mu_{m+1}^{\hat{\xi(m-1)}} = 0\) or  \(\eta^{\hat{\xi(m-1)}}_{m+2}\) is odd. In any case then it follows that \(\eta^{\hat{ \xi(m-1)}}_{m}\) is odd and \(\eta_m^{\xi(m-1)}\) is even. Therefore 
\(\TypeH_{\xi,\bmu}(m-1) = \textup{B}_+\)

\item
If
\(\TypeH_{\xi,\bmu}(m) = \textup{B}_-\), we have that \(\xi(m-1) = \hat{\xi(m)}\), \(c^{ \xi(m)}_{m-1} \neq  0\), \(\mu^{ \hat{\xi(m)}}_{m+1} > 0\), and  \(\eta^{ \hat{\xi(m)}}_{m}\), \(\eta^{ \xi(m)}_{m+1}\) are odd. 
Since \(\eta_m^{\hat{\xi(m)}}\) is odd, we have that \(\mu_m^{\hat{\xi(m)}} = \mu_m^{\xi(m-1)}>0\).
Moreover, we have
\begin{align*}
0 \geq c^{\hat{\xi(m-1)}}_{m-1} = c^{\hat{\xi(m-1)}}_{m-2} + (m-1)(\mu_m^{\xi(m-1)} - \mu_{m-1}^{\hat{\xi(m-1)}}) = (m-1)(\mu_m^{\xi(m-1)} - \mu_{m-1}^{\hat{\xi(m-1)}}),
\end{align*}
which implies that \(\mu_{m-1}^{\hat{\xi(m-1)}} \geq \mu_m^{\xi(m-1)} >0\). Moreover, since \(\mu_m^{\hat{\xi(m-1)}} >0\), we have that \(\eta^{ \hat{\xi(m-1)}}_{m-1} = 2 +\eta^{ \hat{\xi(m-1)}}_{m+1} = 2 +\eta^{\xi(m)}_{m+1} \) is odd. Therefore 
\(\TypeH_{\xi,\bmu}(m-1) = \textup{B}_-\).
\end{enumerate}

Thus in every case we have that \( (\TypeH_{\xi, \bmu}(m-1), \TypeH_{\xi, \bmu}(m))\) is in the set (\ref{allcombotypes}), completing the proof. 
\end{proof}

\subsection{The \(\mathcal{H}\) map and MV bicompositions}\label{Hmapbicompssec}
The downward diagonalization (and associated control over segment types), allows us to prove the following key result, working on a `segment-by-segment' basis.
\begin{lemma}\label{Hisom}
The map \(\mathcal{H}\) restricts to a map of MV bicompositions \(\mathcal{H}: \bOm^{\textup{MV}}_{\geq 1,+} \to \bOm^{\textup{MV}}_{\geq 0,\bullet}\). Moreover, if \(\bmu \in \bOm^{\textup{MV}}_{\geq 1,+}\), then \(\xi_{\bmu}\) is a diagonalization of \(\bnu = \mathcal{H}(\bmu)\), and:
\begin{align}
d^{\xi_{\bmu}(m-1)}_{m-1}(\bnu) &= 
\begin{cases}
d^{\xi_{\bmu}(m-1)}_{m-1}(\bmu)-1 &\textup{if } \TypeH_{\xi_{\bmu}, \bmu}(m) \in \{\textup{A}_\varnothing, \textup{A}_+, \textup{B}_+\}\\
d^{\xi_{\bmu}(m-1)}_{m-1}(\bmu)+1 &\textup{if } \TypeH_{\xi_{\bmu}, \bmu}(m)\in \{\textup{A}_-, \textup{B}_-\}\\
\end{cases}\label{dunder}\\
d^{\xi_{\bmu}(m)}_{m}(\bnu) &= 
\begin{cases}
d^{\xi_{\bmu}(m)}_{m}(\bmu)-1 &\textup{if } \TypeH_{\xi_{\bmu}, \bmu}(m) \in \{\textup{A}_\varnothing, \textup{A}_-, \textup{B}_+\}\\
d^{\xi_{\bmu}(m)}_{m}(\bmu)+1 &\textup{if } \TypeH_{\xi_{\bmu}, \bmu}(m) \in \{\textup{A}_+, \textup{B}_-\}.\\
\end{cases}\label{dover}
\end{align}
\end{lemma}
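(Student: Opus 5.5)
We will work segment by segment along the downward diagonalization \(\xi = \xi_{\bmu}\), writing \(\bnu = \mathcal{H}\bmu\). The first task is to record, for each \(m\), how \(\mathcal{H}\) modifies the four entries \(\mu^{\xi(m-1)}_{m-1}, \mu^{\widehat{\xi(m-1)}}_{m}, \mu^{\xi(m)}_m, \mu^{\widehat{\xi(m)}}_{m+1}\) that bound the \(m\)th segment. This comes directly from (\ref{Hcases}): \(\nu^i_k = \mu^i_k - 1\) exactly when \(\eta^i_k\) is odd, and \(\nu^i_k = \mu^i_k+1\) exactly when \(\eta^i_{k+1}\) is odd. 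The parity conditions defining the types \(\textup{A}_\varnothing, \textup{A}_\pm, \textup{B}_\pm\) are phrased in exactly these \(\eta\)-parities, so each type determines the \(\pm1\) changes of the relevant entries. For instance, in type \(\textup{A}_+\) (with \(j=\xi(m)\)) we get \(\nu^{j}_m = \mu^{j}_m+1\) and \(\nu^{\hat j}_{m+1} = \mu^{\hat j}_{m+1}+1\), while \(\mu^j_{m+1}\) and \(\mu^{\hat j}_{m+2}\) each drop by one. Lemma~\ref{downdiagtypeslem} guarantees that only these five types occur, and that adjacent segments have compatible types. This compatibility is what makes the local changes for segment \(m-1\) and segment \(m\) agree on the shared diagonal.

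Next, we prove that \(\xi\) is a diagonalization of \(\bnu\), i.e. \(c^{\xi(m)}_m(\bnu) = 0\) for all \(m\). We will do this by downward induction on \(m\), starting from \(m \geq M-1\), where everything is trivial (the entries vanish and \(\eta\)'s are \(0\) or \(1\)). For the inductive step we use (\ref{downonec}) and split into two cases:
\begin{enumerate}
\item If \(\xi(m-1)=\xi(m)\) (types \(\textup{A}\)), then \(c^{\xi(m)}_{m-1}(\bnu) - c^{\xi(m)}_m(\bnu) = -m(\nu^{\hat j}_{m+1}-\nu^j_m)\). We check from the type that \(\nu^{\hat j}_{m+1}-\nu^j_m = \mu^{\hat j}_{m+1}-\mu^j_m\), which vanishes since both \(c\)'s are zero for \(\bmu\).
\item If \(\xi(m-1)\neq\xi(m)\) (types \(\textup{B}\)), then we instead need \(c^{\hat j}_{m-1}(\bnu)=0\). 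We get this from the corresponding identity for \(\bmu\) via (\ref{ctoddown}) or (\ref{ctodsame}), together with the fact that \(c^{j}_{m-1}\) and \(c^{\hat j}_{m-1}\) shift compatibly.
\end{enumerate}
The formulas (\ref{dunder}) and (\ref{dover}) then follow by substituting the computed \(\pm1\) changes into (\ref{cdef}), or more efficiently into the telescoping identities (\ref{downoned}) and (\ref{dcdiff}), since \(c=0\) on diagonals. Conceptually, \(\mathcal{H}\) moves a lattice point of \(P(\bmu)\) by \(\pm\alpha_{\hat i}\) along each diagonal.

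It remains to show that \(\bnu\) is MV, i.e. that the non-diagonal \(c\)'s remain \(\leq 0\). Here we will use that \(\beta^i_m(\bnu)\) differs from \(\beta^i_m(\bmu)\) by a controlled amount. By Lemma~\ref{altdesc}(1c), changes in \(\bnu\) come in adjacent pairs (\(+1\) at \(k\), \(-1\) at \(k+1\)), so \(\beta^i_m(\bnu)-\beta^i_m(\bmu) \in \{0, \pm\alpha_i \text{-ish small vectors}\}\). Hence each \(c^i_m(\bnu)\) differs from \(c^i_m(\bmu)\) by at most one. Where \(c^{\hat\xi(m)}_m(\bmu)<0\), the only danger is a \(+1\) change applied to a value \(-1\), and we would rule this out by a type-by-type check. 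This check is the step I expect to be the main obstacle, especially for types \(\textup{B}_\pm\), where the strict inequality \(c^{\xi(m)}_{m-1}\neq 0\) enters.

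Finally, we check the membership \(\bnu\in\bOm^{\textup{MV}}_{\geq0,\bullet}\). By Lemma~\ref{mixedpar}, exactly one of \(\eta^1_1(\bmu),\eta^0_1(\bmu)\) is odd, so by (\ref{Hcases}) exactly one of \(\nu^1_0,\nu^0_0\) equals \(1\), and the other equals \(0\). All entries at negative indices vanish, and the conditions at \(m=0\) are supplied by the segment analysis at \(m=1\).
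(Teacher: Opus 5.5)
Your type-by-type bookkeeping from (\ref{Hcases}), the type-$\textup{A}$ diagonal step, and the use of Lemma~\ref{mixedpar} for $\nu^0_0,\nu^1_0$ agree with the paper. The way you assemble them, however, has two genuine gaps.

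The first gap is in the diagonalization argument, which cannot be anchored at $m\ge M-1$. The quantity $c^{j}_m(\bnu)=\sum_{k=1}^m k(\nu^{\hat j}_{k+1}-\nu^{j}_k)$ is a cumulative sum, and the only value you get for free is $c_0=0$. Knowing that every segment above $M-1$ has type $\textup{A}_\varnothing$ only gives $c^{\xi(m)}_m(\bnu)=c^{\xi(M-1)}_{M-1}(\bnu)$ there. It does not show that this common value is $0$; that is a global balance between the changes in the two colours. So the induction has to run upward from $m=0$.

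Your type-$\textup{B}$ step (``shift compatibly'') is where the real content lies, and it is not justified. Passing between $c^{\hat j}_{m-1}(\bnu)$ and $c^{j}_m(\bnu)$ through (\ref{ctoddown}) requires the shift of $d^{\hat j}_{m-1}$ on the previous diagonal. That shift is not determined by the entries near the $m$th segment alone. It is anchored at the bottom, where $d^{\xi(0)}_0(\bnu)=d^{\xi(0)}_0(\bmu)\pm1$ with the sign dictated by Lemma~\ref{mixedpar}, and it is then transported upward.

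This is why the paper proves $c^{\xi(m)}_m(\bnu)=0$ together with (\ref{dunder}) and (\ref{dover}) in one interleaved upward induction:
\begin{enumerate}
\item The base is $c_0=0$ plus (\ref{dunder}) at $m=1$.
\item $c^{\xi(m-1)}_{m-1}(\bnu)=0$ and (\ref{dunder}) at $m$ give $c^{\xi(m)}_m(\bnu)=0$ and (\ref{dover}) at $m$; for types $\textup{B}_\pm$ this uses (\ref{ctoddown}) and (\ref{dtocdown}) with the known $d$-shift.
\item The adjacency table of Lemma~\ref{downdiagtypeslem} turns (\ref{dover}) at $m$ into (\ref{dunder}) at $m+1$.
\end{enumerate}
The information carried by the $d$-formulas is an input to the type-$\textup{B}$ steps, so it cannot be read off afterwards.

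The second gap is that your MV check rests on a false estimate. Each adjacent pair ($+1$ at $k$, $-1$ at $k+1$) lying inside $[1,m]$ shifts $\beta^i_m$ by $-\delta$, so $\beta^i_m(\bnu)-\beta^i_m(\bmu)$ is not small. An off-diagonal $c$ can change by $-m$ or by $m+1$: computing $\bnu=\mathcal{H}\bmu$ in Example~\ref{bicomgeomex}, $c^1_4$ drops from $-4$ to $-8$.

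Write $c^{\widehat{\xi(m)}}_m(\bnu)$ in terms of $d^{\xi(m)}_m(\bnu)$ as in (\ref{ctodsame}) and run through the $13$ adjacent type pairs. The change is $+1$ for $(\textup{A}_\varnothing,\textup{A}_\varnothing)$, $m+1$ for $(\textup{A}_\varnothing,\textup{A}_+)$ and $(\textup{A}_\varnothing,\textup{B}_+)$, $-m$ for $(\textup{A}_-,\textup{A}_\varnothing)$ and $(\textup{B}_+,\textup{A}_\varnothing)$, and $0$ otherwise. So the dangerous segments are those of type $\textup{A}_\varnothing$, not the $\textup{B}$ types, and each needs its own argument:
\begin{enumerate}
\item For $(\textup{A}_\varnothing,\textup{A}_\varnothing)$, both relevant entries of $\bnu$ vanish, so $c^{\widehat{\xi(m)}}_m(\bnu)=-d^{\xi(m)}_m(\bnu)\le 0$, because $d^{\xi(m)}_m(\bnu)\ge c^{\xi(m)}_m(\bnu)=0$ by (\ref{dcdiff}).
\item For the other two pairs, $\mu^{\xi(m)}_{m+2}>0=\mu^{\widehat{\xi(m)}}_{m+1}$. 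Hence $0\ge c^{\widehat{\xi(m)}}_{m+1}(\bmu)=c^{\widehat{\xi(m)}}_m(\bmu)+(m+1)\mu^{\xi(m)}_{m+2}$, which forces $c^{\widehat{\xi(m)}}_m(\bmu)\le-(m+1)$ and absorbs the increase.
\end{enumerate}
An ``at most one'' bound cannot detect either of these cases.
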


\begin{proof}
Let \(\bmu \in \bOm^{\textup{MV}}_{\geq 1,+}\). 
We will write  \(\xi:= \xi_{\bmu}\). For \(i \in \ZZ_2\), \(m \in \ZZ_{\geq 1}\), the value \(\nu^i_m\) is directly related to the type of the segment in the diagonalization \(\xi\) of \(\bmu\) which the edge corresponding to \(\mu^i_m\) belongs to. Unpacking the definitions in \S\ref{deftypes} alongside (\ref{Hcases}), we have the following facts, which we will use repeatedly:
\begin{enumerate}[(i)]
\item If \(\TypeH_{\xi, \bmu}(m) = \textup{A}_\varnothing\) then \(\nu^{\xi(m)}_m = \nu^{\hat{\xi(m)}}_{m+1} = 0\);
\item If \(\TypeH_{\xi, \bmu}(m) = \textup{A}_+\) then \(\nu^{\xi(m)}_m =\mu^{\xi(m)}_m +1\) and \(\nu^{\hat{\xi(m)}}_{m+1} = \mu^{\hat{\xi(m)}}_{m+1} +1\);
\item If \(\TypeH_{\xi, \bmu}(m) = \textup{A}_-\) then \(\nu^{\xi(m)}_m =\mu^{\xi(m)}_m -1\) and \(\nu^{\hat{\xi(m)}}_{m+1} = \mu^{\hat{\xi(m)}}_{m+1} -1\);
\item If \(\TypeH_{\xi, \bmu}(m) = \textup{B}_+\) then \(\nu^{\hat{\xi(m)}}_m = \mu^{\hat{\xi(m)}}_m+1\) and \(\nu^{\hat{\xi(m)}}_{m+1} = \mu^{\hat{\xi(m)}}_{m+1}-1\);
\item If \(\TypeH_{\xi, \bmu}(m) = \textup{B}_-\) then \(\nu^{\hat{\xi(m)}}_m = \mu^{\hat{\xi(m)}}_m-1\) and \(\nu^{\hat{\xi(m)}}_{m+1} = \mu^{\hat{\xi(m)}}_{m+1}+1\).
\end{enumerate} We proceed via a number of claims.\\

\noindent {\em Claim I: Let \(m \in \ZZ_{\geq 1}\), and set \(m' = m+1\). If (\ref{dover}) holds for \(m\), then (\ref{dunder}) holds for \(m'\).}
We have from Lemma~\ref{downdiagtypeslem} that \(\TypeH_{\xi, \bmu}(m') \in \{\textup{A}_\varnothing, \textup{A}_+, \textup{B}_+\}\) if and only if \(\TypeH_{\xi, \bmu}(m) \in \{\textup{A}_\varnothing, \textup{A}_-, \textup{B}_+\}\), and \(\TypeH_{\xi, \bmu}(m') \in \{\textup{A}_-, \textup{B}_-\}\) if and only if \(\TypeH_{\xi, \bmu}(m) \in \{\textup{A}_+, \textup{B}_-\}\), which immediately implies the claim.\\

\noindent {\em Claim II: The equalities \(c_{m-1}^{\xi(m-1)}(\bnu) = 0\), and (\ref{dunder}) hold when \(m=1\).} 
The first equality is obvious, since \(c_0^{\xi(m-1)}(\bnu)\) is definitionally zero. 
Since \(c^i_0(\bmu) = 0\) for both \(i \in \ZZ_2\), it follows from (\ref{downdiagdef}) and Lemma~\ref{downdiagtypeslem} that \(\xi(0) = \xi(1)\) and \(\TypeH_{\xi, \bmu}(1) \in \{\textup{A}_\varnothing, \textup{A}_+,\textup{A}_-\}\). If the type is \(\textup{A}_\varnothing\) or \(\textup{A}_+\) then \(\eta_1^{\xi(1)}\) is even, so by Lemma~\ref{mixedpar},  \(\eta_1^{\hat{\xi(1)}}\) is odd. Thus
\begin{align*}
d_0^{\xi(0)}(\bnu) &= \nu^{\hat{\xi(1)}}_1 = \mu^{\hat{\xi(1)}}_1-1 = d_0^{\xi(0)}(\bmu)-1.
\end{align*}
On the other hand, if \(\TypeH_{\xi, \bmu}(1)=\textup{A}_-\), then \(\eta_1^{\xi(1)}\) is odd, so by Lemma~\ref{mixedpar},  \(\eta_1^{\hat{\xi(1)}}\) is even. Thus 
\begin{align*}
d_0^{\xi(0)}(\bnu) &= \nu^{\hat{\xi(1)}}_1 = \mu^{\hat{\xi(1)}}_1+1 = d_0^{\xi(0)}(\bmu)+1.
\end{align*}
Thus we have verified Claim II.\\

\noindent {\em Claim III: If \(c^{\xi(m-1)}_{m-1}(\bnu) = 0\) and (\ref{dunder}) hold for a given \(m \in \ZZ_{\geq 1}\), then \(c^{\xi(m)}_{m}(\bnu) = 0\) and (\ref{dover}) both hold for \(m\).} 
Assume that \(c^{\xi(m-1)}_{m-1}(\bnu) = 0\) and (\ref{dunder}) hold. We have by Lemma~\ref{downdiagtypeslem} that \(\TypeH_{\xi_{\bmu}, \bmu}(m) \in \{\textup{A}_\varnothing, \textup{A}_+, \textup{A}_-, \textup{B}_+, \textup{B}_-\}\). We consider these cases separately.

\noindent{\em Case 1. Assume 
\(\TypeH_{\xi,\bmu}(m) = \textup{A}_\varnothing\).} Then \(\xi(m-1) = \xi(m)\), so in view of (\ref{dunder}) and (i) above we have
\begin{align*}
\nu_m^{\xi(m)} = 0 \qquad \nu_{m+1}^{\hat{\xi(m)}} = 0 \qquad d_{m-1}^{\xi(m-1)}(\bnu) = d_{m-1}^{\xi(m-1)}(\bmu) - 1
\end{align*}
so in consideration of (\ref{downonec}, \ref{downoned}) we have
\begin{align*}
c_{m}^{\xi(m)}(\bnu) &= c_{m-1}^{\xi(m-1)}(\bnu) + m(\nu_{m+1}^{\hat{\xi(m)}} - \nu_m^{\xi(m)}) = 0\\
d_{m}^{\xi(m)}(\bnu) &= d_{m-1}^{\xi(m-1)}(\bnu) - (m-1)\nu^{\xi(m)}_m + (m+1)\nu^{\hat{\xi(m)}}_{m+1} \\
&= d_{m-1}^{\xi(m-1)}(\bmu)-1 + (m-1)\mu^{\xi(m)}_m - (m+1)\mu^{\hat{\xi(m)}}_{m+1} =  d_{m}^{\xi(m)}(\bmu) - 1. 
\end{align*}

\noindent{\em Case 2. Assume 
\(\TypeH_{\xi,\bmu}(m) = \textup{A}_+\).} Then \(\xi(m-1) = \xi(m)\), so in view of (\ref{dunder}) and (ii) above we have
\begin{align*}
\nu_m^{\xi(m)} =\mu_m^{\xi(m)} +1 \qquad \nu_{m+1}^{\hat{\xi(m)}} = \mu_{m+1}^{\hat{\xi(m)}}+1 \qquad d_{m-1}^{\xi(m-1)}(\bnu) = d_{m-1}^{\xi(m-1)}(\bmu) - 1
\end{align*}
so in consideration of (\ref{downonec}, \ref{downoned}) we have
\begin{align*}
c_{m}^{\xi(m)}(\bnu) &= c_{m-1}^{\xi(m-1)}(\bnu) + m(\nu_{m+1}^{\hat{\xi(m)}} - \nu_m^{\xi(m)}) = c_{m-1}^{\xi(m-1)}(\bmu) + m(\mu_{m+1}^{\hat{\xi(m)}} - \mu_m^{\xi(m)}) = c_{m}^{\xi(m)}(\bmu) = 0 \\
d_{m}^{\xi(m)}(\bnu) &= d_{m-1}^{\xi(m-1)}(\bnu) - (m-1)\nu^{\xi(m)}_m + (m+1)\nu^{\hat{\xi(m)}}_{m+1}\\
&=d_{m-1}^{\xi(m-1)}(\bmu) - (m-1)\mu^{\xi(m)}_m + (m+1)\mu^{\hat{\xi(m)}}_{m+1} +1 = d_{m}^{\xi(m)}(\bmu) +1.
\end{align*}

\noindent{\em Case 3. Assume 
\(\TypeH_{\xi,\bmu}(m) = \textup{A}_-\).} Then \(\xi(m-1) = \xi(m)\) so in view of (\ref{dunder}) and (iii) above we have
\begin{align*}
\nu_m^{\xi(m)} =\mu_m^{\xi(m)} -1 \qquad \nu_{m+1}^{\hat{\xi(m)}} = \mu_{m+1}^{\hat{\xi(m)}}-1 \qquad d_{m-1}^{\xi(m-1)}(\bnu) = d_{m-1}^{\xi(m-1)}(\bmu) + 1,
\end{align*}
so in consideration of (\ref{downonec}, \ref{downoned}) we have
\begin{align*}
c_{m}^{\xi(m)}(\bnu) &= c_{m-1}^{\xi(m-1)}(\bnu) + m(\nu_{m+1}^{\hat{\xi(m)}} - \nu_m^{\xi(m)}) = c_{m-1}^{\xi(m-1)}(\bmu) + m(\mu_{m+1}^{\hat{\xi(m)}} - \mu_m^{\xi(m)}) = c_{m}^{\xi(m)}(\bmu) = 0 \\
d_{m}^{\xi(m)}(\bnu) &= d_{m-1}^{\xi(m-1)}(\bnu) - (m-1)\nu^{\xi(m)}_m + (m+1)\nu^{\hat{\xi(m)}}_{m+1}\\
&=d_{m-1}^{\xi(m-1)}(\bmu) - (m-1)\mu^{\xi(m)}_m + (m+1)\mu^{\hat{\xi(m)}}_{m+1} -1 = d_{m}^{\xi(m)}(\bmu) -1.
\end{align*}

\noindent{\em Case 4. Assume
\(\TypeH_{\xi,\bmu}(m) = \textup{B}_+\).} Then  \(\xi(m-1) = \hat{\xi(m)}\), so in view of (\ref{dunder}) and (iv) above we have
\begin{align*}
\nu_{m}^{\hat{\xi(m)}} =\mu_{m}^{\hat{\xi(m)}} +1 \qquad \nu_{m+1}^{\hat{\xi(m)}} =\mu_{m+1}^{\hat{\xi(m)}} -1 \qquad d_{m-1}^{\xi(m-1)}(\bnu) = d_{m-1}^{\xi(m-1)}(\bmu) - 1,
\end{align*}
so in consideration of (\ref{ctoddown}, \ref{dtocdown}) we have
\begin{align*}
c_{m}^{\xi(m)}(\bnu) &=  (m-1)\nu_m^{\hat{\xi(m)}} + m\nu_{m+1}^{\hat{\xi(m)}} -d_{m-1}^{\hat{\xi(m)}}(\bnu)\\
&=(m-1)\mu_m^{\hat{\xi(m)}} + m\mu_{m+1}^{\hat{\xi(m)}} -d_{m-1}^{\hat{\xi(m)}}(\bmu)= c_{m}^{\xi(m)}(\bmu) = 0\\
d_{m}^{\xi(m)}(\bnu) &= m\nu^{\hat{\xi(m)}}_m + (m+1)\nu^{\hat{\xi(m)}}_{m+1} - c^{\hat{\xi(m)}}_{m-1}(\bnu)\\
&= m\mu^{\hat{\xi(m)}}_m + (m+1)\mu^{\hat{\xi(m)}}_{m+1} - c_{m-1}^{\hat{\xi(m)}}(\bmu) - 1 = d_{m}^{\xi(m)}(\bmu)-1.
\end{align*}

\noindent{\em Case 5. Assume
\(\TypeH_{\xi,\bmu}(m) = \textup{B}_-\).} Then  \(\xi(m-1) = \hat{\xi(m)}\), so in view of (\ref{dunder}) and (v) above we have
\begin{align*}
\nu_{m}^{\hat{\xi(m)}} =\mu_{m}^{\hat{\xi(m)}} -1 \qquad \nu_{m+1}^{\hat{\xi(m)}} =\mu_{m+1}^{\hat{\xi(m)}} +1 \qquad d_{m-1}^{\xi(m-1)}(\bnu) = d_{m-1}^{\xi(m-1)}(\bmu) + 1,
\end{align*}
so in consideration of (\ref{ctoddown}, \ref{dtocdown}) we have
\begin{align*}
c_{m}^{\xi(m)}(\bnu) &=  (m-1)\nu_m^{\hat{\xi(m)}} + m\nu_{m+1}^{\hat{\xi(m)}} -d_{m-1}^{\hat{\xi(m)}}(\bnu)\\
&=(m-1)\mu_m^{\hat{\xi(m)}} + m\mu_{m+1}^{\hat{\xi(m)}} -d_{m-1}^{\hat{\xi(m)}}(\bmu)= c_{m}^{\xi(m)}(\bmu) = 0\\
d_{m}^{\xi(m)}(\bnu) &= m\nu^{\hat{\xi(m)}}_m + (m+1)\nu^{\hat{\xi(m)}}_{m+1} - c^{\hat{\xi(m)}}_{m-1}(\bnu)\\
&= m\mu^{\hat{\xi(m)}}_m + (m+1)\mu^{\hat{\xi(m)}}_{m+1} - c_{m-1}^{\hat{\xi(m)}}(\bmu) + 1 = d_{m}^{\xi(m)}(\bmu)+1.
\end{align*}
This completes the proof of Claim III.\\

Combining Claims I, II, III, we have that \(c^{\xi(m)}_m(\bnu) = 0\), and (\ref{dunder}, \ref{dover}) hold for all \(m \in \ZZ_{\geq 1}\). 
Moreover, by Lemma~\ref{mixedpar}, we have for some \(i \in \ZZ_2\) that \(\eta^i_1(\bmu)\) is odd and \(\eta^{\hat i}_1(\bmu)\) is even. Thus by (\ref{Hcases}) we have that \(\nu^i_0 = \mu^i_0 + 1 = 1\), and \(\nu^{\hat i}_0 = 0\). It remains to finish the check that \(\bnu\) is an MV bicomposition; i.e., we must check that \(c^{\hat{\xi(m)}}_m(\bnu) \leq 0\) for all \(m \in \ZZ_{\geq 1}\).
Fix \(m \in \ZZ_{\geq 1}\), and set
\begin{align*}
(\textup{X},\textup{Y}) =  (\TypeH_{\xi_{\bmu}, \bmu}(m), \TypeH_{\xi_{\bmu}, \bmu}(m+1)).
\end{align*}
By (\ref{Hcases}, \ref{dover}), we have
\begin{align*}
\nu_{m+1}^{\hat{\xi(m)}} = \mu_{m+1}^{\hat{\xi(m)}}+ \varepsilon_1
\qquad
\nu_{m+1}^{\xi(m)} = \mu_{m+1}^{\xi(m)} + \varepsilon_2
\qquad
d_m^{\xi(m)}(\bnu) = d_m^{\xi(m)}(\bmu) + \varepsilon_3
\end{align*}
for some \(\varepsilon_1, \varepsilon_2, \varepsilon_3 \in \{-1,0,1\}\). Therefore applying  (\ref{ctodsame}) we have
\begin{align*}
c^{\hat{\xi(m)}}_m(\bnu) &= (m+1)\nu^{\hat{\xi(m)}}_{m+1}  + m\nu_{m+1}^{\xi(m)}  - d^{\xi(m)}_{m}(\bnu)  \\
&=(m+1)(\mu^{\hat{\xi(m)}}_{m+1} + \varepsilon_1) + m(\mu_{m+1}^{\xi(m)} + \varepsilon_2) - (d^{\xi(m)}_{m}(\bmu) + \varepsilon_3)
\\&=(m+1)\mu^{\hat{\xi(m)}}_{m+1}+ m\mu_{m+1}^{\xi(m)} - d^{\xi(m)}_{m}(\bmu)  + m(\varepsilon_1 + \varepsilon_2) + (\varepsilon_1 - \varepsilon_3)\\
&=c^{\hat{\xi(m)}}_m(\bmu) + m(\varepsilon_1 + \varepsilon_2) + (\varepsilon_1 - \varepsilon_3).
\end{align*}
We use the above, together with the specific values for \(\varepsilon_1, \varepsilon_2, \varepsilon_3\) taken from (\ref{dover}) and facts (i)--(v) noted at the start of the proof to compute \(c^{\hat{\xi(m)}}_m(\bnu) \) for the 13 possible cases for \((\textup{X},\textup{Y})\) in Lemma~\ref{downdiagtypeslem}. This yields:
\begin{align}
c^{\hat{\xi(m)}}_m(\bnu)=
\begin{cases}
c^{\hat{\xi(m)}}_m(\bmu)+1 & \textup{if } (\textup{X},\textup{Y}) = (\textup{A}_\varnothing, \textup{A}_\varnothing);\\
c^{\hat{\xi(m)}}_m(\bmu) + m +1 & \textup{if } (\textup{X},\textup{Y}) \in \{(\textup{A}_\varnothing, \textup{A}_+), (\textup{A}_\varnothing, \textup{B}_+)\};\\
c^{\hat{\xi(m)}}_m(\bmu) -m & \textup{if } (\textup{X},\textup{Y}) \in  \{(\textup{A}_-, \textup{A}_\varnothing),(\textup{B}_+, \textup{A}_\varnothing) \};\\
c^{\hat{\xi(m)}}_m(\bmu) & \textup{otherwise}.
\end{cases}
\label{chatcases2}
\end{align}
We have \(c^{\hat{\xi(m)}}_m(\bmu) \leq 0\) since \(\bmu \in \bOm_{\textup{MV}}\), so \(c^{\hat{\xi(m)}}_m(\bnu) \leq 0\) in all cases except perhaps for \begin{align*}
(\textup{X},\textup{Y}) \in  \{(\textup{A}_\varnothing, \textup{A}_\varnothing),(\textup{A}_\varnothing, \textup{A}_+), (\textup{A}_\varnothing, \textup{B}_+) \}.
\end{align*} We consider these cases now. 
First assume \((\textup{X},\textup{Y}) = (\textup{A}_\varnothing, \textup{A}_\varnothing)\). Then applying (\ref{ctodsame}) we have 
\begin{align*}
c^{\hat{\xi(m)}}_m(\bnu) &= (m+1)\nu^{\hat{\xi(m)}}_{m+1}  + m\nu_{m+1}^{\xi(m)}  - d^{\xi(m)}_{m}(\bnu) =  - d^{\xi(m)}_{m}(\bnu) \leq 0, 
\end{align*}
thanks to (\ref{dcdiff}), since \(c^{\xi(m-1)}_{m-1}(\bnu) = 0\) and \(d^{\xi(m)}_{m-1}(\bnu) = d^{\xi(m-1)}_{m-1}(\bnu) \geq 0\).
Finally assume \((\textup{X},\textup{Y}) \in  \{(\textup{A}_\varnothing, \textup{A}_+), (\textup{A}_\varnothing, \textup{B}_+) \}\).
In both settings we have \(\mu_{m+2}^{\xi(m)} > 0\) and \(\mu^{\hat{\xi(m)}}_{m+1} = 0\). Therefore
\begin{align*}
0 \geq c_{m+1}^{\hat{\xi(m)}}(\bmu) = c_m^{\hat{\xi(m)}}(\bmu) + (m+1)(\mu^{\xi(m)}_{m+2} - \mu^{\hat{\xi(m)}}_{m+1}) = c_m^{\hat{\xi(m)}}(\bmu) + (m+1)\mu^{\xi(m)}_{m+2} \geq c_m^{\hat{\xi(m)}}(\bmu) + m+1,
\end{align*}
so again in these cases we have \(c^{\hat{\xi(m)}}_m(\bnu) \leq 0\) in view of (\ref{chatcases2}), completing the proof.
\end{proof}

\subsection{Upward diagonalization}\label{updiag}
Now we repeat the approach of \S\ref{downdiag}, \ref{Hmapbicompssec}, but with diagonalization and segment type defintions adjusted to play well with the \(\mathcal{G}\) map instead.
Let \(j \in \ZZ_2\), and \(\bmu \in \bOm_{\geq 0, j}^{\textup{MV}}\).
We inductively construct a specific `upward' diagonalization \(\xi^{\bmu}_j\) of \(\bmu\) as follows, by setting
\begin{align}\label{updiagdef}
\xi_j^{\bmu}(m)
=
\begin{cases}
\hat j & \textup{if } m=0;\\
\xi^{\bmu}_j(m-1) &\textup{if }m> 0 \textup{ and } c_m^{\xi(m-1)}=0;\\
\hat{\xi_j^{\bmu}(m-1)} &\textup{if }m> 0 \textup{ and } c_m^{\xi(m-1)}<0.
\end{cases}
\end{align}
This choice of diagonalization gives us some control over the structure of its segments, as noted in the next subsection.

\subsubsection{Typifying segments, part deuce}\label{deftypesup}
Let \(\xi\) be a diagonalization of \(\bmu \in \bOm^{\textup{MV}}\). We define a function
\begin{align*}
\TypeH^{\xi, \bmu} : \ZZ_{\geq 1} \to \{\textup{A}^\varnothing, \textup{A}^+, \textup{A}^-, \textup{B}^+, \textup{B}^-, \textup{Z}\}
\end{align*}
given by setting
\begin{align*}
\TypeH^{\xi, \bmu}(m) = 
\begin{cases}
\textup{A}^\varnothing & \textup{if \(\xi(m-1) = \xi(m)\), 
\(\mu^{\xi(m)}_m = \mu^{\hat{\xi(m)}}_{m+1} = 0\), and \(\vartheta^{\xi(m)}_{m-1}(\bmu) , \vartheta^{\hat{\xi(m)}}_{m}(\bmu) \) are even.}\\
\textup{A}^+ & \textup{if \(\xi(m-1) = \xi(m)\) and \(\vartheta^{\xi(m)}_{m-1}(\bmu) , \vartheta^{\hat{\xi(m)}}_{m}(\bmu) \) are odd.}\\
\textup{A}^- & \textup{if \(\xi(m-1) = \xi(m)\) and \(\vartheta^{\xi(m)}_{m}(\bmu) , \vartheta^{\hat{\xi(m)}}_{m+1}(\bmu) \) are odd.}\\
\textup{B}^+ & \textup{if \(\xi(m-1) = \hat{\xi(m)}\), \(c^{\xi(m-1)}_{m}(\bmu) \neq 0\), \(\vartheta^{\hat{ \xi(m)}}_{m}(\bmu) \) is odd and \(\vartheta^{\xi(m)}_{m}(\bmu) \) is even.}\\
\textup{B}^- & \textup{if \(\xi(m-1) = \hat{\xi(m)}\), \(c^{ \xi(m-1)}_{m}(\bmu)  \neq  0\), \(\mu^{ \hat{\xi(m)}}_{m} > 0\), and  \(\vartheta^{ \hat{\xi(m)}}_{m+1}(\bmu) \), \(\vartheta^{ \xi(m)}_{m}(\bmu) \) are odd.}\\
\textup{Z} & \textup{otherwise},
\end{cases}
\end{align*}
and we refer to \(\TypeH^{\xi, \bmu}(m)\) as the {\em (upward) type of the \(m\)th segment of \(\xi\) in \(\bmu\)}.

\begin{lemma}\label{updiagtypeslem}
Let \(j \in \ZZ_2\), \(\bmu \in\bOm_{\geq 0, j}^{\textup{MV}}\).
For all \(m \in \ZZ_{\geq 1}\), we have \(\TypeH^{\xi_j^{\bmu}, \bmu}(m) \in \{\textup{A}^\varnothing, \textup{A}^+, \textup{A}^-, \textup{B}^+, \textup{B}^-\}\). Moreover,
 for all \(m \in \ZZ_{\geq 1}\), we have that 
 \begin{align*}
 (\TypeH^{\xi_j^{\bmu}, \bmu}(m), \TypeH^{\xi_j^{\bmu}, \bmu}(m+1))
 \end{align*}
 is equal to one of 
 \begin{align}
 &\{
(\textup{A}^\varnothing, \textup{A}^\varnothing), 
(\textup{A}^\varnothing, \textup{A}^-), 
(\textup{A}^\varnothing, \textup{B}^+), 
(\textup{A}^+, \textup{A}^\varnothing), 
(\textup{A}^+, \textup{A}^-), 
(\textup{A}^+, \textup{B}^+), 
(\textup{A}^-, \textup{A}^+),
\notag\\ & \qquad\;\;\;
(\textup{A}^-, \textup{B}^-),
(\textup{B}^+, \textup{A}^\varnothing),
(\textup{B}^+, \textup{A}^-),
(\textup{B}^+, \textup{B}^+),
(\textup{B}^-, \textup{A}^+),
(\textup{B}^-, \textup{B}^-)
 \}.\label{allcombotypesup}
 \end{align}
\end{lemma}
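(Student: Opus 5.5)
This is the mirror image of Lemma~\ref{downdiagtypeslem}, and I would prove it by \emph{upward} induction on \(m\), the way that lemma was proved by downward induction from \(m \geq M-1\). Write \(\xi = \xi_j^{\bmu}\).

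\textbf{Base case, \(m=1\).} Since \(\bmu \in \bOm^{\textup{MV}}_{\geq 0,j}\), we have \(\mu^j_0 = 1\) and \(\mu^{\hat j}_0 = 0\), and \(\xi(0) = \hat j\). Hence \(\vartheta^{\hat j}_0 = 0\) and \(\vartheta^j_0 = 1\). Using (\ref{downonec}) with \(c^i_0 = 0\), I would determine \(\xi(1)\) from (\ref{updiagdef}). Then I would read off \(\TypeH^{\xi,\bmu}(1)\) from the parities of \(\vartheta^{\xi(1)}_0, \vartheta^{\hat{\xi(1)}}_1\) and \(\vartheta^{\xi(1)}_1, \vartheta^{\hat{\xi(1)}}_2\).

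\textbf{Inductive step.} Assume \(\TypeH^{\xi,\bmu}(m)\) is one of the five non-Z types, and show that \(\TypeH^{\xi,\bmu}(m+1)\) is non-Z and that the pair lies in (\ref{allcombotypesup}). Split into three cases, mirroring Lemma~\ref{downdiagtypeslem}, according to the pair \(\mu^{\xi(m)}_{m+1}\), \(\mu^{\hat{\xi(m)}}_{m}\) (the edges straddling the \(m\)th diagonal on the far side):
\begin{enumerate}[(1)]
\item both are zero;
\item they are equal and positive;
\item they are unequal.
\end{enumerate}
In cases (1) and (2), (\ref{downonec}) gives \(c^{\xi(m)}_{m+1} = c^{\xi(m)}_m = 0\), so \(\xi(m+1) = \xi(m)\) and segment \(m+1\) has A-type. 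In case (3), \(c^{\xi(m)}_{m+1} < 0\), hence \(c^{\hat{\xi(m)}}_{m+1} = 0\) by the MV condition, so \(\xi\) switches and segment \(m+1\) has B-type. In the B-type case we also obtain the needed inequality, for instance \(\mu^{\hat{\xi(m)}}_{m} > \mu^{\xi(m)}_{m+1}\).

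Inside each case, subdivide according to the known type of segment \(m\). Then use recursions of the form
\[
\vartheta^i_{k} = 1 + \vartheta^i_{k-1} \text{ if } \mu^i_k > 0, \qquad \vartheta^i_k = 0 \text{ otherwise},
\]
to transfer the parity information stated for segment \(m\) to the \(\vartheta\)-values relevant to segment \(m+1\). This yields exactly the successor types listed in (\ref{allcombotypesup}): \(\textup{A}^+ \to \textup{A}^-\), \(\textup{A}^- \to \textup{A}^+\) or \(\textup{B}^-\), and so on.

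\textbf{Expected obstacle.} The hard part is the subcases where segment \(m+1\) is forced to be \(\textup{B}^-\) or \(\textup{A}^-\) after a \(\textup{B}^\pm\) or \(\textup{A}^-\). There one needs positivity of a further entry, namely \(\mu^{\hat{\xi(m+1)}}_{m+1} > 0\) or a bound like \(\mu^{\hat{\xi}}_{m+1} \geq \mu^{\xi}_{m}\), to get the right parity. I would obtain these, as in subcases (3c) and (3e) of Lemma~\ref{downdiagtypeslem}, from the inequality \(c^{\hat{\xi}}_{m+1} \le 0\) expanded via (\ref{downonec}), together with \(c^{\xi}_{m} = 0\).

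A cleaner alternative would be to use the flip symmetry \(s\) (which exchanges \(\vartheta\) and \(\eta\)) to deduce the result directly from Lemma~\ref{downdiagtypeslem}. However, the MV condition is not \(s\)-symmetric, so I would only use this as a guide for matching subcases, and do the direct case check.
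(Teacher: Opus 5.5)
Your architecture is the paper's: upward induction from \(\xi(0)=\hat j\) with \(\mu^j_0=1\), a three-way split according to whether \(\xi\) switches between \(m\) and \(m+1\), sub-splitting by \(\TypeH^{\xi,\bmu}(m)\), parity transfer via the \(\vartheta\)-recursion, and an extra MV inequality for the \(\textup{B}^-\) successors. However, the specific entries you name in the inductive step are wrong, and with them the B-type subcases do not close.

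\emph{The case split.} Since \(c^{\xi(m)}_m=0\), (\ref{downonec}) gives \(c^{\xi(m)}_{m+1}=(m+1)(\mu^{\hat{\xi(m)}}_{m+2}-\mu^{\xi(m)}_{m+1})\). So the split must be on \((\mu^{\xi(m)}_{m+1},\mu^{\hat{\xi(m)}}_{m+2})\), the two edges of segment \(m+1\), not on \((\mu^{\xi(m)}_{m+1},\mu^{\hat{\xi(m)}}_{m})\). Already for \(\xi(1)\), the deciding quantity is \(c^{\hat j}_1=\mu^j_2-\mu^{\hat j}_1\).

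\emph{The switching inequality.} In the switching case the strict inequality runs the other way from what you wrote: \(c^{\xi(m)}_{m+1}<0\) means \(\mu^{\xi(m)}_{m+1}>\mu^{\hat{\xi(m)}}_{m+2}\geq 0\). This positivity of \(\mu^{\xi(m)}_{m+1}=\mu^{\hat{\xi(m+1)}}_{m+1}\) is used in every B-successor subcase. It gives \(\vartheta^{\hat{\xi(m+1)}}_{m+1}=\vartheta^{\xi(m)}_{m+1}=1+\vartheta^{\xi(m)}_m\), which is how the \(\textup{B}^+\) parities are obtained. It is also literally the condition \(\mu^{\hat{\xi(m+1)}}_{m+1}>0\) in the definition of \(\textup{B}^-\). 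So it comes for free from the case hypothesis and is not the entry that requires extra work.

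\emph{The extra MV step.} The entry that genuinely needs the MV condition is \(\mu^{\xi(m)}_{m+2}\), and only in the subcases \(\textup{A}^-\to\textup{B}^-\) and \(\textup{B}^-\to\textup{B}^-\). The \(\textup{A}^-\) successors follow directly from the equal-and-positive case, with no extra argument. In those two subcases, \(\vartheta^{\hat{\xi(m)}}_{m+1}\) odd gives \(\mu^{\hat{\xi(m)}}_{m+1}>0\), and
\[
0\geq c^{\hat{\xi(m)}}_m = c^{\hat{\xi(m)}}_{m+1}-(m+1)\bigl(\mu^{\xi(m)}_{m+2}-\mu^{\hat{\xi(m)}}_{m+1}\bigr) = -(m+1)\bigl(\mu^{\xi(m)}_{m+2}-\mu^{\hat{\xi(m)}}_{m+1}\bigr)
\]
yields \(\mu^{\xi(m)}_{m+2}\geq\mu^{\hat{\xi(m)}}_{m+1}>0\). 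Hence \(\vartheta^{\hat{\xi(m+1)}}_{m+2}=\vartheta^{\xi(m)}_{m+2}=2+\vartheta^{\xi(m)}_m\) is odd. This combines the MV inequality at index \(m\) with the equality \(c^{\hat{\xi(m)}}_{m+1}=0\) at index \(m+1\), both with superscript \(\hat{\xi(m)}\). Your pairing of \(c^{\hat\xi}_{m+1}\leq 0\) with \(c^{\xi}_m=0\) mixes superscripts, and (\ref{downonec}) does not connect those two quantities.

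With these three corrections your outline is exactly the paper's argument. Your base case is fine; it produces \(\textup{A}^\varnothing\), \(\textup{A}^-\) or \(\textup{B}^+\).
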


\begin{proof}
We omit the proof as it is very similar in spirit to the proof of Lemma~\ref{downdiagtypeslem}, with minor alteration---one proceeds by upward induction rather than downward, and so the base case of induction is somewhat different. Full details can be found in the \texttt{arXiv} version of the paper as explained in \S\ref{SS:ArxivVersion}.
\begin{answer}
Let \(\bmu \in \bOm_{\geq 0, j}^{\textup{MV}}\), and 
write \(\xi = \xi^{\bmu}_j\). 
First assume \(c_1^{\hat j} = 0\). Then \(\xi(0) = \xi(1) = \hat j\). By (\ref{cdef}) we have \(\mu^j_2 = \mu^{\hat j}_1\). We have
\begin{align*}
\vartheta_0^{\hat j} = 0, \qquad \vartheta_1^j = \begin{cases}
0 & \textup{if } \mu_1^j = 0;\\
2 & \textup{if } \mu_1^j > 0,
\end{cases}
\qquad
\vartheta_1^{\hat j} = \begin{cases}
1 & \textup{if } \mu_1^{\hat j} >0;\\
0 & \textup{if } \mu_1^{\hat j} = 0,
\end{cases}
\qquad
\vartheta_2^{j} = 
\begin{cases}
3 & \textup{if } \mu_2^j, \mu_1^j > 0;\\
1 &\textup{if } \mu_2^j >0, \mu_1^j = 0;\\
0 & \textup{if }\mu_2^j = 0.
\end{cases}
\end{align*}
If \(\mu_2^j = \mu_1^{\hat j} = 0\), then \(\vartheta_0^{\hat j} = 0\) and \(\vartheta_1^j\) is even, so 
\(\TypeH^{\xi, \bmu}(1) = \textup{A}^\varnothing\). If \(\mu_2^j = \mu_1^{\hat j} > 0\), then \(\vartheta_1^{\hat j}\), \(\vartheta_2^j\) are odd, so \(\TypeH^{\xi, \bmu}(1) = \textup{A}^-\).

Now assume \(c_1^{\hat j} < 0\). Then \(c_1^j = 0\), \(\xi(0) = \hat j\), \(\xi(1) = j\). By (\ref{cdef}) we have \((\mu^j_2 - \mu^{\hat j}_1) = c_1^{\hat j}<0\) so \(\mu^{\hat j}_1 > \mu^j_2 \geq 0\). Then 
\begin{align*}
\vartheta_1^{\hat j} = 1,
\qquad
\vartheta_1^j = 
\begin{cases}
2 & \textup{if }\mu_1^j > 0;\\
0 & \textup{if } \mu_1^j = 0.
\end{cases}
\end{align*}
which implies that  \(\TypeH^{\xi, \bmu}(1) = \textup{B}^+\). Thus in any case we have that \(\TypeH^{\xi, \bmu}(1) \in \{\textup{A}^\varnothing, \textup{A}^-, \textup{B}^+\}\).

Now assume that \(m\geq 1\) and  \(\TypeH^{\xi, \bmu}(m) \in \{\textup{A}^\varnothing, \textup{A}^+, \textup{A}^-, \textup{B}^+, \textup{B}^-\}\). We show that  \(\TypeH^{\xi, \bmu}(m+1) \in \{\textup{A}^\varnothing, \textup{A}^+, \textup{A}^-, \textup{B}^+, \textup{B}^-\}\) in keeping with (\ref{allcombotypesup}).
We consider the possibilities 
\begin{align*}
\mu^{\xi(m)}_{m+1} = \mu^{\hat{\xi(m)}}_{m+2} =0, \qquad \mu^{\xi(m)}_{m+1} = \mu^{\hat{\xi(m)}}_{m+2} > 0, \qquad \mu^{\xi(m)}_{m+1} \neq \mu^{\hat{\xi(m)}}_{m+2}
\end{align*}
as three individual cases.

\noindent{\em Case 1. Assume that \(\mu^{\xi(m)}_{m+1} = \mu^{\hat{\xi(m)}}_{m+2} =0\).} Then by (\ref{downonec}) we have \(c^{\xi(m)}_{m+1} = c^{\xi(m)}_{m} = 0\), and thus by (\ref{updiagdef}) we have \(\xi(m+1) = {\xi(m)}\). Now, note that
\begin{align*}
\vartheta^{\xi(m)}_m = \begin{cases}
1+ \vartheta^{\xi(m)}_{m-1} & \textup{if }\mu^{\xi(m)}_m > 0\\
0 & \textup{if }\mu^{\xi(m)}_m = 0
\end{cases}
\qquad
\textup{and}
\qquad
\vartheta^{\hat{\xi(m)}}_{m+1} = \begin{cases}
1+ \vartheta^{\hat{\xi(m)}}_{m} & \textup{if }\mu^{\hat{\xi(m)}}_{m+1} > 0\\
0 & \textup{if }\mu^{\hat{\xi(m)}}_{m+1} = 0.
\end{cases}
\end{align*}
We consider some subcases:
\begin{enumerate}[(1a)]
\item[(1a)] 
If \(\TypeH^{\xi,\bmu}(m) = \textup{A}^\varnothing\), then we have that 
\(\mu^{\xi(m)}_m = \mu^{\hat{\xi(m)}}_{m+1} = 0\), so \(\vartheta_m^{\xi(m)}, \vartheta_{m+1}^{\hat{\xi(m)}}\) are even, and thus \(\TypeH^{\xi,\bmu}(m+1) = \textup{A}^\varnothing\).
\item[(1b)]
If \(\TypeH^{\xi,\bmu}(m) = \textup{A}^+\), then we have that  \(\vartheta^{\xi(m)}_{m-1} , \vartheta^{\hat{\xi(m)}}_{m} \) are odd, and therefore in any case \(\vartheta_m^{\xi(m)}, \vartheta_{m+1}^{\hat{\xi(m)}}\) are even, and thus \(\TypeH^{\xi,\bmu}(m+1) = \textup{A}^\varnothing\).
\item[(1c)] If the \(\TypeH^{\xi,\bmu}(m) = \textup{B}^+\), then we have that \(\vartheta^{\hat{ \xi(m)}}_{m} \) is odd and \(\vartheta^{\xi(m)}_{m} \) is even. Thus \(\vartheta^{\xi(m)}_{m}\) is even in any case, and so \(\TypeH^{\xi,\bmu}(m+1) = \textup{A}^\varnothing\).
\item[(1d)]
If \(\TypeH^{\xi,\bmu}(m) \in\{ \textup{A}^-, \textup{B}^-\}\), then we have that \(\vartheta^{\xi(m)}_{m} , \vartheta^{\hat{\xi(m)}}_{m+1} \) are odd. are odd, which implies that 
\(\TypeH^{\xi,\bmu}(m-1) = \textup{A}^+\).
\end{enumerate}

\noindent{\em Case 2. Assume that \(\mu^{\xi(m)}_{m+1} = \mu^{\hat{\xi(m)}}_{m+2} >0\).} Then by (\ref{downonec}) we have \(c^{\xi(m)}_{m+1} = c^{\xi(m)}_{m} = 0\), and thus by (\ref{updiagdef}) we have \(\xi(m+1) = {\xi(m)}\). Now, note that
\begin{align*}
\vartheta^{\xi(m)}_{m+1} = \begin{cases}
2+ \vartheta^{\xi(m)}_{m-1} & \textup{if }\mu^{\xi(m)}_m > 0\\
1 & \textup{if }\mu^{\xi(m)}_m = 0
\end{cases}
\qquad
\textup{and}
\qquad
\vartheta^{\hat{\xi(m)}}_{m+2} = \begin{cases}
2+ \vartheta^{\hat{\xi(m)}}_{m} & \textup{if }\mu^{\hat{\xi(m)}}_{m+1} > 0\\
1 & \textup{if }\mu^{\hat{\xi(m)}}_{m+1} = 0
\end{cases}
\end{align*}
We consider some subcases:
\begin{enumerate}[(2a)]
\item 
If \(\TypeH^{\xi,\bmu}(m) = \textup{A}^\varnothing\), then we have that
\(\mu^{\xi(m)}_m = \mu^{\hat{\xi(m)}}_{m+1} = 0\), so \(\vartheta^{\xi(m)}_{m} , \vartheta^{\hat{\xi(m)}}_{m+1}=1\), and thus 
\(\TypeH^{\xi,\bmu}(m+1) = \textup{A}^-\).
\item 
If \(\TypeH^{\xi,\bmu}(m) = \textup{A}^+\), then we have that  \(\vartheta^{\xi(m)}_{m-1} , \vartheta^{\hat{\xi(m)}}_{m}\) are odd, and therefore in any case we have that \(\vartheta^{\xi(m+1)}_{m+1} , \vartheta^{\hat{\xi(m+1)}}_{m+2} \) are odd, so
\(\TypeH^{\xi,\bmu}(m+1) = \textup{A}^-\).
\item If 
\(\TypeH^{\xi,\bmu}(m) = \textup{B}^+\), then we have that \(\vartheta^{\hat{ \xi(m)}}_{m} \) is odd and \(\vartheta^{\xi(m)}_{m} \) is even. Thus either \(\mu^{\xi(m)}_{m} = 0\) or \(\vartheta^{\xi(m)}_{m-1}\) is odd. Thus \(\vartheta^{\xi(m+1)}_{m+1} , \vartheta^{\hat{\xi(m+1)}}_{m+2}\) are odd in any case, and so
\(\TypeH^{\xi,\bmu}(m+1) = \textup{A}^-\).
\item If
\(\TypeH^{\xi,\bmu}(m) \in\{ \textup{A}^-, \textup{B}^-\}\), then we have that \(\vartheta^{\xi(m)}_{m} , \vartheta^{\hat{\xi(m)}}_{m+1} \) are odd, which implies that \(\TypeH^{\xi,\bmu}(m+1) = \textup{A}^+\)
\end{enumerate}

\noindent{\em Case 3. Assume that \(\mu^{\xi(m)}_{m+1}  \neq \mu^{\hat{\xi(m)}}_{m+2}\).} Then by (\ref{downonec})  we have \(c^{\xi(m)}_{m+1} \neq 0\). Thus, since \(\bmu \in \bOm^{\textup{MV}}\) we have that \(c^{\xi(m)}_{m+1} < 0\), so \(c^{\hat{\xi(m)}}_{m+1} = 0\) and \(\mu^{\xi(m)}_{m+1} > \mu^{\hat{\xi(m)}}_{m+2} \geq 0\). By (\ref{updiagdef}) we have \(\xi(m+1) = \hat{\xi(m)}\). Note that
\begin{align*}
\vartheta^{\xi(m)}_{m+1}=
\begin{cases}
2+ \vartheta_{m-1}^{\xi(m)} & \textup{if } \mu_m^{\xi(m)} > 0\\
1 & \textup{if } \mu_m^{\xi(m)} =0.
\end{cases}
\qquad
\textup{and}
\qquad
\vartheta^{\hat{\xi(m)}}_{m+1} =
\begin{cases}
1 + \vartheta^{\hat{\xi(m)}}_{m} &\textup{if } \mu_{m+1}^{\hat{\xi(m)}} > 0\\
0 & \textup {if } \mu_{m+1}^{\hat{\xi(m)}} = 0.
\end{cases}
\end{align*}

We again consider some subcases:
\begin{enumerate}[(3a)]
\item 
If \(\TypeH^{\xi,\bmu}(m) = \textup{A}^\varnothing\), then 
\(\mu^{\xi(m)}_m = \mu^{\hat{\xi(m)}}_{m+1} = 0\).
We have then that \(\vartheta^{\hat{ \xi(m+1)}}_{m+1} = \vartheta^{\xi(m)}_{m+1} =1 \) and \(\vartheta^{\xi(m+1)}_{m+1} =\vartheta^{\hat{\xi(m)}}_{m+1} = 0\), so \(\TypeH^{\xi,\bmu}(m+1) = \textup{B}^+\).

\item If 
\(\TypeH^{\xi,\bmu}(m) = \textup{A}^+\), then 
\(\vartheta^{\xi(m)}_{m-1} , \vartheta^{\hat{\xi(m)}}_{m} \) are odd.  Then in any case we have that \(\vartheta^{\hat{ \xi(m+1)}}_{m+1} \) is odd and \(\vartheta^{\xi(m+1)}_{m+1} \) is even, so
\(\TypeH^{\xi,\bmu}(m+1) = \textup{B}^+\).

\item If 
\(\TypeH^{\xi,\bmu}(m) = \textup{A}^-\), then
\(\vartheta^{\xi(m)}_{m} , \vartheta^{\hat{\xi(m)}}_{m+1} \) are odd. Since \(\vartheta^{\hat{\xi(m)}}_{m+1}\) is odd, we have \(\mu^{\hat{\xi(m)}}_{m+1}> 0\). Therefore by (\ref{cdef}) we have 
\begin{align*}
0 \geq c^{\hat{\xi(m)}}_m = c^{\hat{\xi(m)}}_{m+1} - (m+1)(\mu^{\xi(m)}_{m+2} - \mu^{\hat{\xi(m)}}_{m+1}) = - (m+1)(\mu^{\xi(m)}_{m+2} - \mu^{\hat{\xi(m)}}_{m+1}),
\end{align*}
which implies that \(\mu^{\xi(m)}_{m+2} \geq \mu^{\hat{\xi(m)}}_{m+1} > 0\). Since \(\mu^{\xi(m)}_{m+2}, \mu^{\xi(m)}_{m+1} >0\) and \(\vartheta^{\xi(m)}_m\) is odd, we have that \(\vartheta^{\xi(m)}_{m+2}\) is odd. Thus we have that \(\vartheta^{ \hat{\xi(m+1)}}_{m+2} \), \(\vartheta^{ \xi(m+1)}_{m+1} \) are odd, so \(\TypeH^{\xi,\bmu}(m+1) = \textup{B}^-\).

\item If 
\(\TypeH^{\xi,\bmu}(m) = \textup{B}^+\),  we have that  \(\vartheta^{\hat{ \xi(m)}}_{m} \) is odd and \(\vartheta^{\xi(m)}_{m} \) is even. Since \(\vartheta^{\xi(m)}_{m} \) is even, either \(\mu_{m}^{\xi(m)} = 0\) or  \(\vartheta^{\xi(m)}_{m-1}\) is odd. In any case then it follows that \(\vartheta^{\xi(m)}_{m+1}\) is odd. Thus \(\vartheta^{\hat{ \xi(m+1)}}_{m+1} \) is odd and \(\vartheta^{\xi(m+1)}_{m+1} \) is even. Therefore
\(\TypeH^{\xi,\bmu}(m+1) = \textup{B}^+\).

\item
If
\(\TypeH^{\xi,\bmu}(m) = \textup{B}^-\), we have that \(\vartheta^{ \hat{\xi(m)}}_{m+1} \), \(\vartheta^{ \xi(m)}_{m} \) are odd. Since \(\vartheta^{\hat{\xi(m)}}_{m+1}\) is odd, we have \(\mu^{\hat{\xi(m)}}_{m+1}> 0\). Therefore by (\ref{cdef}) we have 
\begin{align*}
0 \geq c^{\hat{\xi(m)}}_m = c^{\hat{\xi(m)}}_{m+1} - (m+1)(\mu^{\xi(m)}_{m+2} - \mu^{\hat{\xi(m)}}_{m+1}) = - (m+1)(\mu^{\xi(m)}_{m+2} - \mu^{\hat{\xi(m)}}_{m+1}),
\end{align*}
which implies that \(\mu^{\xi(m)}_{m+2} \geq \mu^{\hat{\xi(m)}}_{m+1} > 0\). Since \(\mu^{\xi(m)}_{m+2}, \mu^{\xi(m)}_{m+1} >0\) and \(\vartheta^{\xi(m)}_m\) is odd, we have that \(\vartheta^{\xi(m)}_{m+2}\) is odd. Thus we have that \(\vartheta^{ \hat{\xi(m+1)}}_{m+2} \), \(\vartheta^{ \xi(m+1)}_{m+1} \) are odd, so \(\TypeH^{\xi,\bmu}(m+1) = \textup{B}^-\).
\end{enumerate}

Thus in every case we have that \( (\TypeH^{\xi, \bmu}(m), \TypeH^{\xi, \bmu}(m+1))\) is in the set (\ref{allcombotypes}), completing the proof. 
\end{answer}
\end{proof}

\subsection{The \(\mathcal{G}\) map and bicompositions}

\begin{lemma}\label{Gisom}
The map \(\mathcal{G}\) restricts to a map of MV bicompositions \(\mathcal{G}: \bOm_{\geq 0,\bullet}^{\textup{MV}} \to \bOm^{\textup{MV}}_{\geq 1, +}\). Moreover, if \(\bmu \in \bOm_{\geq 0,j}^{\textup{MV}} \), then \(\xi^{\bmu}_j\) is a diagonalization of \(\bnu = \mathcal{G}(\bmu)\), and:
\begin{align}
d^{\xi^{\bmu}_j(m-1)}_{m-1}(\bnu) &= 
\begin{cases}
d^{\xi^{\bmu}_j(m-1)}_{m-1}(\bmu)-1 &\textup{if } \TypeH^{\xi^{\bmu}_j, \bmu}(m) \in \{\textup{A}^+, \textup{B}^-\}\\
d^{\xi^{\bmu}_j(m-1)}_{m-1}(\bmu)+1 &\textup{if } \TypeH^{\xi^{\bmu}_j, \bmu}(m)\in \{\textup{A}^\varnothing, \textup{A}^-, \textup{B}^+\}\\
\end{cases}\label{updunder}\\
d^{\xi_{\bmu}(m)}_{m}(\bnu) &= 
\begin{cases}
d^{\xi^{\bmu}_j(m)}_{m}(\bmu)-1 &\textup{if } \TypeH^{\xi^{\bmu}_j, \bmu}(m) \in \{ \textup{A}^-, \textup{B}^-\}\\
d^{\xi^{\bmu}_j(m)}_{m}(\bmu)+1 &\textup{if } \TypeH^{\xi^{\bmu}_j, \bmu}(m) \in \{\textup{A}^\varnothing, \textup{A}^+, \textup{B}^+\}.\\
\end{cases}\label{updover}
\end{align}
\end{lemma}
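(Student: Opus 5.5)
The plan is to mirror the proof of Lemma~\ref{Hisom}, with upward induction in place of downward induction. Fix \(j\), \(\bmu \in \bOm^{\textup{MV}}_{\geq 0,j}\), and write \(\xi = \xi^{\bmu}_j\) and \(\bnu = \mathcal{G}\bmu\).

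First I will unpack (\ref{Gcases}) against the definitions in \S\ref{deftypesup} to record the local effect of \(\mathcal{G}\) on each segment type. This gives facts (i')--(v') analogous to (i)--(v):
\begin{itemize}
\item on \(\textup{A}^\varnothing\), both \(\nu^{\xi(m)}_m\) and \(\nu^{\hat{\xi(m)}}_{m+1}\) lie in \(\{0,1\}\), dictated by the parities of \(\vartheta_{m-1}\) and \(\vartheta_m\);
\item on \(\textup{A}^\pm\), both edges change by \(\pm1\);
\item on \(\textup{B}^\pm\), the pair \(\nu^{\hat{\xi(m)}}_m, \nu^{\hat{\xi(m)}}_{m+1}\) changes by \((\mp1,\pm1)\), or with the appropriate signs.
\end{itemize}
The key point is that the parity conditions on \(\vartheta\) in the type definitions were designed precisely to read off the cases of (\ref{Gcases}). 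Lemma~\ref{updiagtypeslem} guarantees that no segment has type \(\textup{Z}\).

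Next I will run an induction on \(m\), organized into three claims as in Lemma~\ref{Hisom}.
\begin{enumerate}
\item[(I)] If (\ref{updover}) holds at \(m\), then (\ref{updunder}) holds at \(m+1\). This follows from the list (\ref{allcombotypesup}): a type in \(\{\textup{A}^-,\textup{B}^-\}\) at \(m\) is followed by \(\textup{A}^+\) or \(\textup{B}^-\), and a type in \(\{\textup{A}^\varnothing,\textup{A}^+,\textup{B}^+\}\) is followed by \(\textup{A}^\varnothing\), \(\textup{A}^-\) or \(\textup{B}^+\). So the signs match.
\item[(II)] The base case \(m=1\). Here \(c_0=0\) holds trivially, and \(d_0^{\hat j}(\bnu)=\nu^{j}_1\). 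Since \(\mu^j_0=1\), we have \(\vartheta^j_0=1\), so \(\nu^j_1=\mu^j_1+1\). This gives the \(+1\) predicted for types \(\textup{A}^\varnothing,\textup{A}^-,\textup{B}^+\), which are exactly the types allowed at \(m=1\) by the base case of Lemma~\ref{updiagtypeslem}.
\item[(III)] If \(c^{\xi(m-1)}_{m-1}(\bnu)=0\) and (\ref{updunder}) hold at \(m\), then \(c^{\xi(m)}_m(\bnu)=0\) and (\ref{updover}) hold at \(m\). I will check this in five cases using (\ref{downonec}) and (\ref{downoned}) for the A-types, and (\ref{ctoddown}) and (\ref{dtocdown}) for the B-types, exactly as in Cases 1--5 of Lemma~\ref{Hisom}, with all signs reversed.
\end{enumerate}
Together these show that \(\xi\) is a diagonalization of \(\bnu\). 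It also follows that \(\nu^i_0=0\) for both \(i\): for \(i=j\) this is because \(\vartheta^j_0=1\) is odd, and for \(\hat j\) because \(\mu^{\hat j}_0=0\) and \(\vartheta^{\hat j}_{-1}=0\). Hence \(\bnu\in\bOm_{\geq1}\). Nontriviality comes from \(|\bnu|=|\bmu|\geq1\) (Corollary~\ref{pressum}), or directly from \(\nu^j_1\geq1\).

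The remaining step, which I expect to be the main obstacle, is showing \(c^{\widehat{\xi(m)}}_m(\bnu)\leq 0\) for all \(m\). I will reuse (\ref{ctodsame}) to write
\[c^{\hat{\xi(m)}}_m(\bnu)=c^{\hat{\xi(m)}}_m(\bmu)+m(\varepsilon_1+\varepsilon_2)+(\varepsilon_1-\varepsilon_3),\]
and then tabulate over the 13 pairs in (\ref{allcombotypesup}). I expect most pairs to give a change of \(0\) or a negative shift. The few dangerous pairs will be those involving \(\textup{A}^\varnothing\), \(\textup{B}^+\), \(\textup{A}^-\), where \(\mathcal{G}\) creates new entries equal to \(1\). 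For these I plan to argue as at the end of Lemma~\ref{Hisom}: either the relevant entries of \(\bnu\) vanish, so that \(c=-d\leq 0\) by (\ref{dcdiff}), or the MV inequality at the adjacent index \(m\pm1\) for \(\bmu\) forces \(c^{\hat{\xi(m)}}_m(\bmu)\leq -(m+1)\) or similar, which absorbs the positive shift. Identifying precisely which pairs need this extra slack, and the correct neighboring inequality to invoke, is where I expect the real work to lie.
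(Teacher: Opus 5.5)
Your outline follows the paper's own proof. It uses the same local facts for the five segment types, the same Claims I--III (Claim I read off from (\ref{allcombotypesup}), the base case from $\vartheta^j_0=1$), and the same closing tabulation of $c^{\hat{\xi(m)}}_m(\bnu)$ via (\ref{ctodsame}) over the 13 admissible pairs.

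Two small corrections to the set-up:
\begin{itemize}
\item On a segment of type $\textup{A}^\varnothing$, both edges $\nu^{\xi(m)}_m$ and $\nu^{\hat{\xi(m)}}_{m+1}$ are forced to equal $0$, not merely to lie in $\{0,1\}$. The type requires $\mu=0$ there and $\vartheta^{\xi(m)}_{m-1},\vartheta^{\hat{\xi(m)}}_m$ even, and Case~1 of Claim~III needs exactly this.
\item Claim~III is not Lemma~\ref{Hisom} ``with all signs reversed.'' The $\textup{A}^\pm$ cases give the same $d$-shifts as there; only the $\textup{A}^\varnothing$ and $\textup{B}^\pm$ cases flip.
\end{itemize}
Both are harmless, since those case computations are mechanical.

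The real omission is the step you flag yourself: the MV check $c^{\hat{\xi(m)}}_m(\bnu)\le 0$. Your forecast for it points the wrong way. Write $(\textup{X},\textup{Y})$ for the types at $m$ and $m+1$. Then:
\begin{itemize}
\item $\varepsilon_1$ is determined by X: it is $0,+1,-1,+1,-1$ for $\textup{A}^\varnothing,\textup{A}^+,\textup{A}^-,\textup{B}^+,\textup{B}^-$.
\item $\varepsilon_2$ is determined by Y: it is $0,+1,-1,-1,+1$ in the same order, since for a B-type successor $\hat{\xi(m+1)}=\xi(m)$.
\item $\varepsilon_3=+1$ for $\textup{X}\in\{\textup{A}^\varnothing,\textup{A}^+,\textup{B}^+\}$ and $-1$ otherwise.
\end{itemize}
The shift $m(\varepsilon_1+\varepsilon_2)+(\varepsilon_1-\varepsilon_3)$ is then:
\begin{itemize}
\item $-1$ for $(\textup{A}^\varnothing,\textup{A}^\varnothing)$;
\item $-m-1$ for $(\textup{A}^\varnothing,\textup{A}^-)$ and $(\textup{A}^\varnothing,\textup{B}^+)$;
\item $+m$ for $(\textup{A}^+,\textup{A}^\varnothing)$ and $(\textup{B}^+,\textup{A}^\varnothing)$;
\item $0$ for all other pairs.
\end{itemize}
So the pairs led by $\textup{A}^\varnothing$ are now automatically safe, even though the analogous $\textup{A}_\varnothing$-led pairs were the dangerous ones in Lemma~\ref{Hisom}. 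No argument $c=-d\le 0$ via (\ref{dcdiff}) is needed, $\textup{A}^-$ never causes trouble, and no inequality at index $m+1$ is used.

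The only dangerous pairs are the two with shift $+m$. For those, X gives $\vartheta^{\hat{\xi(m)}}_m$ odd, hence $\mu^{\hat{\xi(m)}}_m>0$, and $\textup{Y}=\textup{A}^\varnothing$ gives $\mu^{\xi(m)}_{m+1}=0$. Then (\ref{downonec}) together with the MV inequality for $\bmu$ at index $m-1$ (trivially $c_0=0$ when $m=1$) gives
\[
0\ \geq\ c^{\hat{\xi(m)}}_{m-1}(\bmu)\ =\ c^{\hat{\xi(m)}}_m(\bmu)+m\,\mu^{\hat{\xi(m)}}_m\ \geq\ c^{\hat{\xi(m)}}_m(\bmu)+m .
\]
Hence $c^{\hat{\xi(m)}}_m(\bmu)\le -m$, which exactly absorbs the $+m$; the bound you need is $-m$, not $-(m+1)$. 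With this piece filled in, your argument is complete and coincides with the paper's.
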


\begin{proof}
We omit this proof as well as it proceeds along the same lines as Lemma~\ref{Hisom}, utilizing Lemma~\ref{updiagtypeslem} in similar fashion to the way the proof of Lemma~\ref{Hisom} utilizes Lemma~\ref{downdiagtypeslem}. Full details can be found in the \texttt{arXiv} version of the paper as explained in \S\ref{SS:ArxivVersion}.
\begin{answer}
Let \(\bmu \in \bOm_{\geq 0,j}^{\textup{MV}} \). 
We will write  \(\xi:= \xi^{\bmu}_j\). For \(i \in \ZZ_2\), \(m \in \ZZ_{\geq 1}\), the value \(\nu^i_m\) is directly related to the type of the segment in the diagonalization \(\xi\) of \(\bmu\) which the edge corresponding to \(\mu^i_m\) belongs to. Unpacking the definitions in \S\ref{deftypesup} alongside (\ref{Gcases}), we have the following facts, which we will use repeatedly:
\begin{enumerate}[(i)]
\item If \(\TypeH^{\xi, \bmu}(m) = \textup{A}^\varnothing\) then \(\nu^{\xi(m)}_m = \nu^{\hat{\xi(m)}}_{m+1} = 0\);
\item If \(\TypeH^{\xi, \bmu}(m) = \textup{A}^+\) then \(\nu^{\xi(m)}_m =\mu^{\xi(m)}_m +1\) and \(\nu^{\hat{\xi(m)}}_{m+1} = \mu^{\hat{\xi(m)}}_{m+1} +1\);
\item If \(\TypeH^{\xi, \bmu}(m) = \textup{A}^-\) then \(\nu^{\xi(m)}_m =\mu^{\xi(m)}_m -1\) and \(\nu^{\hat{\xi(m)}}_{m+1} = \mu^{\hat{\xi(m)}}_{m+1} -1\);
\item If \(\TypeH^{\xi, \bmu}(m) = \textup{B}^+\) then \(\nu^{\hat{\xi(m)}}_m = \mu^{\hat{\xi(m)}}_m-1\) and \(\nu^{\hat{\xi(m)}}_{m+1} = \mu^{\hat{\xi(m)}}_{m+1}+1\);
\item If \(\TypeH^{\xi, \bmu}(m) = \textup{B}^-\) then \(\nu^{\hat{\xi(m)}}_m = \mu^{\hat{\xi(m)}}_m+1\) and \(\nu^{\hat{\xi(m)}}_{m+1} = \mu^{\hat{\xi(m)}}_{m+1}-1\).
\end{enumerate}
We proceed via a number of claims.\\

\noindent {\em Claim I: Let \(m \in \ZZ_{\geq 1}\), and set \(m' = m+1\). If (\ref{updover}) holds for \(m\), then (\ref{updunder}) holds for \(m'\).}
We have from Lemma~\ref{updiagtypeslem} that \(\TypeH^{\xi, \bmu}(m') \in \{\textup{A}^\varnothing, \textup{A}^-, \textup{B}^+\}\) if and only if \(\TypeH^{\xi, \bmu}(m) \in \{\textup{A}^\varnothing, \textup{A}^+, \textup{B}^+\}\), and \(\TypeH^{\xi, \bmu}(m') \in \{\textup{A}^+, \textup{B}^-\}\) if and only if \(\TypeH^{\xi, \bmu}(m) \in \{\textup{A}^-, \textup{B}^-\}\), which immediately implies the claim.\\

\noindent {\em Claim II: The equalities \(c_{m-1}^{\xi(m-1)}(\bnu) = 0\), and (\ref{updunder}) hold when \(m=1\).} 
The first equality is obvious, since \(c_0^{\xi(m-1)}(\bnu)\) is definitionally zero. We have as in the proof of Lemma~\ref{updiagtypeslem} that \(\TypeH^{\xi, \bmu}(1) \in \{\textup{A}^\varnothing, \textup{A}^-, \textup{B}^+\}\). We have \(\xi(0) = \hat j\), \(\mu_0^j = 1\), and \(\mu_0^{\hat j} = 0\). Note that \(\vartheta_0^j(\bmu) =1\), so 
following (\ref{Gcases}) we have
\begin{align*}
d^{\xi(0)}_0(\bnu) = d^{\hat j}_0(\bnu) =  \nu_1^{j} = \mu_1^j+1 = d^{\xi(0)}_0(\bmu) + 1,
\end{align*}
which agrees with (\ref{updunder}) when \(m=1\). Thus we have verified Claim II.\\

\noindent {\em Claim III: If \(c^{\xi(m-1)}_{m-1}(\bnu) = 0\) and (\ref{updunder}) hold for a given \(m \in \ZZ_{\geq 1}\), then \(c^{\xi(m)}_{m}(\bnu) = 0\) and (\ref{updover}) both hold for \(m\).} 
Assume that \(c^{\xi(m-1)}_{m-1}(\bnu) = 0\) and (\ref{updunder}) hold. We have by Lemma~\ref{updiagtypeslem} that \(\TypeH^{\xi, \bmu}(m) \in \{\textup{A}^\varnothing, \textup{A}^+, \textup{A}^-, \textup{B}^+, \textup{B}^-\}\). We consider these cases separately.

\noindent{\em Case 1. Assume 
\(\TypeH^{\xi,\bmu}(m) = \textup{A}^\varnothing\).} Then \(\xi(m-1) = \xi(m)\), so in view of (\ref{updunder}) and fact (i) above, we have
\begin{align*}
\nu_m^{\xi(m)} = 0 \qquad \nu_{m+1}^{\hat{\xi(m)}} = 0 \qquad d_{m-1}^{\xi(m-1)}(\bnu) = d_{m-1}^{\xi(m-1)}(\bmu) + 1
\end{align*}
so in consideration of (\ref{downonec}, \ref{downoned}) we have
\begin{align*}
c_{m}^{\xi(m)}(\bnu) &= c_{m-1}^{\xi(m-1)}(\bnu) + m(\nu_{m+1}^{\hat{\xi(m)}} - \nu_m^{\xi(m)}) = 0\\
d_{m}^{\xi(m)}(\bnu) &= d_{m-1}^{\xi(m-1)}(\bnu) - (m-1)\nu^{\xi(m)}_m + (m+1)\nu^{\hat{\xi(m)}}_{m+1} \\
&= d_{m-1}^{\xi(m-1)}(\bmu)+1 + (m-1)\mu^{\xi(m)}_m - (m+1)\mu^{\hat{\xi(m)}}_{m+1} =  d_{m}^{\xi(m)}(\bmu) + 1. 
\end{align*}

\noindent{\em Case 2. Assume 
\(\TypeH^{\xi,\bmu}(m) = \textup{A}^+\).} Then \(\xi(m-1) = \xi(m)\), 
so in view of (\ref{updunder}) and fact (ii) above, we have
\begin{align*}
\nu_m^{\xi(m)} =\mu_m^{\xi(m)} +1 \qquad \nu_{m+1}^{\hat{\xi(m)}} = \mu_{m+1}^{\hat{\xi(m)}}+1 \qquad d_{m-1}^{\xi(m-1)}(\bnu) = d_{m-1}^{\xi(m-1)}(\bmu) - 1
\end{align*}
so in consideration of (\ref{downonec}, \ref{downoned}) we have
\begin{align*}
c_{m}^{\xi(m)}(\bnu) &= c_{m-1}^{\xi(m-1)}(\bnu) + m(\nu_{m+1}^{\hat{\xi(m)}} - \nu_m^{\xi(m)}) = c_{m-1}^{\xi(m-1)}(\bmu) + m(\mu_{m+1}^{\hat{\xi(m)}} - \mu_m^{\xi(m)}) = c_{m}^{\xi(m)}(\bmu) = 0 \\
d_{m}^{\xi(m)}(\bnu) &= d_{m-1}^{\xi(m-1)}(\bnu) - (m-1)\nu^{\xi(m)}_m + (m+1)\nu^{\hat{\xi(m)}}_{m+1}\\
&=d_{m-1}^{\xi(m-1)}(\bmu) - (m-1)\mu^{\xi(m)}_m + (m+1)\mu^{\hat{\xi(m)}}_{m+1} +1 = d_{m}^{\xi(m)}(\bmu) +1.
\end{align*}

\noindent{\em Case 3. Assume 
\(\TypeH_{\xi,\bmu}(m) = \textup{A}^-\).} Then \(\xi(m-1) = \xi(m)\), so in view of (\ref{updunder}) and fact (iii) above, we have
\begin{align*}
\nu_m^{\xi(m)} =\mu_m^{\xi(m)} -1 \qquad \nu_{m+1}^{\hat{\xi(m)}} = \mu_{m+1}^{\hat{\xi(m)}}-1 \qquad d_{m-1}^{\xi(m-1)}(\bnu) = d_{m-1}^{\xi(m-1)}(\bmu) + 1,
\end{align*}
so in consideration of (\ref{downonec}, \ref{downoned}) we have
\begin{align*}
c_{m}^{\xi(m)}(\bnu) &= c_{m-1}^{\xi(m-1)}(\bnu) + m(\nu_{m+1}^{\hat{\xi(m)}} - \nu_m^{\xi(m)}) = c_{m-1}^{\xi(m-1)}(\bmu) + m(\mu_{m+1}^{\hat{\xi(m)}} - \mu_m^{\xi(m)}) = c_{m}^{\xi(m)}(\bmu) = 0 \\
d_{m}^{\xi(m)}(\bnu) &= d_{m-1}^{\xi(m-1)}(\bnu) - (m-1)\nu^{\xi(m)}_m + (m+1)\nu^{\hat{\xi(m)}}_{m+1}\\
&=d_{m-1}^{\xi(m-1)}(\bmu) - (m-1)\mu^{\xi(m)}_m + (m+1)\mu^{\hat{\xi(m)}}_{m+1} -1 = d_{m}^{\xi(m)}(\bmu) -1.
\end{align*}

\noindent{\em Case 4. Assume
\(\TypeH_{\xi,\bmu}(m) = \textup{B}^+\).} Then  \(\xi(m-1) = \hat{\xi(m)}\), so in view of (\ref{updunder}) and fact (iv) above, we have
\begin{align*}
\nu_{m}^{\hat{\xi(m)}} =\mu_{m}^{\hat{\xi(m)}} -1 \qquad \nu_{m+1}^{\hat{\xi(m)}} =\mu_{m+1}^{\hat{\xi(m)}} +1 \qquad d_{m-1}^{\xi(m-1)}(\bnu) = d_{m-1}^{\xi(m-1)}(\bmu) + 1,
\end{align*}
so in consideration of (\ref{ctoddown}, \ref{dtocdown}) we have
\begin{align*}
c_{m}^{\xi(m)}(\bnu) &=  (m-1)\nu_m^{\hat{\xi(m)}} + m\nu_{m+1}^{\hat{\xi(m)}} -d_{m-1}^{\hat{\xi(m)}}(\bnu)\\
&=(m-1)\mu_m^{\hat{\xi(m)}} + m\mu_{m+1}^{\hat{\xi(m)}} -d_{m-1}^{\hat{\xi(m)}}(\bmu)= c_{m}^{\xi(m)}(\bmu) = 0\\
d_{m}^{\xi(m)}(\bnu) &= m\nu^{\hat{\xi(m)}}_m + (m+1)\nu^{\hat{\xi(m)}}_{m+1} - c^{\hat{\xi(m)}}_{m-1}(\bnu)\\
&= m\mu^{\hat{\xi(m)}}_m + (m+1)\mu^{\hat{\xi(m)}}_{m+1} - c_{m-1}^{\hat{\xi(m)}}(\bmu) + 1 = d_{m}^{\xi(m)}(\bmu)+1.
\end{align*}

\noindent{\em Case 5. Assume
\(\TypeH^{\xi,\bmu}(m) = \textup{B}^-\).} Then 
\(\xi(m-1) = \hat{\xi(m)}\), \(c^{ \xi(m-1)}_{m}(\bmu)  \neq  0\), 
so in view of (\ref{updunder}) and fact (v) above, we have
\begin{align*}
\nu_{m}^{\hat{\xi(m)}} =\mu_{m}^{\hat{\xi(m)}} +1 \qquad \nu_{m+1}^{\hat{\xi(m)}} =\mu_{m+1}^{\hat{\xi(m)}} -1 \qquad d_{m-1}^{\xi(m-1)}(\bnu) = d_{m-1}^{\xi(m-1)}(\bmu) - 1,
\end{align*}
so in consideration of (\ref{ctoddown}, \ref{dtocdown}) we have
\begin{align*}
c_{m}^{\xi(m)}(\bnu) &=  (m-1)\nu_m^{\hat{\xi(m)}} + m\nu_{m+1}^{\hat{\xi(m)}} -d_{m-1}^{\hat{\xi(m)}}(\bnu)\\
&=(m-1)\mu_m^{\hat{\xi(m)}} + m\mu_{m+1}^{\hat{\xi(m)}} -d_{m-1}^{\hat{\xi(m)}}(\bmu)= c_{m}^{\xi(m)}(\bmu) = 0\\
d_{m}^{\xi(m)}(\bnu) &= m\nu^{\hat{\xi(m)}}_m + (m+1)\nu^{\hat{\xi(m)}}_{m+1} - c^{\hat{\xi(m)}}_{m-1}(\bnu)\\
&= m\mu^{\hat{\xi(m)}}_m + (m+1)\mu^{\hat{\xi(m)}}_{m+1} - c_{m-1}^{\hat{\xi(m)}}(\bmu) - 1 = d_{m}^{\xi(m)}(\bmu)-1.
\end{align*}
This completes the proof of Claim III.\\

Combining Claims I, II, III, we have that \(c^{\xi(m)}_m(\bnu) = 0\), and (\ref{updunder}, \ref{updover}) hold for all \(m \in \ZZ_{\geq 1}\). 
Moreover, it follows from (\ref{Gcases}) and the fact that \(\bmu \in \bOm^{\textup{MV}}_{\geq 1, j}\) that \(\nu^j_1 >0\), and \(\nu^i_k = 0\) for \(i \in \ZZ_2\), \(k \leq 0\). To show \(\bnu \in \bOm^{\textup{MV}}_{\geq 1, +}\), 
it thus remains to finish the check that \(\bnu\) is an MV bicomposition; i.e., we must check that \(c^{\hat{\xi(m)}}_m(\bnu) \leq 0\) for all \(m \in \ZZ_{\geq 1}\).
Fix \(m \in \ZZ_{\geq 1}\), and set
\begin{align*}
(\textup{X},\textup{Y}) =  (\TypeH^{\xi, \bmu}(m), \TypeH^{\xi, \bmu}(m+1)).
\end{align*}
By (\ref{Gcases}, \ref{updover}), we have
\begin{align*}
\nu_{m+1}^{\hat{\xi(m)}} = \mu_{m+1}^{\hat{\xi(m)}}+ \varepsilon_1
\qquad
\nu_{m+1}^{\xi(m)} = \mu_{m+1}^{\xi(m)} + \varepsilon_2
\qquad
d_m^{\xi(m)} = d_m^{\xi(m)} + \varepsilon_3
\end{align*}
for some \(\varepsilon_1, \varepsilon_2, \varepsilon_3 \in \{-1,0,1\}\). Therefore applying  (\ref{ctodsame}) we have
\begin{align*}
c^{\hat{\xi(m)}}_m(\bnu) &= (m+1)\nu^{\hat{\xi(m)}}_{m+1}  + m\nu_{m+1}^{\xi(m)}  - d^{\xi(m)}_{m}(\bnu)  \\
&=(m+1)(\mu^{\hat{\xi(m)}}_{m+1} + \varepsilon_1) + m(\mu_{m+1}^{\xi(m)} + \varepsilon_2) - (d^{\xi(m)}_{m}(\bmu) + \varepsilon_3)
\\&=(m+1)\mu^{\hat{\xi(m)}}_{m+1}+ m\mu_{m+1}^{\xi(m)} - d^{\xi(m)}_{m}(\bmu)  + m(\varepsilon_1 + \varepsilon_2) + (\varepsilon_1 - \varepsilon_3)\\
&=c^{\hat{\xi(m)}}_m(\bmu) + m(\varepsilon_1 + \varepsilon_2) + (\varepsilon_1 - \varepsilon_3).
\end{align*}
We use the above, together with the specific values for \(\varepsilon_1, \varepsilon_2, \varepsilon_3\) taken from (\ref{updover}) and facts (i)--(v) noted at the start of the proof to compute \(c^{\hat{\xi(m)}}_m(\bnu) \) for the 13 possible cases for \((\textup{X},\textup{Y})\) in Lemma~\ref{updiagtypeslem}. This yields:
\begin{align}
c^{\hat{\xi(m)}}_m(\bnu)=
\begin{cases}
c^{\hat{\xi(m)}}_m(\bmu)-1 & \textup{if } (\textup{X},\textup{Y}) = (\textup{A}^\varnothing, \textup{A}^\varnothing);\\
c^{\hat{\xi(m)}}_m(\bmu) - m -1 & \textup{if } (\textup{X},\textup{Y}) \in \{(\textup{A}^\varnothing, \textup{A}^-), (\textup{A}^\varnothing, \textup{B}^+)\};\\
c^{\hat{\xi(m)}}_m(\bmu) +m & \textup{if } (\textup{X},\textup{Y}) \in  \{(\textup{A}^+, \textup{A}^\varnothing),(\textup{B}^+, \textup{A}^\varnothing) \};\\
c^{\hat{\xi(m)}}_m(\bmu) & \textup{otherwise}.
\end{cases}
\label{chatcases3}
\end{align}
We have \(c^{\hat{\xi(m)}}_m(\bmu) \leq 0\) since \(\bmu \in \bOm_{\textup{MV}}\), so \(c^{\hat{\xi(m)}}_m(\bnu) \leq 0\) in all cases except perhaps \((\textup{X},\textup{Y}) \in  \{(\textup{A}^+, \textup{A}^\varnothing),(\textup{B}^+, \textup{A}^\varnothing) \}\). We consider this case now. In both settings we have \(\mu_m^{\hat{\xi(m)}} > 0\) and \(\mu^{\xi(m)}_{m+1} = 0\). Therefore
\begin{align*}
0 \geq c_{m-1}^{\hat{\xi(m)}}(\bmu) = c_m^{\hat{\xi(m)}}(\bmu) - m(\mu^{\xi(m)}_{m+1} - \mu^{\hat{\xi(m)}}_m) = c_m^{\hat{\xi(m)}}(\bmu) + m\mu^{\hat{\xi(m)}}_m \geq c_m^{\hat{\xi(m)}}(\bmu) + m,
\end{align*}
so again in these cases we have \(c^{\hat{\xi(m)}}_m(\bnu) \leq 0\) in view of (\ref{chatcases3}), completing the proof.
\end{answer}
\end{proof}

\begin{corollary}\label{GHisomMV}
The functions 
\begin{align*}
\mathcal{G}: \bOm_{\geq 0,\bullet}^{\textup{MV}} \to \bOm_{\geq 1,+}^{\textup{MV}} \qquad
\textup{and}
\qquad \mathcal{H}: \bOm_{\geq 1,+}^{\textup{MV}} \to \bOm_{\geq 0,\bullet}^{\textup{MV}}
\end{align*}
are mutual inverses. 
\end{corollary}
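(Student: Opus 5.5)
The plan is to combine Lemmas~\ref{Hisom} and \ref{Gisom}, which give well-definedness of the two restricted maps, with Lemma~\ref{GHisom}, which says that \(\mathcal{G}\) and \(\mathcal{H}\) are mutually inverse on each \(\Omega(n)\) and hence, acting diagonally, on all of \(\bOm\). Concretely, Lemma~\ref{Hisom} shows that \(\mathcal{H}\) sends \(\bOm^{\textup{MV}}_{\geq 1,+}\) into \(\bOm^{\textup{MV}}_{\geq 0,\bullet}\), and Lemma~\ref{Gisom} shows that \(\mathcal{G}\) sends \(\bOm^{\textup{MV}}_{\geq 0,\bullet}\) into \(\bOm^{\textup{MV}}_{\geq 1,+}\). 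So both restricted functions in the statement are well-defined maps between the indicated sets.

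Next I would check the two compositions. For \(\bmu \in \bOm^{\textup{MV}}_{\geq 1,+}\), the bicomposition \(\mathcal{H}\bmu\) lies in \(\bOm^{\textup{MV}}_{\geq 0,\bullet}\), so \(\mathcal{G}\mathcal{H}\bmu\) makes sense in the restricted setting. As a plain composition on \(\bOm\), it equals \(\bmu\) componentwise by Lemma~\ref{GHisom}, applied to \(\mu^1\) and \(\mu^0\) separately; this is legitimate because each \(\mu^i\) lies in \(\Omega(|\mu^i|)\) and Corollary~\ref{pressum} preserves sizes. Symmetrically, for \(\bmu \in \bOm^{\textup{MV}}_{\geq 0,\bullet}\) we get \(\mathcal{H}\mathcal{G}\bmu = \bmu\).

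The one point needing care is that Lemma~\ref{GHisom} is stated for \(\mathcal{G},\mathcal{H}\) on all compositions, which may have support at index \(0\) or below. The restricted maps here use the unbarred \(\mathcal{G},\mathcal{H}\), not \(\bar{\mathcal{G}},\bar{\mathcal{H}}\), so no truncation at index \(0\) intervenes and the identity \(\mathcal{G}\mathcal{H}=\mathrm{id}\) transfers verbatim. Since all the substance is already contained in Lemmas~\ref{Hisom} and \ref{Gisom}, I expect no real obstacle; the corollary should follow in a couple of lines.
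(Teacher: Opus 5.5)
Your proposal is correct and is essentially the paper's proof. The paper also deduces the corollary immediately from Lemmas~\ref{Hisom} and \ref{Gisom}, which show the two restricted maps land in the stated sets, together with Lemma~\ref{GHisom} applied componentwise, which gives $\mathcal{G}\mathcal{H}=\mathrm{id}$ and $\mathcal{H}\mathcal{G}=\mathrm{id}$.
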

\begin{proof}
Follows immediately by Lemmas~\ref{GHisom},  \ref{Hisom}, \ref{Gisom}.
\end{proof}

\subsection{Directed binary graphs}\label{dgdefs}
Let \(\textup{DG}(\bOm^{\textup{MV}}_{\geq 1})\) be the directed graph whose vertices are the MV bicompositions \(\bOm^{\textup{MV}}_{\geq 1}\), with an oriented edge \(\bmu \to \bar{\mathcal{G}}^i\bmu\) for each \(\bmu \in \bOm^{\textup{MV}}_{\geq 1}\), \(i \in \ZZ_2\). Note that if \(\bmu \in \bOm^{\textup{MV}}_{\geq 1}(n)\), then \(\bar{\mathcal{G}}^i\bmu \in \bOm^{\textup{MV}}_{\geq 1}(n+1)\), so \(\bmu \neq \bar{\mathcal{G}}^i \bmu\). By definition of the map \(\bar{\mathcal{G}}^i\) we also have
\begin{align}\label{diff01}
(\bar{\mathcal{G}}^0\bmu)^0_1 = \mu^0_1 + 1;
\qquad
(\bar{\mathcal{G}}^0\bmu)^1_1 \neq \mu^1_1 + 1;
\qquad
(\bar{\mathcal{G}}^1\bmu)^0_1 \neq \mu^0_1 + 1;
\qquad
(\bar{\mathcal{G}}^1\bmu)^1_1 = \mu^1_1 + 1,
\end{align}
and therefore \(\bar{\mathcal{G}}^0 \bmu \neq \bar{\mathcal{G}}^1 \bmu\) for all \(\bmu \in \bOm^{\textup{MV}}_{\geq 1}\).  Thus \(\textup{DG}(\bOm^{\textup{MV}}_{\geq 1})\) is a simple directed graph. 

Let \(\textup{DG}(\Theta)\) be the simple directed graph whose vertices are the binary words \(\Theta\), with an oriented edge \(\beps \to\mathcal{A}^i\beps\) for each \(\beps \in \Theta\), \(i \in \ZZ_2\).

\begin{theorem}\label{DGthm}
Let \(i \in \ZZ_2\). We have inverse isomorphisms of directed graphs
\begin{align*}
\mathcal{J}: \textup{DG}(\Theta) \to  \textup{DG}(\bOm^{\textup{MV}}_{\geq 1}) 
\qquad
\textup{and}
\qquad
\mathcal{K}^i:   \textup{DG}(\bOm^{\textup{MV}}_{\geq 1}) \to  \textup{DG}(\Theta).
\end{align*}
\end{theorem}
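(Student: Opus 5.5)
The plan is to first show that \(\mathcal{J}\) is a bijection \(\Theta \to \bOm^{\textup{MV}}_{\geq 1}\) intertwining \(\mathcal{A}^i\) with \(\bar{\mathcal{G}}^i\), which makes it a graph isomorphism. After that I would identify \(\mathcal{K}^i\) as its inverse using Lemma~\ref{Uundo}. Throughout I use that \(\textup{DG}(\Theta)\) is the complete binary tree rooted at the empty word, and that every edge \(\beps \to \mathcal{A}^i\beps\) is recovered from its target by \(\mathcal{R}\) together with the last letter.

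\emph{Step 1: \(\mathcal{J}\) lands in MV bicompositions.} I argue by induction on the length \(n\). The empty word maps to the zero bicomposition, which is trivially MV. For the inductive step let \(\bmu = \mathcal{J}\beps \in \bOm^{\textup{MV}}_{\geq 1}\); by Lemma~\ref{commaps} we have \(\mathcal{J}\mathcal{A}^i\beps = \bar{\mathcal{G}}^i\bmu\). Now \(\bar{\mathcal{G}}^i\bmu\) is \(\mathcal{G}\) applied to the bicomposition \(\bmu'\) obtained by setting \(\mu^i_0 = 1\). Since the numbers \(c^j_m\) for \(m\ge1\) only involve entries indexed \(k \geq 1\), the bicomposition \(\bmu'\) is still MV, and it lies in \(\bOm^{\textup{MV}}_{\geq 0,i}\). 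Lemma~\ref{Gisom} then gives \(\bar{\mathcal{G}}^i\bmu \in \bOm^{\textup{MV}}_{\geq 1,+}\). Combined with Lemma~\ref{commaps}, this shows \(\mathcal{J}\) is a morphism of directed graphs.

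\emph{Step 2: bijectivity.} For injectivity I again induct on length. Suppose \(\mathcal{J}\beps = \mathcal{J}\beps'\) with \(\beps, \beps'\) nonempty. Then \(\mathcal{J}\mathcal{R}\beps = \bar{\mathcal{H}}\mathcal{J}\beps = \mathcal{J}\mathcal{R}\beps'\), so \(\mathcal{R}\beps = \mathcal{R}\beps'\) by induction. The last letters also agree, by (\ref{diff01}) applied to \(\bar{\mathcal{G}}^{i}\) of the common parent.

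For surjectivity take a nontrivial \(\bnu \in \bOm^{\textup{MV}}_{\geq 1}(n)\). By Lemma~\ref{Hisom}, \(\mathcal{H}\bnu \in \bOm^{\textup{MV}}_{\geq 0,j}\) for some \(j\), and so \(\bar{\mathcal{H}}\bnu\) is an MV element of \(\bOm_{\geq 1}(n-1)\). By induction \(\bar{\mathcal{H}}\bnu = \mathcal{J}\beps'\) for some word \(\beps'\). Then
\[
\mathcal{J}\mathcal{A}^j\beps' = \bar{\mathcal{G}}^j\bar{\mathcal{H}}\bnu = \mathcal{G}\mathcal{H}\bnu = \bnu,
\]
where the middle equality holds because reinserting \(1\) at position \(0\) of the \(j\)-component recovers \(\mathcal{H}\bnu\), and the last is Corollary~\ref{GHisomMV}. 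Since each vertex of either graph has exactly two out-edges, labelled by \(i\), and \(\mathcal{J}\) sends \(i\)-edges to \(i\)-edges, \(\mathcal{J}\) is a graph isomorphism.

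\emph{Step 3: \(\mathcal{K}^i = \mathcal{J}^{-1}\).} It suffices to show \(\mathcal{K}^i\mathcal{J}\beps = \beps\). Put \(\bmu = \mathcal{J}\beps\), so that \(\mu^i_k = \#\PR_i(\mathcal{T}_i^{k-1}\beps)\). Lemma~\ref{Uundo} gives
\[
\mathcal{T}_i^{k-1}\beps = \mathcal{U}_{i,\mu^i_k}\mathcal{T}_i^k\beps \qquad \text{for every } k \geq 1.
\]
Let \(m\) be maximal with \(\mu^i_m > 0\); then \(\mu^i_k = 0\) for all \(k > m\). Telescoping Lemma~\ref{countis} gives \(\#_i\mathcal{T}_i^m\beps = \#_i\beps - \sum_{k\le m}\mu^i_k\). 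By Lemma~\ref{TsTerm}, and since \(|\mu^i| = \#_i\beps\), this equals \(0\), so \(\mathcal{T}_i^m\beps = \hat i^n\). Chaining the displayed identities from \(k=m\) down to \(k=1\) reproduces exactly (\ref{Kidef}), giving \(\mathcal{K}^i\bmu = \beps\); the case \(\mu^i = \varnothing\) is immediate. Hence \(\mathcal{K}^i\) is the inverse isomorphism.

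I expect the main obstacle to be the surjectivity step. It hinges on matching \(\bar{\mathcal{G}}^j\bar{\mathcal{H}}\) with \(\mathcal{G}\mathcal{H}\) on MV bicompositions, which requires knowing \(\mathcal{H}\bnu\) has a single \(1\) at position \(0\) in the \(j\)-component. That fact is exactly the content of Lemma~\ref{Hisom} (via Lemma~\ref{mixedpar}), so I would lean on it directly.
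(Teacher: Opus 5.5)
Your proposal is correct and follows the paper's proof essentially step for step. Your Steps 1 and 2 are the paper's Claims II, III, I and IV: closure of the vertex set under \(\bar{\mathcal{G}}^i\) via Lemma~\ref{Gisom}, injectivity via \(\bar{\mathcal{H}}\mathcal{J} = \mathcal{J}\mathcal{R}\) together with (\ref{diff01}), and surjectivity via Lemma~\ref{Hisom} and \(\bar{\mathcal{G}}^j\bar{\mathcal{H}} = \mathcal{G}\mathcal{H}\). Your Step 3 is Claim V's telescoping of Lemma~\ref{Uundo}; there your indexing (stopping at \(\mathcal{T}_i^m\beps = \hat i^n\), justified by Lemma~\ref{countis}) is slightly cleaner than the paper's.
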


\begin{proof}
Let \(n \in \ZZ_{\geq 1}\), and \(\bmu \in \bOm^{\textup{MV}}_{\geq 1}(n)\). We proceed via a number of claims.\\

\noindent{\em Claim I.
We have \(\mathcal{J}(\Theta(n)) \supseteq \bOm^{\textup{MV}}_{\geq 1}(n)\).} We go by induction on \(n\). The base case \(n=0\) is immediate, since 
\begin{align}
\label{bc1}\bOm^{\textup{MV}}_{\geq 1}(0) =  \bOm_{\geq 1}(0) = \{(\varnothing, \varnothing)\} = \{\mathcal{J}\varnothing\}.
\end{align}
Now assume the claim holds for \(n \geq0\). Let \(\bmu \in \bOm^{\textup{MV}}_{\geq 1}(n+1)\). By Lemma~\ref{GHisom} we have \(\mathcal{H}\bmu \in \bOm^{\textup{MV}}_{\geq 0, j}\) for some \(j \in \ZZ_2\), and so \(\textup{one}_0^j \circ \textup{zero}_0 \circ \mathcal{H} \bmu = \mathcal{H}\bmu\). Thus
\begin{align}\label{calc3434}
\bar{\mathcal{G}}^j\bar{\mathcal{H}} \bmu = \mathcal{G} \circ \textup{one}_0^j \circ \textup{zero}_0 \circ \mathcal{H} \bmu = \mathcal{G} \mathcal{H} \bmu = \bmu,
\end{align}
by Lemma~\ref{GHisom}.
Since \(\bar{\mathcal{H}} \bmu \in  \bOm^{\textup{MV}}_{\geq 1}(n)\), by induction assumption there exists \(\beps \in \Theta(n)\) such that \(\mathcal{J} \beps = \bar{\mathcal{H}}\bmu\). But then \(\mathcal{A}^j \beps \in \Theta(n+1)\), so we have by Lemma~\ref{commaps} and (\ref{calc3434}) that 
\begin{align*}
\mathcal{J}\mathcal{A}^j \beps = \bar{ \mathcal{G}}^j \mathcal{J} \beps =  \bar{ \mathcal{G}}^j \bar{\mathcal{H}} \bmu = \bmu,
\end{align*}
and so \(\mathcal{J}(\Theta(n+1)) \supseteq \bOm^{\textup{MV}}_{\geq 1}(n+1)\), proving the claim.\\

\noindent{\em Claim II. We have  \(\mathcal{J}(\Theta(n)) \subseteq \bOm^{\textup{MV}}_{\geq 1}(n)\).} We again go by induction on \(n\), with the base case \(n=0\) following from (\ref{bc1}). Assume the claim holds for \(n\geq 0\). Let \(\beps \in \Theta(n+1)\). Then \(\mathcal{R} \beps \in \Theta(n)\), so by induction assumption \(\mathcal{J}\mathcal{R} \beps \in \bOm^{\textup{MV}}_{\geq 1}(n)\). Then \(\textup{one}_0^{\eps_{n+1}} \circ \mathcal{J}\mathcal{R} \beps \in \bOm^{\textup{MV}}_{\geq 0, \eps_{n+1}}(n+1)\), so by Lemma~\ref{Gisom} we have that \(\mathcal{G} \circ \textup{one}_0^{\eps_{n+1}} \circ \mathcal{J}\mathcal{R} \beps \in \bOm^{\textup{MV}}_{\geq 1}(n+1)\). 
But then Lemma~\ref{commaps} implies that 
\begin{align*}
\mathcal{J} \beps = \mathcal{J} \mathcal{A}^{\eps_{n+1}} \mathcal{R} \beps = \bar{\mathcal{G}}^{\eps_{n+1}} \mathcal{J} \mathcal{R} \beps = \mathcal{G} \circ \textup{one}_0^{\eps_{n+1}} \circ \mathcal{J}\mathcal{R} \beps \in \bOm^{\textup{MV}}_{\geq 1}(n+1), 
\end{align*}
and so \(\mathcal{J}(\Theta(n+1)) \subseteq \bOm^{\textup{MV}}_{\geq 1}(n+1)\), proving the claim.\\

\noindent{\em Claim III. The map \(\mathcal{J}: \Theta(n) \to \bOm_{\geq 1}^{\textup{MV}}(n)\) is injective.}
We again go by induction on \(n\), with the base case \(n=0\) following from (\ref{bc1}). Assume the claim holds for \(n \geq 0\), and let \(\beps, \brho \in \Theta(n+1)\), and assume that \(\mathcal{J} \beps = \mathcal{J} \brho\). Then by Lemma~\ref{commaps} we have
\begin{align*}
\mathcal{J} \mathcal{R} \beps = \bar{\mathcal{H}} \mathcal{J} \beps = \bar{\mathcal{H}} \mathcal{J} \brho = \mathcal{J} \mathcal{R} \brho.
\end{align*}
Since \(\mathcal{R} \beps, \mathcal{R} \brho \in \Theta(n)\), the induction assumption implies that \(\mathcal{R} \beps = \mathcal{R} \brho\). Therefore, writing \(\bgamma = \mathcal{R} \beps = \mathcal{R} \brho\), we have \(\mathcal{A}^i \bgamma = \beps\) and \(\mathcal{A}^j \bgamma = \brho\) for some \(i,j \in \ZZ_2\). Then we have
\begin{align}\label{calc35454}
\bar{\mathcal{G}}^i \mathcal{J} \bgamma = \mathcal{J} \mathcal{A}^i \bgamma = \mathcal{J}\beps = \mathcal{J} \brho = \mathcal{J} \mathcal{A}^j \bgamma = \bar{\mathcal{G}}^j \mathcal{J} \bgamma.
\end{align}
But then one may see as in (\ref{diff01}) that  \(\bar{\mathcal{G}}^i \mathcal{J} \bgamma = \bar{\mathcal{G}}^j \mathcal{J} \bgamma\) only if \(i=j\), and so \(\beps = \brho\), proving the claim.\\

\noindent{\em Claim IV. The map \(\mathcal{J}: \textup{DG}(\Theta) \to  \textup{DG}(\bOm^{\textup{MV}}_{\geq 1}) \) is an isomorphism of directed graphs.} We have by Claims I--III that \(\mathcal{J}\) is a bijection of vertices of the graphs. Thus it remains to show that \(\beps \to \brho\) is an arrow in \(\textup{DG}(\Theta) \) if and only if \(\mathcal{J} \beps \to \mathcal{J} \brho\) is an arrow in \( \textup{DG}(\bOm^{\textup{MV}}_{\geq 1}) \).
 First assume  \(\beps \to \brho\) is an arrow in \(\textup{DG}(\Theta) \). Then \(\brho = \mathcal{A}^i \beps\) for some \(i \in \ZZ_2\). Then by Lemma~\ref{commaps} we have
\begin{align*}
\mathcal{J} \brho = \mathcal{J} \mathcal{A}^i \beps = \bar{\mathcal{G}}^i \mathcal{J} \beps,
\end{align*}
so we have an arrow \(\mathcal{J} \beps \to \mathcal{J} \brho\) in \( \textup{DG}(\bOm^{\textup{MV}}_{\geq 1}) \). On the other hand, let \(\mathcal{J} \beps \to \mathcal{J}\brho\) be an arrow in \( \textup{DG}(\bOm^{\textup{MV}}_{\geq 1}) \). Then \(\mathcal{J}\brho= \bar{\mathcal{G}}^j \mathcal{J} \beps\) for some \(j \in \ZZ_2\), so by Lemma~\ref{commaps} we have
\begin{align*}
\mathcal{J}\mathcal{A}^j \beps = \bar{\mathcal{G}}^j \mathcal{J} \beps = \mathcal{J} \brho.
\end{align*}
By injectivity of \(\mathcal{J}\), we have that \(\mathcal{A}^j \beps = \brho\), and so \(\beps \to \brho\) is an arrow in \(\textup{DG}(\Theta)\), proving the claim.\\

\noindent{\em Claim V.
We have \(\mathcal{K}^i = \mathcal{J}^{-1}\).
} 
Let \(\beps \in \Theta(n)\). If \(n=0\), then \(\beps = \varnothing\) and we have \(\mathcal{K}^i \mathcal{J} \beps = \mathcal{K}^i(\varnothing, \varnothing) = \varnothing = \beps\). Assume \(n >0\). 
 Then set \(\bmu = \mathcal{J} \beps  \in \bOm^{\textup{MV}}_{\geq 1}(n)\). We have \(
\mu^i_k = \#\textup{P}^{\textup{R}}_i(\mathcal{T}_i^{k-1} \beps)\) for all \(i \in \ZZ_2, k \in \ZZ_{\geq 1}\). Let \(m\) be maximal such that \(\mu^i_m > 0\). Then \(\mathcal{T}_i^{m+1} \beps = \hat i^n\). Then by (\ref{Kidef}) we have
\begin{align*}
\mathcal{K}^i \bmu 
= \mathcal{U}_{i,\mu_1} \cdots \mathcal{U}_{i,\mu_m}(\hat i^n)
=\mathcal{U}_{i, \# \PR_i(\mathcal{T}_i^0\beps)} \cdots \mathcal{U}_{i,\# \PR_i(\mathcal{T}_i^m\beps)}(\mathcal{T}_i^{m+1}\beps) = \beps
\end{align*}
by repeated applications of Lemma~\ref{Uundo}. Therefore \(\mathcal{K}^i\) is a left inverse for \(\mathcal{J}\), and so \(\mathcal{K}^i = \mathcal{J}^{-1}\) since \(\mathcal{J}\) is an isomorphism.
\end{proof}

\begin{lemma}\label{mutallest}
Let \(i \in \ZZ_2\), \(n \in \Z_{>0}\), \(\bmu \in \bOm_{\geq 1}^{\textup{MV}}(n)\), and set \(\beps = \mathcal{K}^i\bmu \in \bTheta(n)\). Then  there exists \(M \in \Z_{>0}\) such that \(\mu_M^{\eps_1} > 0\) and \(\mu_m^{\hat \eps_1} = 0\) for all \(m\geq M\).
\end{lemma}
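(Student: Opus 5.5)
The plan is to argue by induction on \(n\), using the compatibility \(\mathcal{J}\mathcal{R} = \bar{\mathcal{H}}\mathcal{J}\) from Lemma~\ref{commaps}, together with the fact that deleting the last letter does not change the first letter.

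\emph{Reduction and uniqueness.} Call \(i' \in \ZZ_2\) a \emph{witness color} for a nontrivial \(\bmu \in \bOm^{\textup{MV}}_{\geq 1}\) if there is \(M\) with \(\mu^{i'}_M > 0\) and \(\mu^{\hat i'}_m = 0\) for all \(m \geq M\). Lemma~\ref{onemustbebigger} gives at least one witness color. There is at most one: if \(i'\) is a witness via \(M\) and \(\hat i'\) via \(M'\), then applying each condition to the other gives both \(M' < M\) and \(M < M'\). By Theorem~\ref{DGthm}, \(\beps = \mathcal{K}^i\bmu = \mathcal{J}^{-1}\bmu\), so \(\bmu = \mathcal{J}\beps\). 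The claim is therefore that the unique witness color of \(\mathcal{J}\beps\) is \(\eps_1\).

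\emph{Base case.} For \(n=1\), Lemma~\ref{commaps} gives \(\mathcal{J}\beps = \bar{\mathcal{G}}^{\eps_1}(\varnothing,\varnothing)\). This bicomposition has a single box in position \(1\) of component \(\eps_1\), so its witness color is \(\eps_1\) with \(M=1\).

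\emph{Inductive step.} Let \(n \geq 2\) and set \(\bnu = \bar{\mathcal{H}}\bmu = \mathcal{J}\mathcal{R}\beps\). This lies in \(\bOm^{\textup{MV}}_{\geq 1}(n-1)\) by Theorem~\ref{DGthm}, and it is nontrivial because \(n-1 \geq 1\). Since \(\mathcal{R}\beps\) has first letter \(\eps_1\), the induction hypothesis says \(\bnu\) has witness color \(\eps_1\). Let \(i'\) be the witness color of \(\bmu\), with \(M\) the largest index such that \(\mu^{i'}_M > 0\); it then suffices to show that \(i'\) is also a witness color for \(\bnu\). Reading off (\ref{Hcases}), we get the following.
\begin{itemize}
\item For \(m \geq M\), we have \(\mu^{\hat i'}_m = 0\) and \(\eta^{\hat i'}_{m+1}(\bmu) = 0\), so \(\nu^{\hat i'}_m = 0\). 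For \(m > M\), similarly \(\nu^{i'}_m = 0\).
\item Since \(\eta^{i'}_M(\bmu) = 1\) is odd, \(\nu^{i'}_M = \mu^{i'}_M - 1\), and by the second case of (\ref{Hcases}), \(\nu^{i'}_{M-1} = \mu^{i'}_{M-1} + 1 \geq 1\).
\end{itemize}
If \(\mu^{i'}_M \geq 2\), then \(\nu^{i'}_M > 0\), so \(i'\) is a witness for \(\bnu\) via \(M\), and uniqueness gives \(i' = \eps_1\). If \(\mu^{i'}_M = 1\), then the largest nonzero index of \(\nu^{i'}\) is exactly \(M-1\), and \(\nu^{\hat i'}\) vanishes from \(M\) on. If \(\nu^{\hat i'}_{M-1} = 0\), then \(i'\) is a witness via \(M-1\). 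Otherwise both components of \(\bnu\) are nonzero at the common top index \(N = M-1\) and zero beyond. This is exactly the configuration ruled out in the proof of Lemma~\ref{onemustbebigger}, because \(\bnu\) is MV. Hence \(i'\) is a witness for \(\bnu\) in every case, so \(i' = \eps_1\).

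The main obstacle is the boundary case \(\mu^{i'}_M = 1\), where the top index drops. There one must be careful to invoke the MV property of \(\bnu\), which comes from Theorem~\ref{DGthm} (equivalently Lemma~\ref{Hisom}), and not merely the sign pattern of \(\mathcal{H}\). A further check is that \(M \geq 2\) in this case, so that \(M-1 \geq 1\) is a legitimate index. This holds because \(M = 1\) together with \(\mu^{i'}_M = 1\) would force \(n = |\bmu| = 1\).
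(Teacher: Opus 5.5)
Your proof is correct and follows the same induction as the paper: pass to \(\bnu=\bar{\mathcal{H}}\bmu=\mathcal{J}\mathcal{R}\beps\), observe that deleting the last letter keeps \(\eps_1\), and use Lemma~\ref{onemustbebigger} to rule out both components sharing the same top index. The only difference is direction: the paper pushes the witness from \(\bnu\) up to \(\bmu\), so it needs only the MV property of \(\bmu\) itself, whereas you push the witness of \(\bmu\) down to \(\bnu\) and finish with uniqueness of the witness color, which costs you the MV property of \(\bnu\) (via Theorem~\ref{DGthm}) and the small check \(M\geq 2\).
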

\begin{proof}
When \(n=1\), we have \(\beps = j\) if and only if \(\mu^j_1=1\), \(\mu^j_k = 0\) for \(k >1\), and \(\mu_{\hat j} = \varnothing\), so the claim holds. Now assume \(n >1\). Set \(\beps' = \mathcal{R} \beps\) and \(\bnu =\overline{\mathcal{H}}\bmu\), so that
\begin{align*}
\mathcal{K}^i \bnu = \mathcal{K}^i \overline{\mathcal{H}} \bmu = \mathcal{K}^i \overline{\mathcal{H}} \mathcal{J} \beps = \mathcal{K}^i \overline{\mathcal{H}} \mathcal{J} \mathcal{A}^{\eps_n} \beps' = \mathcal{K}^i\overline{\mathcal{H}} \overline{\mathcal{G}}^{\eps_n} \mathcal{J} \beps' = \mathcal{K}^i \mathcal{J} \beps' = \beps',
\end{align*}
where the second, fourth and sixth equalities are by Theorem~\ref{DGthm}, and the fifth equality is by Corollary~\ref{GHisomMV}. Then \(\eps_1 = \eps_1'\), and by induction assumption there exists \(M' \in \Z_{>0}\) such that \(\nu^{\eps_1}_{M'}>0\) and \(\nu^{\hat \eps_1}_m = 0\) for all \(m \geq M'\). 
By Lemma~\ref{altdesc}, since \(\nu^{\eps_1}_{M'} >0\), it must be that \(\mu^{\eps_1}_{M'} >0\) or \(\mu^{\eps_1}_{M'+1} >0\), and since \(\nu^{\hat \eps_1}_m = 0\) for \(m \geq M'\), it must be that \(\mu^{\hat \eps_1}_m = 0\) for \(m > M'\). But then by Lemma~\ref{onemustbebigger} this occurs if and only if there exists \(M \in \{M', M'+1\}\) such that \(\mu^{\eps_1}_{M} >0\) and \(\mu^{\hat \eps_1}_m = 0 \) for all \(m \geq M\).
\end{proof}

\section{Proofs of the main theorems}\label{proofsmainthmssec}
\subsection{Characterizing affine MV polytopes}
Recall now the definitions of affine MV polytopes in \S\ref{defaMV}.

\begin{lemma}\label{MVlongertop}
Let \((\pi | \phi) \in \aMV(\theta)\). Let \(w = p(\pi^1) + p(\phi^0)\). Then
\begin{enumerate}
\item \(\pi^1_1 \neq \phi^0_1\), and \(\pi^1_1 < \phi^0_1\) implies 
\(
\sum_{k =1}^\infty \left(n_k(\phi^0)\alpha_{0:k} - n_k(\pi^1) \alpha_{1:k} \right)= w\alpha_0;
\)
\item \(\pi^0_1 \neq \phi^1_1\), and \(\pi^0_1 < \phi^1_1\) implies 
\(
\sum_{k =1}^\infty \left(n_k(\phi^1)\alpha_{1:k} - n_k(\pi^0) \alpha_{0:k} \right)= w\alpha_1;
\)
\item \(\pi^1_1 < \phi^0_1\) and \(\pi^0_1 < \phi^1_1\) implies \(\pi^\delta = (w, \phi^\delta_1, \ldots, \phi^\delta_{p(\phi^\delta)})\).
\item \(\pi^1_1 > \phi^0_1\) and \(\pi^0_1 > \phi^1_1\) implies \(\phi^\delta = (w, \pi^\delta_1, \ldots, \pi^\delta_{p(\phi^\delta)})\).
\item \(\pi^1_1 > \phi^0_1\) and \(\pi^0_1 < \phi^1_1\), or \(\pi^1_1 < \phi^0_1\) and \(\pi^0_1 > \phi^1_1\) implies \(\pi^\delta = \phi^\delta\).
\end{enumerate}
\end{lemma}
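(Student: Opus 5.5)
Throughout I use \(\bmu\) for the bicomposition \(\mu^1_k = n_k(\pi^1)\), \(\mu^0_k = n_k(\phi^0)\) attached to the northwest/southeast corner at the origin. By Definition~\ref{defaMV}(1) we have \(c^{\uparrow}_m = c^0_m(\bmu)\) and \(c^{\rightarrow}_m = c^1_m(\bmu)\), so \(\bmu\) is MV in the sense of Definition~\ref{mvbicompdef}. Symmetrically, \(\bmu'\) with \(\mu'^0_k = n_k(\pi^0)\), \(\mu'^1_k = n_k(\phi^1)\) is MV, since its \(c\)'s are \(c^{\downarrow}, c^{\leftarrow}\). Parts (1) and (2) are the same statement for \(\bmu\) and \(\bmu'\), so I treat only (1).

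\textbf{Part (1).} If both \(\pi^1\) and \(\phi^0\) are empty, the statement is vacuous; nonemptiness is implicit in the inequality. Otherwise \(\bmu\) is nontrivial. Since \(\pi^1_1\) and \(\phi^0_1\) are the largest \(k\) with \(\mu^1_k>0\) and \(\mu^0_k>0\) respectively, Lemma~\ref{onemustbebigger} gives exactly one index \(i\) whose support extends strictly further. This shows \(\pi^1_1\neq\phi^0_1\), where an empty partition is treated as having first part \(0\).

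Now suppose \(\pi^1_1<\phi^0_1=:M\). As in \S\ref{downdiag}, for \(m\ge M\) the sums \(c^1_m\) have stabilized at a strictly negative value, because \(c^1\) picks up the term \(-M\mu^0_M\). The MV condition then forces \(c^0_m=0\) for \(m\ge M\), that is, \(\sum_k k(\mu^1_{k+1}-\mu^0_k)=0\) summed over all \(k\). Next I expand
\[
\sum_k \mu^0_k\alpha_{0:k}-\sum_k\mu^1_k\alpha_{1:k}
\]
in the \(\alpha_1,\alpha_0\) coordinates. The \(\alpha_1\)-coefficient is \(\sum_k (k-1)\mu^0_k - \sum_k k\mu^1_k\), which is exactly the stabilized \(c^0\) after reindexing, and hence equals \(0\). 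The \(\alpha_0\)-coefficient is \(\sum_k k\mu^0_k-\sum_k (k-1)\mu^1_k\). It equals the \(\alpha_1\)-coefficient plus \(\sum_k\mu^0_k+\sum_k\mu^1_k = p(\phi^0)+p(\pi^1)=w\). This gives (1). The only point requiring care is the bookkeeping of indices in this reindexing.

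\textbf{Parts (3)–(5).} These follow by comparing with Definition~\ref{defaMV}(2), using the \(\delta\)-components of \(\theta\). Computing \(\theta\) from the root partition \(\pi\) and from \(\phi\), and equating, gives
\[
\sum_k n_k(\phi^0)\alpha_{0:k}-\sum_k n_k(\pi^1)\alpha_{1:k} +\sum_k n_k(\phi^1)\alpha_{1:k}-\sum_k n_k(\pi^0)\alpha_{0:k} = (|\pi^\delta|-|\phi^\delta|)\delta .
\]
Let \(w'=p(\pi^0)+p(\phi^1)\); Definition~\ref{defaMV}(2) says \(w=w'\).

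\begin{itemize}
\item[(3)] Both strict inequalities hold, so (1) and (2) give a left side of \(w\alpha_0+w\alpha_1=w\delta\). Hence \(|\pi^\delta|>|\phi^\delta|\), and Definition~\ref{defaMV}(2) gives \(\phi^\delta=(\pi^\delta_2,\dots)\). I then need \(\pi^\delta_1=w\), which follows from \(|\pi^\delta|-|\phi^\delta|=\pi^\delta_1\).
\item[(4)] Both reverse inequalities hold. Applying (1) and (2) with the roles of \(\pi\) and \(\phi\) exchanged (by the symmetry of Definition~\ref{defaMV}) makes the left side \(-w\delta\). The third case of Definition~\ref{defaMV}(2) then yields the claim.
\item[(5)] The inequalities are mixed. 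One of (1), (2) contributes \(w\alpha_0\) and the symmetric version of the other contributes \(-w\alpha_1\), so the left side is \(w(\alpha_0-\alpha_1)\). This must be a multiple of \(\delta\), so it is \(0\); then \(|\pi^\delta|=|\phi^\delta|\) and \(\pi^\delta=\phi^\delta\).
\end{itemize}

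The main obstacle I anticipate is the degenerate cases where some of \(\pi^1,\phi^0,\pi^0,\phi^1\) are empty. These require the convention that an empty partition has first part \(0\), together with care in invoking Lemma~\ref{onemustbebigger} when \(\bmu\) is trivial. In that situation \(w=0\) on the corresponding side, and the identities should hold trivially.
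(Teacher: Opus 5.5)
Your overall route is the paper's. At $M=\phi^0_1$ the MV inequalities force one cumulative sum to go strictly negative and the other to vanish, and (3)--(5) come from equating the two expressions for $\theta$ and reading off Definition~\ref{defaMV}(2). Getting $\pi^1_1\neq\phi^0_1$ from Lemma~\ref{onemustbebigger}, instead of from the same computation as the paper does, is fine.

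The delicate point, though, is sign bookkeeping rather than the empty cases, and part (5) gets it wrong. Reversing the inequality within a pair moves that pair's contribution to the \emph{other} simple root, with the opposite sign. Concretely, $\pi^1_1>\phi^0_1$ gives $\sum_k(n_k(\phi^0)\alpha_{0:k}-n_k(\pi^1)\alpha_{1:k})=-w\alpha_1$, which is part (2) for $(\phi|\pi)$. Likewise $\pi^0_1>\phi^1_1$ gives $\sum_k(n_k(\phi^1)\alpha_{1:k}-n_k(\pi^0)\alpha_{0:k})=-w\alpha_0$, which is part (1) for $(\phi|\pi)$.

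Hence if $\pi^1_1<\phi^0_1$ and $\pi^0_1>\phi^1_1$, the two contributions are $w\alpha_0$ and $-w\alpha_0$. In the other mixed case they are $-w\alpha_1$ and $w\alpha_1$. Either way the left side is $0$ outright, which is what the paper uses.

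Your value $w(\alpha_0-\alpha_1)$ puts the two contributions on different roots. Taken at face value, requiring it to be a multiple of $\delta$ forces $w=0$, i.e.\ the mixed case could never occur with $w>0$. That is false: in Example~\ref{polyex67} one has $\pi^1_1=2<3=\phi^0_1$, $\pi^0_1=4>3=\phi^1_1$ and $w=13$, and the two pairs contribute $13\alpha_0$ and $-13\alpha_0$. So your (5) reaches the right conclusion through a computation that, as written, proves something false; it must be redone with the signs above.

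There is also a slip in (1), right at the index bookkeeping you flagged. By (\ref{cdef}), $c^{\uparrow}_m=c^1_m(\bmu)$ and $c^{\rightarrow}_m=c^0_m(\bmu)$, the reverse of what you wrote. This is harmless for MV-ness, but it makes your displayed identity $\sum_k k(\mu^1_{k+1}-\mu^0_k)=0$ false. That sum contains the term $-M\mu^0_M$ and is the strictly negative one, namely $c^{\rightarrow}$. What vanishes for $m\ge M$ is $c^{\uparrow}_m=\sum_{k\le m}k(\mu^0_{k+1}-\mu^1_k)$, and that is what the $\alpha_1$-coefficient $\sum_k(k-1)\mu^0_k-\sum_k k\mu^1_k$ reindexes to. With this corrected, (1)--(4) are the paper's argument.

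Finally, the case $\pi^1=\phi^0=\varnothing$ is not vacuous: under your convention the first assertion of (1) fails for $(\varnothing|\varnothing)$. The paper's proof likewise assumes $\phi^0\neq\varnothing$ (it uses $n_M(\phi^0)>0$). So this is an edge case the statement tacitly excludes, not a flaw peculiar to your argument.
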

\begin{proof}
Assume \(\pi_1^1  \leq \phi^0_1 = M\). Then  \(n_{M+1}(\pi^1) = n_{M+1}(\phi^0) = 0\) and \(n_M(\phi^0) >0\). We also have that  \(c_{M-1}^\rightarrow(\pi | \phi) , c_{M-1}^\uparrow(\pi | \phi) \leq 0\) since \((\pi | \phi) \in \aMV\). Therefore
\begin{align*}
c_M^\rightarrow(\pi | \phi) = -M n_M(\phi^0) + c_{M-1}^\rightarrow(\pi | \phi) < 0 
\qquad
\textup{and}
\qquad
c_M^\uparrow(\pi | \phi)  = -Mn_M(\pi^1) + c_{M-1}^\uparrow(\pi | \phi),
\end{align*}
which implies that \(c_M^\uparrow(\pi | \phi) = 0\) since  \(0 \in \{ c_M^\rightarrow(\pi | \phi) , c_M^\uparrow(\pi | \phi) \}\). But then the above implies that \(c_{M-1}^\uparrow(\pi | \phi) = 0\) and \(n_M(\pi^1) = 0\). Therefore \(\pi_1^1 < \phi^0_1\), and so we have:
\begin{align*}
\sum_{k =1}^\infty \left(n_k(\phi^0)\alpha_{0:k} - n_k(\pi^1) \alpha_{1:k} \right) &= \sum_{k =1}^M \left(n_k(\phi^0)\alpha_{0:k} - n_k(\pi^1) \alpha_{1:k} \right)\\
&= \sum_{k=1}^M \left( n_k(\phi^0)(k \alpha_0 + (k-1) \alpha_1) - n_k(\pi^1)(k \alpha_1 + (k-1) \alpha_0) \right) \\
&= c_{M-1}^\uparrow(\pi| \phi) (\alpha_1 + \alpha_0) + \sum_{k=1}^M(n_k(\pi^1) + n_k(\phi^0)) \alpha_0 = w \alpha_0.
\end{align*}
This completes the proof of (1). The proof of (2) is similar. For (3) note that
\begin{align*}
\sum_{k=1}^\infty (n_k(\pi^1)\alpha_{1:k} + n_k(\pi^0)\alpha_{0:k}) + |\pi^\delta| \delta = \theta = \sum_{k=1}^\infty (n_k(\phi^1)\alpha_{1:k} + n_k(\phi^0)\alpha_{0:k}) + |\phi^\delta| \delta
\end{align*}
since \(\pi, \phi\) are root partitions of \(\theta\). But on the other hand by (1) and (2) we have that 
\begin{align*}
\sum_{k=1}^\infty (n_k(\phi^1)\alpha_{1:k} + n_k(\phi^0)\alpha_{0:k}) - \sum_{k=1}^\infty (n_k(\pi^1)\alpha_{1:k} + n_k(\pi^0)\alpha_{0:k}) = w \delta
\end{align*}
which implies that \(|\pi^\delta| = |\phi^\delta| + w\). Then the result follows by Definition~\ref{defaMV}(2). Part (4) follows by symmetry with (3). 

For (5), we note that by (1) and (2), we have in either case that 
\begin{align*}
\sum_{k=1}^\infty (n_k(\phi^1)\alpha_{1:k} + n_k(\phi^0)\alpha_{0:k}) - \sum_{k=1}^\infty (n_k(\pi^1)\alpha_{1:k} + n_k(\pi^0)\alpha_{0:k}) = 0,
\end{align*}
which implies that \(|\pi^\delta| = |\phi^\delta|\). Then the result follows by Definition~\ref{defaMV}(2).
\end{proof}

Now
recall the bijection \(\textup{mc}: \Omega_{\geq 1} \to \Par\) from \S\ref{RKsecappend}, given by 
setting \(\textup{mc}(\mu) = \lambda\), where \(n_k(\lambda) = \mu_k\) for all \(k \in \Z_{>0}\). We abuse notation to extend \(\textup{mc}\) to a bijection:
\begin{align*}
\textup{mc}: \bOm_{\geq 1} \to \Par^2, \qquad (\mu^1, \mu^0) \mapsto (\lambda^1:= \textup{mc}(\mu^1), \lambda^0:= \textup{mc}(\mu^0)),
\end{align*}
and write \(\Par^{\textup{MV}}:= \textup{mc}(\bOm_{\geq 1}^{\textup{MV}})\) for the image of the MV-bicompositions under this bijection.
In view of \S\ref{RKsecappend}, the actions of \(\overline{\mathcal{G}}^i\), \(\overline{\mathcal{H}}\) on \(\bOm_{\geq 1}^\textup{MV}\) induce actions of operators \(\overline{\mathcal{G}}^i_{\Par^2}\), \(\overline{\mathcal{H}}_{\Par^2}\) on \(\Par^\textup{MV}\), where
\begin{align*}
\overline{\mathcal{G}}^1_{\Par^2}: (\lambda^1, \lambda^0) \mapsto (\overline{\mathcal{G}}_\Par\lambda^1, \mathcal{G}_\Par \lambda^0);
\qquad
\overline{\mathcal{G}}^0_{\Par^2}: (\lambda^1, \lambda^0) \mapsto (\mathcal{G}_\Par\lambda^1, \overline{\mathcal{G}}_\Par \lambda^0);
\qquad
\overline{\mathcal{H}}_{\Par^2}: (\lambda^1, \lambda^0) \mapsto (\overline{\mathcal{H}}_\Par \lambda^1, \overline{\mathcal{H}}_\Par \lambda^0).
\end{align*}

\begin{lemma}\label{mvbiparsdesc}
Let \((\lambda^1, \lambda^0) \in \Par^2\), and set \({\tt W} = p(\lambda^1) + p(\lambda^0)\). Then \((\lambda^1, \lambda^0) \in \Par^\textup{MV}\) if and only if \(\lambda^0 = \ORTH^{\tt W}(\lambda^1)\). Moreover, if \((\lambda^1, \lambda^0) \in \Par^\textup{MV}\) then we have
\begin{align*}
(\lambda^1, \lambda^0) = \textup{mc} ( \mathcal{J}\brho^\textup{rev}),
\end{align*}
where \(\brho \in \bTheta({\tt W})\) and \(\rho_k = j\) exactly when \(k\) appears in the first column of \(\SPAR(\lambda^j)\). 
\end{lemma}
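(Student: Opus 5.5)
We will run everything through the graph isomorphism of Theorem~\ref{DGthm}, which identifies MV bicompositions with binary words, together with the tableau description of \(\overline{\mathcal{G}}_\Par,\mathcal{G}_\Par,\overline{\mathcal{H}}_\Par\) from \S\ref{RKsecappend} (rules (GP1), (GP2), (HP), (HG)).

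\textbf{Step 1 (labels from building up the word).} Fix \(\beps\in\Theta(n)\) and set \((\lambda^1,\lambda^0)=\textup{mc}(\mathcal{J}\beps)\). By Lemma~\ref{commaps},
\[
\mathcal{J}\beps=\overline{\mathcal{G}}^{\eps_n}\cdots\overline{\mathcal{G}}^{\eps_1}(\varnothing,\varnothing).
\]
So \(\lambda^j\) is obtained from \(\varnothing\) by \(n\) successive operations. At the \(t\)-th step we apply \(\overline{\mathcal{G}}_\Par\) if \(\eps_t=j\), and \(\mathcal{G}_\Par\) otherwise.

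Label the boxes added at step \(t\) by \(n+1-t\). Reading the word backwards, this is exactly the procedure \(\CALC_\textup{alt}\) of Lemma~\ref{altdesccalc}, applied with the parity pattern \(c_s\) odd iff \((\beps^{\textup{rev}})_s=j\). By (HG), the operator \(\overline{\mathcal{H}}_\Par\) strips these layers off in reverse order. Hence, exactly as in the proof of Theorem~\ref{appendreefundo}, the labelled tableau is \(\SPAR(\lambda^j)\).

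Two consequences follow:
\begin{itemize}
\item the first column of \(\SPAR(\lambda^j)\) consists of those \(s\in[1,n]\) with \((\beps^{\textup{rev}})_s=j\);
\item every \(s\in[1,n]\) lies in the first column of exactly one of \(\SPAR(\lambda^1)\), \(\SPAR(\lambda^0)\).
\end{itemize}

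One subtlety needs checking: a label \(s\) that is never placed, which happens when \(\mathcal{G}_\Par\) adds nothing. Even so, the spar tableau must contain the label \(s\), or else the \({\tt w}\) bookkeeping fails. I expect (GP1)/(GP2) to force the first-column membership bookkeeping to hold for every \(s\le n\), since \(\overline{\mathcal{G}}\) always adds a box in the first column. This point needs care.

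\textbf{Step 2 (the identity \(n={\tt W}\)).} We need \(n=p(\lambda^1)+p(\lambda^0)\). Since \(|\mu^i|=\#_i\beps\) (Lemma~\ref{countis}) and \(p(\textup{mc}(\mu))=|\mu|\), we get \(p(\lambda^1)+p(\lambda^0)=n\). Combined with Step 1, \(\rho:=\beps^{\textup{rev}}\) satisfies \(\rho_k=j\) exactly when \(k\) is in the first column of \(\SPAR(\lambda^j)\). We also have \({\tt w}(\lambda^j)\le n\), because the maximal spar label is at most the number of steps. By the characterization in \S\ref{opsec}, this gives \(\lambda^0=\ORTH^{\tt W}(\lambda^1)\). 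The moreover clause then follows from \(\beps=\brho^{\textup{rev}}\).

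\textbf{Step 3 (the two directions).} For the forward direction, take \((\lambda^1,\lambda^0)\in\Par^\textup{MV}\). By Theorem~\ref{DGthm} it equals \(\textup{mc}(\mathcal{J}\beps)\) for a unique \(\beps\), and Steps 1–2 apply.

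For the converse, suppose \(\lambda^0=\ORTH^{\tt W}(\lambda^1)\). Define \(\brho\in\Theta({\tt W})\) by the first-column rule; this is well defined because the first columns partition \([1,{\tt W}]\). Then \(\textup{mc}(\mathcal{J}\brho^{\textup{rev}})\) is MV, and by Step 1 its components have the same first-column data as \((\lambda^1,\lambda^0)\). Since \(\ORD\) is determined by first-column data and is a bijection (Theorem~\ref{tallyprop}), the two pairs coincide, and so \((\lambda^1,\lambda^0)\in\Par^\textup{MV}\).

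\textbf{Main obstacle.} The hard part is Step 1's identification of the layer labels with \(\SPAR\). This requires matching the order in which \(\mathcal{G}/\overline{\mathcal{G}}\) are applied to \(\CALC_\textup{alt}\)'s descending loop \(t=c(\mu),\dots,1\), and handling steps that add no boxes. I would first verify this on small words, then argue inductively on \(n\) using \(\mathcal{R}\) and \(\overline{\mathcal{H}}\) with (HP).
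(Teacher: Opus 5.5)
Your proposal is correct and follows the paper's own proof essentially step for step: express \(\mathcal{J}\beps\) as \(\overline{\mathcal{G}}^{\eps_n}\cdots\overline{\mathcal{G}}^{\eps_1}(\varnothing,\varnothing)\), label the added layers in reverse order, identify the resulting tableaux with \(\SPAR(\lambda^j)\) via \(\CALC_\textup{alt}\) and Theorem~\ref{appendreefundo}, and read off complementary first columns, with the converse coming from the same construction together with injectivity of \(\ORD\). The subtlety you flag is harmless: the only steps that add no boxes are applications of \(\mathcal{G}_\Par\) to \(\varnothing\), whose labels exceed \({\tt w}(\lambda^j)\) and need not appear in \(\SPAR(\lambda^j)\) at all; only first-column membership matters, and \(\overline{\mathcal{G}}_\Par\) always adds exactly one first-column box while \(\mathcal{G}_\Par\) never adds any.
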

\begin{proof}
\((\implies)\) 
Assume \((\lambda^1, \lambda^0) \in \Par^\textup{MV}\). Then \((\lambda^1, \lambda^0) = \textup{mc}(\bmu)\) for some \(\bmu \in \bOm^\textup{MV}_{\geq 1}({\tt W})\). 
Set \(\beps = \mathcal{K}^1\bmu\). Then by Theorem~\ref{DGthm}, we have that 
\begin{align*}
(\lambda^1, \lambda^0) &= \textup{mc}(\bmu) = \textup{mc}(\mathcal{J} \beps) = \textup{mc}(\mathcal{J}  \mathcal{A}^{\eps_{\tt W}} \cdots \mathcal{A}^{\eps_1} \varnothing) = \textup{mc}(\overline{\mathcal{G}}^{\eps_{\tt W}} \cdots \overline{\mathcal{G}}^{\eps_1}  \mathcal{J} \varnothing)\\
&= \textup{mc}(\overline{\mathcal{G}}^{\eps_{\tt W}} \cdots \overline{\mathcal{G}}^{\eps_1} (\varnothing, \varnothing)) 
= \overline{\mathcal{G}}_{\Par^2}^{\eps_{\tt W}} \cdots \overline{\mathcal{G}}_{\Par^2}^{\eps_1} (\varnothing, \varnothing)
=\overline{\mathcal{G}}_{\Par^2}^{\rho_1} \cdots \overline{\mathcal{G}}_{\Par^2}^{\rho_{\tt W}} (\varnothing, \varnothing),
\end{align*}
where \(\brho := \beps^\textup{rev}\). Then \((\lambda^1, \lambda^0) = \textup{mc} (\mathcal{J}\brho^\textup{rev})\) is constructed by iteratively adding boxes to the empty bipartition. If one labels the boxes that are added in the application of \(\overline{\mathcal{G}}_{\Par^2}^{\rho_k}\) with \(k\), it follows from \S\ref{RKsecappend}(GP1), (GP2) and Proposition~\ref{appendreefundo} that this yields exactly the spar tableau for each \(\lambda^j\), with \(k\) in the first column of \(\lambda^j\) exactly when \(\rho_k = j\). Then the entries in the first columns of \(\SPAR(\lambda^1), \SPAR(\lambda^0)\) are disjoint and their union is \([1,{\tt W}]\), so  \(\lambda^0 = \ORTH^{\tt W}(\lambda^1)\), as desired.

\((\impliedby)\) In the other direction, assume \(\lambda^0 = \ORTH^{\tt W}(\lambda^1)\). Then we may define \(\brho \in \bTheta({\tt W})\) by setting \(\rho_k = j\) exactly when \(k\) appears in the first column of \(\SPAR(\lambda^j)\). It follows then as in the remarks above that \((\lambda^1, \lambda^0) = \textup{mc} ( \mathcal{J}\brho^\textup{rev})\), which implies that \((\lambda^1, \lambda^0) \in \Par^\textup{MV}\) since \(\mathcal{J}\brho^\textup{rev} \in \bOm_{\geq 1}^\textup{MV}\) by Theorem~\ref{DGthm}.
\end{proof}

Now we are in position to provide a tableau-theoretic description of affine MV-polytopes, which gives a combinatorial description of the lozenge map.

\begin{theorem}\label{amvrecogthm}
Let \(\pi, \phi\) be root partitions. Write \({\tt W} = p(\pi^1) + p(\phi^0)\) and \({\tt W}' = p(\pi^0) + p(\phi^1)\). Then \((\pi | \phi) \in \aMV\) if and only if:
\begin{enumerate}
\item \({\tt W} = {\tt W}'\);
\item \(\ORTH^{\tt W}(\pi^1) = \phi^0\) and \(\ORTH^{\tt W}(\pi^0) = \phi^1\);
\item We have
\begin{align*}
\phi^\delta = 
 \begin{cases}
({\tt W}, \pi_1^\delta, \ldots, \pi_{p(\pi^\delta)}^\delta) &\textup{if } {\tt W}={\tt w}(\pi^{0}) = {\tt w}(\pi^{1}); \\
(\pi^\delta_2, \ldots, \pi^\delta_{p(\pi^\delta)}) & \textup{if } {\tt W} > {\tt w}(\pi^{0}), {\tt w}(\pi^{1});\\
\pi^\delta & \textup{otherwise}.
\end{cases}
\end{align*}
\end{enumerate}
\end{theorem}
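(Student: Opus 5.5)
The plan is to translate Definition~\ref{defaMV} piece by piece into the language of MV bicompositions and then apply Lemma~\ref{mvbiparsdesc}. Condition (1) of Definition~\ref{defaMV} splits into two independent halves. The pair $(c^\uparrow_m, c^\rightarrow_m)$ involves only $\pi^1$ and $\phi^0$. Setting $\bmu = (\mu^1,\mu^0)$ with $\mu^1_k = n_k(\pi^1)$ and $\mu^0_k = n_k(\phi^0)$, the definitions in (\ref{cdef}) give $c^\uparrow_m(\pi|\phi) = c^1_m(\bmu)$ and $c^\rightarrow_m(\pi|\phi) = c^0_m(\bmu)$. So the first half of Definition~\ref{defaMV}(1) holds exactly when $\bmu\in\bOm^{\textup{MV}}_{\geq 1}$, that is, when $(\pi^1,\phi^0)\in\Par^{\textup{MV}}$. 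Lemma~\ref{mvbiparsdesc} then says this is equivalent to $\phi^0 = \ORTH^{\tt W}(\pi^1)$. Symmetrically, using $\mu^1_k = n_k(\phi^1)$ and $\mu^0_k=n_k(\pi^0)$, the pair $(c^\leftarrow, c^\downarrow)$ is MV if and only if $\phi^1 = \ORTH^{{\tt W}'}(\pi^0)$. This shows that Definition~\ref{defaMV}(1) is equivalent to condition (2) of the theorem, with ${\tt W}$ read for the first equation and ${\tt W}'$ for the second.

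Next I would handle Definition~\ref{defaMV}(2). It contains the equality ${\tt W}={\tt W}'$, which is condition (1) of the theorem, and that same equality lets condition (2) be written with the single parameter ${\tt W}$. What remains is to show that Definition~\ref{defaMV}(2) corresponds to condition (3), given (1) and (2). The key tool is Lemma~\ref{MVlongertop}, which sorts the $\delta$-behaviour according to the comparisons $\pi^1_1$ versus $\phi^0_1$ and $\pi^0_1$ versus $\phi^1_1$. I therefore need to turn those comparisons into statements about ${\tt w}(\pi^1)$, ${\tt w}(\pi^0)$ and ${\tt W}$.

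The claim I would prove is that, for $(\lambda,\ORTH^{\tt W}(\lambda))\in\Par^{\textup{MV}}$, we have $\lambda_1 < \ORTH^{\tt W}(\lambda)_1$ exactly when ${\tt w}(\lambda) < {\tt W}$, that is, when ${\tt W}$ lies in the first column of $\SPAR(\ORTH^{\tt W}(\lambda))$. To prove it I would use the letter $\rho$ from Lemma~\ref{mvbiparsdesc} together with Lemma~\ref{mutallest}. Under $\mathcal{J}\brho^{\textup{rev}}$, the first letter of $\brho^{\textup{rev}}$ is $\rho_{\tt W}$, and Lemma~\ref{mutallest} says the component indexed by $\rho_{\tt W}$ is the one carrying the longest part. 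Since $\rho_{\tt W}=j$ precisely when ${\tt W}$ is in the first column of $\SPAR(\lambda^j)$, which by the description of ${\tt w}$ means ${\tt w}(\lambda^j)={\tt W}$, this gives the claim.

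With the claim, the three cases of condition (3) line up with parts (3), (4) and (5) of Lemma~\ref{MVlongertop}. If ${\tt w}(\pi^1)={\tt w}(\pi^0)={\tt W}$, then $\pi^1_1>\phi^0_1$ and $\pi^0_1>\phi^1_1$, so part (4) gives $\phi^\delta=({\tt W},\pi^\delta)$. If both ${\tt w}(\pi^i)<{\tt W}$, part (3) gives $\pi^\delta=({\tt W},\phi^\delta)$, which means $\phi^\delta$ is $\pi^\delta$ with its first part removed. The mixed case corresponds to part (5), giving $\phi^\delta=\pi^\delta$.

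For the converse, assume (1)–(3). Conditions (1) and (2) give Definition~\ref{defaMV}(1) and the equality ${\tt W}={\tt W}'$. The computation in Lemma~\ref{MVlongertop}(1),(2) only uses condition (1) of Definition~\ref{defaMV}, so it still gives the comparison $|\pi^\delta|-|\phi^\delta|\in\{0,\pm{\tt W}\}$ in the matching cases. Condition (3) then coincides with the formula required in Definition~\ref{defaMV}(2), provided the bound $\pi^\delta_1\le {\tt W}$ holds.

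I expect that bound to be the main obstacle. In the case $\phi^\delta=(\pi^\delta_2,\ldots)$, the root-partition content equation forces $\pi^\delta_1={\tt W}$, so the bound holds there. In the other cases it is not obviously implied by (3), and I would need an argument that the partition structure of $\pi^\delta$ together with content forces it. Failing that, I would check whether the intended reading of condition (3) is that $\phi^\delta$ is a genuine partition, which in the first case already gives ${\tt W}\ge\pi^\delta_1$. The mixed case may need the bound to be added as an implicit hypothesis.
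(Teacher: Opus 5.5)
Your `only if' direction is the paper's proof. You translate Definition~\ref{defaMV}(1) through $\textup{mc}$ into the statement that $(\pi^1,\phi^0)$ and $(\phi^1,\pi^0)$ are MV, and read off condition (2) from Lemma~\ref{mvbiparsdesc}. You then use the word $\brho$ of that lemma with Lemma~\ref{mutallest} to turn ${\tt w}(\pi^i)<{\tt W}$ into $\pi^i_1<\phi^{\hat i}_1$, and finish with Lemma~\ref{MVlongertop}(3)--(5). The only difference is that the paper gets ${\tt W}={\tt W}'$ from equality of contents, whereas you read it off Definition~\ref{defaMV}(2); either works.

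For the converse, the paper gives no direct argument; it says the `only if' direction suffices by Theorem~\ref{exunloz}. Your worry about $\pi^\delta_1\le{\tt W}$ is justified, and it also shows that this reduction fails. Conditions (1)--(3) do not determine $\phi$ from $\pi$: for every $w\ge\max\{{\tt w}(\pi^1),{\tt w}(\pi^0)\}$, the choice $\phi^0=\ORTH^w(\pi^1)$, $\phi^1=\ORTH^w(\pi^0)$ satisfies (1) and (2) with ${\tt W}=w$, since $p(\ORTH^w(\lambda))=w-p(\lambda)$.

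Concretely, let $\pi=\phi=\{(1),(5),\varnothing\}$.
\begin{itemize}
\item ${\tt W}={\tt W}'=1$, $\ORTH^1((1))=\varnothing=\phi^0$ and $\ORTH^1(\varnothing)=(1)=\phi^1$, so (1) and (2) hold.
\item Since ${\tt w}(\pi^1)=1={\tt W}>0={\tt w}(\pi^0)$, we are in the `otherwise' case of (3), which holds because $\phi^\delta=\pi^\delta$.
\item The contents trivially agree, yet $\pi^\delta_1=5>1=p(\pi^1)+p(\phi^0)$ violates Definition~\ref{defaMV}(2).
\end{itemize}
The actual partner is $\pi_\lozenge=\{(1^5),\varnothing,(2,1^3)\}$, which is the solution of (1)--(3) with $w=5$. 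So the `if' direction is false as stated, and no argument can close your mixed case.

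What is true is the converse with the added hypothesis $\pi^\delta_1\le{\tt W}$, for $\pi,\phi$ of equal content. Given (1)--(3), this is equivalent to ${\tt W}=\max\{{\tt w}(\pi^0),\pi^\delta_1,{\tt w}(\pi^1)\}$, which is the normalization built into Theorem~\ref{rpartranslate}. Under that hypothesis your direct route finishes immediately:
\begin{itemize}
\item (1) and (2) give Definition~\ref{defaMV}(1) and ${\tt W}={\tt W}'$.
\item Whichever of the three shapes (3) prescribes for $\phi^\delta$, its size relative to $|\pi^\delta|$ selects exactly the matching branch of Definition~\ref{defaMV}(2).
\end{itemize}
So the bound was the only real issue.

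The paper's later uses are unaffected. Theorem~\ref{rpartranslate} needs only the `only if' direction plus Theorem~\ref{exunloz}. In Lemma~\ref{Xsidesmatch} the bound is automatic, since $\pi^\delta$ is obtained by halving $\sfD^\delta_1$, which has at most ${\tt W}$ columns.
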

\begin{proof}
It will suffice to demonstrate the `only if' direction, thanks to Theorem~\ref{exunloz}. Assume \((\pi| \phi) \in \aMV(\theta)\) for some \(\theta \in \ZZ_{\geq 0}I\). Then since \(\pi, \phi \in \Pi(\theta)\) we have
\begin{align*}
\Lambda_1(\theta) = p(\pi^1) - p(\pi^{0}) = p(\phi^1) - p(\phi^{0}),
\end{align*}
which implies (1). Now, define \(\bmu, \bnu \in \bOm_{\geq 1}({\tt W})\) by setting 
\begin{align*}
\mu^1 = \textup{mc}^{-1}(\pi^1); \qquad \mu^0 = \textup{mc}^{-1}(\phi^0); \qquad \nu^1 = \textup{mc}^{-1}(\phi^1); \qquad \nu^0 = \textup{mc}^{-1}(\pi^0).
\end{align*}
By Definition~\ref{defaMV}(1) we have that
\begin{align*}
0 \in \{c^\uparrow_k(\pi | \phi), c^\rightarrow_k(\pi | \phi)\} \subseteq \ZZ_{\leq 0};
\qquad
\textup{and}
\qquad
0 \in \{c^\downarrow_k(\pi | \phi), c^\leftarrow_k(\pi | \phi)\} \subseteq \ZZ_{\leq 0},
\end{align*}
for all \(k \in \ZZ_{>0}\), which, translating through the bijection \(\textup{mc}\) and considering Definition~\ref{mvbicompdef} gives us that \(\bmu, \bnu \in \bOm^\textup{MV}_{\geq 1}\). Thus (2) follows from Lemma~\ref{mvbiparsdesc}. 

Finally, we confirm (3), in three cases.\\

\noindent{\em Case 1. Assume first that \({\tt w}(\pi^1), {\tt w}(\pi^0) < {\tt W}\).} Note that by Lemma~\ref{mvbiparsdesc} we have  \((\pi^1, \phi^0) = \textup{mc} (\mathcal{J}\brho^\textup{rev})\) for some \(\brho \in \bTheta({\tt W})\), where
\(\rho_{\tt W} =0\) since \({\tt w}(\pi^1) < {\tt W}\). Thus \(\rho^\textup{rev}_1 = 0\), and so it follows from Lemma~\ref{mutallest} that \(\pi^1_1 < \phi^0_1\). In similar fashion we see that \(\pi^0_1 < \phi^1_1\). But then by Lemma~\ref{MVlongertop} we have that 
\begin{align*}
\pi^\delta = ({\tt W}, \phi_1^\delta, \ldots, \phi^\delta_{p(\phi^\delta)})
\qquad
\textup{and so}
\qquad
\phi^\delta = (\pi^\delta_2, \ldots, \pi^\delta_{p(\pi^\delta)}).
\end{align*}

\noindent{\em Case 2. Assume that \({\tt w}(\pi^1) = {\tt w}(\pi^0) = {\tt W}\).} Then we have \({\tt w}(\phi^1), {\tt w}(\phi^0) < {\tt W}\). Then appealing to symmetry and swapping \(\pi, \phi\) in the Case 1 argument above, we have 
\begin{align*}
\phi^\delta = ({\tt W}, \pi_1^\delta, \ldots, \pi^\delta_{p(\pi^\delta)})
\qquad
\textup{and}
\qquad
\pi^\delta = (\phi^\delta_2, \ldots, \phi^\delta_{p(\phi^\delta)}).
\end{align*}

\noindent{\em Case 3. Assume \({\tt w}(\pi^1) < {\tt W}\) and \({\tt w}(\pi^0) = {\tt W}\), or \({\tt w}(\pi^1) = {\tt W}\) and \({\tt w}(\pi^0) < {\tt W}\).} 
Then again as in the cases above, we see that either \(\pi^1_1 < \phi^0_1\) and \(\phi^1_1 < \pi^0_1\), or  \(\pi^1_1 > \phi^0_1\) and \(\phi^1_1 > \pi^0_1\). Then we have by Lemma~\ref{MVlongertop} that \(\phi^\delta = \pi^\delta\), as desired. 
\end{proof}

\subsection{Establishing the crystal isomorphisms in \S\ref{MVULisomsintro}}
Recall now the definition of upper ledge diagrams in \S\ref{bigULsec} and the functions \(\mathcal{X}, \mathcal{Y}\) and associated combinatorial data introduced in \S\ref{MVULisomsintro}. For an upper ledge diagram \(\sfD \in \UL\), we will write \(\beps^\uparrow(\sfD)\) (resp. \(\beps^\downarrow(\sfD)\)) for the colors of top (resp. bottom) boxes in \(\sfD\), read from left to right.

\begin{lemma}\label{Xsidesmatch}
Let \(\sfD \in \UL\). Then \((\mathcal{X}_1(\sfD) \mid \mathcal{X}_0(\sfD)) \in \aMV\), and so the map
\begin{align*}
\mathcal{X}: \UL \to \aMV, \qquad \sfD \mapsto (\mathcal{X}_1(\sfD) \mid \mathcal{X}_0(\sfD))
\end{align*}
is well-defined.
\end{lemma}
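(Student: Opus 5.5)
The idea is to check the three conditions of Theorem~\ref{amvrecogthm} directly for \(\pi := \mathcal{Y}_1(\sfD)\) and \(\phi := \mathcal{Y}_0(\sfD)\). The statement writes \(\mathcal{X}_i(\sfD)\), but since \(\mathcal{X}_i\) takes root partitions as input, I read this as the pair built from \(\TRIP_1(\sfD)\) and \(\TRIP_0(\sfD)\) through \(\CALC\), \(\textup{halve}\) and \(\textup{forget}\). Theorem~\ref{amvrecogthm} reduces membership in \(\aMV\) to three conditions: the equality \({\tt W}={\tt W}'\), the two \(\ORTH^{\tt W}\) relations, and the rule relating \(\phi^\delta\) to \(\pi^\delta\).

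\textbf{The \(\ORTH\) relations.} Both \(\pi^1=\CALC(\textup{forget}\circ\textup{flip}(\sfD^\uparrow_1))\) and \(\phi^0=\CALC(\textup{forget}(\sfD^\downarrow_0))\) are images under \(\CALC\). So Theorem~\ref{tallyprop} says the first column of \(\SPAR(\pi^1)\) (resp. \(\SPAR(\phi^0)\)) is exactly the set of \(t\) such that the \(t\)th column of \(\sfD^\uparrow_1\) (resp. \(\sfD^\downarrow_0\)) has odd length. By the characterization of \(\ORTH^w\) in \S\ref{opsec}, \(\ORTH^{\tt W}(\pi^1)=\phi^0\) is then equivalent to a statement about \(\sfD\) alone: for each \(t\in[1,{\tt W}]\), exactly one of the \(t\)th columns of \(\sfD^\uparrow_1\) and \(\sfD^\downarrow_0\) has odd length.

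I would prove this parity complementarity from the color data of \(\sfD\). Let \(\beps^\uparrow(\sfD)\) and \(\beps^\downarrow(\sfD)\) record the colors of the top and bottom boxes, column by column. In each diagram, the parity of a column's length is decided by whether its top and bottom colors agree. Recall how the two pieces are built:
\begin{itemize}
\item \(\sfD^\uparrow_1\) consists of the boxes weakly northwest of the rightmost \(1\)-top box of \(\sfD\);
\item \(\sfD^\downarrow_0\) is built from the rightmost \(1\)-bottom box, using the rule that the column height increments exactly when the bottom color changes (equivalently, it is the region weakly southwest of that box in \(\SINK(\sfD)\)).
\end{itemize}
Tracking the top and bottom colors of each column in both pieces against the upper ledge shape (strictly increasing rows, then weakly decreasing rows) should give opposite parities column by column.

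The same bookkeeping should show two further facts. First, the column ranges of the two pieces together fill out \([1,{\tt W}]\), with \({\tt w}(\pi^1)=c(\sfD^\uparrow_1)\) and \({\tt w}(\phi^0)=c(\sfD^\downarrow_0)\). Second, since \(p(\CALC(\mu))\) is the number of odd columns of \(\mu\), the quantity \(p(\pi^1)+p(\phi^0)\) equals the number of columns in the union, which is \({\tt W}\). The roles of the colors are symmetric, so the identical argument applied to \(\sfD^\uparrow_0\) and \(\sfD^\downarrow_1\) gives \(\ORTH^{{\tt W}'}(\pi^0)=\phi^1\), where \({\tt W}'\) is again a column count of \(\sfD\). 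Showing that the two counts agree, and hence \({\tt W}={\tt W}'\), should come down to the fact that both equal the number of columns of \(\sfD\) outside the \(\delta\)-parts.

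\textbf{The \(\delta\) condition.} Here \(\pi^\delta=\textup{halve}(\sfD^\delta_1)\) and \(\phi^\delta=\textup{halve}(\sfD^\delta_0)\). I would split into the three cases of Theorem~\ref{amvrecogthm}(3), according to whether \({\tt w}(\pi^1)\) and \({\tt w}(\pi^0)\) equal \({\tt W}\). Each case should correspond to whether the rightmost \(1\)-top box and the rightmost \(0\)-top box of \(\sfD\) lie in the last column. In each case one compares the middle strips obtained by cutting \(\sfD\) with \(i=1\) and with \(i=0\): a full-width pair of rows of length \({\tt W}\) is either gained, lost, or neither, which yields the three formulas.

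The main obstacle is the parity-complementarity step, which requires a careful case analysis of how the shape of an upper ledge diagram interacts with the \(\TRIP_i\) cuts. I would first establish it for \(\sfD\) with a single peak region, and then extend by induction on the number of columns, removing the last column of \(\sfD\).
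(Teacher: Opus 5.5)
Your overall frame matches the paper's: verify the conditions of Theorem~\ref{amvrecogthm} for \(\pi=\mathcal{Y}_1(\sfD)\) and \(\phi=\mathcal{Y}_0(\sfD)\), turning the \(\ORTH\) relations into column-parity statements via Theorem~\ref{tallyprop}. Reading \(\mathcal{X}_i(\sfD)\) as \(\mathcal{Y}_i(\sfD)\) is also correct. There is, however, a genuine gap: you have the wrong pieces of \(\TRIP_0(\sfD)\) and \(\TRIP_1(\sfD)\) producing the parts of \(\phi\) and \(\pi\). By definition of \(\mathcal{Y}_0\), \(\phi^0=\CALC\circ\textup{forget}\circ\textup{flip}(\sfD^\uparrow_0)\) and \(\phi^1=\CALC\circ\textup{forget}(\sfD^\downarrow_0)\). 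Likewise \(\pi^0=\CALC\circ\textup{forget}(\sfD^\downarrow_1)\), not something built from \(\sfD^\uparrow_0\). So \(\ORTH^{\tt W}(\pi^1)=\phi^0\) needs parity complementarity between the columns of \(\sfD^\uparrow_1\) and \(\sfD^\uparrow_0\). Similarly, \(\ORTH^{\tt W}(\pi^0)=\phi^1\) needs it between \(\sfD^\downarrow_1\) and \(\sfD^\downarrow_0\). The complementarity you set out to prove, between \(\sfD^\uparrow_1\) and \(\sfD^\downarrow_0\), is false. Take \(\sfD\) a single box of color \(1\): both \(\sfD^\uparrow_1\) and \(\sfD^\downarrow_0\) equal that box, so column \(1\) is odd in both. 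Indeed here \(\pi^1=\phi^1=(1)\) and \(\phi^0=\varnothing=\ORTH^1((1))\). No case analysis or induction on columns can rescue that step.

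The missing idea is a direct comparison of \(\TRIP_1(\sfD)\) with \(\TRIP_0(\sfD)\), which makes the parity statements immediate. Suppose the top box of the last column \({\tt W}\) has color \(i\). Then the rightmost \(i\)-top box ends the uppermost peak row, and the rightmost \(\hat i\)-top box ends the row just above it. Hence \(\sfD^\uparrow_i\) is \(\sfD^\uparrow_{\hat i}\) with one row of length \({\tt W}\) appended at the bottom. Similarly, the two lower pieces differ by a single row of length \({\tt W}\) at the top, added or deleted according to the bottom color of column \({\tt W}\). The two middle pieces either have the same shape or differ by two rows of length \({\tt W}\). A full-width row flips the parity of every column in \([1,{\tt W}]\), which gives both \(\ORTH^{\tt W}\) relations at once. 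Then \(p(\pi^1)+p(\phi^0)=p(\pi^0)+p(\phi^1)={\tt W}\) follows automatically and has nothing to do with the \(\delta\)-parts. Your case criterion for the \(\delta\) condition also fails. Exactly one of the rightmost \(1\)-top and \(0\)-top boxes always lies in the last column, so that criterion cannot separate the cases of Theorem~\ref{amvrecogthm}(3). The correct criterion is: \({\tt w}(\pi^1)={\tt W}\) iff \(\beps^\uparrow(\sfD)_{\tt W}=1\), while \({\tt w}(\pi^0)={\tt W}\) iff \(\beps^\downarrow(\sfD)_{\tt W}=0\). The latter holds because \(\pi^0\) comes from \(\sfD^\downarrow_1\), which is built from the rightmost \(0\)-bottom box. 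The four combinations of top and bottom colors of the last column then yield the three \(\delta\) formulas.
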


\begin{proof}
For a given upper ledge diagram \(\sfD\), it is straightforward to translate between the 1-triple  upper ledge diagram decomposition \(\TRIP_1(\sfD) = (\sfD^\uparrow_1, \sfD^\delta_1, \sfD^\downarrow_1)\) and the 0-triple upper ledge diagram decomposition \(\TRIP_0(\sfD) = (\sfD^\uparrow_0, \sfD^\delta_0, \sfD^\downarrow_0)\) for \(\sfD\). 
Writing \({\tt W}\) for the length of a peak row of \(\sfD\), we have the following:
\begin{enumerate}
\item[(a)] If \(\beps^\uparrow(\sfD)_{\tt W} = \beps^\downarrow(\sfD)_{\tt W} = i\), 
then 
\begin{itemize}
\item \(\sfD_i^\uparrow\) is achieved by adding a \({\tt W}\)-length row of color \(i\) to the bottom of \(\sfD_{\hat i}^\uparrow\);
\item \(\sfD_{ i}^\downarrow\) is achieved by deleting a \({\tt W}\)-length row of color \( i\) from the top of \(\sfD_{\hat i}^\downarrow\);
\item \(\sfD_i^\delta\) and \(\sfD_{\hat i}^\delta\) have identical shape, with opposite colors.
\end{itemize}
\item[(b)] If \(\beps^\uparrow(\sfD)_{\tt W} = i\) and \( \beps^\downarrow(\sfD)_{\tt W} = \hat i\), 
then 
\begin{itemize}
\item \(\sfD_i^\uparrow\) is achieved by adding a \({\tt W}\)-length row of color \(i\) to the bottom of \(\sfD_{\hat i}^\uparrow\);
\item \(\sfD_{ i}^\downarrow\) is achieved by adding a \({\tt W}\)-length row of color \( \hat i\) to the top of \(\sfD_{\hat i}^\downarrow\);
\item \(\sfD_i^\delta\) is achieved by deleting the top two (\({\tt W}\)-length) rows of \(\sfD_{\hat i}^\delta\).
\end{itemize}
\end{enumerate}

Write \(\pi = \mathcal{X}_1(\sfD)\) and \(\phi = \mathcal{X}_0(\sfD)\). In any case, it follows from (a) and (b) above that the \(k\)th column of \(\sfD^\uparrow_i\) (resp. \(\sfD^\downarrow_{ i}\)) will have odd length if and only if the \(k\)th column of \(\sfD^\uparrow_{\hat i}\) (resp. \(\sfD^\downarrow_{\hat i}\)) has even length. Therefore we have
\begin{align}\label{condAAA}
\phi^0 = 
\CALC \circ \textup{forget}(\sfD^\uparrow_0) = \ORTH^{\tt W}( \CALC \circ \textup{forget}(\sfD^\uparrow_1)  ) = \ORTH^{\tt W}( \pi^1)
\end{align}
and
\begin{align}\label{condBBB}
\phi^1 = 
\CALC \circ \textup{forget}(\sfD^\downarrow_0) = \ORTH^{\tt W}( \CALC \circ \textup{forget}(\sfD^\downarrow_1)  ) = \ORTH^{\tt W}( \pi^0).
\end{align}

We now consider four possible cases.\\

\noindent{\em Case 1. Assume first that \({\beps}^\uparrow(\sfD)_{\tt W} = \beps^\downarrow(\sfD)_{\tt W} = 1\).} It follows then that the \({\tt W}\)th (last) column of \(\sfD^\uparrow_1\) has length one, so \({\tt W}\) appears in the first column of the spar tableau for \(\pi^1 = \CALC \circ \textup{forget}(\sfD^\uparrow_1)\), and so \({\tt w}(\pi^1) = {\tt W}\). 
We also have that the \({\tt W}\)th (last) column of \(\sfD^\downarrow_0\) has length one, so \({\tt W}\) appears in the first column of the spar tableau for \(\phi^1 = \CALC \circ \textup{forget}(\sfD^\downarrow_0)\), and so \({\tt w}(\phi^1) = {\tt W}\). Since \(\pi^0 = \ORTH^{\tt W}(\phi^1)\), this implies that \({\tt w}(\pi^0) < {\tt W}\). We also have that 
\begin{align}\label{condCCC}
\pi^\delta = \textup{halve} \circ \textup{forget}(\sfD^\delta_1) = \textup{halve} \circ \textup{forget}(\sfD^\delta_0) = \phi^\delta
\end{align}
by (a) above. Since \({\tt w}(\pi^1) = {\tt W} > {\tt w}(\pi^0)\), it follows by Theorem~\ref{amvrecogthm} and (\ref{condAAA}), (\ref{condBBB}), (\ref{condCCC}) that \((\pi| \phi) \in \aMV\).\\

\noindent{\em Case 2. Assume that \({\beps}^\uparrow(\sfD)_{\tt W} = \beps^\downarrow(\sfD)_{\tt W} = 0\).} This case plays out similar to Case 1, where now \({\tt w}(\pi^0) = {\tt W} < {\tt w}(\pi^1)\) and (\ref{condCCC}) still holds, so \((\pi| \phi) \in \aMV\).\\

\noindent{\em Case 3. Assume that \({\beps}^\uparrow(\sfD)_{\tt W} = 1, \beps^\downarrow(\sfD)_{\tt W} = 0\).} It follows as in Case 1 that \({\tt w}(\pi^1) = {\tt W}\) and \({\tt w}(\pi^0)= {\tt W}\). We also have by (b) above that \(\pi^\delta = \textup{halve} \circ \textup{forget}(\sfD^\delta_1)\) is achieved by deleting the top (\({\tt W}\)-length) row of \(\textup{halve} \circ \textup{forget}(\sfD^\delta_0) = \phi^\delta\). Then again it follows from Theorem~\ref{amvrecogthm} that \((\pi | \phi) \in \aMV\).\\

\noindent{\em Case 4. Assume that \({\beps}^\uparrow(\sfD)_{\tt W} = 0, \beps^\downarrow(\sfD)_{\tt W} = 1\).} It follows as in Case 1 that \({\tt w}(\phi^1) = {\tt W}\) and \({\tt w}(\phi^0)= {\tt W}\), and therefore \({\tt w}(\pi^1), {\tt w}(\pi^0) < {\tt W}\) by (\ref{condAAA}), (\ref{condBBB}). We also have by (b) above that \(\phi^\delta = \textup{halve} \circ \textup{forget}(\sfD^\delta_1)\) is achieved by deleting the top (\({\tt W}\)-length) row of \(\textup{halve} \circ \textup{forget}(\sfD^\delta_0) = \pi^\delta\). Then again it follows from Theorem~\ref{amvrecogthm} that \((\pi | \phi) \in \aMV\).
\end{proof}

\begin{lemma}\label{Ydef}
For \((\pi | \phi) \in \aMV\), we have \(\mathcal{Y}_1(\pi) = \mathcal{Y}_0(\phi)\), and therefore the map
\begin{align*}
\mathcal{Y}: \aMV \to \UL, \qquad \mathcal{Y}(\pi | \phi) = \mathcal{Y}_1(\pi) = \mathcal{Y}_0(\phi)
\end{align*}
is well-defined.
\end{lemma}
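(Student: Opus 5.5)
The plan is to derive this from Lemma~\ref{Xsidesmatch} together with the uniqueness statement of Theorem~\ref{exunloz}, rather than by comparing the two stackings directly. A note on notation first: in the statement, \(\mathcal{Y}_i\) applied to a root partition denotes the root-partition-to-diagram map, i.e.\ \(\STACK_i\) of the associated \(i\)-triple. So what must be shown is that, for \((\pi|\phi)\in\aMV\), the \(1\)-stacking of \(\pi\) and the \(0\)-stacking of \(\phi\) produce the same upper ledge diagram.

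Step 1 is to check that, for each \(i\in\Z_2\), the root-partition-to-diagram map \(\Pi(\theta)\to\UL(\theta)\) and the diagram-to-root-partition map \(\UL(\theta)\to\Pi(\theta)\) are mutually inverse.
\begin{itemize}
\item Since \(\ORD\) lands in \(\Parreg\), \(\textup{flip}\circ\textup{color}_i\circ\ORD(\pi^i)\) has strictly increasing rows with bottom row of color \(i\).
\item \(\textup{color}_{\hat i}\circ\textup{double}(\pi^\delta)\) has even columns, top boxes colored \(\hat i\) and bottom boxes colored \(i\).
\item \(\textup{color}_{\hat i}\circ\ORD(\pi^{\hat i})\) has strictly decreasing rows with top row colored \(\hat i\).
\end{itemize}
So the triple lies in \(\UL^3_i\). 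Because \(\TRIP_i\) and \(\STACK_i\) are mutually inverse bijections, applying \(\TRIP_i\) to the stack returns the same triple. Then \(\CALC\circ\ORD=\textup{id}\) (Theorem~\ref{appendreefundo}), \(\textup{halve}\circ\textup{double}=\textup{id}\), and \(\textup{forget}\circ\textup{color}\), \(\textup{flip}\circ\textup{flip}\) being identities, together recover \(\pi\). For the reverse composition, the components of any \(i\)-triple are, after forgetting colors (and flipping \(\sfD^\uparrow_i\)), 2-regular partitions or partitions with even columns. Here \(\ORD\circ\CALC=\textup{id}\) on \(\Parreg\), and the colors are forced by the triple conditions, so these maps are bijections.

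Step 2 is the main argument. Given \((\pi|\phi)\in\aMV\), set \(\sfD\) equal to the \(1\)-stacking of \(\pi\). By Lemma~\ref{Xsidesmatch}, the pair formed by the \(1\)-decoding and \(0\)-decoding of \(\sfD\) is an affine MV polytope. By Step 1, its first coordinate is \(\pi\). Theorem~\ref{exunloz} says \(\pi\) has a unique MV partner, so the \(0\)-decoding of \(\sfD\) must equal \(\pi_\lozenge=\phi\). Applying Step 1 again with \(i=0\), the \(0\)-stacking of \(\phi\) is the \(0\)-stacking of the \(0\)-decoding of \(\sfD\), which is \(\sfD\) itself. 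This gives the claimed equality, and the induced map \(\aMV\to\UL\) is then well-defined.

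The main obstacle is Step 1, specifically confirming that the stacked diagram really decomposes back into the same triple. This relies on the asserted (``easy to check'') fact that \(\TRIP_i\) and \(\STACK_i\) are inverse. If that fact needs more care, I would verify it directly. The boxes weakly northwest of the rightmost \(i\)-top box in \(\STACK_i(\ldots)\) are exactly \(\sfD^\uparrow_i\), because its strictly increasing bottom row sits above the weakly decreasing \(\sfD^\delta_i\). Likewise, the column-shifting in \(\STACK_i\) is exactly undone by the highlighting rule that defines \(\sfD^\downarrow_i\).
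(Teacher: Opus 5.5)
Your proof is correct and takes essentially the same route as the paper. Both combine Lemma~\ref{Xsidesmatch} with the uniqueness in Theorem~\ref{exunloz} and the fact that $i$-stacking and $i$-decoding are mutually inverse; the paper finishes via injectivity of the $0$-decoding map, you via decoding-then-stacking being the identity, and your Step~1 simply spells out the bijectivity the paper calls clear.
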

\begin{proof}
For a given \(i \in \Z_2\), it is clear that \(\mathcal{Y}_i: \Pi \to \UL\) and \(\mathcal{X}_i: \UL \to \Pi\) are mutual inverses since \(\ORD\)/\(\CALC\) and \(\STACK_i\)/\(\TRIP_i\) are mutual inverses. Thus we have that \(
(\pi | \phi) = (\mathcal{X}_1 \mathcal{Y}_1 \pi \, | \, \mathcal{X}_0 \mathcal{Y}_0 \phi).
\)
On the other hand by Lemma~\ref{Xsidesmatch} we have that 
\(
(\mathcal{X}_1 \mathcal{Y}_1 \pi \, | \, \mathcal{X}_0 \mathcal{Y}_1 \pi) \in \aMV
\).
Therefore
\(
\mathcal{X}_0 \mathcal{Y}_0 \phi = (\mathcal{X}_1 \mathcal{Y}_1 \pi )_\lozenge = \mathcal{X}_0 \mathcal{Y}_1 \pi
\)
by Theorem~\ref{exunloz}, 
which implies that \(\mathcal{Y}_0 \phi = \mathcal{Y}_1 \pi\) since \(\mathcal{X}_0\) is injective.
\end{proof}

\begin{theorem}\label{mainamvulthm}
The maps \(\mathcal{X}: \UL \to \aMV\) and \(\mathcal{Y}: \aMV \to \UL\) are mutually inverse \({\tt A}_1^{(1)}\)-bicrystal isomorphisms.
\end{theorem}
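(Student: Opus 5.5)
Lemmas~\ref{Xsidesmatch} and \ref{Ydef} already show that \(\mathcal{X}\) and \(\mathcal{Y}\) are well-defined. Since \(\mathcal{X}_1,\mathcal{Y}_1\) are mutually inverse (because \(\ORD/\CALC\) and \(\STACK_1/\TRIP_1\) are), it follows that \(\mathcal{X}\) and \(\mathcal{Y}\) are mutually inverse bijections. The map \(\mathcal{X}\) preserves content, hence weight, since each of \(\ORD\), double and stack preserves the \(\alpha\)-content of the corresponding root partition pieces. So the whole proof reduces to showing that \(\mathcal{X}\) intertwines the crystal data. By the symmetry \(\textup{float}\circ\textup{flip}\leftrightarrow(\pi|\phi)\mapsto(\phi|\pi)\) (Remark~\ref{starinvpoly}), it suffices to handle the unstarred operators; the starred case follows by the same argument applied with the roles of top and bottom exchanged. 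Moreover, since \(e_i\) and \(f_i\) are mutually inverse where defined on both sides, and \(\varepsilon_i=\max\{m: e_i^m\neq 0\}\), it suffices to prove that \(\mathcal{Y}(f_i\sfD)=f_i\mathcal{Y}(\sfD)\) for all \(\sfD\), together with the identification of \(\varepsilon_i\).

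Fix \(i\) and work on the side of the polytope where \(f_i\) acts simply. For \(i=0\), \(f_0(\pi|\phi)=(\pi_{+0:1}|\cdots)\), so only \(\pi^0\) changes, by the addition of a row of length \(1\). Via \(\mathcal{X}_1\), \(\pi^0\) is encoded in \(\sfD^\downarrow_1=\textup{color}_0\circ\ORD(\pi^0)\), while \(\sfD^\uparrow_1\) and \(\sfD^\delta_1\) are untouched. Thus I must show that \(\STACK_1\) applied to \((\sfD^\uparrow_1,\sfD^\delta_1,\textup{color}_0\ORD(\pi^0\sqcup(1)))\) equals \(f_0\sfD\), the diagram obtained by adding a \(0\)-box under the leftmost unarced \(1\)-bottom box (or at the uppermost \(0\)-peak row). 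The key combinatorial input is how \(\ORD\) changes when a part \(1\) is added to a partition \(\lambda\). Translated through \(\textup{mc}\), this is the composition-level change \(\mu_1\mapsto\mu_1+1\). Via Theorem~\ref{DGthm} and Lemma~\ref{mvbiparsdesc}, the first-column labels of \(\SPAR\) are read off from a binary word, and adding a length-one row corresponds to a prescribed modification of that word. I would prove directly from the spar construction (labels in the first column, \(\mathcal{G}/\mathcal{H}\) descriptions (GP1), (GP2), (HP)) that adding a part \(1\) to \(\lambda\) changes exactly one column parity pattern in \(\ORD(\lambda)\): it adds a single box to \(\ORD(\lambda)\) at the column given by a bracketing (arc) rule on the parities of consecutive column lengths. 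I would then check that this bracketing rule, once \(\ORD(\pi^0)\) is stacked beneath \(\sfD^\delta_1\) and \(\sfD^\uparrow_1\), coincides with the arc arrangement \(\sfD[0]\) on bottom boxes read right to left. The bottom boxes of \(\STACK_1\) sit in the \(\sfD^\downarrow_1\) region, or in \(\sfD^\delta_1\) and \(\sfD^\uparrow_1\) when columns of \(\sfD^\downarrow_1\) are short. Since \(\sfD^\delta_1\) has all bottoms colored \(1\), its columns contribute only unarced-or-arced \(1\)-bottoms, which explains the fallback "uppermost \(0\)-peak row" case: if the bracketing passes through the peak, the new box lands on the peak row, and \(\ORD\) acquires a new column (the case \({\tt w}\) increases).

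For \(i=1\), \(f_1\) adds a row of length \(1\) to \(\phi^1\), so I will instead use \(\mathcal{X}_0(\phi)=\mathcal{X}(\pi|\phi)\) (Lemma~\ref{Ydef}) and run the identical argument with colors swapped. The \(\varepsilon\) statements then follow. On the polytope side, \(\varepsilon_0=n_1(\pi^0)\). On the diagram side, \(\varepsilon_0\) counts the unarced \(0\)-bottom boxes, and the bracketing lemma shows that these correspond bijectively to parts of size \(1\) in \(\pi^0\), i.e.\ to the removable steps under \(\overline{\mathcal{H}}\) restricted to \(\mu_1\).

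The main obstacle is the local lemma that adding a part of size \(1\) to \(\lambda\) modifies \(\ORD(\lambda)\) by one box placed according to a bracket rule, and that this rule matches \(\sfD[i]\) after stacking. The difficulty is especially acute near the junction of \(\sfD^\downarrow\) with \(\sfD^\delta\) and in the peak cases (a)/(b) from the proof of Lemma~\ref{Xsidesmatch}, where \(\TRIP_i\) itself changes. I would attack it first at the word level, using the \(\mathcal{T}_i\)/peak machinery of Lemmas~\ref{PtoR} and \ref{PRL}, because \(\mathcal{A}^i\) and \(\mathcal{R}\) already model single-box additions there.
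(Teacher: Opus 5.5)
Your global reduction is sound and matches the paper's. Bijectivity comes from Lemmas~\ref{Xsidesmatch} and \ref{Ydef}, and one reduces to intertwining \(f_0\), the other operators being symmetric. The bottom-color word of \(\sfD\) is \(\beps^\downarrow(\sfD^\downarrow_1)1^y\) with the trailing \(1\)'s unarced. So \(\TRIP_1(f_0\sfD)\) differs from \(\TRIP_1(\sfD)\) only in its third entry, by one box. Everything of substance is then the claim that this new third entry is \(\textup{color}_0\circ\ORD(\pi^0_{+0:1})\). Equivalently, adding a part \(1\) to \(\pi^0\) enlarges the first-column label set of \(\SPAR(\pi^0)\) by exactly the column selected by the \(0\)-arc rule.

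You explicitly leave this open as ``the main obstacle,'' and that is a genuine gap, not a routine check. The analogous monotonicity of \(\ORD\) under adding an arbitrary last row is left as Conjecture~\ref{conjsparcontain} in the paper. So ``prove it directly from the spar construction'' is not a safe fallback. The word operations \(\mathcal{A}^i,\mathcal{R}\) you propose to use are also the wrong tool. They model \(\bar{\mathcal{G}}^i\) and \(\bar{\mathcal{H}}\), which change many parts of both partitions at once, not the addition of a single part.

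The missing idea is to use the bottom-color word itself as the bookkeeping device. Put \(\brho=\beps^\downarrow(\sfD)\). Then \(\rho_k=0\) iff the \(k\)th column of \(\sfD^\downarrow_1\) has odd length, iff \(k\) lies in the first column of \(\SPAR(\pi^0)\). Lemma~\ref{mvbiparsdesc} and injectivity of \(\ORD\) (Theorem~\ref{tallyprop}) therefore identify \(\pi^0\) with the \(0\)-component of \(\textup{mc}(\mathcal{J}\brho^{\textup{rev}})\). Hence \(n_k(\pi^0)=\#\PR_0(\mathcal{T}_0^{k-1}\brho^{\textup{rev}})\) for all \(k\).

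Unarced \(1\)-bottoms of \(\sfD[0]\) are exactly the left \(1\)-peaks of \(\brho^{\textup{rev}}\), and unarced \(0\)-bottoms are the right \(0\)-peaks. The operator \(f_0\) either turns the leftmost unarced \(1\) into an unarced \(0\), or (peak-row case) prepends an unarced \(0\) to \(\brho^{\textup{rev}}\). In neither case does it change the arc status of any other box. Hence \(\#\PR_0\) rises by exactly one.

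Because \(\mathcal{T}_0\) fixes exactly the left \(1\)-peaks and flips everything else, \(\mathcal{T}_0\tilde{\brho}^{\textup{rev}}\) equals either \(\mathcal{T}_0\brho^{\textup{rev}}\) or \(1\,\mathcal{T}_0\brho^{\textup{rev}}\). A leading \(1\) is inert under further applications of \(\mathcal{T}_0\) and under \(\PR_0\), so \(n_k\) is unchanged for \(k\geq 2\). This gives \(\pi_{+0:1}\), and it also yields \(\varepsilon_0(\sfD)=\#\PR_0(\brho^{\textup{rev}})=n_1(\pi^0)\) directly; your sketch attributes this only vaguely to the bracketing lemma.

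A smaller point: the \(\FLOAT\circ\textup{flip}\) description of the \(*\)-involution on \(\UL\) is derived in the paper from this theorem. You should not invoke it to dispose of the starred operators. Instead, run the top/bottom mirror of the argument directly, as the paper does.
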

\begin{proof}
That the maps are inverse functions is clear thanks to Lemmas~\ref{Xsidesmatch} and \ref{Ydef}, since \(\mathcal{X}_i\) and \(\mathcal{Y}_i\) are inverse functions, as noted in the proof of Lemma~\ref{Ydef}. It thus remains to check that \(\mathcal{X}\) is a bicrystal map. 
We will show that \(f_0 \mathcal{X}(\sfD) = \mathcal{X}(f_0 \sfD)\); the proofs that \(f_1, f_1^*, f_0^*\) commute with \(\mathcal{X}\) follow in a similar fashion. 
Write \(\tilde{\sfD}:= f_0 \sfD\), \(\pi = \mathcal{X}_1(\sfD)\), and \(\varpi = \mathcal{X}_1(\tilde{\sfD})\). In view of \S\ref{aMVcrysdef}, it suffices to check that \(\varpi= \pi_{+0:1}\).

Recall that \(\tilde{\sfD}\) is achieved by adding a box colored \(0\) to the bottom of the leftmost unarced \(1\)-bottom box in \(\sfD[0]\), if it exists, and otherwise by appending a box to the right end of the uppermost \(0\)-peak row in \(\sfD\). In either case, the new box is an unarced \(0\)-bottom box in \(\tilde{\sfD}[0]\), and all other bottom boxes are arced in \(\tilde{\sfD}[0]\) if and only if they were arced in \(\sfD[0]\).
It immediately follows that \(\TRIP_1(\tilde{\sfD}) = (\sfD^\uparrow_1, \sfD^\delta_1, f_0 \sfD^\downarrow_1)\), and so \(\varpi^1 = \pi^1\) and \(\varpi^\delta = \pi^\delta\).

Write \(\brho = \beps^\downarrow(\sfD)\),  \(\tilde{\brho} = \beps^\downarrow(\tilde{\sfD})\). 
It follows from definitions that the \(k\)th bottom box from the left in \(\sfD\) is 
\begin{enumerate}
\item[(i)] \(1\)-colored and unarced in \(\sfD[0]\) if and only if \(k \in \PL_1(\brho^\textup{rev})\);
\item[(ii)] \(0\)-colored and unarced in \(\sfD[0]\) if and only if \(k \in \PR_0(\brho^\textup{rev})\), 
\end{enumerate}
and similarly for \(\tilde{\sfD}\) and \(\tilde \brho\).
Writing \(\bnu = \mathcal{J}\brho^\textup{rev}\) and \(\tilde{\bnu} = \mathcal{J} \tilde \brho^\textup{rev}\), we have by (i), (ii) and  (\ref{Jdef})  that 
\begin{align*}
\tilde{\nu}_1^0 = \#\PR_0(\tilde{\brho}^\textup{rev}) =  \#\PR_0(\brho^\textup{rev}) +1 = \nu_1^0 + 1.
\end{align*}
We moreover have by the above discussion and (\ref{Tmapdef}) that  \(\mathcal{T}_0 \tilde{\brho}^\textup{rev} = 1^x \mathcal{T}_0 \brho^\textup{rev}\), where \(x = 0\) if the new box in \(\tilde{\sfD}\) was added below a \(1\)-bottom box in \(\sfD\), and \(x=1\) if it was added to the uppermost \(0\)-peak row in \(\sfD\) (hence increasing the width of the diagram by one).
So it follows from  (\ref{Jdef})  that \(\tilde{\nu}_k^0 = \nu_k^0\) for all \(k >1\). Writing \((\lambda^1, \lambda^0) = \textup{mc} (\bnu)\) and \((\tilde\lambda^1, \tilde\lambda^0) = \textup{mc} (\tilde\bnu)\), we have then that \(n_1(\tilde \lambda^0) = n_1(\lambda^0) + 1\), and \(n_k(\tilde \lambda^0) = n_k(\lambda^0)\) for \(k >1\). By Lemma~\ref{mvbiparsdesc}, note that \(\rho_k = j\) (resp. \(\tilde{\rho}_k = j\)) if and only if \(k\) appears in the first column of \(\SPAR(\lambda^j)\) (resp. \(\SPAR(\tilde{\lambda}^j)\)). 

Now, we have \(\pi^0 = \CALC(\sfD^\downarrow_1)\) by definition. Then \(k\) appears in the first column of \(\SPAR(\pi^0)\) if and only if the \(k\)th column of \(\sfD^\downarrow_1\) has odd length, which happens if and only if the \(k\)th bottom box from the left in \(\sfD^\downarrow_1\) is colored 0, which happens if and only if \(\beps^\downarrow(\sfD^\downarrow_1)_k = 0\). We further have \(\brho = \beps^\downarrow(\sfD) = \beps^\downarrow(\sfD^\downarrow_1)1^y\) for some \(y \in \ZZ_{\geq 0}\). Thus \(\rho_k = 0\) if and only if \(k\) appears in the first column of \(\SPAR(\pi^0)\). But then we have \(\ORD(\pi^0) = \ORD(\lambda^0)\), which implies \(\pi^0 = \lambda^0\) since \(\ORD\) is injective by Theorem~\ref{tallyprop}. In similar fashion we see that \(\varpi^0 = \tilde \lambda^0\), so it follows that 
\(n_1(\varpi^0) = n_1(\pi^0) + 1\) and \(n_k(\varpi^0) = n_k(\pi^0)\) for \(k> 1\), so \(\varpi = \pi_{+0:1} \). 

Finally, the fact that the statistics \(\textup{wt}\), \(\varepsilon_i\), \(\varepsilon^*_i\), \(\varphi_i\), \(\varphi^*_i\) commute with \(\mathcal{X}\) is easily checked by induction, taking advantage of the fact that \(\UL\), \(\aMV\) are highest weight.
\end{proof}

\subsection{Establishing the crystal isomorphisms in \S\ref{kmulisomsec}}
Recall now the maps \(\mathcal{Z}^{\bkap}, \mathcal{W}^{\bkap}\), and associated gluing and splitting operations from \S\ref{kmulisomsec}.

\subsubsection{Freezing boxes in upper ledge diagrams}
Let \(\usfD = (\sfD_1, \ldots, \sfD_{m+1}) \in \UL^{m+1}\) be a sequence of upper ledge diagrams. We will consider the diagrams arranged horizontally so \(\sfD_{t+1}\) is to the right of \(\sfD_t\). Then the {\em frozen} boxes of \(\usfD\) are defined as follows. For \(t \in [1,m]\), let \(v_t\) be the rightmost box in the uppermost peak row in \(\sfD_t\). Then for each \(t=m, m-1, \ldots, 1\), freeze the nearest unfrozen bottom box to the right of \(v_t\) whose color is opposite \(v_t\), and then freeze the nearest unfrozen bottom box to the right whose color is opposite that, and so on, greedily freezing the nearest unfrozen opposite-color bottom box to the right at each step until exhaustion. We say boxes frozen in the \(t\)th step are \(t\)-frozen when it is useful to be specific.

\begin{lemma}\label{splittrackbottomfrozen1}
Let \(\sfD \in \UL\), \(i \in \Z_2\), and set \(\usfD = (\sfD_1, \sfD_2) = \SPLIT^i(\sfD)\). Then the unfrozen bottom boxes in \((\sfD_1, \sfD_2)\), read from left to right, are exactly the images under the \(\SPLIT^i\) map of the bottom boxes in \(\sfD\), read from left to right.
\end{lemma}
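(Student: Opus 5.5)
We track explicitly where each bottom box of \(\sfD\) goes under \(\SPLIT^i\), and then verify that the bottom boxes of \(\SPLIT^i(\sfD)\) with no preimage are exactly the frozen ones. Let \(u\) be the uppermost \(i\)-colored box in the first column of \(\sfD\), let \(r\) be the southeast ray from \(u\), and write \(\mathsf{L},\mathsf{U}\) for the lower and upper pieces. If no such \(u\) exists, then \(\SPLIT^i(\sfD)=(\varnothing,\sfD)\). In that case there is no \(v_1\) (the first diagram is empty), so nothing is frozen and the statement is trivial. We therefore assume \(u\) exists.

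First, split the columns of \(\sfD\) according to how \(r\) meets them.

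\begin{itemize}
\item Columns lying entirely weakly below \(r\) go wholly into \(\mathsf{L}=\sfD_1\).
\item Columns crossing \(r\) have their bottom box in \(\mathsf{L}\). Those bottom boxes therefore survive, in order, as bottom boxes of \(\sfD_1\).
\item Columns lying strictly above \(r\) have their bottom boxes in \(\mathsf{U}\). Left-justifying the rows of \(\mathsf{U}\) and applying \(\FLOAT\) only shift columns by even amounts, so colors are preserved. These boxes become bottom boxes of \(\sfD_2\).
\end{itemize}

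So the images of the bottom boxes of \(\sfD\) are: the bottom boxes of \(\sfD_1\) coming from \(\sfD\) (left of or on the cut), followed by certain bottom boxes of \(\sfD_2\), in the same left-to-right order. Next we identify the bottom boxes of \(\sfD_1,\sfD_2\) that do not arise this way. These are the new bottom boxes created in \(\mathsf{U}'\) where a column of \(\sfD\) crosses \(r\) and its lower part went into \(\mathsf{L}\). Because \(\mathsf{L}\) is bounded by a \(-45^\circ\) ray, every bottom box of \(\sfD_1\) is already a bottom box of \(\sfD\). All the extra boxes are therefore in \(\sfD_2\).

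The core step is to match these extra bottom boxes of \(\sfD_2\) with the frozen boxes of \((\sfD_1,\sfD_2)\). Here \(m=1\), so freezing starts from \(v_1\), the rightmost box of the uppermost peak row of \(\sfD_1=\mathsf{L}\).

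\begin{itemize}
\item I would show that the peak row of \(\mathsf{L}\) is the row of \(u\), or is determined by where \(r\) exits \(\sfD\). The diagonal boundary of \(\mathsf{L}\) is staircase-shaped, so \(v_1\) sits where the ray leaves the diagram.
\item The columns of \(\sfD\) crossing \(r\) contribute cut boxes just above \(r\). After left-justification and \(\FLOAT\), these cut boxes become bottom boxes of the leftmost columns of \(\sfD_2\), with colors alternating. This alternation holds because the cut boxes of adjacent columns lie along a diagonal, and colors alternate down columns of a ledge diagram.
\item One then checks that the greedy freezing from \(v_1\) selects exactly these alternating-color new bottom boxes. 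The first one has color opposite \(v_1\), each subsequent one is the nearest opposite-colored unfrozen bottom box, and the process exhausts precisely at the last cut column.
\item Along the way, no original bottom box lying between two consecutive new ones has the required opposite color and sits nearer. I would verify this using the weakly-decreasing/upper ledge shape constraints and the \(\FLOAT\) repositioning, which places original bottom boxes lower than the cut ones.
\end{itemize}

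I expect this last verification to be the main obstacle: showing that the greedy opposite-color search never grabs an original bottom box and never stops early. I would attack it by a local case analysis on consecutive columns crossing \(r\), separating the case where a peak plateau is cut from the strictly increasing part above it. I would track the relative heights in \(\FLOAT(\mathsf{U}')\) column by column, with induction on the number of crossing columns. Once the frozen boxes are identified with the extra bottom boxes, the remaining bottom boxes are exactly the images of the bottom boxes of \(\sfD\), with the order preserved as established above.
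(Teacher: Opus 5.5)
There is a genuine gap in your core step. You misidentify the extra bottom boxes of \(\sfD_2\), and the error comes from treating left-justification as a column operation.

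Left-justifying the rows of \(\mathsf{U}\) does not shift columns. Each row of \(\mathsf{U}\) moves exactly one step further left than the row above it: if \(u\) lies in row \(a\), then row \(a+s\) loses its first \(s+1\) boxes. So the columns of \(\sfD\) are broken up, and the rule that survives is: \(w\) is directly below \(z\) in \(\mathsf{U}'\) iff \(w\) is directly southeast of \(z\) in \(\sfD\). Consequences:
\begin{itemize}
\item A box \(z\in\mathsf{U}\) is bottom in \(\mathsf{U}'\) (equivalently in \(\sfD_2\)) iff \(\sfD\) has no box directly southeast of \(z\).
\item All your ``cut boxes just above \(r\)'' land in the single first column of \(\mathsf{U}'\). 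So at most one of them is a bottom box of \(\sfD_2\); they do not become bottom boxes of several leftmost columns.
\item The genuinely new bottom boxes are the boxes of \(\mathsf{U}\) that have a box directly below but none directly southeast. These need not lie in columns meeting \(r\) at all.
\end{itemize}

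Here is a concrete counterexample to your matching. Write \((\rho,c)\) for the box in row \(\rho\), column \(c\). Take \(\sfD\) with row lengths \((6,6,1)\) and top row colored \(i\).
\begin{itemize}
\item Then \(u=(1,1)\), \(\mathsf{L}\) has rows \((1,2,1)\), and \(\sfD_2=\mathsf{U}'\) has rows \((5,4)\).
\item The only column crossing \(r\) is column \(2\). Its cut box \((1,2)\) sits above \((2,3)\) in \(\sfD_2\), so it is not bottom.
\item Freezing from \(v_1=(2,2)\) freezes only \((1,6)\), which lies in column \(6\), entirely above \(r\).
\end{itemize}
So ``frozen \(=\) cut boxes of crossing columns'' is false, and an induction over crossing columns cannot succeed. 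Your order-preservation claim for the \(\mathsf{U}\)-part also needs an argument, since different rows shift by different amounts.

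The missing idea is to describe the bottom boxes of \(\mathsf{U}'\) as a lattice path, as the paper does.
\begin{itemize}
\item Let \(x\) be the leftmost box of the lowest row of \(\mathsf{U}\), and \(y\) the rightmost box of the uppermost peak row of \(\sfD\). Show that \(y\) is weakly northeast of \(x\).
\item Show that the boxes of \(\mathsf{U}\) with no southeast neighbour are exactly the path \(x=b_1,\dots,b_m=y\) that steps east when possible and north otherwise.
\item Let \(b_0\) be the box left of \(x\) if \(x\) is bottom in \(\sfD\), and the box below \(x\) otherwise. Then \(b_0\) is the lowest box of \(\sfD\) on \(r\), and it equals \(v_1\).
\item \(b_t\) is bottom in \(\sfD\) iff it is an east step, i.e.\ iff it has the same color as \(b_{t-1}\).
\item Left-justification preserves the weak-northeast relation, and \(\FLOAT\) only slides columns vertically. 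Hence \(b_1,\dots,b_m\) are the bottom boxes of \(\sfD_2\) read left to right.
\end{itemize}
The greedy freezing from \(v_1=b_0\) then freezes precisely the color changes, i.e.\ the north steps. What remains unfrozen is exactly the bottom boxes of \(\sfD\) lying in \(\mathsf{U}\), in order.
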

\begin{proof}
The statement is trivial if \(\sfD_1\) or \(\sfD_2\) are empty, so let us assume otherwise.
We utilize the terminology from \S\ref{defthesplitmap}. 
Let \(u\) be the uppermost box of color \(i\) in the first column of \(\sfD\). Let \(r\) be the ray extending directly southeast from the centroid of \(u\). Let \(\sfL\) be the set of boxes in \(\sfD\) which are weakly southwest of \(r\), and let \(\sfU\) be the set of boxes in \(\sfD\) which are strictly northeast of \(r\). Let \(\sfU'\) be the ledge diagram obtained by left-justifying all boxes in \(\sfU\). Let \(\sfU'' = \FLOAT(\sfU')\). Then we have \(\sfD_1 = \sfL\) and \(\sfD_2 = \sfU''\). 
Let \(x\) be the leftmost among the lowest row of boxes in \(\sfU\), and let \(y\) be the rightmost box in the uppermost peak row of \(\sfD\). See Figure~\ref{trackingboxes676767} for an example.\\

\begin{figure}[h]
\begin{align*}
\\
\hackcenter{}
\hackcenter{
\begin{overpic}[height=38.5mm]{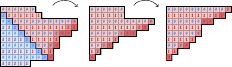}
  \put(-1, 18){\makebox(0,0)[r]{$\scriptstyle \sfL$}}    
    \put(6.6, 27.3){\makebox(0,0)[b]{$\scriptstyle \sfU$}}    
      \put(45, 27.3){\makebox(0,0)[b]{$\scriptstyle \sfU'$}}    
        \put(78.6, 27.3){\makebox(0,0)[b]{$\scriptstyle \sfU''$}}    
        \put(28.4,11.4){\makebox(0,0)[r]{$\scriptstyle B$}}    
          \put(19.3,2){\makebox(0,0)[t]{$\scriptstyle b_0$}}    
           \put(24,4){\makebox(0,0)[l]{$\scriptstyle x=b_1$}}    
             \put(32.6,22){\makebox(0,0)[b]{$\scriptstyle y=b_m$}}    
              \put(28,29.3){\makebox(0,0)[b]{$\scriptstyle \textup{left-justify}$}}   
              \put(63,29.6){\makebox(0,0)[b]{$\scriptstyle \FLOAT$}}   
\end{overpic}
}
\end{align*}
\caption{At left, the upper ledge diagram \(\sfD\). The diagram \(\sfL\) is shaded in blue, with the boxes lying on \(r\) shaded in darker blue. The diagram \(\sfU\) is shaded red, \(B\) shaded darker red, and the bottom-box members of \(B\) shaded darkest red. At center is \(\sfU'\), and at right is \(\sfU''\)---note that the darkest red boxes become the unfrozen bottom boxes in \((\sfL, \sfU'')\).}
\label{trackingboxes676767}
\end{figure}

\noindent{\em Claim 1. The box \(y\) is weakly northeast of \(x\).} Clearly \(y\) is weakly east of \(x\).  Assume by way of contradiction that \(x\) is strictly north of \(y\), i.e., in a row above the uppermost peak row of \(\sfD\). Then by the definition of upper ledge diagrams, the row below \(x\) is necessarily longer than the row containing \(x\), so the box \(z\) directly southeast of \(x\) must be in \(\sfD\). Since \(x\) is strictly northeast of \(r\), this implies \(z\) is strictly northeast of \(r\) as well, so \(z \in \sfU\), in contradiction of the definition of \(x\).\\

By Claim 1, there must be a path \(B\) in \(\sfD\) from \(x\) to \(y\) given by stepping east whenever possible, and north otherwise. In fact, \(B\) is contained in \(\sfU\) since every box traversed in \(B\) is weakly northeast of \(x\), which is strictly northeast of \(r\). Write \(x= b_1, b_2, \ldots, b_m = y\) for the boxes in \(B\) traversed in northeasterly order. If \(b_1\) is bottom, we define \(b_0\) to be the box to its left, and if \(b_1\) is non-bottom, we define \(b_0\) to be the box below it.
See again Figure~\ref{trackingboxes676767} for an example.\\

\noindent{\em Claim 2. If \(b \in B\), then there is no box directly southeast of \(b\) in \(\sfD\).}
We prove the claim for \(b_t\) by induction on \(t\). First note that if there were a box \(z\) directly southeast of \(b_1 = x\), then since \(x\) is strictly northeast of \(r\), \(z\) would be as well, so \(z \in \sfU\), in contradiction of the definition of \(x\). Thus the claim holds for \(b_1\). 
Now assume the claim holds for \(b_t\), for some \(1 \leq t < m\). If \(b_{t+1}\) is east of \(b_t\), then there is no box directly below \(b_{t+1}\), and therefore there cannot be a box directly southeast of \(b_{t+1}\) by the definition of ledge diagrams. If \(b_{t+1}\) is north of \(b_t\), then by the definition of the path \(B\), there is no box to the east of \(b_t\), and thus again no box directly southeast of \(b_{t+1}\).\\

\noindent{\em Claim 3. If \(z \in \sfU \backslash B\), then there is a box directly southeast of \(z\) in \(\sfD\).} Assume by way of contradiction that \(z \in \sfU \backslash B\) and there is not a box directly southeast of \(z\) in \(\sfD\). Let \(R\) be the row containing \(z\). Note then that the row below \(R\) is therefore weakly shorter than \(R\), so it must be that \(R\) is weakly below the uppermost peak row of \(\sfD\), and weakly above the row containing \(x\). Thus \(R\) must contain some box \(b \in B\). By the definition of the path \(B\), every box in \(R\) to the east of \(b\) is in \(B\), so it must be that \(z\) is west of \(b\). Since there is no box directly to the southeast of \(z\) in \(\sfD\), it follows that no box in \(R\) east of \(z\) has a box below it. Since \(z \notin B\), this implies that all steps in \(B\) were eastward steps prior to \(b\), so \(b\) must be in the same row as \(x\). By construction, the path \(B\) contains all boxes in \(R\) weakly east of \(x\), so it must be that \(z\) is strictly west of \(x\), a contradiction of the definition of \(x\).\\

\noindent{\em Claim 4. The box \(b_0\) is the lowest box in \(\sfD\) which lies along \(r\).} First note that \(b_0\) must be in \(\sfL\) by the definition of \(x\). Since \(x\) is strictly northeast of \(r\), it must be then that \(b_0\) lies along \(r\). If \(x\) is bottom, then \(b_0\) is to the left of \(x\) and there is no box directly southeast of \(b_0\), so \(b_0\) is lowest box in \(\sfD\) which lies along \(r\). Thus we may assume that \(b_0\) is non-bottom. Then \(b_0\) is directly below \(x\). There is no box directly southeast of \(b_1\) by Claim 2, so there is no box directly to the right of \(b_0\). Then since the row containing \(b_0\) is weakly lower than the uppermost peak row of \(\sfD\), it must be that there is no box directly to the southeast of \(b_0\). Thus \(b_0\) is the lowest box in \(\sfD\) which lies along \(r\).\\

\noindent{\em Claim 5. Reading from left to right, the bottom boxes in \(\sfU'\) (and thus in \(\sfU''\)) are exactly \(b_1, \ldots, b_m\).}
Let \(z, w\) be boxes in \(\sfU\). Then \(w\) is directly southeast of \(z\) in \(\sfU\) if and only if \(w\) is directly under \(z\) in \(\sfU'\), since the left justification process shifts every row in \(\sfU\) one notch to the right relative to the row under it. Then by Claims 1 and 2, a box \(z \in \sfU\) will be bottom in \(\sfU'\) if and only if \(z \in B\). Again, since \(\sfU'\) is obtained by shifting every row in \(\sfU\) one notch to the right relative to the row under it, any two boxes arranged such that one is weakly northeast of the other in  \(\sfU\) will retain that relationship in \(\sfU'\). Thus since \(b_1, \ldots, b_m\) are in weakly increasing order of northeasternness, they retain that relationship in \(\sfU'\), completing the proof.\\

\noindent{\em Claim 6. For \(t \in [1,m]\), the box \(b_t\) is bottom in \(\sfD\) if and only if \(b_{t-1}\) is of the same color.} By construction, each \(b_t\) is either a north or east step from \(b_{t-1}\), and thus \(b_t\) will have the same color as \(b_{t-1}\) if and only if it is an east step from \(b_{t-1}\). Thus we must show that \(b_t\) is bottom if and only if \(b_t\) is an east step from \(b_{t-1}\).
If \(b_t\) is a north step from \(b_{t-1}\), then \(b_t\) is immediately not bottom. If \(t >1\) and \(b_t\) is an east step from \(b_{t-1}\), then there is no box directly southeast of \(b_{t-1}\) by Claim 2, so \(b_t\) is bottom. Finally, assume \(b_1\) is an east step from \(b_0\). Then by Claim 4 there is no box directly southeast of \(b_0\), so again \(b_1\) is bottom. \\

\noindent{\em Claim 7. The set of bottom boxes in \(\sfD\) which belong to \(\sfU\), read from left to right in \(\sfD\), is the subsequence of \(b_1, \ldots, b_m\) consisting of those \(b_t\) which match the color of \(b_{t-1}\).} Let \(z \in \sfU\) be bottom in \(\sfD\). Then since there is no box directly below \(z\), there can be no box directly to the southeast of \(z\), so \(z =b_t\) for some \(t \in [1,m]\) thanks to Claim 3. Thus the bottom boxes of \(\sfD\) which belong to \(\sfU\) are exactly those \(b_t\) which match the color of \(b_{t-1}\) by Claim 6, and thus the claim follows since the sequence \(b_1, \ldots, b_m\) proceeds in a northeasterly direction.
\\

\noindent{\em Claim 8. The unfrozen bottom boxes in \((\sfL, \sfU'')\) read from left to right are exactly the bottom boxes in \(\sfD\) read from left to right.} Since no boxes are frozen in \(\sfL\), it is enough to show that the bottom boxes in \(\sfU\) read from left to right are exactly the bottom boxes in \(\sfU''\) read from left to right. Note that since \(b_0\) is the lowest element of \(\sfD\) that lies along \(r\), we have that \(b_0\) is in the uppermost peak row of \(\sfL\), and by Claim 5, the bottom boxes in \(\sfU''\) read from left to right are \(b_1, \ldots, b_m\). Then by definition of the freezing process, the box \(b_t\) will be frozen in \(\sfU''\) if and only if \(b_t\) is of opposite color to \(b_{t-1}\). Thus the unfrozen bottom boxes in \(\sfU''\) read from left to right is the subsequence of \(b_1, \ldots, b_m\) consisting of those \(b_t\) which match the color of \(b_{t-1}\), and thus Claim 7 gives the result.
\end{proof}

Let \(\bkap\) be a multicharge, and let \(\usfD = (\sfD_1, \ldots, \sfD_m) \in \UL^m\). We define operators on such sequences:
\begin{align*}
s^{\bkap}(\usfD) &= (\sfD_1, \ldots, \sfD_{m-1}, \SPLIT^{\kappa_m}(\sfD_m)_\downarrow, \SPLIT^{\kappa_m}(\sfD_m)_\uparrow) \in \UL^{m+1};\\
g^{\bkap}(\usfD) &= (\sfD_1, \ldots, \sfD_{m-2}, \GLUE(\sfD_{m-1}, \sfD_{m})) \in \UL^{m-1} \qquad (m>1).
\end{align*}

\begin{lemma}\label{bigfreezeUL}
Let \(\bkap\) be a multicharge, and \(\sfD \in \UL\). Let \(\usfD = (\sfD_1, \ldots, \sfD_{m+1}) = (s^{\bkap})^{\circ m}(\sfD)\). Then the unfrozen bottom boxes in \(\usfD\), read from left to right, are exactly the images under \((s^{\bkap})^{\circ m}\) of the bottom boxes in \(\sfD\), read from left to right.
\end{lemma}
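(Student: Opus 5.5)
The plan is induction on \(m\), with Lemma~\ref{splittrackbottomfrozen1} as the one-step engine. The base case \(m=0\) is trivial: \(\usfD=(\sfD)\) has no frozen boxes, since freezing only starts from \(v_t\) for \(t\in[1,m]\). For \(m=1\) the statement is exactly Lemma~\ref{splittrackbottomfrozen1} with \(i=\kappa_1\).

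For the inductive step, assume the claim for \(\usfD'=(\sfD_1,\ldots,\sfD_m)=(s^{\bkap})^{\circ (m-1)}(\sfD)\). Then \(\usfD=(\sfD_1,\ldots,\sfD_{m-1},\mathsf{L},\sfU'')\), where \((\mathsf{L},\sfU'')=\SPLIT^{\kappa_m}(\sfD_m)\). By Lemma~\ref{splittrackbottomfrozen1} applied to \(\sfD_m\) alone, the bottom boxes of \(\sfD_m\), read left to right, map bijectively and in order onto the bottom boxes of \(\mathsf{L}\) together with the \(m\)-unfrozen bottom boxes of \(\sfU''\). Here the \(m\)-frozen boxes are those frozen in the \(t=m\) step starting from \(v_m\), the rightmost box of the uppermost peak row of \(\mathsf{L}\). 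This bijection also preserves colors, since \(\SPLIT\) only shifts boxes by color-preserving moves.

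The key point is that the freezing steps \(t=m-1,\ldots,1\) for \(\usfD\) are performed after the \(t=m\) step. Each such step greedily scans to the right of \(v_t\) through unfrozen bottom boxes of alternating color, and this scan depends only on the ordered color sequence of the currently unfrozen bottom boxes to the right of \(v_t\). The box \(v_t\) for \(t\le m-1\) lies in \(\sfD_t\), which is unchanged. After the \(t=m\) step, the unfrozen bottom boxes in \((\mathsf{L},\sfU'')\) form, in order and with colors, a copy of the bottom boxes of \(\sfD_m\). So the remaining steps on \(\usfD\) freeze exactly the images of the boxes frozen in the corresponding steps on \(\usfD'\). This needs one small observation: the \(t\)-th step of freezing in \(\usfD'\) likewise acts on the bottom boxes of \(\sfD_{t+1},\ldots,\sfD_m\) only through their left-to-right color sequence. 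Hence the unfrozen bottom boxes of \(\usfD\) are the images of the unfrozen bottom boxes of \(\usfD'\), and the induction hypothesis finishes the step.

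The main obstacle is making the ordering argument rigorous. One must check that the freezing procedure really is a function of the ordered color word of unfrozen bottom boxes lying to the right, and that the bottom boxes of \(\sfD_1,\ldots,\sfD_{m-1}\) which are not to the right of a given \(v_t\) are handled identically in both settings. I would formalize this by encoding the freezing as the greedy matching on the concatenated color word, with each \(v_t\) inserted at its position. Lemma~\ref{splittrackbottomfrozen1} then becomes the statement that replacing the subword for \(\sfD_m\) by the word for \((\mathsf{L},\sfU'')\), with \(v_m\) inserted and its greedy chain deleted, recovers the original subword.
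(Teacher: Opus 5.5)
Your proposal is correct and is essentially the paper's proof. Both argue by induction on \(m\). Lemma~\ref{splittrackbottomfrozen1} identifies the bottom boxes of \((s^{\bkap})^{\circ m}(\sfD)\) that are not \(m\)-frozen with the bottom boxes of \((s^{\bkap})^{\circ (m-1)}(\sfD)\), preserving order and color. After that, the remaining freezing steps \(t=m-1,\ldots,1\) act identically in both sequences, because each step depends only on the color of \(v_t\) and the ordered colors of the currently unfrozen bottom boxes to its right. Your extra care on that last point makes explicit what the paper compresses into ``by definition of the freezing process.''
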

\begin{proof}
We go by induction on \(m\). The base case \(m=0\) is trivial, so assume \(m > 0\) and the claim holds for \(m-1\). Write \(\usfC = (s^{\bkap})^{\circ(m-1)}(\sfD)\). 
By Lemma~\ref{splittrackbottomfrozen1}, the bottom boxes in \(\usfD\) which are not \(m\)-frozen, read from left to right, are exactly the images under \(s^{\bkap}\) of the bottom boxes in \(\usfC\), read from left to right. 
Thus, by definition of the freezing process,  the unfrozen bottom boxes in \(\usfD\), read from left to right, are the images under \(s^{\bkap}\) of the unfrozen bottom boxes in \(\usfC\), read from left to right. 
By induction assumption, the unfrozen bottom boxes in \(\usfC\), read from left to right, are exactly the images under \((s^{\bkap})^{\circ(m-1)}\) of the bottom boxes in \(\sfD\), read from left to right, which yields the result.
\end{proof}

\subsubsection{Frozen arc diagrams for multipartitions}
Let \(\bkap\) be a multicharge, and \(\blam \in \MP^{\bkap}\), of level \(\ell\). The {\em frozen} boxes of \(\blam\) are defined as follows. For \(t \in \ZZ_{>0}\), let \(a_t\) be the uppermost addable position in \(\lambda^{(t)}\), and let \(r_t = \textup{res}(a_t)\). We now iteratively {\em freeze} removable boxes in \(\blam\): for each \(t=\ell-1, \ell-2, \ldots, 1\), freeze the nearest unfrozen removable box of residue \(r_t\) above \(a_t\), and then freeze the nearest unfrozen removable box with residue \(\hat r_t\) above that, and so on, alternating residue at each step, until exhaustion. We say boxes frozen in the \(t\)th step are \(t\)-{\em frozen} when it is useful to be specific.

Let \(i \in \Z_2\). We define the {\em frozen \(i\)-arc diagram} \(\blam[i]^{\,\,\snowflake\,}\) of \(\blam\) as follows. 
First we freeze removable boxes in \(\blam\) according to the process above, subsequently ignoring these boxes. Now reading from top to bottom, whenever an \(i\)-removable unfrozen box is followed by an \(\hat{i}\)-removable unfrozen box with no unarced unfrozen removable boxes between them, draw an arc connecting them. Repeat until no more such arcs are possible. The resulting diagram is \(\blam[i]^{\,\,\snowflake\,}\). An example is shown at the left in Figure~\ref{bigexampleoffreezingfig}.

\begin{figure}[h]
\begin{align*}
\\
\hackcenter{}
\hackcenter{
\begin{overpic}[height=120mm]{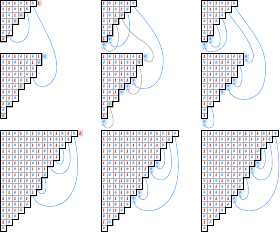}
\end{overpic}
}
\end{align*}
\caption{Various arc diagrams related to a multipartition \(\blam \in \MPres^{\bkap}\). At left, the frozen \(0\)-arc diagram \(\blam[0]^{\,\,\snowflake\,}\) with frozen boxes shaded in gray. In the center, the thawed \(0\)-arc diagram \(\blam[0]^{\waterdrop}\) with type A arcs in blue and type B arcs in gray. At right, the usual \(0\)-arc diagram \(\blam[0]\). }
\label{bigexampleoffreezingfig}
\end{figure}

\begin{lemma}\label{firstfrozenlem}
Let \(\sfD \in \UL\), \(i \in \ZZ_2\). 
Let \(x,y\) be boxes in \(\sfD\), and \(\tilde{x}, \tilde{y}\) be their images in \(\blam = \mathcal{W}^{\bkap}(\sfD)\). 
\begin{enumerate}
\item The box \(x\) is a bottom box in \(\sfD\) if and only if \(\tilde x\) is unfrozen removable in  \(\blam[i]^{\,\,\snowflake\,}\).
\item If \(x,y\) are bottom boxes in \(\sfD\) with \(x\) left of \(y\), then \(\tilde x\) is lower than \(\tilde y\) in \(\blam\).
\item The box \(x\) is unarced bottom in \(\sfD[i]\) if and only if \(\tilde x\) is unarced unfrozen removable in  \(\blam[i]^{\,\,\snowflake\,}\).
\item The boxes \(x,y\) are bottom and share an arc in \(\sfD[i]\) if and only if \(\tilde x, \tilde y\) are unfrozen removable  and share an arc in \(\blam[i]^{\,\,\snowflake\,}\).
\end{enumerate}
\end{lemma}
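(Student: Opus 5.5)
The plan is to reduce everything to Lemma~\ref{bigfreezeUL}, which already tracks bottom boxes through the iterated splitting. I would then compare two pieces of data box by box: the frozen boxes of the sequence \(\usfD = (\sfD_1,\ldots,\sfD_{m+1})=\SPLIT^{\bkap}(\sfD)\), defined via peak rows, and the frozen boxes of the multipartition, defined via uppermost addable positions.

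The first step is a local dictionary between a single upper ledge diagram \(\sfD_t\) and the partition \(\lambda^{(t)}\) obtained by top-justifying its columns. Top-justifying preserves column lengths and the color of each box, and colors become residues. The bottom box of column \(k\) of \(\sfD_t\) becomes the last box in column \(k\) of \(\lambda^{(t)}\). That box is removable exactly when column \(k+1\) is strictly shorter, i.e.\ when the bottom box of column \(k\) in \(\sfD_t\) has no box to its right at the same height. I will check that this fails precisely for bottom boxes that the freezing procedure skips over within a single diagram, or more directly that bottom boxes correspond bijectively to removable boxes plus the ``corner'' positions. Here I use that upper ledge rows strictly increase then weakly decrease, and that \(\mathcal{W}^{\bkap}(\sfD)\) is 2-restricted, so consecutive columns differ appropriately.

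I also identify the rightmost box \(v_t\) of the uppermost peak row of \(\sfD_t\) with the box to the left of the uppermost addable position \(a_t\) of \(\lambda^{(t)}\), so that the color opposite \(v_t\) equals \(r_t=\textup{res}(a_t)\). Ordering is handled as follows. Bottom boxes read left to right across \(\sfD_1,\sfD_2,\ldots\) correspond to removable boxes read bottom to top: within a component, moving rightward means moving to later columns, which are higher rows in the partition, and later components sit higher in the vertical display of \(\blam\). This gives (2).

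With the dictionary in hand, the greedy freezing rules match step by step. ``Nearest unfrozen opposite-color bottom box to the right of \(v_t\)'' translates to ``nearest unfrozen removable box of residue \(r_t\) above \(a_t\)'', and so on with alternating colors/residues. Running \(t=m,\ldots,1\) on both sides therefore freezes corresponding boxes. Combined with Lemma~\ref{bigfreezeUL}, the unfrozen bottom boxes of \(\usfD\) are exactly the images of the bottom boxes of \(\sfD\), which gives (1).

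Parts (3) and (4) then follow by comparing arc rules. The arcs in \(\sfD[i]\) pair an \(i\)-bottom box with a subsequent \(\hat i\)-bottom box, reading right to left, with no unarced bottom boxes between. The arcs in \(\blam[i]^{\,\,\snowflake\,}\) pair an unfrozen \(i\)-removable box with an unfrozen \(\hat i\)-removable box, reading top to bottom, ignoring frozen ones. By (1) and (2) these are the same sequence in the same order with the same colors, so the greedy arc constructions coincide.

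The main obstacle is the local dictionary in the first step, specifically making precise which bottom boxes of \(\sfD_t\) are removable in \(\lambda^{(t)}\) versus merely column-ends. One must show that every bottom box of \(\SPLIT\)-images that is not frozen yields a genuinely removable box, while the frozen ones correspond exactly to the residue-alternating removable chain above \(a_t\). I would first try to prove this by working within one \(\sfD_t\), using Claims 5–8 of Lemma~\ref{splittrackbottomfrozen1}, which describe the bottom boxes of \(\FLOAT(\sfU')\) as the path \(b_1,\ldots,b_m\), and then translating that path under top-justification.
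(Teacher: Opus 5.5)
Your plan has the same architecture as the paper's proof. You push bottom boxes through the splitting with Lemma~\ref{bigfreezeUL}, and match $\usfD$ with $\blam$ via bottom $\leftrightarrow$ removable, left-to-right $\leftrightarrow$ bottom-to-top, color $\leftrightarrow$ residue, and $v_t \leftrightarrow$ the box left of $a_t$. You then observe that the two greedy freezing rules agree and compare the arc rules for (3) and (4). Those later steps are fine.

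The gap is the local dictionary, which you flag as the main obstacle and then state incorrectly. The criterion ``removable exactly when the bottom box of column $k$ of $\sfD_t$ has no box to its right at the same height'' is false. Take $\sfD_t$ with rows of lengths $1,2,2$. Both bottom boxes sit in the last row, and the left one has a right neighbour. Yet top-justifying gives $(2^2,1)$, in which both images are removable. Your suggested repairs point the wrong way: the criterion does not fail ``exactly for boxes skipped by freezing,'' and there is no bijection with removable boxes ``plus corner positions.'' Freezing has nothing to do with removability. Frozen bottom boxes are removable too, and both freezing procedures run over the full sets of bottom, resp.\ removable, boxes.

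What is missing is that each $\sfD_t$, being $\SPLIT^{\kappa_t}(\cdot)_\downarrow$, is staircase. If $u$ lies in row $r_0$, column $k$ of $\mathsf{L}$ consists of the boxes in rows $\geq r_0+k-1$. The box in row $r_0+k-1$, column $k$ exists whenever that column is nonempty, because columns of an upper ledge diagram are contiguous and rows above the peak strictly increase.

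So top-justifying $\sfD_t$ is the same as shifting its $k$th column up $k-1$ notches, which is how the paper's proof phrases $\mathcal{W}^{\bkap}$. Consequently the right-hand neighbour of an image box in $\lambda^{(t)}$ corresponds to the box directly southeast in $\sfD_t$. A bottom box has nothing directly below it, hence, rows being left-justified, nothing directly southeast, so its image is removable. Conversely, every removable box is a column end, i.e.\ the image of a bottom box.

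In particular the column lengths of $\lambda^{(t)}$ strictly decrease. So 2-restrictedness is a consequence of this step, not an input you may assume here. The paper only records it later, in the proof of Theorem~\ref{Kleshtestappend}, and deriving it from Kleshchevness would be circular.

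Claims 5--8 of Lemma~\ref{splittrackbottomfrozen1} are not the tool for this step: they relate $\sfD$ to the split pair, which Lemma~\ref{bigfreezeUL} already packages. With the bijection in place, the rest of your argument goes through.
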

\begin{proof}
Say that \(\blam\) is of level \(\ell\). Then we have that \(\blam = \mathcal{W}^{\bkap}(\sfD)\) is obtained by computing \(\usfD = (s^{\bkap})^{\circ \ell}(\sfD)\), and then shifting the \(k\)th column in each component up by \(k-1\) notches. It is clear then that a box is bottom in \(\usfD\) if and only if its image is removable in \(\blam\), and a bottom box \(y\) is right of a bottom box \(x\) in \(\usfD\) if and only if the image of \(y\)  is above the image of \(x\) in \(\blam\). Then, in consideration of the respective freezing processes and Lemma~\ref{bigfreezeUL}, claims (1) and (2) immediately follow. Then (1), (2) immediately imply (3) and (4), since, reading left to right, the bottom boxes in \(\sfD\) appear in the same sequence as their images (the unfrozen removable boxes) appear in \(\blam\), read bottom to top. 
\end{proof}

\begin{lemma}\label{candelbottoms}
Let \(\bkap\) be a multicharge and \(\sfD \in \UL\). Let \(x\) be a bottom box in \(\sfD\) and let \(\tilde x\) be its image in \(\mathcal{W}^{\bkap}(\sfD)\). Assume \(\sfD' \in \UL\) is obtained by removing \(x\) from \(\sfD\). Then \(\mathcal{W}^{\bkap}(\sfD')\) is obtained by removing \(\tilde x\) from \(\mathcal{W}^{\bkap}(\sfD)\).
\end{lemma}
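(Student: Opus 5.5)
We argue by tracking the single bottom box \(x\) through the iterated splitting that defines \(\mathcal{W}^{\bkap}\). Say \(\blam = \mathcal{W}^{\bkap}(\sfD)\) has level at most \(\ell\); we may choose \(\ell\) large enough that both \(\mathcal{W}^{\bkap}(\sfD)\) and \(\mathcal{W}^{\bkap}(\sfD')\) have level at most \(\ell\). Then \(\mathcal{W}^{\bkap}\) is computed by forming \((s^{\bkap})^{\circ \ell}\) and top-justifying columns. Top-justification of columns is compatible with deleting a bottom box of a column: removing the bottom box of a column of \(\sfD_t\) removes the last box of the corresponding column of \(\lambda^{(t)}\). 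So it suffices to prove the statement for one application of \(\SPLIT^i\). Concretely, we claim that \(\SPLIT^i(\sfD')\) is obtained from \(\SPLIT^i(\sfD)\) by deleting the image of \(x\), and that this image is a bottom box of the relevant component. Induction on the number of splits then finishes the argument.

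For a single split, we use the notation from the proof of Lemma~\ref{splittrackbottomfrozen1}: the box \(u\), the ray \(r\), the pieces \(\sfL\) and \(\sfU\), and the path \(B = (b_1,\ldots,b_m)\). The first column of \(\sfD'\) agrees with that of \(\sfD\) unless \(x\) is the unique box of its first column. In that case \(\sfD = \sfD'\sqcup x\) has width one and can be handled directly. Otherwise the box \(u\) is unchanged, unless \(x\) is itself the bottom box of the first column and equals \(u\). In that edge case \(u\) is the bottom of the first column, \(\sfL=\{u\}\), and removing it gives \(\SPLIT^i(\sfD') = (\varnothing, \ldots)\) with the upper part unchanged. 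This needs a short check that \(\FLOAT\) of the left-justified \(\sfU\) is unaffected.

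In the generic case the ray \(r\) is the same for \(\sfD\) and \(\sfD'\). If \(x\in\sfL\), then \(\sfL' = \sfL\setminus\{x\}\) and \(\sfU'=\sfU\), so the claim is immediate. If \(x\in \sfU\), then \(x\) is bottom in \(\sfD\), so by Claims 3 and 7 of Lemma~\ref{splittrackbottomfrozen1} we have \(x=b_t\), with \(b_t\) an east step from \(b_{t-1}\). Deleting \(x\) from \(\sfU\) deletes the bottom box \(b_t\) of its column in the left-justified \(\sfU'\). This uses Claim 5: the bottom boxes of \(\sfU'\) are exactly the \(b_s\).

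It remains to see that \(\FLOAT\) commutes with deleting that bottom box, i.e.\ that \(\FLOAT(\sfU'\setminus b_t) = \FLOAT(\sfU')\setminus(\text{image of } b_t)\). \textbf{This is the main obstacle.} \(\FLOAT\) shifts whole columns by even amounts to make the diagram upper ledge, and removing a box could a priori change the required shifts of neighbouring columns. We plan to show this cannot happen. Since \(b_t\) is an east step, columns \(t-1\) and \(t\) of \(\sfU'\) have bottoms at the same height. After removal, column \(t\)'s bottom rises by one, which is consistent with the weakly-increasing/decreasing shape, so the shifts of the other columns are unchanged. One must also check that the shrunken column still lies in the region where its shift is unchanged (parity of colors is preserved since only a bottom box is removed). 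We would verify this directly from the definition of \(\FLOAT\) as the unique color-preserving rearrangement, using uniqueness: the diagram \(\FLOAT(\sfU')\) minus the box is already upper ledge and color-compatible, hence equals \(\FLOAT(\sfU'\setminus b_t)\). The one point to confirm is that removing a bottom box of an upper ledge diagram that was bottom in \(\sfD\) leaves an upper ledge diagram; this follows from Claim 6.
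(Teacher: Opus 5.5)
Your skeleton is the paper's: prove the claim for one application of \(\SPLIT^i\), check that the image of \(x\) is again a bottom box, iterate, then top-justify columns. The difference is in the single-split step. The paper uses a locality observation: where a box lands under \(\SPLIT^i\), and its position relative to the northwesternmost box of its piece, depends only on the boxes weakly northwest of it. Since \(x\) is bottom and \(\sfD'\in\UL\), \(x\) is weakly northwest of no other box, so deleting it moves nothing else. You instead run the \(\sfL/\sfU\)/path-\(B\) analysis of Lemma~\ref{splittrackbottomfrozen1}. That route is viable, and your handling of the first-column edge cases and of \(x\in\sfL\) is fine. (There is one slip: the first column of \(\sfD'\) also changes when \(x\) is merely its bottom box, but you treat that case afterwards.)

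The gap is at the step you flag, and the justification you give does not work. Claim 6 only says \(b_t\) is an east step from \(b_{t-1}\), so it controls the \emph{left} neighboring column in \(\sfU'\). Whether deleting \(b_t\) is compatible with \(\FLOAT\) is decided by the \emph{right} neighbor. There you need the hypothesis \(\sfD'\in\UL\), i.e.\ that \(x\) is the last box of its row in \(\sfD\), and your argument for \(x\in\sfU\) never uses it.

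Without that hypothesis, the claim you attribute to Claim 6 is false. Suppose \(b_{t+1}\) were directly east of \(b_t\). Then columns \(t,t+1\) of \(\sfU'\) would have level bottoms, and \(\sfU'\setminus b_t\) would not be a ledge diagram. In \(\FLOAT(\sfU')\) those two bottoms can still be level, so deleting the image of \(b_t\) would break the shape there too.

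With the hypothesis, \(b_{t+1}\) (for \(t<m\)) is a north step. So in \(\sfU'\) the bottom of column \(t+1\) sits one above that of column \(t\), and column \(t\) is not the lone box \(b_t\) unless \(t=m\). The cleanest finish from here:
\begin{itemize}
\item In an upper ledge diagram, consecutive column tops differ by \(0\) or \(1\) according as the top boxes agree in color.
\item Hence the even shifts made by \(\FLOAT\) are determined by the column tops of \(\sfU'\) (their heights and colors), which deleting \(b_t\) does not change.
\item So \(\FLOAT(\sfU'\setminus b_t)\) equals \(\FLOAT(\sfU')\) minus the bottom box of column \(t\), provided \(\sfU'\setminus b_t\) is lower ledge.
\item It is lower ledge: the bottom step from column \(t-1\) becomes \(1\) by Claim 6 (for \(t\geq 2\)), and the step to column \(t+1\) becomes \(0\) by the north step. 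Equivalently, it is the left-justified upper piece for \(\sfD'\).
\end{itemize}

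If you prefer to keep your uniqueness phrasing, the north step still does the work. In \(\sfU'\), the length difference of columns \(t,t+1\) is one more than their top drop. \(\FLOAT\) can only shrink that top drop, to \(0\) or \(1\) of the same parity. So in \(\FLOAT(\sfU')\) column \(t+1\) still ends strictly above column \(t\), and removing the image of \(b_t\) leaves an upper ledge diagram.
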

\begin{proof}
Let \(y\) be a box in \(\sfD\), and let \((\sfD_1, \sfD_2) = \SPLIT^i(\sfD)\). It is straightforward to see from the definition of the \(\SPLIT^i\) map that whether \(y\) lands in \(\sfD_1\) or \(\sfD_2\), and what its position is relative to the northwesternmost box in the component in which it lands, is dependent only on the set of boxes in \(\sfD\) which are weakly northwest of \(y\). Since \(x\) is bottom and \(\sfD' \in \UL\), it follows that \(x\) is not weakly northwest of any other box in \(\sfD\), so no other box's position depends on \(x\). Thus we have that \(\SPLIT^i(\sfD')\) is obtained by deleting the image of \(x\) from \(\SPLIT^i(\sfD)\). Moreover, we have by Lemma~\ref{splittrackbottomfrozen1} that the image of \(x\) is bottom in \(\SPLIT^i(\sfD)\). Iterating on these facts then, we have that \((s^{\bkap})^{\circ m}(\sfD')\) is obtained by deleting the image of \(x\) in \((s^{\bkap})^{\circ m}(\sfD')\) for any \(m \in \Z_{\geq 0}\). Finally, if \(\blam\) is of level \(\ell\), then \(\mathcal{W}^{\bkap}(\sfD)\) (resp. \(\mathcal{W}^{\bkap}(\sfD')\)) is obtained by shifting the \(k\)th column in each component of \((s^{\bkap})^{\circ \ell}(\sfD)\) (resp. \((s^{\bkap})^{\circ \ell}(\sfD')\)) up by \(k-1\) notches, so the result follows.
\end{proof}

Let \(\blam \in \MPres^{\bkap}\). 
We form the {\em thawed \(i\)-arc diagram}  \(\blam[i]^{\waterdrop}\)  now as follows. First construct \(\blam[i]^{\,\,\snowflake\,}\). Note that every \(\hat i\)-removable box must have an \(i\)-addable box directly beneath it, since every component in \(\blam\) is 2-restricted. For every arced \(\hat i\)-removable box in \(\blam[i]^{\,\,\snowflake\,}\) , move the bottom of the arc down one notch so that it connects to the \(i\)-addable box directly beneath it instead (we'll call these {\em type A} arcs). Now, reading from top to bottom, whenever an unarced \(i\)-removable box (frozen or not) is followed by an unarced \(i\)-addable box with no unarced \(i\)-removable or \(i\)-addable boxes between them, draw an arc connecting them (we'll call these {\em type B} arcs). Repeat until no more such arcs are possible. The resulting diagram is  \(\blam[i]^{\waterdrop}\). An example is shown in the middle in Figure~\ref{bigexampleoffreezingfig}.

\begin{lemma}\label{secondfrozenlem}
Let \(\blam \in \MPK^{\bkap}\) and \(i \in \Z_2\). Let \(u\) be an \(i\)-removable box, and \(v\) be an \(i\)-addable box in \(\blam\).
\begin{enumerate}
\item If \(u\) is unfrozen and unarced in \(\blam[i]^{\,\,\snowflake\,}\), then \(u\) is unarced in \(\blam[i]\).
\item If \(u\) is frozen or arced in \(\blam[i]^{\,\,\snowflake\,}\), then \(u\) is arced in \(\blam[i]^{\waterdrop}\).
\item If \(u,v\) are arced in \(\blam[i]^{\waterdrop}\), then every \(i\)-addable or \(i\)-removable box between them is arced in \(\blam[i]^{\waterdrop}\).
\item The box \(u\) is arced in \(\blam[i]^{\waterdrop}\)  if and only if \(u\) is arced in \(\blam[i]\).
\item The box \(v\) is arced in  \(\blam[i]^{\waterdrop}\)  if and only if \(v\) is arced in \(\blam[i]\).
\end{enumerate}
\end{lemma}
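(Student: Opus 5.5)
We plan to prove the five parts in the order (2), (3), (5), (4), (1). The idea is to show that the thawed diagram \(\blam[i]^{\waterdrop}\) is a legitimate non-crossing ``bracket cancellation'' of the \(i\)-signature of \(\blam\). Here \(i\)-removable boxes act as opening brackets and \(i\)-addable boxes as closing brackets, read top to bottom. We then compare it with \(\blam[i]\), which is the greedy cancellation of the same word. The standard fact we will use is the following. Suppose a matching of this word is non-crossing, every letter strictly inside an arc is itself matched, and no unmatched opening bracket precedes an unmatched closing bracket. Then its set of matched letters coincides with that of the greedy cancellation. Parts (4) and (5) are exactly this conclusion, so the work is in establishing these three properties for \(\blam[i]^{\waterdrop}\).

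For (2), we will treat frozen and arced boxes separately. An \(i\)-removable box \(u\) that is arced in \(\blam[i]^{\,\,\snowflake\,}\) keeps its arc, since only the lower endpoint of a type A arc moves. For \(u\) frozen, we will follow its freezing chain. The next element below \(u\) in the chain is either a frozen \(\hat i\)-removable box \(w\) or the uppermost addable position \(a_t\) of \(\lambda^{(t)}\), which then has residue \(i\). In the first case, 2-restrictedness (every component is 2-restricted by Theorem~\ref{Kleshtest}(1)) gives an \(i\)-addable box directly beneath \(w\). We will argue that all boxes strictly between \(u\) and this addable box are already consumed by arcs or by the freezing. Hence the type B step links \(u\) to it.

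Part (3) will be proved by induction on the order in which arcs are drawn. Type A arcs inherit the nesting property from \(\blam[i]^{\,\,\snowflake\,}\): lowering an endpoint by one notch passes only over the position directly beneath an \(\hat i\)-removable box, and that position is the \(i\)-addable box itself. Type B arcs satisfy the property by their defining rule.

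The remaining ingredient, and the step we expect to be the main obstacle, is that no unarced \(i\)-removable box lies above an unarced \(i\)-addable box in \(\blam[i]^{\waterdrop}\). The type B greedy step handles everything except an unfrozen, unarced (in \(\blam[i]^{\,\,\snowflake\,}\)) removable box \(u\) sitting above an unarced \(i\)-addable box. So we must show that every \(i\)-addable box below such a \(u\) is absorbed by a type A arc or a type B arc. We would try to get this from the Kleshchev inequalities in Theorem~\ref{Kleshtest}(2),(3). These say that consecutive components \(\lambda^{(t)}, \lambda^{(t+1)}\) overlap enough that each uppermost addable \(a_t\) is paired off, through its freezing chain, before any unfrozen removable box above it is reached. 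Concretely, we would do a component-by-component induction from the top component downward, tracking the residue of \(a_t\) and using condition (3) to rule out an exposed addable at a block of equal charges.

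Once this is in place, part (1) follows. An unfrozen unarced \(u\) in \(\blam[i]^{\,\,\snowflake\,}\) receives no type A arc. By the step above, it cannot receive a type B arc either, since no unarced \(i\)-addable box lies below it. So \(u\) is unarced in \(\blam[i]^{\waterdrop}\), and part (4) transfers this to \(\blam[i]\).
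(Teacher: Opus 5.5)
Your proposal has a genuine gap at the heart of part (1), and the route you sketch for it does not work. First, the property you single out as the main obstacle is automatic. Type B arcs start from unarced \(i\)-removable boxes ``frozen or not'', so when that step terminates no unarced \(i\)-removable box lies above an unarced \(i\)-addable one. What (1) actually needs is that an unfrozen box \(u\) that is unarced in \(\blam[i]^{\,\,\snowflake\,}\) is never reached by this greedy step.

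For that step you propose Theorem~\ref{Kleshtest}(2),(3), which is circular. Theorem~\ref{Kleshtest} is proved (as Theorem~\ref{Kleshtestappend}) from Theorem~\ref{WKisommainproof}. That proof rests on this lemma, and applies it to \(\mathcal{W}^{\bkap}(\sfD)\), whose Kleshchevness is exactly what is being established. No Kleshchev inequalities are needed, only 2-restrictedness, which the paper takes as given when defining \(\blam[i]^{\waterdrop}\).

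The missing idea is an injection \(q\) from the \(i\)-addable boxes \(x\) below \(u\) into the \(i\)-removable boxes below \(u\), with \(q(x)\) above \(x\):
\begin{itemize}
\item if \(x=a_t\), take the first \(t\)-frozen box;
\item if \(x\) lies directly beneath a \(t\)-frozen \(\hat i\)-removable \(x'\), take the next \(t\)-frozen box above \(x'\);
\item if \(x\) lies beneath an unfrozen \(\hat i\)-removable \(x'\), then \(x'\) is arced in \(\blam[i]^{\,\,\snowflake\,}\) (the unarced \(u\) sits above it), and you take its partner.
\end{itemize}
In the first two cases the box exists and lies strictly below \(u\), because chains greedily take the nearest unfrozen box of the required residue and \(u\) is unfrozen. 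In the third case the arc cannot jump over \(u\). So for each such \(x\), the \(i\)-removable boxes strictly between \(u\) and \(x\) outnumber the \(i\)-addable ones. This is incompatible with \(u\) being arced to \(x\) in \(\blam[i]\), which proves (1) directly without any detour through (4).

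Your route to (3)--(5) also needs repair.

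\(\blam[i]^{\waterdrop}\) is not non-crossing in general. Take \(\kappa_1=\kappa_2=i\) and the Kleshchev multipartition \(\blam=((4,3,2,1),(3,2,1,1),\varnothing,\ldots)\). Its only frozen box is \((2,2)\) in \(\lambda^{(2)}\). The single arc of \(\blam[i]^{\,\,\snowflake\,}\) joins \((1,3)\) and \((4,1)\) in \(\lambda^{(2)}\), and becomes a type A arc ending at \((5,1)\) in \(\lambda^{(2)}\). Meanwhile the type B step joins \((2,2)\) in \(\lambda^{(2)}\) to \((1,5)\) in \(\lambda^{(1)}\), and these two arcs cross. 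So you must first uncross, as the paper does; replacing a crossing pair by a nested pair preserves the arced set and your other two properties.

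In (3), type A arcs do not inherit the interior property from \(\blam[i]^{\,\,\snowflake\,}\), since that diagram ignores frozen boxes and contains no addable boxes at all. The \(i\)-addable boxes between \(u\) and \(v\) that are top-row boxes \(a_t\), or that sit beneath frozen \(\hat i\)-removables, are not type A. Showing they receive type B arcs needs a counting injection like the one above (the paper's map \(s\)).

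Likewise in (2), your claim that everything between a frozen \(u\) and the addable under its chain predecessor is ``already consumed'' is precisely what must be proved. Frozen boxes of other chains and non-type-A addables in between are themselves still waiting for type B partners. The paper instead injects the frozen \(i\)-removable boxes into the non-type-A \(i\)-addable boxes below them and argues by counting.
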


\begin{proof}
For (1), let \(u\) be unfrozen and unarced in \(\blam[i]^{\,\,\snowflake\,}\). 
Let \(X\) be the set of \(i\)-addable boxes below \(u\), and \(Y\) be the set of \(i\)-removable boxes below \(u\). We define now a function \(q:X \to Y\), considering separately the three possible cases. Let \(x \in X\).\\

\noindent{\em Case 1. Say \(x\) is in the top row of \(\lambda^{(t)}\).} Then set \(q(x)\) to be the lowest \(t\)-frozen box---which must be in \(Y\) since \(u\) is unfrozen, above \(x\), and the freezing process greedily \(t\)-freezes the lowest unfrozen \(i\)-removable box above \(x\).\\

\noindent{\em Case 2. Say \(x\) is directly beneath an \(\hat i\)-removable \(t\)-frozen box \(x'\).} Then set \(q(x)\) to be the lowest \(t\)-frozen \(i\)-removable box above \(x'\)---which again must be in \(Y\) since \(u\) is unfrozen, above \(x'\), and the freezing process greedily \(t\)-freezes the lowest unfrozen \(i\)-removable box above \(x'\).\\

\noindent{\em Case 3. Say \(x\) is directly beneath an \(\hat i\)-removable unfrozen box \(x'\).}
Then \(x'\) cannot be unarced in  \(\blam[i]^{\,\,\snowflake\,}\) since it is below \(u\) and \(u\) is unarced. Set \(q(x)\) to be the unfrozen \(i\)-removable box which is arced to \(x'\) in \(\blam[i]^{\,\,\snowflake\,}\)---which must be in \(Y\) since this arc cannot by construction bypass the unarced \(u\).\\

It is clear from construction that \(q\) is injective and \(q(x)\) is above \(x\) for all \(x \in X\), so we have that 
\begin{align}\label{zzy8sf}
\#\{x' \in X \mid x' \textup{ is above } x\} < \# \{y \in Y \mid y \textup{ is above } x\}
\end{align}
for all \(x \in X\). Now, say by way of contradiction that \(u\) is arced in \(\blam[i]\) to some \(x \in X\). Then every \(i\)-removable and \(i\)-addable box strictly between \(u\) and \(x\) must be arced in \(\blam[i]\), so
\begin{align*}
\#\{x' \in X \mid x' \textup{ is above }x\} = \# \{y \in Y \mid y \textup{ is above } x\}
\end{align*}
in contradiction of (\ref{zzy8sf}). Thus \(u\) is unarced in \(\blam[i]\), completing the proof of (1).

For (2), let \(Z\) be the set of frozen \(i\)-removable boxes in \(\blam\). Let \(W\) be the set of \(i\)-addable boxes in \(\blam\) which do not belong to a type A arc in  \(\blam[i]^{\waterdrop}\).
Let \(w\in W\), and assume that \(w\) is arced by a type B arc to some \(i\)-removable box \(x\) in \(\blam[i]^{\waterdrop}\). We claim that \(x \in Z\). Indeed, say by way of contradiction that \(x\) is unfrozen. Letting \(w'\) be the box directly above \(w\), we note that \(w'\) is by definition not arced in \(\blam[i]^{\,\,\snowflake\,}\) (else \(w\) would belong to a type A arc), so cannot be arced to \(x\) in \(\blam[i]^{\,\,\snowflake\,}\). If \(x\) were arced to some \(w'' \neq w'\) in \(\blam[i]^{\,\,\snowflake\,}\), then \(x\) would be arced via type A arc to the box directly beneath \(w''\) in \(\blam[i]^{\waterdrop}\) instead, so this cannot be the case either. Hence both \(w'\) and \(x\) are unarced in \(\blam[i]^{\,\,\snowflake\,}\), which again is impossible since \(w'\) is \(\hat i\)-removable and \(x\) is \(i\)-removable and \(w'\) is below \(x\), completing the claim.

We define now a function \(p: Z \to W\), considering the two possible cases. Let \(z \in Z\).\\

\noindent{\em Case 1. Say \(z\) is the lowest \(t\)-frozen box.} Then set \(p(z)\) to be the \(i\)-addable box in the top row of \(\lambda^{(t)}\)---such a box must exist and be below \(z\) by definition of the freezing process.\\

\noindent{\em Case 2. Say \(z \) is not the lowest \(t\)-frozen box.} Let \(w'\) be the nearest \(t\)-frozen box below \(z\), and set \(p(z)\) to be the addable box directly beneath \(w'\)---note that by definition of the freezing process that \(p(z)\) will have residue \( i\) and \(z'\) is not arced in \(\blam[i]^{\,\,\snowflake\,}\) so \(p(z) \in W\).\\

It is clear from construction that \(p\) is injective and \(p(z)\) is below \(z\) for all \(z \in Z\), so we have that 
\begin{align}\label{zzy8sggg}
\#\{w \in W \mid w \textup{ is below } z\} > \# \{z' \in Z \mid z' \textup{ is below } z\}
\end{align}
for all \(z \in Z\). Now assume that there exists a lowest \(z \in Z\) which is unarced in \(\blam[i]^{\waterdrop}\). Then every \(w \in W\) which is below \(z\) must be arced to some \(i\)-removable \(x\) below \(z\)---and by the claim above we also have that such \(x \in Z\). Thus it follows that
\begin{align}
\#\{w \in W \mid w \textup{ is below } z\} \leq \# \{z' \in Z \mid z' \textup{ is below } z\}
\end{align}
in contradiction of (\ref{zzy8sggg}). Thus no such \(z\) can exist and so every frozen box is arced in  \(\blam[i]^{\waterdrop}\). Finally, if \(u\) is arced in \(\blam[i]^{\,\,\snowflake\,}\) then by construction \(u\) is arced in \(\blam[i]^{\waterdrop}\), completing the proof of (2).

For (3), note that if \(u\) and \(v\) are connected by a type B arc in \(\blam[i]^{\waterdrop}\), then by construction there cannot be any unarced \(i\)-addable or \(i\)-removable boxes between \(u\) and \(v\). So we may assume \(u\) and \(v\) are connected by a type A arc in \(\blam[i]^{\waterdrop}\). Let \(v'\) be the unfrozen \(\hat i\)-removable box directly above \(v\). Then there is an arc from \(u\) to \(v'\) in \(\blam[i]^{\,\snowflake\,}\).
Let \(M\) be the set of \(i\)-addable boxes between \(u\) and \(v\), and let \(N\) be the set of \(i\)-removable boxes between \(u\) and \(v\). We define a map \(s: M \to N\) as follows. 
Letting \(m \in M\), we consider the three possible cases:\\

\noindent{\em Case 1. Say that \(m\) is in the top row of \(\lambda^{(t)}\).} Set \(s(m)\) to be the lowest \(t\)-frozen box---since \(m\) has residue \(i\), and \(u\) is unfrozen, it must be that \(s(m)\) has residue \(i\) and lies below \(u\), so \(s(m)\) is in \(N\).\\

\noindent{\em Case 2. Say there is a removable box \(m'\) directly above \(m\) which is \(t\)-frozen. } Set \(s(m)\) to be the nearest \(t\)-frozen box above \(m'\)---since \(m'\) has residue \(\hat i\) and \(u\) is unfrozen, it must be that \(s(m)\) lies below \(u\), so \(s(m)\) is in \(N\).\\

\noindent{\em Case 3. Say there is a removable box \(m'\) directly above \(m\) which is unfrozen.} Then since \(m'\) is unfrozen and falls within the arc connecting \(u\) and \(v'\) in \(\blam[i]^{\,\,\snowflake\,}\), it must be that \(m'\) is arced to some \(i\)-removable unfrozen box \(n\) below \(u\) in  \(\blam[i]^{\,\snowflake\,}\). Therefore \(m\) shares a type A arc with \(n\) in \(\blam[i]^{\waterdrop}\). Set \(s(m) = n\).\\

It is clear from construction that \(s\) is injective and \(s(m)\) is above \(m\) for all \(m \in M\), so we have that 
\begin{align}\label{yuyuy67}
\#\{n \in N \mid n \textup{ is above } m\} > \# \{m' \in M \mid m' \textup{ is above } m\}
\end{align}
for all \(m \in M\). Now if any \(m \in M\) is unarced in \(\blam[i]^{\waterdrop}\), it must be that every \(n \in N\) above \(m\) is arced to some \(m' \in M\) above \(m\), and so 
\begin{align*}
\#\{n \in N \mid n \textup{ is above } m\} \leq \# \{m' \in M \mid m' \textup{ is above } m\}
\end{align*}
in contradiction of (\ref{yuyuy67}). Thus every \(m \in M\) is arced in \(\blam[i]^{\waterdrop}\). 

Now let \(n \in N\). If \(n\) is frozen, then it is arced in \(\blam[i]^{\waterdrop}\) by (2). If \(n\) is unfrozen, then since \(n\) lies between \(u\) and \(v'\), which are arced in  \(\blam[i]^{\,\snowflake\,}\), it follows that \(n\) must be arced in \(\blam[i]^{\,\snowflake\,}\) and therefore belongs to a type A arc in \(\blam[i]^{\waterdrop}\). This completes the proof of (3).

For (4) and (5), we will generally have arcs which cross in \(\blam[i]^{\waterdrop}\). We may iteratively refine this diagram, replacing any oriented crossing \(\cross\) with \(\uncross\)\,, until no more crossings remain. Call this diagram \(\textup{Unc}(\blam[i]^{\waterdrop})\). We note the following features of \(\textup{Unc}(\blam[i]^{\waterdrop})\), which follow directly from previous results and consideration of the uncrossing process.
\begin{enumerate}
\item[(a)] All arcs in \(\textup{Unc}(\blam[i]^{\waterdrop})\) are oriented downward from an \(i\)-removable to an \(i\)-addable box in \(\textup{Unc}(\blam[i]^{\waterdrop})\).
\item[(b)] There are no unarced \(i\)-addable or \(i\)-removable boxes trapped within any arc in \(\textup{Unc}(\blam[i]^{\waterdrop})\) (which follows from (3)).
\item[(c)] There is no unarced \(i\)-addable box below an unarced \(i\)-removable box in \(\textup{Unc}(\blam[i]^{\waterdrop})\) (since this does not occur in \(\blam[i]^{\waterdrop}\)).
\item[(d)] No arcs cross in \(\textup{Unc}(\blam[i]^{\waterdrop})\).
\end{enumerate}

It is straightforward to check that the features (a)--(d) of \(\textup{Unc}(\blam[i]^{\waterdrop})\) noted above uniquely distinguish it as the diagram \(\blam[i] = \textup{Unc}(\blam[i]^{\waterdrop})\). As the arcedness/unarcedness of \(i\)-addable and \(i\)-removable boxes in \(\blam[i]^{\waterdrop}\) is unchanged by the uncrossing process that obtains \(\textup{Unc}(\blam[i]^{\waterdrop})\), claims (4) and (5) of the lemma statement immediately follow. 
\end{proof}

\begin{theorem}\label{WKisommainproof}
We have \(\mathcal{W}^{\bkap}(\UL) \subseteq \MPK^{\bkap}\), and  \(\mathcal{W}^{\bkap}: \UL \to \MPK^{\bkap}\) and \(\mathcal{Z}^{\bkap}: \MPK^{\bkap} \to \UL\) are mutually inverse crystal isomorphisms.
\end{theorem}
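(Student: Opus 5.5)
We prove that \(\mathcal{W}^{\bkap}\) intertwines the operators \(e_i\) on \(\UL\) and on \(\MP^{\bkap}\). Kleshchevness of the image and the crystal isomorphism then follow because \(\UL\) is highest weight. Concretely, we show by induction on \(|\sfD|\) that for every \(\sfD \in \UL\) and \(i \in \Z_2\):
\begin{enumerate}
\item[(a)] \(\varepsilon_i(\sfD) = \varepsilon_i(\mathcal{W}^{\bkap}(\sfD))\);
\item[(b)] \(\mathcal{W}^{\bkap}(e_i\sfD) = e_i\mathcal{W}^{\bkap}(\sfD)\), with \(0\) corresponding to \(0\).
\end{enumerate}
Since every \(\sfD \neq \varnothing\) has some \(e_i\sfD \neq 0\), iterating (b) gives \(e_{i_1}\cdots e_{i_m}\mathcal{W}^{\bkap}(\sfD) = \mathcal{W}^{\bkap}(\varnothing) = \varnothing\). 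Hence \(\mathcal{W}^{\bkap}(\sfD)\) is Kleshchev, which proves \(\mathcal{W}^{\bkap}(\UL) \subseteq \MPK^{\bkap}\).

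For (a) and (b), we first note that \(\mathcal{W}^{\bkap}(\sfD)\) is 2-restricted componentwise. Top-aligning the columns of an upper ledge diagram \(\sfD_t\) produced by splitting gives columns whose lengths differ by at most one. This is because the \(\SPLIT\) cut is along a \(-45^\circ\) ray and \(\FLOAT\) aligns columns. So the thawed diagram \(\blam[i]^{\waterdrop}\) is available.

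By Lemma~\ref{firstfrozenlem}(3), unarced \(i\)-bottom boxes of \(\sfD[i]\) correspond bijectively, and order-preservingly, to unarced unfrozen \(i\)-removable boxes of \(\blam[i]^{\,\snowflake\,}\). Here the order is left-to-right in \(\sfD\) and bottom-to-top in \(\blam\). By Lemma~\ref{secondfrozenlem}(1),(2),(4), these are exactly the unarced \(i\)-removable boxes of \(\blam[i]\): frozen and arced boxes become arced in \(\blam[i]^{\waterdrop}\) and hence in \(\blam[i]\), and the remaining ones stay unarced. This gives (a). Moreover, the rightmost unarced \(i\)-bottom box \(x\) of \(\sfD\) maps to the topmost unarced \(i\)-removable box \(\tilde x\) of \(\blam\). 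Lemma~\ref{candelbottoms} then says that deleting \(x\) from \(\sfD\) deletes \(\tilde x\) from \(\mathcal{W}^{\bkap}(\sfD)\), which gives (b). A subtle point is that Lemma~\ref{secondfrozenlem} is stated for \(\blam \in \MPK^{\bkap}\). Since we are inducting, we should use it only where its proof actually needs Kleshchevness, which we expect is just 2-restrictedness. Otherwise we must check that the freezing counts in that proof rely only on \(\blam = \mathcal{W}^{\bkap}(\sfD)\).

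Next we show that \(\mathcal{W}^{\bkap}\) is a bijection onto \(\MPK^{\bkap}\) with inverse \(\mathcal{Z}^{\bkap}\). We verify \(\GLUE(\SPLIT^i(\sfD)_\downarrow, \SPLIT^i(\sfD)_\uparrow) = \sfD\) directly from the definitions. The cut along \(r\), left-justification and \(\FLOAT\) are undone by the column-shifting in \(\GLUE\), which stops at the uppermost peak row of \(\sfD_1\); by Claim 4 in Lemma~\ref{splittrackbottomfrozen1}, this row contains \(b_0\) on the ray. In addition, the column-shift-by-\(k-1\) step in \(\mathcal{Z}^{\bkap}\) inverts top-aligning. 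Hence \(\mathcal{Z}^{\bkap}\circ\mathcal{W}^{\bkap} = \mathrm{id}_{\UL}\), and \(\mathcal{W}^{\bkap}\) is injective.

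For surjectivity, both \(\UL\) and \(\MPK^{\bkap}\) are highest weight crystals with \(\varnothing\mapsto\varnothing\), and (a), (b) show that \(\mathcal{W}^{\bkap}\) is a strict embedding commuting with \(e_i\) and \(\varepsilon_i\). We then show it commutes with \(f_i\) as well. Take \(\blam = \mathcal{W}^{\bkap}(\sfD)\) and \(\bmu = f_i\blam\), so that \(\bmu\) is Kleshchev with \(e_i\bmu = \blam\) and \(\varepsilon_i(\bmu) = \varepsilon_i(\blam)+1\). Also \(e_i f_i\sfD = \sfD\). We compare weights and the \(\varepsilon\) statistics, using that \(\varphi_i = \varepsilon_i + \langle \mathrm{wt}, \alpha_i^\vee\rangle\) agree on both sides. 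We can then argue via a counting argument on weight spaces: both crystals are isomorphic to \(\mathcal{B}(\infty)\) (Theorems~\ref{BinfUL} and the \(\MPK^{\bkap}\) result), so \(\#\UL(\theta) = \#\MPK^{\bkap}(\theta)\) is finite. An injection \(\UL(\theta)\to\MPK^{\bkap}(\theta)\) is therefore a bijection, and commuting with \(e_i\) then forces commuting with \(f_i\). This is the cleanest route, and we would take it.

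The main obstacle is the lemma-hygiene point above, namely applying Lemma~\ref{secondfrozenlem} to \(\mathcal{W}^{\bkap}(\sfD)\) before knowing it is Kleshchev. If this fails, we will strengthen the induction so that \(\mathcal{W}^{\bkap}(e_i\sfD)\) is Kleshchev by hypothesis, and prove Kleshchevness of \(\mathcal{W}^{\bkap}(\sfD)\) itself via \(e_i\mathcal{W}^{\bkap}(\sfD) = \mathcal{W}^{\bkap}(e_i\sfD)\). For this step only Lemma~\ref{secondfrozenlem}(1) is needed, to locate an unarced removable box, and its proof uses only the freezing structure and 2-restrictedness.
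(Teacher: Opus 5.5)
Your proof is correct, and its core step is the paper's argument. You match unarced \(i\)-bottom boxes of \(\sfD[i]\) with unarced \(i\)-removable boxes of \(\mathcal{W}^{\bkap}(\sfD)[i]\) via Lemmas~\ref{firstfrozenlem} and \ref{secondfrozenlem}, then delete via Lemma~\ref{candelbottoms}.

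Your hygiene remark is a fair catch. The paper applies Lemma~\ref{secondfrozenlem} to \(\mathcal{W}^{\bkap}(\sfD)\) before Kleshchevness is established. That lemma's proof in fact uses only that the components are 2-restricted, which is what makes removable and addable boxes stack vertically as the thawed diagram and the counting injections require.

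Two small corrections there:
\begin{itemize}
\item \(\mathcal{W}^{\bkap}(\sfD)\) is 2-restricted because each \(\sfD_t\) is staircase. Top-aligning therefore gives \emph{strictly decreasing} column lengths (consecutive rows differ by at most one), not columns differing by at most one.
\item Your fallback claim that only part (1) of Lemma~\ref{secondfrozenlem} would be needed is inaccurate. Knowing that \(e_i\) removes \(\tilde x\), rather than some frozen box above it, requires parts (2) and (4) as well. The fallback is moot in any case.
\end{itemize}

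The genuine divergence is how you obtain \(f_i\)-equivariance and surjectivity.
\begin{itemize}
\item Your counting route needs \(\MPK^{\bkap}\cong\mathcal{B}(\infty)\). The paper presents that fact partly as a \emph{consequence} of this very theorem. To avoid circularity you would have to import it from the literature, namely the \(\ell\to\infty\) limit of Theorem~\ref{KleshCrystalCycThm}.
\item The paper's route is shorter and self-contained, and you had it in hand when you wrote \(e_if_i\sfD=\sfD\). By (b), \(e_i\mathcal{W}^{\bkap}(f_i\sfD)=\mathcal{W}^{\bkap}(e_if_i\sfD)=\mathcal{W}^{\bkap}(\sfD)\). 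Hence \(\mathcal{W}^{\bkap}(f_i\sfD)=f_ie_i\mathcal{W}^{\bkap}(f_i\sfD)=f_i\mathcal{W}^{\bkap}(\sfD)\), using \(f_ie_i=\mathrm{id}\) on multipartitions with \(e_i\neq 0\). Surjectivity is then immediate, since \(\MPK^{\bkap}\) is generated from \(\varnothing\) by the \(f_i\) and \(\mathcal{W}^{\bkap}(\varnothing)=\varnothing\).
\end{itemize}

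For injectivity you use \(\mathcal{Z}^{\bkap}\circ\mathcal{W}^{\bkap}=\mathrm{id}\). The paper instead derives injectivity from \(e_i\)-equivariance by a minimal-rank argument, and uses \(\GLUE\circ\SPLIT^i=\mathrm{id}\) only for the inverse statement. Either works.
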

\begin{proof}
Let \(\sfD \in \UL\). Let \(x\) be a box of \(\sfD\) and \(\tilde x\) be its image in \(\blam = \mathcal{W}^{\bkap}(\sfD)\). We claim that \(x\) is unarced \(i\)-bottom in \(\sfD[  i]\) if and only if \(x\) is unarced \(i\)-removable in \(\blam[i]\). Indeed, if \(x\) is an unarced \(i\)-bottom box in \(\sfD\), then \(\tilde x\) is unarced unfrozen \(i\)-removable in \(\blam[i]^{\,\,\snowflake\,}\) by Lemma~\ref{firstfrozenlem}(3), and so \(\tilde x\) is unarced in \(\blam[i]\) by Lemma~\ref{secondfrozenlem}(1).
On the other hand, if \(\tilde x\) is unarced \(i\)-removable in \(\blam[i]\), then by Lemma~\ref{secondfrozenlem}(2),(4) \(\tilde x\) is unfrozen and unarced in \(\blam[i]^{\,\,\snowflake\,}\), and so \(x\) is an unarced \(i\)-bottom box in \(\sfD\) by Lemma~\ref{firstfrozenlem}(3).

Thus by Lemma~\ref{firstfrozenlem}(2) \(x\) is the rightmost unarced \(i\)-bottom box in \(\sfD[ i]\) if and only if \(\tilde x\) is the uppermost unarced \(i\)-removable box in \(\blam[i]\). Therefore \(e_i \sfD = 0\) if and only if \(e_i \blam = 0\). On the other hand if \(e_i \sfD, e_i \blam \neq 0\), then by Lemma~\ref{candelbottoms} we have that \(\mathcal{W}^{\bkap}(e_i \sfD) = e_i \blam = e_i \mathcal{W}^{\bkap}(\sfD)\).

Assume that \(\mathcal{W}^{\bkap}(f_{i_k} \cdots f_{i_1} \varnothing) = f_{i_k} \cdots f_{i_1} \varnothing \) for some \(i_1, \ldots, i_k, i_{k+1} \in \Z_2\).
Then we have
\begin{align*}
e_{i_{k+1}} f_{i_{k+1}} \cdots f_{i_{1}} \varnothing &= f_{i_k} \cdots f_{i_{1}} \varnothing = \mathcal{W}^{\bkap}( f_{i_k} \cdots f_{i_{1}} \varnothing)
= \mathcal{W}^{\bkap}(e_{i_{k+1}} f_{i_{k+1}} \cdots f_{i_{1}} \varnothing)
= e_{i_{k+1}}\mathcal{W}^{\bkap}( f_{i_{k+1}} \cdots f_{i_{1}} \varnothing),
\end{align*}
and therefore 
\begin{align*}
 f_{i_{k+1}} \cdots f_{i_{1}}  = 
f_{i_{k+1}} e_{i_{k+1}} f_{i_{k+1}} \cdots f_{i_{1}} 
=
f_{i_{k+1}}e_{i_{k+1}}\mathcal{W}^{\bkap}( f_{i_{k+1}} \cdots f_{i_{1}} \varnothing)
=\mathcal{W}^{\bkap}( f_{i_{k+1}} \cdots f_{i_{1}} \varnothing).
\end{align*}
Thus, since \(\mathcal{W}^{\bkap}(\varnothing) = \varnothing\) by definition, it follows from induction that \(\mathcal{W}^{\bkap}(f_{i_m} \cdots f_{i_1} \varnothing) = f_{i_m} \cdots f_{i_1} \varnothing\) for all \(m \in \Z_{\geq 0}\) and all \(i_1, \ldots, i_m \in \ZZ_2\). Then since \(\UL\) and \(\MPK^{\bkap}\) are highest weight \({\tt A}^{(1)}_1\)-crystals generated by \(\varnothing\), it follows that \(\mathcal{W}^{\bkap}(\UL) \subseteq \MPK^{\bkap}\) and \(\mathcal{W}^{\bkap}\) commutes with \(f_i\) for \(i \in \Z_2\).

To see injectivity of \(\mathcal{W}^{\bkap}\), assume by way of contradiction that \(\sfD , \sfD' \in \UL\) are of minimal rank such that \(\sfD \neq \sfD'\) and \(\mathcal{W}^{\bkap}(\sfD) = \mathcal{W}^{\bkap}(\sfD')\). There exists \(i \in \Z_2\) such that \(e_i\mathcal{W}^{\bkap}(\sfD) \neq 0\), and by above we have
\begin{align*}
\mathcal{W}^{\bkap}( e_i \sfD) = e_i \mathcal{W}^{\bkap}(\sfD) = e_i \mathcal{W}^{\bkap}(\sfD') = \mathcal{W}^{\bkap}( e_i \sfD'), 
\end{align*}
which implies by minimality that \(e_i \sfD = e_i \sfD'\), and so \(\sfD = f_i e_i \sfD = f_i e_i \sfD' = \sfD'\), giving the desired contradiction.

That \(\mathcal{W}^{\bkap}\) commutes with \(\textup{wt}\), \(\varphi_i\), \(\varepsilon_i\) follows from definitions, so  \(\mathcal{W}^{\bkap}: \UL \to \MPK^{\bkap}\) is a crystal isomorphism. By construction, the map \(\GLUE\) is a left inverse to \(\SPLIT^i\), so \(\mathcal{Z}^{\bkap}\) is a left inverse to \(\mathcal{W}^{\bkap}\), which completes the proof.
\end{proof}

\subsection{Recognizing Kleshchev multipartitions}
For \(\sfD \in \UL\), we will write \(\textup{ht}(\sfD)\) for the {\em height} (number of nonzero rows) of \(\sfD\). We will write \(\textup{wid}(\sfD)\) for the {\em width} (length of a peak row) of \(\sfD\).
For a multicharge \(\bkap\), we will say \(\usfD \in \UL^m\) is {\em \(\bkap\)-good} provided it satisfies the following criteria. For \(t \in [1,m]\), set \(\tilde{\kappa}_t(\usfD) = \kappa_t\) if \(\sfD_t = \varnothing\), and otherwise set \(\tilde{\kappa}_t(\usfD)\) to be the color of the top row of \(\sfD_t\). 
\begin{enumerate}
\item For all \(t \in [1,m-1]\), \( \tilde{\kappa}_t(\usfD) = \kappa_t\).
\item For all \(t \in [1,m-1]\), \(\sfD_t\) is {\em staircase}; the top boxes of \(\sfD_t\) lie along a \(-45^\circ\) diagonal.
\item For all \(t \in [1,m-1]\), \(\textup{wid}(\sfD_t) - \textup{ht}(\sfD_{t+1}) \geq \delta_{\tilde{\kappa}_t(\usfD) , \tilde{\kappa}_{t+1}(\usfD) } - 1\).
\item For all \(s \in [1,m-2]\), \(u \in [s+2,m]\) such that \(\sfD_u \neq \varnothing\) and \(\tilde{\kappa}_s(\usfD)  \neq \tilde{\kappa}_{s+1}(\usfD)  = \cdots = \tilde{\kappa}_{u-1}(\usfD)  \neq \tilde{\kappa}_u(\usfD) \), there exists some \(t \in [s,u-1]\) such that \(\textup{wid}(\sfD_t) - \textup{ht}(\sfD_{t+1}) \geq \delta_{\tilde{\kappa}_t(\usfD) , \tilde{\kappa}_{t+1}(\usfD) } + 1.\)
\end{enumerate}

\begin{lemma}\label{kappagoodlemma}
Let \(\usfD \in \UL^m\) be \(\bkap\)-good. Then we have:
\begin{enumerate}
\item[(a)]\(s^{\bkap}(\usfD)\) is \(\bkap\)-good and \(g^{\bkap} \circ s^{\bkap}(\usfD) = \usfD\).
\item[(b)] If \(m>1\) then \(g^{\bkap}(\usfD)\) is \(\bkap\)-good and \(s^{\bkap} \circ g^{\bkap}(\usfD) = \usfD\).
\end{enumerate}
\end{lemma}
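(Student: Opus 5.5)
The proof splits into a local geometric analysis of a single application of \(\SPLIT^{i}\) and \(\GLUE\), followed by bookkeeping of the four \(\bkap\)-good conditions. Throughout, only the last one or two components change: \(s^{\bkap}\) replaces \(\sfD_m\) by the pair \((\sfL, \sfU'') = \SPLIT^{\kappa_m}(\sfD_m)\), and \(g^{\bkap}\) replaces \((\sfD_{m-1}, \sfD_m)\) by \(\GLUE(\sfD_{m-1}, \sfD_m)\). So conditions (1)--(3) for indices away from the end are inherited automatically. The work is at the seam.

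For (a), set \(i = \kappa_m\) and let \(u\) be the uppermost \(i\)-colored box in the first column of \(\sfD_m\). I will treat two cases: either \(u\) is the top box of the first column (so \(\tilde\kappa_m(\usfD) = i\)), or \(u\) is the second box (so the top row of \(\sfD_m\) has color \(\hat i\)).

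\emph{Structure of \(\sfL\).} Since \(\sfL\) consists of the boxes weakly below the \(-45^\circ\) ray \(r\) through \(u\), and \(\sfD_m\) is upper ledge, \(\sfL\) is staircase with top row of color \(i\). This gives conditions (1) and (2) at the new index \(m\).

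\emph{Width of \(\sfL\) and height of \(\sfU''\).} By Claim 4 in the proof of Lemma~\ref{splittrackbottomfrozen1}, \(\textup{wid}(\sfL)\) is the column of \(b_0\), the lowest box on \(r\). By Claim 5, the bottom boxes of \(\sfU''\) are \(b_1, \ldots, b_m\). I will read \(\textup{ht}(\sfU'')\) and the top color of \(\sfU''\) off the path \(B\), and compare with \(\textup{wid}(\sfL)\). From this I expect \(\textup{wid}(\sfL) - \textup{ht}(\sfU'') \ge \delta_{i, \tilde\kappa_{m+1}} - 1\), which is condition (3) at the new seam. Moreover, equality should hold exactly when \(B\) starts with a north step.

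\emph{The old seam \(m-1\).} Here the relevant quantity is \(\textup{ht}(\sfD_m)\) versus \(\textup{ht}(\sfL)\). In the first case these heights are equal. In the second case \(\textup{ht}(\sfL) = \textup{ht}(\sfD_m) - 1\), while \(\tilde\kappa_m\) flips from \(\hat i\) to \(i\). I will check that the change in \(\delta\) exactly compensates, so the inequality in condition (3) at \(m-1\) is preserved.

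\emph{Condition (4).} This involves chains \(s < \cdots < u\) of equal \(\tilde\kappa\), and it is the part I expect to be the main obstacle. The new chains are those ending at \(u = m+1\), and those through index \(m\) whose \(\tilde\kappa_m\) changed in the second case. My first approach will be the following.
\begin{itemize}
\item If \(\tilde\kappa_m\) did not change, the existing witness \(t\) still works.
\item If \(\tilde\kappa_m\) changed, the chain is either broken or lengthened by one. In the lengthened case, I will show that the strict inequality at the new seam (from the \(B\)-path analysis) supplies the missing witness.
\end{itemize}

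\emph{The identity \(g^{\bkap} \circ s^{\bkap} = \textup{id}\).} This reduces to \(\GLUE(\sfL, \sfU'') = \sfD_m\). Under the alignment rule, the top row of \(\sfU''\) sits level with or one notch above the top row of \(\sfL\). The uppermost peak row of \(\sfL\) is the row of \(b_0\). The gluing step shifts columns down in steps of two, bounded by that row and by the left neighbour; I will show this exactly undoes \(\FLOAT\) together with the unshifting of the rows of \(\sfU\). This is the column-by-column comparison underlying Claims 5--8 of Lemma~\ref{splittrackbottomfrozen1}.

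For (b), I will show \(\SPLIT^{\kappa_{m-1}}(\GLUE(\sfD_{m-1}, \sfD_m)) = (\sfD_{m-1}, \sfD_m)\) by running the same argument in reverse. Since \(\sfD_{m-1}\) is staircase with top color \(\kappa_{m-1}\) (conditions (1) and (2)), its top-left box is the \(u\) for the split of the glued diagram. The ray from \(u\) then cuts off exactly \(\sfD_{m-1}\): condition (3) guarantees the glued columns of \(\sfD_m\) lie strictly above \(r\). The \(\bkap\)-goodness of \(g^{\bkap}(\usfD)\) then follows from the same seam computations read backwards, with condition (4) again the delicate step.
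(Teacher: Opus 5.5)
Your outline follows the paper's: only the seams next to the last component change, you split according to whether the top row of $\sfD_m$ has color $i=\kappa_m$ or $\hat i$, $g^{\bkap}\circ s^{\bkap}=\mathrm{id}$ is just $\GLUE$ being a left inverse of $\SPLIT^i$, and in (b) condition (3) keeps the columns of $\sfD_m$ above the uppermost peak row of $\sfD_{m-1}$. The gap is condition (4). You assign the witness to the wrong seam in both cases, and the step you announce for the lengthened case is false.

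If the top row of $\sfD_m$ has color $\hat i$, the ray starts at the second box of the first column, so $\textup{ht}(\sfL)=\textup{ht}(\sfD_m)-1$. Meanwhile $\sfU''$ has top color $\hat i$ and $\textup{ht}(\sfU'')\in\{\textup{wid}(\sfL),\textup{wid}(\sfL)+1\}$. Hence at the new seam $\delta=0$ and $\textup{wid}(\sfL)-\textup{ht}(\sfU'')\le 0<1$, so that seam is never a witness. For a concrete case, take $\bkap=(0,1,1,\dots)$, $\sfD_1$ with rows $(1,2)$ colored $0,1$, $\sfD_2$ a single $1$-box, and $\sfD_3$ with rows $(1,2)$ colored $0,1$. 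This triple is $\bkap$-good. Splitting $\sfD_3$ gives $\sfL$ a single $1$-box and $\sfU''$ two one-box rows colored $0,1$, so $\tilde\kappa=(0,1,1,0)$. The lengthened chain $(s,u)=(1,4)$ gets $1-2=-1$ at the new seam, and its only witness is the old one at $t=1$.

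What works is transporting old data through the seam $(\sfD_{m-1},\sfL)$. There the one-row drop pays for the change in $\delta$, which is the same compensation you already use for condition (3).
\begin{itemize}
\item For a lengthened chain, an old witness at $t\le m-2$ is untouched. An old witness at the seam $(\sfD_{m-1},\sfD_m)$ read $\textup{wid}(\sfD_{m-1})-\textup{ht}(\sfD_m)\ge 1$ with $\delta=0$. It becomes $\textup{wid}(\sfD_{m-1})-\textup{ht}(\sfL)\ge 2$ with the new $\delta=1$.
\item The ``broken'' case is not free. It creates new chains ending at $\sfL$, or with $(s,u)=(m-1,m+1)$. Their witness is $\textup{wid}(\sfD_{m-1})-\textup{ht}(\sfL)\ge 1$, which comes from the old condition (3) with $\delta=1$.
\item When $\tilde\kappa_m$ does not change, the chains ending at $u=m+1$ are brand new and have no existing witness, so your first bullet does not cover them. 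Here the new seam must serve. You need the strict fact that $\textup{ht}(\sfU'')<\textup{wid}(\sfL)$ whenever the top row of $\sfU''$ has color $\hat i$. In general one only has $\textup{ht}(\sfU'')\le\textup{wid}(\sfL)$, with equality only if that top color is $i$. This is stronger than the condition-(3) bound you plan to read off the path $B$.
\end{itemize}

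The same bookkeeping is needed in (b). Suppose $\tilde\kappa_{m-2}=\tilde\kappa_m\neq\tilde\kappa_{m-1}$, $\sfD_m\neq\varnothing$ and $\textup{ht}(\sfD_m)\ge\textup{wid}(\sfD_{m-1})$. Then gluing flips the top color and raises the height by one. Condition (3) for $g^{\bkap}(\usfD)$ at that seam then needs $\textup{wid}(\sfD_{m-2})\ge\textup{ht}(\sfD_{m-1})+1$. This comes only from the input's condition (4) at $(s,u)=(m-2,m)$, where the witness cannot sit at $t=m-1$. So it does not follow from ``the same seam computations read backwards'' unless those computations are the corrected ones above.
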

\begin{proof}
We first record some easy observations about the split function. For \(\sfD \in \UL\) and \(i \in \Z_2\) we have:
\begin{enumerate}
\item[(i)] If the top row of \(\sfD\) is colored \(i\), then \(\textup{ht}(\SPLIT^i(\sfD)_\downarrow) = \textup{ht}(\sfD)\) and \(\textup{ht}(\SPLIT^i(\sfD)_\uparrow) \leq \textup{wid}(\SPLIT^i(\sfD)_\downarrow) \), with equality only if the top row of \(\SPLIT^i(\sfD)_\uparrow\) is colored \(i\).
\item[(ii)] If the top row of \(\sfD\) is colored \(\hat i\), then \(\textup{ht}(\SPLIT^i(\sfD)_\downarrow) =\textup{ht}(\sfD) - 1\) and we have \(\textup{ht}(\SPLIT^i(\sfD)_\uparrow) \in \{\textup{wid}(\SPLIT^i(\sfD)_\downarrow), \textup{wid}(\SPLIT^i(\sfD)_\downarrow) + 1\}\) and the top row of \( \SPLIT^i(\sfD)_\uparrow \) is colored \(\hat i\).
\end{enumerate}

Now we consider (a). It is straightforward to check that \(\GLUE\) is a left inverse of \(\SPLIT^i\) for all upper ledge diagrams, so the latter claim is immediate. Assume \(\usfC \in \UL^{m-1}\) is \(\bkap\)-good, and set \(\usfD = s^{\bkap}(\sfD) \in \UL^{m}\). We check that properties (1)--(4) hold for \(\usfD\). Note first that since \(\usfC\) is \(\bkap\)-good and \(\sfD_t = \sfC_t\) for \(t \in [1,m-2]\), we have that (1),(2) hold when \(t \in [1,m-2]\), (3) holds when \(t \in [1,m-3]\), and (4) holds when \(u \in [1,m-3]\). Moreover, since \(\sfD_{m-1} = \SPLIT^{\kappa_{m-1}}(\sfC_{m-1})_{\downarrow}\), we have that (1),(2) hold for \(t=m-1\) by definition of the map \(\SPLIT^{\kappa_{m-1}}\). Thus it remains to check (3) when \(t \in \{m-2,m-1\}\) and (4) when \(u \in \{m-2,m-1\}\).
We consider this in cases.\\

\noindent{\em Case 1. Assume \(\tilde{\kappa}_{m-1}(\usfC) = \kappa_{m-1}\)}. In this case we have by (i)  that 
\begin{align*}
\textup{ht}(\sfD_{m-1}) = \textup{ht}(\sfC_{m-1}) \leq \textup{wid}(\sfC_{m-2}) +1 - \delta_{\tilde{\kappa}_{m-2}(\usfC), \tilde{\kappa}_{m-1}(\usfC)} = \textup{wid}(\sfD_{m-2}) + 1 - \delta_{\tilde{\kappa}_{m-2}(\usfD), \tilde{\kappa}_{m-1}(\usfD)},
\end{align*}
where the inequality follows from the fact that \(\usfC\) is \(\bkap\)-good. Thus (3) holds when \(t=m-1\). We also have by (i) that 
\begin{align}\label{923847}
\textup{wid}(\sfD_{m-1})  \geq 
 \textup{ht}(\sfD_m) + 1 - \delta_{\tilde{\kappa}_m(\usfD), \kappa_{m-1}} = \textup{ht}(\sfD_m) + 1 - \delta_{\tilde{\kappa}_m(\usfD), \tilde{\kappa}_{m-1}(\usfD)} 
\end{align}
which implies that (3) holds when \(t=m\). 

Now assume that the assumptions of (4) are met in \(\usfD\), with \(u=m-1\). Then the same assumptions are met in \(\usfC\) since \(\tilde{\kappa}_{m-1}(\usfD) = \kappa_{m-1} = \tilde{\kappa}_{m-1}(\usfC) \). But then (4) holds for \(u=m-1\) in \(\usfD\) because it holds in \(\usfC\) and \(\textup{ht}(\sfD_{m-1}) = \textup{ht}(\sfC_{m-1})\). Now assume the conditions of (4) are met in \(\usfD\), with \(u=m\). Then \(\tilde{\kappa}_m(\sfD) \neq \tilde{\kappa}_{m-1}(\sfD)\), so by (\ref{923847}) we have that \(\textup{wid}(\sfD_{m-1}) - \textup{ht}(\sfD_m) \geq 1\), and thus (4) holds for \(\usfD\) when \(u=m\).\\

\noindent{\em Case 2. Assume \(\tilde{\kappa}_{m-1}(\usfC) \neq \kappa_{m-1}\)}. In this case we have that by (ii) that
\begin{align*}
 \textup{ht}(\sfD_{m-1}) &= \textup{ht}(\sfC_{m-1}) - 1 \leq \textup{wid}(\sfC_{m-2}) - \delta_{ \tilde{\kappa}_{m-2}(\usfC), \tilde{\kappa}_{m-1}(\usfC)} = \textup{wid}(\sfD_{m-2}) -1+
 \delta_{ \tilde{\kappa}_{m-2}(\usfD), \tilde{\kappa}_{m-1}(\usfD)}
\end{align*}
where the inequality follows from the fact that \(\usfC\) is \(\bkap\)-good, and the last equality follows from the fact that  \( \tilde{\kappa}_m(\usfD)  = \tilde{\kappa}_{m-1}(\usfC) \neq \kappa_{m-1} = \tilde{\kappa}_{m-1}(\usfD)\). Thus (3) holds when \(t= m-1\). We also have by (ii) that 
\begin{align*}
\textup{wid}(\sfD_{m-1}) \geq \textup{ht}(\sfD_m) -1,
\end{align*}
which implies that (3) holds when \(t=m\).

Now we assume the assumptions of (4) are met in \(\usfD\), with \(u=m-1\). Then \(\tilde{\kappa}_{m-2}(\usfC) = \tilde{\kappa}_{m-2}(\usfD) \neq \tilde{\kappa}_{m-1}(\usfD) \neq \tilde{\kappa}_{m-1}(\usfC)\), and since \(\usfC\) is \(\bkap\)-good, we have that 
\begin{align}\label{xxyxyx}
\textup{wid}(\sfD_{m-2}) = \textup{wid}(\sfC_{m-2}) \geq \textup{ht}(\sfC_{m-1}) = \textup{ht}(\sfD_{m-1}) + 1.
\end{align}
thus (4) holds for \(u= m-1\) in \(\usfD\).

Now assume the conditions of (4) are met in \(\usfD\), with \(u=m\). If \(s = m-2\), then \(\tilde{\kappa}_{m-2}(\usfC) = \tilde{\kappa}_{m-2}(\usfD) \neq \tilde{\kappa}_{m-1}(\usfD) \neq \tilde{\kappa}_{m-1}(\usfC)\), and thus (4) holds again as in (\ref{xxyxyx}).
Thus we may assume \(s\leq m-3\). Then we have that
\begin{align*}
\tilde{\kappa}(\usfC)_s \neq \tilde{\kappa}(\usfC)_{s+1} = \cdots  =  \tilde{\kappa}(\usfC)_{m-2} \neq \tilde{\kappa}(\usfC)_{m-1},
\end{align*}
and since \(\usfC\) is \(\bkap\)-good, there exists some \(t \in [s,m-2]\) such that \(\textup{wid}(\sfC_t) - \textup{ht}(\sfC_{t+1}) \geq \delta_{\tilde{\kappa}_t(\usfC), \tilde{\kappa}_{t+1}(\usfC)}+1\). If \(t \leq m-3\), then this immediately implies that \(\textup{wid}(\sfD_t) - \textup{ht}(\sfD_{t+1}) \geq \delta_{\tilde{\kappa}_t(\usfD), \tilde{\kappa}_{t+1}(\usfD)}+1\), as desired. If \(t=m-2\), then 
\begin{align*}
\textup{wid}(\sfD_{m-2}) = 
 \textup{wid}(\sfC_{m-2}) \geq \textup{ht}(\sfC_{m-1}) + 1 = \textup{ht}(\sfD_{m-1}) + 2,
\end{align*}
as desired. Thus (4) holds for \(u=m\) in \(\usfD\), which completes the proof of (a).

Now we consider (b). Assume \(\usfC \in \UL^{m+1}\) is \(\bkap\)-good, and set \(\usfD = g^{\bkap}(\usfC)\). 
Since \(\usfC\) is \(\bkap\)-good, we have by (3) that \(\textup{wid}(\sfC_m) \geq \textup{ht}(\sfC_{m+1}) + \delta_{\tilde{\kappa}_m(\usfC), \tilde{\kappa}_{m+1}(\usfC)} - 1\). This, together with the fact that \(\sfC_m\) is staircase, ensures that, when lined up as in Figure~\ref{gengluefig} to compute  \(\GLUE(\sfC_m, \sfC_{m+1})\), the bottom row of \(\sfC_{m+1}\) does not extend below the uppermost peak row of \(\sfC_m\). From this fact, it is straightforward to see that \(\SPLIT^{\kappa_m} \circ \GLUE(\sfC_m, \sfC_{m+1}) = (\sfC_m, \sfC_{m+1})\), and so \(s^{\bkap} \circ g^{\bkap}( \usfC)  = \usfC\), proving the latter part of (b).

We now check that properties (1)--(4) hold for \(\usfD\). Note first that since \(\usfC\) is \(\bkap\)-good and \(\sfD_t = \sfC_t\) for \(t \in [1,m-1]\), we have immediately that (1), (2) hold, (3) holds when \(t \in [1,m-2]\), and (4) holds when \(u \in [1,m-1]\). Thus it remains to check (3) when \(t=m-1\) and (4) when \(u = m\). We do so now, in three separate cases. \\

\noindent{\em Case 1. Assume \(\tilde{\kappa}_m(\usfC) = \tilde{\kappa}_{m+1}(\usfC)\) or \(\textup{ht}(\sfC_{m+1})< \textup{wid}(\sfC_m)\).} 
By definition of the \(\GLUE\) map and the fact that \(\sfC_m\) is staircase, we have \(\tilde{\kappa}_m(\usfD) = \tilde{\kappa}_m(\usfC)\), and 
\begin{align*}
\textup{ht}(\sfD_m)= \textup{ht}(\sfC_m) 
& \leq \textup{wid}(\sfC_{m-1}) - \delta_{ \tilde{\kappa}_{m-1}(\usfC), \tilde{\kappa}_m(\usfC)   } + 1
= \textup{wid}(\sfD_{m-1}) - \delta_{ \tilde{\kappa}_{m-1}(\usfD), \tilde{\kappa}_m(\usfD)   } + 1,
\end{align*}
where the inequality follows from the fact that \(\usfC\) is \(\bkap\)-good. Therefore (3) holds when \(t=m-1\). Note moreover that since \(\textup{ht}(\sfD_m) = \textup{ht}(\sfC_m)\) and \(\sfD_t = \sfC_t\) for \(t \in [1,m-1]\), if the conditions for (4) are met in \(\usfD\) with \(s=m\), then they are similarly met in \(\usfC\), and so (4) holds for \(\usfD\) since \(\usfC\) is \(\bkap\)-good.
\\

\noindent{\em Case 2. Assume \(\tilde{\kappa}_m(\usfC) \neq \tilde{\kappa}_{m-1}(\usfC) =\tilde{\kappa}_{m+1}(\usfC)\) and \(\textup{ht}(\sfC_{m+1}) \geq \textup{wid}(\sfC_m)\).} Since \(\usfC\) is \(\bkap\)-good, we have that \(\textup{ht}(\sfC_{m+1}) \in \{\textup{wid}(\sfC_m), \textup{wid}(\sfC_m)+1\}\). As \(\sfC_m\) is staircase, the definition of the \(\GLUE\) map gives us that \(\tilde{\kappa}_m(\usfD) = \tilde{\kappa}_{m+1}(\usfC) = \tilde{\kappa}_{m-1}(\sfC) = \tilde{\kappa}_{m-1}(\sfD)\) (so condition (4), \(u=m\) cannot be met in \(\usfD\)), and \(\textup{ht}(\sfD_m) = \textup{ht}(\sfC_m) + 1\). Moreover, since \(\usfC\) is \(\bkap\)-good, we have by (4) then that  \(\textup{wid}(\sfC_{m-1}) - \textup{ht}(\sfC_m) \geq 1\), and thus
\begin{align*}
\textup{wid}(\sfD_{m-1}) = \textup{wid}(\sfC_{m-1}) \geq \textup{ht}(\sfC_m) + 1 = \textup{ht}(\sfD_m),
\end{align*}
so (3) holds for \(\usfD\) when \(t=m-1\). \\

\noindent{\em Case 3. Assume \(\tilde{\kappa}_{m-1}(\usfC) = \tilde{\kappa}_m(\usfC) \neq \tilde{\kappa}_{m+1}(\usfC)\) and \(\textup{ht}(\sfC_{m+1}) \geq \textup{wid}(\sfC_m)\).} As above, we have that \(\textup{ht}(\sfC_{m+1}) \in \{\textup{wid}(\sfC_m), \textup{wid}(\sfC_m)+1\}\). As \(\sfC_m\) is staircase, the definition of the \(\GLUE\) map gives us that \(\tilde{\kappa}_m(\usfD) = \tilde{\kappa}_{m+1}(\usfC) \neq \tilde{\kappa}_{m-1}(\sfC) = \tilde{\kappa}_{m-1}(\sfD)\), and \(\textup{ht}(\sfD_m) = \textup{ht}(\sfC_m) + 1\). Therefore since \(\usfC\) is \(\bkap\)-good, we have by (3) that 
\begin{align*}
\textup{wid}(\sfD_{m-1}) = \textup{wid}(\sfC_{m-1}) \geq \textup{ht}(C_m) = \textup{ht}(\sfD_m) - 1, 
\end{align*}
so (3) holds for \(\usfD\) when \(t=m-1\). 

Now assume the conditions of (4) are met in \(\usfD\), with \(u=m\). Then we must have \(s\leq m-2\), and
\begin{align*}
\tilde{\kappa}_{s}(\usfC) \neq \tilde{\kappa}_{s+1}(\usfC) = \cdots = \tilde{\kappa}_m(\usfC) \neq \tilde{\kappa}_{m+1}(\usfC)
\end{align*}
and so since \(\usfC\) is \(\bkap\)-good, we have by (4) that \(\textup{wid}(\sfC_t) - \textup{ht}(\sfC_{t+1}) \geq \delta_{\tilde{\kappa}_t(\usfC), \tilde{\kappa}_{t+1}(\usfC)} + 1\) for some \(t \in [s, m-1]\) (the case \(t=m\) being already ruled out above). If \(t \leq m-2\), then we immediately have that (4) holds for \(\usfD\), so assume finally that \(t=m-1\). Then
\begin{align*}
\textup{wid}(\sfD_{m-1}) = \textup{wid}(\sfC_{m-1}) \geq \textup{ht}(\sfC_m) + 2 = \textup{ht}(\sfD_m) + 1,
\end{align*}
and so (4) holds for \(\usfD\), completing the proof.
\end{proof}

\begin{theorem}\label{Kleshtestappend}
Let \(\bkap\) be a multicharge, and let \(\blam \in \MP^{\bkap}\) be a multipartition of level \(\ell\). Then \(\blam \in \MPK^{\bkap}\) if and only if it satisfies the following conditions:
\begin{enumerate}
\item The partition \(\lambda^{(t)}\) is 2-restricted for all \(t \in [1,\ell]\).
\item For all \(t \in [1,\ell-1]\), we have \(\lambda^{(t)}_1 - (\lambda^{(t+1)})'_1 \geq \delta_{\kappa_t, \kappa_{t+1}} - 1\).
\item If \(s \in [1,\ell-2]\), \(u \in [s+2,\ell]\) are such that \(\lambda^{(u)} \neq \varnothing\) and \(\kappa_s \neq \kappa_{s+1} = \cdots =  \kappa_{u-1} \neq \kappa_u\), then there exists  \(t \in [s,u-1]\) such that  \(\lambda^{(t)}_1 - (\lambda^{(t+1)})'_1 \geq \delta_{\kappa_t, \kappa_{t+1}}+1\). 
\end{enumerate}
\end{theorem}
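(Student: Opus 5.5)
The plan is to transport the three conditions to the upper ledge side, where Lemma~\ref{kappagoodlemma} describes exactly which sequences arise by iterated splitting. The central observation is that the column-shift used in the definition of \(\mathcal{Z}^{\bkap}\) (shift the \(k\)th column down \(k-1\) notches, color by residue) is a bijection between 2-restricted partitions \(\lambda^{(t)}\in\Par^{\kappa_t}\) and \emph{staircase} upper ledge diagrams whose top row has color \(\kappa_t\). Under this bijection, \(\lambda^{(t)}_1=\textup{wid}(\sfD_t)\) and \((\lambda^{(t)})'_1=\textup{ht}(\sfD_t)\), and the residue of the top-left box gives \(\tilde\kappa_t=\kappa_t\). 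Indeed, 2-restrictedness, meaning distinct column lengths, is exactly what makes the shifted diagram upper ledge with top boxes on a \(-45^\circ\) diagonal. Consequently, for \(\blam\) of level \(\ell\), conditions (1)--(3) of the statement are equivalent to the sequence \(\usfD=(\sfD_1,\ldots,\sfD_\ell)\) attached to \(\blam\) being \(\bkap\)-good. One small point needs care here: the goodness definition requires staircase and \(\tilde\kappa_t=\kappa_t\) only for \(t\le m-1\), while condition (1) asks 2-restrictedness for all \(t\le \ell\). I will therefore work with \(m=\ell+1\), appending an empty \(\sfD_{\ell+1}\); for this empty entry the conditions in (3),(4) are vacuous or trivially satisfied.

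\emph{(\(\Leftarrow\))} Suppose \(\blam\) satisfies (1)--(3), so \(\usfD\) is \(\bkap\)-good. Apply Lemma~\ref{kappagoodlemma}(b) repeatedly to glue down to a single diagram \(\sfD=\GLUE(\usfD)=\mathcal{Z}^{\bkap}(\blam)\in\UL\). At each stage the lemma also gives \(s^{\bkap}\circ g^{\bkap}=\mathrm{id}\) on good sequences, so splitting \(\sfD\) back recovers \(\usfD\), up to trailing empties, which split trivially. Hence \(\mathcal{W}^{\bkap}(\sfD)=\blam\), and Theorem~\ref{WKisommainproof} gives \(\blam\in\MPK^{\bkap}\).

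\emph{(\(\Rightarrow\))} Suppose \(\blam\in\MPK^{\bkap}\). By Theorem~\ref{WKisommainproof}, \(\blam=\mathcal{W}^{\bkap}(\sfD)\) for some \(\sfD\in\UL\). A single diagram \((\sfD)\in\UL^1\) is trivially \(\bkap\)-good, since every condition ranges over an empty index set. Lemma~\ref{kappagoodlemma}(a) then shows that \((s^{\bkap})^{\circ k}(\sfD)\) is \(\bkap\)-good for all \(k\). Take \(k\) large enough that all components past \(\ell\) are empty. The components \(\sfD_1,\ldots,\sfD_\ell\) are staircase, carry the correct top colors, and satisfy (3),(4) of goodness. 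Since \(\lambda^{(t)}\) is obtained by top-justifying columns of \(\sfD_t\), the translation above yields conditions (1)--(3).

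The main obstacle is the dictionary step. I need to verify carefully that top-justifying a staircase upper ledge diagram with top color \(\kappa_t\) gives a 2-restricted partition with the stated width/height identities and residues. I also need to check that, in the goodness conditions, \(\tilde\kappa_t\) may be replaced by \(\kappa_t\) even when some \(\sfD_t\) is empty, using the convention \(\tilde\kappa_t=\kappa_t\) for empty entries together with \(\textup{wid}=\textup{ht}=0\) matching \(\lambda_1=\lambda'_1=0\). Finally, the case of empty intermediate components deserves a direct check against the inequalities in (2), since an empty \(\lambda^{(t)}\) forces \((\lambda^{(t+1)})'_1\le 1-\delta_{\kappa_t,\kappa_{t+1}}\).
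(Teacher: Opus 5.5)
Your proposal is correct and follows essentially the same route as the paper: the column-shift dictionary identifies conditions (1)--(3) with \(\bkap\)-goodness of \((\sfD_1,\ldots,\sfD_\ell,\varnothing)\), and then Lemma~\ref{kappagoodlemma}(a) for the forward direction and Lemma~\ref{kappagoodlemma}(b) for the reverse direction, each combined with Theorem~\ref{WKisommainproof}, finish the argument. Your handling of the appended empty component and of the convention \(\tilde\kappa_t=\kappa_t\) for empty entries matches what the paper does, and it is exactly what makes the dictionary an equivalence.
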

\begin{proof}
Assume \(\blam \in \MPK^{\bkap}\). Then by Theorem~\ref{WKisommainproof}, we have \(\blam = \mathcal{W}^{\bkap}(\sfD)\) for some \(\sfD \in \UL\). In view of the definition of \(\mathcal{W}^{\bkap}\), we have that \(\blam\) is achieved by computing
\(
\usfD= (\sfD_1, \ldots, \sfD_\ell, \varnothing) := s^{\kappa_{\ell }} \circ \cdots \circ s^{\kappa_1}( \sfD),
\)
and then shifting the \(k\)th column in each component up by \(k-1\) notches. Now, the single upper ledge diagram \(\sfD\) is vacuously a \(\bkap\)-good sequence, and so it follows from Lemma~\ref{kappagoodlemma} that \(\usfD\) is \(\bkap\)-good. It is immediate that column-shifting a staircase upper ledge diagram in the above fashion yields a 2-restricted partition, so (1) is satisfied for \(\blam\). Moreover, we have \(\textup{wid}(\sfD_t) = \lambda^{(t)}_1\) and \(\textup{ht}(\sfD_t) = (\lambda^{(t)})'_1\) for all \(t \in [1,\ell]\), so (2), (3) are satisfied for \(\blam\) as well since \(\usfD\) is \(\bkap\)-good.

Now assume \(\blam\) satisfies conditions (1)--(3) above. Let \(\usfD = (\sfD_1, \ldots, \sfD_\ell, \varnothing)\) be the sequence of diagrams given by shifting the \(k\)th column in each component down by \(k-1\) notches. By (1), each component is staircase upper ledge, and (2), (3) ensure that \(\usfD\) is \(\bkap\)-good. Therefore, setting \(\sfD:= g^{\kappa_1} \circ \cdots \circ g^{\kappa_\ell}(\usfD) \in \UL\), we have by Lemma~\ref{kappagoodlemma} that \(\usfD = s^{\kappa_\ell} \circ \cdots \circ s^{\kappa_1}(\sfD)\), and so \(\blam = \mathcal{W}^{\bkap}(\sfD)\). Thus by Theorem~\ref{WKisommainproof} we have that \(\blam \in \MPK^{\bkap}\).
\end{proof}

\section{Representations of KLR algebras}\label{bigKLRsec}

\subsection{The KLR algebra of type \({\tt A}_1^{(1)}\).}
Fix a field \(\k\) of characteristic \(p \geq 0\). Let \(\theta \in \ZZ_{\geq 0}I\), and set \(n= \height(\theta) \). We write \(I^\theta\) for the set of binary words \(\bi = i_1 \cdots i_n \in \Z_2^n\) such that \(\sum_{k=1}^n \alpha_{i_k} = \theta\). 
As in \cite{KhovLauda1, Rouq1}, the {\em KLR} (or {\em quiver Hecke}) algebra of type \({\tt A}_{1}^{(1)}\) is the \(\Z\)-graded \(\k\)-algebra \(R_\theta\) generated by elements
\begin{align*}
\{1_{\bi} \mid \bi \in I^\theta\} \cup \{y_1, \dots, y_n\} \cup \{\psi_1, \dots, \psi_{n-1}\}, 
\end{align*}
subject to the relations:
\begin{align*}
1_{\bi} 1_{\bj} &= \delta_{\bi, \bj} 1_{\bi};
\qquad
\sum_{\bi \in I^\theta}1_{\bi} = 1;
\qquad
y_r 1_{\bi} = 1_{\bi} y_r;
\qquad
y_r y_s = y_s y_r;
\qquad
\psi_r 1_{\bi} = 1_{s_r \bi} \psi_r;
\end{align*}
\begin{align*}
(y_r \psi_r - \psi_r y_{r+1}) 1_{\bi} = \delta_{i_r, i_{r+1}} 1_{\bi} = (y_{r+1} \psi_r - \psi_r y_r);
\qquad
\psi_r^2 1_{\bi}
=- \delta_{i_r, \hat{i}_{r+1}}(y_r^2 - 2y_r y_{r+1} + y_{r+1}^2)1_{\bi};
\end{align*}
\begin{align*}
(\psi_{r+1} \psi_r\psi_{r+1} - \psi_r\psi_{r+1}\psi_r )1_{\bi} = -\delta_{i_r,i_{r+2}} \delta_{i_r, \hat{i}_{r+1}}(y_r - 2y_{r+1} + y_{r+2}) 1_{\bi},
\end{align*}
for all admissible \(\bi, \bj \in I^\theta\) and \(r,s \in [1,n]\).
The \(\Z\)-grading is given by setting \(\deg(1_{\bi}) = 0\), \(\deg(y_r) = 2\), \(\deg(\psi_r 1_{\bi}) = 2 - 4\delta_{i_r, i_{r+1}}\). To each permutation \(\sigma \in \mathfrak{S}_n\), we associate a choice of distinguished reduced expression \(\sigma = s_{j_1} \dots s_{j_k}\) and define an associated element \(\psi_\sigma := \psi_{j_1} \dots \psi_{j_k} \in R_\theta\).

\subsection{Representation theory of \(R_\theta\)}
We consider the category \(R_\theta\)-mod of finitely generated \(\ZZ\)-graded \(R_\theta\)-modules. We will use the \(\cong\) symbol to indicate a (degree-preserving) isomorphism of \(R_\theta\)-modules, and \(\approx\) to indicate an isomorphism of \(R_\theta\)-modules up to some grading shift.
For \(\theta_1,\ldots, \theta_k \in \Z_{\geq 0}I\), there is a natural inclusion 
\begin{align*}
R_{\theta_1, \ldots, \theta_k}:= R_{\theta_1} \otimes \cdots \otimes  R_{\theta_k} \hookrightarrow R_{\theta_1 + \cdots + \theta_k},
\end{align*} 
and we write \(1_{\theta_1, \ldots, \theta_k}\) for the image of the identity under this inclusion.
This yields accompanying exact induction and restriction functors
\begin{align*}
&\Ind_{\theta_1,\ldots, \theta_k}^{\theta_1 + \cdots + \theta_k}
:= R_{\theta_1 + \cdots + \theta_k}1_{\theta_1, \ldots, \theta_k}\otimes_{R_{\theta_1, \ldots, \theta_k}}?
\colon \;R_{\theta_1, \ldots, \theta_k}\textup{-mod} \to R_{\theta_1 + \cdots + \theta_k}\textup{-mod},
\\
&\Res_{\theta_1,\ldots, \theta_k}^{\theta_1 + \cdots + \theta_k}
:=
1_{\theta_1, \ldots, \theta_k} R_{\theta_1 + \cdots + \theta_k} \otimes_{R_{\theta_1 + \cdots + \theta_k} }?
\colon \;R_{\theta_1 + \cdots + \theta_k}\textup{-mod} \to R_{\theta_1, \ldots, \theta_k}\textup{-mod}.
\end{align*}
For \(M_i \in R_{\theta_i}\) we will sometimes use the shorthand \(M_1 \circ \cdots \circ M_k:= \Ind_{\theta_1,\ldots, \theta_k}^{\theta_1 + \cdots + \theta_k}(M_1 \boxtimes \cdots \boxtimes M_k)\).

\subsubsection{Chevalley functors}\label{transfuncsec}
The KLR algebras \(R_{0}, R_{\alpha_1}, R_{\alpha_0}\) each have a single one-dimensional simple module concentrated in degree zero, which we denote as \(\k_0, \k_{\alpha_1}, \k_{\alpha_0}\), respectively. For \(i \in \Z_2\) we denote the functors
\begin{align*}
&E_i: R_{\theta+ \alpha_i}\textup{-mod} \to R_{\theta}\textup{-mod}, \quad M \mapsto \Res_{\theta, \alpha_i}^{\theta+\alpha_i}M\\
&F_i: R_{\theta}\textup{-mod} \to R_{\theta+\alpha_i}\textup{-mod}, \quad M \mapsto \Ind_{\theta, \alpha_i}^{\theta+ \alpha_i}(M \boxtimes \k_{\alpha_i})\\
&E_i^*: R_{\theta+ \alpha_i}\textup{-mod} \to R_{\theta}\textup{-mod}, \quad M \mapsto \Res_{\alpha_i, \theta}^{\theta+\alpha_i}M\\
&F_i^*: R_{\theta}\textup{-mod} \to R_{\theta+\alpha_i}\textup{-mod}, \quad M \mapsto \Ind_{ \alpha_i, \theta}^{\theta+ \alpha_i}(\k_{\alpha_i}\boxtimes M),
\end{align*}
where we interpret \(E_iM\) (resp. \(E_i^*M\)) as an \(R_\theta\)-module via the restriction induced by the inclusion \(R_\theta \hookrightarrow R_{\theta, \alpha_i}\) (resp. \(R_\theta \hookrightarrow R_{\alpha_i, \theta}\)).

\begin{remark}
The KLR algebras \(R_\theta\) categorify the lower half of the quantum group associated to \(\hat{\mathfrak{sl}}_2(\C)\), in the sense that Grothendieck rings of the categories of finitely generated projective and finite dimensional graded KLR modules recover (integral forms of) \(U_q^-(\hat{\mathfrak{sl}}_2)\) and its dual, respectively \cite{KhovLauda1, Rouq1}. The functors \(E_i, F_i, E_i^*, F_i^*\) satisfy categorified Serre relations, and the set of simple KLR modules taken together with the action of these functors underwrites an \({\tt A}^{(1)}_1\)-crystal structure isomorphic to \(\mathcal{B}(\infty)\), see  \cite{LVcrystals} and \S\ref{branchingpoly}.
\end{remark}

\subsubsection{Graded characters}
Let \(\theta \in \Z_{\geq 0}I\) and \(M \in R_\theta\textup{-mod}\). For \(d \in \Z\) we denote the \(d\)th degree subspace in \(M\) by \(M_d\), and use \(q^cM\) to denote grading shift by \(c\), so \((q^cM)_m = M_{m-c}\). The {\em graded character} of \(M\) is 
\begin{align*}
\textup{ch}\, M := \sum_{\bi \in I^\theta} \sum_{d \in \Z} \dim(1_{\bi} M_d)q^d  \bi \in \Z_{\geq 0}[q^{\pm 1}] I^\theta.
\end{align*}
We say that \(\bi \in I^\theta\) is a {\em word in \(M\)} provided that \(1_{\bi} M \neq 0\), i.e., \(\bi\) is in the support of \(\textup{ch}\, M\).

\subsection{Specht modules}
Much of KLR representation theory can be couched in the language of (skew) Specht modules, which have a nicely combinatorial structure and graded character.

\subsubsection{Skew diagrams}

A {\em skew diagram} \(\mu\) is a collection of boxes (with inherent residues) that can be realized as the set difference of Young diagrams \(\mu = \lambda \backslash \nu\) for some charge \(\kappa \in \Z_2\) and some \(\lambda \supseteq \nu \in \Par^\kappa\). We say \(\mu\) is a {\em ribbon} if it is connected and contains no \(2 \times 2\) squares. We extend the notion of content to skew diagrams in the obvious way.
See Figure~\ref{figskewdiags} for an example.

\subsubsection{Standard tableaux and residue sequences}
Given a skew diagram \(\mu\), a {\em \(\mu\)-tableau} is a bijective labeling \({\tt t}: [1, |\mu|] \to \mu\) for the boxes of \(\mu\). We say \({\tt t}\) is {\em standard} if labels increase left to right along rows and top to bottom down columns. We write \(\textup{Std}(\mu)\) for the set of standard \(\mu\)-tableaux. The {\em leading tableau} \({\tt t}^{\mu}\) is the standard tableau which labels boxes row-by-row, left to right, beginning with the top row (see Figure~\ref{figskewdiags}). If \(\mu\) has content \(\theta \in \Z_{\geq 0}I\), then the {\em residue sequence} of a \(\mu\)-tableau \({\tt t}\) is 
\(
\bi^{\tt t} = \textup{res}({\tt t}(1)) \cdots \textup{res}({\tt t}(|\mu|)) \in I^\theta
\). 
We write \(\bi^{\mu} := \bi^{{\tt t}^\mu}\) for the residue sequence of the leading tableau. The symmetric group \(\mathfrak{S}_{|\mu|}\) has a left action on \(\mu\)-tableaux given by \((\omega \cdot {\tt t})(x) = {\tt t}(\omega^{-1} \cdot x)\); we write \(\omega_{\tt t}\) for the permutation such that \(\omega_{\tt t} \cdot {\tt t}^\mu = {\tt t}\), and set \(\psi_{\tt t}:= \psi_{\omega_{{\tt t}}} \).

\subsubsection{Garnir elements}
For \(d \in \Z_{\geq 0}\) and \(r \in [1,d-1]\), we define
\begin{align*}
\tau_r := 1+ \psi_{2r}\psi_{2r-1}\psi_{2r+1}\psi_{2r} \in R_{d\delta}.
\end{align*}
Recalling that for each \(\omega \in \mathfrak{S}_d\) we have a chosen reduced expression \(\omega = s_{r_1} \cdots s_{r_m}\), we associate \(\omega\) with the element \(\tau_\omega := \tau_{r_1} \cdots \tau_{r_m} \in R_{d\delta}\). For \(r+s = d\), we write \(\mathscr{D}^{r,s}_d\) for the set of  minimal coset representatives in \(\mathfrak{S}_d /(\mathfrak{S}_r \times \mathfrak{S}_s)\), and set
\begin{align*}
g_{r,s} := \sum_{\omega \in \mathscr{D}_d^{r,s}} \tau_\omega  \in R_{d \delta}.
\end{align*}

Let \(\mu\) be a skew diagram of content \(\theta\). Assume \(u\) is a box in \(\mu\) with residue \(i\) and there is a box \(v\) directly below \(u\). We call such \(u\) a {\em Garnir box for \(\mu\)}. The {\em Garnir belt of \(u\)} is the set of boxes to the right of \(u\) and to the left of \(v\) (inclusive). Assume \([x,y]\) is the interval of labels in the Garnir belt in the leading tableau \({\tt t}^\mu\).
The {\em Garnir tableau} \({\tt g}^\mu_u\) is the tableau defined as follows. Outside of the Garnir belt, all labels match those in \({\tt t}^\mu\). If the lower row of the Garnir belt is an odd number of boxes, label the first box in this row with \(x\). If the upper row of the Garnir belt is an odd number of boxes, label the last box in this row with \(y\). The remaining interval of labels \([x', y']\) then are used to fill in the \(2r\) boxes in the top row of the belt first, followed by the \(2s\) boxes in the bottom row to complete \({\tt g}^\mu_{u}\). Then the {\em Garnir element for \(u\)} is  
\begin{align*}
\textup{Gar}^\mu_u 1_{\bi^\mu}= (1_{\bj} \otimes g_{r,s}1_{(i \hat i)^{r+s}} \otimes 1_{\bk}) \psi_{{\tt g}^\mu_u} 1_{\bi^\mu} \in R_{\theta}.
\end{align*}
where \(\bj, \bk\) are the residue sequences associated with the labels \([1,x'-1]\) and \([y'+1, |\mu|]\) in \({\tt t}^\mu\), respectively. See Figure~\ref{figskewdiags}.

\begin{figure}[h]
\begin{align*}
\hackcenter{}
\hackcenter{
\begin{overpic}[height=17.5mm]{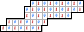}
\end{overpic}
}
\;\;
\hackcenter{}
\hackcenter{
\begin{overpic}[height=17.5mm]{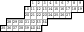}
\end{overpic}
}
\;\;
\hackcenter{}
\hackcenter{
\begin{overpic}[height=17.5mm]{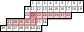}
\end{overpic}
}
\end{align*}
\caption{From left to right, a skew diagram \(\mu\) of content \(20 \alpha_0 + 21 \alpha_1\); the leading tableau \({\tt t}^\mu\), and the Garnir tableau \({\tt g}^\mu_u\) for a box \(u\) with Garnir belt highlighted in red.}
\label{figskewdiags}
\end{figure}

\subsubsection{Specht modules}
We now define Specht modules for the KLR algebra associated to skew diagrams, following \cite{KMR, MuthSkew}.

\begin{definition}
Let \(\mu\) be a skew diagram of content \(\theta \in \Z_{\geq 0}I\). The {\em (skew) Specht module} \(\bS(\mu) \in R_\theta\)-mod is the left ideal \(R_\theta 1_{\bi^\mu}\) modulo the left submodule generated by the elements
\begin{enumerate}
\item \(y_r 1_{\bi^\mu}\), for every \(r \in [1, |\mu|]\);
\item \(\psi_r 1_{\bi^\mu}\), for every pair of labels \(r,r+1\) in the same row of \({\tt t}^\mu\);
\item \(\textup{Gar}^\mu_u 1_{\bi^\mu}\), for every Garnir box \(u\) in \(\mu\).
\end{enumerate}
\end{definition}
We will henceforth drop the `skew' and refer to \(\bS(\mu)\) merely as a Specht module, heedless of whether \(\mu\) is a skew diagram or a proper Young diagram. Specht modules have a combinatorial basis and graded character indexed by the standard tableaux:

\begin{theorem}\cite[Theorem 5.1]{MuthSkew}
\label{spechtbasis}
Let \(\mu\) be a skew diagram.
The Specht module \(\bS(\mu)\) has homogeneous \(\k\)-basis given by 
\begin{align*}
\{1_{\bi^{\tt t}} \psi_{{\tt t}} 1_{\bi^\mu} \mid {\tt t} \in \textup{Std}(\mu)\}.
\end{align*}
and graded character
\(
\textup{ch}\, \bS(\mu) = \sum_{{\tt t} \in \textup{Std}(\mu)} q^{\deg(\psi_{\tt t} 1_{\bi^\mu})}\bi^{\tt t}
\).
\end{theorem}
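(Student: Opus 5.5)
This is the basis theorem for skew Specht modules, quoted from \cite[Theorem 5.1]{MuthSkew}; my plan follows the standard Kleshchev--Mathas--Ram style argument \cite{KMR}, specialized to skew shapes. The proof has two halves: a spanning argument and a linear independence argument. The character formula then follows immediately. Each basis vector \(1_{\bi^{\tt t}}\psi_{\tt t}1_{\bi^\mu}\) is homogeneous of degree \(\deg(\psi_{\tt t}1_{\bi^\mu})\) and lies in the \(\bi^{\tt t}\)-weight space, since \(\psi_r 1_{\bi} = 1_{s_r\bi}\psi_r\) and \(\omega_{\tt t}\cdot\bi^\mu = \bi^{\tt t}\).

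\textbf{Spanning.} The cyclic module \(R_\theta 1_{\bi^\mu}\) is spanned by elements \(\psi_w y^{\mathbf a}1_{\bi^\mu}\) for \(w\in\mathfrak S_{|\mu|}\), by the standard PBW-type basis of KLR algebras. In \(\bS(\mu)\), relation (1) kills every \(y\)-monomial, so the image is spanned by \(\psi_w 1_{\bi^\mu}\). I would then show by induction on the Bruhat/length order that each \(\psi_w 1_{\bi^\mu}\) lies in the span of \(\psi_{\tt t}1_{\bi^\mu}\) with \({\tt t}\) standard. There are three inputs:
\begin{itemize}
\item Changing reduced expressions costs only terms with fewer \(\psi\)'s, via the braid relation together with \(y\)-corrections, which vanish.
\item If \(w\cdot{\tt t}^\mu\) is not row-standard, one can write \(w=w's_r\) with \(r,r+1\) in the same row, and relation (2) kills the result.
\item If \(w\cdot{\tt t}^\mu\) is row-standard but not column-standard, there is a Garnir box \(u\). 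Expanding \(\textup{Gar}^\mu_u\) expresses \(\psi_{{\tt g}^\mu_u}1_{\bi^\mu}\) as a combination of terms indexed by tableaux that are strictly larger in the Bruhat sense.
\end{itemize}
The Garnir elements here use the \(\tau_r\) and \(g_{r,s}\) adapted to \(\delta\)-bricks, so I would invoke the brick combinatorics of \cite{KMR} verbatim. The only skew-specific observation needed is that the Garnir belt of a box in a skew diagram is still a pair of adjacent row segments. The argument is therefore local and carries over unchanged.

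\textbf{Linear independence.} This is the main obstacle. My first approach would be a dimension count. The number of standard \(\mu\)-tableaux is known to equal \(\dim\bS(\mu)\) in the ungraded/Hecke setting, via the Brundan--Kleshchev isomorphism with cyclotomic Hecke algebras and the known dimension of skew Specht modules there. This works provided \(\mu=\lambda\setminus\nu\) can be realized inside a cyclotomic quotient, with \(\bS(\mu)\) identified with a subquotient of \(\bS(\lambda)\) cut out by \(\nu\) through restriction \(1_{\nu,\mu}\). Concretely, I would show
\begin{align*}
\Res^{|\lambda|}_{|\nu|,|\mu|}\bS(\lambda)\supseteq \bS(\nu)\boxtimes \bS(\mu)
\end{align*}
appears as the \(\nu\)-filtration layer. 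Independence of the \(\psi_{\tt t}\) for \(\lambda\)-tableaux extending \({\tt t}^\nu\), which is the non-skew theorem of \cite{KMR}, then forces independence of the \(\psi_{\tt t}1_{\bi^\mu}\). The delicate point is checking that the skew relations map to zero in that subquotient, so that we get a surjection \(\bS(\mu)\to\) layer. Relations (1) and (2) are clear; the Garnir relations need the same local computation as in the spanning step.
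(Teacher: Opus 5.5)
The paper gives no argument for this statement; it is imported from \cite{MuthSkew}. Your outline therefore has to stand on its own.

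Your spanning half is the standard straightening argument and is fine in outline, with one orientation slip. In this paper's normalization, \({\tt g}^\mu_u\) places the upper-row bricks before the lower-row ones, so \(\psi_{{\tt g}^\mu_u}1_{\bi^\mu}\) is the \emph{shortest} word occurring in \(\textup{Gar}^\mu_u1_{\bi^\mu}\). The term you must straighten is the one indexed by the longest element of \(\mathscr{D}^{r,s}_{r+s}\). Its tableau puts the lower-row bricks first, so it is row-standard but not column-standard, and the relation rewrites it through shorter, more dominant terms. Rewriting \(\psi_{{\tt g}^\mu_u}1_{\bi^\mu}\) via longer words, as you phrase it, does not feed a descending induction.

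The genuine gap is in linear independence. Write \(\mu=\lambda\backslash\nu\), let \(v^{\tt T}\) be the image of \(\psi_{\tt T}1_{\bi^\lambda}\) in \(\bS(\lambda)\), and let \({\tt T}_0\) fill \(\nu\) by \({\tt t}^\nu\) and then \(\mu\) by \({\tt t}^\mu\). Using the known triangularity of the \(y_r\)- and \(\psi_r\)-actions on the standard basis of \(\bS(\lambda)\), consider the span of the \(v^{\tt S}\) whose entries \(1,\ldots,|\nu|\) fill a shape \(\trianglerighteq\nu\), and the span of those with shape \(\triangleright\nu\). These are \(R_{|\nu|}\otimes R_{|\mu|}\)-stable subspaces \(N_{\trianglerighteq\nu}\supseteq N_{\triangleright\nu}\). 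Relations (1) and (2) hold for \(\bar v=v^{{\tt T}_0}+N_{\triangleright\nu}\), because any standard \({\tt S}\triangleright{\tt T}_0\) must change shape at step \(|\nu|\), since \({\tt t}^\nu\) and \({\tt t}^\mu\) are dominance-maximal. This is not `clear', but it is routine. Everything then rests on the skew Garnir relations \(\textup{Gar}^\mu_u\bar v=0\), and that is exactly the step you do not supply.

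`The same local computation as in the spanning step' does not give it. That computation is a rewriting identity inside \(R_\theta1_{\bi^\mu}\): it writes \(\textup{Gar}^\mu_u\) as a non-standard leading term plus shorter ones, which lets you straighten \emph{modulo} the relation. It gives no reason for \(\textup{Gar}^\mu_u\) to annihilate a particular vector in a different module.

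Nor can you import the Garnir relations of \(\bS(\lambda)\). Those act on \(v^\lambda\), not on \(v^{{\tt T}_0}\). Moreover, for the same box \(u\) in row \(r\), the lower row of the \(\lambda\)-belt starts in column \(1\) and so runs through \(\nu\) whenever \(\nu_{r+1}>0\), while the lower row of the \(\mu\)-belt starts in column \(\nu_{r+1}+1\). They are different elements acting on a different vector.

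To close the argument you need one of the following:
\begin{itemize}
\item a genuine proof that \(\textup{Gar}^\mu_u v^{{\tt T}_0}\in N_{\triangleright\nu}\), for instance by a weight-space and degree analysis of the layer, or by a skew version of the argument in \cite{KMR} establishing the homogeneous Garnir relations;
\item some other module of dimension \(\#\textup{Std}(\mu)\) on which all three families of relations provably hold.
\end{itemize}
Your Hecke-algebra dimension count does not bypass this. A known dimension on the Hecke side bounds \(\dim\bS(\mu)\) from below only once you have a surjection from \(\bS(\mu)\) onto that module, and that again means verifying the Garnir relations there.
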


\subsection{Cuspidal systems and classification of simple \(R_\theta\)-modules}\label{stratasec}
Throughout this section we fix \(i \in \Z_2\), and recall that this corresponds to a choice of convex order \(\succitext\) from \S\ref{cvxordersec}. Following \cite{KleshCusp, McNaffine, KMstrat, TW, MNSS}, to each \(\beta \in \Phi_+^\re\) we associate a self-dual simple finite dimensional {\em \(\succitext\)-cuspidal} module \(L_{i,\beta} \in R_\beta\textup{-mod}\), and to each integer partition \(\lambda\) we associate a self-dual simple finite dimensional {\em \(\succitext\)-semicuspidal} module \(L_{i,\lambda} \in R_{|\lambda|\delta}\textup{-mod}\).
These cuspidal/semicuspidal modules are key building blocks in the representation theory of KLR algebras. We refer the reader to the references above for the precise definition of (semi)cuspidal modules and their properties; for our purposes it will be enough to directly construct these modules in the next subsection.

\subsubsection{Cuspidal/semicuspidal modules via Specht modules} Following \cite{ADMPSS, MNSS} we may explicitly construct the cuspidal/semicuspidal modules via Specht modules associated with certain skew diagrams as follows. Recall that we have fixed \(i \in \Z_2\) which determines a choice of convex order \(\succitext\). For \(j \in \Z_2\), \(k \in \Z_{>0}\), let \(\xi_i(\alpha_{j:k})\) be the unique ribbon skew diagram of content \(\alpha_{j:k}\) such that all boxes of content \(i\) are (southeast)-removable. For a partition \(\lambda\), let \(\xi_i(\lambda)\) be the skew diagram formed by doubling the length of every column of \(\lambda\), shifting the \(t\)th column up \(t-1\) units, and choosing residues so that (southeast)-removable boxes have content \(i\). In Figure~\ref{skewcuspex} we show some example diagrams in the case of the convex order \(\succatext\).

\begin{figure}[h]
\hrulefill
\;\;
{\em Skew diagrams for real cuspidal modules}
\;
\hrulefill
\vspace{2mm}
\begin{align*}
\hackcenter{}
\xi_1(\alpha_{1:1}) = 
\hackcenter{
\begin{overpic}[height=3.5mm]{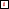}
\end{overpic}
}
\qquad
\xi_1(\alpha_{1:2})= 
\hackcenter{
\begin{overpic}[height=7mm]{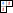}
\end{overpic}
}
\qquad
\xi_1(\alpha_{1:3})= 
\hackcenter{
\begin{overpic}[height=10.5mm]{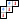}
\end{overpic}
}
\qquad
\xi_1(\alpha_{1:4}) = 
\hackcenter{
\begin{overpic}[height=14mm]{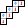}
\end{overpic}
}
\qquad
\cdots
\qquad
\qquad
\end{align*}
\begin{align*}
\hackcenter{}
\qquad
\qquad
\cdots
\qquad
\xi_1(\alpha_{0:4})= 
\hackcenter{
\begin{overpic}[height=14mm]{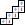}
\end{overpic}
}
\qquad
\xi_1(\alpha_{0:3})= 
\hackcenter{
\begin{overpic}[height=10.5mm]{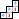}
\end{overpic}
}
\qquad
\xi_1(\alpha_{0:2})= 
\hackcenter{
\begin{overpic}[height=7mm]{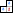}
\end{overpic}
}
\qquad
\xi_1(\alpha_{0:1})= 
\hackcenter{
\begin{overpic}[height=3.5mm]{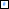}
\end{overpic}
}
\end{align*}

\vspace{3mm}
\hrulefill
\;\;
{\em Skew diagrams for imaginary semicuspidal modules}
\;
\hrulefill
\vspace{1mm}
\begin{align*}
\hackcenter{}
\xi_1
\hspace{-1mm}
\left(
\,
\hackcenter{
\begin{overpic}[height=2.4mm]{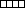}
\end{overpic}
}
\,
\right)
=
\hackcenter{
\begin{overpic}[height=17.5mm]{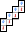}
\end{overpic}
}
\qquad
\xi_1
\hspace{-1mm}
\left(
\,
\hackcenter{
\begin{overpic}[height=4mm]{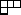}
\end{overpic}
}
\,
\right)
=
\hackcenter{
\begin{overpic}[height=21mm]{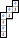}
\end{overpic}
}
\qquad
\xi_1
\hspace{-1mm}
\left(
\,
\hackcenter{
\begin{overpic}[height=4mm]{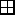}
\end{overpic}
}
\,
\right)
=
\hackcenter{
\begin{overpic}[height=17.5mm]{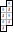}
\end{overpic}
}
\qquad
\xi_1
\hspace{-1mm}
\left(
\,
\hackcenter{
\begin{overpic}[height=6mm]{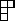}
\end{overpic}
}
\,
\right)
=
\hackcenter{
\begin{overpic}[height=24.5mm]{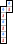}
\end{overpic}
}
\qquad
\xi_1
\hspace{-1mm}
\left(
\,
\hackcenter{
\begin{overpic}[height=7.5mm]{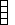}
\end{overpic}
}
\hspace{-1.5mm}
\right)
=
\hackcenter{
\begin{overpic}[height=28mm]{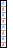}
\end{overpic}
}
\end{align*}
\caption{Examples of cuspidal/semicuspidal skew diagrams for the convex order \hspace{-0.5mm}$\texorpdfstring{\begin{smallmatrix}{}\\ \succ \\ {}^1\end{smallmatrix}}{a,b}$.}
\label{skewcuspex}
\end{figure}

\begin{theorem}\label{cuspsasspechts}
For \(i, j \in \Z_2\), \(k \in \Z_{>0}\), and \(\lambda \in \Par\) we have that
\begin{align*}
L_{i, \alpha_{j:k}} \cong q^{k-1} \bS(\xi_i(\alpha_{j:k}))
\qquad
\textup{and}
\qquad
L_{i, \lambda} \cong  \textup{hd} \,q^{c(\lambda)} \bS(\xi_i(\lambda)),
\end{align*}
where
\begin{align*}
c(\lambda) =2 |\lambda| - 2 p(\lambda) - 2
 \sum_{m=1}^{p(\lambda)} \min\left\{m-1, \left\lfloor \frac{\lambda_m - 1}{2} \right\rfloor \right\}. 
\end{align*}
Moreover, \([\bS(\xi_i(\lambda)) : L_{i,\lambda}] = 1\) and if \([\bS(\xi_i(\lambda)): M] \neq 0\) for some simple module \(M\), then \(M \approx L_{i, \mu}\) for some \(\mu \trianglelefteq \lambda\).
\end{theorem}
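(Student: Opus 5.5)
The plan is to treat the real and imaginary cases separately. In both cases I work with the explicit Specht presentation and the basis of Theorem~\ref{spechtbasis}, and I import the general theory of (semi)cuspidal modules from \cite{KleshCusp, McNaffine, TW, KMstrat}.

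\textbf{Real roots.} Fix $\beta=\alpha_{j:k}$.
\begin{enumerate}
\item I first show that every word of $\bS(\xi_i(\alpha_{j:k}))$ is $\succitext$-cuspidal. By Theorem~\ref{spechtbasis}, these words are the residue sequences $\bi^{\tt t}$ for ${\tt t}\in\textup{Std}(\xi_i(\alpha_{j:k}))$. So it suffices to check that the content of every initial segment, i.e. of every order ideal of the ribbon, lies in the cone spanned by the roots $\succitext$-above $\beta$ (or weakly on the correct side). Because $\xi_i(\alpha_{j:k})$ is a ribbon all of whose $i$-boxes are removable, an order ideal is a union of initial segments of the ribbon's ``steps''. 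Its content is therefore of the form $a\alpha_i+b\alpha_{\hat i}$ with an explicit inequality between $a$ and $b$, which is exactly the convexity condition for the orders (\ref{favorder1}), (\ref{favorder2}).
\item Next I show the module is simple. Since cuspidal modules of real-root content are unique up to shift and form the only simple composition factors that such a module can have, it is enough to show that a single copy of the cuspidal appears. I would do this by exhibiting a word of multiplicity one, namely the extremal word $\bi^{\xi_i(\alpha_{j:k})}$ of the leading tableau, and comparing dimensions against the known character of $L_{i,\beta}$ (it is a single Specht-like module of dimension given by the number of standard tableaux).
\item Finally I pin down the grading by bar-invariance. I compute $\deg(\psi_{\tt t}1_{\bi^\mu})$ along the ribbon and check that the multiset of degrees is symmetric about $-(k-1)$. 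This forces $q^{k-1}\bS$ to be self-dual, and hence equal to $L_{i,\beta}$ on the nose.
\end{enumerate}

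\textbf{Imaginary roots.} The first step is again cuspidality: I show all words of $\bS(\xi_i(\lambda))$ are semicuspidal. The argument is the same order-ideal computation as above, now applied to the doubled-column, shifted skew shape, where each order ideal has content in the cone appropriate to $|\lambda|\delta$. Consequently every composition factor is of the form $L_{i,\mu}$ with $|\mu|=|\lambda|$.

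The remaining claims should be transported from the classical theory. I would invoke imaginary Schur--Weyl duality (as in \cite{KMstrat, MNSS}): an idempotent truncation (Gelfand--Graev type) identifies the semicuspidal category in content $d\delta$ with modules over a Schur-type algebra. The key step is to show that this functor sends $\bS(\xi_i(\lambda))$ to the classical Specht/Weyl module indexed by $\lambda$. I would prove this by matching generators: the Garnir elements $g_{r,s}$, built from the $\tau_r$, correspond to classical Garnir relations, and the images of the generators $y_r1_{\bi^\mu}$ and $\psi_r1_{\bi^\mu}$ vanish. Once this identification is in place, James's results give three things:
\begin{itemize}
\item the Specht/Weyl module has a simple head;
\item the head occurs with multiplicity one;
\item all other composition factors are indexed by partitions $\mu\trianglelefteq\lambda$.
\end{itemize}
These become exactly the statements that $\textup{hd}\,\bS(\xi_i(\lambda))\approx L_{i,\lambda}$, that $[\bS(\xi_i(\lambda)):L_{i,\lambda}]=1$, and the dominance unitriangularity. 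The grading shift $c(\lambda)$ is then determined by self-duality of $L_{i,\lambda}$. Concretely, $c(\lambda)$ is the shift placing the degree of the head's highest vector symmetrically. I would compute it by locating the lowest-degree standard tableau surviving in the head: column $m$ contributes $2\lambda_m-2$ minus the corrections $2\min\{m-1,\lfloor(\lambda_m-1)/2\rfloor\}$ coming from overlaps created by the column shifts.

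\textbf{Expected obstacle.} I expect the main difficulty to be the identification of $\bS(\xi_i(\lambda))$ with the image of a classical Specht module under imaginary Schur--Weyl duality, and in particular verifying that the Garnir relations match. My first attempt would be to check it on the character level via Theorem~\ref{spechtbasis}, and then to argue surjectivity from a cyclic generator.
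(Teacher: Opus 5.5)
Your proposal has genuine gaps in both halves. Your real-root simplicity step fails as stated, and in the imaginary case both the key identification and the grading computation are missing.

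\textbf{Real roots: the simplicity step.} For $k\ge 2$ the ribbon $\xi_i(\alpha_{i:k})$ is a staircase in which every $\hat i$-box has one $i$-box directly below it and another directly to its right. Dually, in $\xi_i(\alpha_{\hat i:k})$ every $i$-box has $\hat i$-boxes directly above it and directly to its left.
\begin{itemize}
\item Hence in every standard tableau the last $\hat i$-box filled is followed by two $i$-boxes. So every word of $\bS(\xi_i(\alpha_{i:k}))$ ends in $ii$, and every word of $\bS(\xi_i(\alpha_{\hat i:k}))$ begins with $\hat i\hat i$.
\item The nil-Hecke action of $y_r,y_{r+1},\psi_r$ then makes every weight space even-dimensional.
\item So there is no word of multiplicity one at all, in particular not the leading word $(\hat i\,i)^{k-1}i$. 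For example, $\xi_i(\alpha_{i:2})$ is the shape $(2,1)$, and both of its standard tableaux have word $\hat i\,i\,i$.
\item Falling back on ``the known character of $L_{i,\beta}$'' is circular, since that character is what you are trying to establish.
\end{itemize}
A correct replacement is a graded divisibility argument on the word $\bj$ that fills all $\hat i$-boxes first, namely $\hat i^{k-1}i^k$ (resp.\ $\hat i^{k}i^{k-1}$). The $\hat i$-boxes are pairwise incomparable, as are the $i$-boxes, so $1_{\bj}\bS$ has ungraded dimension $(k-1)!\,k!$. It is also divisible by $[k-1]![k]!$, so its graded dimension is $q^s[k-1]![k]!$. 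On the other hand, $\dim_q 1_{\bj}M$ is divisible by $[k-1]![k]!$ for every finite-dimensional $M$. Writing $\textup{ch}\,\bS=m(q)\,\textup{ch}\,L_{i,\beta}$ therefore forces $m(q)$ to be a monomial. Your cuspidality check and the final bar-invariance normalization are reasonable.

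\textbf{Imaginary roots: the identification.} The step you flag as the obstacle is the whole content, and it is not supplied.
\begin{itemize}
\item Character matching plus a cyclic generator does not give an isomorphism with a classical module.
\item Before imaginary Schur--Weyl duality \cite{KMimag} applies at all, you must show that $\bS(\xi_i(\lambda))$ is a module over the imaginary Schur algebra, i.e.\ is killed by the annihilator of the imaginary tensor space. This is not automatic for semicuspidal modules.
\item The classical target must be the Schur-algebra Weyl module $\Delta(\lambda)$. Symmetric-group Specht modules need not have simple head in characteristic $2$.
\end{itemize}
The paper does not reprove these ungraded facts. It quotes them from \cite[Theorem~6.13]{ADMPSS} and \cite[Theorem~9.15]{MNSS}, and its own work is only the grading.

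\textbf{Imaginary roots: the grading.} Here your plan is not well-defined: the Specht basis does not descend to a basis of the head, so one cannot ``locate the lowest-degree tableau surviving in the head.'' The missing device is the Gelfand--Graev word $\bi(\lambda)=\hat i^{\lambda_1}i^{\lambda_1}\cdots\hat i^{\lambda_{p(\lambda)}}i^{\lambda_{p(\lambda)}}$.
\begin{itemize}
\item By \cite[Lemma~5.13]{MNSS}, $L_{i,\lambda}$ has this word and no $L_{i,\mu}$ with $\mu\triangleleft\lambda$ does.
\item So $1_{\bi(\lambda)}\bS(\xi_i(\lambda))$ is contributed entirely by the head, up to shift.
\item Computing its graded dimension from Theorem~\ref{spechtbasis} and imposing bar-invariance yields $c(\lambda)$.
\end{itemize}
The same computation on the word $\bj$ above yields the real-root shift $k-1$.
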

\begin{proof}
Up to grading shift, these facts are recorded in \cite[Theorem~6.13]{ADMPSS} and \cite[Theorem~9.15]{MNSS}. Now write \(\bi(\alpha_{j:k})=\hat i^{k-1} i^k \in I^{\alpha_{j:k}} \) and \(\bi(\lambda) =\hat i^{\lambda_1} i^{\lambda_1} \cdots \hat i^{\lambda_{p(\lambda)}} i^{\lambda_{p(\lambda)}} \in I^{|\lambda|\delta}\). Self-duality implies that the graded dimensions of \(1_{\bi(\alpha_{j:k})}L_{i, \alpha_{j:k}}\) and \(1_{\bi(\lambda)}L_{i,\lambda}\) are invariant under the bar involution \(q \leftrightarrow q^{-1}\).
Moreover, by \cite[Theorem~6.13]{ADMPSS} and \cite[Lemma~5.13, Theorem~9.15]{MNSS} the graded dimensions of \(1_{\bi(\alpha_{j:k})} \bS(\xi_i(\alpha_{j:k}))\) and \(1_{\bi(\lambda)} \bS(\xi_i(\lambda))\) agree with those of \(1_{\bi(\alpha_{j:k})}L_{i, \alpha_{j:k}}\) and \(1_{\bi(\lambda)}L_{i,\lambda}\) respectively, up to some explicit shift. By directly calculating the graded dimensions of \(1_{\bi(\alpha_{j:k})} \bS(\xi_i(\alpha_{j:k}))\) and \(1_{\bi(\lambda)} \bS(\xi_i(\lambda))\) using Theorem~\ref{spechtbasis}, and finding the requisite shifts to induce bar invariance, we arrive at the shifts in the theorem statement.
\end{proof}

\subsection{Classification of simple \(R_\theta\)-modules via cuspidal systems}\label{cuspclasssec}
There is a bilexicographic partial order \(\geq \) on \(\Pi(\theta)\) defined as follows (see \cite[Definition~9.17]{MNSS}). Set \(\pi > \phi\) provided that:
\begin{enumerate}
\item For \(j \in \Z_2\), there exists \(k \in \Z_{>0}\) where \(n_k(\pi^j) > n_k(\phi^j)\) and \(n_{k'}(\pi^j) = n_{k'}(\phi^j)\) for all \(k' < k\), or;
\item For \(j \in \Z_2\), we have \(\pi^j = \phi^j\) and \(\pi^\delta \triangleright \phi^\delta\).
\end{enumerate}

Let \(\theta \in \Z_{\geq 0}I\) and \(\pi = \{\pi^1, \pi^\delta, \pi^0\} \in \Pi(\theta)\). We associate to \(\pi\) a {\em proper standard module} \(\bar\Delta_i(\pi)\) which is the \(\succitext\)-ordered induction product of simple cuspidal and semicuspidal modules as defined in \cite[(6.5)]{KleshCusp}:
\begin{align}\label{DeltaDef}
\bar\Delta_i(\pi) := 
q^{\textup{sh}(\pi)}
L_{i,\alpha_{i:1}}^{\circ \,n_1(\pi^i)} \circ L_{i,\alpha_{i:2}}^{\circ \,n_2(\pi^i)}  \circ \cdots
\circ L_{i,\pi^\delta} \circ \cdots \circ
L_{ i,\alpha_{\;\hat i:2}}^{\circ \,n_2(\pi^{\hat i})} \circ L_{i,\alpha_{\;\hat i:1}}^{\circ \,n_1(\pi^{\hat i})},
\end{align}
where the grading shift \(\textup{sh}( \pi)\) is given by
\begin{align}\label{DeltaShift}
\textup{sh}(\pi) = \frac{1}{2} \sum_{\substack{j \in \Z_2 \\ t \in \Z_{>0}   }} n_t( \pi^j)[n_t(\pi^j) - 1].
\end{align}

The key result about cuspidal systems is the following:
\begin{theorem}\label{longthm}
Let \(\theta \in \Z_{\geq 0}I\) and \(i \in \Z_2\).
\begin{enumerate}
\item For \(\pi \in \Pi(\theta)\), the proper standard module \(\bar\Delta_i(\pi)\) has self-dual simple head \(L_i(\pi):= \textup{hd} \,\bar\Delta_i(\pi)\).
\item The set  
\begin{align}\label{rootparclassification}
\{ L_i(\pi) \mid \pi \in \Pi(\theta)\}
\end{align}
is a complete and irredundant list of simple \(R_\theta\)-modules up to isomorphism and grading shift. 
\item For all \(\pi, \phi \in \Pi(\theta)\) we have \([\bar \Delta_i(\pi): L_i(\pi)] = 1\) and \([\bar \Delta_i(\pi): L_i(\phi)] \neq 0\) only if \(\pi \geq \phi\).
\item For all \((\pi | \phi) \in \aMV(\theta)\), we have that 
\(L_1(\pi) \cong L_{0}( \phi)\), and we may alternatively index the simple modules in (\ref{rootparclassification}) as 
\begin{align*}
\{L(\pi | \phi) := L_1(\pi) \cong L_0(\phi) \mid (\pi | \phi) \in \aMV(\theta)\}.
\end{align*}
\item For \((\pi | \phi) \in \aMV\), the geometric polytope for \((\pi | \phi)\) is the convex hull of the words of \(L(\pi | \phi) \) considered as paths in the root lattice.
\end{enumerate}
\end{theorem}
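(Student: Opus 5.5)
This theorem is essentially a compilation of known results from the cuspidal systems literature, so the plan is to verify that the hypotheses of those general results hold in type \({\tt A}_1^{(1)}\) with our conventions, rather than to reprove anything from scratch. For parts (1)--(3), I would invoke the general theory of \cite{KleshCusp} (see also \cite{McNaffine, TW}) for an arbitrary convex preorder in affine type. There the proper standard modules \(\bar\Delta(\pi)\) are defined as ordered induction products of cuspidal modules \(L_\beta\) for real \(\beta\) and semicuspidal imaginary modules \(L_{\lambda}\). The first task is to check that the modules \(L_{i,\alpha_{j:k}}\) and \(L_{i,\lambda}\) constructed in Theorem~\ref{cuspsasspechts} are exactly Kleshchev's cuspidal and imaginary semicuspidal modules for the order \(\succitext\). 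Here I use that in type \({\tt A}_1^{(1)}\) the imaginary stratum is governed by a single colour, so that semicuspidal simples of content \(d\delta\) are labelled by partitions of \(d\), as in \cite{KMstrat, MNSS}.

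Next, I would match the grading shift \(\textup{sh}(\pi)\) in (\ref{DeltaShift}) with the normalization in \cite[(6.5)]{KleshCusp}. In that normalization, \(L_\beta^{\circ n}\) carries the shift \(q^{n(n-1)/2}\) times \((\beta,\beta)/2 = 1\) for real roots. With this shift the simple head is self-dual. Parts (1)--(3) then follow directly from \cite[Theorem~2, Main Theorem]{KleshCusp}. I would translate Kleshchev's bilexicographic order into our \(\geq\), following \cite[Definition~9.17]{MNSS}.

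For (4), the key point is that both \(\{L_1(\pi)\}\) and \(\{L_0(\phi)\}\) classify the simples, so some bijection \(\pi \mapsto \phi\) exists with \(L_1(\pi) \cong L_0(\phi)\). Identifying this bijection is the main obstacle. My approach is to use \cite{TW} (Tingley--Webster), who show that the character polytope of a simple \(L\) is an MV polytope. They also show that its boundary root partitions in the two convex orders are exactly the cuspidal labels of \(L\) with respect to those orders. Concretely, the label \(\pi\) of \(L\) in order \(\succatext\) is read off from the extremal words: the \(\succatext\)-greatest cuspidal factorization of \(\textup{Res}\) records the southeast boundary \(S(\pi,\succatext)\). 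Symmetrically, the label in order \(\succbtext\) records the northwest boundary.

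So the plan is to show that the words of \(L_1(\pi)\) trace out a polytope whose southeast path is \(\pi\) and whose northwest path is some \(\phi\). By \cite[Theorem~3.11, Theorem~4.5]{aMVrank2}, this polytope lies in \(\aMV\), so by Theorem~\ref{exunloz} we get \(\phi = \pi_\lozenge\). Statement (5) is then this same character-polytope fact, quoted from \cite{TW}. The delicate bookkeeping is matching our coordinate and ordering conventions (Example~\ref{polytopefirstex}) with theirs. In particular, I need to confirm that \(\succatext\)-labels correspond to the southeast rather than the northwest edge, and that the imaginary \(\delta\)-edge markings match \(\pi^\delta\) versus \(\phi^\delta\).
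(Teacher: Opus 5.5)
Your treatment of (1)--(3) agrees with the paper, which simply cites \cite[Theorem~4.1]{KleshCusp} and \cite[Theorem~2.19]{TW}. The gap is in (4), and it lies at the one substantive point of the paper's proof.

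You propose to read \(\pi\) and \(\phi\) off the convex hull of the words of \(L_1(\pi)\). That hull records only the real parts and the lengths \(|\pi^\delta|,|\phi^\delta|\). It does not see the partitions \(\pi^\delta,\phi^\delta\) that mark the \(\delta\)-edges. Those markings are the labels of the imaginary semicuspidal factors, so they depend on which semicuspidal simple is called \(L_{i,\lambda}\). This paper makes that choice in Theorem~\ref{cuspsasspechts}, via \(L_{i,\lambda}\approx\textup{hd}\,\bS(\xi_i(\lambda))\).

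Here is a concrete case. Take \(\pi=\{(1,1),(2),\varnothing\}\) and \(\pi'=\{(1,1),(1,1),\varnothing\}\). By Theorem~\ref{rpartranslate}, \(\pi_\lozenge=\pi\) and \(\pi'_\lozenge=\pi'\), while \((\pi|\pi')\notin\aMV\) by Definition~\ref{defaMV}(2). Yet \(L(\pi|\pi)\) and \(L(\pi'|\pi')\) have the same character polytope, namely the parallelogram spanned by \(2\alpha_1\) and \(2\delta\). Now suppose the \(\succbtext\)-semicuspidal simples of content \(2\delta\) were relabeled by interchanging \((2)\) and \((1,1)\). Then parts (1)--(3) and every character polytope would be unchanged. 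But, with \(L_0\) computed from the new labels, one would get \(L_1(\pi)\cong L_0(\pi')\), contradicting (4). So an argument that uses only the shape traced by words together with Theorem~\ref{exunloz} cannot prove (4).

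Relatedly, \cite[Theorems~3.11, 4.5]{aMVrank2} do not say that the character polytope of a simple module is MV. That is \cite[Theorem~B]{TW}, and for the decorated polytope it holds only with Tingley--Webster's labeling of the imaginary semicuspidal modules \cite[Definition~3.38]{TW}.

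What is missing is the identification of this paper's labeling with Tingley--Webster's. Once you have it, (4) and (5) are exactly \cite[Theorem~B, Corollary~3.11]{TW}, which is how the paper concludes. The paper makes the identification through Gelfand--Graev words:
\begin{itemize}
\item by \cite[Lemma~5.13]{MNSS}, among the \(\succitext\)-semicuspidal simples, \(L_{i,\lambda}\) is characterized by having the word \(\boldsymbol{g}^\lambda=\hat{i}^{\lambda_1}i^{\lambda_1}\cdots\hat{i}^{\lambda_{p(\lambda)}}i^{\lambda_{p(\lambda)}}\) and no word \(\boldsymbol{g}^\mu\) with \(\mu\) more dominant;
\item \cite[Remark~3.43]{TW} observes that the labeling of \cite[Definition~3.38]{TW} is characterized the same way.
\end{itemize}

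Your closing remark about matching the \(\delta\)-edge markings points toward this. However, it is not a matter of coordinate or ordering conventions. It is an identification of specific modules, and it needs an argument of this kind.

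Likewise, your first-paragraph check that the \(L_{i,\lambda}\) are ``exactly Kleshchev's'' semicuspidal modules is not the relevant one for the imaginary part. Parts (1)--(3) are insensitive to how the semicuspidal simples are matched with partitions, whereas (4) is not. The comparison that matters is with \cite{TW}.
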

\begin{proof}
(1)--(3) are the content of \cite[Theorem~4.1]{KleshCusp} and \cite[Theorem~2.19]{TW}. By \cite[Lemma~5.13]{MNSS}, our simple semicuspidal modules \(L_{i, \lambda}\) are uniquely identified by having the {\em Gelfand-Graev} word \(\boldsymbol{g}^\lambda = \hat{i}^{\lambda_1} i^{\lambda_1} \cdots \hat{i}^{\lambda_{p(\lambda)}} i^{\lambda_{p(\lambda)}}\) and no word \(\boldsymbol{g}^\mu\) for \(\mu\) more dominant. Then, as noted in \cite[Remark~3.43]{TW}, our choice of labelings for the collection of imaginary semicuspidal simple modules agrees with the choice made in \cite[Definition~3.38]{TW}, so (4) and (5) are the content of \cite[Theorem~B, Corollary 3.11]{TW}. 
\end{proof}

\begin{remark}
The cuspidal system approach to KLR representation theory has proven to be a powerful organizing principle, yielding deep structural results and interactions with other areas of representation theory.
The simple modules \(L_i(\pi)\) and proper standard modules \(\bar \Delta_i(\pi)\) (as well as their infinite dimensional counterparts, the projective covers \(P_i(\pi)\) and standard modules \(\Delta_i(\pi)\)) are the key ingredients in a complete {\em stratification} framework for \(R_\theta\)-mod, replete with associated filtrations, BGG reciprocity, and other homological results, see \cite{KMstrat, McNaffine, Murata}. The {\em imaginary} strata in this setup are linked via Howe duality to the classical Schur algebra, and through Morita equivalence to RoCK blocks of Hecke algebras and to `zigzag deformations' of affine Hecke and Schur algebras, see \cite{AffWeb, KMimag, KMaffzig, FaceFunctors, MNSS}. Through the lens of categorification, the projective and standard modules (and respectively, simple and proper standard modules) encode higher symmetries in the theory of  crystal  and PBW bases (and respectively, their duals) for \(\mathcal{U}^-_q(\hat{\mathfrak{sl}}_2)\), see \cite{KatoKLR, LVcrystals, TW, VV}.
\end{remark}

\subsection{Real root functors}\label{rootfuncsec}
For \(i \in \Z_2\), \(k \in \Z_{>0}\), we denote the functors
\begin{align*}
&E_{i:k} \colon R_{\theta+ \alpha_{i:k}}\textup{-mod} \to R_{\theta}\textup{-mod}, \quad M \mapsto \Hom_{R_{\theta + \alpha_{i:k}}}(R_\theta \circ L_{\hat i, \alpha_{i:k}}, M) \\
&F_{i:k}\colon R_{\theta}\textup{-mod} \to R_{\theta+\alpha_{i:k}}\textup{-mod}, \quad M \mapsto \Ind_{\theta, \alpha_{i:k}}^{\theta+ \alpha_{i:k}}(M \boxtimes L_{\hat i, \alpha_{i:k}})\\
&E_{i:k}^*\colon R_{\theta+ \alpha_{i:k}}\textup{-mod} \to R_{\theta}\textup{-mod}, \quad M \mapsto \Hom_{R_{\theta + \alpha_{i:k}}}(L_{i, \alpha_{i:k}} \circ R_\theta, M)\\
&F_{i:k}^*\colon R_{\theta}\textup{-mod} \to R_{\theta+\alpha_{i:k}}\textup{-mod}, \quad M \mapsto \Ind_{ \alpha_{i:k}}^{\theta+ \alpha_{i:k}}(L_{i, \alpha_{i:k}}\boxtimes M)
\end{align*}
These functors generalize the Chevalley functors of \S\ref{transfuncsec} from simple roots to all real roots; we have \(E_i \cong E_{i:1}, F_i \cong F_{i:1},E_i^* \cong E^*_{i:1}\) and \(F_i^* \cong F^*_{i:1}\) since \(L_{i, \alpha_i} \cong L_{\hat i, \alpha_i} \cong \k_{\alpha_i}\) is the unique one-dimensional self-dual simple module for \(R_{\alpha_i}\). 

\subsection{Augmented branching rules for simple modules labeled by affine MV polytopes}
It will be useful to define the following statistics on polytopes \((\pi | \phi) \in \aMV\):
\begin{align*}
\begin{array}{lllllllllll}
m_1(\pi|\phi) = \min(\phi^1);\quad
&m_1^*(\pi|\phi) = \min(\pi^1);\quad
&m_0(\pi|\phi) = \min(\pi^0);\quad
&m_0^*(\pi|\phi) = \min(\phi^0).\\
n_{1:k}(\pi|\phi) = n_k(\phi^1);
&n^*_{1:k}(\pi|\phi) = n_k(\pi^1);\quad
&n_{0:k}(\pi|\phi) = n_k(\pi^0);\quad
&n_{0:k}^*(\pi|\phi) = n_k(\phi^0).
\end{array}
\end{align*}
Recall the operators \(e_{i:k}, e^*_{i:k}, f_{i:k}, f^*_{i:k}\) on \(\aMV\) defined in \S\ref{aMVcrysdef}.

\begin{proposition}\label{rootfuncbranch}
Let \((\pi|\phi) \in \aMV\), \(i \in \Z_2\), and \(k \in \Z_{>0}\). 
\begin{enumerate}
\item If \(m_{ i}(\pi|\phi)>k\) then \(E_{ i:k} L(\pi |\phi)= 0\), and if \(m_{ i}(\pi|\phi) = k\) then 
\begin{align*}
q^{1-n_{i:k}(\pi | \phi)} \textup{soc}\, E_{ i:k} L(\pi |\phi) \cong L(e_{i:k}(\pi |\phi))
\end{align*}
\item If \(m^*_{ i}(\pi|\phi)>k\) then \(E^*_{ i:k} L(\pi| \phi)= 0\), and if \(m^*_{ i}(\pi|\phi) = k\) then 
\begin{align*}
q^{1-n_{i:k}^*(\pi | \phi)} \textup{soc}\, E^*_{ i:k} L(\pi| \phi) \cong  L(e^*_{i:k}(\pi |\phi)).
\end{align*}
\item If \(m_{  i}(\pi|\phi) \geq k\) then 
\begin{align*}
q^{n_{i:k}(\pi| \phi)} \textup{hd}\, F_{ i:k} L(\pi | \phi) \cong L(f_{i:k} (\pi| \phi)).
\end{align*}
\item If \(m^*_{  i}(\pi | \phi) \geq k\) then 
\begin{align*}
q^{n_{i:k}^*(\pi| \phi)} \textup{hd}\, F^*_{ i:k}L(\pi | \phi) \cong L(f^*_{i:k} (\pi| \phi)).
\end{align*}
\end{enumerate}
\end{proposition}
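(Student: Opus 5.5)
The plan is to reduce everything to the cuspidal-system description of \(L(\pi|\phi)\) from Theorem~\ref{longthm}, choosing for each of the four statements the convex order in which the relevant real root \(\alpha_{i:k}\) sits at the \emph{outer end} of the ordered induction product (\ref{DeltaDef}). For (3) and (1), with \(F_{i:k}\) inducing \(L_{\hat i,\alpha_{i:k}}\) on the right, we use the order \(\succ\) with subscript \(\hat i\). In this order \(\alpha_{i:1}\succ\alpha_{i:2}\succ\cdots\) are the smallest roots, so \(\alpha_{i:k}\) appears at the right end of \(\bar\Delta_{\hat i}\). Moreover \(L(\pi|\phi)\cong L_{\hat i}(\rho)\), where \(\rho=\phi\) if \(i=1\) and \(\rho=\pi\) if \(i=0\). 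In either case the relevant real partition \(\rho^{i}\) is exactly the one entering \(m_i\) and \(n_{i:k}\). For (2) and (4) we apply the symmetric argument with the order \(\succ\) with subscript \(i\), where \(\alpha_{i:k}\) sits at the left end. Alternatively, these follow from (1) and (3) via the \(*\)-involution (Remark~\ref{starinvpoly}), which swaps \(\pi\) and \(\phi\).

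For (3), assume \(m_i\ge k\), that is, all parts of \(\rho^i\) are at least \(k\). Then \(\alpha_{i:k}\) is weakly smaller than every real root occurring in \(\rho\), so \(\bar\Delta_{\hat i}(\rho)\circ L_{\hat i,\alpha_{i:k}}\) is again a properly ordered product. Up to the grading shift it equals \(\bar\Delta_{\hat i}(\rho_{+i:k})\), using associativity of induction and the fact that \(L_{\hat i,\alpha_{i:k}}^{\circ n}\) is simple up to shift for real cuspidals. We then argue that \(\textup{hd}\,(L_{\hat i}(\rho)\circ L_{\hat i,\alpha_{i:k}})\) is simple and isomorphic to \(L_{\hat i}(\rho_{+i:k})\). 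The surjection \(\bar\Delta_{\hat i}(\rho)\circ L\to L_{\hat i}(\rho)\circ L\), combined with the unique simple head of \(\bar\Delta_{\hat i}(\rho_{+i:k})\) (Theorem~\ref{longthm}(1)), shows that every head constituent is \(L_{\hat i}(\rho_{+i:k})\). Its multiplicity is one by Theorem~\ref{longthm}(3), which gives multiplicity one in the standard module. Translating labels, \(\rho_{+i:k}\) corresponds to the polytope \(f_{i:k}(\pi|\phi)\) by the definitions in \S\ref{aMVcrysdef}, because \(f_{1:k}\) adds a row to \(\phi^1\) and \(f_{0:k}\) adds one to \(\pi^0\). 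The shift is then fixed by comparing \(\textup{sh}\) in (\ref{DeltaShift}): passing from \(n\) to \(n+1\) copies changes it by \(n=n_{i:k}\). Self-duality of the simple head normalizes the remaining degree.

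For (1), \(E_{i:k}\) is right adjoint to \(F_{i:k}\). If \(m_i>k\), then by the standard fact that words of \(L_{\hat i}(\rho)\) are bounded by the polytope (Theorem~\ref{longthm}(5)), the southeast boundary of \(\rho\) has no edge \(\alpha_{i:k'}\) with \(k'\le k\). Hence no word of \(L(\pi|\phi)\) ends with a word of \(L_{\hat i,\alpha_{i:k}}\) in a compatible way, so the restriction vanishes. I expect this step to be the main obstacle. It needs a convexity argument: a nonzero map from \(R_\theta\circ L_{\hat i,\alpha_{i:k}}\) would force a submodule of \(\Res_{\theta,\alpha_{i:k}}L\) whose \(\alpha_{i:k}\)-part is cuspidal, and this contradicts the minimality of \(\alpha_{i:k}\) among the roots in \(\rho\). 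I would first try to invoke the characterization of cuspidality via restrictions from \cite{KleshCusp}, namely Mackey filtrations of \(\bar\Delta\). When \(m_i=k\), write \(\rho=(\rho_{-i:k})_{+i:k}\). By (3), \(L(\pi|\phi)\) is the head of \(F_{i:k}L(e_{i:k}(\pi|\phi))\), up to shift. Adjunction then gives a nonzero map from \(L(e_{i:k}(\pi|\phi))\) into \(E_{i:k}L(\pi|\phi)\). To show that the socle is simple, use the Mackey/ordered-restriction structure: \(\Res\) of \(\bar\Delta_{\hat i}(\rho)\) to \(\theta,\alpha_{i:k}\) has a top filtration piece \(\bar\Delta_{\hat i}(\rho_{-i:k})\boxtimes L_{\hat i,\alpha_{i:k}}\) of multiplicity \(n_{i:k}\) up to shifts. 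Any other socle constituent would contradict the bilexicographic triangularity of Theorem~\ref{longthm}(3). The shift \(1-n_{i:k}\) comes from the lowest-degree copy among these \(n_{i:k}\) shifted pieces, together with self-duality.
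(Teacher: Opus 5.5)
Your parts (3) and (4) are the paper's argument. By (\ref{DeltaDef}), (\ref{DeltaShift}) and associativity, \(\bar\Delta_{\hat i}(\rho_{+i:k})\cong q^{n_{i:k}}\,\bar\Delta_{\hat i}(\rho)\circ L_{\hat i,\alpha_{i:k}}\) surjects onto \(q^{n_{i:k}}F_{i:k}L(\pi|\phi)\). A nonzero quotient of a module with simple head has that same head, so neither simplicity of \(L^{\circ n}\) nor the multiplicity count is needed.

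For the vanishing half of (1) you depart from the paper, which goes through \cite[Theorem~4.1(v)]{KleshCusp}. Your word argument is in fact already complete, with no Mackey analysis. The polytope is convex with vertex \(\theta\), and (when \(\rho^i\neq\varnothing\)) the edge of the \(\rho\)-boundary at \(\theta\) is parallel to \(\alpha_{i:\min(\rho^i)}\). So for \(k<\min(\rho^i)\) the point \(\theta-\alpha_{i:k}\) lies outside the tangent cone at \(\theta\). By Theorem~\ref{longthm}(5) no word of \(L(\pi|\phi)\) passes through that point, hence \(1_{\theta-\alpha_{i:k},\alpha_{i:k}}L(\pi|\phi)=0\) outright.

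The genuine gap is the case \(m_i=k\). Adjunction does embed \(L(e_{i:k}(\pi|\phi))\) into \(\textup{soc}\,E_{i:k}L(\pi|\phi)\), but the rest does not follow from what you wrote.

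First, triangularity (Theorem~\ref{longthm}(3)) applied to the pieces \(\bar\Delta_{\hat i}(\rho_{-i:k})\boxtimes L_{\hat i,\alpha_{i:k}}\) only gives \(\sigma\le\rho_{-i:k}\) for a constituent \(L_{\hat i}(\sigma)\). It does not exclude \(\sigma<\rho_{-i:k}\) from the socle. You also never show that the other Mackey subquotients of \(\Res\bar\Delta_{\hat i}(\rho)\) contribute no factors \((-)\boxtimes L_{\hat i,\alpha_{i:k}}\). That needs cuspidality of every factor plus convexity, and it is exactly what the paper imports from \cite[Theorem~4.1(v)]{KleshCusp}.

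The missing step is this. Comparing \(i\)-components lexicographically, \(\sigma\le\rho_{-i:k}\) forces \(\min(\sigma^i)\ge k\). So if \(L_{\hat i}(\sigma)\) lies in the socle, adjunction makes \(L_{\hat i}(\rho)\) a quotient of \(F_{i:k}L_{\hat i}(\sigma)\). By part (3) that module has head \(L_{\hat i}(\sigma_{+i:k})\), hence \(\sigma=\rho_{-i:k}\).

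You could also get \(\min(\sigma^i)\ge k\) from your polytope picture. An embedding \(L_{\hat i}(\sigma)\boxtimes L_{\hat i,\alpha_{i:k}}\hookrightarrow\Res L_{\hat i}(\rho)\) puts the polytope of \(\sigma\), whose top vertex is \(\theta-\alpha_{i:k}\), inside that of \(\rho\). Its last edge is parallel to \(\alpha_{i:\min(\sigma^i)}\), and this edge would leave the cone at \(\theta\) if \(\min(\sigma^i)<k\).

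Second, the multiplicity and the shift do not come from ``the lowest-degree copy'' among the \(n_{i:k}\) Mackey pieces. Those pieces live in \(\Res\bar\Delta_{\hat i}(\rho)\) and say nothing about what survives in the socle of its quotient \(\Res L_{\hat i}(\rho)\). Both come from
\[\Hom(L_{\hat i}(\rho_{-i:k}),E_{i:k}L_{\hat i}(\rho))\cong\Hom(F_{i:k}L_{\hat i}(\rho_{-i:k}),L_{\hat i}(\rho)).\]
By part (3) applied to \(e_{i:k}(\pi|\phi)\), where \(n_{i:k}\) drops by one, this space is one-dimensional. It yields \(\textup{soc}\,E_{i:k}L_{\hat i}(\rho)\cong q^{n_{i:k}-1}L_{\hat i}(\rho_{-i:k})\).

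Once repaired this way, your adjunction applied directly to \(E_{i:k}\) is slightly cleaner than the paper's route. The paper first computes \(\textup{soc}\,\Res\), then compares the socle of a Hom space with Hom into the socle.
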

\begin{proof}
For sake of space throughout the proof, write \(\alpha:= \alpha_{i:k}\).
We prove (4) first in the \(i=1\) case, noting that the \(i=0\) case, as well as (3), is proved in symmetrical fashion. We have then that \(L(\pi | \phi) = L_i(\pi)\).
Since \(m_i^*(\pi | \phi) = \min(\pi^i) \geq k\), 
, it follows from (\ref{DeltaDef}, \ref{DeltaShift}) that
\(
\bar{\Delta}_i(\pi_{+i:k}) \cong 
q^{n_k(\pi^i)}   L_{ i, \alpha} \circ \bar{\Delta}_i(\pi).
\)
By Theorem~\ref{longthm}(1) we have a surjection \(L_{ i, \alpha} \boxtimes \bar{\Delta}_i(\pi)  \twoheadrightarrow  L_{ i, \alpha} \boxtimes L_i(\pi) \), and so by exactness of induction we have a  surjection
\begin{align}\label{littleindarg}
\bar{\Delta}_i(\pi_{+i:k}) \cong 
q^{n_k(\pi^i)}   L_{ i, \alpha} \circ  \bar{\Delta}_i(\pi) \twoheadrightarrow q^{n_k(\pi^i)}  L_{ i, \alpha} \circ L_i(\pi) = q^{n_k(\pi^i)} F^*_{i:k} L_i(\pi). 
\end{align}
But \(\bar{\Delta}_i(\pi_{+i:k})\) has simple head \(L_i(\pi_{+i:k})\) by Theorem~\ref{longthm}(1), which yields result (4).

Now we prove (2) in the \(i=1\) case as well, noting that the \(i=0\) case, as well as (1), is proved in symmetrical fashion. We again have \(L(\pi| \phi) = L_i(\pi)\). 
By \cite[Theorem~4.1]{KleshCusp}(v), every simple factor of \(\Res_{\alpha, \theta}^{\alpha + \theta} L_i(\pi)\) is of the form \(L_{i, \alpha} \boxtimes L_i(\sigma)\), where \(\sigma \leq \pi_{-i:k}\). Then for such \(\sigma\) we have:
\begin{align*}
[\textup{soc}\, \Res_{\alpha, \theta}^{\alpha + \theta}L_i(\pi) : L_{i, \alpha} \boxtimes L_i(\sigma)] &=
\dim_q \Hom_{R_\alpha \otimes R_\theta}(L_{i, \alpha} \boxtimes L_i(\sigma), \Res_{\alpha, \theta}^{\alpha + \theta} L_i(\pi))\\
&=
\dim_q \Hom_{R_{\alpha + \theta}}(L_{i, \alpha} \circ L_i(\sigma), L_i(\pi))\\
&= 
\dim_q \Hom_{R_{\alpha + \theta}}(F^*_{i:k} L_i(\sigma), L_i(\pi)) \\
&= 
\dim_q \Hom_{R_{\alpha + \theta}}(q^{-n_k(\sigma^{i})}L_i(\sigma_{+i:k}), L_i(\pi)) 
= \delta_{\sigma, \pi_{-i:k}}q^{n_k(\pi_{-i:k}^i)},
\end{align*}
where the second to last equality follows since \(L_i(\pi)\) is simple and \(F^*_{i:k} L_i(\sigma)\) has simple head \(q^{-n_k(\sigma^i)}L_i(\sigma_{+i:k})\) by part (4), and the last equality follows by Theorem~\ref{longthm}(2). Since \(n_k(\pi_{-i:k}^i) = n_k(\pi^i) - 1\), we have \(\textup{soc}\, \Res_{\alpha, \theta}^{\alpha + \theta} L_i(\pi) \cong q^{n_k(\pi^i)-1}L_{i, \alpha} \boxtimes L_i(\pi_{-i:k})\) if \(n_k(\pi^i) >0\), and zero otherwise. Note that, as \(R_\theta\)-modules, 
\begin{align*}
\Hom_{R_\alpha}(L_{i,\alpha}, \textup{soc} \, \Res_{\alpha, \theta}^{\alpha + \theta} L_i(\pi)) 
\cong \Hom_{R_\alpha}(L_{i,\alpha}, q^{n_k(\pi^i)-1} L_{i, \alpha} \boxtimes L_i(\pi_{-i:k}))
\cong q^{n_k(\pi^i)-1}L_i(\pi_{-i:k}).
\end{align*}

Now let the homomorphism \(\varphi \in \Hom_{R_\alpha}(L_{i,\alpha}, \Res_{\alpha, \theta}^{\alpha + \theta} L_i(\pi))\) be any generator of a simple \(R_\theta\)-submodule in \(\Hom_{R_\alpha}(L_{i,\alpha}, \Res_{\alpha, \theta}^{\alpha + \theta} L_i(\pi))\). Then there is a nonzero \(R_\alpha \otimes R_\theta\)-module evaluation map \(L_{i, \alpha} \boxtimes (R_\theta \cdot \varphi) \to \Res_{\alpha, \theta}^{\alpha + \theta} L_i(\pi)\), 
and since \(L_{i, \alpha} \boxtimes (R_\theta \cdot \varphi)\) is a simple \(R_\alpha \otimes R_\beta\)-module, this map must be injective with image  in \(\textup{soc}\, \Res_{\alpha, \theta}^{\alpha + \theta} L_i(\pi)\). Therefore \(\varphi(L_{i, \alpha}) \subseteq \textup{soc}\, \Res_{\alpha, \theta}^{\alpha + \theta} L_i(\pi)\), and so \(\varphi \in \Hom_{R_\alpha}(L_{i, \alpha}, \textup{soc}\, \Res_{\alpha, \theta}^{\alpha + \theta} L_i(\pi))\). Thus
\begin{align*}
\textup{soc}\, \Hom(L_{i, \alpha}, \Res_{\alpha, \theta}^{\alpha + \theta} L_i(\pi)) 
\subseteq
\Hom(L_{i, \alpha},  \textup{soc}\, \Res_{\alpha, \theta}^{\alpha + \theta} L_i(\pi)).
\end{align*}
Therefore, as \(R_\theta\)-modules, we have
\begin{align*}
\textup{soc}\, E^*_{i:k} L_i(\pi) 
&= \textup{soc} \,\Hom_{R_{\alpha+\theta}}(L_{i,\alpha} \circ R_\theta, L_i(\pi))
\cong \textup{soc}\, \Hom_{R_{\alpha} \otimes R_\theta}( L_{i, \alpha} \boxtimes R_\theta, \Res_{\alpha, \theta}^{\alpha + \theta} L_i(\pi))\\
&\cong \textup{soc}\, \Hom_{R_\alpha}(L_{i,\alpha}, \Res_{\alpha, \theta}^{\alpha + \theta} L_i(\pi))
\subseteq
\Hom(L_{i, \alpha},  \textup{soc} \,\Res_{\alpha, \theta}^{\alpha + \theta} L_i(\pi))
\cong q^{n_k(\pi^i)-1} L_i(\pi_{-i:k}).
\end{align*}
Since \(\Hom(L_{i, \alpha},  \textup{soc} \,\Res_{\alpha, \theta}^{\alpha + \theta} L_i(\pi)) \neq 0\), it follows that \(\Hom(L_{i, \alpha},   \Res_{\alpha, \theta}^{\alpha + \theta} L_i(\pi)) \neq 0\), and thus we have that \(\textup{soc}\, \Hom_{R_\alpha}(L_{i,\alpha}, \Res_{\alpha, \theta}^{\alpha + \theta} L_i(\pi)) \neq 0\). Therefore \(\textup{soc}\, E^*_{i:k} L_i(\pi)\) is a nonzero \(R_\theta\)-submodule of the simple module \(q^{n_k(\pi^i)-1} L_i(\pi_{-i:k})\), and result (2) follows.
\end{proof}

This yields the immediate corollary (which appears in ungraded form in \cite[Proposition~3.13]{TW}):

\begin{corollary}\label{branchingpoly}
 Let \((\pi | \phi) \in \aMV(\theta)\). Recalling the \({\tt A}_1^{(1)}\)-bicrystal structure of \(\aMV\) from \S\ref{aMVcrysdef}, we have 
\begin{enumerate}
\item If \(\varepsilon_i(\pi | \phi) = 0\) then \(E_i L(\pi | \phi) = 0\), and if \(\varepsilon^*_i(\pi | \phi) = 0\) then \(E_i^* L(\pi | \phi) = 0\). Otherwise
\begin{align*}
q^{1-\varepsilon_i(\pi | \phi)}  \textup{soc}\, E_i L(\pi | \phi) \cong L({e}_i (\pi | \phi) )
\quad
\textup{and}
\quad
q^{1-\varepsilon^*_i(\pi | \phi)}\textup{soc} \,E_i^* L(\pi | \phi) \cong L({e}^*_i (\pi | \phi) ).
\end{align*}
\item We have
\begin{align*}
q^{\varepsilon_i(\pi | \phi)} \textup{hd} \, F_i L(\pi | \phi) \cong L({f}_i (\pi | \phi) )
\quad
\textup{and}
\quad
q^{\varepsilon^*_i(\pi | \phi)} \textup{hd} \,F_i^* L(\pi | \phi) \cong L({f}^*_i (\pi | \phi) ).
\end{align*}
\end{enumerate}
\end{corollary}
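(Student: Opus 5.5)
The plan is to specialize Proposition~\ref{rootfuncbranch} to \(k=1\), using \(E_{i:1}\cong E_i\), \(F_{i:1}\cong F_i\), \(E^*_{i:1}\cong E_i^*\), \(F^*_{i:1}\cong F_i^*\) from \S\ref{rootfuncsec}, and then translate the statistics \(m_i, m_i^*, n_{i:1}, n^*_{i:1}\) into the crystal statistics \(\varepsilon_i,\varepsilon_i^*\) of (\ref{epsvaluesamv}).

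First match the statistics. By definition \(n_{1:1}(\pi|\phi)=n_1(\phi^1)=\varepsilon_1(\pi|\phi)\), \(n_{0:1}(\pi|\phi)=n_1(\pi^0)=\varepsilon_0(\pi|\phi)\), and likewise \(n^*_{i:1}=\varepsilon_i^*\). For the minimum statistics, \(m_1(\pi|\phi)=\min(\phi^1)\) is always \(\geq 1\) (or \(\infty\) for the empty partition). Moreover \(m_1(\pi|\phi)=1\) exactly when \(\phi^1\) has a row of length one, i.e.\ when \(\varepsilon_1(\pi|\phi)=n_1(\phi^1)>0\). Otherwise \(m_1(\pi|\phi)>1\). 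The same dichotomy holds for \(m_0\), \(m_i^*\) with \(\varepsilon_0\), \(\varepsilon_i^*\).

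Part (1) follows from Proposition~\ref{rootfuncbranch}(1),(2) with \(k=1\). If \(\varepsilon_i=0\), then \(m_i>1=k\), so \(E_iL(\pi|\phi)=0\). Otherwise \(m_i=1\), and the socle statement holds with shift \(1-n_{i:1}=1-\varepsilon_i\). Here \(e_{i:1}=e_i\) by the abbreviation in \S\ref{aMVcrysdef}, and the starred case is identical.

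Part (2) follows from Proposition~\ref{rootfuncbranch}(3),(4). The hypothesis \(m_i\geq 1\) always holds, so the head statements hold with shift \(n_{i:1}=\varepsilon_i\) and with \(f_{i:1}=f_i\).

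There is no real obstacle here. The only care point is the convention \(\min(\varnothing)=\infty\), which ensures the hypothesis \(m_i\ge1\) holds for empty components as well.
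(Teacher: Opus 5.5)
Your proposal is correct and follows the paper's own argument. The paper records this corollary as an immediate consequence of Proposition~\ref{rootfuncbranch} at \(k=1\), using \(E_{i:1}\cong E_i\), \(F_{i:1}\cong F_i\) and their starred analogues from \S\ref{rootfuncsec}, together with \(n_{i:1}=\varepsilon_i\), \(n^*_{i:1}=\varepsilon^*_i\), and the fact that \(m_i=1\) exactly when \(\varepsilon_i>0\), with the convention \(\min(\varnothing)=\infty\); your bookkeeping of all of this is accurate.
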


\subsection{Cyclotomic KLR algebras}
Let \(\theta \in \Z_{\geq 0}I\) and \(\Lambda = t_0 \Lambda_0 + t_1 \Lambda_1\) be a dominant integral weight. Then we define the {\em cyclotomic KLR algebra} \(R_\theta^\Lambda\) to be the quotient of \(R_\theta\) by the two-sided ideal generated by \(\{y_1^{t_{i_1}}1_{ \bi} \mid \bi \in I^\theta\}\). We write \(\textup{infl}^\Lambda: R_\theta^\Lambda\textup{-mod} \to R_\theta\textup{-mod}\) and \(\textup{pr}^\Lambda: R_{\theta}\textup{-mod} \to R_{\theta}^\Lambda\textup{-mod}\) for the usual inflation and truncation functors. The algebra \(R_\theta^\Lambda\) is always finite dimensional.

\begin{remark}
Cyclotomic KLR algebras occupy a central position in modern representation theory due to their myriad connections to categorification and other algebraic structures. To name a few, the celebrated result \cite{BKisom} of Brundan-Kleshchev establishes that cyclotomic KLR algebras are isomorphic to blocks of cyclotomic Hecke algebras, allowing the latter---and modular representations of symmetric groups in particular (see \cite{KleshSurvey})---to be studied through the (graded) lens of cyclotomic KLR algebras. Furthermore, Kang-Kashiwara proved \cite{KangKash} that the cyclotomic KLR algebras \(R^\Lambda_\theta\) categorify the highest weight representations \(V(\Lambda)\) for the quantum group \(U_q(\widehat{\mathfrak{sl}}_n)\), with the simple \(R^\Lambda_\theta\)-modules categorifying the crystal \(\mathcal{B}(\Lambda)\), as shown by Varagnolo-Vasserot \cite{VV}.
\end{remark}

\subsubsection{Specht modules for multipartitions}
Let \(\bkap = (\kappa_1, \kappa_2, \ldots)\) be a multicharge and \(\blam = (\lambda^{(1)}, \lambda^{(2)}, \ldots ) \in \MP^{\bkap}(\theta)\) be a multipartition of level \(\ell\). Then we may consider each constituent partition \(\lambda^{(t)}\) as a (skew) diagram with residues as determined by \(\kappa_t\). Following \cite{KMR} (and expressly calculating the grading shift in \cite[(2.3)]{KMR}) we set
\begin{align*}
S^{\bkap}(\blam) := q^{\textup{sh}^{\bkap}(\blam)}\bS(\lambda^{(\ell)}) \circ  \bS(\lambda^{(\ell-1)}) \circ \cdots \circ  \bS(\lambda^{(1)}) \in R_\theta\textup{-mod},
\end{align*}
where the grading shift is given by
\begin{align*}
\textup{sh}^{\bkap}(\blam) = \sum_{\substack{1 \leq t \leq \ell\\ 1 \leq r \leq p(\lambda^{(t)})}} \left \lfloor   \frac{\lambda_r^{(t)}}{2} \right \rfloor + 
\sum_{\substack{1 \leq s< t \leq \ell\\ 1 \leq r \leq p(\lambda^{(t)})}} 
\left \lfloor
\frac{\lambda_r^{(t)} + ((r + \delta_{\kappa_s, \hat \kappa_t})\hspace{-2mm} \mod 2)}{2}
\right\rfloor.
\end{align*}
If \(\blam \in \MP^{(\bkap, \ell)}(\theta)\), then \(S^{\bkap}(\blam)\) naturally descends to an \(R^{\Lambda(\bkap, \ell)}_\theta\)-module.

\begin{theorem}\cite[Theorem~5.10]{BKdecomp} \cite[Main Theorem]{HuMathasCycA} \label{MPlabelthmcyc}
Let \(\theta \in \Z_{\geq 0}I\). Let \(\Lambda\) be a dominant weight, and assume that the multicharge
 \(\bkap\) and \(\ell \in \Z_{>0}\) are such that \(\Lambda = \Lambda(\bkap, \ell)\). Then the algebra \(R_\theta^{\Lambda}\) is graded cellular, with poset \(\MP^{(\bkap, \ell)}(\theta)\) under the dominance order \(\trianglerighteq\) and cell modules \(\{S^{\bkap}(\blam) \mid \blam \in \MP^{(\bkap, \ell)}(\theta)\}\). For all {\em Kleshchev} multipartitions \(\blam \in \MPK^{(\bkap,\ell)}(\theta)\), the module \(S^{\bkap}(\blam)\) has self-dual simple head \(D^{\bkap}(\blam):= \textup{hd} \,S^{\bkap}(\blam)\), and 
\begin{align*}
\{ D^{\bkap}(\blam) \mid \blam \in \MPK^{(\bkap, \ell)}(\theta)\}
\end{align*}
is a complete and irredundant list of simple \(R_\theta^{\Lambda}\)-modules up to isomorphism and grading shift.
\end{theorem}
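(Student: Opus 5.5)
This is a cited theorem (Brundan–Kleshchev, Hu–Mathas), so I would not reprove it from scratch; the plan is to assemble it from those sources and check that our conventions match theirs. The three points to verify are the ordering of components, the multicharge, and the grading shift \(\textup{sh}^{\bkap}(\blam)\).

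\textbf{Step 1 (cellular structure).} First I would transport the result through the Brundan–Kleshchev isomorphism. Hu–Mathas construct a graded cellular basis of \(R^\Lambda_\theta\) in the Hecke setting, indexed by standard tableaux of multipartitions of level \(\ell\). The cell modules are the graded Specht modules of Brundan–Kleshchev–Wang. Our components are indexed bottom-to-top, whereas theirs are indexed top-to-bottom. Our dominance order is defined accordingly, with components of larger index weighted first. So after reversing the order of components, the poset is \((\MP^{(\bkap,\ell)}(\theta),\trianglerighteq)\) as stated.

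\textbf{Step 2 (identifying cell modules with our \(S^{\bkap}(\blam)\)).} By \cite{KMR}, the cyclotomic Specht module of a multipartition is an induction product of the Specht modules of its components. That product is taken in the order dictated by the cellular basis (last component first), with an explicit degree shift \cite[(2.3)]{KMR}. The first part of this step is to show that the product descends to an \(R^{\Lambda}_\theta\)-module. This holds because every word of \(S^{\bkap}(\blam)\) begins with the residue \(\kappa_t\) of the first box of some component, and the cyclotomic relation kills \(y_1^{t_{i_1}}\) on such words.

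The second part of this step is to evaluate \cite[(2.3)]{KMR} for the \(\tt A_1^{(1)}\) residue pattern. Write \(\lambda^{(t)}_r\) for the length of row \(r\) of component \(t\). The within-component part of the degree comes from Brundan–Kleshchev–Wang. In the case of two residues, each row contributes half its number of same-residue adjacencies, which gives \(\lfloor\lambda^{(t)}_r/2\rfloor\). The cross-component part counts, for each row of component \(t\), the boxes whose residue matches against the earlier components \(s<t\). This depends on the parity of \(r\) and on whether \(\kappa_s\neq\kappa_t\), which gives the second floor term.

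\textbf{Step 3 (simple heads and classification).} For Kleshchev \(\blam\), the general theory of graded cellular algebras gives that the nonzero \(D^{\bkap}(\blam)=\textup{hd}\,S^{\bkap}(\blam)\) are self-dual and pairwise non-isomorphic, and that they exhaust the simple modules. Brundan–Kleshchev \cite[Theorem~5.10]{BKdecomp} identify the \(\blam\) for which \(D^{\bkap}(\blam)\neq 0\) as exactly the Kleshchev multipartitions of \S\ref{Kleshcrys}. The only check needed here is that our arc rule agrees with their crystal definition once components are reordered.

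\textbf{Main obstacle.} I expect Step 2 to be the main obstacle: pinning down the shift \(\textup{sh}^{\bkap}(\blam)\) so that heads are self-dual on the nose rather than only up to a grading shift. What I would try first is to compare the graded dimension of the highest-weight word space against the cellular degree of the initial tableau, as in the proof of Theorem~\ref{cuspsasspechts}.
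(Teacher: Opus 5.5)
Your overall route is the paper's route. The paper gives no independent argument: it cites \cite[Theorem~5.10]{BKdecomp} and \cite{HuMathasCycA}, builds \(S^{\bkap}(\blam)\) as a shifted induction product following \cite{KMR}, and reads \(\textup{sh}^{\bkap}(\blam)\) off from \cite[(2.3)]{KMR}. Your convention checks are the right ones. The paper's \(\lambda^{(t)}\) plays the role of the standard \((\ell+1-t)\)th component, which is why the dominance order weights larger indices first and the product begins with \(\bS(\lambda^{(\ell)})\). Your two floor terms do reproduce \(\textup{sh}^{\bkap}(\blam)\), which is exactly the Brundan--Kleshchev--Wang degree of the initial tableau \({\tt t}^{\blam}\). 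One small correction to your explanation: boxes adjacent in a row never share a residue here. The within-component term instead counts boxes \((r,c)\) whose residue agrees with that of the addable node \((r+1,1)\) below, i.e.\ those with \(c\) even, which gives \(\lfloor \lambda^{(t)}_r/2\rfloor\).

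The step that does not hold as written is the descent in Step~2. The cyclotomic ideal is the two-sided ideal generated by the elements \(y_1^{t_{i_1}}1_{\bi}\), so you must show that each of these annihilates \(S^{\bkap}(\blam)\). Your observation that every word begins with some \(\kappa_t\), \(t\le\ell\), only handles words with \(t_{i_1}=0\), since those do not occur. For words with \(t_{i_1}\ge 1\), the claim that the cyclotomic relation kills \(y_1^{t_{i_1}}\) on them is exactly what needs proof. It is not automatic for an induced module, because moving \(y_1\) past the shuffle elements \(\psi_w\) creates correction terms. You should instead import the descent from \cite{KMR}, as the paper does: there the shifted induction product is identified with the Brundan--Kleshchev--Wang/Hu--Mathas cell module, which is an \(R^\Lambda_\theta\)-module by construction.

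Relatedly, what you call the main obstacle is already settled by Step~2. Once the shift equals \(\deg({\tt t}^{\blam})\), the generator sits in the same degree as in the graded cell module, and self-duality of \(D^{\bkap}(\blam)\) comes from the degree-zero cellular form. The bar-invariance argument of Theorem~\ref{cuspsasspechts} is therefore not needed. It would not apply directly anyway, since you do not know the graded dimension of any word space of the head in advance.
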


\subsubsection{Cyclotomic branching rules for simple modules labeled by Kleshchev multipartitions}
Following \cite[\S4.4]{BKdecomp} we define (biadjoint, exact) cyclotomic \(i\)-restriction and \(i\)-induction functors
\begin{align*}
&E_i^{\Lambda}\colon R^{\Lambda}_{\theta+\alpha_i}\textup{-mod} \to R^{\Lambda}_{\theta}\textup{-mod},
\hspace{5mm}M \mapsto 1_{\theta, \alpha_i} R^{\Lambda}_{\theta+\alpha_i} \otimes_{R^{\Lambda}_{\theta+\alpha_i} } M
\\
&F_i^{\Lambda}\colon R^{\Lambda}_{\theta}\textup{-mod} \to R^{\Lambda}_{\theta+\alpha_i}\textup{-mod},
\hspace{5mm}M \mapsto R_{\theta+\alpha_i}^{\Lambda} 1_{\theta, \alpha_i}\otimes_{R_{\theta}^{\Lambda}} M
\end{align*}

\begin{theorem}  \cite[Theorem~4.12]{BKdecomp}\label{cycbranch}
Let \(\theta = a_0 \alpha_0 + a_1 \alpha_1\in \Z_{\geq 0}I\). Let \(\Lambda=t_0 \Lambda_0 + t_1 \Lambda_1\) be a dominant integral weight, and assume that the multicharge
 \(\bkap\) and \(\ell \in \Z_{>0}\) are such that \(\Lambda = \Lambda(\bkap, \ell)\). Let \(i \in \Z_2\) and \(\blam \in \MPK^{(\bkap, \ell)}(\theta)\). Then recalling the \({\tt A}_1^{(1)}\)-crystal structure of \(\MPK^{(\bkap, \ell)}\) from \S\ref{cycKleshcrystal}, we have
\begin{enumerate}
\item If \(\varepsilon_i^{(\bkap, \ell)} (\blam) = 0\), then \(E_i^{\Lambda} D^{\bkap}(\blam) = 0\). Otherwise
\begin{align*}
q^{\varepsilon_i (\blam)-1}
 \textup{hd}\left( E_i^{\Lambda}  D^{\bkap}(\blam)\right)
\cong D^{\bkap}(e_i^{(\bkap, \ell)}  \blam)
\cong 
q^{1-\varepsilon_i (\blam)}
 \textup{soc}\left( E_i^{\Lambda} D^{\bkap}(\blam)\right)
\end{align*}
\item If \(\varphi^{(\bkap, \ell)}_i (\blam) = 0\), then \(F_i^{\Lambda} D^{\bkap}(\blam) = 0\). Otherwise
\begin{align*}
q^{2a_i - 2a_{\hat i} - t_i + \varphi_i^{(\bkap, \ell)}(\blam)} 
 \textup{hd}\left(F_i^{\Lambda}D^{\bkap}(\blam) \right)
\cong D^{\bkap}(f_i^{(\bkap, \ell)}  \blam)
\cong 
q^{2a_i - 2a_{\hat i} - t_i - \varphi_i^{(\bkap, \ell)}(\blam)+2}
\textup{soc}\left( F_i^{\Lambda}D^{\bkap}(\blam) \right).
\end{align*}
\end{enumerate}
\end{theorem}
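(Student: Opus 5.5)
This is the cyclotomic modular branching theorem of Brundan--Kleshchev, and it is cited from \cite[Theorem~4.12]{BKdecomp} rather than proved here. Still, here is how I would reconstruct it from the tools above. The work splits into two parts. First I would set up the ungraded statement and the identification of crystal data. Second I would pin down the grading shifts.

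\textbf{Ungraded branching.} I would use the adjunction between inflation and truncation: \(E_i^\Lambda\) agrees with \(\textup{pr}^\Lambda\circ E_i\circ\textup{infl}^\Lambda\), and \(F_i^\Lambda = \textup{pr}^\Lambda\circ F_i\circ \textup{infl}^\Lambda\). The argument then runs as follows.
\begin{enumerate}
\item Grojnowski--Vazirani-type arguments, as for the affine KLR algebra, show that \(E_iD\) has simple socle whenever it is nonzero.
\item \(\varepsilon_i\) equals the maximal \(m\) with \(E_i^mD\neq0\).
\item \(E_iD\) is self-dual up to shift, so its head and socle coincide up to a shift.
\end{enumerate}
Corollary~\ref{branchingpoly} already gives this at the level of \(R_\theta\): socles of \(E_i\) and heads of \(F_i\) realize \(e_i\) and \(f_i\) on \(\mathcal{B}(\infty)\). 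The cyclotomic truncation then cuts this down to \(\mathcal{B}(\Lambda)\), via the highest weight crystal \(\aMV^\Lambda\) and Theorem~\ref{polycycthm}. For \(F_i^\Lambda\), biadjointness gives \(F_i^\Lambda D\neq0\) if and only if \(\varphi_i>0\), by the sl\(_2\)-categorification theory of Kang--Kashiwara.

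\textbf{Matching labels.} To match these labels with Kleshchev multipartitions, I would compare the characters of the cell modules \(S^{\bkap}(\blam)\) with their restrictions. Restricting a Specht module has a filtration by Specht modules obtained by removing \(i\)-removable boxes. The removable box that survives in the head is the one that is unarced in \(\blam[i]\), which is exactly Kleshchev's normal/good node combinatorics. Unitriangularity in dominance (Theorem~\ref{MPlabelthmcyc}) identifies the socle constituent as \(D^{\bkap}(e_i\blam)\). Alternatively, I could transport Corollary~\ref{branchingpoly} through the crystal isomorphisms of Corollary~\ref{PolyMPthmcyc}, together with a uniqueness statement for crystal isomorphisms of \(\mathcal{B}(\Lambda)\). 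This alternative needs the cellular labels to form a crystal, which again comes from the Specht filtration argument.

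\textbf{Grading shifts.} This is the main obstacle. I would use self-duality of \(D^{\bkap}(\blam)\), and biadjointness of \(E_i^\Lambda, F_i^\Lambda\) up to the shift \(2a_i-2a_{\hat i}-t_i\) coming from the degree of the unit and counit. For \(E_i\), the socle shift \(1-\varepsilon_i\) comes from \(E_i^{(\varepsilon)}\) and the divided-power decomposition \(E_i^{\varepsilon}\cong[\varepsilon]!E_i^{(\varepsilon)}\). Restricting to \(e_i\) then gives the balanced shifts \(\pm(\varepsilon_i-1)\). For \(F_i^\Lambda\), I would combine the \(E\) result with the biadjunction shift and the identity \(\varphi_i-\varepsilon_i=\langle \textup{wt},\alpha_i^\vee\rangle\), and check the head and socle shifts against bar-invariance of graded dimensions. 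Keeping track of these degrees is the delicate bookkeeping step.
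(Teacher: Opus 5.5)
Since the paper only imports this result from \cite[Theorem~4.12]{BKdecomp}, your sketch has to stand on its own. It has a genuine gap exactly at the step you call matching labels.

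The module-theoretic half is fine. For $E_i$ you can even read it off Corollary~\ref{branchingpoly}, since $\textup{infl}^\Lambda\circ E_i^\Lambda=E_i\circ\textup{infl}^\Lambda$ and $E_iD$ is self-dual. What has to be proved is that the simple in $\textup{soc}\,E_i^\Lambda D^{\bkap}(\blam)$ is $D^{\bkap}(e_i\blam)$ for the \emph{combinatorial} $e_i$ (delete the topmost unarced $i$-removable box of $\blam[i]$). One must also show that the module-theoretic $\varepsilon_i$ counts the unarced $i$-removable boxes.

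Specht filtrations plus unitriangularity cannot give this, for three reasons. First, the filtration of $E_iS^{\bkap}(\blam)$ has a section (up to shift) $S^{\bkap}(\blam\backslash A)$ for \emph{every} $i$-removable box $A$, arced or not, so it is blind to the arcs, which come from $i$-addable boxes. Second, passing from $S^{\bkap}(\blam)$ to its head can kill any part of that filtration. Third, dominance only bounds which $D^{\bkap}(\bnu)$ can occur; it does not say which one lies in the socle, nor even that $D^{\bkap}(e_i\blam)$ occurs at all.

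Your sentence ``the removable box that survives in the head is the one that is unarced'' \emph{is} the modular branching theorem. In level one this is Kleshchev's theorem \cite{KleshBranchI, KleshBranchII}. In higher levels it is the identification of Grojnowski's crystal labels with heads of Specht modules \cite{Ariki2001}, which rests on Ariki's decomposition-number theorem.

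Your alternative through Corollary~\ref{PolyMPthmcyc} is circular here. Those maps are purely combinatorial. The paper's only bridge to modules is $D^{\bkap}(\blam)\cong L(\mathcal{Y}\circ\mathcal{Z}^{\bkap}(\blam))$ in Theorem~\ref{roottomultiisomthm}, and that comes from Proposition~\ref{EFaffmulti}, whose proof uses Theorem~\ref{cycbranch} itself.

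Separately, $F_i^\Lambda\neq\textup{pr}^\Lambda\circ F_i\circ\textup{infl}^\Lambda$. The functor $F_i$ induces from $M\boxtimes\k_{\alpha_i}$, which kills the new generator $y_{n+1}$. By contrast, $F_i^\Lambda M=R^\Lambda_{\theta+\alpha_i}1_{\theta,\alpha_i}\otimes_{R_\theta}M$ truncates the induction from $M\boxtimes R_{\alpha_i}$. So $\textup{pr}^\Lambda F_i\,\textup{infl}^\Lambda M$ is only the cokernel of the nilpotent endomorphism $y_{n+1}$ of $F_i^\Lambda M$.

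For example, take $\Lambda=2\Lambda_0$ and $\theta=0$. Then $F_0^\Lambda\k=\k[y]/(y^2)$, whose socle $q^2\k$ is exactly what part (2) predicts ($\varphi_0=2$, $\varepsilon_0=0$). Meanwhile $\textup{pr}^\Lambda F_0\k=\k_{\alpha_0}$ is simple.

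The heads of the two functors do agree. In fact the head statement in (2) follows from (1) by the tensor--Hom adjunction alone, because $2a_i-2a_{\hat i}-t_i+\varphi_i=\varepsilon_i$ (as $\varphi_i-\varepsilon_i=\langle\Lambda-\theta,\alpha_i^\vee\rangle$). But the socle statement, and the Kang--Kashiwara biadjunction \cite{KangKash} you want to use for it, concern the genuine $F_i^\Lambda$, not your composite.
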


\subsection{Classification of simple \(R_\theta\)-modules via Kleshchev multipartitions}\label{kleshclasssec}
In a limiting procedure, we may lift the classification of simple modules for cyclotomic KLR algebras \(R^\Lambda_\theta\) in Theorem~\ref{MPlabelthmcyc} to the full affine KLR algebra \(R_\theta\).

\begin{theorem}\label{MPlabelthm}
Let \(\theta \in \Z_{\geq 0}I\) and let \(\bkap\) be a multicharge. The set
\begin{align}\label{affinemultclass}
\{ D^{\bkap}(\blam) = \textup{hd}\, S^{\bkap}(\blam) \mid \blam \in \MPK^{\bkap}(\theta)\}
\end{align}
is a complete and irredundant set of simple \(R_\theta\)-modules up to isomorphism and grading shift. Moreover, \([S^{\bkap}(\blam): D^{\bkap}(\blam)] = 1\) and \([S^{\bkap}(\bmu): D^{\bkap}(\blam)] \neq 0\) for some \(\bmu \in \MP^{\bkap}(\theta)\) only if \(\bmu \trianglerighteq \blam\).
\end{theorem}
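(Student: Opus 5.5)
The plan is to deduce the statement from the cyclotomic classification of Theorem~\ref{MPlabelthmcyc} by letting the level grow. Fix \(\theta\) with \(n = \height(\theta)\) and choose a level \(\ell\) large enough for all the relevant data to stabilize. Any multipartition of content \(\theta\) has at most \(n\) nonempty components, so \(\MP^{\bkap}(\theta)\) is finite. Choose \(\ell\) so that every \(\blam \in \MP^{\bkap}(\theta)\) has level at most \(\ell\); then \(\MP^{(\bkap,\ell)}(\theta) = \MP^{\bkap}(\theta)\) and \(\MPK^{(\bkap,\ell)}(\theta) = \MPK^{\bkap}(\theta)\). For \(\Lambda = \Lambda(\bkap,\ell)\), the multicharge contains infinitely many \(0\)'s and \(1\)'s, so enlarging \(\ell\) makes both \(t_0, t_1 \geq n\).

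The key step is to show that, for \(t_0,t_1 \geq n\), every simple \(R_\theta\)-module \(L\) is annihilated by the cyclotomic ideal, so that \(L = \textup{infl}^\Lambda \textup{pr}^\Lambda L\). I would argue via crystals:
\begin{itemize}
\item A simple \(L\) with \(y_1^{t_{i_1}}1_{\bi}L \neq 0\) for some word \(\bi\) needs \(y_1\) to act non-nilpotently to order \(t_{i}\) on \(1_{\bi}L\).
\item Equivalently, the nilpotency degree of \(y_1\) on \(1_{\bi}L\) is bounded by \(\varepsilon_{i_1}^*\) of the corresponding crystal element. This is the standard fact that \(L\) factors through \(R^\Lambda\) iff \(\varepsilon_i^*(L) \leq t_i\) for both \(i\), as in \cite{LVcrystals}.
\item Since \(\varepsilon_i^* \leq n\) for content \(\theta\), the choice \(t_i \geq n\) makes every simple \(R_\theta\)-module an \(R_\theta^\Lambda\)-module.
\end{itemize}
Simple \(R^\Lambda_\theta\)-modules inflate to simple \(R_\theta\)-modules, so inflation gives a bijection between simples of \(R^\Lambda_\theta\) and simples of \(R_\theta\), up to shift. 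Combining this with Theorem~\ref{MPlabelthmcyc} gives completeness and irredundance of (\ref{affinemultclass}). Here \(S^{\bkap}(\blam)\), defined as an \(R_\theta\)-module, is the inflation of the cell module, so its head is unchanged.

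For the multiplicity statements, I would use graded cellularity of \(R^\Lambda_\theta\) from Theorem~\ref{MPlabelthmcyc}. Standard cellular theory gives \([S:D(\blam)] = 1\) for the cell module of \(\blam\), and \([S(\bmu):D(\blam)] \neq 0\) only if \(\bmu \trianglerighteq \blam\). Composition multiplicities are preserved under inflation, so these statements transfer to \(R_\theta\). I would also note that the dominance order on \(\MP^{\bkap}(\theta)\) agrees with the one on level-\(\ell\) multipartitions, since the trailing components are empty.

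The main obstacle is the key step, namely justifying that every simple \(R_\theta\)-module factors through \(R^\Lambda_\theta\) once \(t_i\) is large. I would first try the elementary route: since \(\theta\) has height \(n\), the nilpotency order of \(y_1\) on any finite-dimensional simple module is at most \(n\). This should follow from \(E_{i}^{*m}L=0\) for \(m>n\) together with the Kleshchev–Lauda–Vazirani description of the \(y_1\) action through \(\varepsilon^*\). If that is too delicate, I would fall back on the crystal identification \(\varepsilon_i^*\le t_i\) from \cite{LVcrystals}.
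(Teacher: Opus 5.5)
Your overall plan of pushing everything into a cyclotomic quotient and quoting Theorem~\ref{MPlabelthmcyc} is the paper's plan. However, your bookkeeping rests on a false claim: $\MP^{\bkap}(\theta)$ is \emph{not} finite. A multipartition of content $\theta$ has at most $\height(\theta)$ nonempty components, but those components can sit at arbitrarily large indices. For example, when $\theta=\alpha_j$, a single box in component $t$ has content $\theta$ for each of the infinitely many $t$ with $\kappa_t=j$. So no $\ell$ satisfies $\MP^{(\bkap,\ell)}(\theta)=\MP^{\bkap}(\theta)$. As written, you have therefore not justified $\MPK^{(\bkap,\ell)}(\theta)=\MPK^{\bkap}(\theta)$, which you use to get simplicity and irredundancy of the list. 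Nor have you covered the decomposition-number statement for $\bmu\in\MP^{\bkap}(\theta)$ of level $>\ell$, about which the cellular structure of $R^{\Lambda(\bkap,\ell)}_\theta$ says nothing. Completeness is unaffected.

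The repair is the one the paper uses: let the level depend on the multipartitions at hand. Given $\blam,\bmu$, take $M$ at least both of their levels. Then $S^{\bkap}(\blam)$ and $S^{\bkap}(\bmu)$ are inflated cell modules of $R^{\Lambda(\bkap,M)}_\theta$, and the dominance order is unchanged by appending empty components. Simplicity, irredundancy and the multiplicity claims then follow from Theorem~\ref{MPlabelthmcyc} at level $M$. Alternatively, your key step does imply the uniform bound you wanted. A Kleshchev $\blam$ of level $\ell'>\ell$ would have $D^{\bkap}(\blam)\approx D^{\bkap}(\bmu)$ for some $\bmu$ of level $\le\ell$, by completeness at level $\ell$. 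That contradicts irredundancy at level $\ell'$.

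Your key step also takes a genuinely different route from the paper's. The paper needs no crystal input there. A simple $L$ is finite dimensional and graded, and $\deg y_1=2$, so $y_1^mL=0$ for some $m$. It then suffices to take $M$ so large that $\kappa_1,\dots,\kappa_M$ contains at least $m$ zeros and $m$ ones, and $L$ descends to $R^{\Lambda(\bkap,M)}_\theta$. This gives no bound uniform in $L$, but none is needed once levels are chosen per module.

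Your argument combines the criterion that a simple $L$ factors through $R^\Lambda_\theta$ iff $\varepsilon_i^*(L)\le t_i$ with the bound $\varepsilon_i^*(L)\le\height(\theta)$. It is valid and proves more: a single quotient with $t_0,t_1\ge\height(\theta)$ already sees every simple $R_\theta$-module. The cost is importing a nontrivial external result where the elementary degree argument suffices. In particular, your proposed ``elementary route'' of bounding the nilpotency order of $y_1$ by $n$ is unnecessary.
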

\begin{proof}
This is essentially \cite[Theorem~6.34]{GeLi}; we provide the proof here since our setup is somewhat different. Simplicity of the modules in (\ref{affinemultclass}) follows from Theorem~\ref{MPlabelthmcyc}. Every self-dual simple module \(L \in R_{\theta}\textup{-mod}\) is finite dimensional and thus \(y_1^{m}L = 0\) for some \(m \gg 0\) by degrees. Then, letting \(M\) be large enough that the list \(\kappa_1, \ldots, \kappa_M\) contains at least \(m\) 0's and 1's, it follows that \(L\) descends to a simple \(R^{\Lambda(\bkap, M)}\)-module, and hence \(L \approx D^{\bkap}(\blam)\) for some \(\blam \in \MPK^{(\bkap, M)}(\theta) \subset \MPK^{\bkap}(\theta)\) by Theorem~\ref{MPlabelthmcyc}. Thus the list (\ref{affinemultclass}) is complete. For irredundancy, we note that if \(D^{\bkap}(\blam) \approx D^{\bkap}(\bmu)\) for some \(\blam, \bmu \in \MPK^{\bkap}(\theta)\), then again choosing \(M \gg 0 \) such that \(\lambda^{(m)} = \mu^{(m)} = \varnothing\) for \(m > M\), we would have that \(\blam, \bmu \in \MPK^{(\bkap, M)}(\theta)\) and \(D^{\bkap}(\blam) \approx D^{\bkap}(\bmu) \in R_\theta^{\Lambda(\bkap, M)}\textup{-mod}\) and thus \(\blam = \bmu\) by Theorem~\ref{MPlabelthmcyc}. The final statement in the theorem follows likewise from consideration of these modules in the cyclotomic setting, see \cite[Theorem~4.12]{BKdecomp}.
\end{proof}

\subsubsection{(Affine) branching rules for simple modules labeled by Kleshchev multipartitions}

\begin{proposition}\label{EFaffmulti}
Let \(\theta \in \Z_{\geq 0}I\) and let \(\bkap\) be a multicharge. Let \(i \in \Z_2\) and \(\blam \in \MPK^{\bkap}(\theta)\). 
Recalling the Chevalley functors from \S\ref{transfuncsec} and the \({\tt A}_1^{(1)}\)-crystal structure of \(\MPK^{\bkap}\) from \S\ref{Kleshcrys}, we have:
\begin{enumerate}
\item If \(\varepsilon_i(\blam) = 0\) then \(E_i D^{\bkap}(\blam) = 0\). Otherwise
\begin{align*}
q^{\varepsilon_i(\blam)-1}  \textup{hd}\, E_i D^{\bkap}(\blam) \cong D^{\bkap}({e}_i \blam) \cong 
q^{1-\varepsilon_i(\blam)}  \textup{soc}\, E_i D^{\bkap}(\blam).
\end{align*}
\item We have
\begin{align*}
q^{\varepsilon_i(\blam)} \textup{hd} \, F_i D^{\bkap}(\blam) \cong D^{\bkap}({f}_i \blam ).
\end{align*}
\end{enumerate}
\end{proposition}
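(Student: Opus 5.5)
The plan is to reduce the affine statement to the cyclotomic branching rules of Theorem~\ref{cycbranch} by choosing the level large, as in the proof of Theorem~\ref{MPlabelthm}. Given \(\blam \in \MPK^{\bkap}(\theta)\), I will pick \(\ell\) so large that \(\lambda^{(m)} = \varnothing\) for \(m > \ell - 1\), and so that \(\kappa_1, \ldots, \kappa_\ell\) contains many copies of each residue. Then \(\blam \in \MPK^{(\bkap,\ell)}\), and \(D^{\bkap}(\blam) = \textup{infl}^{\Lambda} D\) for the corresponding simple \(R^{\Lambda}_\theta\)-module, where \(\Lambda = \Lambda(\bkap,\ell)\). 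The unrestricted crystal data agree with the cyclotomic ones in this range: \(\varepsilon_i^{(\bkap,\ell)} = \varepsilon_i\) and \(e_i^{(\bkap,\ell)} = e_i\). Moreover, \(f_i \blam\) adds the bottommost unarced \(i\)-addable box, which lies within components \(1,\ldots,\ell\) once \(\ell\) is large, so \(f_i^{(\bkap,\ell)}\blam = f_i\blam \neq 0\).

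For (1), the key fact is that the affine restriction \(E_i\) of an inflated module agrees with the cyclotomic \(E_i^\Lambda\) followed by inflation. This holds because \(E_i^{\Lambda}M = 1_{\theta,\alpha_i} M\) as a vector space with the \(R^\Lambda_\theta\)-action, and the \(R_\theta\)-action on \(1_{\theta,\alpha_i}M\) factors through \(R^\Lambda_\theta\): the cyclotomic relation only involves \(y_1 1_{\bi}\), and this is preserved under restriction to the first \(\theta\) letters. Taking heads and socles commutes with inflation, since inflation is fully faithful onto a Serre subcategory closed under submodules and quotients. Part (1) then follows directly from Theorem~\ref{cycbranch}(1).

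For (2), induction does not commute with inflation, so I instead use that \(\textup{hd}\,F_i L\) is simple. I will take this from the crystal structure on simples: by Corollary~\ref{branchingpoly} applied to \(L = L(\pi|\phi) \approx D^{\bkap}(\blam)\), the head \(\textup{hd}\,F_iL\) is simple with a definite shift \(q^{-\varepsilon_i}\). To identify the label, I will use Frobenius reciprocity: a simple quotient \(F_iD \twoheadrightarrow D'\) gives a nonzero map \(D \to E_i D'\), landing in the socle. By part (1), that socle is \(q^{\varepsilon_i(\blam')-1}D^{\bkap}(e_i\blam')\), which forces \(e_i\blam' = \blam\), i.e.\ \(\blam' = f_i\blam\). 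The grading shift then follows from \(\varepsilon_i(f_i\blam) = \varepsilon_i(\blam)+1\) together with self-duality of \(D^{\bkap}\).

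The main obstacle is this last identification, specifically the grading bookkeeping in the adjunction between \(F_i\) and \(E_i\), where degree shifts must be matched carefully. If the adjunction shift proves awkward, I will fall back on a degree-counting argument via the characters of Theorem~\ref{spechtbasis}.
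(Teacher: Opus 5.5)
Your argument is correct. Part (1) coincides with the paper's proof: \(E_i\) commutes with inflation, and then Theorem~\ref{cycbranch}(1) applies. Part (2), however, goes a different way.

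The paper uses part (1) and Corollary~\ref{branchingpoly} to show that the labeling bijection \(B\colon\MPK^{\bkap}\to\aMV\), defined by \(D^{\bkap}(\blam)\cong L(B(\blam))\), intertwines \(e_i\) and hence \(f_i\). It then transports Corollary~\ref{branchingpoly}(2) across \(B\). You instead identify \(\textup{hd}\,F_iD^{\bkap}(\blam)\) directly on the multipartition side, using the \(\Ind\)/\(\Res\) adjunction and the socle half of part (1). From the polytope side you borrow only the simplicity of the head.

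The grading step you are worried about is routine. The adjunction \(\Hom_{R_{\theta+\alpha_i}}(F_iM,N)\cong\Hom_{R_\theta\otimes R_{\alpha_i}}(M\boxtimes\k_{\alpha_i},\Res\,N)\) is degree-preserving. So if \(\textup{hd}\,F_iD^{\bkap}(\blam)\cong q^cD^{\bkap}(f_i\blam)\), then \(D^{\bkap}(\blam)\cong q^c\,\textup{soc}\,E_iD^{\bkap}(f_i\blam)\cong q^{c+\varepsilon_i(\blam)}D^{\bkap}(\blam)\), using part (1) and \(\varepsilon_i(f_i\blam)=\varepsilon_i(\blam)+1\). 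This forces \(c=-\varepsilon_i(\blam)\); neither self-duality nor a character count is needed.

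The same adjunction gives \(\dim\Hom(F_iD^{\bkap}(\blam),D')\leq\dim\Hom(D^{\bkap}(\blam),\textup{soc}\,E_iD')\leq 1\) for any simple \(D'\). Hence you could drop Corollary~\ref{branchingpoly} altogether, and (2) would rest only on the cyclotomic branching rules.

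What the paper's route buys is the crystal isomorphism \(B\) itself, which it reuses in Theorem~\ref{roottomultiisomthm}. With your version of (2) in hand this is immediate anyway: comparing with Corollary~\ref{branchingpoly}(2) gives \(B(f_i\blam)=f_iB(\blam)\).
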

\begin{proof}
It follows from definitions that \(\textup{infl}^{\Lambda(\bkap, \ell)} \circ E_i^{\Lambda(\bkap, \ell)} = E_i \circ \textup{infl}^{\Lambda(\bkap, \ell)}\), and thus the right side of Theorem~\ref{cycbranch}(1) implies (1) in the proposition. By Theorem~\ref{MPlabelthm} and Theorem~\ref{longthm}(4) there is some bijection \(B:\MPK^{\bkap} \to  \aMV \) such that \(D^{\bkap}(\blam) \cong L(B(\blam))\) for all \(\blam \in \MPK^{\bkap}\). Now, note that by Proposition~\ref{branchingpoly} we have
\begin{align*}
L(B(\blam))
\approx
D^{\bkap}(\blam) 
=
D^{\bkap}({e}_i {f}_i \blam)
\approx
\textup{hd}\, E_i D^{\bkap} ({f}_i \blam)
\approx
\textup{hd} \, E_i L(B({f}_i \blam)) 
\approx
 L({e}_i B({f}_i\blam)),
\end{align*}
so by Theorem~\ref{longthm}(4) we have that \({e}_i B({f}_i\blam) = B(\blam)\), and thus \(B({f}_i \blam) = {f}_i{e}_i B({f}_i\blam) = {f}_iB(\blam)\), and so \(B\) is a crystal isomorphism. Thus the rest of the claims in the proposition follow from Proposition~\ref{branchingpoly}.
\end{proof}

\subsection{Translating between labeling regimes for simple KLR modules}\label{simpletranslations}
In this subsection we apply the combinatorial results of \S\ref{crystalisomsec} to help work with, and translate between, the various classification regimes for simple KLR modules.

\subsubsection{Working with cuspidal systems}
When studying the representation theory of \(R_\theta\) from the cuspidal system point of view, it is useful to be able to efficiently switch between labeling systems associated to the two choices of convex order \(\succatext\) and \(\succbtext\), particularly for the purposes of calculating with the Chevalley functors of \S\ref{transfuncsec}. Recall the combinatorial description of the involution \(\lozenge: \Pi(\theta) \to \Pi(\theta)\) from Theorem~\ref{rpartranslate}, and the fact that for \(\pi \in \Pi(\theta)\), we have that \((\pi | \pi_\lozenge)\) and \((\pi_\lozenge | \pi)\) are affine MV polytopes. Then the next proposition follows from Theorem~\ref{longthm}(4).

\begin{proposition}\label{lozengetranslate}
Let \(\theta \in \Z_{\geq 0}I\). For all \(i \in \Z_2\) and \(\pi \in \Pi(\theta)\) we have \(L_i(\pi) \cong L_{\hat i}(\pi_\lozenge)\).
\end{proposition}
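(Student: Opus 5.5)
The plan is to reduce the statement to Theorem~\ref{longthm}(4), using the symmetry of the affine MV condition recorded after Theorem~\ref{exunloz}. Fix \(\pi \in \Pi(\theta)\). By definition, \(\pi_\lozenge\) is the unique root partition of \(\theta\) such that \((\pi \,|\, \pi_\lozenge) \in \aMV(\theta)\). By the symmetry of Definition~\ref{defaMV}, we also have \((\pi_\lozenge \,|\, \pi) \in \aMV(\theta)\).

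We treat the two values of \(i\) separately.

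If \(i = 1\), apply Theorem~\ref{longthm}(4) to the polytope \((\pi \,|\, \pi_\lozenge)\). This gives
\begin{align*}
L_1(\pi) \cong L_0(\pi_\lozenge) = L_{\hat 1}(\pi_\lozenge).
\end{align*}

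If \(i = 0\), apply Theorem~\ref{longthm}(4) instead to the polytope \((\pi_\lozenge \,|\, \pi)\). This gives \(L_1(\pi_\lozenge) \cong L_0(\pi)\). Reading this isomorphism backwards yields
\begin{align*}
L_0(\pi) \cong L_1(\pi_\lozenge) = L_{\hat 0}(\pi_\lozenge).
\end{align*}

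The only point needing care is the symmetry claim \((\pi \,|\, \phi) \in \aMV \Leftrightarrow (\phi \,|\, \pi) \in \aMV\). I would verify it directly from Definition~\ref{defaMV}. Swapping \(\pi\) and \(\phi\) interchanges \(c^{\uparrow}_m\) with \(c^{\leftarrow}_m\) and \(c^{\rightarrow}_m\) with \(c^{\downarrow}_m\), so condition (1) is preserved. Condition (2) is preserved by inverting its three cases, using the identity \(p(\pi^1)+p(\phi^0) = p(\pi^0)+p(\phi^1)\).

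Alternatively, the symmetry can be read off from Theorem~\ref{amvrecogthm}, since \(\ORTH^{\tt W}\) is an involution. One could also cite Remark~\ref{starinvpoly} for it. Either way, no genuine obstacle is expected: the proposition is a bookkeeping consequence of Theorem~\ref{longthm}(4), with Theorem~\ref{rpartranslate} supplying the explicit formula for \(\pi_\lozenge\).
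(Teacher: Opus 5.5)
Your proof is correct and is exactly the paper's argument: apply Theorem~\ref{longthm}(4) to \((\pi \,|\, \pi_\lozenge)\) and to \((\pi_\lozenge \,|\, \pi)\), the latter lying in \(\aMV(\theta)\) by the symmetry of Definition~\ref{defaMV} that the paper records right after Theorem~\ref{exunloz}. Two corrections to your optional check of that symmetry: swapping \(\pi,\phi\) actually exchanges \(c^{\uparrow}_m \leftrightarrow c^{\downarrow}_m\) and \(c^{\rightarrow}_m \leftrightarrow c^{\leftarrow}_m\), which is harmless since the two pairs are still swapped as sets; and in the case \(|\pi^\delta| > |\phi^\delta|\), condition (2) also needs \(\pi^\delta_1 = p(\pi^1)+p(\phi^0)\), which follows from condition (1) together with equality of content (as in the proof of Lemma~\ref{MVlongertop}), not from (2) alone.
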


\begin{remark}\label{rootparchevappl}
In conjunction with Theorem~\ref{rpartranslate}, Proposition~\ref{rootfuncbranch} and Corollary~\ref{branchingpoly}, Proposition~\ref{lozengetranslate} allows one to efficiently and combinatorially compute with the Chevalley functors of \S\ref{transfuncsec} (and their generalized real root analogues in \S\ref{rootfuncsec}) in the root partition setting; for instance we have
\begin{align*}
\textup{hd} \,F_1 L_1(\pi) \cong \textup{hd}\, F_1 L_0(\pi_\lozenge) \cong q^{-n_1(\pi_\lozenge^1)}L_0((\pi_\lozenge)_{+1:1}) \cong q^{-n_1(\pi_\lozenge^1)}L_1(((\pi_\lozenge)_{+1:1})_\lozenge).
\end{align*}
\end{remark}

\subsubsection{Translating between cuspidal system and Kleshchev multipartition labelings}
The ability to move efficiently between the cuspidal- and multipartition-theoretic approaches to KLR representations allows us to combine the advantages of each. The former carries with it a stratification theory with significant homological control and reciprocity results, while the latter connects to the cellular structure of cyclotomic quotients and the classical theory of Specht modules and their homomorphisms. Different aspects of representation-theoretic information are encoded in the combinatorics of the labels themselves: we know, for instance, that the character of a simple \(R_\theta\)-module is bounded by its labeling polytope, while the multiplicity of words in the simple is bounded by their occurrence as residue sequences in the labeling multipartition. The crystal isomorphisms of \S\ref{crystalisomsec} let us make this connection directly.

\begin{theorem}\label{roottomultiisomthm}
Let \(\theta \in \Z_{\geq 0}I\), let \(\bkap\) be a multicharge and let \(i \in \Z_2\). Then we have
\begin{align*}
L_i(\pi) \cong D^{\bkap}( \mathcal{W}^{\bkap} \circ  \mathcal{X}_i (\pi))
\qquad
\textup{and}
\qquad
D^{\bkap}(\blam) \cong L_i(    \mathcal{Y}_i \circ \mathcal{Z}^{\bkap} (\blam))
\end{align*}
for all \(\pi \in \Pi(\theta)\) and \(\blam \in \MPK^{\bkap}(\theta)\).
\end{theorem}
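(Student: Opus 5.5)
The plan is to show that both labelings are governed by crystal isomorphisms with $\mathcal{B}(\infty)$, and then use the rigidity of highest weight crystals. Consider the map $\Psi\colon \MPK^{\bkap}(\theta) \to \aMV(\theta)$ defined by $D^{\bkap}(\blam) \approx L(\Psi(\blam))$. By Theorem~\ref{MPlabelthm} and Theorem~\ref{longthm}(2),(4), both sets classify simple $R_\theta$-modules up to isomorphism and grading shift, so $\Psi$ is a well-defined bijection on $\bigsqcup_\theta$.

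The proof of Proposition~\ref{EFaffmulti} already shows that $\Psi$ intertwines $f_i$: comparing $\textup{hd}\,E_i$ via Proposition~\ref{EFaffmulti}(1) and Corollary~\ref{branchingpoly}(1) gives $\Psi(f_i\blam)=f_i\Psi(\blam)$. Hence $\Psi$ is a crystal isomorphism $\MPK^{\bkap}\to\aMV$ sending $\varnothing$ to $(\varnothing|\varnothing)$.

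By Corollary~\ref{PolyMPthm}, the map $\mathcal{Y}\circ\mathcal{Z}^{\bkap}$ is also a crystal isomorphism with the same property. Both crystals are highest weight and generated from their highest weight elements by the $f_i$. Any two such isomorphisms agree on $f_{i_m}\cdots f_{i_1}\varnothing$, so they coincide everywhere. This gives
\[
D^{\bkap}(\blam)\approx L(\mathcal{Y}\circ\mathcal{Z}^{\bkap}(\blam)) = L_1(\mathcal{Y}_1\mathcal{Z}^{\bkap}\blam) \cong L_0(\mathcal{Y}_0\mathcal{Z}^{\bkap}\blam),
\]
where the last equalities use the definition of $\mathcal{Y}$ in Theorem~\ref{PolyOH} together with Theorem~\ref{longthm}(4).

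For the first formula, set $\blam=\mathcal{W}^{\bkap}\mathcal{X}_i(\pi)$. By Theorem~\ref{isommpul}, $\mathcal{Z}^{\bkap}\blam=\mathcal{X}_i(\pi)$, and by Theorem~\ref{PolyOH}, $\mathcal{Y}_i\mathcal{X}_i(\pi)=\pi$. The second formula then gives $D^{\bkap}(\blam)\approx L_i(\pi)$.

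The remaining issue, and the main obstacle, is upgrading $\approx$ to a degree-preserving $\cong$. Both $D^{\bkap}(\blam)$ and $L_i(\pi)$ are self-dual simple modules: the first by Theorem~\ref{MPlabelthmcyc}, the second by Theorem~\ref{longthm}(1). Suppose $q^cL\cong L'$ with $L, L'$ self-dual. Dualizing gives $q^{-c}L\cong L'$, so $q^{2c}L\cong L$. A nonzero finite-dimensional module cannot be isomorphic to a nontrivial shift of itself, since its graded dimension would change, so $c=0$. This yields genuine isomorphisms in both directions.

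Finally, I will double-check that the bijection $\Psi$ is well defined on grading-shift classes, so that the crystal argument in Proposition~\ref{EFaffmulti} only needs $\approx$. This is routine.
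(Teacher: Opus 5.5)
Your proposal is correct and follows the paper's proof. The paper likewise takes the crystal isomorphism $B\colon \MPK^{\bkap}\to\aMV$ with $L(B(\blam))\cong D^{\bkap}(\blam)$ from the proof of Proposition~\ref{EFaffmulti}, identifies $B=\mathcal{Y}\circ\mathcal{Z}^{\bkap}$ via Corollary~\ref{PolyMPthm} (tacitly using the uniqueness-of-isomorphism argument you spell out), and reads off both formulas from Theorem~\ref{PolyOH}. Your self-duality argument upgrading $\approx$ to $\cong$ makes explicit a step the paper leaves implicit. One small correction: in the intertwining step you should compare socles rather than heads of $E_i$, since Corollary~\ref{branchingpoly}(1) only describes $\textup{soc}\,E_iL(\pi|\phi)$, while Proposition~\ref{EFaffmulti}(1) gives both for $D^{\bkap}$.
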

\begin{proof}
As noted in the proof of Proposition~\ref{EFaffmulti}, there is a crystal isomorphism \(B:\MPK^{\bkap} \xrightarrow{\sim} \aMV\) such that \(L(B(\blam)) \cong D^{\bkap}(\blam)\) for all \(\blam \in \MPK^{\bkap}\). Then it follows by Corollary~\ref{PolyMPthm} that \(B =   \mathcal{Y} \circ \mathcal{Z}^{\bkap} \) and \(B^{-1} = \mathcal{W}^{\bkap} \circ  \mathcal{X}\), so the statement follows from Theorem~\ref{PolyOH}.
\end{proof}

\begin{remark}\label{Dlabelrem} In view of Theorem~\ref{roottomultiisomthm}, we may consider \(\UL\) as yet another index for simple KLR modules, writing
\(
L(\sfD) := L( \mathcal{Y}(\sfD)) 
\)
for all \(\sfD \in \UL\), noting then that \(L(\sfD) \cong L_i(\mathcal{Y}_i(\sfD)) \cong D^{\bkap}(\mathcal{W}^{\bkap} \sfD)\) for any \(i \in \Z_2\), and multicharge \(\bkap\).
\end{remark}

\subsection{Translating between multicharges}\label{transmulticharges}
For a given integral dominant weight \(\Lambda = t_0 \Lambda_0 + t_1 \Lambda_1\), there are in general \({t_0 + t_1 \choose t_0}\) inequivalent choices of multicharges \(\bkap\) which define a cellular structure and labeling for the simple modules of \(R^\Lambda_\theta\). Our next result shows that translating between these choices amounts to `gluing-then-splitting':

\begin{proposition}
Let \(\bkap, \bom\) be multicharges. Then \(D^{\bkap}(\blam) \cong D^{\bom}( \mathcal{W}^{\bom} \circ \mathcal{Z}^{\bkap}(\blam))\) for all \(\blam \in \MPK^{\bkap}\).
\end{proposition}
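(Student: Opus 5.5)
The plan is to route everything through the upper ledge diagram labeling of Remark~\ref{Dlabelrem}. Fix \(\blam \in \MPK^{\bkap}\) and set \(\sfD := \mathcal{Z}^{\bkap}(\blam) \in \UL\). The claimed isomorphism then says that \(D^{\bkap}(\blam)\) and \(D^{\bom}(\mathcal{W}^{\bom}(\sfD))\) coincide. My strategy is to identify each of these with the single module \(L(\sfD)\), which does not depend on a choice of multicharge.

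For the first identification, I use that \(\mathcal{Z}^{\bkap}\) and \(\mathcal{W}^{\bkap}\) are mutually inverse (Theorem~\ref{isommpul}). This gives \(\blam = \mathcal{W}^{\bkap}(\sfD)\). Then Theorem~\ref{roottomultiisomthm}, or equivalently Remark~\ref{Dlabelrem}, applied with any fixed \(i \in \Z_2\), yields
\[
D^{\bkap}(\blam) \cong L_i(\mathcal{Y}_i \circ \mathcal{Z}^{\bkap}(\blam)) = L_i(\mathcal{Y}_i(\sfD)).
\]

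For the second identification, I note that \(\mathcal{W}^{\bom}(\sfD) \in \MPK^{\bom}\), again by Theorem~\ref{isommpul}, and that \(\mathcal{Z}^{\bom}\mathcal{W}^{\bom}(\sfD) = \sfD\). Applying Theorem~\ref{roottomultiisomthm} now for the multicharge \(\bom\), with the same \(i\), gives
\[
D^{\bom}(\mathcal{W}^{\bom}(\sfD)) \cong L_i(\mathcal{Y}_i(\sfD)).
\]
Comparing the two displays yields the proposition, with both isomorphisms degree-preserving as stated.

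There is no real obstacle here. The only point needing care is that Theorem~\ref{roottomultiisomthm} is stated with genuine \(\cong\), not merely \(\approx\). That theorem rests on the bijection \(B\) constructed in the proof of Proposition~\ref{EFaffmulti}, and it holds on the nose because all the modules \(D^{\bkap}(\blam)\) and \(L(\pi|\phi)\) involved are self-dual, so no grading shift can appear. With this noted, the proof is just the two-line composition above.
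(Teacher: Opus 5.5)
Your proof is correct and is essentially the paper's argument: both apply Theorem~\ref{roottomultiisomthm} once for \(\bkap\) and once for \(\bom\), pivoting through a multicharge-independent label. The only difference is where the cancellation happens: the paper writes \(L_1(\pi) \cong D^{\bom}(\mathcal{W}^{\bom}\circ\mathcal{X}_1(\pi))\) and cancels via \(\mathcal{X}_1\circ\mathcal{Y}_1=\textup{id}\) (Theorem~\ref{PolyOH}), whereas you cancel via \(\mathcal{Z}^{\bom}\circ\mathcal{W}^{\bom}=\textup{id}\) (Theorem~\ref{isommpul}), which is immaterial.
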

\begin{proof}
In view of Theorem~\ref{roottomultiisomthm} and Theorem~\ref{PolyOH} we have
\begin{align*}
D^{\bkap}(\blam) \cong L_1(\mathcal{Y}_1 \circ \mathcal{Z}^{\bkap}(\blam)) \cong D^{\bom}(\mathcal{W}^{\bom} \circ \mathcal{X}_1 \circ \mathcal{Y}_1\circ \mathcal{Z}^{\bkap}(\blam)) \cong D^{\bom}(\mathcal{W}^{\bom} \circ \mathcal{Z}^{\bkap}(\blam)),
\end{align*}
as desired.
\end{proof}

\subsection{The jump statistic via upper ledge diagrams}\label{jumpstatsec}
For \(i \in \Z_2\), \(M \in R_\theta\textup{-mod}\), let us set  
\begin{align*}
\tilde{F}_i(M) = \textup{hd}(F_i M); \qquad
\tilde{E}_i^*(M) = \textup{soc}(E_i^*M); \qquad
\tilde{\varepsilon}_i^*(M) = \max\{ m \in \Z_{>0} \mid (\tilde{E}_i^*)^m(M) \neq 0\}.
\end{align*}
Then the {\em jump} statistic on simple modules in \(R_\theta\textup{-mod}\) defined in \cite[\S6.1]{LVcrystals} is given by
\begin{align*}
\textup{jump}_i(M) := \max\{J \in \Z_{\geq 0} \mid \varepsilon_i^*( \tilde{F}_i^J M) = \varepsilon_i^*(M)\}.
\end{align*}
This statistic was a technical innovation that allowed Lauda--Vazirani to verify the crystal axioms for induction/restriction, and has played a key role in subsequent categorification projects  \cite{KvingeVazirani2018, Vazirani2016}. In general computing \(\textup{jump}_i(M)\) is at best an iterative process, even with the labeling crystal data for \(M\) in hand. If one works as in Remark~\ref{Dlabelrem} with the labeling data of \(\UL\) instead however, it can be computed directly:

\begin{proposition}\label{jumpprop}
Let \(\sfD \in \UL\). Then we have
\begin{align}\label{jumpforumlaD}
\textup{jump}_i(L(\sfD)) = \#\{\textup{unarced \(\hat i\)-bottom boxes in }\sfD[i]\} + \#\{\textup{unarced \(\hat i\)-top boxes in }\sfD[i]\}.
\end{align}
\end{proposition}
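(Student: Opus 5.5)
Our plan is to reduce the statement entirely to crystal combinatorics on \(\UL\). By Remark~\ref{Dlabelrem} and Theorem~\ref{mainamvulthm}, \(L(\sfD)\cong L(\mathcal{Y}(\sfD))\), and \(\mathcal{Y}\) is a bicrystal isomorphism. Corollary~\ref{branchingpoly} then identifies, up to grading shift, \(\tilde F_i L(\sfD)\) with \(L(f_i\sfD)\) and \(\tilde E_i^* L(\sfD)\) with \(L(e_i^*\sfD)\). Hence \(\tilde\varepsilon_i^*(L(\sfD))=\varepsilon_i^*(\sfD)\), and
\[
\textup{jump}_i(L(\sfD))=\max\{J\geq 0 \mid \varepsilon_i^*(f_i^J\sfD)=\varepsilon_i^*(\sfD)\}.
\]
By definition, \(\varepsilon_i^*\) counts the unarced \(i\)-top boxes in \(\sfD[i]\). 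What remains is to track how repeated applications of \(f_i\) affect this count.

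We will first describe \(f_i^J\sfD\) explicitly. Let \(a\) be the number of unarced \(\hat i\)-bottom boxes in \(\sfD[i]\). Each \(f_i\) places an \(i\)-box beneath the leftmost such box. The new box becomes an unarced \(i\)-bottom box lying to its left in reading order, and the remaining unarced \(\hat i\)-bottom boxes stay unarced. So the first \(a\) applications of \(f_i\) fill these boxes from left to right, and none of them changes any top box or the peak structure. Consequently \(\varepsilon_i^*\) is constant for \(J\le a\). After that, \(f_i\) appends \(i\)-boxes to the uppermost \(i\)-peak row.

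We then analyse the top-arc arrangement, reading right to left. A box appended to the end of the \(i\)-peak row lengthens the diagram and creates a new rightmost column. If the top box of that column is an \(i\)-colored box sitting on the peak row, it is an \(i\)-top box that can arc to an unarced \(\hat i\)-top box to its left. Let \(b\) be the number of unarced \(\hat i\)-top boxes in \(\sfD[i]\). We will argue that each of the next \(b\) applications of \(f_i\) produces a new \(i\)-top box that absorbs one unarced \(\hat i\)-top box through an arc. Under the greedy right-to-left arcing, the newly arced \(i\)-top box is exactly the new one, so the unarced \(i\)-top count does not change. Once \(b\) such boxes have been absorbed, the next \(f_i\) creates an unarced \(i\)-top box and \(\varepsilon_i^*\) rises by one. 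Because the \(f_i\) operations are unchanged whenever \(\varepsilon\) does not change, monotonicity then gives \(\textup{jump}_i=a+b\), which is the formula (\ref{jumpforumlaD}).

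The main obstacle is the step after the bottom boxes are exhausted. Appending to the \(i\)-peak row must be checked to produce a new \(i\)-top box, rather than a non-top box under an existing \(\hat i\)-row, and we must verify that its arc consumes an unarced \(\hat i\)-top box without disturbing the other arcs. This depends on the upper-ledge shape near the peak: whether the top row is the peak row, and the parity of colors in the columns to the right. We would first handle the case where the uppermost \(i\)-peak row is also the top row. The case where a \(\hat i\)-row lies above it would come next, where the appended box may sit below a \(\hat i\)-box, so that the new top box in that column is \(\hat i\)-colored. In that case the aim is to check that the arc bookkeeping still yields no net change for exactly \(b\) steps, using \(\textup{float}\circ\textup{flip}\)-symmetry or Theorem~\ref{mainamvulthm} to compare with the polytope description if the direct check becomes unwieldy.
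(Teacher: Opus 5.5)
Your overall plan is exactly the paper's argument: transfer to $\mathsf{UL}$ via Corollary~\ref{branchingpoly}, fill the $a$ unarced $\hat i$-bottom boxes, then append to the $i$-peak row and count top arcs. The gap is the step you yourself flag as the main obstacle. You leave it open, and your framing of it rests on a false claim.

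The first $a$ applications of $f_i$ do \emph{not} preserve the peak structure. If $\sfD$ is a single $\hat i$-colored row of length $2$, then $\sfD$ has no $i$-colored row at all, while $f_i^2\sfD$ has rows $(2,2)$ whose lower, $i$-colored row is a peak row. The missing observation is the following.
\begin{itemize}
\item The rightmost bottom box of an upper ledge diagram is the last box of the lowest peak row.
\item A bottom arc in $\sfD[i]$ joins an $\hat i$-bottom box only to an $i$-bottom box on its \emph{right}. So this rightmost box is automatically unarced whenever it is $\hat i$-colored.
\item Hence in $f_i^{a}\sfD$, which has no unarced $\hat i$-bottom boxes, the lowest peak row is $i$-colored. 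The uppermost $i$-peak row is therefore a peak row of the whole diagram.
\end{itemize}
Each subsequent $f_i$ therefore appends a box in a brand-new rightmost column, where it sits alone. It is simultaneously $i$-top and $i$-bottom, and all existing columns, with their top and bottom boxes, are untouched. This merely prepends an $i$ to the right-to-left bottom word, so still no unarced $\hat i$-bottom box appears. The next $f_i$ therefore appends to the same row again. The case you planned to treat separately, an appended box lying under an $\hat i$-box so that the new top box is $\hat i$-colored, never occurs. No $\textup{float}\circ\textup{flip}$ or polytope detour is needed.

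With this fact, your bracket count finishes the proof. Prepending an $i$-top box to the right-to-left top word leaves the existing top arcs unchanged. It arcs the new box to the first unarced $\hat i$-top box when one exists, and otherwise leaves it unarced. So $\varepsilon_i^*(f_i^{a+m}\sfD)=\varepsilon_i^*(\sfD)$ exactly when $m\le b$, and it strictly increases afterwards. This gives $\textup{jump}_i(L(\sfD))=a+b$, as in the paper.
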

\begin{proof}
Say \(\sfD\) has \(u_b\) unarced \(\hat i\)-bottom boxes and \(u_t\) unarced \(\hat i\)-top boxes. Then by \S\ref{ULcrystaldefworkings},  \(f_i^{u_b}\sfD\) is obtained by placing an \(i\)-box beneath each one of these \(\hat i\)-bottom boxes, so that \((f_i^{u_b}\sfD)[i]\) has no unarced \(\hat i\)-bottom boxes, has a peak row of color \(i\), and the top boxes in \(f_i^{u_b}\sfD\) are unchanged from \(\sfD\). Then by \S\ref{ULcrystaldefworkings} again, \(f_i^{u_b + m}\sfD\) is obtained by adding \(m\) boxes of color \(i\)---which are both \(i\)-top and \(i\)-bottom---to the uppermost \(i\)-peak row of \(f_i^{u_b}\sfD\). Each of these \(m\) new \(i\)-colored boxes will be top-arced in \((f_i^{u_b + m}\sfD)[i]\) if and only if \(u_t \geq m\), so we have \(\varepsilon_i^*(f_i^{u_b + m} \sfD) = \varepsilon_i^*(\sfD)\) if and only if \(u_t \geq m\). Then the result follows from Theorems~\ref{mainamvulthm} and Corollary~\ref{branchingpoly}.
\end{proof}

\subsection{Augmented branching rules for simple modules labeled by Kleshchev multipartitions}\label{augmentedmulti}
Thanks to Theorem~\ref{roottomultiisomthm} and Proposition~\ref{rootfuncbranch}, we can describe the action of root functors of \S\ref{rootfuncsec} on simple modules labeled by Kleshchev multipartitions---we simply translate from \(D^{\bkap}(\blam)\) to the isomorphic module \(L(\pi | \phi)\) via the map \(\mathcal{Y} \circ \mathcal{Z}^{\bkap}\), utilize Proposition~\ref{rootfuncbranch} and then translate back via the map \(\mathcal{W}^{\bkap} \circ \mathcal{X}\). For reader convenience we detail an example of this computation now.

\begin{example}\label{crazylongextext}
Assume \(\bkap = (1,0,\ldots)\), and \(\blam = ((5,4^2,3^4,2,1),(4^2,3,2^2,1))\), shown at left in Figure~\ref{craylongexfix67}. Consider the application of the functor \(F_{0:3}\) to \(D^{\bkap}(\blam)\). 
Writing \((\pi | \phi) := \mathcal{Y} \circ \mathcal{Z}^{\bkap}(\blam)\), we have by Theorem~\ref{roottomultiisomthm}  that
\(
 D^{\bkap}(\blam) \cong L(\pi | \phi) \cong L_1(\pi)
\), with 
\begin{align*}
m_0(\pi | \phi) = \min(\pi^0); \quad n_{0:3}(\pi|\phi) = n_3(\pi^0);  \quad f_{0:3}(\pi | \phi) = (\pi_{+0:3} \mid (\pi_{+0:3})_\lozenge); \quad L(f_{0:3} (\pi | \phi)) \cong L_1(\pi_{+0:3}),
\end{align*}
so by Theorem~\ref{roottomultiisomthm} and Proposition~\ref{rootfuncbranch} we will have
\begin{align}\label{longDexpres}
q^{n_3(\pi^0)} \textup{hd} F_{0:3} D^{\bkap}(\blam) \cong D^{\bkap}(\bmu); \qquad \bmu :=
\mathcal{W}^{\bkap} \circ   \mathcal{X}_1 ( (\mathcal{Y}_1 \circ \mathcal{Z}^{\bkap}(\blam))_{+0:3}),
\end{align}
provided \(\min(\pi^0) \geq 3\). Now we walk through the (slightly streamlined) computation of \(\bmu\), as in Figure~\ref{craylongexfix67}.
\begin{enumerate}
\item 
Compute \(\sfD:= \mathcal{Z}^{\bkap}(\blam) \) by shifting/gluing the components of \(\blam\) as shown at left.
\item Separate into the \(1\)-triple decomposition \(\TRIP_1(\sfD) = (\sfD^\uparrow_1, \sfD^\delta_1, \sfD^\downarrow_1)\). 
\item Find \(\pi^0 = \CALC \circ \textup{forget}(\sfD^\downarrow_1)\) as shown at bottom---note \(n_3(\pi^0) = 0\) 
 and \(\min(\pi^0) = 4\) so (\ref{longDexpres}) holds. 
\item 
Now \(\pi_{+0:3}\) is given by adding a row of length 3 to the bottom of \(\pi^0\) (without changing \(\pi^1\) or \(\pi^\delta\)).
\item Replace \(\sfD^\downarrow_1\) with the new diagram \(\tilde{\sfD}^\downarrow_1 = \textup{color}_0 \circ \ORD(\pi^0_{+0:3})\).
\item Regather upper ledge diagrams, setting
\(
\tilde{\sfD} := \mathcal{X}_1 ( (\mathcal{Y}_1 \circ \mathcal{Z}^{\bkap}(\blam))_{+0:3}) = \STACK_1 ( \sfD^\uparrow_1, \sfD^\delta_1, \tilde{\sfD}^\downarrow_1),
\)
\item Compute \(\bmu = \mathcal{W}^{\bkap}(\tilde{\sfD}) = ((6,5^2,4,3^3,2,1),(5,4,3,2^2,1))\) by splitting/shifting \(\tilde{\sfD}\) as shown at right. 
\end{enumerate}
Then we arrive at
\begin{align*}
\textup{hd} F_{0:3} D^{\bkap}(\blam) \cong D^{\bkap}(\bmu).
\end{align*}
\end{example}

\begin{figure}[h]
\begin{align*}
\\
\hackcenter{}
\hackcenter{
\begin{overpic}[height=53.5mm]{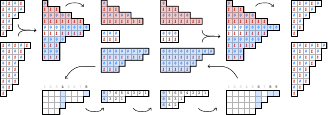}
 \put(-3.5,21){\makebox(0,0)[l]{$\scriptstyle \lambda^{(1)}$}}   
  \put(-3.5,34){\makebox(0,0)[l]{$\scriptstyle \lambda^{(2)}$}}   
   \put(85.5,21){\makebox(0,0)[l]{$\scriptstyle \mu^{(1)}$}}   
  \put(85.5,34){\makebox(0,0)[l]{$\scriptstyle \mu^{(2)}$}}   
   \put(11,34){\makebox(0,0)[l]{$\scriptstyle\sfD$}} 
    \put(28.5,34){\makebox(0,0)[l]{$\scriptstyle\sfD^\uparrow_1$}} 
     \put(28.5,25){\makebox(0,0)[l]{$\scriptstyle\sfD^\delta_1$}} 
      \put(28.5,19){\makebox(0,0)[l]{$\scriptstyle\sfD^\downarrow_1$}} 
      \put(46.6,34){\makebox(0,0)[l]{$\scriptstyle\sfD^\uparrow_1$}} 
     \put(46.6,25){\makebox(0,0)[l]{$\scriptstyle\sfD^\delta_1$}} 
      \put(46.6,19){\makebox(0,0)[l]{$\scriptstyle\tilde{\sfD}^\downarrow_1$}} 
         \put(67.3,34){\makebox(0,0)[l]{$\scriptstyle\tilde{\sfD}$}} 
          \put(32,8.3){\makebox(0,0)[b]{$\scriptstyle\pi^0$}}   
              \put(51.5,7.8){\makebox(0,0)[b]{$\scriptstyle\pi^0_{+0:3}$}}   
            \put(9,26.5){\makebox(0,0)[b]{$\scriptstyle\mathcal{Z}^{\bkap}$}}     
                   \put(22.8,34.5){\makebox(0,0)[b]{$\scriptstyle\TRIP_1$}}     
                     \put(81,34.7){\makebox(0,0)[b]{$\scriptstyle \mathcal{W}^{\bkap}$}}     
                       \put(26,14.2){\makebox(0,0)[t]{$\scriptstyle\textup{forget}$}}     
                         \put(67.7,14.2){\makebox(0,0)[t]{$\scriptstyle\textup{color}_0$}}     
                          \put(65,25){\makebox(0,0)[b]{$\scriptstyle\STACK_1$}}     
                  \put(28.7,0.5){\makebox(0,0)[t]{$\scriptstyle\CALC$}}             
                   \put(43.5,0.5){\makebox(0,0)[t]{$\scriptstyle\textup{add 3-row}$}}     
                     \put(63.5,0.5){\makebox(0,0)[t]{$\scriptstyle\ORD$}}     
\end{overpic}
}
\end{align*}
\caption{The computation yielding \(\textup{hd}\, F_{0:3}D^{\bkap}(\blam) \cong D^{\bkap}(\bmu)\) as detailed in Example~\ref{crazylongextext}.}
\label{craylongexfix67}
\end{figure}

\subsection{A tile factor theorem}\label{tilingsec}
James’s celebrated regularization theorem (see \S\ref{Restrictizationsec}) gives, for any level one Specht module, a purely combinatorial rule for describing a unique factor of this module, and a dominance bound on all of its other simple factors.
In \cite[Theorem~9.23]{MNSS} a cuspidal-flavored analogue of this result was provided for Specht modules. In this section we can provide a streamlined version of this analogue in level one thanks to the isomorphisms in \S\ref{crystalisomsec}.

Recalling \S\ref{cuspclasssec}, the partial order \(\geq\) on \(\Pi(\theta)\) induces a partial order \(\geq_i\) on \(\MPK^{\bkap}(\theta)\); we write \(\blam \geq_i \bmu\) provided \(\mathcal{Y}_i \circ \mathcal{Z}^{\bkap}(\blam) \geq \mathcal{Y}_i \circ \mathcal{Z}^{\bkap}(\bmu)\).
Let \(\lambda \in \Par^{\kappa}\) be a (level one) partition, and let \(\sigma\) be the largest staircase partition (i.e., of the form \((m, m-1, \ldots, 1)\) for some \(m\)) contained in \(\lambda\). If \(\sigma\) contains the rightmost \(i\)-residue box in the top row of \(\lambda\), then we say \(\lambda\) is {\em \(i\)-quasirestricted}.

Assume \(\lambda \in \Par^\kappa\) is \(i\)-quasirestricted. Then there is a unique `\(i\)-tiling' of \(\lambda\), which we compute in a greedy fashion; first tile all the southeast-removable \(\xi_1(\alpha_{\hat i:1})\) ribbons in \(\lambda\), then (ignoring these boxes) tile all the southeast-removable \(\xi_1(\alpha_{\hat i:2})\)'s, and then \(\xi_1(\alpha_{\hat i:3})\)'s, and so on until the remaining diagram has only removable boxes of residue \(i\) (and hence no more such tiles can be found). Call the  remaining partition \(\bar \lambda\). Let \(\chi^i\) be the partition such that \(n_k(\chi^i)\) is the number of \(\xi_i(\alpha_{\hat i: k})\) tiles in the \(i\)-tiling of \(\lambda\). Then we construct a partition \(\lambda^{\mathsf{T}(i)} \in \Parres^{\kappa}\) by appending \(c_r(\ORD(\chi^i))\) boxes to the \(r\)th column of \(\bar{\lambda}\) for all \(r \in \Z_{>0}\) (in essence, shifting the columns of \(\ORD(\chi^i)\) up to meet \(\bar \lambda\)). See Figure~\ref{tilingex1123fig} for an example.

\begin{figure}[h]
\begin{align*}
\\
\hackcenter{}
\hackcenter{
\begin{overpic}[height=59.5mm]{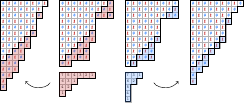}
 \put(0.6,42.7){\makebox(0,0)[l,b]{$\scriptstyle \lambda^{\mathsf{T}(1)}$}}   
  \put(78,42.7){\makebox(0,0)[l,b]{$\scriptstyle \lambda^{\mathsf{T}(0)}$}}   
 \put(24.5,42.7){\makebox(0,0)[l,b]{$\scriptstyle 1\textup{-tiling of } \lambda$}}    
  \put(51.2,42.7){\makebox(0,0)[l,b]{$\scriptstyle 0\textup{-tiling of } \lambda$}}    
    \put(23.9,11.2){\makebox(0,0)[r]{$\scriptstyle \chi^1$}}    
      \put(50.6,11.2){\makebox(0,0)[r]{$\scriptstyle \chi^0$}}    
      \put(15.7,4.3){\makebox(0,0)[u]{$\scriptstyle \ORD$}}  
        \put(15.7,2.3){\makebox(0,0)[u]{$\scriptstyle \&\; \textup{combine}$}}  
        \put(68,4.3){\makebox(0,0)[u]{$\scriptstyle \ORD$}}  
        \put(68,2.3){\makebox(0,0)[u]{$\scriptstyle \&\; \textup{combine}$}}  
\end{overpic}
}
\end{align*}
\caption{In the top center, the two \(i\)-tilings of an \(i\)-quasirestricted partition \(\lambda \in \Par^0\). Below them are the partitions \(\chi^i\) which record the ribbons used in the tiling, and their spar tableaux \(\SPAR(\chi^i)\). To the sides, the associated 2-restricted partition \(\lambda^{\mathsf{T}(i)}\). }
\label{tilingex1123fig}
\end{figure}

\begin{proposition}\label{tilingresult}
Let \(i, \kappa \in \Z_2\), and assume \(\lambda \in\Par^\kappa\) is an \(i\)-quasirestricted partition. Then \(D^{\kappa}(\lambda^{\mathsf{T}(i)})\) arises exactly once as a simple factor in \(S^{\kappa}(\lambda)\) up to some shift, and every simple factor of \(S^{\kappa}(\lambda)\) is of the form \(D^{\kappa}(\mu)\) up to shift, where \(\mu \in \Parres^\kappa\) and \(\mu \leq_i\lambda^{\mathsf{T}(i)}\).
\end{proposition}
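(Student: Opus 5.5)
The plan is to reduce the statement to the cuspidal-flavored Specht factor theorem of \cite[Theorem~9.23]{MNSS}, and then convert its root-partition output into a Kleshchev label using the crystal isomorphisms of \S\ref{crystalisomsec}. That theorem says the following for the convex order \(\succitext\) and a skew/level-one Specht module \(\bS(\lambda)\) whose diagram admits a suitable ribbon tiling. There is a root partition \(\pi\) such that \(L_i(\pi)\) occurs exactly once in \(\bS(\lambda)\) up to shift, and every other composition factor is \(L_i(\sigma)\) with \(\sigma \leq \pi\).

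\textbf{Step 1: read off \(\pi\) from the \(i\)-tiling.} I will check that the \(i\)-quasirestricted hypothesis is exactly what makes the greedy \(i\)-tiling well defined. The tiles \(\xi_i(\alpha_{\hat i:k})\) peeled off the southeast boundary give a filtration-compatible decomposition, so \(\bS(\lambda)\) receives a map from (or has a factor matching) an induction product ordered as in (\ref{DeltaDef}). The \(\hat i\)-real part of \(\pi\) is then \(\pi^{\hat i} = \chi^i\).

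The residual partition \(\bar\lambda\) has only \(i\)-removable boxes and lies inside the staircase, by quasirestrictedness. Its Specht module should contribute the \(i\)-real part and the imaginary part. I expect \(\bar\lambda\) to be 2-restricted with top row ending in residue \(i\). That forces it to be \(\mathcal{W}^\kappa\) of a diagram with no \(\hat i\)-bottom data, and hence \(\pi^\delta\) and \(\pi^i\) are determined by \(\bar\lambda\) through \(\mathcal{Y}_i\circ\mathcal{Z}^\kappa\).

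\textbf{Step 2: identify \(L_i(\pi)\) with \(D^\kappa(\lambda^{\mathsf T(i)})\).} By Theorem~\ref{roottomultiisomthm}, \(L_i(\pi)\cong D^\kappa(\mathcal{W}^\kappa\circ\mathcal{X}_i(\pi))\). By definition, \(\mathcal{X}_i(\pi)\) is \(\STACK_i\) of three pieces: \(\textup{flip}\circ\textup{color}_i\circ\ORD(\pi^i)\), \(\textup{color}_{\hat i}\circ\textup{double}(\pi^\delta)\), and \(\textup{color}_{\hat i}\circ\ORD(\chi^i)\).

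In level one, \(\mathcal{W}^\kappa\) simply top-aligns columns. Appending \(\sfD^\downarrow_i=\textup{color}_{\hat i}\circ\ORD(\chi^i)\) beneath the part coming from \(\bar\lambda\) therefore becomes, after top-justifying, exactly ``appending \(c_r(\ORD(\chi^i))\) boxes to the \(r\)th column of \(\bar\lambda\).'' This gives \(\mathcal{W}^\kappa\circ\mathcal{X}_i(\pi)=\lambda^{\mathsf T(i)}\). Two facts need to be checked here: that \(\mathcal{Z}^\kappa(\bar\lambda)\) is precisely \(\STACK_i(\sfD_i^\uparrow,\sfD_i^\delta,\varnothing)\), and that \(\STACK_i\)'s column shifting commutes with top-justification. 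Both should follow from the staircase shape and the definitions of \(\STACK_i\) and \(\mathcal{W}^\kappa\).

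\textbf{Step 3: transport the dominance bound.} Every other factor is \(L_i(\sigma)\) with \(\sigma\leq\pi\), and \(L_i(\sigma)\approx D^\kappa(\mu)\) with \(\mathcal{Y}_i\circ\mathcal{Z}^\kappa(\mu)=\sigma\) by Theorem~\ref{roottomultiisomthm}. The relation \(\mu\leq_i\lambda^{\mathsf T(i)}\) is then immediate from the definition of \(\leq_i\). Since level-one Kleshchev means 2-restricted, we get \(\mu\in\Parres^\kappa\).

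The main obstacle is Step 1, matching the hypotheses and output of \cite[Theorem~9.23]{MNSS} with the greedy \(i\)-tiling and the quasirestricted condition, in particular verifying that the imaginary and \(i\)-real data extracted there agree with \(\mathcal{Y}_i\circ\mathcal{Z}^\kappa(\bar\lambda)\). I would first test this on small staircases, where \(\bar\lambda\) is a staircase and \(\pi^\delta\) is read from its doubled columns.
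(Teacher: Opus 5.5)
Your route is the paper's: use \cite[Theorem~9.23]{MNSS} to get a multiplicity-one factor \(L_i(\pi)\) of \(S^{\kappa}(\lambda)\), with every other factor \(L_i(\sigma)\) for \(\sigma\leq\pi\). Then show \(\lambda^{\mathsf{T}(i)}=\mathcal{W}^{\bkap}\circ\mathcal{X}_i(\pi)\), and transport everything via Theorem~\ref{roottomultiisomthm} and the definition of \(\leq_i\). Your Step~2 computation (stack \(\textup{color}_{\hat i}\circ\ORD(\chi^i)\) underneath, then top-justify) is the right one.

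The gap is the key inference in Step~1. That \(\bar\lambda\) has only \(i\)-removable boxes shows that \(\mathcal{Z}^{\bkap}(\bar\lambda)\) has no \(\hat i\)-bottom boxes, so \(\mathcal{Y}_i\circ\mathcal{Z}^{\bkap}(\bar\lambda)\) has empty \(\hat i\)-component. It does not force the \(i\)- and \(\delta\)-components to equal the \(\pi^i,\pi^\delta\) that the MNSS theorem attaches to the tiling. That coincidence is exactly the fact your Step~2 lists as needing a check, namely \(\mathcal{Z}^{\bkap}(\bar\lambda)=\STACK_i(\sfD^\uparrow_i,\sfD^\delta_i,\varnothing)\), and testing small staircases will not establish it.

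The missing input, which the paper uses, is \cite[Proposition~4.12]{MNSS}: the tiles of \(\xi_i(\pi)\) of content \(\succieqtext\delta\) compose precisely \(\bar\lambda\). These are the ribbons \(\xi_i(\alpha_{i:k})\) and the imaginary tile \(\xi_i(\pi^\delta)\). Only then does the question become combinatorial: check that level-one \(\mathcal{Z}^{\bkap}\) (shift column \(k\) down \(k-1\)) sends \(\xi_i(\pi^\delta)\) to \(\textup{color}_{\hat i}\circ\textup{double}(\pi^\delta)\), and the assembled \(\alpha_{i:k}\)-ribbons to \(\textup{flip}\circ\textup{color}_i\circ\ORD(\pi^i)\). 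For instance, a single \(\xi_i(\alpha_{i:k})\) becomes rows of length \(k-1\) over \(k\) with bottom row colored \(i\), matching \(\ORD((k))=(k,k-1)\). Also, per the paper, quasirestrictedness is what guarantees that \(\lambda\) admits a \(\xi(\pi)\)-tiling for \(\succitext\) in the sense of \cite[\S9]{MNSS}. That is the hypothesis of Theorem~9.23, so it is the thing your check should target.

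Two smaller repairs.

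First, the claim that \(\bar\lambda\) lies inside the staircase is false. Take \(\kappa=i=1\) and \(\lambda=(1^3)\): then \(\lambda\) is \(1\)-quasirestricted and no \(\xi_1(\alpha_{0:k})\) tile is removable, so \(\bar\lambda=(1^3)\not\subseteq(1)\). What Step~2 actually needs is that \(\mathcal{X}_i(\pi)\) has its top boxes on the \(-45^\circ\) diagonal through a \(\kappa\)-colored top-left box. Then \(\SPLIT^{\kappa}\) acts trivially and \(\mathcal{W}^{\bkap}\) is just top-justification.

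Second, in Step~3 you should say why \(\mu\) has level one. \(S^\kappa(\lambda)\) is a module over the level-one cyclotomic quotient \(R^{\Lambda_\kappa}_\theta\), so its factors are \(D^\kappa(\mu)\) with \(\mu\in\MPK^{(\bkap,1)}=\Parres^\kappa\). The bound \(\mu\leq_i\lambda^{\mathsf{T}(i)}\) then follows from \(\mathcal{Y}_i\circ\mathcal{Z}^{\bkap}(\mu)\leq\pi=\mathcal{Y}_i\circ\mathcal{Z}^{\bkap}(\lambda^{\mathsf{T}(i)})\), where the equality comes from Step~2 and Theorem~\ref{isommpul}.
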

\begin{proof}
The condition that \(\lambda\) is \(i\)-quasirestricted ensures that \(\lambda\) has a {\em \(\xi(\pi)\)-tiling} for \(\succitext\) in the language of \cite[\S9]{MNSS}, for some \(\pi \in \Pi(\theta)\). By \cite[Theorem~9.23]{MNSS} the simple module \(L_i(\pi)\) arises once as a factor of \(S^{\kappa}(\lambda)\), and \(\phi \leq_i \pi\) for every factor \(L_i(\phi)\) of \(S^{\kappa}(\lambda)\) (up to shifts). By \cite[Proposition~4.12]{MNSS}, the tiles in \(\xi_i(\pi)\) of content \(\succieqtext \delta\) compose the partition \(\bar \lambda\), and it follows by Corollary~\ref{PolyMPthm} and consideration of the associated combinatorial maps that \(\lambda^{\mathsf{T}(i)} = \mathcal{W}^{\bkap} \circ \mathcal{X}_i(\pi)\). Then the result follows from Theorem~\ref{roottomultiisomthm}.
\end{proof}

\begin{remark}
When \(\lambda\) is itself 2-restricted, the partition \(\lambda^{\mathsf{T}(i)}\) is recovered simply by removing all removable \(\hat i\) boxes and then adding them back on in the lowest possible \(\hat{i}\)-addable positions. In this setting Proposition~\ref{tilingresult} is covered by \cite[Theorem~4.4]{TTdecomp}.
\end{remark}

\begin{remark}
While Proposition~\ref{tilingresult} is of a similar flavor to James's regularization theorem, we note that in general the information offered by each is distinct; the simple factor guaranteed by our tiling theorem is not generally the same as the one coming from James's theorem (as may be seen by considering the Figure~\ref{tilingex1123fig} example), and James's dominance partial order \(\trianglerighteq\) and our cuspidal-flavored partial order \(\geq_i\) do not coincide.
\end{remark}

\subsection{A level two analogue of James's regularization theorem (conjecturally)} \label{conjjames2}

\subsubsection{Restrictization}\label{Restrictizationsec} Let \(\lambda \in \Par\). The {\em restrictization} \(\lambda_\textup{R}\) is defined by sliding boxes in \(\lambda\) as far southwest as possible along the northeast/southwest antidiagonals (or {\em ladders}) they belong to.
James's celebrated {\em regularization theorem} (better called in our setting a {\em restrictization} theorem), extended by Fayers--Lyle--Martin, establishes the following powerful result (presented here for \({\tt A}_1^{(1)}\) but holding analogously for all types \({\tt A}_{\geq 1}^{(1)}\)), which describes a specific composition factor appearing in any level one Specht module, and gives a dominance bound on other factors.
\begin{theorem}\cite[Theorem~A]{JamesDecompII}, \cite[Theorem~1.7]{FLM}\label{Jreg}
Let \(\kappa\) be a (level one) charge, and let \(\lambda \in \Par^\kappa\). Then:
\begin{enumerate}
\item There exists a nonzero homomorphism \(S^\kappa(\lambda_\textup{R}) \to S^\kappa(\lambda)\);
\item We have \([S^\kappa(\lambda): D^\kappa(\lambda_\textup{R})] = 1\);
\item If \([S^\kappa(\lambda): D^\kappa(\mu)] >0\) for \(\mu \in \Parres^\kappa\), then \(\mu \trianglelefteq \lambda_\textup{R}\).
\end{enumerate}
\end{theorem}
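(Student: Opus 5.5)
This statement is quoted from James \cite[Theorem~A]{JamesDecompII} and Fayers--Lyle--Martin \cite[Theorem~1.7]{FLM}, so I would not prove it from the combinatorics developed here. Instead I would transfer it to the graded KLR setting. The plan has three steps: (i) identify level one graded Specht and simple modules for \(R^{\Lambda_\kappa}_\theta\) with the classical (ungraded) Specht modules \(S^\lambda\) and simple modules \(D^\mu\) of Iwahori--Hecke algebras of type \(A\) at \(e=2\), via the Brundan--Kleshchev isomorphism \cite{BKisom}; (ii) invoke the cited classical theorems; (iii) check that the labels match under the conventions of this paper.

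For (i), Theorem~\ref{MPlabelthmcyc} with \(\ell=1\), together with \cite{BKisom} and the comparison results of \cite{KMR, BKdecomp}, identifies \(S^\kappa(\lambda)\) with the Dipper--James Specht module after forgetting the grading. It also identifies \(D^\kappa(\mu)\), for \(\mu\) 2-restricted, with the corresponding simple head. Grading shifts play no role, since only composition multiplicities and the existence of homomorphisms are asserted. The existence of a nonzero homomorphism in (1) is insensitive to grading: a nonzero ungraded homomorphism between graded modules has a nonzero homogeneous component. Composition multiplicities are likewise unchanged by forgetting the grading, because graded simples remain simple after forgetting the grading.

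For (ii) and (iii), the main subtlety is conventions. The classical statements are phrased with \(e\)-regular partitions, a regularization moving boxes up ladders, and the Specht modules \(S^\lambda\) of Dipper--James. Our setup instead uses 2-restricted labels, the Kleshchev crystal of \S\ref{Kleshcrys}, and the Specht modules of \cite{KMR}. To move between them I would apply conjugation \(\lambda\mapsto\lambda'\): it exchanges regular and restricted partitions and sends regularization to the restrictization \(\lambda_{\textup{R}}\) (ladders are reflected to antidiagonals sliding southwest). Conjugation also reverses dominance, which is why (3) reads \(\mu\trianglelefteq\lambda_{\textup{R}}\). The Specht modules differ by the sign/Mullineux twist, as explained in \cite{BKisom,KMR}. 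Under this twist, \(D^{\lambda'}\)-type labels become \(D_{\mu}\)-type labels. In characteristic-free type \({\tt A}^{(1)}_1\) at \(e=2\), the twist is trivial on labels up to conjugation, so no Mullineux map computation is needed.

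The main obstacle is this bookkeeping: making sure the homomorphism in (1) goes in the stated direction \(S^\kappa(\lambda_{\textup{R}})\to S^\kappa(\lambda)\) after dualizing and twisting. I would handle it by checking a small example, such as \(\lambda=(2)\) at \(e=2\), to fix the direction. FLM state the homomorphism form directly, so (1) follows once the twist is confirmed.
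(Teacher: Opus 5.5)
Your route matches the paper's, which gives no argument beyond citing James and Fayers--Lyle--Martin. Transporting their statements through Brundan--Kleshchev, forgetting the grading, and conjugating is the right translation. However, the one step with real content is not handled correctly.

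First, step (i) misidentifies the module. After forgetting the grading, \(S^\kappa(\lambda)\) is not the Dipper--James Specht module \(S^\lambda\) but its contragredient dual, the Dipper--James--Mathas cell module. For instance, when \(p=2\), \(S^{(2,1^2)}\) has trivial head, whereas \(D^\kappa(2,1^2)\) is two-dimensional.

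Second, your proposed check of the direction in (1) is vacuous. For \(\lambda=(2)\) one has \(\lambda_{\textup{R}}=(1^2)\). Both \(S^\kappa(2)\) and \(S^\kappa(1^2)\) are one-dimensional and supported on the single word \(\kappa\hat\kappa\), so they are isomorphic up to shift and there are nonzero maps in both directions. In any case, a convention question like this has to be settled structurally, not by examples.

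Here is the structural resolution. Combine the duality above with the classical isomorphism between the dual of \(S^\lambda\) and \((S^{\lambda'})^{\#}\), where \(\#\) is the sign automorphism (trivial when \(p=2\)). This gives \(S^\kappa(\lambda)\approx (S^{\lambda'})^{\#}\) after forgetting the grading. Twisting by an algebra automorphism is a covariant exact autoequivalence. Hence \(D^\kappa(\mu)=\operatorname{hd} S^\kappa(\mu)\) corresponds to \((D^{\mu'})^{\#}\), and \([S^\kappa(\lambda):D^\kappa(\mu)]=[S^{\lambda'}:D^{\mu'}]\).

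Now take the Dipper--James-convention statements for \(\nu=\lambda'\): \([S^\nu:D^{\nu^{\textup{R}}}]=1\); every factor \(D^\rho\) of \(S^\nu\) has \(\rho\trianglerighteq\nu^{\textup{R}}\); and there is a nonzero map \(S^{\nu^{\textup{R}}}\to S^\nu\). These become (2), (3), (1) verbatim, because \((\lambda')^{\textup{R}}=(\lambda_{\textup{R}})'\) and conjugation reverses dominance.

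Nothing is reversed and no Mullineux map enters, since the same twist is applied to Specht and simple modules alike. So your remark that the Mullineux map is trivial at \(e=2\), while true, is not needed. The arrow would flip, landing on the regular label \(\lambda^{\textup{R}}\) rather than \(\lambda_{\textup{R}}\), only if you used the duality alone.

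Finally, James's Theorem A is about \(\k\mathfrak{S}_n\), so it covers only \(\operatorname{char}\k=2\). When \(\operatorname{char}\k\neq 2\), the level-one cyclotomic KLR algebra corresponds to \(H_n(-1)\), so make sure the regularization statement you invoke is proved for Hecke algebras.
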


It is beguiling to attempt to extend James's theorem to higher levels. In type \({\tt A}_1^{(1)}\) level three and higher, a completist's dream generalization will be impossible---it is easy to find Specht modules \(S^{\bkap}(\blam)\) which have multiple maximally dominant factors, so a result along the lines of Theorem~\ref{Jreg} (3) cannot be hoped for. In types \({\tt A}_{> 1}^{(1)}\) matters are even worse, as similar issues begin popping up even in level two. Thus if one wants a full-strength generalization of Theorem~\ref{Jreg}, the only setting where it seems possible to accomplish is in type \({\tt A}_1^{(1)}\) level two. We give a conjecture in this direction below.

\subsubsection{Kleshchevization} 
Let \(\bkap = (\kappa_1, \kappa_2)\) be a bicharge, and \(\blam = (\lambda^{(1)}, \lambda^{(2)}) \in \MP^{\bkap}\). We define the {\em Kleshchevization} \(\blam_\textup{K} \in \MPK^{\bkap}\) of \(\blam\) by setting \(\blam_\textup{K} = \mathcal{Z}^{\bkap} \circ \mathcal{W}^{\bkap}(\blam)\). Note that by Theorem~\ref{isommpul} we have \(\blam_\textup{K} = \blam\) if and only if \(\blam \in \MPK^{\bkap}\).

Unpacking the particulars of the maps \(\mathcal{W}^{\bkap}\) and \(\mathcal{Z}^{\bkap}\), one sees that Kleshchevization amounts to ``greedy ladder-filling'' as follows.
First position \(\lambda^{(2)}\) to the northeast of \(\lambda^{(1)}\), so that the northwesternmost box(es) of residue \(\kappa_1\) in \(\lambda^{(2)}\)  lie along the same antidiagonal as the northwesternmost (\(\kappa_1\)-residue) box in \(\lambda^{(1)}\).
Then \(\blam_\textup{K}\) is the \(\bkap\)-bipartition obtained by sliding boxes directly to the southwest along antidiagonals in this arrangement as far as possible. Note that this procedure specializes to the usual restrictization of \(\lambda^{(1)}\) when \(\lambda^{(2)} = \varnothing\). See Figure~\ref{Kleshchevification} for an illustration. 

\begin{figure}[h]
\begin{align*}
\\
\hackcenter{}
\hackcenter{
\begin{overpic}[height=52.5mm]{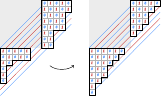}
 \put(-0.5,27){\makebox(0,0)[r, b]{$\scriptstyle \lambda^{(1)}$}}   
  \put(31,61){\makebox(0,0)[r, b]{$\scriptstyle \lambda^{(2)}$}}   
   \put(55,25.8){\makebox(0,0)[r, b]{$\scriptstyle \lambda^{(1)}_\textup{K}$}}   
  \put(86.7,60.5){\makebox(0,0)[r, b]{$\scriptstyle \lambda^{(2)}_\textup{K}$}}   
     \put(39,15){\makebox(0,0)[t]{$\scriptstyle \textup{slide down}$}}  
        \put(39,12){\makebox(0,0)[t]{$\scriptstyle \textup{antidiagonals}$}}  
\end{overpic}
}
\end{align*}
\caption{Let \(\bkap = (1,0)\) and \(\blam = ((5^2, 2, 1^3), (5^2,4^3,2^3))\). We arrange \(\lambda^{(1)}, \lambda^{(2)}\) so that the first antidiagonal of \(1\)-residue boxes in \(\lambda^{(2)}\) is aligned with the top left (residue 1) box in in \(\lambda^{(1)}\), and then slide boxes all the way down antidiagonals (passing through but not landing in the gray zone) to get \(\blam_\textup{K} = ((6,5,4,3^2,2,1^2),(5,4,3^2,2,1))\).}
\label{Kleshchevification}
\end{figure}

\begin{conjecture}\label{Jregconj}
Let \(\bkap = (\kappa_1, \kappa_2)\) be a bicharge, and let \(\blam \in \MP^{\bkap}\). Then:
\begin{enumerate}
\item There exists a nonzero homomorphism \(S^{\bkap}(\blam_\textup{K}) \to S^{\bkap}(\blam)\);
\item We have \([S^\kappa(\blam): D^\kappa(\blam_\textup{K})] = 1\);
\item If \([S^\kappa(\blam): D^\kappa(\bmu)] >0\) for \(\bmu \in \MPK^{\bkap}\), then \(\bmu \trianglelefteq \blam_\textup{K}\).
\end{enumerate}
\end{conjecture}

We have computationally verified that this result holds in small ranks. An intrepid mathematician may hypothesize that the Kleshchevization process can reasonably be extended to  levels greater than two (at the expense of weakening part (3) of the conjecture statement). Naively defining \(\blam_\textup{K} = \mathcal{Z}^{\bkap} \circ \mathcal{W}^{\bkap}(\blam)\) in higher levels does not succeed without some adjustment, however, and we are currently unable to compute with large enough ranks in higher levels to make any concrete conjectures in this direction.

\subsection{An alternate classification regime for KLR algebras (conjecturally)}\label{conjlabelreg}
In \S\ref{cuspclasssec},  \ref{kleshclasssec} we recalled how simple modules for the KLR algebra \(R_\theta\) may be constructed as heads of explicit (skew)  Specht modules directly associated to the combinatorial aspects of vertices in the \({\tt A}_1^{(1)}\)-crystals \(\aMV\) and \(\MPK^{\bkap}\). We expect that there should be such a construction associated to our `intermediary' \({\tt A}_1^{(1)}\)-crystal \(\UL\) as well, and make a conjecture in this direction now.

We associate a skew diagram \(\xi(\sfD)\) to any upper ledge diagram \(\sfD \in \UL\) as follows. Let \(u\) be the rightmost box in the uppermost peak row of \(\sfD\), and extend a ray \(r\) from the centroid of \(u\) directly to the northwest (i.e., at a \(135^\circ\) angle). Now form a diagram \(\sfD'\) by shifting every row above the uppermost peak row in \(\sfD\) to the right until the rightmost box in each of these rows lies along \(r\) (forming a staircase along the northeast side of \(\sfD'\)). Next, construct \(\xi(\sfD)\) by shifting the \(k\)th column in \(\sfD'\) up \(k-1\) steps. See Figure~\ref{magicshapeexfig} for an example.

\begin{figure}[h]
\begin{align*}
\\
\hackcenter{}
\hackcenter{
\begin{overpic}[height=45.5mm]{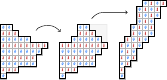}
 \put(0.5,34.3){\makebox(0,0)[l,b]{$\scriptstyle \sfD$}}    
  \put(47,34.3){\makebox(0,0)[l,b]{$\scriptstyle \sfD'$}}    
   \put(86,49){\makebox(0,0)[l,b]{$\scriptstyle \xi(\sfD)$}}    
    \put(28.4,36){\makebox(0,0)[b]{$\scriptstyle \textup{shift}$}}   
       \put(28.4,33.5){\makebox(0,0)[b]{$\scriptstyle \textup{rows}$}}   
    \put(67,43.7){\makebox(0,0)[b]{$\scriptstyle \textup{shift}$}}   
     \put(67,41.2){\makebox(0,0)[b]{$\scriptstyle \textup{columns}$}}   
\end{overpic}
}
\end{align*}
\caption{Computing the skew diagram \(\xi(\sfD)\) associated to an upper ledge diagram \(\sfD\). Note \(\xi(\sfD)\) is equal to \(\lambda \backslash \mu\), where \(\lambda = (8,7^2,6^3,5,4,3^2,2,1^2)\), \( \mu = (4,3^3,2^2,1) \in \Parres^0\).}
\label{magicshapeexfig}
\end{figure}

\begin{conjecture}\label{Dconj}
Let \(\theta \in \Z_{\geq 0}I\). For \(\sfD \in \UL(\theta)\), the Specht module \(\bS(\xi(\sfD))\) has simple head isomorphic to \(L(\sfD)\) up to some shift.
\end{conjecture}
As in the \(\aMV\), \(\MPK^{\bkap}\) settings, one would also expect some control over the other simple factors of \(\bS(\xi(\sfD))\) and related homological results.
Considering the combinatorics at work, one sees (as in Figure~\ref{magicshapeexfig}) that the skew diagrams \(\xi(\sfD)\) are exactly those realized by set differences of properly nested 2-restricted partitions. Defining the set of skew diagrams:
\begin{align*}
\mathcal{NP}_{\textup{2res}}(\theta) = \{ \lambda\backslash\mu \mid \kappa \in \Z_2, \, \lambda, \mu \in \Parres^\kappa, \, \lambda_1 > \mu'_1,\,
\textup{cont}(\lambda\backslash \mu) = \theta\}
\end{align*}
allows a reformulation of Conjecture~\ref{Dconj}:

\begin{conjecture}\label{NP2conj}
Let \(\theta \in \Z_{\geq 0}I\). 
For \(\lambda\backslash \mu \in \mathcal{NP}_{\textup{2res}}(\theta)\), the Specht module \(\bS(\lambda \backslash \mu)\) has simple head and 
\begin{align*}
\{ L(\lambda \backslash \mu) := \textup{hd}\, \bS(\lambda \backslash \mu) \mid \lambda \backslash \mu \in \mathcal{NP}_{\textup{2res}}(\theta)\}
\end{align*}
is a complete and irredundant list of simple \(R_\theta\)-modules up to isomorphism and grading shift.
\end{conjecture}

Theorem~\ref{MPlabelthmcyc} shows that \(\bS(\lambda \backslash \varnothing)\) has simple head for every \(\lambda \in \Parres^\kappa\), and it was shown in \cite{MNSS} that deleting a core (staircase in this \({\tt A}_1^{(1)}\) setting) partition \(\rho\) from \(\lambda\) has nice enough functorial behavior that \(\bS(\lambda \backslash \rho)\) should maintain simple head as well. One cannot push this too far; it can be seen for instance that \(\bS(\lambda \backslash \mu)\) is decomposable over a field of characteristic zero when we take \(\lambda = (2^2,1^3), \mu = (1^3) \in \Parres^\kappa\). Our hope is that the condition \(\lambda_1 > \mu'_1\) for \(\lambda, \mu \in \Parres^\kappa\) is restrictive enough to dissuade this rude behavior.

\subsubsection{A conjecture in general affine type {\tt A}}
For readers familiar with the setup of KLR algebras and associated combinatorics in general type \({\tt A}^{(1)}_{e-1}\), we offer one final conjecture, motivated by small-rank computations and large-rank optimism. Let \(e \geq 2\), and consider the associated KLR algebra \(R_\theta({\tt A}_{e-1}^{(1)})\) for some \(\theta\) in the root lattice of type \({\tt A}_{e-1}^{(1)}\), over an arbitrary field. For \(i \in \Z_e\), let \(\MPK^{(i,i)}(\theta)\) be the set of Kleshchev \({\tt A}_{e-1}^{(1)}\)-bipartitions of bicharge \((i,i)\), with \(\textup{cont}(\lambda^{(1)} \backslash \lambda^{(2)}) = \theta\) (see \cite{Ariki2001, HuMathasCycA}).

\begin{conjecture}\label{NP3conj}
For \(i \in \Z_e\), \((\lambda^{(1)}, \lambda^{(2)}) \in \MPK^{(i,i)}\), the Specht module \(\bS(\lambda^{(1)} \backslash \lambda^{(2)})\) has simple head and 
\begin{align*}
\{\textup{hd}\, \bS(\lambda^{(1)} \backslash \lambda^{(2)}) \mid (\lambda^{(1)}, \lambda^{(2)}) \in \MPK^{(i,i)}(\theta)\}
\end{align*}
is a complete (albeit redundant) list of simple \(R_\theta({\tt A}_{e-1}^{(1)})\)-modules up to isomorphism and grading shift.
\end{conjecture}

\bibliographystyle{eprintamsplain}
\bibliography{Biblio}

@book{Kac1990,
  author    = {Kac, Victor G.},
  title     = {{I}nfinite {D}imensional {L}ie {A}lgebras},
  edition   = {3rd},
  publisher = {Cambridge University Press},
  address   = {Cambridge},
  year      = {1990},
}

@article{Kamnitzer2010,
  author  = {Kamnitzer, Joel},
  title   = {Mirkovi\'c--{V}ilonen cycles and polytopes},
  journal = {Ann. of Math. (2)},
  volume  = {171},
  number  = {1},
  pages   = {245--294},
  year    = {2010},
  doi     = {10.4007/annals.2010.171.245},
  url     = {https://doi.org/10.4007/annals.2010.171.245},
}

@article{MuthiahTingley2014,
  author  = {Muthiah, Dinakar and Tingley, Peter},
  title   = {Affine {PBW} bases and {MV} polytopes in rank 2},
  journal = {Selecta Math. (N.S.)},
  volume  = {20},
  number  = {1},
  pages   = {237--260},
  year    = {2014},
  doi     = {10.1007/s00029-012-0117-z},
  url     = {https://doi.org/10.1007/s00029-012-0117-z},
}

@article{MuthiahTingley2018,
  author  = {Muthiah, Dinakar and Tingley, Peter},
  title   = {Affine {PBW} bases and affine {MV} polytopes},
  journal = {Selecta Math. (N.S.)},
  volume  = {24},
  number  = {5},
  pages   = {4781--4810},
  year    = {2018},
  doi     = {10.1007/s00029-018-0436-9},
  url     = {https://doi.org/10.1007/s00029-018-0436-9},
}

@article{Fayers2010,
  author    = {Fayers, Matthew},
  title     = {An {LLT}-type algorithm for computing higher-level canonical bases},
  journal   = {J. Pure Appl. Algebra},
  volume    = {214},
  number    = {12},
  pages     = {2186--2198},
  year      = {2010},
  doi       = {10.1016/j.jpaa.2010.02.021},
  url       = {https://doi.org/10.1016/j.jpaa.2010.02.021},
}

@incollection{Vazirani2016,
  author    = {Vazirani, Monica},
  title     = {Categorifying the tensor product of a level 1 highest weight and perfect crystal in type {A}},
  booktitle = {Lie algebras, Lie superalgebras, vertex algebras and related topics},
  series    = {Proc. Sympos. Pure Math.},
  volume    = {92},
  pages     = {293--324},
  publisher = {Amer. Math. Soc., Providence, RI},
  year      = {2016},
}

@article{KvingeVazirani2018,
  author  = {Kvinge, Henry and Vazirani, Monica},
  title   = {A combinatorial categorification of the tensor product of the {K}irillov-{R}eshetikhin crystal {$B^{1,1}$} and a fundamental crystal},
  journal = {Algebr. Represent. Theor.},
  volume  = {21},
  number  = {6},
  pages   = {1277--1331},
  year    = {2018},
  doi     = {10.1007/s10468-017-9747-3},
  url     = {https://doi.org/10.1007/s10468-017-9747-3},
}

@article{FLM,
    author = {Fayers, Matthew and Lyle, Sin\'ead and Martin, Stuart},
    title = {$p$-restriction of partitions and homomorphisms 
             between {S}pecht modules},
    journal = {J. Algebra},
    volume = {306},
    number = {1},
    year = {2006},
    pages = {175--190},
    doi = {10.1016/j.jalgebra.2005.11.040},
    url = {https://doi.org/10.1016/j.jalgebra.2005.11.040},
}

@book{FayersBook,
  author = {Fayers, Matthew},
  title  = {\(e\)-modular combinatorics of partitions},
  note   = {In preparation},
  year   = {2026}
}

@article{JaconLecouvey,
    author = {Jacon, Nicolas and Lecouvey, C\'edric},
    title = {Crystal isomorphisms for irreducible highest weight 
             {$\mathcal{U}_v(\widehat{\mathfrak{sl}}_e)$}-modules 
             of higher level},
    journal = {Algebr. Represent. Theory},
    volume = {13},
    number = {4},
    year = {2010},
    pages = {467--489},
    doi = {10.1007/s10468-009-9133-x},
    url = {https://doi.org/10.1007/s10468-009-9133-x},
}

@article{FLOTW,
    author = {Foda, Omar and Leclerc, Bernard and Okado, Masato 
              and Thibon, Jean-Yves and Welsh, Trevor},
    title = {Branching functions of {$A_{n-1}^{(1)}$} and 
             {J}antzen--{S}eitz problem for {A}riki--{K}oike algebras},
    journal = {Adv. Math.},
    volume = {141},
    number = {2},
    year = {1999},
    pages = {322--365},
    doi = {10.1006/aima.1998.1783},
    url = {https://doi.org/10.1006/aima.1998.1783},
}

@article{KimMono,
    author = {Kim, Jeong-Ah},
    title = {Monomial realization of crystal graphs for 
             {$U_q(A_n^{(1)})$}},
    journal = {Math. Ann.},
    volume = {332},
    number = {1},
    year = {2005},
    pages = {17--35},
    doi = {10.1007/s00208-004-0613-3},
    url = {https://doi.org/10.1007/s00208-004-0613-3},
}

@article{SSSrigB,
    author = {Salisbury, Ben and Scrimshaw, Travis},
    title = {A rigged configuration model for 
             {$B(\infty)$}},
    journal = {J. Combin. Theory Ser. A},
    volume = {133},
    year = {2015},
    pages = {29--57},
    doi = {10.1016/j.jcta.2015.01.008},
    url = {https://doi.org/10.1016/j.jcta.2015.01.008},
}

@article{KangYW,
    author = {Kang, Seok-Jin},
    title = {Crystal bases for quantum affine algebras and 
             combinatorics of {Y}oung walls},
    journal = {Proc. London Math. Soc. (3)},
    volume = {86},
    number = {1},
    year = {2003},
    pages = {29--69},
    doi = {10.1112/S0024611502013734},
    url = {https://doi.org/10.1112/S0024611502013734},
}

@article{KimShinYW,
    author = {Kim, Jeong-Ah and Shin, Dong-Uy},
    title = {Generalized {Y}oung walls and crystal bases 
             for quantum affine algebra of type {$A_n^{(1)}$}},
    journal = {Proc. Amer. Math. Soc.},
    volume = {138},
    number = {11},
    year = {2010},
    pages = {3877--3889},
    doi = {10.1090/S0002-9939-2010-10428-8},
    url = {https://doi.org/10.1090/S0002-9939-2010-10428-8},
}

@incollection{Uglov,
    author = {Uglov, Denis},
    title = {Canonical bases of higher-level $q$-deformed 
             {F}ock spaces and {K}azhdan--{L}usztig polynomials},
    booktitle = {Physical Combinatorics},
    series = {Progr. Math.},
    volume = {191},
    publisher = {Birkh\"auser},
    address = {Boston},
    year = {2000},
    pages = {249--299},
    doi = {10.1007/978-1-4612-1378-9_8},
    url = {https://doi.org/10.1007/978-1-4612-1378-9_8},
}

@article{Gerber,
    author = {Gerber, Thomas},
    title = {Crystal isomorphisms in {F}ock spaces and 
             {S}chensted correspondence in affine type {$A$}},
    journal = {Algebr. Represent. Theory},
    volume = {18},
    number = {4},
    year = {2015},
    pages = {1009--1046},
    doi = {10.1007/s10468-015-9530-2},
    url = {https://doi.org/10.1007/s10468-015-9530-2},
}

@incollection{Kashiwara1995,
  author    = {Kashiwara, Masaki},
  title     = {On crystal bases},
  booktitle = {Representations of groups ({B}anff, {AB}, 1994)},
  series    = {CMS Conf. Proc.},
  volume    = {16},
  pages     = {155--197},
  publisher = {Amer. Math. Soc., Providence, RI},
  year      = {1995},
}

@article{BKW,
    author = {Brundan, Jonathan and Kleshchev, Alexander and Wang, Weiqiang},
    title = {Graded {S}pecht modules},
    journal = {J. Reine Angew. Math.},
    volume = {655},
    year = {2011},
    pages = {61--87},
    doi = {10.1515/crelle.2011.033},
    note = {arXiv:0901.0218},
    url = {https://doi.org/10.1515/crelle.2011.033}
}

@book{pynchon2006,
  author    = {Pynchon, Thomas},
  title     = {Against the {D}ay},
  publisher = {Penguin Press},
  address   = {New York},
  year      = {2006},
}

@article {KangKash,
    AUTHOR = {Kang, Seok-Jin and Kashiwara, Masaki},
     TITLE = {Categorification of highest weight modules via
              {K}hovanov-{L}auda-{R}ouquier algebras},
   JOURNAL = {Invent. Math.},
  FJOURNAL = {Inventiones Mathematicae},
    VOLUME = {190},
      YEAR = {2012},
    NUMBER = {3},
     PAGES = {699--742},
      ISSN = {0020-9910,1432-1297},
   MRCLASS = {17B67 (17B37)},
  MRNUMBER = {2995184},
MRREVIEWER = {Volodymyr\ Mazorchuk},
       DOI = {10.1007/s00222-012-0388-1},
       URL = {https://doi.org/10.1007/s00222-012-0388-1},
}

@inproceedings {KleshSurvey,
    AUTHOR = {Kleshchev, Alexander},
     TITLE = {Modular representation theory of symmetric groups},
 BOOKTITLE = {Proceedings of the {I}nternational {C}ongress of
              {M}athematicians---{S}eoul 2014. {V}ol. {III}},
     PAGES = {97--120},
 PUBLISHER = {Kyung Moon Sa, Seoul},
      YEAR = {2014},
      ISBN = {978-89-6105-806-3; 978-89-6105-803-2},
   MRCLASS = {20C30 (17B37 20C08)},
  MRNUMBER = {3729020},
MRREVIEWER = {Himmet\ Can},
}

@article {BKisom,
    AUTHOR = {Brundan, Jonathan and Kleshchev, Alexander},
     TITLE = {Blocks of cyclotomic {H}ecke algebras and {K}hovanov-{L}auda
              algebras},
   JOURNAL = {Invent. Math.},
  FJOURNAL = {Inventiones Mathematicae},
    VOLUME = {178},
      YEAR = {2009},
    NUMBER = {3},
     PAGES = {451--484},
      ISSN = {0020-9910,1432-1297},
   MRCLASS = {20C08 (16G99 20F55)},
  MRNUMBER = {2551762},
MRREVIEWER = {Andrew\ Mathas},
       DOI = {10.1007/s00222-009-0204-8},
       URL = {https://doi.org/10.1007/s00222-009-0204-8},
}

@article{MisraMiwa,
    AUTHOR = {Misra, Kailash and Miwa, Tetsuji},
     TITLE = {Crystal base for the basic representation of
              {$U_q(\hat{\mathfrak{sl}}(n))$}},
   JOURNAL = {Comm. Math. Phys.},
    VOLUME = {134},
      YEAR = {1990},
    NUMBER = {1},
     PAGES = {79--88},
       DOI = {10.1007/BF02102090},
       URL = {https://doi.org/10.1007/BF02102090},
}

@article {KatoKLR,
    AUTHOR = {Kato, Syu},
     TITLE = {Poincar\'e-{B}irkhoff-{W}itt bases and
              {K}hovanov-{L}auda-{R}ouquier algebras},
   JOURNAL = {Duke Math. J.},
  FJOURNAL = {Duke Mathematical Journal},
    VOLUME = {163},
      YEAR = {2014},
    NUMBER = {3},
     PAGES = {619--663},
      ISSN = {0012-7094,1547-7398},
   MRCLASS = {17B37 (16T20)},
  MRNUMBER = {3165425},
MRREVIEWER = {Sonia\ Natale},
       DOI = {10.1215/00127094-2405388},
       URL = {https://doi.org/10.1215/00127094-2405388},
}

@article {VV,
    AUTHOR = {Varagnolo, M. and Vasserot, E.},
     TITLE = {Canonical bases and {KLR}-algebras},
   JOURNAL = {J. Reine Angew. Math.},
  FJOURNAL = {Journal f\"ur die Reine und Angewandte Mathematik. [Crelle's
              Journal]},
    VOLUME = {659},
      YEAR = {2011},
     PAGES = {67--100},
      ISSN = {0075-4102,1435-5345},
   MRCLASS = {17B37 (16T20)},
  MRNUMBER = {2837011},
MRREVIEWER = {Nicolas\ Jacon},
       DOI = {10.1515/CRELLE.2011.068},
       URL = {https://doi.org/10.1515/CRELLE.2011.068},
}

@article {KMimag,
    AUTHOR = {Kleshchev, Alexander and Muth, Robert},
     TITLE = {Imaginary {S}chur-{W}eyl duality},
   JOURNAL = {Mem. Amer. Math. Soc.},
  FJOURNAL = {Memoirs of the American Mathematical Society},
    VOLUME = {245},
      YEAR = {2017},
    NUMBER = {1157},
     PAGES = {xvii+83},
      ISSN = {0065-9266,1947-6221},
      ISBN = {978-1-4704-2249-3; 978-1-4704-3603-2},
   MRCLASS = {17B67 (05E10 17B22 20G43)},
  MRNUMBER = {3589160},
MRREVIEWER = {Aleksandr\ Nikolaevich\ Sergeev},
       DOI = {10.1090/memo/1157},
       URL = {https://doi.org/10.1090/memo/1157},
 }

@article{Ariki2001,
  author  = {Ariki, Susumu},
  title   = {On the classification of simple modules for cyclotomic {Hecke} algebras of type {$G(m,1,n)$} and {Kleshchev} multipartitions},
  journal = {Osaka J. Math.},
  volume  = {38},
  year    = {2001},
  pages   = {827--837},
}

@article {ArikiMathas,
    AUTHOR = {Ariki, Susumu and Mathas, Andrew},
     TITLE = {The number of simple modules of the {H}ecke algebras of type
              {$G(r,1,n)$}},
   JOURNAL = {Math. Z.},
  FJOURNAL = {Mathematische Zeitschrift},
    VOLUME = {233},
      YEAR = {2000},
    NUMBER = {3},
     PAGES = {601--623},
      ISSN = {0025-5874,1432-1823},
   MRCLASS = {20C08 (16G99 17B67 20F55)},
  MRNUMBER = {1750939},
MRREVIEWER = {Gerhard\ Hiss},
       DOI = {10.1007/s002090050489},
       URL = {https://doi.org/10.1007/s002090050489},
}

@article{KleshBranchII,
    author = {Kleshchev, Alexander},
    title = {Branching rules for modular representations of symmetric groups. {II}},
    journal = {J. Reine Angew. Math.},
    volume = {459},
    year = {1995},
    pages = {163--212},
    doi = {10.1515/crll.1995.459.163},
    url = {https://doi.org/10.1515/crll.1995.459.163}
}

@article {KleshBranchI,
    AUTHOR = {Kleshchev, Alexander},
     TITLE = {Branching rules for modular representations of symmetric
              groups. {I}},
   JOURNAL = {J. Algebra},
  FJOURNAL = {Journal of Algebra},
    VOLUME = {178},
      YEAR = {1995},
    NUMBER = {2},
     PAGES = {493--511},
      ISSN = {0021-8693,1090-266X},
   MRCLASS = {20C30 (20C20 20G05)},
  MRNUMBER = {1359899},
MRREVIEWER = {Jens\ C.\ Jantzen},
       DOI = {10.1006/jabr.1995.1362},
       URL = {https://doi.org/10.1006/jabr.1995.1362},
}

@article {LLT,
    AUTHOR = {Lascoux, Alain and Leclerc, Bernard and Thibon, Jean-Yves},
     TITLE = {Hecke algebras at roots of unity and crystal bases of quantum
              affine algebras},
   JOURNAL = {Comm. Math. Phys.},
  FJOURNAL = {Communications in Mathematical Physics},
    VOLUME = {181},
      YEAR = {1996},
    NUMBER = {1},
     PAGES = {205--263},
      ISSN = {0010-3616,1432-0916},
   MRCLASS = {17B37 (20C20)},
  MRNUMBER = {1410572},
MRREVIEWER = {Thomas\ M.\ Halverson},
DOI = {10.1007/BF02101678},
       URL = {http://doi.org/10.1007/BF02101678},
}

@article {AKT,
    AUTHOR = {Ariki, Susumu and Kreiman, Victor and Tsuchioka, Shunsuke},
     TITLE = {On the tensor product of two basic representations of
              {$U_v(\mathfrak{sl}_e)$}},
   JOURNAL = {Adv. Math.},
  FJOURNAL = {Advances in Mathematics},
    VOLUME = {218},
      YEAR = {2008},
    NUMBER = {1},
     PAGES = {28--86},
      ISSN = {0001-8708,1090-2082},
   MRCLASS = {17B37 (05E10)},
  MRNUMBER = {2409408},
MRREVIEWER = {Olga\ Bershtein},
       DOI = {10.1016/j.aim.2007.11.018},
       URL = {https://doi.org/10.1016/j.aim.2007.11.018},
}

@article {Jacon,
    AUTHOR = {Jacon, Nicolas},
     TITLE = {Kleshchev multipartitions and extended {Y}oung diagrams},
   JOURNAL = {Adv. Math.},
  FJOURNAL = {Advances in Mathematics},
    VOLUME = {339},
      YEAR = {2018},
     PAGES = {367--403},
      ISSN = {0001-8708,1090-2082},
   MRCLASS = {20C08 (05E10 17B37)},
  MRNUMBER = {3866901},
MRREVIEWER = {Jun\ Hu},
       DOI = {10.1016/j.aim.2018.09.038},
       URL = {https://doi.org/10.1016/j.aim.2018.09.038},
}

@incollection {MathasAK,
    AUTHOR = {Mathas, Andrew},
     TITLE = {Simple modules of {A}riki-{K}oike algebras},
 BOOKTITLE = {Group representations: cohomology, group actions and topology
              ({S}eattle, {WA}, 1996)},
    SERIES = {Proc. Sympos. Pure Math.},
    VOLUME = {63},
     PAGES = {383--396},
 PUBLISHER = {Amer. Math. Soc., Providence, RI},
      YEAR = {1998},
      ISBN = {0-8218-0658-0},
   MRCLASS = {20C20 (16G99)},
  MRNUMBER = {1603195},
MRREVIEWER = {Hiroshi\ Naruse},
       DOI = {10.1090/pspum/063/1603195},
       URL = {https://doi.org/10.1090/pspum/063/1603195},
}

@article {BessSyl,
    AUTHOR = {Bessenrodt, Christine},
     TITLE = {A bijection for {L}ebesgue's partition identity in the spirit
              of {S}ylvester},
   JOURNAL = {Discrete Math.},
  FJOURNAL = {Discrete Mathematics},
    VOLUME = {132},
      YEAR = {1994},
    NUMBER = {1-3},
     PAGES = {1--10},
      ISSN = {0012-365X,1872-681X},
   MRCLASS = {11P81 (05A17)},
  MRNUMBER = {1297366},
MRREVIEWER = {Richard\ P.\ Lewis},
       DOI = {10.1016/0012-365X(94)90228-3},
       URL = {https://doi.org/10.1016/0012-365X(94)90228-3},
}

@incollection {eulerbijections,
    AUTHOR = {Alladi, Krishnaswami},
     TITLE = {Euler's partition theorem and refinements without appeal to
              infinite products},
 BOOKTITLE = {Algorithmic combinatorics: enumerative combinatorics, special
              functions and computer alge},
    SERIES = {Texts Monogr. Symbol. Comput.},
     PAGES = {9--23},
 PUBLISHER = {Springer, Cham},
      YEAR = {2020},
      ISBN = {978-3-030-44558-4; 978-3-030-44559-1},
   MRCLASS = {05A17},
  MRNUMBER = {4300202},
       DOI = {10.1007/978-3-030-44559-1\_2},
       URL = {https://doi.org/10.1007/978-3-030-44559-1_2},
}

@article {Paksurvey,
    AUTHOR = {Pak, Igor},
     TITLE = {Partition bijections, a survey},
   JOURNAL = {Ramanujan J.},
  FJOURNAL = {Ramanujan Journal. An International Journal Devoted to the
              Areas of Mathematics Influenced by Ramanujan},
    VOLUME = {12},
      YEAR = {2006},
    NUMBER = {1},
     PAGES = {5--75},
      ISSN = {1382-4090,1572-9303},
   MRCLASS = {05A17 (05A30 11P81)},
  MRNUMBER = {2267263},
MRREVIEWER = {George\ E.\ Andrews},
       DOI = {10.1007/s11139-006-9576-1},
       URL = {https://doi.org/10.1007/s11139-006-9576-1},
}

@article{BKT,
    author = {Baumann, Pierre and Kamnitzer, Joel and Tingley, Peter},
    title = {Affine {M}irkovi\'{c}--{V}ilonen polytopes},
    journal = {Publ. Math. Inst. Hautes \'{E}tudes Sci.},
    volume = {120},
    year = {2014},
    pages = {113--205},
    doi = {10.1007/s10240-013-0057-y},
    note = {arXiv:1110.3661},
    url = {https://www.numdam.org/item/PMIHES_2014__120__113_0/},
}

@article {aMVrank2,
    AUTHOR = {Baumann, Pierre and Dunlap, Thomas and Kamnitzer, Joel and
              Tingley, Peter},
     TITLE = {Rank 2 affine {MV} polytopes},
   JOURNAL = {Represent. Theory},
  FJOURNAL = {Representation Theory. An Electronic Journal of the American
              Mathematical Society},
    VOLUME = {17},
      YEAR = {2013},
     PAGES = {442--468},
      ISSN = {1088-4165},
   MRCLASS = {17B67 (05E10 52B20)},
  MRNUMBER = {3084418},
MRREVIEWER = {Kyu-Hwan\ Lee},
       DOI = {10.1090/S1088-4165-2013-00438-7},
       URL = {https://doi.org/10.1090/S1088-4165-2013-00438-7},
}

@article {JamesDecompII,
    AUTHOR = {James, G. D.},
     TITLE = {On the decomposition matrices of the symmetric groups. {II}},
   JOURNAL = {J. Algebra},
  FJOURNAL = {Journal of Algebra},
    VOLUME = {43},
      YEAR = {1976},
    NUMBER = {1},
     PAGES = {45--54},
      ISSN = {0021-8693},
   MRCLASS = {20C30},
  MRNUMBER = {430050},
MRREVIEWER = {A.\ Kerber},
       DOI = {10.1016/0021-8693(76)90143-5},
       URL = {https://doi.org/10.1016/0021-8693(76)90143-5},
}

@article {TTdecomp,
    AUTHOR = {Tan, Kai Meng and Teo, Wei Hao},
     TITLE = {Sign sequences and decomposition numbers},
   JOURNAL = {Trans. Amer. Math. Soc.},
  FJOURNAL = {Transactions of the American Mathematical Society},
    VOLUME = {365},
      YEAR = {2013},
    NUMBER = {12},
     PAGES = {6385--6401},
      ISSN = {0002-9947,1088-6850},
   MRCLASS = {17B37 (20C30)},
  MRNUMBER = {3105756},
MRREVIEWER = {Guiyu\ Yang},
       DOI = {10.1090/S0002-9947-2013-05860-6},
       URL = {https://doi.org/10.1090/S0002-9947-2013-05860-6},
}

@article {FaceFunctors,
    AUTHOR = {McNamara, Peter J. and Tingley, Peter},
     TITLE = {Face functors for {KLR} algebras},
   JOURNAL = {Represent. Theory},
  FJOURNAL = {Representation Theory. An Electronic Journal of the American
              Mathematical Society},
    VOLUME = {21},
      YEAR = {2017},
     PAGES = {106--131},
      ISSN = {1088-4165},
   MRCLASS = {17B37},
  MRNUMBER = {3670026},
MRREVIEWER = {Zhankui\ Xiao},
       DOI = {10.1090/ert/496},
       URL = {https://doi.org/10.1090/ert/496},
}

@article {LVcrystals,
    AUTHOR = {Lauda, Aaron D. and Vazirani, Monica},
     TITLE = {Crystals from categorified quantum groups},
   JOURNAL = {Adv. Math.},
  FJOURNAL = {Advances in Mathematics},
    VOLUME = {228},
      YEAR = {2011},
    NUMBER = {2},
     PAGES = {803--861},
      ISSN = {0001-8708,1090-2082},
   MRCLASS = {17B37},
  MRNUMBER = {2822211},
MRREVIEWER = {Peter\ W.\ Tingley},
       DOI = {10.1016/j.aim.2011.06.009},
       URL = {https://doi.org/10.1016/j.aim.2011.06.009},
}

@article {GeLi,
    AUTHOR = {Li, Ge},
     TITLE = {Integral basis theorem of cyclotomic
              {K}hovanov-{L}auda-{R}ouquier algebras of type {A}},
   JOURNAL = {J. Algebra},
  FJOURNAL = {Journal of Algebra},
    VOLUME = {482},
      YEAR = {2017},
     PAGES = {1--101},
      ISSN = {0021-8693,1090-266X},
   MRCLASS = {20C08 (16G30)},
  MRNUMBER = {3646285},
MRREVIEWER = {J.\ Matthew\ Douglass},
       DOI = {10.1016/j.jalgebra.2017.02.022},
       URL = {https://doi.org/10.1016/j.jalgebra.2017.02.022},
}

@article {BKdecomp,
    AUTHOR = {Brundan, Jonathan and Kleshchev, Alexander},
     TITLE = {Graded decomposition numbers for cyclotomic {H}ecke algebras},
   JOURNAL = {Adv. Math.},
  FJOURNAL = {Advances in Mathematics},
    VOLUME = {222},
      YEAR = {2009},
    NUMBER = {6},
     PAGES = {1883--1942},
      ISSN = {0001-8708,1090-2082},
   MRCLASS = {20C08 (17B37 20C20 20F55)},
  MRNUMBER = {2562768},
MRREVIEWER = {Andrew\ Mathas},
       DOI = {10.1016/j.aim.2009.06.018},
       URL = {https://doi.org/10.1016/j.aim.2009.06.018},
}

@article {HuMathasCycA,
    AUTHOR = {Hu, Jun and Mathas, Andrew},
     TITLE = {Graded cellular bases for the cyclotomic
              {K}hovanov-{L}auda-{R}ouquier algebras of type {$A$}},
   JOURNAL = {Adv. Math.},
  FJOURNAL = {Advances in Mathematics},
    VOLUME = {225},
      YEAR = {2010},
    NUMBER = {2},
     PAGES = {598--642},
      ISSN = {0001-8708,1090-2082},
   MRCLASS = {20C08 (16S99 17B37 20C30)},
  MRNUMBER = {2671176},
MRREVIEWER = {Anton\ Cox},
       DOI = {10.1016/j.aim.2010.03.002},
       URL = {https://doi.org/10.1016/j.aim.2010.03.002},
}

@misc{Rouq1,
      title={2-{K}ac-{M}oody algebras}, 
      author={Rapha\"el Rouquier},
      year={2008},
      eprint={0812.5023},
      archivePrefix={arXiv},
      primaryClass={math.RT},
      url={https://arxiv.org/abs/0812.5023}, 
}

@misc{AffWeb,
      title={Superalgebra deformations of web categories: {A}ffine and cyclotomic webs}, 
      author={Davidson, Nicholas and Kujawa,  Jonathan R. and Muth, Robert},
      year={2025},
      eprint={2511.21671},
      archivePrefix={arXiv},
      primaryClass={math.RT},
      url={https://arxiv.org/abs/2511.21671}, 
}

@misc{Murata,
      title={Affine highest weight structures on module categories over quiver Hecke algebras}, 
      author={Haruto Murata},
      year={2024},
      eprint={2412.12903},
      archivePrefix={arXiv},
      primaryClass={math.RT},
      url={https://arxiv.org/abs/2412.12903}, 
}

@article {KhovLauda1,
    AUTHOR = {Khovanov, Mikhail and Lauda, Aaron D.},
     TITLE = {A diagrammatic approach to categorification of quantum groups.
              {I}},
   JOURNAL = {Represent. Theory},
  FJOURNAL = {Representation Theory. An Electronic Journal of the American
              Mathematical Society},
    VOLUME = {13},
      YEAR = {2009},
     PAGES = {309--347},
      ISSN = {1088-4165},
   MRCLASS = {17B37},
  MRNUMBER = {2525917},
MRREVIEWER = {Fan\ Xu},
       DOI = {10.1090/S1088-4165-09-00346-X},
       URL = {https://doi.org/10.1090/S1088-4165-09-00346-X},
}

@article {ADMPSS,
    AUTHOR = {Abbasian, Dina and Difulvio, Lena and Muth, Robert and
              Pasternak, Gabrielle and Sholtes, Isabella and Sinclair,
              Frances},
     TITLE = {Cuspidal ribbon tableaux in affine type {A}},
   JOURNAL = {Algebr. Comb.},
  FJOURNAL = {Algebraic Combinatorics},
    VOLUME = {6},
      YEAR = {2023},
    NUMBER = {2},
     PAGES = {285--319},
      ISSN = {2589-5486},
   MRCLASS = {17B22 (05E10 20C30 20G42)},
  MRNUMBER = {4591589},
MRREVIEWER = {Stuart\ Martin},
       DOI = {10.5802/alco.260},
       URL = {https://doi.org/10.5802/alco.260},
}

@article {MNSS,
    AUTHOR = {Muth, Robert and Nicewicz, Thomas and Speyer, Liron and
              Sutton, Louise},
     TITLE = {A skew {S}pecht perspective of rock blocks and cuspidal
              systems for {KLR} algebras in affine type {A}},
   JOURNAL = {Represent. Theory},
  FJOURNAL = {Representation Theory. An Electronic Journal of the American
              Mathematical Society},
    VOLUME = {29},
      YEAR = {2025},
     PAGES = {718--788},
      ISSN = {1088-4165},
   MRCLASS = {20C08 (05E10 05E16 17B22 20C30)},
  MRNUMBER = {4967672},
       DOI = {10.1090/ert/698},
       URL = {https://doi.org/10.1090/ert/698},
}

@article {TW,
    AUTHOR = {Tingley, Peter and Webster, Ben},
     TITLE = {Mirkovi\'c-{V}ilonen polytopes and
              {K}hovanov-{L}auda-{R}ouquier algebras},
   JOURNAL = {Compos. Math.},
  FJOURNAL = {Compositio Mathematica},
    VOLUME = {152},
      YEAR = {2016},
    NUMBER = {8},
     PAGES = {1648--1696},
      ISSN = {0010-437X,1570-5846},
   MRCLASS = {17B37 (16G10 17B67 19A49 52B20)},
  MRNUMBER = {3542489},
MRREVIEWER = {Csaba\ Sz\'ant\'o},
       DOI = {10.1112/S0010437X16007338},
       URL = {https://doi.org/10.1112/S0010437X16007338},
}

@article {KMstrat,
    AUTHOR = {Kleshchev, Alexander and Muth, Robert},
     TITLE = {Stratifying {KLR} algebras of affine {ADE} types},
   JOURNAL = {J. Algebra},
  FJOURNAL = {Journal of Algebra},
    VOLUME = {475},
      YEAR = {2017},
     PAGES = {133--170},
      ISSN = {0021-8693,1090-266X},
   MRCLASS = {17B67 (05E10 16G30 17B22 20C30 20G43)},
  MRNUMBER = {3612467},
       DOI = {10.1016/j.jalgebra.2016.07.006},
       URL = {https://doi.org/10.1016/j.jalgebra.2016.07.006},
}

@article {McNaffine,
    AUTHOR = {McNamara, Peter J.},
     TITLE = {Representations of {K}hovanov-{L}auda-{R}ouquier algebras
              {III}: symmetric affine type},
   JOURNAL = {Math. Z.},
  FJOURNAL = {Mathematische Zeitschrift},
    VOLUME = {287},
      YEAR = {2017},
    NUMBER = {1-2},
     PAGES = {243--286},
      ISSN = {0025-5874,1432-1823},
   MRCLASS = {16W50 (16T05)},
  MRNUMBER = {3694676},
MRREVIEWER = {Volodymyr\ Mazorchuk},
       DOI = {10.1007/s00209-016-1825-4},
       URL = {https://doi.org/10.1007/s00209-016-1825-4},
}

@article {KleshCusp,
    AUTHOR = {Kleshchev, Alexander},
     TITLE = {Cuspidal systems for affine {K}hovanov-{L}auda-{R}ouquier
              algebras},
   JOURNAL = {Math. Z.},
  FJOURNAL = {Mathematische Zeitschrift},
    VOLUME = {276},
      YEAR = {2014},
    NUMBER = {3-4},
     PAGES = {691--726},
      ISSN = {0025-5874,1432-1823},
   MRCLASS = {20C08 (20C30)},
  MRNUMBER = {3175157},
MRREVIEWER = {Anton\ Cox},
       DOI = {10.1007/s00209-013-1219-9},
       URL = {https://doi.org/10.1007/s00209-013-1219-9},
}

@article {MuthSkew,
    AUTHOR = {Muth, Robert},
     TITLE = {Graded skew {S}pecht modules and cuspidal modules for
              {K}hovanov-{L}auda-{R}ouquier algebras of affine type {A}},
   JOURNAL = {Algebr. Represent. Theory},
  FJOURNAL = {Algebras and Representation Theory},
    VOLUME = {22},
      YEAR = {2019},
    NUMBER = {4},
     PAGES = {977--1015},
      ISSN = {1386-923X,1572-9079},
   MRCLASS = {20C08 (05E10 20C30)},
  MRNUMBER = {3985148},
MRREVIEWER = {Anton\ Cox},
       DOI = {10.1007/s10468-018-9808-2},
       URL = {https://doi.org/10.1007/s10468-018-9808-2},
}

@article {KMR,
    AUTHOR = {Kleshchev, Alexander and Mathas, Andrew and Ram, Arun},
     TITLE = {Universal graded {S}pecht modules for cyclotomic {H}ecke
              algebras},
   JOURNAL = {Proc. Lond. Math. Soc. (3)},
  FJOURNAL = {Proceedings of the London Mathematical Society. Third Series},
    VOLUME = {105},
      YEAR = {2012},
    NUMBER = {6},
     PAGES = {1245--1289},
      ISSN = {0024-6115,1460-244X},
   MRCLASS = {20C08 (05E10 20C30)},
  MRNUMBER = {3004104},
MRREVIEWER = {Chi\ Kin\ Mak},
       DOI = {10.1112/plms/pds019},
       URL = {https://doi.org/10.1112/plms/pds019},
}

@article{KMaffzig,
	author = {Kleshchev, Alexander and Muth, Robert},
	doi = {10.1090/tran/7464},
	fjournal = {Transactions of the American Mathematical Society},
	journal = {Trans. Amer. Math. Soc.},
	mrclass = {20C08, 17B10, 05E10},
	pages = {4535--4583},
	title = {Affine zigzag algebras and imaginary strata for {KLR} algebras},
	url = {https://doi.org/10.1090/tran/7464},
	volume = {371},
	year = {2019}}

@misc{G,
	author = {Gould, Henry},
	title = {Tables of Combinatorial Identities},
	url = {https://math.wvu.edu/~hgould/},
	urldate = {2020-06-29},
	year = 2010}

\end{document}